\pgfplotsset{compat=1.15}
\numberwithin{equation}{section}
\DeclareMathOperator{\im}{Im}
\DeclareMathOperator{\re}{Re}
\DeclareMathOperator{\Id}{Id}
\DeclareMathOperator{\I}{I}
\DeclareMathOperator{\Diag}{Diag}
\newcommand{\R}{\mathbb{R}}
\newcommand{\N}{\mathbb{N}}
\newcommand{\Z}{\mathbb{Z}}
\newcommand{\C}{\mathbb{C}}
\DeclareMathOperator{\cinf}{\emph{C}^\infty}
\DeclareMathOperator{\cinfc}{\emph{C}_c^\infty}
\DeclareMathOperator{\supp}{supp}
\DeclareMathOperator{\ess}{ess}
\DeclareMathOperator{\gr}{Gr}
\DeclareMathOperator{\op}{Op_{\textit{h}}}
\DeclareMathOperator{\Gr}{Gr}
\DeclareMathOperator{\WF}{WF_{\textit{h}}}
\newcommand{\hinf}{O(h^\infty)}
\newtheorem{thm}{Theorem}
\newtheorem{corImp}{Corollary}
\newtheorem{ThmA}{Theorem}
\theoremstyle{definition}
\newtheorem{defi}{Definition}[section]
\newtheorem{prop}{Proposition}[section]
\newtheorem*{nota}{Notations}
\newtheorem{lem}{Lemma}[section]
\newtheorem{cor}{Corollary}[section]
\newtheorem*{rem}{Remark}
\def\paragraph{\vspace{0.4cm} \@startsection{paragraph}{4}%
  \z@\z@{-\fontdimen2\font}%
  {\normalfont\bfseries}}
\def\subparagraph{\vspace{0.3cm} \@startsection{subparagraph}{4}%
  \z@\z@{-\fontdimen2\font}%
  {\normalfont\bfseries}}
\title{Spectral gap for obstacle scattering in dimension 2}
\author{Lucas Vacossin}
\address{Universit\'e Paris-Saclay, Laboratoire de mathématiques d'Orsay, UMR 8628 du CNRS, B\^atiment 307, 91405 Orsay Cedex,}
\email{lucas.vacossin@universite-paris-saclay.fr}
\date{}
\begin{document}
\maketitle

\begin{abstract}
In this paper, we study the problem of scattering by several strictly convex obstacles, with smooth boundary and satisfying a non eclipse condition. We show, in dimension 2 only, the existence of a spectral gap for the meromorphic continuation of the Laplace operator outside the obstacles. The proof of this result relies on a reduction to an \emph{open hyperbolic quantum map}, achieved in \cite{NSZ14}. In fact, we obtain a spectral gap for this type of objects, which also has applications in potential scattering. The second main ingredient of this article is a fractal uncertainty principle. We adapt the techniques of \cite{NDJ19} to apply this fractal uncertainty principle in our context. 
\end{abstract}

\tableofcontents
\pagebreak{}

\section{Introduction}

\paragraph{Scattering by convex obstacles and spectral gap.}

In this paper, we are interested by the problem of scattering by strictly convex obstacles in the plane. Assume that $$\mathcal{O} = \bigcup_{j=1}^J \mathcal{O}_j$$ where $\mathcal{O}_j$ are open, strictly convex connected obstacles in $\R^2$ having smooth boundary and satisfying the \emph{Ikawa condition} : for $i \neq j \neq k$, $\overline{\mathcal{O}_i}$ does not intersect the convex hull of $ \overline{ \mathcal{O}_j }\cup \overline{\mathcal{O}_k}$. Let $$
\Omega = \R^2  \setminus \overline{\mathcal{O}}$$
\begin{figure}[h]
\begin{center}
\includegraphics[scale=0.3]{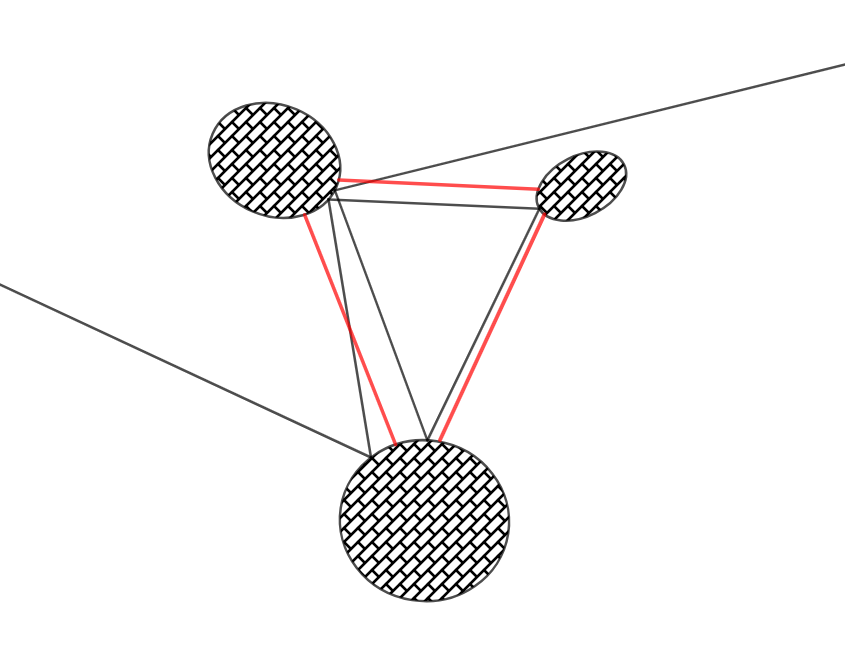}
\caption{Scattering by three obstacles in the plane}
\end{center}
\end{figure}

It is known that the resolvent of the Dirichlet Laplacian in $\Omega$ continues meromorphically to the logarithmic cover of $\C$ (see for instance \cite{DyZw18}).  More precisely, suppose that
$\chi \in \cinfc(\R^2)$ is equal to one in a neighborhood of $\overline{\mathcal{O}}$. 
$$\chi (-\Delta - \lambda^2 )^{-1} \chi  : L^2 (\Omega) \to L^2(\Omega)$$ 
is holomorphic in the region $\{ \im \lambda >0 \}$ and it continues meromorphically to the logarithmic cover of $\C$. Its poles are the \emph{scattering resonances}. We are interested in the problem of the existence of a spectral gap in the first sheet of the logarithmic cover (i.e. $\C \setminus  i \R^-$). We prove the following theorem : 

\begin{ThmA}\label{ThmA}
There exist $\gamma >0$ and $\lambda_0 >0$ such that there is no resonance in the region 
$$ [ \lambda_0, + \infty [ +i[- \gamma , 0 ]$$
\end{ThmA}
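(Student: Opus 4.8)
The plan is to follow exactly the route announced in the abstract, in two stages. First I would invoke the reduction of \cite{NSZ14}: after the semiclassical rescaling $h = (\re\lambda)^{-1}$, the resonances of $-\Delta$ on $\Omega$ with $\re\lambda \in [\lambda_0,\infty)$ and $\im\lambda \in [-\gamma,0]$ are in bijection with the values of $z = h\lambda$ (lying near energy $1$, with $\im z \in [-h\gamma, 0]$) for which an associated \emph{open hyperbolic quantum map} $M(z) = M_h(z)$ has $1$ as an eigenvalue; here $M(z)$ is an $h$-Fourier integral operator quantizing the billiard-ball relation restricted to a neighbourhood of the trapped set $K$, microlocally unitary near energy $1$ when $z$ is real. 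Theorem~\ref{ThmA} then reduces to the following spectral gap for the quantum map: there exist $\beta>0$, $M_0>0$, $h_0>0$ such that for $h\in(0,h_0)$ and all $z$ with $\re z$ near $1$ and $0\ge \im z\ge -h\gamma$,
\[
\| M(z)^{T} \|_{L^2 \to L^2} \le C h^{\beta}, \qquad T = T(h) := \lfloor M_0 \log(1/h) \rfloor ,
\]
which forces $I - M(z)$ to be invertible for $h$ small and hence rules out resonances. Passing from $\im z = 0$ to the strip is the cheap part: $M(z) = M(\re z)\,e^{iQ(z)(z-\re z)/h}$ up to negligible errors, with $Q$ bounded (morally the return time), so $\|M(z)^T\| \le e^{C\gamma T}\|M(\re z)^T\| \le C h^{\beta - C\gamma M_0}$, and choosing $\gamma < \beta/(CM_0)$ closes the argument. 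Thus the whole weight rests on the estimate for real $z$.

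For that estimate I would decompose $M(z)^T$ over admissible words of length $T$ in the finite alphabet labelling the obstacles, $M(z)^T = \sum_{|w|=T} M_w$, each $M_w$ an $h$-FIO whose canonical relation is the billiard relation composed along $w$ and whose symbol is microlocalized on a ``tube'' around the corresponding piece of the unstable manifold. Since the Ikawa non-eclipse condition makes the billiard flow on $K$ uniformly hyperbolic, for $T\sim M_0\log(1/h)$ these tubes have unstable width $\sim h^{1/2-\epsilon}$ and exponentially small stable width, so each $M_w$ is essentially a pseudodifferential cutoff to an $h$-dependent box composed with a unitary. The geometry of $K$ enters through its one-dimensional sections along the stable/unstable foliations — this is where being in dimension two is essential: these sections are Cantor sets that are $\nu$-porous on lines, so the Bourgain--Dyatlov fractal uncertainty principle applies and gives, for the relevant $h$-neighbourhoods $X=X(h)$, $Y=Y(h)$ of such sections, a bound $\|\mathbf{1}_{X}\,\mathcal{F}_h\,\mathbf{1}_{Y}\|_{L^2\to L^2} \le C h^{\beta}$ with $\beta>0$ depending only on the porosity constants.

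Because $M(z)^T$ is not literally one such pairing $\mathbf{1}_X \mathcal{F}_h \mathbf{1}_Y$, the heart of the proof is the Dolgopyat-type iteration of \cite{NDJ19}, adapted to the present open quantum map. I would run the word decomposition one symbol at a time and, at each step, split the family according to whether two consecutive branches are ``aligned'' (their unstable tubes nearly parallel after one application of the billiard relation) or ``transverse'': the transverse part is controlled directly by the fractal uncertainty principle together with the $h^{1/2}$-volume of each box, producing a gain $h^{\beta}$, while the aligned part is fed back into the iteration with a strictly smaller contribution, thanks to hyperbolic contraction and the non-existence of a perfectly invariant alignment (a consequence of the transversality of stable and unstable directions along the billiard relation). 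Summing the resulting geometric series over the $\sim\log(1/h)$ steps yields $\|M(z)^T\| \le C h^{\beta'}$ for some $\beta'>0$, which is exactly what the reduction needs.

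The main obstacle — and the bulk of the work — is this last step: transplanting the fractal-uncertainty/Dolgopyat machinery of \cite{NDJ19}, written there for the geodesic flow on hyperbolic surfaces, to the open quantum map of obstacle scattering. One must choose charts straightening the stable/unstable foliations of the billiard map, develop the symbol calculus for the long products $M_w$ uniformly down to the Ehrenfest time (and uniformly for $z$ in the $h\gamma$-strip), verify that the ``non-alignment'' input holds for the billiard relation, and check that the Cantor sections of $K$ satisfy the porosity hypotheses required to invoke the Bourgain--Dyatlov fractal uncertainty principle with an explicit exponent. The fractal uncertainty principle itself is used as a black box; it is the surrounding hyperbolic propagation estimates and the combinatorial bookkeeping that have to be carried out from scratch in the obstacle setting.
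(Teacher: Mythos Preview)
Your reduction via \cite{NSZ14} to bounding $\|M(z)^{T}\|\le Ch^{\beta}$ for $T\sim M_0\log(1/h)$ is correct, as is the strip argument and the word decomposition. But the core mechanism you describe --- a ``Dolgopyat-type iteration'' splitting into aligned/transverse branches and summing a geometric series over $\log(1/h)$ steps --- is neither what \cite{NDJ19} does nor what the paper does; no Dolgopyat scheme is run. The actual route is: split $T=N_0+N_1$ asymmetrically ($N_0$ short, below Ehrenfest, handled by standard Egorov; $N_1$ long), subdivide the words of length $N_1$ according to the first time their local unstable Jacobian exceeds $h^{-\tau}$ for a fixed $\tau<1$, and then partition the surviving family into \emph{clouds} --- groups whose images $\mathcal{V}_{\mathbf q}^{+}$ lie within $h^{\mathfrak b}$ of a common local unstable leaf. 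A Cotlar--Stein estimate reduces the full sum to a supremum over clouds (each cloud interacts with only boundedly many others), and for a single cloud one works in an \emph{adapted symplectic chart} straightening the unstable leaves, propagates Lagrangian states beyond the Ehrenfest time by explicit iteration, and reduces to a single bound $\|\mathds{1}_{\Omega^-}(y)\,\mathds{1}_{\Omega^+}(hD_y)\|\le h^{\gamma}$ with $\Omega^\pm$ porous. The FUP of \cite{BD18} is applied once per cloud; there is no recursive feed-back and no geometric series.

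There is also a concrete missing ingredient in your plan. The adapted charts require the unstable distribution $\rho\mapsto E_u(\rho)$ to extend from the fractal trapped set $\mathcal{T}$ to an open neighbourhood as a $C^{1+\beta}$ line field. For Anosov systems this is classical, but here $\mathcal{T}$ is totally disconnected, and the paper has to prove this extension from scratch (Theorem~\ref{Thm_regularity}, via a fiber-contraction argument in the spirit of Hirsch--Pugh). This regularity is what makes the chart construction possible and fixes the exponent $\mathfrak b=1/(1+\beta)$ governing the entire numerology ($\delta_0$, $\tau$, the split $N_0+N_1$); without it the reduction to a one-dimensional FUP does not go through. You gesture at ``choosing charts straightening the stable/unstable foliations'' but do not flag that establishing their existence with the needed regularity is itself a nontrivial dynamical result in this open setting.
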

\vspace{0.5cm}

This problem has a long history in the physics and mathematics literature. The spectral gap has for instance been studied by \cite{Ik88} in dimension 3. For related problems concerning the distribution of scattering resonances for such systems, here is a non exhaustive list of papers in which the reader can find pointers to a larger litterature : \cite{GR} for the three-disks problem, \cite{Ge88}, \cite{Ik82} for the the two obstacles problem, \cite{PS10} for link with dynamical zeta functions, \cite{BLR}, \cite{HaLe} for the diffraction by one convex obstacle, \cite{ZwSja} among others papers of the two authors concerning the distribution of the scattering resonances. We will also widely use the presentation and the arguments of \cite{NSZ14}. 
\vspace{0.5cm}

The spectral gap problem is a high-frequency problem and justifies the introduction of a small parameter $h$, where $\frac{1}{h}$ corresponds to a large frequency scale. Under this rescaling, we are interested in the semiclassical operator 
$$ P(h)  = -h^2 \Delta - 1 \quad h \leq h_0$$ and spectral parameter $z \in D(0, Ch)$ for some $C >0$. 

In the semiclassical limit, the classical dynamics associated to this quantum problem is the billiard flow in $\Omega \times \mathbb{S}^1$, that is to say, the free motion outside the obstacles with normal reflection on their boundaries. A relevant dynamical object is the trapped set corresponding to the points $(x, \xi) \in \Omega \times \mathbb{S}^1$ that do not escape to infinity in the backward and forward direction of the flow. In the case of two obstacles, it is a single closed geodesic. As soon as more obstacles are involved, the structure of the trapped set becomes complex and exhibits a fractal structure. This is a consequence of the hyperbolicity of the billiard flow. It is known that the structure of the trapped set plays a crucial role in the spectral gap problem. 

A good dynamical object to study this structure is the topological pressure associated to the unstable Jacobian $\phi_u$. This dynamical quantity is a strictly decreasing function $s \mapsto P(s)$ which measures the instability of the flow (see Section \ref{Section_main_theorem} for definitions and references given there). In dimension 2, Bowen's formula shows that the  Hausdorff and upper box dimensions of the trapped set are $2s_0$ where $s_0$ is the unique root of the equation $P(s) = 0$. In \cite{NZ09}, the existence of a spectral gap for such systems has been proved under the pressure condition $$P\left(\frac{1}{2}\right) <0$$
 Their result holds in any dimension, with a quantitative spectral gap. Our result doesn't need this assumption anymore. In fact, it relies on the weaker pressure condition : 

$$ P(1) <0$$ 
It is known that this condition is always satisfied in the scattering problem we consider since the trapped set is not an attractor (\cite{BoRu}). Due to Bowen's formula, this condition can be interpreted as a fractal condition.  This is this fractal property that will be crucial in the analysis. 

\paragraph{Open hyperbolic systems and spectral gaps. }
The problem of scattering by obstacles falls into the wider class of spectral problems for open hyperbolic systems (see \cite{Nonnen11}).  In these open systems, the spectral problems concern the resonances : these are generalized eigenvalues which exhibit some resonant states. Among the problems which widely interest mathematicians and physicians, resonance counting and spectral gaps are on the top of the list. Spectral gaps are known to be important to give resonance expansion (see for instance \cite{DyZw}) and local energy decay (see for instance the works of Ikawa \cite{Ik82} and \cite{Ik88} concerning local energy decay in the exterior of 2 and several obstacles in $\R^3$).  It has been conjectured in \cite{Zw17} (Conjecture 3) that such systems might exhibit a spectral gap as soon as the trapped set has such a fractal structure.

\subparagraph{Convex co-compact hyperbolic surfaces. }

Another class of open hyperbolic systems exhibiting a fractal trapped set consists of the convex co-compact hyperbolic surfaces, which can be obtained as the quotient of the hyperbolic plane $\mathbb{H}^2$ by Schottky groups $\Gamma$. The spectral problem concerns the Laplacian on these surfaces and its classical counterpart is the geodesic flow on the cosphere bundle, which is known to be hyperbolic due to the negative curvature of theses surfaces. In this context, it is common to write the energy variable $\lambda^2 = s(1-s)$ and study 
$$ \left(- \Delta - s(1-s) \right)^{-1} $$
The trapped set is linked to the limit set of $\Gamma$ and the dimension $\delta$ of this limit set influences the spectrum. The Patterson-Sullivan theory (see for instance \cite{Bo}) tells that there is a resonance at $s=\delta$ and that the other resonances are located in $\{ \re(s) < \delta \}$. In particular, it gives an essential spectral gap of size $\max(0, 1/2- \delta)$. This is consistent with the pressure condition $P(s) <1/2$ since in that situation, $P(s)$ is simply given by $P(s) = \delta - s$. Results where obtained by Naud (\cite{Na}), where he improves the gap given by the Patterson-Sullivan theory in the case $\delta \leq 1/2$.  Recent results, initiated by \cite{DZ16}, have improved this gap. In \cite{BD18}, the authors show that there exists an essential spectral gap for any convex co-compact hyperbolic surfaces. In particular, the pressure condition $\delta < 1/2$ is no more a necessary assumption. The new idea in these papers is the use of a fractal uncertainty principle. It will be a crucial tool of our analysis. 

\subparagraph{Potential scattering.} Scattering by a compactly potential also falls in the class of open systems. It consists in studying the semiclassical operator $P(h) = -h^2 \Delta + V(x)$ where $V \in \cinfc(\R^2)$. 

\begin{figure}[h]
\centering
\includegraphics[scale=0.7]{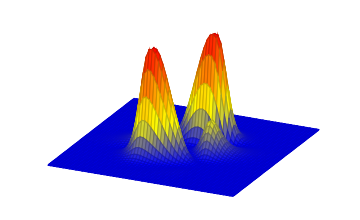}
\caption{Scattering by a smooth compactly supported potential $V$. }
\end{figure}

In this framework, the spectral gap problem consists in exhibiting bands in the complex plane of the form 
$$[a,b] - i  \times [0,h \gamma]$$
 where $P(h)$ has no resonance, for $h$ small enough. In the semiclassical limit, the behavior of $P(h)$ is linked to the classical flow of the system, that is the Hamiltonian flow generated by $p(x,\xi) = |\xi|^2 + V(x)$. Note that in potential scattering, one has to focus on some energy shell $\{p=E \}$ where $E \in \R$ is independent of $h$, with $\re z$ sufficiently close to $E$. This specification is not necessary in obstacle scattering (implicitly, we have already decided to work with $E=1$). The properties of the resonant states $u_h$, which are generalized solutions of the equation $(P(h) - z)  u_h =0$, are linked to the trapped set of the flow. This trapped set corresponds to all the trajectories which stay bounded for the  backward and forward evolution of the flow. When the flow is hyperbolic on the trapped set, this trapped set is known to exhibit a fractal structure.

\paragraph{Reduction to open hyperbolic quantum maps. }

 An important aspect of our analysis to prove Theorem \ref{ThmA} relies on previous results of \cite{NSZ14}. Their Theorem 5 (Section 6) reduces the study of the scattering poles to the study of the cancellation of
$$ z \mapsto \det( \I - M(z) ) $$ 
where 
\begin{equation}\label{M(z)}
M(z) : L^2 (\partial \mathcal{O}) \to L^2 (\partial \mathcal{O})
\end{equation}  is a family of \emph{hyperbolic open quantum map} (see below Section \ref{hyperbolic_open_quantum_map}). The family $z \mapsto M(z)$ depends holomorphically on $z \in D(0,Ch)$ for some $C>0$ and is sometimes called a \emph{hyperbolic quantum monodromy operator}.
The construction of this operator relies on the study of the operators $M_0(z)$ defined as follows : 
for $1 \leq j \leq J$, let $H_j(z) : \cinf(\partial\mathcal{O}_i ) \to \cinf (\R^2 \setminus \mathcal{O}_j ) $ be the resolvent of the problem 
$$ \left\{ \begin{array}{l}
(-h^2 \Delta - 1 - z) (H_j(z) v )  = 0 \\
H_j(z) v \text{ is outgoing} \\
H_j(z) v =v \text{ on } \partial \mathcal{O}_j
\end{array}\right. $$
Let $\gamma_j$ be the restriction of a smooth function $u \in \cinf(\R^2)$ to $\cinf(\partial \mathcal{O}_j )$ and define $M_0(z)$ by  : 

$$M_0(z) =  \left\{ \begin{array}{l}
0 \text{ if } i=j \\
- \gamma_i H_j(z) \text{ otherwise} 
\end{array}\right. $$
Due to results of Gerard (\cite{Ge88}, Appendix II), this matrix is a Fourier integral operator associated with a Lagrangian relation related to the billiard flow. \textit{A priori}, it does exclude neither the glancing rays nor the shadow region. Ikawa's conditon allows the authors to get rid of these embarrassing regions, since they do not play a role when considering the trapped set (see Section 6 in \cite{NSZ14}). A consequence of their analysis is that $M(z)$ is associated with a simpler Lagrangian relation $\mathcal{B}$, which is the restriction of the billiard map to a domain excluding the glancing rays. To be more precise, let us introduce 
 
 \begin{align*}
 &S^*_{\partial \mathcal{O}_j}=\{ (x, \xi) \in T^* \R^2, x \in \partial \mathcal{O}_j , |\xi| = 1 \} \\
 &B^* \partial \mathcal{O}_j = \{ (y, \eta) \in T^* \partial \mathcal{O}_j , |\eta | \leq 1 \}\\
 & \pi_j  : S^*_{\partial \mathcal{O}_j} \to B^* \partial \mathcal{O}_j  \text{ the orthogonal projection on each fiber}
 \end{align*}
 
 \begin{figure}[h]
 \begin{center}
  \includegraphics[scale=0.5]{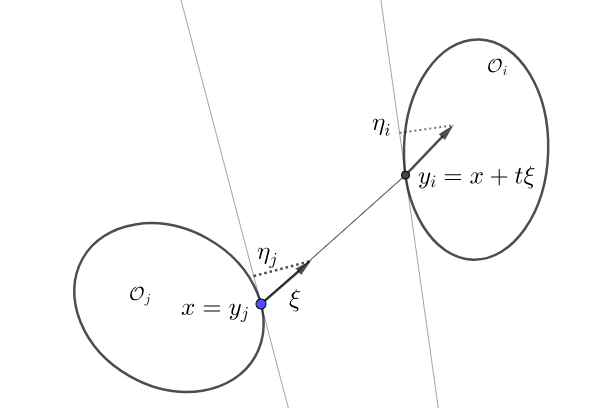}
 \end{center}
 \caption{Description of the Lagrangian relation $\mathcal{B}_{ij}$}
 \end{figure}
$\mathcal{B}$ is then the union of the relations $\mathcal{B}_{ij}$ corresponding to the reflection on two obstacles : for $ (\rho_i, \rho_j) \in B^* \partial \mathcal{O}_i \times B^* \partial \mathcal{O}_j$ : 
 
 \begin{align*}
   &(\rho_i, \rho_j)  \in \mathcal{B}_{ij} \iff  
   \exists t >0  , \xi \in \mathbb{S}^1, x \in \partial \mathcal{O}_j  \\
&\pi_j(x, \xi) = \rho_j , \pi_i(x + t \xi, \xi) = \rho_i ,  \nu_j(x) \cdot \xi >0 , \nu_i(x+t \xi) \cdot \xi  <0 
 \end{align*}
It is a standard fact in the study of chaotic billiards (see for instance \cite{Che}) that the billiard map is hyperbolic due to the strict convexity assumption. Ikawa's condition ensures that the restriction of the dynamical system to the trapped set has a symbolic representation (\cite{Mo91}).

\paragraph{Spectral gap for hyperbolic open quantum maps.} 
Using this reduction, Theorem \ref{ThmA} will be proved once we are able to show that the spectral radius of $M(z)$ is strictly smaller than $1$ for $z \in D(0,Ch) \cap \{ \im z \in [-   \delta h , 0]\}$, for some $\delta >0$. 
 This will be a consequence of the following statement, which will be demonstrated in this paper (see Section \ref{Section_main_theorem} below for a more precise version).

\begin{ThmA}\label{ThmB}
Let $(M(z))_z$ be the family introduced in (\ref{M(z)}), that is a hyperbolic quantum monodromy operator associated with the open Lagrangian relation $\mathcal{B}$. 
Then, there exist $h_0 >0$, $\gamma >0$ and $\tau_{\max} >0$  such that the spectral radius of $M(z)$, $\rho_{spec}(z)$, satisfies : for all $h \leq h_0$ and all $z \in D(0,Ch)$, 
$$ \rho_{spec}(z) \leq  e^{ - \gamma - \tau_{\max} \im z} $$
\end{ThmA}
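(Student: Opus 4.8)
The plan is to estimate the operator norm of a high iterate $M(z)^N$, where $N \sim M\log(1/h)$ is logarithmically large in $1/h$ (an Ehrenfest-type time), and then take $N$-th roots to control the spectral radius via the spectral radius formula $\rho_{\mathrm{spec}}(z) = \lim_{N\to\infty} \|M(z)^N\|^{1/N}$. Since $M(z)$ is a hyperbolic open quantum map quantizing the open billiard relation $\mathcal{B}$, the iterate $M(z)^N$ is (up to $O(h^\infty)$ errors controlled by Egorov's theorem and stationary phase) a Fourier integral operator associated with the $N$-fold composition $\mathcal{B}^N$, weighted by the $N$-step unstable Jacobian and by a factor $e^{-\tau(\rho)\,\im z/h}$ recording the time spent along the orbit. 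First I would set up the microlocal partition: decompose $M(z)^N = \sum_{\mathbf{w}} M_{\mathbf{w}}(z)$ over admissible words $\mathbf{w} = (w_0,\dots,w_N)$ in the symbolic alphabet provided by the Morita/Ikawa coding, where $M_{\mathbf{w}}$ propagates along the branch of $\mathcal{B}^N$ following $\mathbf{w}$. The norm of each $M_{\mathbf{w}}$ is, by the standard $TT^*$ / almost-orthogonality argument and the hyperbolic estimates, bounded by roughly $e^{-\frac12 \lambda_{\mathbf{w}}}e^{-\tau_{\mathbf{w}}\,\im z/h}$ where $\lambda_{\mathbf{w}}$ is the expansion rate along the word; summing over the $\sim e^{N P(1)+o(N)}$ relevant words and invoking the topological-pressure bound $P(1) < 0$ already gives exponential decay — but only a \emph{pressure gap}, which is exactly the weaker NZ09-type result, and it degrades as the trapped set gets thick.

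The point of Theorem B is to do better than the pressure gap, and for that the key new ingredient is the fractal uncertainty principle (FUP), in the form adapted by \cite{NDJ19} to higher-dimensional / general hyperbolic open quantum maps. Concretely, after the reduction one must show that there is \emph{no} function that is simultaneously microlocalized near the outgoing trapped set $\Gamma_+$ and near the incoming trapped set $\Gamma_-$ (these being the forward/backward trapped sets for $\mathcal{B}$, each a fractal of dimension related to $s_0$), up to an $h$-dependent loss $h^{\beta}$ with $\beta > 0$. The strategy, following \cite{BD18} and \cite{NDJ19}, is: (i) use the hyperbolicity to show that after $\sim \frac12\log(1/h)$ steps, $M(z)^N$ maps functions microlocalized near $\Gamma_-$ into functions microlocalized in an $h^{1/2-\epsilon}$-neighborhood of $\Gamma_+$, reducing a bulk term to something that lives on a neighborhood of the fractal trapped set; (ii) represent the relevant propagator, in well-chosen coordinates adapted to the stable/unstable foliations, as a semiclassical FIO whose phase is a nondegenerate generating function, so that it is conjugate to a rescaled semiclassical Fourier transform; (iii) apply the FUP (which holds for any set of dimension $< 1$, hence — and this is why $P(1)<0$ suffices — for our trapped set, whose box dimension is $2s_0 < \dim = 1$ in the relevant one-dimensional transversal, i.e. $s_0 < 1/2$ is \emph{not} needed, only the correct interpretation of the porosity/regularity of $\Gamma_\pm$) to gain a factor $h^{\beta}$ over the trivial bound. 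Combining this gain with the pressure sum from the first paragraph yields $\|M(z)^N\| \le C h^{\beta} e^{-\tau_{\min} N \,\im z /h}$ for $N \sim M_0 \log(1/h)$ with $M_0$ large; choosing $M_0$ large enough that $h^\beta e^{CN} \to 0$ converts this into $\|M(z)^N\|^{1/N} \le e^{-\gamma'} e^{-\tau_{\max}\im z}$ for a fixed $\gamma' > 0$, which is the claimed bound.

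The main technical obstacle — and the heart of the paper — is step (ii): verifying that the hyperbolic quantum map $M(z)$, which a priori is only known abstractly through the $\cite{NSZ14}$ reduction as an FIO attached to the billiard relation $\mathcal{B}$, actually falls within the class of operators to which the FUP machinery of \cite{NDJ19} applies. This requires (a) controlling the geometry: the stable and unstable laminations of the open billiard map are only finitely smooth (Hölder/$C^{1+\alpha}$), so the change of variables straightening them is not smooth, and one must either work with the rough foliations directly or use the fact that the \emph{trapped set itself} and the relevant regularity needed for FUP (Ahlfors-regularity / $\delta$-porosity of $\Gamma_\pm$ fibered over the symbolic space) are better behaved than the individual leaves; (b) handling the $z$-dependence and the weight $e^{-\tau \im z/h}$ uniformly, which is harmless since $\tau$ is bounded on the trapped set ($\tau_{\min} \le \tau(\rho) \le \tau_{\max}$) and $\im z = O(h)$, so this only contributes a bounded multiplicative constant per step and fixes the constant $\tau_{\max}$ in the statement; and (c) the bookkeeping of error terms to ensure the $O(h^\infty)$ and subprincipal contributions in the Egorov/stationary-phase expansions do not overwhelm the $h^\beta$ gain — this forces the Ehrenfest time $N = M_0\log(1/h)$ to be chosen below the usual $\frac{1}{2\lambda_{\max}}\log(1/h)$ threshold, and one must check $M_0$ can still be taken large enough to beat the pressure constant, which is where $P(1)<0$ (rather than $P(1/2)<0$) is used in an essential, quantitatively tight way.
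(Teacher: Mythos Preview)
Your high-level plan captures the right architecture (iterate to logarithmic time, microlocal partition, apply FUP, take $N$-th roots), but there are two genuine gaps that would prevent the argument from closing as written.

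\textbf{First, the Ehrenfest-time constraint is fatal to the FUP step.} You write that the error bookkeeping ``forces the Ehrenfest time $N = M_0\log(1/h)$ to be chosen below the usual $\frac{1}{2\lambda_{\max}}\log(1/h)$ threshold''. But if you stop at half the Ehrenfest time, the best localization you can hope for is into an $h^{1/2-\varepsilon}$-neighborhood of $\Gamma_\pm$ on each side. The FUP of \cite{BD18}/\cite{NDJ19} requires the two porosity scales $h^{\gamma_0^+}$ and $h^{\gamma_0^-}$ to satisfy the saturation condition $\gamma_0^+ + \gamma_0^- > 1$; with both at $1/2-\varepsilon$ you are exactly on the wrong side of this threshold, and the FUP gives nothing. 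The paper resolves this by an \emph{asymmetric} split $N = N_0 + N_1$: a short time $N_0 \sim \frac{\delta_0}{\lambda_1}|\log h|$ strictly below Ehrenfest (handled by standard $S_\delta$ calculus, giving scale $h^{\delta_2}$ with $\delta_2 < 1/2$) and a long time $N_1 \sim \frac{1}{\lambda_0}|\log h|$ \emph{beyond} Ehrenfest (giving scale $h^\tau$ with $\tau$ close to $1$, chosen so that $\delta_2 + \tau > 1$). Going beyond Ehrenfest is not optional; it is what makes the FUP applicable. Of course this means Egorov fails for $M^{N_1}$, and the paper replaces it by a direct propagation of Lagrangian states through the word decomposition (Section~\ref{section_propagation_Ehrenfest} and the proof of Proposition~\ref{Prop_evolution_lagrangian_state}), tracking the evolved Lagrangian leaves in adapted charts.

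\textbf{Second, the triangle inequality over words blows up.} Even granting the FUP gain $h^\beta$ for each individual piece $\mathfrak{M}^{N_0} U_{\mathbf{q}}$, the number of words $\mathbf{q}$ of length $N_1$ grows like $h^{-c}$, so summing norms destroys the gain. Your proposal does not address this. The paper's mechanism is a Cotlar--Stein argument: the words are grouped into \emph{clouds} $\mathcal{Q}_i$ (Section~\ref{Clouds}) according to which $h^{\mathfrak{b}}$-neighborhood of an unstable leaf the set $\mathcal{V}_{\mathbf{q}}^+$ lies in. Words in distinct non-interacting clouds give almost-orthogonal operators (Proposition~\ref{orthogoality_disjoint_support}), and each cloud interacts with only boundedly many others, so it suffices to bound a single cloud. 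The FUP is then applied once per cloud, not once per word.

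A third point you mention but underweight: the adapted charts in which the FUP reduction takes place require straightening the unstable lamination to sufficient accuracy. The paper proves a new regularity result (Theorem~\ref{Thm_regularity}) showing the stable/unstable distributions extend from the fractal $\mathcal{T}$ to a $C^{1+\beta}$ field on a full neighborhood, which is what makes Lemma~\ref{Existence_of_adapated_charts} possible; the exponent $\mathfrak{b} = 1/(1+\beta)$ in the numerology comes directly from this $\beta$. Finally, the condition $P(1)<0$ is not used in the error bookkeeping as you suggest, but rather (via Bowen's formula) to show that $\mathcal{T}\cap W_{u/s}(\rho)$ has upper box dimension strictly less than one, which is exactly the porosity input to the FUP (Section~\ref{upper-box dimension}).
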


When $z \in \R$, the operator $M(z)$ is microlocally unitary near the trapped set and its $L^2$ norm is essentially 1. Then, we have the trivial bound 
$$ \rho_{spec}(z) \leq 1$$ 
The bound given by the theorem is a spectral gap since we obtain $$\rho_{spec}(z) \leq e^{-\gamma} < 1$$ 
The dependence of the bound with the parameter $z$ is related to the symbol of the open quantum map $M(z)$.  

\vspace*{0.5cm}

The link between open quantum maps and the resonances of open quantum systems has also been established in \cite{NSZ11} for the case of potential scattering. As a consequence, we will also obtain a spectral gap in this context.  We review this reduction both in obstacle and potential scattering in Section \ref{Section_main_theorem} and show how it implies the spectral gap.  This correspondance between open quantum maps and open quantum systems leads to an heuristics :  
 to a resonance $z$ for the open quantum systems, it corresponds an eigenvalue $e^{-i \tau \frac{z}{h} }$ of an open quantum map. 
Here, $\tau$ is a return time associated with the classical dynamics of the open system. 
In particular, the spectral gap for open quantum maps given by the theorem heuristically implies that the resonances of the open systems might satisfy $\im z <- h \frac{\gamma}{\tau}$. 
\vspace*{0.5cm}

\paragraph{On the fractal uncertainty principle.} This is a recent tool in harmonic analysis in 1D developed by Dyatlov and several collaborators. For a large survey on this topic, we refer the reader to \cite{Dy18}. We do not enter into the details in this introduction and give the precise definitions and statements in Section \ref{Section_FUP}. We rather explain here the general idea of this principle in the spirit of our use. Roughly speaking, it says that no function can be concentrated both in frequencies and positions near a fractal set. 
Suppose that $X,Y \subset \R$ are fractal sets. To fix the ideas, let's say that $X$ and $Y$ have upper box dimension $\delta_X$ and $\delta_Y$ strictly smaller than one. For $c >0$, let's note $X(c) = X + [- c, + c]$ and the same for $Y$. Also denote $\mathcal{F}_h$ the $h$-Fourier transform : 

$$ \mathcal{F}_h u (\xi) = \frac{1}{(2 \pi h)^{1/2}} \int_\R e^{- i \frac{x \xi}{h}} u(x) dx$$
The fractal uncertainty principle then states that there exists $\beta >0$ depending on $X$ and $Y$ (See Proposition \ref{FUP} for the precise dependence) such that, for $h$ small enough, 
$$ || \mathds{1}_{X(h)}\mathcal{F}_h \mathds{1}_{Y(h)}   ||_{L^2 (\R) \to L^2(\R) } \leq h^\beta$$

\begin{figure}[h]
\centering
\includegraphics[scale=0.3]{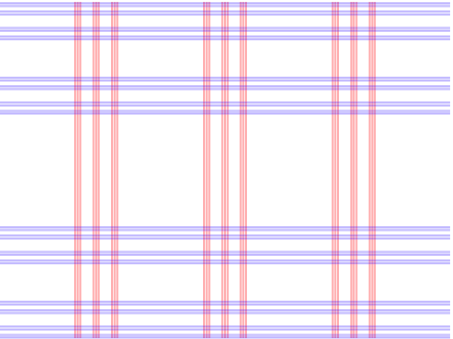}
\caption{The fractal uncertainty principle asserts that no state can be microlocalized both in frequencies (in blue) and positions (in red) near fractal sets.}
\end{figure}
Actually, one can change the scales and look for the sets $X(h^{\alpha_X} ) $ and $Y(h^{\alpha_Y})$ where $\alpha_X$ and $\alpha_Y$ are positive exponents. The result will stay true as soon as theses exponents satisfy a saturation condition : 
$$ \alpha_X + \alpha_Y > 1$$ 

It will be a key ingredient in the proof of the main theorem of this paper. It has been successfully used to show spectral gaps for convex co-compact hyperbolic surfaces (\cite{DZ16}, \cite{BoDy17},\cite{DyJ18}, \cite{DyZw18}). 
A discrete version of the fractal uncertainty principle is also the main ingredient of \cite{DJ17} where the author proved a spectral gap for open quantum maps in a toy model case. Their results concerning open baker's map on the torus $\mathbb{T}^2$ partly motivates our theorem on open quantum maps. 

\vspace*{0.5cm}

The fractal uncertainty principle has also given new results in quantum chaos on negatively curved compact surfaces. It has first been successfully used for compact hyperbolic surfaces in \cite{DJ17} where the authors proved that semiclassical measures have full support. The hyperbolic case was treated using quantization procedures developed in \cite{DZ16}, which allow to have a good semiclassical calculus for symbols very irregular in the stable direction, but smooth in the unstable one (or conversely). The existence of such quantization procedures relies on the smoothness of the horocycle flow. This smoothness is no more possible for general negatively curved surfaces. However, in \cite{NDJ19}, the authors bypassed this obstacle and succeeded to extend these results to the case of negatively curved surfaces. This is mainly from this paper that we borrow the techniques and we adapt them in our setting.

\paragraph{ A model example.}
To explain the main ideas of the proof of Theorem \ref{ThmB}, let us show how it works in an example where the trapped set is the smallest possible : a single point. In this context, we only need a simpler uncertainty principle. We focus on the case $z=0$ in Theorem \ref{ThmB} and focus on a single open quantum map. 

We consider the hyperbolic map 
$$ F : (x ,\xi) \in \R^2 \mapsto (2^{-1}x , 2\xi ) \in \R^2$$
It has a unique hyperbolic fixed point $\rho_0 = 0$ and the stable (resp. unstable) manifold at $0$ is given by $\{ \xi = 0 \}$ (resp. $\{ x = 0 \}$). 
The scaling operator 
$$U : v \in L^2(\R) \mapsto \sqrt{2} v(2x) $$
is a quantum map quantizing $F$. To open it, consider a cut-off function $\chi \in \cinfc(\R^2)$ such that $\chi \equiv 1$ in $B(0,1/2)$ and $\supp \chi \Subset B(0,1)$ and we consider the open quantum map 
$$ M= M(h)  = \op(\chi) U$$
where $\op$ is in this example (and only in this example) the left quantization : 

$$ \op(\chi) u(x) = \frac{1}{2 \pi h} \int_{\R^2} \chi(x, \xi) e^{ i \frac{(x-y) \xi}{h}} u(y) dy d\xi $$
One easily checks that Egorov's property for $U$ is true without remainder term : 
$$ U^* \op(\chi) U = \op\left( \chi \circ F\right) \quad ,\quad U \op\left(\chi  \right) U^* = \op\left( \chi \circ F^{-1} \right)$$
To show a spectral gap for $M$, we study $M^n$ with 
$$ n = n(h) \sim - \frac{3}{4} \frac{\log h}{\log 2} $$
This time is longer than the Ehrenfest time $- \frac{\log h}{\log 2} $. We write : 

$$ M^n = U^n \op\left(  \chi \circ F^{n} \right) \dots \op\left(  \chi \circ F^{1} \right) $$
The formula $[\op(a), \op(b)] = O(h^{1- 2 \delta})$ is valid for $a, b$ symbols in $S_\delta$ (we recall the definitions of symbol classes in section \ref{Section_preliminaries}) and $\delta <1/2$. The problem here is that for $1 \leq k \leq n$, $\chi \circ F^{k}$ are uniformly in $S_{3/4}$ : this is not a good symbol class. To bypass this difficulty, we observe that the symbols  $\chi \circ F^k$ are uniformly in $S_{3/8}$ for $k \in \{ - n/2, \dots,  n/2 \}$. As a consequence, for $j \in \{1, \dots, n \}$ we write: 
\begin{align*}
\left[ \op\left(  \chi \circ F^{n} \right) ,  \op\left(  \chi \circ F^{j} \right) \right] &= U^{-n/2} \left[ \op\left(  \chi \circ F^{n/2} \right) ,  \op\left(  \chi \circ F^{j-n/2} \right) \right]  U^{n/2} \\
&=  U^{-n/2} O\left( h^{1/4} \right) U^{n/2} \\
&= O\left( h^{1/4} \right) 
\end{align*}
where the constants in $O$ are uniform in $j$ and depend only on $\chi$. Applying this formula recursively to move the term $\op\left(  \chi \circ F^{n} \right)$ to the right, we get that 
$$ M^n = U^n \op\left(  \chi \circ F^{n-1} \right) \dots \op\left(  \chi \circ F^{1} \right) \op\left(  \chi \circ F^{n} \right) + O\left(  h^{1/4} \log h \right) $$
Similarly, we can write : 
$$ M^{n+1} =  \op\left(  \chi \circ F^{-n} \right) \op \left( \chi \right)  \dots \op\left(  \chi \circ F^{-n+1} \right)  U^{n+1} + O\left(  h^{1/4} \log h \right)$$
Hence, we have 
$$ M^{2n+1} =A \op\left(  \chi \circ F^{n} \right)  \op\left(  \chi \circ F^{-n} \right) B + O\left(  h^{1/4} \log h \right)$$
with $$A=A(h) = U^n \op\left(  \chi \circ F^{n-1} \right) \dots \op\left(  \chi \circ F^{1} \right) = O(1)$$
and $$B=B(h)=  \op \left( \chi  \right)  \dots \op\left(  \chi \circ F^{-n+1} \right) U^{n+1} = O(1)$$ 
We have the following properties on the supports 

$$ \supp \chi \circ F^{n} \subset \{ |\xi| \leq 2^{-n} \} \quad , \quad \supp \chi \circ F^{n} \subset \{ |x| \leq 2^{-n} \}$$
Assuming that $n(h) \geq - \frac{3}{4} \frac{\log h}{\log 2} $, we observe that 
$$  \op\left(  \chi \circ F^{n} \right) =  \op\left(  \chi \circ F^{n} \right) \mathds{1}_{[-h^{3/4},h^{3/4}]}(hD_x) $$
$$ \op\left(  \chi \circ F^{-n} \right)  = \mathds{1}_{[h^{-3/4},h^{3/4}]} (x) \op\left(  \chi \circ F^{-n} \right)  $$
 Finally, we have 
$$M^{2n+1} =A \op\left(  \chi \circ F^{n} \right)\mathds{1}_{[-h^{3/4},h^{3/4}]}(hD_x)   \mathds{1}_{[h^{-3/4},h^{3/4}]} (x)\op\left(  \chi \circ F^{-n} \right) B + O\left(  h^{1/4} \log h \right)$$
This is where we need an uncertainty principle : 
\begin{align*}
|| \mathds{1}_{[-h^{3/4},h^{3/4}]}(hD_x)  \mathds{1}_{[h^{-3/4},h^{3/4}]} (x) ||_{L^2 \to L^2} &= || \mathds{1}_{[-h^{3/4},h^{3/4}]}  \mathcal{F}_h \mathds{1}_{[-h^{3/4},h^{3/4}]} ||_{L^2 \to L^2} \\
&\leq || \mathds{1}_{[-h^{3/4},h^{3/4}]} ||_{L^\infty \to L^2 }  \times || \mathcal{F}_h||_{L^1 \to L^\infty} \times || \mathds{1}_{[-h^{3/4},h^{3/4}]} ||_{L^2 \to L^1 } \\
&\leq C h^{3/8} \times h^{-1/2} \times h^{3/8}  = C h^{1/4}
\end{align*} 
Here, the bound can be understood as a volume estimate : the box in phase space of size $h^{3/4}$ is smaller than a "quantum box".
Gathering all the computations together, we see that 
$$ ||M^{2n+1}||_{L^2 \to L^2} = O\left(h^{1/4} \log h \right) $$
Elevating this to the power $\frac{1}{2n+1}$, we see that for every $\varepsilon>0$, we can find $h_\varepsilon$ such that for $h \leq h_\varepsilon$, 
$$ \rho (M) \leq (1+ \varepsilon) 2^{-1/6}$$
\begin{rem}What matters in this example is the strategy we use, and not particularly the bound, which is in fact not optimal. 
\end{rem}

\paragraph{Sketch of proof. } The strategy presented in this simple model case is the guideline, but its direct application will encounter major pitfalls that we'll have to bypass. 

\begin{itemize}
\item The trapped set being a more complex fractal set, we'll need the general fractal uncertainty principle developed by Dyatlov and his collaborators. 
\item Even in small coordinate charts, the trapped set cannot be written has a product of fractal sets in the unstable and stable directions. To tackle this difficulty, we build adapted coordinate charts (see \ref{Adapted_charts}) in which we straighten the unstable manifolds. The existence of such coordinate charts is made possible by Theorem \ref{Thm_regularity}, in which we prove that the unstable (and stable) distribution can be extended in a neighborhood of the trapped set to a $C^{1+ \beta}$ vector field. 
\item In the model case,  there is only one point and hence one unstable Jacobian to consider which gives the Lyapouvov exponent of the map $\log J_u^1(0) =\log 2$.  Generally, the growth rate of the unstable Jacobian differs from one point to another (see \ref{Local_jacobian}) and the choice of the integer $n(h)$ is not as simple. In fact, we prefer to break the symmetry $2n(h) = n(h) + n(h)$ and split $2n(h)$ into a small logarithmic time $N_0(h)$ and a long logarithmic time $N_1(h)$ (see section \ref{Numerology}). The first one is supposed to be smaller than the Ehrenfest time and allows us to use semiclassical calculus to handle $M^{N_0}$. As a matter of fact, the major technical difficulties concerns the study of $M^{N_1}$. 
\item The study of $M^{N_1}$ requires fine microlocal techniques. The trick used in the model case to have the commutator estimate is no more possible and we have to use propagation results up to twice the Ehrenfest time. This is what we do in section \ref{section_propagation_Ehrenfest} but this study has to be made locally and we need to split $M^{N_1}$ into a sum of many terms $U_\mathbf{q}$.
\item We could use the fractal uncertainty principle to get the decay for single terms $M^{N_0}U_\mathbf{q}$. However, a simple triangle inequality to handle their sum will no more give a decay for $M^{N_0 + N_1}$ since the number of terms in the sum grows like a negative power of $h$. To bypass this problem, we need a more careful analysis and we gather them into clouds (see \ref{Clouds}). These clouds are supposed to interact with a few other ones, so that a Cotlar-Stein type estimate reduces the study of the norm of the sum, to the norm of each cloud.  The elements of a single cloud are supposed to be close to each other, so that the fractal uncertainty principle can be applied to all of them in the same time and gives the required decay for a single cloud. 
\end{itemize}

Our strategy follows the main lines of the proof of \cite{NDJ19}. In particular, their strategy allows us to apply the fractal uncertainty principle of \cite{BD18} in a case where the unstable foliation is not smooth (and in fact, a priori defined only in a fractal set). Their strategy relies on the existence of adapted charts based on $C^{2^-}$ regularity of the unstable foliations in negatively curved hyperbolic surfaces. It is based on results of \cite{KH} for Anosov flows. We needed to prove the existence of such adapted charts in this different context. To do so, we prove that the unstable lamination can be extended into a $C^{1 + \beta}$ foliation (see \ref{Adapted_charts}). Another aspect which changes from \cite{NDJ19} is the proof of porosity. In their study, the porous sets arise as iteration of artifical "holes" and they had to control the evolution of such holes. In our context,  this study is easier since we already know that the trapped set has a fractal structure, characterized by its Hausdorff dimension. In this paper, we will rather use the upper box dimension (but these two dimensions are equal in this context).  

\paragraph{Restrictions.} The main restriction of our theorem is that it only applies to quantum maps with two-dimensional phase space. In terms of open systems, it only concerns problems with physical space of dimension 2. Several points explain this restriction : 
\begin{itemize}
\item The fractal uncertainty principle works in dimension 1. In higher dimension, the result is currently not well understood and the only known cases require strong assumptions on the fractal sets (See \cite{Dy18}, Section 6). 
\item Our proof strongly relies on the regularity of the stable and unstable laminations. 
\item The growth of the unstable Jacobian controls the contraction (resp. expansion) rate in the unique stable (resp. unstable) direction. 
\end{itemize}

\paragraph{Plan of the paper. } The paper is organized as follows : 

\begin{itemize}
\item In Section \ref{Section_main_theorem}, we present the main theorem of this paper and show how it gives a spectral gap in some open quantum systems. 
\item In Section \ref{Section_preliminaries}, we give some background material in semiclassical analysis (pseudodifferential operators and Fourier integral operators). We also recall some standard facts about hyperbolic dynamical systems and give further results. In particular, in Theorem \ref{Thm_regularity}, we show that the unstable and stable distribution have $C^{1+ \beta}$ regularity. 
\item The proof of Theorem \ref{Spectral_radius} starts in Section \ref{Section_first_ingredient} where we introduce the main ingredients needed for the proof and give several technical results. 
\item In Section \ref{Section_reduction_to_FUP}, we use fine microlocal methods to microlocalize the operators we work with in small regions where the dynamic is well understood and we reduce the proof of Theorem \ref{Spectral_radius} to a fractal uncertainty principle with the techniques of \cite{NDJ19}. 
\item In Section \ref{Section_FUP}, we conclude the proof of this theorem by applying the fractal uncertainty principle of \cite{BD18}, and more precisely, the version stated in \cite{NDJ19}. 
\end{itemize}

\subsubsection*{Acknowledgment}
The author would like to warmly thank Stéphane Nonnenmacher for his careful reading and helpful discussions which contributed a lot in the achievement of this work.

\section{Main theorem and applications}\label{Section_main_theorem}

\subsection{Hyperbolic open quantum maps}\label{hyperbolic_open_quantum_map}
We introduce the main tools needed to state the main theorem of this paper. The following long definition is based on the definitions in the works of Nonnenmacher, Sjöstrand and Zworski in \cite{NSZ11} and \cite{NSZ14} specialized to the 2-dimensional phase space. 
Consider open intervals $Y_1, \dots, Y_J$ of $\R$ and set  : 
$$Y = \bigsqcup_{j=1}^J Y_j \subset \bigsqcup_{j=1}^J \R$$
and consider 
$$ U = \bigsqcup_{j=1}^J U_j \subset \bigsqcup_{j=1}^J T^*\R^d \quad ; \quad U_j \Subset T^*Y_j$$
The Hilbert space $L^2(Y)$ is the orthogonal sum $\bigoplus_{i=1}^J L^2(Y_i)$. 

Then, we introduce a smooth Lagrangian relation $F \subset U \times U$. It is a disjoint union of symplectomophisms. 
For $j=1, \dots, J$, consider open disjoint subsets $\widetilde{D_{i j } } \Subset U_j$, $1 \leq i \leq J$ and similarly, for $i= 1, \dots, J$ consider open disjoint subsets $\widetilde{A_{i j }} \Subset U_i$, $1 \leq j \leq J$. We consider a family of smooth symplectomorphisms 
\begin{equation}
F_{i j } : \widetilde{D_{i j }} \to F_{ij } \left(\widetilde{D_{i j }} \right) = \widetilde{A_{i j }} 
\end{equation} and define the relation 
$F$  as the disjoint union of the relation $F_{ij}$, namely, 
$$(\rho^\prime, \rho) \in F \iff \exists 1 \leq i, j  \leq J, \rho^\prime = F_{ij}(\rho)$$
In particular, $F$ and $F^{-1}$ are single-valued. We will identify $F$ with a smooth map and note by abuse $\rho^\prime = F(\rho)$ or $\rho = F^{-1}(\rho^\prime)$ instead of $(\rho^\prime, \rho) \in F$. 
\\
We note 
$$ \pi_L (F) = \widetilde{A} = \bigsqcup_{i=1}^J \bigcup_{j=1}^J \widetilde{A_{i j }} $$ 
$$ \pi_R (F) = \widetilde{D} = \bigsqcup_{j=1}^J \bigcup_{i=1}^J \widetilde{D_{i j }} $$

We define the outgoing (resp. incoming) tail by $\mathcal{T}_+ \coloneqq \{ \rho \in U ; F^{-n}(\rho) \in U , \forall n \in \N \} $ ( resp. $\mathcal{T}_- \coloneqq \{ \rho \in U ; F^{n}(\rho) \in U , \forall n \in \N \} $). We assume that they are closed subsets of $U$ and that the \emph{trapped set} 
\begin{equation}\label{trapped_set} 
\mathcal{T} = \mathcal{T}_+ \cap \mathcal{T}_-
\end{equation}
is compact. We note $f : \mathcal{T} \to \mathcal{T}$ the restriction of $F$ to $\mathcal{T}$. 
For $i , j \in \{1, \dots , J \}$, we note $\mathcal{T}_i = \mathcal{T} \cap U_i$, $$D_{i j } = \{ \rho \in \mathcal{T}_j ; f(\rho) \in \mathcal{T}_i \} \subset  \widetilde{D_{i j }}  $$ and $$
A_{i j } = \{ \rho \in \mathcal{T}_i ; f^{-1}(\rho) \in \mathcal{T}_j\} \subset \widetilde{A_{ij}}$$

\begin{rem}
$F$ is an open canonical transformation since $F$ (resp. $F^{-1}$) is defined only in $\widetilde{D}$ (resp. $\widetilde{A}$). The sets $U \setminus \widetilde{D}$ (resp. $U \setminus \widetilde{A}$) can be seen as holes in which a point $\rho$ can fall in the future (resp. in the past). 
\end{rem}

We then make the following hyperbolic assumption. 

\begin{equation}\label{Hyperbolicity_assumption}\tag{Hyp} 
\mathcal{T} \text{ is a hyperbolic set for }F  
\end{equation}
Namely, for every $\rho \in \mathcal{T}$, we assume that there exist stable and unstable tangent spaces $E^{s}(\rho)$ and $E^{u}(\rho)$ such that : 
\begin{itemize}
\item $\dim E^{s}(\rho) = \dim E^{u}(\rho) = 1$
\item $T_\rho U = E^{s}(\rho) \oplus E^{u}(\rho)$
\item there exist $\lambda >0$, $C >0$  such that for every $v \in E^{\star}(\rho) $ ($\star$ stands for $u$ or $s$) and any $n \in \N$, 
\begin{align}
v \in E^{s}(\rho)  \implies ||d_\rho F^n (v) || \leq C e^{-n \lambda} || v||  \label{hyp1}\\
v \in E^{u}(\rho) \implies ||d_\rho F^{- n} (v_\star) || \leq C e^{-n \lambda} \label{hyp2} || v||
\end{align}where  $|| \cdot ||$ is a fixed Riemannian metric on $U$. 
\end{itemize}
The decomposition of $T_\rho U$ into stable and unstable spaces is assumed to be continuous.

\begin{rem}\text{} \\
\begin{itemize}[label=-, nosep]
\item The definition is valid for any Riemannian metric and we can of course suppose that is it the standard Euclidean metric on $\R^2$. 
\item It is a standard fact (See \cite{Mather}) that there exists a smooth Riemannian metric on $U$, which is said to be adapted to the dynamics, such that (\ref{hyp1}) and (\ref{hyp2}) hold with $C=1$. 
\item  It is known that the map $\rho \mapsto E_{u/s}(\rho)$ is in fact $\beta$-Hölder for some $\beta >0$ (\cite{KH}). We will show further an improved regularity. This will be an essential property for the proof of the main theorem. 
\end{itemize}
\end{rem}

The last assumption we'll make on $\mathcal{T}$ is a  fractal assumption. To state it, we introduce the map $ \phi_u : \rho \in \mathcal{T} \mapsto -log \left| \left|d_\rho F|_{E_u(\rho)} \right| \right|$ associated with the bijection $f$. 
We suppose that 

\begin{equation}\label{Fractal} \tag{Fractal}
- \gamma_{cl} \coloneqq - P\left( -\log \left| \left|d_\rho F|_{E_u(\rho)} \right| \right| ,f \right) >0
\end{equation}
Here, in terms of thermodynamics formalism, $P$ denotes the topological pressure of the map $\phi_u$. The norm $||\cdot||$ is associated with any Riemannian metric on $U$. For instance, a possible formula for the definition of the pressure is 
$$ P(\phi) = \lim_{\varepsilon \to 0} \limsup_{n \to + \infty} \frac{1}{n} \log \sup_{E} \sum_{ \rho \in E} \exp^{\sum_{k=0}^{n-1}  \phi( f^k \rho) } $$
where the supremum ranges over all the $(n, \varepsilon)$ separated subsets $E \subset \mathcal{T}$ ($E$ is said to be $(n, \varepsilon)$ separated if for for every $\rho,\rho^\prime \in E$, there exists $k \in \{0, \dots, n-1 \}, d(f^k(\rho),f^k(\rho^\prime)) > \varepsilon $). 

\begin{rem}\text{} 
\begin{itemize}
\item $\gamma_{cl}$ is the classical decay rate of the dynamical system. It has the following physical interpretation : fix a point $\rho_0 \in \mathcal{T}$ and consider the set $B_m(\rho_0, \varepsilon)$ of points $\rho \in U$ such that $|F^k(\rho) - F^k(\rho_0)| < \varepsilon$ for $0 \leq k \leq m-1$. Then, its Lebesgue measure if of order $e^{- m \gamma_{cl}}$.
\item In Section \ref{upper-box dimension}, we recall arguments showing that $\mathcal{T}$ is indeed "fractal". More precisely, the trace of $\mathcal{T}$ along the unstable and stable manifolds (see Lemma \ref{classical_hyperbolic} for the definitions of these manifolds) have upper-box dimension strictly smaller than one. In fact, Bowen's formula (see for instance \cite{Bar} and referecences given there) gives that this upper-box dimension corresponds to the Hausdorff dimension $d_H$ and it is the unique solution of the equation 
$$ P(s\phi_u , f) = 0 , s \in \R$$
The Hausdorff dimension of the trapped set is then $2d_H$. 
\item This condition has to be compared with the pressure condition $P(\frac{1}{2} \phi_u)<0$ in \cite{NZ09} which ensured a spectral gap for chaotic systems. This condition required that $\mathcal{T}$ was sufficiently "thin", i.e. with Hausdorff dimension strictly smaller than one. Our condition allows to go up to the limit $\dim_H \mathcal{T} =2^-$. 
\end{itemize}
\end{rem}

We then associate to $F$ \textit{hyperbolic open quantum maps}, which are its quantum counterpart. 

\begin{defi}\label{def_FIO}
Fix $\delta \in [0,1/2)$. 
We say that $T=T(h)$ is a semi-classical Fourier integral operator associated with $F$, and we note $T=T(h) \in I_{\delta} ( Y \times Y , F^\prime )$ if : 
For each couple $(i,j) \in \{ 1, \dots , J \}^2$, there exists a semi-classical Fourier integral operator  $ T_{ i j }=T_{ i j } (h) \in I_{\delta} ( Y_j \times Y_i , F^\prime_{i j } )$ associated with $F_{ i j}$ in the sense of definition \ref{Def_FIO_local}, such that
$$ T = ( T_{i j } )_{ 1 \leq i,j \leq J }  : \bigoplus_{i=1}^J L^2 (Y_i) \to \bigoplus_{i=1}^J L^2 (Y_i)$$ 
In particular $\WF (T) \subset \widetilde{A} \times \widetilde{D}$. 
We note $I_{0^+}(Y \times Y , F^\prime)= \bigcap_{ \delta >0}  I_{\delta} ( Y \times Y , F^\prime )$. 
\end{defi}

We will say that $T$ is \emph{microlocally unitary near $\mathcal{T}$} if the two following conditions hold : 
\begin{itemize}
\item $||T T^*|| \leq 1 + O\left(h^{\varepsilon} \right)$ for some $\varepsilon >0$
\item there exists a neighborhood  $\Omega \subset U$ of $\mathcal{T}$ such that, for every $u =(u_1, \dots, u_J) \in \bigoplus_{j=1}^J L^2(Y_j)$, 
$$ \forall j \in \{1, \dots, J \}, \WF(u_j) \subset \Omega \cap U_j \implies TT^* u = u + \hinf ||u||_{L^2} , T^* T u = u +  \hinf ||u||_{L^2} $$
\end{itemize}

Let us now briefly see what the second condition implies for the components of $T^*T$. First focus on the off-diagonal entries. 
$$(T^*T)_{ij} = \sum_{k=1}^J (T^*)_{ik} T_{kj} =\sum_{k=1}^J (T_{ki})^*T_{kj}  $$
If $k \in \{1, \dots, J\}$ and $i \neq j$, $(T_{ki})^*T_{kj}  = \hinf$ since $$
\WF(T_{ki}^*) \subset \widetilde{D}_{ki} \times \widetilde{A}_{ki} \quad ; \quad \WF(T_{kj}) \subset \widetilde{A}_{kj} \times \widetilde{D}_{kj} \quad  \text{and} \quad \widetilde{A}_{kj} \cap \widetilde{A}_{ki}  = \emptyset$$
As a consequence, the off-diagonal terms are always $\hinf$. For the diagonal entries, $$(T^*T)_{ii} = \sum_{k=1}^J (T_{ki})^*T_{ki}  $$
Each term of this sum is a pseudodifferential operator with wavefront set  $$\WF(T_{ki}^* T_{ki} ) \subset \widetilde{D}_{ki}$$ 
Since the  $\widetilde{D}_{ki}$ are pairwise disjoint, $T^* T = \Id_{L^2(Y)} + \hinf$ microlocally near $\mathcal{T}$ if and only if for all $k,i$, $T^*_{ki}T_{ki} = \Id_{L^2(Y_i)} + \hinf$ microlocally near $D_{ki}$. The same computations apply to $TT^*$. As a consequence, $T$ is microlocally unitary near $\mathcal{T}$ if and only if for all $(k,i)$, $T_{ki}$ is a Fourier integral operator associated with $F_{ki}$, microlocally unitary near $D_{ki} \times A_{ki}$ (see the paragraphe below Definition \ref{Def_FIO_local}). 

\begin{nota}
An element of $S_\delta^{comp}(U)$ is a $J$-uple $\alpha=(\alpha_1, \dots, \alpha_J)$ where each $\alpha_j$ is an element of $S^\delta_{comp}(\R^2)$ such that $\ess \supp \alpha_j \subset U_j$ (this notation is recalled in the next section). \\
We fix a smooth function $\Psi_Y = (\Psi_1, \dots, \Psi_J)$ such that, for $1 \leq j \leq J$,  $\Psi_j \in \cinfc(Y_j, [0,1])$ satisfies $\Psi_j = 1$ on $\pi(U_j)$ (recall that $U_j \Subset T^*Y_j)$. \\
For $\alpha \in S_\delta^{comp}(U)$, we also note $\op(\alpha)$ the diagonal operator valued matrix: 
$$\op(\alpha) = \Diag(\Psi_1\op(\alpha_1)\Psi_1 , \dots, \Psi_J\op(\alpha_J) \Psi_J) :\bigoplus_{j=1}^J L^2(Y_j) \to \bigoplus_{j=1}^J L^2(Y_j)$$ 
Note that as operators on $L^2(\R)$, $\op(\alpha_j)$  and $\Psi_j \op(\alpha_j)\Psi_j$ are equal modulo $\hinf$.
\end{nota}

We can now state the main theorem of this paper, namely a spectral gap for hyperbolic open quantum maps.  We note $\rho_{spec}(A)$ the spectral radius of a bounded operator $ A : L^2(Y) \to L^2(Y)$.

\begin{thm}\label{Spectral_radius}
Suppose that the above assumptions on $F$ (\ref{Hyperbolicity_assumption}), (\ref{Fractal}) are satisfied. Then, there exists $\gamma >0$ such that the following holds : 

 Let $T=T(h) \in I_{0^+}(Y \times Y, F^\prime)$ be a semi-classical Fourier integral operator associated with $F$ in the sense of definition (\ref{def_FIO}) and $\alpha \in S_\delta^{comp}(U)$.  Assume that $T$ is microlocally unitary in a neighborhood of $\mathcal{T}$. 
Then, there exists $h_0>0$ such that
$$\forall 0 <h \leq h_0\quad , \quad \rho_{spec}( T(h)\op(\alpha) ) \leq e^{ -\gamma} ||\alpha||_\infty$$
$h_0$ depends on $(U,F)$, $T$ and semi-norms of $\alpha$ in $S_\delta$.
\end{thm}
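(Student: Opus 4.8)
\textbf{Proof strategy for Theorem \ref{Spectral_radius}.}

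The plan is to adapt the model-example strategy to the genuine fractal trapped set, following the architecture of \cite{NDJ19}. The spectral radius is controlled by estimating $\|(T\op(\alpha))^N\|$ for a suitable logarithmic time $N = N(h)$ and then taking the $N$-th root: since $\rho_{spec}(A)=\lim_N\|A^N\|^{1/N}$, it suffices to show $\|(T\op(\alpha))^N\|\le C(h)\,\|\alpha\|_\infty^N e^{-\gamma' N}$ with $C(h)$ at most a negative power of $h$ and $N\to\infty$ as $h\to 0$, which then yields $\rho_{spec}\le e^{-\gamma'}\|\alpha\|_\infty$ after absorbing the polynomial loss (replacing $\gamma'$ by a slightly smaller $\gamma$). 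First I would observe that, up to $O(h^\infty)$ and a constant, only the behaviour of $T$ microlocally near $\mathcal{T}$ matters: away from $\mathcal{T}_+$ the orbit falls in a ``hole'', so iterating $T\op(\alpha)$ forces the microsupport of any relevant state to concentrate near $\mathcal{T}$, where $T$ is microlocally unitary — this is where the $\|\alpha\|_\infty$ factor enters cleanly, since each application of $\op(\alpha)$ costs at most $\|\alpha\|_\infty + O(h^{1/2-\delta})$ in norm on the relevant microlocal region.

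Next I would split the total time: $N = N_0 + N_1$ with $N_0(h)$ a small multiple of the Ehrenfest time $\tfrac{|\log h|}{2\lambda_{\max}}$ and $N_1(h)$ a large multiple, as announced in the sketch (\ref{Numerology}). The factor $(T\op(\alpha))^{N_0}$ is handled by Egorov's theorem and standard semiclassical calculus, contributing the classical decay: the symbol of $(T\op(\alpha))^{N_0}\!^*(T\op(\alpha))^{N_0}$ near $\mathcal{T}$ is essentially $\prod_{k} |\alpha\circ f^k|^2$ times a factor reflecting the escape, and the pressure inequality (\ref{Fractal}) $P(\phi_u,f) = -\gamma_{cl} < 0$ gives the decay of the relevant volumes. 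The factor $(T\op(\alpha))^{N_1}$ is the heart of the argument. Because the symbols $\alpha\circ f^k$ for $k$ up to twice the Ehrenfest time lie in bad symbol classes, I cannot use naive commutator estimates; instead I would use the adapted charts of \ref{Adapted_charts} (which exist by the $C^{1+\beta}$ regularity of the stable/unstable laminations, Theorem \ref{Thm_regularity}), straightening the unstable manifolds, and decompose $(T\op(\alpha))^{N_1}$ as a sum $\sum_{\mathbf q} U_{\mathbf q}$ over finite ``itineraries'' $\mathbf q$, using the propagation results up to twice the Ehrenfest time from \ref{section_propagation_Ehrenfest}. Each $U_{\mathbf q}$ is microlocalized in a small box: on the incoming side near a piece of $\mathcal{T}$ in a stable direction, on the outgoing side near a piece in an unstable direction.

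Finally, each $U_{\mathbf q}$ — up to acceptable errors and the $O(1)$ ``head'' and ``tail'' factors $A,B$ as in the model case — factors through $\op(\text{supported near }\mathcal{T}\cap W^u) \cdot \op(\text{supported near }\mathcal{T}\cap W^s)$, and since $\mathcal{T}$ along stable/unstable manifolds has upper box dimension strictly less than one (Bowen's formula plus (\ref{Fractal})), the corresponding neighbourhoods are porous; the fractal uncertainty principle of \cite{BD18} (in the form stated in \cite{NDJ19}, recalled in Section \ref{Section_FUP}) gives a gain $h^\beta$ for some $\beta>0$ independent of $\mathbf q$. The remaining — and genuinely delicate — point is that there are $\sim h^{-c}$ terms $U_{\mathbf q}$, so a crude triangle inequality destroys the gain. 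To fix this I would group the $U_{\mathbf q}$ into \emph{clouds} (\ref{Clouds}): itineraries in the same cloud are close enough that the fractal uncertainty principle applies uniformly across the cloud, while distinct clouds are almost orthogonal in the sense that $U_{\mathbf q}^* U_{\mathbf q'}$ and $U_{\mathbf q}U_{\mathbf q'}^*$ are negligible unless $\mathbf q,\mathbf q'$ lie in a bounded number of neighbouring clouds, so a Cotlar--Stein argument reduces $\|\sum_{\mathbf q}U_{\mathbf q}\|$ to the maximum over clouds of $\|\sum_{\mathbf q\in\text{cloud}}U_{\mathbf q}\|$, each of which carries the $h^\beta$ gain. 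Combining: $\|(T\op(\alpha))^{N}\| \le C h^{-c}\, \|\alpha\|_\infty^{N} e^{-c' N_0} h^{\beta}$ with $c'$ controlled by $\gamma_{cl}$; choosing the multiples defining $N_0,N_1$ appropriately makes the $h^\beta$ and $e^{-c'N_0}$ beat the polynomial loss $h^{-c}$, and the $N$-th root gives $\rho_{spec}(T\op(\alpha))\le e^{-\gamma}\|\alpha\|_\infty$. The main obstacle, as in \cite{NDJ19}, is precisely the bookkeeping of the clouds and verifying the almost-orthogonality and the uniform applicability of the fractal uncertainty principle within a cloud — together with establishing, via the adapted charts, that the microlocalized pieces genuinely reduce to one-dimensional fractal uncertainty with porosity constants uniform in $h$.
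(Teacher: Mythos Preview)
Your high-level architecture --- split $N=N_0+N_1$, decompose $M^{N_1}$ into pieces $U_{\mathbf q}$, group into clouds, apply Cotlar--Stein, then invoke the fractal uncertainty principle --- matches the paper. But you misidentify the role of $N_0$, and this is not cosmetic: it is where the second porous set in the FUP comes from.

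You write that $(T\op(\alpha))^{N_0}$ ``contributes the classical decay'' via the pressure condition $P(\phi_u)<0$. It does not. Under the weak hypothesis $P(\phi_u)<0$ (as opposed to $P(\tfrac12\phi_u)<0$), there is no decay to be extracted from a pressure/volume argument on $N_0$ iterates --- that mechanism is exactly what the paper is trying to \emph{avoid}. In the paper, $\mathfrak M^{N_0}$ is used purely for \emph{microlocalization}: Proposition~\ref{prop_micro_left} shows $\mathfrak M^{N_0}=\mathfrak M^{N_0}\op(\chi_h)+O(h^\infty)$ with $\chi_h$ supported in an $h^{\delta_2}$-neighbourhood of the stable lamination $\mathcal T_-^{loc}$. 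This produces the porous set $\Omega^-$ on the \emph{position} side.

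Correspondingly, your claim that ``each $U_{\mathbf q}$ factors through $\op(\text{near }\mathcal T\cap W^u)\cdot\op(\text{near }\mathcal T\cap W^s)$'' is not right: a single $U_{\mathbf q}$ (built from $N_1$ iterates) only gives localization near an \emph{unstable} leaf on its output (the set $\Omega^+$ on the momentum side, Proposition~\ref{prop_micro_right}). The stable-side localization comes from the $\mathfrak M^{N_0}$ factor acting on the left. It is the product $\mathfrak M^{N_0}U_{\mathcal Q}$, for a whole cloud $\mathcal Q$, that reduces to $\mathds 1_{\Omega^-}(y)\,\mathds 1_{\Omega^+}(hD_y)$ (equation~(\ref{reduction_to_FUP}) and Proposition~\ref{FUP_to_Omega}). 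The FUP then requires the saturation condition $\delta_2+\tau>1$ on the two porosity scales (see~(\ref{condition_on_tau})), which is precisely why $N_0$ is chosen as it is. So in your final bound there is no legitimate $e^{-c'N_0}$ factor; the entire gain is the single $h^{\gamma}$ from FUP, and the numerology in Section~\ref{Numerology} is tuned so that this $h^{\gamma}$ survives the $N$-th root, not so that it beats an additional polynomial loss.
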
 

For applications, we will need the following corollary (it is in fact rather a corollary of the method used to prove Theorem \ref{Spectral_radius}) : 

\begin{corImp}\label{Cor_Thm} With the same notations and assumptions as in Theorem \ref{Spectral_radius}, 
if $R(h)$ is a family of bounded operators on $L^2(Y)$ satisfying $||R(h) || = O(h^\eta)$ for some $\eta >0$, then the there exists $\gamma^\prime$ depending only on $\gamma$ and $\eta$, such that for $0 < h \leq h_0$, 
$$  \rho_{spec}\big( T(h) \op(\alpha) + R(h) \big) \leq e^{-\gamma^\prime}||\alpha||_\infty$$
\end{corImp}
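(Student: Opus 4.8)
The plan is to deduce Corollary \ref{Cor_Thm} from Theorem \ref{Spectral_radius} by inspecting where, in the proof of the theorem, the operator $T(h)\op(\alpha)$ actually enters. The spectral radius is estimated through a bound of the form
$$ \| (T(h)\op(\alpha))^{N} \|_{L^2 \to L^2} \le C\, h^{\kappa} \,\|\alpha\|_\infty^{N} $$
for a suitable $h$-dependent integer $N = N(h)$ (the total logarithmic time $N_0(h)+N_1(h)$ in the sketch), with $\kappa>0$ fixed and $C$ polynomially bounded in $\log(1/h)$, so that taking $N$-th roots and letting $h\to 0$ gives $\rho_{spec}(T(h)\op(\alpha)) \le e^{-\gamma}\|\alpha\|_\infty$ after absorbing the $\log h$ loss into an arbitrarily small worsening of $\gamma$. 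The point is that exactly this same estimate is what we need to reprove for $A(h) := T(h)\op(\alpha) + R(h)$, so the whole argument is reduced to a binomial expansion.

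The main step is therefore the following. First I would expand
$$ A(h)^{N} = \big(T(h)\op(\alpha) + R(h)\big)^{N} = \sum_{\text{words } w \in \{T\op(\alpha),\,R\}^{N}} w. $$
The single word with no occurrence of $R$ is $(T(h)\op(\alpha))^{N}$, controlled by Theorem \ref{Spectral_radius}'s internal estimate by $C h^{\kappa}\|\alpha\|_\infty^{N}$. For a word $w$ containing at least one factor $R(h)$, I would group the maximal runs of consecutive $T(h)\op(\alpha)$ factors: $w$ is a product of blocks $(T(h)\op(\alpha))^{m_1} R(h) (T(h)\op(\alpha))^{m_2} R(h)\cdots$. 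Each block $(T(h)\op(\alpha))^{m}$ with $m$ arbitrary is bounded simply by $(\|T\|\,\|\op(\alpha)\|)^{m} \le (C'\|\alpha\|_\infty)^m$ using $\|T(h)T(h)^*\|\le 1+O(h^\varepsilon)$ (so $\|T(h)\|=1+O(h^\varepsilon)$) and the sharp Gårding/Calderón–Vaillancourt bound $\|\op(\alpha)\| \le \|\alpha\|_\infty + o(1)$ valid for $\alpha\in S_\delta^{comp}$ with $\delta<1/2$. Each factor $R(h)$ contributes a gain $O(h^\eta)$. Hence any word with $r\ge 1$ occurrences of $R$ is bounded by $C^{N} h^{r\eta}\|\alpha\|_\infty^{N-r}\,(\text{const})^{r}$; crude counting gives at most $2^{N}$ words in total, and with $N=N(h)=O(\log(1/h))$ we have $2^{N} = h^{-O(1)}$, which is beaten by a single factor $h^{\eta}$ once $h$ is small — more precisely, summing over $r\ge 1$ the geometric-type series in $h^{\eta}$ shows the $R$-containing part of $A(h)^N$ is $O\big(h^{\eta - o(1)}\big)$ times $(C\|\alpha\|_\infty)^N$. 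Combining, $\|A(h)^{N}\| \le h^{\min(\kappa,\eta)/2}\,(C\|\alpha\|_\infty)^{N}$ for $h$ small, whence $\rho_{spec}(A(h)) \le e^{-\gamma'}\|\alpha\|_\infty$ with $\gamma'$ depending only on $\gamma$ and $\eta$.

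The delicate point — and the reason the corollary is phrased as "a corollary of the method" rather than of the statement — is the uniformity of the constant $C$ and of $N(h)$, and in particular the claim that the proof of Theorem \ref{Spectral_radius} really produces a \emph{norm} estimate on a fixed power $(T(h)\op(\alpha))^{N(h)}$ with $C$ growing at most polynomially in $\log(1/h)$, rather than only an asymptotic statement about the spectral radius. One has to go back into Sections \ref{Section_reduction_to_FUP} and \ref{Section_FUP} and check that the Cotlar--Stein assembly of the clouds, the semiclassical calculus bounds up to twice the Ehrenfest time, and the fractal uncertainty principle input all yield an explicit polynomial-in-$\log(1/h)$ prefactor; given the structure of the argument (finitely many cloud interactions, each estimate of the model-example type with a $\log h$ loss), this is the case, but it must be verified. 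The remaining ingredients — $\|T(h)\| \le 1 + O(h^\varepsilon)$, $\|\op(\alpha)\|\le \|\alpha\|_\infty + o(1)$, and the bound on the number of words — are routine.
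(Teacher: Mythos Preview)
Your approach is essentially identical to the paper's: both derive the corollary from the norm bound $\|M^{N(h)}\|\le h^\gamma\|\alpha\|_\infty^{N(h)}$ (Proposition~\ref{Prop_thm}, with $N(h)\sim\delta|\log h|$) via a binomial expansion of $(M+R)^N$, bounding the mixed terms using $\|M\|\le\|\alpha\|_\infty(1+o(1))$ and $\|R\|=O(h^\eta)$. The paper carries this out more cleanly via the single inequality $(\|M\|+\|R\|)^N-\|M\|^N\le N\|R\|(\|M\|+\|R\|)^{N-1}=O(h^{\eta-})\|\alpha\|_\infty^N$, which sidesteps your $2^N$ word-count (note that $2^N=h^{-O(1)}$ is \emph{not} beaten by a single $h^\eta$ when $\eta$ is small---your ``more precisely'' refinement is what actually works, and is exactly the paper's estimate).
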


\begin{rem} \text{}
\begin{itemize}
\item If the value $h_0$ depends on $T$ and $\alpha$, this is not the case of $\gamma$ which depends on $(U,F)$. 
\item This is a spectral gap : it has to be compared with the easy bound we could have $$\rho_{spec}(T\op(\alpha) ) \leq ||\alpha||_{\infty} + o(1)$$ In particular, if $\alpha \equiv 1$ in a neighborhood of $\mathcal{T}$ and $|\alpha| \leq 1$ everywhere, $\rho_{spec} (T(h) ) \leq e^{-\gamma} < 1$.  
\item $T\op(\alpha)$ is the way we've chosen to write our Fourier integral operator with "gain" (or absorption depending on the modulus of $\alpha$) factor $\alpha$. $T\op(\alpha)$ transforms a wave packet $u_0$ microlocalized near $\rho_0$ lying in a small neighborhood of $\mathcal{T}$ into a wave packet microlocalized near $F(\rho_0)$, with norm essentially changed by a factor $|\alpha(\rho_0)|$.
\item The proof will actually show that if $\eta$ is strictly bigger than some threshold,  then $\gamma^\prime = \gamma$. 
\end{itemize}
\end{rem}

\begin{nota}
Throughout the paper, the meaning of the constants $C$ can change from line to line but these constants will only depend on our dynamical system $(U,F)$. If there is another dependence, it will be specified. 
\end{nota}

\subsection{Applications of the theorem }
This theorem has applications in the study of open quantum systems. We refer the reader to \cite{Nonnen11} for a survey on this topic. The spectral gap given by Theorem \ref{Spectral_radius} will actually give a spectral gap for the resonances of semiclassical operators $P(h)$ in $\R^2$, or for the resonances of the Dirichlet Laplacian in the exterior of strictly convex obstacles satisfying the Ikawa non-eclipse condition. We refer the reader to the review \cite{Zw17} for more background on scattering resonances or to the book \cite{DyZw}. The results we will obtain from Theorem \ref{Spectral_radius} give a positive answer (in dimension 2) to the Conjecture 3 in \cite{Zw17}, under a fractal assumption. 

\paragraph{Scattering by strictly convex obstacles in the plane} As already explained in the introduction the main problem motivating Theorem \ref{Spectral_radius}, is the problem of scattering by obstacles in the plane $\R^2$. It leads to 

\begin{thm}
Assume that $\mathcal{O} = \bigcup_{i=1}^J \mathcal{O}_j$ where $\mathcal{O}_j$ are open, strictly convex connected obstacles in $\R^2$ having smooth boundary and satisfying the \emph{Ikawa condition} : for $i \neq j \neq k$, $\overline{\mathcal{O}_i}$ does not intersect the convex hull of $ \overline{ \mathcal{O}_j }\cup \overline{\mathcal{O}_k}$. Let $$
\Omega = \R^2  \setminus \overline{\mathcal{O}}$$
There exist $\gamma >0$ and $\lambda_0 >1$ such that the Dirichlet Laplacian $-\Delta$ on $L^2(\Omega)$ has no scattering resonance in the region 
$$ [ \lambda_0, + \infty [ +i[- \gamma , 0 ]$$
\end{thm}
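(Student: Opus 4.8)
The plan is to transfer the abstract spectral gap of Theorem~\ref{Spectral_radius} to the obstacle problem through the monodromy reduction of \cite{NSZ14}. First I would set up the semiclassical reduction: for $\lambda$ in the strip with $|\lambda|$ large, put $h=|\lambda|^{-1}$ and $z=h^2\lambda^2-1$; writing $\lambda=|\lambda|e^{i\theta}$ one gets $z=e^{2i\theta}-1$, so that $z\in D(0,C_0h)$ with $\im z\in[-C_0\gamma h,0]$ whenever $\im\lambda\in[-\gamma,0]$ and $|\lambda|\ge\lambda_0$, the constant $C_0$ being absolute. A scattering resonance $\lambda$ thus corresponds to a resonance $z$ of $P(h)=-h^2\Delta-1$ in $D(0,C_0h)$. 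By Theorem~5 of \cite{NSZ14} there is a holomorphic family $z\mapsto M(z)$ of operators on $L^2(\partial\mathcal{O})$ --- the hyperbolic quantum monodromy operator of Section~\ref{hyperbolic_open_quantum_map}, associated with the open billiard relation $\mathcal{B}$ --- such that $z\in D(0,C_0h)$ is a resonance of $P(h)$ if and only if $\det(\I-M(z))=0$; since $M(z)$ is trace class modulo $O(h^\infty)$, the operator $\I-M(z)$ is invertible as soon as $\rho_{spec}(M(z))<1$. Hence it suffices to produce $\delta>0$ with $\rho_{spec}(M(z))<1$ for all $z\in D(0,C_0h)$ with $\im z\in[-\delta h,0]$ and all $h$ small.

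Next I would check that $\mathcal{B}$ falls under the hypotheses of Theorem~\ref{Spectral_radius}. The relation $\mathcal{B}$ constructed in Section~6 of \cite{NSZ14} is a disjoint union of symplectomorphisms $\mathcal{B}_{ij}$ between open subsets of the ball bundles $B^*\partial\mathcal{O}_j$; Ikawa's non-eclipse condition ensures that glancing and shadow rays are irrelevant, that the incoming/outgoing tails are closed and the trapped set $\mathcal{T}$ compact, and that $f=\mathcal{B}|_{\mathcal{T}}$ admits a finite symbolic coding (\cite{Mo91}). Strict convexity of the $\mathcal{O}_j$ makes $\mathcal{T}$ a hyperbolic set (\cite{Che}), so assumption (\ref{Hyperbolicity_assumption}) holds. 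Finally, the fractal assumption (\ref{Fractal}), i.e.\ $P(\phi_u,f)<0$, holds here: this is precisely the condition ``$P(1)<0$'' of the introduction, equivalently the Hausdorff dimension of the slices of $\mathcal{T}$ along the (un)stable manifolds is $<1$; since the trapped set of the open billiard is not an attractor, these slices are Cantor sets of zero Lebesgue measure and the Bowen--Ruelle volume estimate (\cite{BoRu}) gives $P(\phi_u,f)<0$.

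Then I would apply Theorem~\ref{Spectral_radius}. The operator $M(0)$ lies in $I_{0^+}(\cdot,\mathcal{B}')$ and is microlocally unitary near $\mathcal{T}$ (part of the output of \cite{NSZ14}, reflecting that normal reflection preserves Liouville measure). The dependence on $z$ enters only through the factor $e^{i\ell(\rho)\sqrt{1+z}/h}$ produced by free propagation over the billiard arc of length $\ell(\rho)>0$ between successive obstacles, so that, microlocally near $\mathcal{T}$, $M(z)=M(0)\op(\alpha_z)+O(h^\infty)$ with $|\alpha_z(\rho)|=e^{-\ell(\rho)\im z/(2h)}(1+o(1))$; in particular $\|\alpha_z\|_\infty\le e^{\ell_{\max}|\im z|/(2h)}\le e^{C\delta}$ on the region $\im z\in[-\delta h,0]$. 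Theorem~\ref{Spectral_radius} together with Corollary~\ref{Cor_Thm} (to absorb the $O(h^\infty)$ remainder) then yields $\rho_{spec}(M(z))\le e^{-\gamma_0}\|\alpha_z\|_\infty\le e^{-\gamma_0+C\delta}$, where $\gamma_0>0$ is the gap furnished by Theorem~\ref{Spectral_radius} and depends only on $(U,\mathcal{B})$; this is $<1$ once $\delta$ is fixed small enough. Unwinding the first step, for $\gamma>0$ small enough and $\lambda_0$ large enough (so that $h=|\lambda|^{-1}\le h_0$) the strip $[\lambda_0,+\infty[\,+i[-\gamma,0]$ contains no resonance.

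The dynamical input (verifying (\ref{Fractal}) via \cite{BoRu}) is classical, and the reduction to $\rho_{spec}(M(z))<1$ is formal once Theorem~\ref{Spectral_radius} is granted. I expect the main obstacle to be the faithful transcription of the \cite{NSZ14} reduction: checking that $\mathcal{B}$ literally satisfies the axioms of Section~\ref{hyperbolic_open_quantum_map} (single-valuedness after discarding the glancing and shadow regions, closedness of the tails, compactness of $\mathcal{T}$), that $M(z)\in I_{0^+}$ is microlocally unitary at real $z$, and that its $z$-dependence is captured by a symbol $\alpha_z$ with the sup-norm control used above --- in other words, the bookkeeping needed to present $M(z)$ in the exact form $T\op(\alpha)+R$ required by Theorem~\ref{Spectral_radius}, uniformly over the relevant semiclassical windows. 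Once that is carried out, the theorem is a direct corollary.
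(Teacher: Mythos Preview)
Your proposal is correct and follows essentially the same route as the paper: semiclassical rescaling, the monodromy reduction of \cite{NSZ14}, verification of (\ref{Hyperbolicity_assumption}) and (\ref{Fractal}) for the billiard relation $\mathcal{B}$, and application of Theorem~\ref{Spectral_radius} together with Corollary~\ref{Cor_Thm}. Two minor bookkeeping points where the paper is slightly more precise: the output of \cite{NSZ14} is $M(z)=M(0)\op(e^{iz\tau/h})+O(h^{1-\varepsilon})$ (not $O(h^\infty)$), and the object whose spectral radius must be bounded is the finite-rank $\mathcal{M}(z)=\Pi_h M(z)\Pi_h+O(h^L)$; both are absorbed by Corollary~\ref{Cor_Thm} exactly as you indicate.
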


Let us give the arguments to see why Theorem \ref{Spectral_radius} implies this theorem. After a semiclassical reparametrization, is is enough to show that there exist $\delta>0$ and $h_0 >0$ such that $P(h) \coloneqq -h^2 \Delta -1$ has no resonance in $D(0,Ch) \cap \{ \im z \in [-\delta h , 0]\}$, for any $h \le h_0$. As already explained, the implication relies on \cite{NSZ14} (Theorem 5, Section 6).  They prove the existence of a family of \begin{equation}\label{monodromy}
(\mathcal{M}(z))_{z \in D(0,Ch)} = (\mathcal{M}(z,h))
\end{equation}
such that 
\begin{itemize}
\item $\mathcal{M}(z) = \Pi_h M(z) \Pi_h + O(h^L)$ where $\Pi_h$ is a finite rank projector, of rank comparable to $h^{-1}$, $L >0$ is a fixed constant (which can in fact be chosen as big as we want) and $M(z)$ is described below and satisfies  $\Pi_h M(z) \Pi_h = M(z) + O(h^L)$ ; 
\item $M(0)$ is an open quantum map associated with a Lagrangian relation $\mathcal{B}$ presented in the introduction, which is microlocally unitary near $\mathcal{T}$.  $\mathcal{B}$ and $M(0)$ play the role of $F$ and $T$ in Theorem \ref{Spectral_radius} and satisfy its assumptions ; 
\item $M(z) = M(0) \op \left( e^{ \frac{i z \tau}{h}} \right) + O\left( h^{1- \varepsilon} \right)$ uniformly in $D(0,Ch)$, where $\varepsilon>0$ can be chosen arbitrarily close to zero and $\tau \in \cinfc(U)$ is a smooth function (which has to be seen as a return time) ; 
\item  The resonances of $P(h)$ in $D(0,Ch)$, are the roots, with multiplicities, of the equation 
$$\det (I - \mathcal{M}(z) ) =0 $$ 
\end{itemize}
Hence, to prove the theorem, it is enough to show that the spectral radius of $\mathcal{M}(z)$ is strictly smaller than $1$ for $z \in D(0,Ch) \cap \{ \im z \in [-   \delta h , 0]\}$ for some $\delta >0$ and for $h$ small enough. To see that, we write 
$$ \mathcal{M}(z) = M(0)\op \left( e^{ \frac{i z \tau}{h}} \right)  + R(h)$$
with $R(h) = O(h^\eta)$ for any $\eta < \min(1,L)$. 
We apply Theorem \ref{Spectral_radius} and find some $\gamma^\prime$ such that 
$$\rho_{spec}(\mathcal{M}(z) ) \leq e^{-\gamma^\prime}  \left| \left| e^{i z \tau /h} \right| \right|_\infty  \leq e^{-\gamma^\prime} e^{ \delta \tau_{\max}}\quad , \quad  z \in D(0,Ch) \cap \{ \im z \in [-   \delta h , 0]\}  $$
where $\tau_{max} = || \tau||_\infty$. This ensures a spectral gap of size $$\delta < \frac{\gamma^\prime}{\tau_{\max}}$$

\paragraph{Schrödinger operators}

Actually, the obstacles, seen as infinite potential barriers, can be smoothened with a potential $V \in \cinfc(\R^2)$ and we can consider the Schrodinger operators $ P_0(h) = -h^2 \Delta + V(x) $

Unlike the obstacle problem, a simple rescaling does not allow to pass from energy $1$ to any energy $E$ and the behavior of the classical flow can drastically change from an energy shell to another. To study the problem at energy $E>0$, independent of $h$, we rather consider 
$$P(h) = P_0(h) -E$$
The resolvent $(P(h) - z)^{-1} $ continues meromorphically from $\im z >0$ to $D(0,Ch)$ (as previously in the sense that $ \chi (P(h) - z)^{1} \chi$ extends meromorphically with $\chi \in \cinfc(\R^2)$) and we are interested in the existence of a spectral gap. 

The classical Hamiltonian flow associated with $P(h)$ is the Hamiltonian flow $\Phi^t$ generated by $p_0(x,\xi) = |\xi|^2 + V(x)$ on the energy shell $p_0^{-1}(E)$. The trapped set is defined as above by 

$$ K_E \coloneqq \{ (x,\xi) \in T^* \R^2, p_0(x,\xi) = E, \Phi^t(x,\xi) \text{ stays bounded as } t \to \pm \infty \}$$ 
We assume that the flow is hyperbolic on $K_E$ and that the trapped set is topologically one-dimensional. Equivalently, we assume that transversely to the flow, $K_E$ is zero-dimensional. Under these assumptions, the authors proved (see Theorem 1 in \cite{NSZ11}) the existence of a family of monodromy operators associated with a Lagrangian relation $F_E$ which is a Poincaré map of the flow on different Poincaré sections $\Sigma_1, \dots, \Sigma_J \subset p_0^{-1}(E)$. The assumption on the dimension of $K_E$ implies that the assumption (\ref{Fractal}) is satisfied since $K_E$ cannot be an attractor (\cite{BoRu}). Hence, Theorem 1 applies and we can prove as done in the case of obstacles  

\begin{thm}
Under the above assumptions, there exists $\delta>0$ such that $P(h)$ has no resonances in $$D(0,Ch) \cap \{ \im z \in [-i \delta h ,0 ] \}$$ 
\end{thm}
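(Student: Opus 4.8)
The plan is to run exactly the same reduction as in the obstacle case, now using the monodromy construction of \cite{NSZ11} for potential scattering. After the semiclassical reparametrization, it suffices to produce $\delta>0$ and $h_0>0$ so that $P(h)=P_0(h)-E$ has no resonance in $D(0,Ch)\cap\{\im z\in[-\delta h,0]\}$ for $h\le h_0$. First I would invoke Theorem 1 of \cite{NSZ11}: under hyperbolicity of $\Phi^t$ on $K_E$ and the topological one-dimensionality of $K_E$, it furnishes Poincaré sections $\Sigma_1,\dots,\Sigma_J\subset p_0^{-1}(E)$, the associated Poincaré map $F_E$ (an open Lagrangian relation between the $T^*\Sigma_j$, realized as a disjoint union of symplectomorphisms $F_{E,ij}$), and a holomorphic family $(\mathcal{M}(z))_{z\in D(0,Ch)}$ of monodromy operators such that the resonances of $P(h)$ in $D(0,Ch)$ are exactly the roots, with multiplicity, of $\det(I-\mathcal{M}(z))=0$.

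The second step is to check that $(U,F_E)$ and $M(0)$ satisfy the hypotheses of Theorem \ref{Spectral_radius}. Hyperbolicity of $K_E$ for the flow transfers to the assumption (\ref{Hyperbolicity_assumption}) for the Poincaré map on its trapped set $\mathcal{T}$ (the trace of $K_E$ on the sections), with one-dimensional stable/unstable subspaces because the phase spaces $T^*\Sigma_j$ are two-dimensional. For (\ref{Fractal}): since $K_E$ is not an attractor — it is a hyperbolic set of a Hamiltonian flow, so the argument of \cite{BoRu} gives $P(\phi_u,f)<0$, i.e. $\gamma_{cl}>0$. Finally, $M(0)$ is a hyperbolic open quantum map associated with $F_E$, microlocally unitary near $\mathcal{T}$, and $M(z)=M(0)\op\!\big(e^{iz\tau/h}\big)+O(h^{1-\varepsilon})$ uniformly on $D(0,Ch)$ for a return-time function $\tau\in\cinfc(U)$, with $\mathcal{M}(z)=\Pi_h M(z)\Pi_h+O(h^L)$ for a finite-rank projector $\Pi_h$; so $\mathcal{M}(z)=M(0)\op\!\big(e^{iz\tau/h}\big)+R(h)$ with $\|R(h)\|=O(h^\eta)$ for any $\eta<\min(1,L)$.

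The third step applies Corollary \ref{Cor_Thm}: there is $\gamma'>0$ (depending on $(U,F_E)$ and $\eta$) such that for $h\le h_0$,
$$\rho_{spec}\big(\mathcal{M}(z)\big)\le e^{-\gamma'}\,\big\|e^{iz\tau/h}\big\|_\infty\le e^{-\gamma'}e^{\delta\tau_{\max}},\qquad z\in D(0,Ch)\cap\{\im z\in[-\delta h,0]\},$$
with $\tau_{\max}=\|\tau\|_\infty$. Choosing $\delta<\gamma'/\tau_{\max}$ makes $\rho_{spec}(\mathcal{M}(z))<1$ on that band, so $I-\mathcal{M}(z)$ is invertible and $\det(I-\mathcal{M}(z))\neq0$ there; hence there are no resonances in $D(0,Ch)\cap\{\im z\in[-\delta h,0]\}$, which is the claim.

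\textbf{Main obstacle.} There is no genuinely new analytic difficulty here — the theorem is a transcription of the obstacle argument — so the only real work is bookkeeping: confirming that the framework of \cite{NSZ11} (Poincaré sections, monodromy operator, the dependence $M(z)=M(0)\op(e^{iz\tau/h})+O(h^{1-\varepsilon})$) literally matches Definition \ref{def_FIO} and the hypotheses (\ref{Hyperbolicity_assumption})–(\ref{Fractal}), and that the $O(h^L)$ and $O(h^{1-\varepsilon})$ errors combine into an $O(h^\eta)$ remainder to which Corollary \ref{Cor_Thm} applies. The one point deserving care is the role of the fixed energy $E$: unlike the obstacle case one cannot rescale, so one must keep $\re z$ close to $E$ throughout and use the $E$-dependent Poincaré map, but this is already built into the statement of \cite{NSZ11}, Theorem 1.
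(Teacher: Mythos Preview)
Your proposal is correct and is exactly the approach the paper takes: it explicitly says one proves this theorem ``as done in the case of obstacles,'' replacing the monodromy construction of \cite{NSZ14} by that of \cite{NSZ11}, checking (\ref{Hyperbolicity_assumption}) and (\ref{Fractal}) for the Poincar\'e map (the latter via \cite{BoRu}), and then applying Corollary \ref{Cor_Thm} with the choice $\delta<\gamma'/\tau_{\max}$.
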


\section{Preliminaries} \label{Section_preliminaries}

\subsection{Pseudodifferential operators and Weyl quantization}
We recall some basic notions and properties of the Weyl quantization on $\R^n$. We refer the reader to \cite{ZW} for the proofs of the statements and further considerations on semiclassical analysis and quantizations. We start by defining classes of $h$-dependent symbols. 

\begin{defi}
Let $0 \leq \delta \leq \frac{1}{2}$. We say that an $h$-dependent family $a \coloneqq \left( a (\cdot ; h) \right)_{0 < h \leqslant 1}$ is in the class $S_\delta(T^*\R^n)$ (or simply $S_\delta$ if there is no ambiguity) if for every $\alpha \in \N^{2n}$, there exists $C_\alpha >0$ such that : 
$$\forall 0< h \leq 1, \sup_{(x,\xi) \in \R^n} | \partial^\alpha a (x,\xi ; h) | \leq C_\alpha h^{-\delta |\alpha|}$$
\end{defi}
In this paper, we will mostly be concerned with $\delta <1/2$. We will also use the notation $S_{0^+} = \bigcap_{\delta >0} S_\delta$. \\
We write $a =  O(h^N)_{S_\delta}$ to mean that for every $\alpha \in \N^{2n}$, there exists $C_{\alpha,N}$ such that 
$$\forall 0< h \leq 1, \sup_{(x,\xi) \in \R^n}   | \partial^\alpha a (x,\xi ; h) | \leq  C_{\alpha,N} h^{-\delta |\alpha|} h^N $$ 
 If $a=O(h^N)_{S_\delta}$ for all $N \in N$, we'll write $a= O(h^\infty)_{S_\delta}$. \textit{A priori}, the constants $C_{\alpha,N}$ depend on the symbol $a$. However, in this paper, we will often make them depend on different parameters but not directly on $a$. This will be specified when needed. 

For a given symbol $a \in S_\delta(T^*\R^n)$, we say that $a$ has a compact essential support if there exists a compact set $K$ such : 
$$ \forall \chi \in \cinfc(\Omega), \supp \chi \cap K = \emptyset \implies \chi a =\hinf_{\mathcal{S}(T^*\R^n) }$$
(here $\mathcal{S}$ stands for the Schwartz space). We note $ \text{ess} \supp a \subset K$ and say that $a$ belongs to the class $S_\delta^{comp}(T^*\R^n) $. The essential support of $a$ is then the intersection of all such compact $K$'s. In particular, the class $S_{\delta}^{comp}$ contains all the symbols supported in a $h$-independent compact set and these symbols correspond, modulo $\hinf_{\mathcal{S}(T^*\R) }$, to all symbols of $S_\delta^{comp}$. For this reason, we will adopt the following notation :  $a \in S_\delta^{comp}(\Omega) \iff \text{ess} \supp a \Subset \Omega$.

For a symbol $a \in S_\delta(T^*\R^n)$, we'll quantize it using Weyl's quantization procedure. It is informally written as : 
$$(\op(a)u)(x) = (a^W u)(x)= \frac{1}{(2\pi h)^n }\int_{\R^{2n}} a\left( \frac{x+y}{2}, \xi \right)u(y) e^{i \frac{(x-y)\cdot \xi}{h}}dy d \xi$$

We will note $\Psi_\delta(\R^n)$ the corresponding classes of pseudodifferential operators. By definition, the wavefront set of $A = \op(a)$ is $\WF(A) = \text{ess} \supp a $. 
\vspace*{0.5cm}

We say that a family $u=u(h) \in \mathcal{D}^\prime(\R^n)$ is $h$-tempered if for every $\chi \in \cinfc(\R^n)$, there exist $C >0$ and $N \in \N$ such that $|| \chi u ||_{H_h^{-N}} \leq Ch^{-N}$. For a $h$-tempered family $u$, we say that a point $\rho \in T^*\R^n$ does \emph{not} belong to the wavefront set of $u$ if there exists $a \in S^{comp}(T^*\R^n)$ such that $a(\rho) \neq 0$ and $\op(a)u = \hinf_{\mathcal{S}}$. We note $\WF(u)$ the wavefront set of $u$. 

We say that a family of operators $B=B(h) : \cinfc(\R^{n_2}) \to \mathcal{D}^\prime(\R^{n_1})$ is $h$-tempered  if its Schwartz kernel $\mathcal{K}_B \in \mathcal{D}^\prime(\R^{n_1} \times \R^{n_2})$ is $h$-tempered. We define 
$$ \WF^\prime(B) = \{ ( x,\xi, y , -\eta) \in T^*\R^{n_1}  \times T^* \R^{n_2} ,  (x,\xi, y , \eta)  \in \WF(\mathcal{K}_B )\}$$

\vspace*{0.5cm}
Let us now recall standard results in semi-classical analysis concerning the $L^2$-boundedness of pseudodifferential operator and their composition. 
We'll use the following version of Calderon-Vaillancourt Theorem (\cite{ZW}, Theorem 4.23). 

\begin{thm}
There exists $ C_n>0$ such that the following holds. For every $0 \leq \delta < \frac{1}{2}$, and $a \in S_\delta(T^* \R^n)$, $\op(a)$ is a bounded operator on $L^2$ and 
$$ || \op(a) ||_{L^2 (\R^n) \to L^2 (\R^n) } \leq C_n \sum_{ |\alpha| \leq 8n } h^{|\alpha|/2} || \partial^\alpha a ||_{L^\infty} $$ 
\end{thm}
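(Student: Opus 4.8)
The plan is to obtain the bound from the Cotlar--Stein almost-orthogonality lemma after a semiclassical rescaling, which is the standard route to Calder\'on--Vaillancourt (see \cite{ZW}). First I would reduce to the case $h=1$. Let $U_h\colon L^2(\R^n)\to L^2(\R^n)$ be the unitary dilation $(U_hf)(x)=h^{n/4}f(\sqrt h\,x)$. A change of variables in the oscillatory integral defining $\op(a)$ gives $U_h^{-1}\op(a)U_h=\mathrm{Op}_1(a_h)$, where $\mathrm{Op}_1$ denotes the Weyl quantization with semiclassical parameter set to $1$ and $a_h(x,\xi)=a(\sqrt h\,x,\sqrt h\,\xi)$. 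Since $\partial^\alpha a_h=h^{|\alpha|/2}(\partial^\alpha a)(\sqrt h\,\cdot)$, one has $\|\partial^\alpha a_h\|_{L^\infty}=h^{|\alpha|/2}\|\partial^\alpha a\|_{L^\infty}$, which is finite for all $\alpha$ because $a\in S_\delta$, and stays bounded as $h\to 0$ precisely because $\delta<1/2$. Hence it suffices to prove: if $b\in C^\infty(\R^{2n})$ has bounded derivatives, then $\|\mathrm{Op}_1(b)\|_{L^2\to L^2}\le C_n\sum_{|\alpha|\le 8n}\|\partial^\alpha b\|_{L^\infty}$; applying this to $b=a_h$ and using the unitarity of $U_h$ yields the theorem with the claimed $h^{|\alpha|/2}$ weights.

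For the $h=1$ statement I would fix $\psi\in C_c^\infty(\R^{2n})$ with $\sum_{\nu\in\Z^{2n}}\psi(\,\cdot-\nu)\equiv 1$, set $b_\nu=\psi(\,\cdot-\nu)\,b$ and $A_\nu=\mathrm{Op}_1(b_\nu)$, so that (at least in the strong sense, once the estimate below is in place) $\mathrm{Op}_1(b)=\sum_\nu A_\nu$. The key step is the almost-orthogonality of the $A_\nu$. The Weyl symbol of $A_\mu A_\nu^*$ is the Moyal product $b_\mu\#\overline{b_\nu}$, represented by the oscillatory integral $\pi^{-2n}\iint b_\mu(w_1)\overline{b_\nu(w_2)}\,e^{2i\sigma(w-w_1,\,w-w_2)}\,dw_1\,dw_2$, where $\sigma$ is the symplectic form. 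On $\operatorname{supp}b_\mu\times\operatorname{supp}b_\nu$ the phase is non-stationary: its $(w_1,w_2)$-gradient has modulus comparable to $|w-w_1|+|w-w_2|\ge|w_1-w_2|\gtrsim\langle\mu-\nu\rangle$ once $|\mu-\nu|$ is large, and it is also large when $|w|$ is large relative to $\mu,\nu$. Repeated integration by parts then shows, for every $M$, that $b_\mu\#\overline{b_\nu}$ and its derivatives of order up to (roughly) $M$ are bounded by $C_M\langle\mu-\nu\rangle^{-M}$ times a rapidly decaying weight in $w$ centred near $\mu$ and $\nu$, times a square of finitely many $L^\infty$-norms of derivatives of $b$ (the order needed growing with $M$). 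Feeding this into Schur's test for the Schwartz kernel of $A_\mu A_\nu^*$ — integrating by parts $n+1$ times in $\xi$ to gain integrability in $x-y$, and using the decay in $w$ for integrability in $\tfrac{x+y}{2}$ — gives $\|A_\mu A_\nu^*\|_{L^2\to L^2}+\|A_\mu^*A_\nu\|_{L^2\to L^2}\le C_M\langle\mu-\nu\rangle^{-M}\big(\sup_{|\alpha|\le M'}\|\partial^\alpha b\|_{L^\infty}\big)^2$.

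Finally, choosing $M>4n$ so that $\sum_{\nu}\langle\mu-\nu\rangle^{-M/2}<\infty$, the Cotlar--Stein lemma bounds $\|\sum_\nu A_\nu\|$ by $\sup_\mu\sum_\nu\big(\|A_\mu A_\nu^*\|^{1/2}+\|A_\mu^*A_\nu\|^{1/2}\big)\le C_n\sup_{|\alpha|\le M'}\|\partial^\alpha b\|_{L^\infty}\le C_n\sum_{|\alpha|\le M'}\|\partial^\alpha b\|_{L^\infty}$, which also shows the series converges. A careful count of how many derivatives of $b$ are consumed — in the integrations by parts producing the $\langle\mu-\nu\rangle^{-M}$ decay with $M\approx 4n$, in the few extra derivatives of the composed symbol needed for Schur's test, and in the $n+1$ integrations by parts in $\xi$ — shows that $M'\le 8n$ is enough, which combined with Step 1 completes the proof.

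The main obstacle is the almost-orthogonality step: one has to turn the merely formal asymptotic expansion of the Moyal product into genuine quantitative decay in $|\mu-\nu|$ via non-stationary phase, and then estimate the $L^2$ norm of a pseudodifferential operator with (Schwartz, essentially localized) symbol \emph{without} circularly invoking the very theorem being proved — which is why one passes through Schur's lemma on the kernel rather than a second application of an $L^2$ bound. The precise constant $8n$ is only the outcome of this bookkeeping; any fixed multiple of $n$ would do, and $8n$ is a comfortable over-estimate. An alternative to the dyadic decomposition is to conjugate $\mathrm{Op}_1(b)$ by the Bargmann (FBI) transform, an isometry of $L^2(\R^n)$ onto a closed subspace of a weighted $L^2(\C^n)$, under which $\mathrm{Op}_1(b)$ becomes multiplication by $b$ plus an operator whose norm is controlled by finitely many derivatives of $b$; this gives the same bound, with a comparably delicate derivative count.
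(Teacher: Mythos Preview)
The paper does not give its own proof of this theorem: it is quoted as a known result and referenced to \cite{ZW}, Theorem~4.23. Your sketch follows precisely the standard proof found there---the unitary rescaling $U_h$ reducing to $h=1$, the lattice partition of unity in phase space, and the Cotlar--Stein lemma with almost-orthogonality obtained by non-stationary phase---so there is nothing substantive to compare. Your outline is correct; the only soft spot is the derivative count leading to the specific constant $8n$, which you acknowledge is just bookkeeping and which in any case is carried out in the cited reference.
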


As a consequence of the sharp Gärding inequality (see \cite{ZW}, Theorem 4.32), we also have the precise estimate of $L^2$ norms of pseudodifferential operator, 

\begin{prop}\label{Garding}
Assume that $a \in S_\delta(\R^{2n})$. Then, there exists $C_a$ depending on a finite number of semi-norms of $a$ such that : 
$$ || \op(a) ||_{L^2 \to L^2} \leq ||a||_\infty + C_ah^{\frac{1}{2} - \delta}$$
\end{prop}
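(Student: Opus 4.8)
The plan is to bound the operator norm through the identity $\|\op(a)\|_{L^2\to L^2}^2 = \|\op(a)^*\op(a)\|_{L^2\to L^2}$, and to estimate the nonnegative self-adjoint operator $\op(a)^*\op(a)$ from above by the scalar operator $\|a\|_\infty^2\,\Id$ up to a small error, using the $S_\delta$ symbolic calculus together with the sharp Gårding inequality applied to a well-chosen nonnegative symbol.

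First I would identify the Weyl symbol of $\op(a)^*\op(a)$. Since $\op(a)^*=\op(\bar a)$ for the Weyl quantization and $\bar a\in S_\delta$, the composition formula for the $S_\delta$-calculus gives $\op(\bar a)\op(a)=\op(b_h)$ with $b_h=|a|^2+r_h$; the $k$-th term in the Moyal expansion carries $k$ derivatives on each factor and a prefactor $h^k$, hence is $O(h^{k(1-2\delta)})_{S_\delta}$, and since $\delta<1/2$ the full remainder satisfies $r_h=O(h^{1-2\delta})_{S_\delta}$, with implicit seminorms controlled by finitely many seminorms of $a$. By the Calderón--Vaillancourt estimate recalled just above, this gives $\|\op(r_h)\|_{L^2\to L^2}=O(h^{1-2\delta})$ with a constant depending only on a finite number of seminorms of $a$.

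Next, set $M_h:=\|a(\cdot;h)\|_{L^\infty}$ and consider the symbol $c_h:=M_h^2-|a|^2$. It is pointwise $\geq 0$, and since the additive constant $M_h^2$ disappears under differentiation, $c_h\in S_\delta$ with seminorms bounded by finitely many seminorms of $a$, uniformly in $h$. Applying the sharp Gårding inequality (Theorem 4.32 of \cite{ZW}) to $c_h$ — in its $S_\delta$ form, or equivalently by first conjugating $\op(a)$ with the unitary dilation $u\mapsto h^{n\delta/2}u(h^\delta\,\cdot)$, which turns $\op_h(a)$ into a Weyl quantization with effective parameter $\hbar=h^{1-2\delta}$ of the symbol $a(h^\delta\cdot,h^\delta\cdot)\in S_0$, and then applying the usual sharp Gårding — gives
$$ \op(c_h)\ \geq\ -\,C\,h^{1-2\delta}\,\Id, $$
with $C$ depending only on finitely many seminorms of $a$. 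Since $\op(|a|^2)=M_h^2\,\Id-\op(c_h)$ and $\op(a)^*\op(a)=\op(|a|^2)+\op(r_h)$, combining with the previous step yields
$$ \op(a)^*\op(a)\ \leq\ \big(M_h^2+C'\,h^{1-2\delta}\big)\,\Id. $$

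Finally, because $\op(a)^*\op(a)$ is nonnegative self-adjoint, the last inequality gives $\|\op(a)\|_{L^2\to L^2}^2\leq M_h^2+C'h^{1-2\delta}$, whence
$$ \|\op(a)\|_{L^2\to L^2}\ \leq\ \big(M_h^2+C'h^{1-2\delta}\big)^{1/2}\ \leq\ \|a\|_\infty+\sqrt{C'}\;h^{1/2-\delta}, $$
which is the asserted bound with $C_a=\sqrt{C'}$ depending on finitely many seminorms of $a$. There is no deep obstacle here; the only genuine point of care is the bookkeeping of constants — verifying at the composition step, the sharp Gårding step and the Calderón--Vaillancourt step that the constants depend only on a fixed finite number of seminorms of $a$ and not on $a$ itself — together with the observation that it is precisely the square root in the last display that turns the operator-level gain $h^{1-2\delta}$ into the stated $h^{1/2-\delta}$. (For a real-valued symbol one can instead apply sharp Gårding directly to $\|a\|_\infty\mp a\geq 0$, avoiding this loss and obtaining the better remainder $O(h^{1-2\delta})$; the $h^{1/2-\delta}$ loss is genuine only in the complex case.)
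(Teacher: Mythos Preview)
Your proof is correct and follows exactly the route the paper indicates: the proposition is stated in the paper simply ``as a consequence of the sharp G\aa rding inequality (see \cite{ZW}, Theorem 4.32)'' with no further details, and you have filled in the standard argument (compute the symbol of $\op(a)^*\op(a)$ via the $S_\delta$ Moyal expansion, apply sharp G\aa rding to $\|a\|_\infty^2-|a|^2\ge 0$, then take a square root). Your closing remark that real-valued symbols yield the sharper $O(h^{1-2\delta})$ remainder is also correct and is a nice addition.
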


We recall that the Weyl quantizations of real symbols are self-adjoint in $L^2$. The composition of two pseudodifferential operators in $\Psi_\delta$ is still a pseudodifferential operator. More precisely (see \cite{ZW}, Theorem 4.11 and 4.18), if $a , b \in S_\delta$, $\op(a) \circ \op(b)$ is given by $\op(a \# b)$, where $a \# b$ is the Moyal product of $a$ and $b$. It is given by 
$$ a \# b (\rho) = e^{i h A(D) } (a \otimes b)|_{ \rho= \rho_1 = \rho_2} $$ 
where $a \otimes b (\rho_1, \rho_2) = a(\rho_1) b(\rho_2)$, $e^{i h A(D) }$ is a Fourier multiplier acting on functions on $\R^{4n}$ and, writing $\rho_i = (x_i, \xi_i)$,  $$A(D) = \frac{1}{2} \left( D_{\xi_1} \circ D_{x_2} - D_{x_1} \circ D_{\xi_2} \right) $$
We can estimate the Moyal product by a quadratic stationary phase and get the following expansion: for all $N \in \N$,  

\begin{equation*}
 a \# b (\rho) = \sum_{k=0}^{N-1} \frac{i^k h^k}{k!} A(D)^{k} (a \otimes b)|_{\rho= \rho_1= \rho_2} + r_N
\end{equation*}
where for all $\alpha \in \N^{2n}$, there exists $C_\alpha$, independent of $a$ and $b$, such that 
\begin{equation*}
||\partial^\alpha r_N ||_\infty \leq C_\alpha h^N ||a \otimes b ||_{ C^{2N+ 4n+1+ |\alpha| } }
\end{equation*}
As a consequence of this asymptotic expansion, we have the precise product formula :

\begin{lem}\label{Moyal_produc_op}
For every $N \in \N$, there exists $C_N >0$ such that, for every $a,b \in S_\delta(\R^n)$, 
\begin{equation}
\op(a) \circ \op(b) = \op \left( \sum_{k=0}^{N-1} \frac{i^k h^k}{k!} A(D)^{k} (a \otimes b)|_{\rho= \rho_1= \rho_2}  \right)  + R_N
\end{equation}
where \begin{equation}
||R_N||_{L^2(\R) \to L^2(\R) } \leq C_N h^N ||a \otimes b ||_{C^{2N+12n+1}}
\end{equation}
\end{lem}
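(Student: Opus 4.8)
The plan is to deduce the precise product formula directly from the asymptotic expansion of the Moyal product that has just been stated, combined with the Calderón–Vaillancourt bound quoted earlier as a theorem. First I would set
\[
c_N(\rho) = \sum_{k=0}^{N-1} \frac{i^k h^k}{k!} A(D)^{k}(a\otimes b)\big|_{\rho=\rho_1=\rho_2},
\]
so that by the composition theorem $\op(a)\circ\op(b) = \op(a\# b) = \op(c_N) + \op(r_N)$, where $r_N = a\# b - c_N$ is exactly the remainder term in the stated asymptotic expansion. Thus $R_N := \op(r_N)$, and everything reduces to estimating $\|\op(r_N)\|_{L^2\to L^2}$ in terms of $C^{2N+12n+1}$-norms of $a\otimes b$.

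The key step is to feed the symbol bound on $r_N$ into Calderón–Vaillancourt. From the expansion we have, for every $\alpha\in\N^{2n}$, a constant $C_\alpha$ independent of $a,b$ with $\|\partial^\alpha r_N\|_\infty \le C_\alpha h^N \|a\otimes b\|_{C^{2N+4n+1+|\alpha|}}$. The Calderón–Vaillancourt theorem (for symbols on $T^*\R^n$, i.e. in $2n$ real variables) gives
\[
\|\op(r_N)\|_{L^2\to L^2} \le C_n \sum_{|\alpha|\le 8n} h^{|\alpha|/2}\,\|\partial^\alpha r_N\|_\infty.
\]
Substituting the symbol estimate, and bounding each $h^{|\alpha|/2}\le 1$ for $h\le 1$, yields
\[
\|\op(r_N)\|_{L^2\to L^2} \le C_n \sum_{|\alpha|\le 8n} C_\alpha\, h^N \|a\otimes b\|_{C^{2N+4n+1+|\alpha|}} \le C_N' \, h^N \|a\otimes b\|_{C^{2N+12n+1}},
\]
since for $|\alpha|\le 8n$ one has $2N+4n+1+|\alpha|\le 2N+12n+1$ and the $C^m$-norms are monotone in $m$; absorbing the finitely many constants into a single $C_N>0$ gives the claimed bound.

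There is essentially no hard obstacle here — the statement is a routine packaging of already-quoted results — but the one point requiring a little care is \textbf{tracking the number of derivatives} so that the final bound is genuinely in terms of $\|a\otimes b\|_{C^{2N+12n+1}}$ and not some larger $C^m$-norm, and \textbf{checking uniformity}: the constants $C_\alpha$ in the Moyal expansion and $C_n$ in Calderón–Vaillancourt do not depend on $a$ or $b$, so neither does $C_N$, which is exactly what the lemma asserts. One should also note that the bound is stated for $a,b$ genuinely in $S_\delta$; since $a\# b$ then lies in $S_\delta$ as well and the remainder bounds are taken in sup-norm (not $S_\delta$-seminorm), no loss in powers of $h$ from the $\delta$-scaling enters the $L^2$ estimate beyond what is already absorbed in writing $h^{|\alpha|/2}\le 1$.
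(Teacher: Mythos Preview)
Your proof is correct and is exactly the argument the paper has in mind: the lemma is stated as an immediate consequence of the Moyal expansion remainder estimate $\|\partial^\alpha r_N\|_\infty \le C_\alpha h^N \|a\otimes b\|_{C^{2N+4n+1+|\alpha|}}$ combined with the Calder\'on--Vaillancourt bound using derivatives up to order $8n$. Your derivative count $2N+4n+1+8n = 2N+12n+1$ and the uniformity of the constants in $a,b$ are precisely the points that make the lemma hold as stated.
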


\begin{rem}
It will be important in the sequel to understand the derivatives of $a$ and $b$ involved in the $k$-th term of the previous expansion. A quick recurrence using the precise form of the operator $A(D)$ shows that $A(D)^{k} (a \otimes b)(\rho_1,\rho_2) $ is of the form 
$$ \sum_{|\alpha| = k , |\beta| = k} \lambda_{\alpha, \beta} \partial^\alpha a(\rho_1) \partial^\beta b (\rho_2) $$ 
This can be rewritten $l_k \left( d^k a(\rho_1), d^k b(\rho_2) \right) $ where $l_k$ is a bilinear form on the spaces of $k$-symmetric forms on $\R^{2n}$. Of, course, we make use of the the identifications $T_{\rho_1} T^*\R^n  \simeq T_{\rho_2} T^*\R^n \simeq  \R^{2n}$
\end{rem}

As a simple corollary, we get an expression for the commutator of pseudodifferential operators. 
\begin{cor}
For every $N \in \N$, there exists $C_N >0$ such that, for every $a,b \in S_\delta(\R^n)$, 
$$ [\op(a), \op(b) ] = 
 \op \left( \frac{h}{i} \{ a,b\} + \sum_{k=2}^{N-1} h^k L_k (d^k a ,d^k b) \right)  + R_N
$$
where \begin{equation*}
||R_N||_{L^2(\R) \to L^2(\R) } \leq C_N h^N ||a \otimes b ||_{C^{2N+12n+1}}
\end{equation*}
where the $L_k$ are bilinear forms on the spaces of $k$-symmetric forms on $\R^{2n}$. 
\end{cor}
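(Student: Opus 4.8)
The plan is to obtain this as a direct subtraction of two instances of Lemma \ref{Moyal_produc_op}, followed by a reorganization of the resulting expansion via the structure of $A(D)^k(a\otimes b)$ recorded in the Remark following that lemma. So first I would apply Lemma \ref{Moyal_produc_op} with the same $N$ to both $\op(a)\op(b)$ and $\op(b)\op(a)$, giving
\begin{align*}
\op(a)\op(b)&=\op\left(\sum_{k=0}^{N-1}\frac{i^kh^k}{k!}A(D)^k(a\otimes b)\big|_{\rho=\rho_1=\rho_2}\right)+R_N^{(1)},\\
\op(b)\op(a)&=\op\left(\sum_{k=0}^{N-1}\frac{i^kh^k}{k!}A(D)^k(b\otimes a)\big|_{\rho=\rho_1=\rho_2}\right)+R_N^{(2)},
\end{align*}
with $\|R_N^{(j)}\|_{L^2\to L^2}\le C_Nh^N\|a\otimes b\|_{C^{2N+12n+1}}$. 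Here I use that $b\otimes a$ is the image of $a\otimes b$ under the linear coordinate swap of the two copies of $\R^{2n}$, which is an isometry, so $\|b\otimes a\|_{C^m}=\|a\otimes b\|_{C^m}$; thus $R_N:=R_N^{(1)}-R_N^{(2)}$ still obeys the claimed bound (after renaming $C_N$).

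Subtracting the two displays, the $k=0$ terms are $ab$ and $ba$ and cancel. For the $k=1$ term, a direct computation from $A(D)=\tfrac12(D_{\xi_1}\circ D_{x_2}-D_{x_1}\circ D_{\xi_2})$, with the convention $D_\bullet=\tfrac1i\partial_\bullet$, gives $A(D)(a\otimes b)\big|_{\rho_1=\rho_2=\rho}=\tfrac12(\partial_x a\,\partial_\xi b-\partial_\xi a\,\partial_x b)(\rho)=-\tfrac12\{a,b\}(\rho)$ (sums over the $n$ coordinates understood); swapping $a$ and $b$ flips the sign, so the $k=1$ contribution to $[\op(a),\op(b)]$ is $\op\big(\tfrac{h}{i}\{a,b\}\big)$. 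For $2\le k\le N-1$, I would invoke the Remark: $A(D)^k(a\otimes b)(\rho_1,\rho_2)=l_k\big(d^ka(\rho_1),d^kb(\rho_2)\big)$ for a bilinear form $l_k$ on symmetric $k$-forms on $\R^{2n}$ depending only on $k$. Evaluating at $\rho_1=\rho_2=\rho$ and subtracting the corresponding term for $b\otimes a$, the $k$-th contribution to the commutator is $h^kL_k\big(d^ka,d^kb\big)$ with $L_k(X,Y):=\tfrac{i^k}{k!}\big(l_k(X,Y)-l_k(Y,X)\big)$, again a bilinear form on symmetric $k$-forms depending only on $k$ (using that the swap conjugates $A(D)$ to $-A(D)$ one even sees $L_k=0$ for $k$ even, but this is not needed). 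Assembling the $k=0$ cancellation, the $k=1$ term, the terms $2\le k\le N-1$, and the remainder from the first step yields precisely the asserted identity and bound.

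There is no genuine obstacle here: essentially all the analytic content already sits in Lemma \ref{Moyal_produc_op} and in the structural Remark that follows it. The only points demanding a little care are the invariance of the $C^m$-norm under the coordinate swap (so that the remainder estimate is symmetric in $a$ and $b$), the bookkeeping of the constants $C_N$, and pinning down the sign convention for the Poisson bracket so that the leading term comes out as $\tfrac{h}{i}\{a,b\}$ rather than its negative.
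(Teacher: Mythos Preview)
Your proposal is correct and is exactly the approach the paper has in mind: the corollary is stated there without proof as ``a simple corollary'' of Lemma~\ref{Moyal_produc_op} and the structural Remark on $A(D)^k(a\otimes b)$, and your argument is precisely the natural subtraction of the two Moyal expansions with the bookkeeping you describe.
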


\subsection{Fourier Integral Operators}
We now review some aspects of the theory of Fourier integral operators. We follow \cite{ZW}, Chapter 11 and \cite{NSZ14}. We refer the reader to \cite{GuSt} for further details. Finally, we will give the precise definition needed to understand the definition \ref{def_FIO}. 

\subsubsection{Local symplectomorphisms and their quantization}
We momentarily work in dimension $n$. 
Let us note $\mathcal{K}$ the set of symplectomorphisms $\kappa : T^*\R^n \to T^* \R^n$ such that the following holds : there exist continuous and piecewise smooth families of smooth functions $(\kappa_t)_{t \in [0,1]} $, $(q_t)_{t \in [0,1]}$ such that : \begin{itemize}[nosep]
\item $\forall t \in [0,1]$, $\kappa_t : T^*\R^n \to T^*\R^n$ is a symplectomorphism ; 
\item $\kappa_0 = \Id_{T^* \R^n} , \kappa_1 = \kappa$ ; 
\item $\forall t \in [0,1], \kappa_t(0)=0 $ ; 
\item there exists $K \Subset T^*\R^n$ compact such that $\forall t \in [0,1], q_t : T^*\R^n \to \R$ and $ \supp q_t \subset K$ ; 
\item $\frac{d}{dt} \kappa_t = \left( \kappa_t \right)^* H_{q_t}$
\end{itemize}
 If $\kappa \in \mathcal{K}$, we note $C = Gr^\prime(\kappa) = \{ (x, \xi, y , - \eta), (x,\xi) = \kappa(y,\eta) \}$ the twisted graph of $\kappa$. 
We recall  \cite{ZW}, Lemma 11.4, which asserts that local symplectomorphisms can be seen as elements of $\mathcal{K}$, as soon as we have some geometric freedom. 

\begin{lem}\label{lemma_local_symp}
Let $U_0, U_1$ be open and precompact subsets of $T^*\R^n$. Assume that $\kappa : U_0 \to U_1$ is a local symplectomorphism fixing 0 and which extends to $V_0 \Supset U_0$ an open star-shaped neighborhood of 0. Then, there exists $\tilde{\kappa} \in \mathcal{K}$ such that $\tilde{\kappa}|_{U_0} = \kappa$. 
\end{lem}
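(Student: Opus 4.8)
## Proof plan for Lemma \ref{lemma_local_symp}

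The plan is to reduce the statement to the well-known fact, recorded in \cite{ZW}, Lemma 11.4, that a symplectomorphism of $T^*\R^n$ fixing the origin which is \emph{globally} defined and isotopic through such symplectomorphisms to the identity lies in $\mathcal{K}$; equivalently, that such a global symplectomorphism is generated by a compactly supported time-dependent Hamiltonian. So the real content is to manufacture, out of the merely local data $\kappa : U_0 \to U_1$ extending to a star-shaped $V_0 \Supset U_0$, a \emph{global} symplectomorphism of $T^*\R^n$ which agrees with $\kappa$ on $U_0$ and is of the required form.

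First I would linearize at $0$: write $A = d_0\kappa \in \Sp(2n,\R)$, and use that $\Sp(2n,\R)$ is connected to produce a smooth path $A_t$ in $\Sp(2n,\R)$ with $A_0 = \Id$, $A_1 = A$; each $A_t$ is the time-one flow of a quadratic (hence globally defined, but not compactly supported) Hamiltonian, and by a standard cutoff argument near $0$ one obtains $\kappa^{\mathrm{lin}} \in \mathcal{K}$ with $d_0\kappa^{\mathrm{lin}} = A$ and $\kappa^{\mathrm{lin}}$ equal to $A$ on a small neighborhood of $0$. Replacing $\kappa$ by $(\kappa^{\mathrm{lin}})^{-1}\circ\kappa$ — still a local symplectomorphism extending to a star-shaped neighborhood of $0$, and now tangent to the identity at $0$ — it suffices to treat the case $d_0\kappa = \Id$. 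In that case I would use the star-shaped structure of $V_0$ together with the generating-function / Moser-type deformation: the homotopy $\kappa_s(\rho) := s^{-1}\kappa(s\rho)$, $s \in (0,1]$, $\kappa_0 = \Id$, is a smooth isotopy of $\kappa$ to the identity through local symplectomorphisms fixing $0$, defined on the fixed star-shaped neighborhood $V_0$ (this is exactly where star-shapedness is used). Differentiating this isotopy produces a time-dependent Hamiltonian vector field $H_{q_t}$ on $V_0$ generating $\kappa_s$, with $q_t$ vanishing to second order at $0$.

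The remaining step is to globalize: choose a cutoff $\chi \in \cinfc(T^*\R^n)$ with $\chi \equiv 1$ on $U_0$ (and, after shrinking, on a neighborhood of the whole isotopy $\bigcup_s \kappa_s(U_0)$) and $\supp\chi \Subset V_0$, and replace $q_t$ by $\chi q_t$, extended by zero outside $V_0$. The flow of $H_{\chi q_t}$ is a globally defined isotopy $\tilde\kappa_t \in$ (candidate for $\mathcal{K}$), and since $\chi \equiv 1$ on a neighborhood of the trajectories emanating from $U_0$, we get $\tilde\kappa_1|_{U_0} = \kappa$; the conditions $\tilde\kappa_t(0) = 0$, compact support of the generating Hamiltonians, and $\frac{d}{dt}\tilde\kappa_t = (\tilde\kappa_t)^* H_{\chi q_t}$ are then immediate, so $\tilde\kappa = \tilde\kappa_1 \in \mathcal{K}$. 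The main obstacle — and the only place any care is genuinely needed — is the globalization: one must ensure that the cutoff $\chi$ is chosen large enough to equal $1$ along \emph{all} of the (time-dependent) trajectories issuing from $\overline{U_0}$ while still being supported in $V_0$, which requires first shrinking $U_1$ (equivalently $U_0$) slightly so that this tube of trajectories stays compactly inside $V_0$; the precompactness of $U_0, U_1$ and the star-shapedness of $V_0$ are exactly what make this possible. Everything else is the standard correspondence between Hamiltonian isotopies fixing a point and elements of $\mathcal{K}$, already encapsulated in \cite{ZW}, Lemma 11.4.
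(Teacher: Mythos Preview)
The paper does not prove this lemma; it simply quotes it as \cite{ZW}, Lemma 11.4, without argument. Your proof plan is a correct outline of the standard argument for that result (and is essentially the proof given in \cite{ZW}): linearize at $0$ using the connectedness of $\Sp(2n,\R)$ to reduce to the case $d_0\kappa = \Id$, then use the scaling isotopy $\kappa_s(\rho) = s^{-1}\kappa(s\rho)$ --- well-defined on all of $V_0$ precisely by star-shapedness, symplectic since dilations are conformally symplectic, and with $\kappa_0 = \Id$ since $d_0\kappa = \Id$ --- to produce a time-dependent Hamiltonian, and finally cut off the Hamiltonian to globalize. Your identification of the globalization step (ensuring the cutoff equals $1$ along the full tube of trajectories from $\overline{U_0}$ while remaining supported in $V_0$) as the only point requiring care is accurate, and the precompactness hypothesis $U_0 \Subset V_0$ is exactly what makes it work.

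One small remark: you phrase your plan as \emph{reducing to} \cite{ZW}, Lemma 11.4, describing that lemma as a statement about globally defined symplectomorphisms. In fact the paper identifies the present lemma \emph{with} \cite{ZW}, Lemma 11.4 --- the local-to-global extension via star-shapedness is already the content of that reference, not a preliminary reduction to it. This is only a matter of attribution, not mathematics.
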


 If $\kappa \in \mathcal{K}$ and  if $(q_t)$ denotes the family of smooth functions associated with $\kappa$ in its definition, we note $Q(t) = \op(q_t)$. It is a continuous and piecewise smooth family of operators. Then the Cauchy problem
 
 \begin{equation}\label{Cauchy_pb_Egorov}
\left\{ \begin{array}{c}
hD_t U(t) + U(t)Q(t) = 0 \\
U(0)=\Id
\end{array}
\right. 
\end{equation}
is globally well-posed.

Following \cite{NSZ14}, Definition 3.9, we adopt the definition : 

\begin{defi}
Fix $\delta \in [0,1/2)$. We say that $U \in I_\delta(\R^n \times \R^n; C )$ if there exist $a \in S_\delta(T^*\R^n)$ and a path $(\kappa_t)$ from $\Id$ to $\kappa$ satisfying the above assumptions such that $U = \op(a)U(1)$, where $t \mapsto U(t)$ is the solution of the Cauchy problem (\ref{Cauchy_pb_Egorov}). 

The class $I_{0^+}(\R \times \R, C)$ is by definition $\bigcap_{\delta >0} I_\delta(\R \times \R, C)$. 
\end{defi}

It is a standard result, known as Egorov's theorem (see \cite{ZW}, Theorem 11.1) that if $U(t)$ solves the Cauchy problem (\ref{Cauchy_pb_Egorov}) and if $a \in S_\delta$, then $U^{-1} \op(a) U$ is a pseudodifferential operator in $\Psi_\delta$ and if $b= a \circ \kappa$, then 
$U^{-1} \op(a) U - \op(b) \in h^{1- 2 \delta} \Psi_\delta$.

\begin{rem}\text{}
Applying Egorov's theorem and Beal's theorem, it is possible to show that if $(\kappa_t)$ is a closed path from $\Id$ to $\Id$, and $U(t)$ solves (\ref{Cauchy_pb_Egorov}), then $U(1) \in \Psi_0(\R^n)$. In other words, $I_\delta(\R \times \R , \Gr^\prime(\Id) ) \subset \Psi_\delta(\R^n)$. But the other inclusion is trivial. Hence, this in an equality : 
$$I_\delta(\R^n \times \R^n , \Gr^\prime(\Id) ) = \Psi_\delta(\R^n)$$
The notations $I(\R^n\times \R^n, C)$ comes from the fact that the Schwartz kernel of such operators are Lagrangian distributions associated with $C$, and in particular have wavefront set included in $C$. As a consequence, if $T \in I_\delta(\R^n \times \R^n, C)$, $\WF^\prime(T) \subset \Gr(T)$. 
\end{rem}

Let us state a simple proposition concerning the composition of Fourier integral operators : 

\begin{prop}\label{composition_FIO}
Let $\kappa_1, \kappa_2 \in \mathcal{K}$ and $U_1 \in I_\delta(\R \times \R, \Gr^\prime(\kappa_1) ) , U_2 \in I_\delta(\R \times \R, \Gr^\prime(\kappa_1) )$. Then, 
$$ U_1 \circ U_2 \in  I_\delta(\R \times \R, \Gr^\prime(\kappa_1 \circ \kappa_2) )$$ 
\end{prop}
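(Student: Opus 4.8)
The plan is to prove Proposition~\ref{composition_FIO} by unwinding the definition of $I_\delta$ and composing the two Egorov-type propagators directly. By definition, write $U_1 = \op(a_1) V_1(1)$ where $(\kappa_{1,t})$ is a path from $\Id$ to $\kappa_1$ with generating functions $(q_{1,t})$, and $t \mapsto V_1(t)$ solves $hD_t V_1(t) + V_1(t)\op(q_{1,t}) = 0$, $V_1(0)=\Id$; similarly $U_2 = \op(a_2) V_2(1)$ attached to a path $(\kappa_{2,t})$. First I would form the concatenated path: define $\kappa_t$ to be $\kappa_{2,2t}$ for $t \in [0,1/2]$ and $\kappa_{1,2t-1}\circ\kappa_2$ for $t \in [1/2,1]$. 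This is a continuous, piecewise-smooth family of symplectomorphisms fixing $0$, running from $\Id$ to $\kappa_1\circ\kappa_2$, and one checks that it is generated by a rescaled, reparametrised, and pulled-back family of compactly supported Hamiltonians $q_t$ (on the second half, $\frac{d}{dt}\kappa_t = \kappa_t^* H_{q_t}$ with $q_t = 2\,q_{1,2t-1}\circ\kappa_2$, using that conjugating a Hamiltonian flow by a fixed symplectomorphism replaces the Hamiltonian by its pullback). Then $V(1) = V_1(1)V_2(1)$ solves the concatenated Cauchy problem, because the flow of a concatenated non-autonomous generator is the composition of the two flows — this is just uniqueness for~\eqref{Cauchy_pb_Egorov}.

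Next I would handle the leftover symbol factors. We have $U_1 U_2 = \op(a_1)V_1(1)\op(a_2)V_2(1)$. The idea is to commute $\op(a_2)$ to the left through $V_1(1)$: by Egorov's theorem (the version recalled after the definition of $I_\delta$), $V_1(1)\op(a_2) = \op(b)V_1(1)$ where $b = a_2\circ\kappa_1^{-1} + O(h^{1-2\delta})_{S_\delta}$, and in particular $b \in S_\delta(T^*\R^n)$ with compact essential support. Hence $U_1U_2 = \op(a_1)\op(b)V_1(1)V_2(1) = \op(a_1 \# b)\,V(1)$ by Lemma~\ref{Moyal_produc_op} (or just the composition formula for $\Psi_\delta$), and $a_1\# b \in S_\delta(T^*\R^n)$. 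Since $V(1)$ is exactly the propagator attached to the concatenated path from $\Id$ to $\kappa_1\circ\kappa_2$ constructed above, this displays $U_1U_2$ in the required form $\op(c)V(1)$ with $c \in S_\delta$, i.e. $U_1U_2 \in I_\delta(\R\times\R,\Gr'(\kappa_1\circ\kappa_2))$.

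One technical point to be careful about is whether $\kappa_1\circ\kappa_2 \in \mathcal{K}$, i.e. whether the concatenated path genuinely satisfies all the bullet points in the definition of $\mathcal{K}$ — in particular that each $q_t$ is globally defined on $T^*\R^n$ with support in a fixed compact set. This is where Lemma~\ref{lemma_local_symp} and the observation that the $q_{i,t}$ already have compact support are used: pulling back a compactly supported Hamiltonian by the symplectomorphism $\kappa_2$ (which is the identity outside a compact set, or can be arranged so after applying Lemma~\ref{lemma_local_symp}) keeps the support compact, and reparametrising $[0,1]\to[1/2,1]$ preserves piecewise smoothness. I expect this bookkeeping — verifying that the concatenation stays inside the admissible class $\mathcal{K}$ and that the generators remain uniformly compactly supported — to be the main (though routine) obstacle; the analytic content is entirely contained in Egorov's theorem and the Moyal composition formula, both quoted above. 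A harmless abuse is that the proposition is stated with $U_2 \in I_\delta(\R\times\R,\Gr'(\kappa_1))$, evidently a typo for $\Gr'(\kappa_2)$, and I would read it that way throughout.
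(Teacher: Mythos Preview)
Your overall strategy (move the symbol past $V_1(1)$ via Egorov, then concatenate the two propagators) is exactly the paper's, but the concatenation is done in the wrong order for this Cauchy problem, and the claim ``$V(1)=V_1(1)V_2(1)$ by uniqueness for \eqref{Cauchy_pb_Egorov}'' is not correct. The equation \eqref{Cauchy_pb_Egorov} has the generator on the right: $\partial_t U=-\tfrac{i}{h}\,U\,Q(t)$. With your path ($\kappa_{2,2t}$ on $[0,1/2]$, then $\kappa_{1,2t-1}\circ\kappa_2$ on $[1/2,1]$) the classical Hamiltonian on the second half is indeed $2\,q_{1,2t-1}\circ\kappa_2$, but the solution of the quantum Cauchy problem with generator $\op(2\,q_{1,2t-1}\circ\kappa_2)$ is $V(t)=V_2(1)\,W(2t-1)$, where $W$ solves the problem with generator $\op(q_{1,s}\circ\kappa_2)$ from $\Id$. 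For $V(1)=V_1(1)V_2(1)$ you would need $W(1)=V_2(1)^{-1}V_1(1)V_2(1)$; the latter, however, solves the Cauchy problem with generator $V_2(1)^{-1}\op(q_{1,s})V_2(1)$, which by Egorov equals $\op(q_{1,s}\circ\kappa_2)$ only modulo $h\Psi_\delta$. So $V(1)$ and $V_1(1)V_2(1)$ differ by a pseudodifferential factor you have not produced, and the ``composition of flows'' heuristic does not close the argument at the quantum level.

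The paper sidesteps this by concatenating in the opposite order: $\kappa_3(t)=\kappa_{1,2t}$ on $[0,1/2]$ and $\kappa_3(t)=\kappa_1\circ\kappa_{2,2t-1}$ on $[1/2,1]$. Because the generator acts on the right, left-multiplying by the fixed operator $U_1(1)$ does not change it, so $U_3(t)=U_1(1)U_2(2t-1)$ on the second half solves \eqref{Cauchy_pb_Egorov} with $Q_3(t)=2Q_2(2t-1)$ exactly, and one checks that the classical path with Hamiltonians $2q_{1,2t}$, $2q_{2,2t-1}$ also lands at $\kappa_1\circ\kappa_2$. This order makes the pullback (and your worry about compact support of $q_{1,s}\circ\kappa_2$) disappear: the generators are just reparametrised copies of the original $q_{i,t}$, so no appeal to Lemma~\ref{lemma_local_symp} is needed. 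Your treatment of the symbol factors via Egorov and Moyal is correct and matches the paper's first step.
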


\begin{proof}
Let's write $U_1 = \op(a_1) U_1(1)$, $U_2 = \op(a_2) U_2(1)$ with the obvious notations associated with the Cauchy problems (\ref{Cauchy_pb_Egorov}) for $\kappa_1$ and $\kappa_2$. Egorov's theorem asserts that $U_1(1) \op(a_2)U_1(1)^{-1} = \op(b_2)$ for some $b_2 \in S_\delta$ and $\op(a_1)\op(b_2) = \op(a_1 \# b_2)$. It is then enough to focus on the case $a_1= a_2 =1$. We set  $$
U_3(t) \coloneqq \left\{ \begin{array}{l} U_1(2t) \text{ for } 0 \leq t \leq 1/2 \\
U_1(1)\circ U_2(2t-1)  \text{ for } 1/2 \leq t \leq 1 \end{array}\right. $$
It solves  the Cauchy problem $$\left\{ \begin{array}{c}
hD_t U_3(t) + U_3(t)Q_3(t) = 0 \\
U(0)=\Id
\end{array}
\right. 
$$
with  $$
Q_3(t) \coloneqq \left\{ \begin{array}{l} 2 Q_1(2t) \text{ for } 0 \leq t \leq 1/2 \\
2Q_2 (2t-1) \text{ for } 1/2 \leq t \leq 1 \end{array}\right. $$
To conclude the proof, it is enough to notice that this Cauchy problem is associated with the path $\kappa_3(t)$ between $\kappa(0) = \Id$ and $\kappa_3(1) = \kappa_1 \circ \kappa_2$ where  $$
\kappa_3(t) \coloneqq \left\{ \begin{array}{l} \kappa_1(2t) \text{ for } 0 \leq t \leq 1/2 \\
\kappa_1 \circ \kappa_2(2t-1)  \text{ for } 1/2 \leq t \leq 1 \end{array}\right. $$
\end{proof}

\subsubsection{Precise version of Egorov's theorem}

We will need a more quantitative version of Egorov's theorem, similar to the one in \cite{NDJ19} (Lemma A.7). The result does not show that $U(1)^{-1} \op(a) U(1)$ is a pseudodifferential operator (one would need Beal's theorem to say that) but it gives a precise estimate on the remainder, depending on the semi-norms of $a$. We now specialize to the case of dimension $n=1$ but the following result holds in any dimension but changing the constant 15 in something of the form $Mn$. 

\begin{prop}\label{prop_Egorov}
Consider $\kappa \in \mathcal{K}$ and note $U(t)$ the solution of (\ref{Cauchy_pb_Egorov}). There exists a family of differential operators $(D_{j})_{j \in \N}$ of order $j$ such that for all $a \in S_\delta$ and all $N \in \N$, 
\begin{equation}
U(1)^{-1} \op(a) U(1) = \op \left(a \circ \kappa +  \sum_{j=1}^{N-1} h^j (D_{j+1} a) \circ \kappa \right) + O_{\kappa} \left( h^N ||a||_{C^{2N +15}} \right)
\end{equation}
\end{prop}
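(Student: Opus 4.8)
The plan is to derive the asymptotic expansion from the Heisenberg-picture evolution of the conjugated operator, following the standard proof of Egorov's theorem but keeping careful track of remainders in terms of $C^k$-norms of the symbol. First I would set $A(t) \coloneqq U(t)^{-1}\op(a)U(t)$, which by the Cauchy problem (\ref{Cauchy_pb_Egorov}) satisfies the Heisenberg equation
$$ hD_t A(t) = \frac{1}{i}\bigl[A(t), Q(t)\bigr] + \text{(terms that cancel)}, $$
more precisely $hD_t A(t) = [Q(t), A(t)]/i$ with $A(0) = \op(a)$, using $hD_t U(t)^{-1} = Q(t)U(t)^{-1}$. The strategy is then to seek an approximate solution of the form $A_N(t) = \op\bigl(a_0(t) + h a_1(t) + \dots + h^{N-1}a_{N-1}(t)\bigr)$ where the $a_j(t)$ are symbols determined recursively, and to show $\|A(t) - A_N(t)\|_{L^2\to L^2} = O(h^N\|a\|_{C^{2N+15}})$ by a Duhamel/Gr\"onwall argument.

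The recursion is obtained by inserting the ansatz into the Heisenberg equation and using the commutator expansion from the corollary to Lemma \ref{Moyal_produc_op}: since $[\op(q_t),\op(b)] = \frac{h}{i}\{q_t,b\} + \sum_{k\ge 2} h^k L_k(d^kq_t, d^kb) + O(h^N\|q_t\otimes b\|_{C^{2N+13}})$, matching powers of $h$ gives $\partial_t a_0(t) = \{q_t, a_0(t)\}$ (i.e. $a_0(t) = a\circ\kappa_t$, using $\frac{d}{dt}\kappa_t = \kappa_t^*H_{q_t}$), and for $j\ge 1$ a transport equation
$$ \partial_t a_j(t) - \{q_t, a_j(t)\} = \sum_{k\ge 2} (\text{polynomial in } d^k q_t)\, L_k\text{-type terms applied to } d^k a_{j-k+1}(t), $$
which is solved by integrating along the flow of $H_{q_t}$. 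A clean way to package this is to note that each $a_j(1)$ is obtained from $a$ by applying finitely many vector fields and multiplications by smooth functions built from $q_t$ and its derivatives, then composing with $\kappa$; this is exactly the structure $(D_{j+1}a)\circ\kappa$ claimed in the statement, where $D_{j+1}$ is a differential operator of order $j+1$ (the order count comes from: $a_j$ involves at most $j$ iterations of the $k\ge 2$ commutator terms, each raising the derivative order, but composing with $\kappa_t$ at each stage redistributes them — a careful bookkeeping gives order $j+1$). One has to check $a_j(t) \in S_\delta$ with semi-norms controlled by $\|a\|_{C^{j+1+\text{something}}}$; since $\kappa_t$ is a fixed (compactly supported perturbation of) smooth symplectomorphism, composition with it costs only $\kappa$-dependent constants, so the symbol classes are stable.

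For the remainder estimate I would write, with $R_N(t) \coloneqq A(t) - A_N(t)$, the equation $hD_t R_N(t) = [Q(t), R_N(t)]/i + h^N E_N(t)$ where $E_N(t) = \op(e_N(t)) + O_\kappa(h\|a\|_{C^{2N+15}})_{L^2\to L^2}$ collects the errors from truncating the commutator expansion at order $N$ and from the imperfect cancellation in the transport equations; the key point is that the Calderon–Vaillancourt bound together with the explicit $C^k$-controlled remainder in Lemma \ref{Moyal_produc_op} gives $\|E_N(t)\|_{L^2\to L^2} \le C_\kappa \|a\|_{C^{2N+15}}$, the constant $15$ absorbing $2N+12n+1$ with $n=1$ plus the few extra derivatives lost in the $\kappa_t$-composition and in passing from symbol bounds to operator bounds. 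Since $R_N(0)=0$ and $U(t)$, $U(t)^{-1}$ are bounded on $L^2$ uniformly in $t\in[0,1]$ with $\|U(t)^{\pm 1}\|_{L^2\to L^2} \le C_\kappa$ (this follows from $\|hD_t U(t)\| \le \|Q(t)\| \le C_\kappa$ via the equation, or from unitarity up to $O(h)$), Duhamel's formula $R_N(1) = \frac{1}{ih}\int_0^1 U(1)U(t)^{-1}\,h^N E_N(t)\,U(t)U(1)^{-1}\,dt$ yields $\|R_N(1)\|_{L^2\to L^2} \le C_\kappa h^{N-1}\cdot h\cdot \|a\|_{C^{2N+15}}$... — wait, that loses a power of $h$, so one must instead run the recursion to order $N$ so that the truncation error is $O(h^{N+1})$ pointwise in the symbol, i.e. define $a_j$ for $j\le N$ (not $N-1$) in the approximate solution but only keep $j\le N-1$ in the final statement, absorbing $h^N a_N(1)\circ\kappa$ into the remainder since $\|h^N\op(a_N(1)\circ\kappa)\| \le C_\kappa h^N\|a\|_{C^{N+\dots}}$. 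The main obstacle is precisely this bookkeeping: proving that the differential operators $D_{j+1}$ have order exactly $j+1$ and that the $C^{2N+15}$-norm (rather than a norm growing faster with $N$) suffices, which requires tracking how many derivatives of $a$ are spent at each step of the recursion and checking the arithmetic closes — the factor-of-two structure (each commutator term of order $k\ge 2$ eats $k$ derivatives of $a$ but advances the $h$-order by $k-1$) is what makes $2N$ the right count, and the constant $15 = 2\cdot 1\cdot 6 + 1 + 2$ comes from Lemma~\ref{Moyal_produc_op} with $n=1$ plus a small loss from $\kappa_t$-composition and Calderon–Vaillancourt.
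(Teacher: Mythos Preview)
Your approach is correct and follows the standard Heisenberg-picture proof of Egorov's theorem; the paper's argument is essentially the same but organized so as to avoid the Duhamel step and the associated $h$-loss you had to work around. Concretely, instead of approximating $A(t)=U(t)^{-1}\op(a)U(t)$ by $\op\bigl(\sum h^j a_j(t)\bigr)$ and controlling the difference via Duhamel, the paper sets $A_k(t)\coloneqq U(t)\,\op\!\bigl(\sum_{j=0}^k h^j a_j(t)\circ\kappa_t\bigr)\,U(t)^{-1}$ with the $a_j$ chosen recursively so that $a_j(0)=0$ for $j\ge 1$; then $A_{N-1}(0)=\op(a)$ exactly, and one shows directly that $\partial_t A_{N-1}(t)=O_\kappa(h^N\|a\|_{C^{2N+15}})$ in operator norm (the $1/h$ from the commutator is absorbed by the leading cancellation built into the ansatz), so a plain integration from $0$ to $1$ finishes. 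This is just your argument after conjugating your $A_N(t)$ by $U(t)$, but it makes the remainder bookkeeping cleaner and removes the need to run the recursion one extra step.
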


\begin{proof}
We keep the notations introduced previously. 
Let us first note $$A_0(t) = U(t) \op(a \circ \kappa_t  ) U(t)^{-1}$$ and compute 
\begin{align*}
U(t)^{-1} \partial_t A_0(t) U(t) &= -\frac{i}{h} [Q(t) , \op( a \circ \kappa_t) ] + \op \left( \{ q_t, a \circ \kappa_t \} \right) \\
&= \op \left( \{ q_t, a \circ \kappa_t \} \right) - \frac{i}{h} \left(  \op \left(  \frac{h}{i } \{ q_t, a \circ \kappa_t \} + \sum_{j=2}^{N} h^j L_j (d^j q_t, d^j (a \circ \kappa_t) ) \right) \right) 
\\&+ O\left(h^{N} ||q_t \otimes (a \circ \kappa_t)||_{C^{2(N+1) +13}}  \right)  \\ 
& =  \op \left( \sum_{j=1}^{N-1} -i h^j L_{j+1} (d^{j+1} q_t, d^{j+1} (a \circ \kappa_t) ) \right)
+ O_{\kappa_t} \left(h^{N} ||a||_{C^{2N +15}} \right) 
\end{align*}
We now define by induction a family of functions $a_j(t), j=0, \dots, N-1$ by 
\begin{equation*}
a_0(t) = a \; ; \; a_k(t) = \sum_{m=0}^{k-1}  \int_0^t i L_{k+1-m} \left( d^{k+1-m} q_s , d^{k+1-m} ( a_m(s) \circ \kappa_s) \right) \circ \kappa_s^{-1} ds
\end{equation*}
and set $A_k(t) = U(t) \op \left( \sum_{j=0}^k h^j a_j(t) \circ \kappa_t \right) U(t)^{-1}$. 
We first remark by an easy induction on $k$, that $a_k(t)$ is of the form $D_{k+1}(t) a$ where $D_{k+1}(t)$ is a differential operator of order at most $k+1$, with coefficients depending continuously on $t$ and on $(\kappa_t)_t$. We now check by induction the following : 
$$U(t)^{-1} \partial_t A_k(t) U(t) = -i \op \left( \sum_{j=k+1}^{N-1} \sum_{m=0}^k h^{j}L_{j+1-m} \left( d^{j+1-m} q_t, d^{j+1-m} (a_m(t) \circ \kappa_t ) \right)  \right) + O_{\kappa} \left( h^N ||a||_{C^{2N+15}}\right)$$
We've already done it for $k=0$. Let's assume that the equality holds for $k-1$ and let's prove it for $k \geq 1$. 

$$U(t)^{-1} \partial_t A_k(t) U(t) =U(t)^{-1} \partial_t A_{k-1}(t) U(t) + h^k U(t)^{-1} \partial_t \op\left(  a_{k}(t) \circ \kappa_t \right)U(t)$$
Let's compute the second part of the right hand side. 
\begin{align*}
& U(t)^{-1} \partial_t \op\left( a_{k}(t) \circ \kappa_t \right) U(t)  \\
&= - \frac{i}{h} [Q(t), \op (a_k(t) \circ \kappa_t)] + \op( \{ q_t, a_k(t) \circ \kappa_t \} ) + \op \left( \partial_t a_k(t)  \circ \kappa_t \right) \\
&= -i\op \left( \sum_{l=1}^{N-1-k} h^j L_{l+1} \left(d^{l+1} q_t, d^{l+1} (a_k(t) \circ \kappa_t) \right) \right)+ O_\kappa\left( h^{N-k} ||a_k(t)||_{C^{2(N+1 - k )+13}} \right)  +  \op \left( \partial_t a_k(t)  \circ \kappa_t \right)
\end{align*}
We can estimate the remainder by 
$$ O_\kappa\left( h^{N-k} ||a_k(t)||_{C^{2(N+1 - k )+13}} \right)  =  O_\kappa\left( h^{N-k} ||a||_{C^{2(N+1 - k )+13+k+1}} \right) = O_\kappa\left( h^{N-k} ||a||_{C^{2N+15}} \right)$$
We now combine this with the value of $$U(t)^{-1} \partial_t A_{k-1}(t) U(t) = -i \op \left( \sum_{j=k}^{N-1} \sum_{m=0}^{k-1} h^{j}L_{j+1-m} \left( d^{j+1-m} q_t, d^{j+1-m} (a_m(t) \circ \kappa_t ) \right)  \right) + O_{\kappa} \left( h^N ||a||_{C^{2N+15}}\right)$$
By definition of $a_k(t)$, the term  $h^k \op \left( \partial_t a_k(t)  \circ \kappa_t \right)$ cancels the term corresponding to $j=k$ in the sum. Moreover, for every $j >k$, writing $j= k+l, l \in \{ 1, \dots, N-1-k \}$, the term $h^{k+l} L_{l+1} \left(d^{l+1} q_t, d^{l+1} (a_k(t) \circ \kappa_t) \right) $, gives the missing term $h^j L_{j+1-k} \left(d^{j+1-k} q_t, d^{j+1-k} (a_k(t) \circ \kappa_t) \right) $. This gives the required equality for $A_k(t)$. 

In particular, $\partial_t A_{N-1}(t) = O_{\kappa} \left( h^N ||a||_{C^{2N+15}}\right)$. 
We now use the fact that at $t=0$, $a_0(0) = a , a_k(0) = 0, k=1, \dots, N-1$, $U(0) = \Id, \kappa_0 = \Id$,  and hence $A_{N-1}(0) = \op(a)$. Integrating between $0$ and $1$, we hence have 
$$ A_{N-1}(t) - \op(a) = O_{\kappa} \left( h^N ||a||_{C^{2N+15}}\right)$$
Conjugating by $U(1)$, we finally have 
$$U(1)^{-1} \op(a) U(1) = \op( a \circ \kappa + \sum_{k=1}^{N-1} h^k a_k(1) \circ \kappa ) + O_{\kappa} \left( h^N ||a||_{C^{2N+15}}\right)$$
which is the what we wanted, since $a_k(1) = D_{k+1}(1) a$. 
\end{proof}

\subsubsection{An important example}
Let us focus on a particular case of canonical transformations. 
Suppose that $\kappa: T^*\R^n \to T^*\R^n$ is a canonical transformation such that 
$$ (x,\xi, y, \eta) \in \Gr(\kappa) \mapsto (x, \eta) $$
is a local diffeomorphism near $(x_0, \xi_0 , y_0, \eta_0 )$. Then, there exists a phase function $\psi \in \cinf(\R^n \times \R^n)$, $\Omega_x, \Omega_\eta$ open sets of $\R^n$ and $\Omega$ a neighborhood of $(x_0, \xi_0 , y_0, \eta_0 ) $,  such that 
$$ \Gr^\prime(\kappa)  \cap \Omega= \{ (x, \partial_x \psi(x, \eta) , \partial_\eta\psi(x,\eta), -\eta) , x \in \Omega_x, \eta \in \Omega_\eta \} $$ 
One says that $\psi$ generates $\Gr^\prime(\kappa)$. 
Suppose that that $\alpha \in S_\delta^{comp} (\Omega_x \times \Omega_\eta)$. Then,  modulo a smoothing operator $\hinf$, the following operator $T$ is an element of $I_\delta^{comp} ( \R^n \times \R^n , \Gr^\prime(\kappa) ) $ : 

$$ Tu(x) = \frac{1}{(2\pi h)^n} \int_{\R^{2n}} e^{ \frac{i}{h} (\psi(x,\eta) - y \cdot \eta) } \alpha(x,\eta) u(y) dy d\eta $$
and if $T^*T = \Id$ microlocally near $(y_0,\eta_0)$ then $|\alpha(x, \eta)|^2 = | \det D^2_{ x \eta}  \psi (x, \eta) | + O(h^{1-2 \delta})_{S_\delta}$ near $(x_0, \xi_0, y_0, \eta_0)$. The converse statement holds : microlocally near $(x_0,\xi_0, y_0, \eta_0)$ and modulo $\hinf$, the elements of $I_\delta(\R^n \times \R^n, \Gr^\prime(\kappa) ) $ can be written under this form. 

\subsubsection{Lagrangian relations}
Recall that the Lagrangian relation $F$ we consider is the union of local Lagrangian relations $F_{ij} \subset U_i \times U_j$. We fix a compact set $W \subset \pi_L(F)$ containing some neighborhood of $\mathcal{T}$. Our definition will depend on $W$. 
Following \cite{NSZ14} (Section 3.4.2), we now focus on the definition of the elements of $I_\delta(Y \times Y; F^\prime)$. An element $T \in I_\delta(Y \times Y; F^\prime)$ is a matrix of operators  
$$T = (T_{ij})_{1 \leq i,j\leq J } : \bigoplus_{j=1}^J L^2(Y_j) \to \bigoplus_{i=1}^J L^2(Y_i)$$
Each $T_{ij}$ is an element of $I_\delta(Y_i \times Y_j , F_{ij}^\prime )$. Let's now describe the recipe to construct elements of $I_\delta(Y_i \times Y_j, F_{ij}^\prime )$. We fix $i,j \in \{1, \dots , J \}$. 

\begin{itemize}
\item Fix some small $\varepsilon >0$ and two open covers of $U_j$, $U_j\subset \bigcup_{l=1}^L \Omega_l$, $\Omega_l \Subset \widetilde{\Omega}_l$, with $\widetilde{\Omega}_l$ star-shaped and having diameter smaller than $\varepsilon$. We note $\mathcal{L}$ the sets of indices $l$ such that $\Omega_l \subset \pi_R(F_{ij})=\widetilde{D}_{ij} \subset U_j$ and we require (this is possible if $\varepsilon$ is small enough)  
$$ F^{-1}(W) \cap U_j \subset \bigcup_{l \in \mathcal{L}} \Omega_l $$

\item Introduce a smooth partition of unity associated with the cover $(\Omega_l)$, $(\chi_l)_{1 \leq l \leq L} \in \cinfc(\Omega_l, [0,1])$, $\supp \chi_l \subset \Omega_l$, $\sum_l \chi_l = 1$ in a neighborhood of $\overline{U_j}$.
\item  For each $l \in \mathcal{L}$, we denote $F_l$ the restriction to $\widetilde{\Omega}_l$ of $F_{ij}$, seen as a symplectomorphism $F_{ij} : \widetilde{D}_{ij} \subset U \to V$. By Lemma \ref{lemma_local_symp}, there exists $\kappa_l \in \mathcal{K}$ which coincides with $F_l$ on $\Omega_l$. 
\item  We consider $T_l=\op(\alpha_i) U_l(1)$ where $U_l(t)$ is the solution of the Cauchy problem (\ref{Cauchy_pb_Egorov}) associated with $\kappa_l$ and $\alpha_i \in S_\delta^{comp}(T^*\R)$. 
\item We set 

\begin{equation}\label{def_FIO_global}T^\R = \sum_{l \in \mathcal{L}} T_l \op(\chi_l) : L^2(\R) \to L^2(\R)
\end{equation} 
$T^\R$ is a globally defined Fourier integral operator. We will note $T^\R \in I_\delta(\R \times \R, F_{ij}^\prime)$. Its wavefront set is included in $\widetilde{A}_{ij} \times \widetilde{D}_{ij}$. 
\item Finally, we fix cut-off functions $(\Psi_i, \Psi_j) \in \cinfc(Y_i, [0,1]) \times \cinfc(Y_j, [0,1])$ such that $\Psi_i \equiv 1$ on $\pi(U_i)$ and $\Psi_j \equiv 1$ on $\pi(U_j)$(here, $\pi : (x,\xi) \in T^*Y_{\cdot} \mapsto  x \in Y_{\cdot}$ is the natural projection) and we adopt the following definitions : 
\end{itemize}

\begin{defi}\label{Def_FIO_local}
We say that $T : \mathcal{D}^\prime(Y_j) \to \cinf(\overline{Y_i})$ is a Fourier integral operator in the class $I_\delta(Y_i \times Y_j, F_{ij}^\prime)$  if there exists $T^\R \in I_\delta(\R \times \R, F^\prime)$ as constructed above such that
\begin{itemize}
\item $T - \Psi_i T \Psi_j = \hinf_{\mathcal{D}^\prime(Y) \to \cinf(\overline{Z})}$; 
\item $\Psi_i T \Psi_j =  \Psi_i T^\R \Psi_j $
\end{itemize}
\end{defi}

For $U^\prime_j \subset U_j$ and $U_i^\prime = F(U_j^\prime) \subset U_i$, we say that $T$ (or $T^\R$)  is microlocally unitary in $U_i^\prime \times U_j^\prime$ if $TT^* = \Id$ microlocally in $U_i^\prime$ and $T^*T = \Id$ microlocally in $U_j^\prime$. 

\begin{rem}
The definition of this class is not canonical since it depends in fact on the compact set $W$ through the partition of unity. 
\end{rem}

\paragraph{Another version of Egorov's theorem.} The precise version of Egorov's theorem in Proposition \ref{prop_Egorov} is only stated for globally unitary Fourier integral operator defined using the Cauchy problem \ref{Cauchy_pb_Egorov}. We extend it here to microlocally unitary and globally defined Fourier integral operators. We fix $i,j \in \{1, \dots ,J \}$. 

\begin{lem}\label{lem_precise_Egorov}
Let $T \in I_\delta(\R \times \R, F_{ij}^\prime)$. Suppose that $B(\rho, 4 \varepsilon) \subset U_j$ and that $T$ is microlocally unitary in $F_{ij}(B(\rho, 4 \varepsilon)) \times B(\rho, 4 \varepsilon)$. Then, there exists a family $(D_k)_{k \in \N}$ of differential operators of order $k$, compactly supported in $B(\rho, 3\varepsilon)$ such that the following holds : 
For every $N \in \N$ and for all $b \in \cinfc(B(\rho, 2 \varepsilon))$, 
$$ T\op(b) = \op\left( b \circ \kappa^{-1}   + \sum_{k=1}^{N-1} h^k(D_{k+1}b) \circ \kappa^{-1}  \right) T+ O\left( h^N ||b||_{C^{2N+15} } \right)_{L^2(\R) \to L^2(\R)}$$
The constants in $O$ depend on $T$ and $F$.
\end{lem}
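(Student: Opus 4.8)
The strategy is to reduce the microlocally-unitary, globally-defined Fourier integral operator $T$ to one of the model operators $\op(\alpha_i)U_l(1)$ for which the quantitative Egorov statement of Proposition~\ref{prop_Egorov} applies, and then to transfer the conclusion back. First I would localize: since $b \in \cinfc(B(\rho,2\varepsilon))$ and $B(\rho,4\varepsilon) \subset U_j$, only one (or finitely many) of the local pieces $T_l\op(\chi_l)$ in the decomposition (\ref{def_FIO_global}) contributes to $T\op(b)$ modulo $\hinf$; more precisely, for $l$ with $\Omega_l$ meeting $B(\rho,2\varepsilon)$ we have $\chi_l = 1$ there after possibly shrinking $\varepsilon$, so $\op(b) = \sum_{l}\op(\chi_l)\op(b) + \hinf\cdot\|b\|$, and the symbols involved are all supported in $B(\rho,3\varepsilon) \subset U_j$ where $F_{ij}$ agrees with a fixed $\kappa_l \in \mathcal{K}$. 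Thus it suffices to prove the statement for $T = \op(\alpha_i)U(1)$ with $U(1)$ solving (\ref{Cauchy_pb_Egorov}) for a $\kappa \in \mathcal{K}$ extending $F_{ij}$ near $B(\rho,4\varepsilon)$, and $b$ supported in $B(\rho,2\varepsilon)$.

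For such a $T$, write $T\op(b) = \op(\alpha_i)U(1)\op(b) = \op(\alpha_i)\big(U(1)\op(b)U(1)^{-1}\big)U(1)$. Applying the dual form of Proposition~\ref{prop_Egorov} — equivalently, Proposition~\ref{prop_Egorov} applied to $\kappa^{-1}$, which also lies in $\mathcal{K}$ (reverse and negate the path) — gives
$$ U(1)\op(b)U(1)^{-1} = \op\Big( b\circ\kappa^{-1} + \sum_{k=1}^{N-1} h^k (\widetilde D_{k+1}b)\circ\kappa^{-1} \Big) + O_\kappa\big(h^N\|b\|_{C^{2N+15}}\big), $$
where $\widetilde D_{k+1}$ are differential operators of order $k+1$; note the symbol on the right is supported in $\kappa^{-1}(B(\rho,2\varepsilon)) \subset B(\rho,3\varepsilon)$ (shrinking $\varepsilon$). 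Now I must commute $\op(\alpha_i)$ past this pseudodifferential factor and absorb it into $T = \op(\alpha_i)U(1)$ on the \emph{left}. Here the microlocal unitarity in $F_{ij}(B(\rho,4\varepsilon)) \times B(\rho,4\varepsilon)$ enters: it forces $|\alpha_i|^2 = |\det D^2_{x\eta}\psi| + O(h^{1-2\delta})_{S_\delta}$ on the relevant region (the ``important example'' paragraph), in particular $\alpha_i$ is elliptic near $\kappa(B(\rho,3\varepsilon))$, and $T^*T = \Id$ there. So one writes $\op(a)T = \op(a)\op(\alpha_i)U(1)$ and uses the Moyal product (Lemma~\ref{Moyal_produc_op}) together with $\op(\alpha_i)^*\op(\alpha_i) = \op(|\alpha_i|^2) + O(h)$ being microlocally $\Id$ to invert: for $a$ supported near $\kappa(B(\rho,3\varepsilon))$ one has $\op(a)\op(\alpha_i) = \op(\alpha_i)\op(a') + \hinf\|a\|_{C^{\ast}}$ with $a' = a + \sum h^k(\text{differential of }a) $, i.e. $a'$ is again $a$ modified by lower-order differential operators. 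Composing the two symbol expansions and collecting powers of $h$ produces the claimed family $(D_k)$ of differential operators of order $k$, supported in $B(\rho,3\varepsilon)$, and the remainder bound $O(h^N\|b\|_{C^{2N+15}})$.

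The main obstacle is the last step — moving $\op(\alpha_i)$ to the left through the symbol $b\circ\kappa^{-1} + \dots$ and reassembling $T$ — because $\alpha_i$ is only in $S_\delta^{comp}$ (not a nice classical symbol) and, more seriously, because one must check that inverting the elliptic factor $\op(\alpha_i)$ microlocally does not generate negative powers of $h$: this is where microlocal unitarity is essential, since it pins $|\alpha_i|$ to a \emph{smooth}, $h$-independent Jacobian factor, so the parametrix construction for $\op(\alpha_i)$ near $\kappa(B(\rho,3\varepsilon))$ has a genuine asymptotic expansion in $h$ with $C^\infty$ coefficients. Once that is in hand, all the bookkeeping (tracking that the $k$-th correction term is a differential operator of order exactly $k$, tracking that $2N+15$ derivatives of $b$ suffice — the $+15$ comes from the Egorov remainder of Proposition~\ref{prop_Egorov} and the extra losses from the finitely many compositions are absorbed by enlarging $N$) is routine. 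I would also remark that the constants depend on $T$ (through $\alpha_i$ and the path $(\kappa_t)$) and on $F$ (through the $\Omega_l$, $\chi_l$), as stated.
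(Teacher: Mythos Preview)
Your approach is workable but different from the paper's, and the localization step is not quite right as written. You cannot arrange that a single $\chi_l$ equals $1$ on $B(\rho,2\varepsilon)$: the cover $(\Omega_l)$ and the partition $(\chi_l)$ are fixed before $\varepsilon$, so in general several pieces $T_l\op(\chi_l)$ contribute nontrivially, each with its own $U_l(1)$, and summing expressions of the form $\op(c'_l)T_l$ does not directly give something times $T$. The clean fix is to choose one $\tilde\kappa \in \mathcal{K}$ extending $F_{ij}$ on $B(\rho,3\varepsilon)$, set $U = U(1)$ for this $\tilde\kappa$, and observe that $T\op(\chi)U^{-1} \in \Psi_\delta$ (Proposition~\ref{composition_FIO} together with the remark $I_\delta(\R\times\R,\Gr'(\Id)) = \Psi_\delta$), so that $T\op(\chi) = \op(a_1)U$ for a \emph{single} symbol $a_1$. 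Once this is in place, your Egorov-then-commute argument does go through.

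The paper takes a slicker route that avoids both the piecewise decomposition and the parametrix construction for the amplitude. It inserts $T^*T = \Id$ microlocally on the right, writing
\[
T\op(b) = T\op(\chi)\op(b)\op(\chi)\op(\chi)\,T^*T + O\big(h^N\|b\|_{C^{2N+13}}\big),
\]
and groups this as $\op(a_1)\big[U\op(b)\op(\chi)U^{-1}\big]\op(a_2)\,T$ with $\op(a_1)=T\op(\chi)U^{-1}$ and $\op(a_2)=U\op(\chi)T^*$, both pseudodifferential by the composition rule. Proposition~\ref{prop_Egorov} handles the middle bracket, and one Moyal expansion with the fixed symbols $a_1,a_2$ finishes; microlocal unitarity enters only through the scalar identity $a_1a_2=1$ on the support of $b\circ\kappa^{-1}$, which makes the leading term exactly $b\circ\kappa^{-1}$. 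No inversion of an elliptic amplitude is needed, and the factor $T$ appears on the right for free.
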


\begin{proof}
First, introduce some cut-off function $\chi$ such that $\chi \equiv 1$ in a neighborhood of $B(\rho,2\varepsilon)$ and $\supp \chi \subset B(\rho, 3\varepsilon)$. Due to these properties and Proposition \ref{Moyal_produc_op}, we have 
$$ \op(b) = \op(\chi) \op(b) \op(\chi) \op(\chi) +  O\left( h^N ||b||_{C^{2N+13} } \right)_{L^2(\R) \to L^2(\R)}$$
Moreover, $\op(\chi) T^*T = \op(\chi) + \hinf$ and hence, 
$$T\op(b) = T\op(\chi)\op(b) \op(\chi)\op(\chi) T^* T + O\left( h^N ||b||_{C^{2N+13} } \right)_{L^2 \to L^2} + \hinf ||\op(b) ||_{L^2 \to L^2}$$
The term $\hinf ||\op(b) ||_{L^2 \to L^2}$ can be absorbed in $O\left( h^N ||b||_{C^{2N+13} } \right)_{L^2 \to L^2} $. Consider $\tilde{\kappa} \in \mathcal{K}$ extending $\kappa|_{B(\rho, 3 \varepsilon)}$ and construct $U = U(1)$ by solving the Cauchy problem (\ref{Cauchy_pb_Egorov}) associated with $\tilde{\kappa}$. Due to the properties on composition of Fourier integral operators (Proposition \ref{composition_FIO}), $T\op(\chi)U^{-1}$ and $U \op(\chi)T^*$ are pseudodiffferential operators, and we note them $\op(a_1), \op(a_2)$. Now write
\begin{align*}T\op(b) &=  \left[T\op(\chi)U^{-1}\right]  U\op(b) \op(\chi)U^{-1} \left[U\op(\chi) T^* \right] T + O\left( h^N ||b||_{C^{2N+13} } \right)_{L^2 \to L^2} \\
&= \op(a_1) \left[ U\op(b) \op(\chi)U^{-1}\right]  \op(a_2) T + O\left( h^N ||b||_{C^{2N+13} } \right)_{L^2 \to L^2}
\end{align*}
By using the precise version in Proposition \ref{prop_Egorov}, one can write  
$$U\op(b) \op(\chi)U^{-1} = \op \left( b \circ \kappa^{-1} + \sum_{k=1}^{N-1} (L_{k+1} b) \circ \kappa^{-1} \right) + O\left( h^N ||b||_{C^{2N+15} } \right)_{L^2 \to L^2}$$
Applying Lemma \ref{Moyal_produc_op}, we see that we can write 
$$T\op(b) =   \op \left( b_0 \circ \kappa^{-1} + \sum_{k=1}^{N-1} (D_{k+1} b) \circ \kappa^{-1} \right) T + O\left( h^N ||b||_{C^{2N+15} } \right)_{L^2 \to L^2}$$
where $b_0 = a_1 \times b \circ \kappa^{-1}\times  a_2$. $T$ being microlocally unitary in $B(\rho, 4 \varepsilon)$, the product $a_1 a_2$ is equal to $1$ in $B(\rho, 2\varepsilon)$, and hence, the lemma is proved. 
\end{proof}

\subsection{Hyperbolic dynamics}\label{section_hyperbolic} \label{Subsection_hyperbolic_dynamics}

We assumed that $F$ is hyperbolic on the trapped set $\mathcal{T}$. As already mentioned, we can fix an adapted Riemannian metric on $U$ such that the following stronger version of the hyperbolic estimates are satisfied for some $\lambda_0 >0$ : for every $\rho \in \mathcal{T}$, $n \in \N$, 
\begin{align}
v \in E_u( \rho) \implies ||d_\rho F^{-n}(v)|| \leq e^{-\lambda_0 n } ||v||\\
v \in E_s( \rho) \implies ||d_\rho F^{n}(v)|| \leq e^{-\lambda_0 n } ||v||
\end{align}

\begin{nota}
We now use the induced Riemannian distance on $U$ and denote it $d$. \\
We also use the same notation $|| \cdot||$ to denote the subordinate norm on the space of linear maps between tangent spaces of $U$, namely, if $F(\rho_1) = \rho_2,$ $$||d_{\rho_1}F || = \sup_{ v \in T_{\rho_1} U, ||v||_{\rho_1}=1} ||d_{\rho_1}F(v)||_{\rho_2}$$
\end{nota}

If $\rho \in \mathcal{T}$,  $n \in \Z$, we use this Riemannian metric to define the unstable Jacobian $J^u_n (\rho)$ and stable Jacobian $J^s_n (\rho)$ at $\rho$ by : 

\begin{align}\label{Def_jacobian}
v \in E_u( \rho) \implies ||d_\rho F^{n}(v)|| = J^u_n (\rho) ||v|| \\
v \in E_s( \rho) \implies ||d_\rho F^{n}(v)|| = J^s_n (\rho)||v||
\end{align}
These Jacobians quantify the local hyperbolicity of the map. 
\begin{nota}
Suppose that $f$ and $g$ are some real-valued functions depending on the same family of parameters $\mathcal{P}$. For instance, for $J^u_n(\rho)$, $\mathcal{P}= \{ n, \rho\}$.
We will note $ f \sim g$ to mean that there exist constant a $C\geq 1$ depending only on $(U,F)$, but not on $\mathcal{P}$, such that $ C^{-1}g \leq f \leq C g$. \\
For instance, if we define unstable and stable Jacobian $\tilde{J}^u_n$ and $\tilde{J}^s_n$ using another Riemannian metric, then, for every $n \in \Z$ and $\rho \in \mathcal{T}$, 
\begin{equation*}
\tilde{J}^u_n(\rho) \sim J^u_n(\rho)\quad ; \quad  \tilde{J}^s_n(\rho) \sim J^s_n(\rho)
\end{equation*}
\end{nota}
From the compactness of $\mathcal{T}$, there exist $\lambda_1 \geq \lambda_0$ which satisfies 

\begin{align}
&e^{n\lambda_0} \leq J^{u}_n (\rho) \leq  e^{n\lambda_1 } &\text{ and }  & e^{-n\lambda_1} \leq J^{s}_n (\rho) \leq  e^{-n\lambda_0} &\quad ; \quad  n \in \N , \rho \in \mathcal{T} \label{lamba0et1}\\
 &e^{n\lambda_0} \leq J^{s}_{-n} (\rho) \leq  e^{n\lambda_1 } &\text{ and } & e^{-n\lambda_1} \leq J^{u}_{-n} (\rho) \leq  e^{-n\lambda_0} &\quad ; \quad  n \in \N , \rho \in \mathcal{T} \label{lambdao0et1bis}
\end{align}
We cite here standard facts about the stable and unstable manifolds (see for instance \cite{KH}, Chapter 6). 
\begin{lem} \label{classical_hyperbolic}
For any $\rho \in \mathcal{T}$, there exist local stable and unstable manifolds $W_s(\rho), W_u( \rho) \subset U$ satisfying, for some $\varepsilon_1 >0$ (only depending on $F$) ($\star$ will denote a letter in $\{u ,s \}$ and the use of $\pm$ with $\star$ has to be read with the convention $ u \to - , s \to +)$ )
\begin{enumerate}[label = (\arabic*)]
\item $W_s(\rho), W_u( \rho)$ are $C^\infty$-embedded curves, with the $C^\infty$ norms of the embedding uniformly bounded in $\rho$. 
\item the boundary of $W_\star(\rho)$ do not intersect $\overline{B(\rho, \varepsilon_1)} $ \footnote{in other words, there exists a smooth curve $\gamma : [-\delta,\delta]  \to U$ such that $\overline{B(\rho, \varepsilon_1)} \cap W_\star(\rho) = \im \gamma$, with $\gamma(0) = \rho$ : it means that the size of the (un)stable manifolds is bounded from below uniformly. }
\item $W_s(\rho) \cap W_u(\rho) = \{ \rho \}$, $T_\rho W_\star (\rho) = E_\star (\rho) $ 

\item $F^{\pm}(W_\star(\rho) ) \subset W_\star \left(F(\rho) \right)$.
\item For each $\rho^\prime \in W_\star(\rho), d( F^{\pm n } (\rho), F^{ \pm n}(\rho^\prime) ) \to 0$. 
\item Let $\theta>0$ satisfying $ e^{-\lambda_0} < \theta < 1$. If $\rho^\prime \in U$ satisfies $d(F^{\pm i }(\rho) , F^{\pm i }( \rho^\prime) ) \leq \varepsilon_1$ for all $i = 0 , \dots, n$  then $d\left( \rho^\prime, W_\star(\rho) \right)\leq C \theta^n \varepsilon_1$ for some $C>0$. 
\item If $\rho, \rho^\prime \in \mathcal{T}$ satisfy $d(\rho, \rho^\prime) \leq \varepsilon_1$, then  $W_u (\rho) \cap W_s (\rho^\prime)$ consists of exactly one point in $\mathcal{T}$. 
\end{enumerate}
\end{lem}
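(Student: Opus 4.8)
The plan is to establish Lemma~\ref{classical_hyperbolic} as the Hadamard--Perron / stable manifold theorem for the compact hyperbolic set $\mathcal{T}$, following the classical graph-transform argument (\cite{KH}, Chapter~6) and tracking uniformity in $\rho$. The one thing worth stressing before the construction is that every constant that enters it --- the rate $\lambda_0$, the $C^k$ norms of $F^{\pm 1}$ on a fixed neighborhood of $\mathcal{T}$, and a uniform lower bound for the angle between $E_u(\rho)$ and $E_s(\rho)$ --- is uniform over $\mathcal{T}$, by compactness of $\mathcal{T}$ together with continuity of the splitting. This is precisely what produces the uniform $C^\infty$ bounds in item~(1) and the uniform size $\varepsilon_1$ in item~(2).

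Concretely, I would first choose, for each $\rho\in\mathcal{T}$, a coordinate chart of a fixed size $r_0>0$ centered at $\rho$ sending $T_\rho U = E_u(\rho)\oplus E_s(\rho)$ to the standard splitting $\R\oplus\R$; such charts exist with $r_0$ independent of $\rho$ by continuity and compactness. In these charts, wherever it is defined, $F$ is a uniformly small $C^\infty$ perturbation of the linear hyperbolic map $\mathrm{diag}\big(J^u_1(\rho),J^s_1(\rho)\big)$, with $e^{\lambda_0}\le J^u_1(\rho)$ and $J^s_1(\rho)\le e^{-\lambda_0}$ by the adapted hyperbolicity estimates and \eqref{lamba0et1}. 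Running the graph transform $\Gamma$ on the space of $C^1$ graphs over balls in $E_u(\cdot)$ whose tangent lies in a fixed unstable cone field, one checks that $\Gamma$ carries such a graph over the chart at $\rho$ into such a graph over the chart at $F(\rho)$ and contracts the fiberwise $C^0$ distance by $e^{-\lambda_0}$; its unique invariant section is $\rho\mapsto W_u(\rho)$, and applying the same construction to $F^{-1}$ gives $\rho\mapsto W_s(\rho)$. Item~(1) --- $C^\infty$ regularity with uniform norms --- is the standard bootstrap: the $C^r$ graph transform is a contraction in $C^0$ and fiber-contracting on each higher jet bundle, and the jet bounds it propagates depend only on the (uniform) $C^\infty$ norms of $F^{\pm1}$. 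Item~(2) is the uniform radius of the balls on which $\Gamma$ is defined, again a function only of $\lambda_0$ and the uniform $C^2$ norm of $F^{\pm1}$.

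Items~(3)--(6) are then read off the construction. Since $\mathcal{T}$ is $F$-invariant, the orbit of $\rho$ stays in $\mathcal{T}$, so $\rho$ itself lies on both invariant sections, giving $\rho\in W_u(\rho)\cap W_s(\rho)$; uniqueness of the intersection and $T_\rho W_\star(\rho)=E_\star(\rho)$ follow because the induced action of $d_\rho F^{\mp1}$ on tangent lines has $E_\star(\rho)$ as its unique attracting fixed line. The inclusion $F^{\pm}(W_\star(\rho))\subset W_\star(F^{\pm}(\rho))$ is the equivariance of $\Gamma$ together with uniqueness of the section (after restriction to $B(\cdot,\varepsilon_1)$). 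For $\rho'\in W_u(\rho)$ one has $d(F^{-n}(\rho),F^{-n}(\rho'))\le e^{-\lambda_0 n}d(\rho,\rho')$, and symmetrically for $W_s$ under forward iteration, which is item~(5) and is exactly the fiberwise contraction of $\Gamma$ along the fixed section. Item~(6) comes from the same cone estimate: a point $\rho'$ staying $\varepsilon_1$-close to the backward (resp.\ forward) orbit of $\rho$ for $n$ steps is trapped between two admissible graphs that $\Gamma$ brings within $e^{-n\lambda_0}\varepsilon_1$ of one another, and any fixed $\theta\in(e^{-\lambda_0},1)$ absorbs the constants lost when passing between the adapted metric and the distance $d$.

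The step I expect to need the most care is the local product structure, item~(7). Given $\rho,\rho'\in\mathcal{T}$ with $d(\rho,\rho')\le\varepsilon_1$, the curves $W_u(\rho)$ and $W_s(\rho')$ are $C^1$, of uniform size, and uniformly transverse --- their tangents are $C^0$-close to $E_u(\rho)$ and $E_s(\rho')$, which make a uniformly positive angle --- so for $\varepsilon_1$ small the implicit function theorem yields exactly one intersection point $q$. That $q\in\mathcal{T}$ uses only item~(4): from $q\in W_s(\rho')$ and invariance of $\mathcal{T}$ we get $F^n(q)\in W_s(f^n(\rho'))\subset U$ for all $n\ge 0$, and from $q\in W_u(\rho)$ we get $F^{-n}(q)\in W_u(f^{-n}(\rho))\subset U$ for all $n\ge 0$, so the full orbit of $q$ stays in $U$, i.e.\ $q\in\mathcal{T}_+\cap\mathcal{T}_-=\mathcal{T}$. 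Since the whole statement is classical, the actual write-up can reasonably be reduced to fixing the uniform adapted charts and then citing \cite{KH}, Chapter~6; the only ingredient specific to our setting rather than to abstract hyperbolicity is the invariance of $\mathcal{T}$ used at the end of item~(7).
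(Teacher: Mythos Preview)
Your proposal is correct and, in fact, goes well beyond what the paper does: the paper simply cites this lemma as a collection of standard facts about hyperbolic sets, referring to \cite{KH}, Chapter~6, and gives no proof at all. Your graph-transform sketch is a faithful outline of that classical argument, and you correctly identify the one point specific to this setting --- that $q\in\mathcal{T}$ in item~(7) follows from invariance of $\mathcal{T}$ together with item~(4) and the definition $\mathcal{T}=\mathcal{T}_+\cap\mathcal{T}_-$ --- which the paper leaves entirely implicit.
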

\vspace{0.5cm}

Since we work with the local unstable and stable manifolds, we may assume that $W_\star(\rho) \subset B(\rho, 2 \varepsilon_1)$. 
 
For our purpose, we will need a more precise version of these results. The following lemmas are an adaptation of Lemma 2.1 in \cite{NDJ19} to our setting. 

\begin{lem}\label{Local_hpyerbolic_1}
There exists a constant $C >0$ depending only on $(U,F)$, such that for all $\rho, \rho^\prime \in U$, 
\begin{enumerate}[label = (\arabic*)]
\item if $\rho \in \mathcal{T}$ and $\rho^\prime \in W_s( \rho)$ then 
\begin{equation} 
d\left( F^{n}(\rho) , F^{n} (\rho^\prime) \right) \leq CJ_n^s(\rho) d( \rho, \rho^\prime) \quad , \quad \forall n \in \N 
\end{equation}
\item if $\rho \in \mathcal{T}$ and $\rho^\prime \in W_u( \rho)$ then 
\begin{equation}
d\left( F^{-n}(\rho) , F^{-n} (\rho^\prime) \right) \leq CJ_{-n}^u(\rho) d( \rho, \rho^\prime) \quad , \quad  \forall n \in \N 
\end{equation} 
\end{enumerate}
\end{lem}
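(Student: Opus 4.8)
The plan is to prove $(1)$ directly and to deduce $(2)$ by applying $(1)$ to the map $F^{-1}$, which satisfies the same standing hypotheses with the roles of the stable and unstable directions exchanged (so that $W_s$ for $F^{-1}$ is $W_u$ for $F$, and $J^s_n$ for $F^{-1}$ is $J^u_{-n}$ for $F$). For $(1)$, fix $\rho \in \mathcal{T}$ and $\rho' \in W_s(\rho)$; by the normalisation $W_s(\rho) \subset B(\rho,2\varepsilon_1)$ we have $d(\rho,\rho') \le 2\varepsilon_1$. First I would parametrise by arc length the sub-arc of $W_s(\rho)$ joining $\rho$ to $\rho'$, say $\gamma \colon [0,L] \to W_s(\rho)$ with $\gamma(0) = \rho$, $\gamma(L) = \rho'$, $|\gamma'| \equiv 1$; since by Lemma \ref{classical_hyperbolic}(1) this curve is $C^\infty$ with $C^\infty$-norm (in particular $C^2$-norm) bounded uniformly in $\rho$, one gets $L \sim d(\rho,\rho')$. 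Then for every $n$,
\[ d\bigl(F^n(\rho), F^n(\rho')\bigr) \le \operatorname{length}(F^n \circ \gamma) = \int_0^L \bigl| d_{\gamma(t)} F^n (\gamma'(t)) \bigr| \, dt , \]
and by Lemma \ref{classical_hyperbolic}(4) each $F^k \circ \gamma$ parametrises a sub-arc of $W_s(F^k\rho)$, so all the points $F^k(\gamma(t))$ stay in $B(F^k\rho, 2\varepsilon_1)$, hence within $2\varepsilon_1$ of $\mathcal{T}$. Everything then reduces to bounding $|d_{\gamma(t)} F^n (\gamma'(t))|$ by $C\, J^s_n(\rho)$ uniformly in $t$ and $n$.

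Second, I would establish a crude uniform geometric contraction: after shrinking $\varepsilon_1$ (which depends only on $F$), there is $\theta \in (e^{-\lambda_0}, 1)$ with $\| d_{\sigma'} F|_{T_{\sigma'} W_s(\sigma)} \| \le \theta$ for all $\sigma \in \mathcal{T}$ and $\sigma' \in W_s(\sigma)$. This follows by treating $d_{\sigma'} F|_{T_{\sigma'} W_s(\sigma)}$ as a perturbation of $d_\sigma F|_{E_s(\sigma)}$, whose norm is $\le e^{-\lambda_0}$: the Lipschitz continuity of $dF$ (as $F$ is $C^2$) controls the change of base point, $d(\sigma',\sigma) \le 2\varepsilon_1$, while the uniform $C^2$ bound on $W_s(\sigma)$ controls the angle between $T_{\sigma'} W_s(\sigma)$ and $E_s(\sigma) = T_\sigma W_s(\sigma)$ by $O(\varepsilon_1)$. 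Applying the chain rule along the orbit and using the invariance $F(W_s(\cdot)) \subset W_s(F(\cdot))$ then gives $|d_{\gamma(t)} F^k (\gamma'(t))| \le \theta^k$, whence $d(F^k\rho, F^k\rho') \le C \theta^k d(\rho,\rho')$ and, summing the geometric series, $\sum_{k \ge 0} d(F^k\rho, F^k\rho') \le C\, d(\rho,\rho')$ with $C$ uniform.

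Third — and this is the actual content — I would upgrade the bound $\theta^n$ into $J^s_n(\rho)$. Decompose $d_{\gamma(t)} F^n$ as the product of the $n$ one-step differentials along the orbit, each restricted to the tangent line of the relevant sub-arc of $W_s(F^k\rho)$, and compare the $k$-th factor with $d_{F^k\rho} F|_{E_s(F^k\rho)}$, whose norm is exactly $J^s_1(F^k\rho)$. Using once more the Lipschitz bound on $dF$ and the uniform $C^2$ bound on the stable manifolds, the $k$-th factor has norm at most $J^s_1(F^k\rho)\bigl(1 + C\, d(F^k\rho, F^k\rho')\bigr)$; here the passage from an additive to a multiplicative error uses the uniform lower bound $J^s_1 \ge e^{-\lambda_1} > 0$ on $\mathcal{T}$ recorded in (\ref{lamba0et1}). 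Multiplying over $k = 0, \dots, n-1$, invoking the cocycle identity $\prod_{k=0}^{n-1} J^s_1(F^k\rho) = J^s_n(\rho)$, and using the summability from Step 2, one obtains $|d_{\gamma(t)} F^n(\gamma'(t))| \le J^s_n(\rho)\, \exp\bigl(C \sum_{k \ge 0} d(F^k\rho, F^k\rho')\bigr) \le C\, J^s_n(\rho)$; integrating over $t \in [0,L]$ and using $L \sim d(\rho,\rho')$ gives $(1)$.

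The step I expect to be the main obstacle is the third one: converting the accumulated additive perturbations — which come both from the non-constancy of $dF$ along the orbit and from the tangent direction of the \emph{nonlinear} stable manifold not being exactly $E_s$ — into a convergent multiplicative product. This is precisely what forces one to prove the crude geometric contraction of Step 2 first, so that the errors $d(F^k\rho, F^k\rho')$ are summable with a uniform sum, and to exploit the two-sided bounds on $J^s_1$ from (\ref{lamba0et1}). Note that, because the individual stable manifolds are genuinely $C^\infty$ with uniform geometry by Lemma \ref{classical_hyperbolic}(1), one only needs a Lipschitz control of the angle between the curve's tangent and $E_s$ — the finer Hölder (or $C^{1+\beta}$) regularity of the stable \emph{lamination} is not needed for this lemma.
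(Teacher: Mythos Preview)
Your proposal is correct and follows essentially the same strategy as the paper's proof: obtain a crude geometric decay $d(F^k\rho,F^k\rho')\le C\theta^k d(\rho,\rho')$ first, then compare each one-step factor with $J^s_1(F^k\rho)$ up to a multiplicative error $(1+C\theta^k)$, and conclude via the convergent product $\prod_k(1+C\theta^k)$. The only cosmetic difference is that the paper works directly with distances via a first-order Taylor expansion of $F|_{W_s(\rho)}$ at $\rho$ (using $T_\rho W_s(\rho)=E_s(\rho)$) to get $d(F\rho,F\rho')\le J^s_1(\rho)\,d(\rho,\rho')(1+Cd(\rho,\rho'))$ and then iterates, whereas you integrate the differential along an arc-length parametrisation; and the paper simply cites the $\theta^k$ decay from Lemma~\ref{classical_hyperbolic} rather than re-deriving it as in your Step~2.
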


\begin{proof}
We prove (1). (2) is proved in a similar way by inverting the time direction.  
Let $\rho \in \mathcal{T}, \rho^\prime \in W_s(\rho)$. Since $T_{\rho}(W_s(\rho) )  =E_s(\rho) $ and $d_\rho F \left(E_s(\rho)\right) = E_s(F(\rho))$, the Taylor development of $F$ along $W_s(\rho)$ gives : 
\begin{equation}
d(F(\rho), F(\rho^\prime)) \leq J^s_1(\rho) d(\rho, \rho^\prime) + C d(\rho, \rho^\prime)^2 \leq J^s_1(\rho) d(\rho, \rho^\prime)  ( 1 + Cd(\rho, \rho^\prime) )
\end{equation}
since $J^s_1 \geq C^{-1}$. Applying this inequality with $F^k (\rho)$ and $F^k (\rho^\prime)$ instead of $\rho$ and $\rho^\prime$, and recalling that, by lemma \ref{classical_hyperbolic}, $d(F^k(\rho), F^k (\rho^\prime)) \leq C\theta^k d(\rho, \rho^\prime)$, 
we can write, 
\begin{equation}
 d(F^{k+1}(\rho), F^{k+1} (\rho^\prime)) \leq J^s_1(F^k(\rho)) d(F^k(\rho), F^k (\rho^\prime)) (1 + C \theta^k)
\end{equation}
By this last inequality and the chain rule, we have 
\begin{equation}
d(F^n(\rho), F^n (\rho^\prime)) \leq J^s_n(\rho) d(\rho, \rho^\prime) \prod_{k=0}^{n-1} (1 + C \theta^k) \leq C J^s_n(\rho) d(\rho, \rho^\prime)
\end{equation}
\end{proof}

\begin{figure}\label{Figure_hyperbolic}
\begin{center}
\includegraphics[scale=0.6]{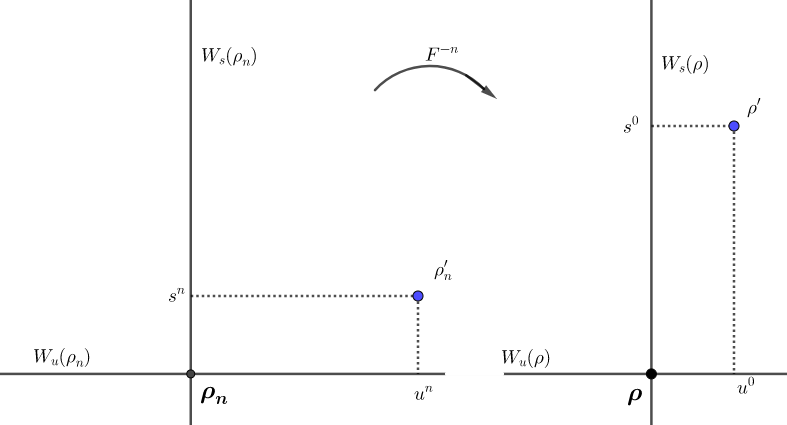}
\caption{Framework for the proof of Lemma \ref{Local_hpyerbolic_2} }
\end{center}
\end{figure}

The following lemma gives a stronger version of (6) in Lemma \ref{classical_hyperbolic}. 

\begin{lem}\label{Local_hpyerbolic_2}
There exist $C >0$ and $\varepsilon_1 >0$, depending only on $(U,F)$, such that for all $\rho, \rho^\prime \in U$ and $n \in \N$ : 
\begin{enumerate}[label = (\arabic*)]
\item if $\rho \in \mathcal{T}$ and $d\left( F^i(\rho) , F^i(\rho^\prime) \right) \leq \varepsilon_1$ for all $i \in \{0, \dots, n \}$ then 
\begin{equation} \label{close_to_leaves}
d\left( \rho^\prime , W_s (\rho) \right) \leq \frac{C}{J_n^u(\rho)} 
\end{equation}
and 
\begin{equation}\label{control_of_jacobian}
||d_{\rho^\prime}F^n|| \leq C J^u_n(\rho)
\end{equation}
\item if $\rho \in \mathcal{T}$ and $d\left( F^{-i}(\rho) , F^{-i}(\rho^\prime) \right) \leq \varepsilon_1$ for all $i \in \{0, \dots, n \}$ then 
\begin{equation} 
d\left( \rho^\prime , W_u (\rho) \right) \leq \frac{C}{J_{-n}^s(\rho)} 
\end{equation}
and 
 \begin{equation}
||d_{\rho^\prime}F^{-n}|| \leq C J^s_{-n}(\rho)
\end{equation}
\end{enumerate}
\end{lem}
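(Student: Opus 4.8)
The plan is to prove part (1); part (2) follows by reversing time (replacing $F$ by $F^{-1}$ and swapping the roles of stable and unstable objects). I would first establish the bound \eqref{close_to_leaves} on the distance to the stable leaf, and then use it to deduce the Jacobian bound \eqref{control_of_jacobian}.

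\textbf{Step 1: Distance to the stable leaf.} Suppose $\rho \in \mathcal{T}$ and $d(F^i(\rho), F^i(\rho')) \leq \varepsilon_1$ for $i = 0, \dots, n$. Applying Lemma \ref{classical_hyperbolic}(6) to the orbit segment of length $n$, with $\theta$ chosen with $e^{-\lambda_0} < \theta < 1$, gives $d(\rho', W_s(\rho)) \leq C \theta^n \varepsilon_1$. This is not quite \eqref{close_to_leaves}; I need to replace $\theta^n$ by $1/J_n^u(\rho)$. The point is that one should apply the unstable-manifold characterization in a time-reversed and localized way, or rather, one should argue directly: let $\rho_s$ be the point of $W_s(\rho)$ closest to $\rho'$ (which exists by the bound just obtained, and lies within $\varepsilon_1$ of $\rho$ for $\varepsilon_1$ small). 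Write $\rho' = \exp_{\rho_s}(v)$ for a small tangent vector $v$; decompose $v = v_s + v_u$ along $E_s(\rho_s) \oplus E_u(\rho_s)$. The stable part is irrelevant (moving along $W_s$), so WLOG $v \in E_u(\rho_s)$-direction up to higher-order corrections. Now the hypothesis that $F^i(\rho')$ stays $\varepsilon_1$-close to $F^i(\rho)$ — hence to $F^i(\rho_s)$, since $\rho_s \in W_s(\rho)$ contracts — forces the unstable component not to have expanded beyond $\varepsilon_1$ after $n$ steps. Since the unstable direction at $\rho_s$ expands at rate comparable to $J_n^u(\rho)$ (using continuity of the splitting and that $\rho_s$ is $\varepsilon_1$-close to $\rho \in \mathcal{T}$, together with the cocycle/chain-rule bounds \eqref{lamba0et1}), we get $\|v_u\| \cdot J_n^u(\rho) \leq C \varepsilon_1$, i.e. $d(\rho', W_s(\rho)) \leq \|v_u\| + O(\|v\|^2) \leq C/J_n^u(\rho)$. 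Making this rigorous requires a graph-transform / local-coordinates argument, exactly in the spirit of the proof of Lemma \ref{classical_hyperbolic}(6) in \cite{KH}, but keeping track of the precise expansion factor rather than just the exponential rate $\theta$.

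\textbf{Step 2: The Jacobian bound.} Given \eqref{close_to_leaves}, let $\rho_s \in W_s(\rho)$ realize the distance to $\rho'$, so $d(\rho', \rho_s) \leq C/J_n^u(\rho)$. By Lemma \ref{Local_hpyerbolic_1}(1), $d(F^n(\rho), F^n(\rho_s)) \leq C J_n^s(\rho) d(\rho, \rho_s) \leq C \varepsilon_1$, so the whole orbit of $\rho_s$ stays controlled. Now I want to compare $d_{\rho'}F^n$ with $d_{\rho_s}F^n$. Write $F^n = F \circ \cdots \circ F$; at the $k$-th step the two orbits $F^k(\rho')$ and $F^k(\rho_s)$ are within $d(F^k(\rho'),F^k(\rho_s)) \leq C\theta^k d(\rho',\rho_s) \cdot J_k^u(\rho_s)$-type distance — more carefully, one tracks that $F^k(\rho')$ lies within $C/J_{n-k}^u(F^k(\rho))$ of $W_s(F^k(\rho))$ by applying Step 1 to the shifted orbit, hence within a comparable distance of $F^k(\rho_s)$. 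Using the $C^2$-regularity of $F$ (Taylor expansion of $dF$ as in the proof of Lemma \ref{Local_hpyerbolic_1}) one gets $\|d_{F^k(\rho')}F\| \leq \|d_{F^k(\rho_s)}F\|(1 + C\,d(F^k(\rho'),F^k(\rho_s)))$, and the distances $d(F^k(\rho'),F^k(\rho_s))$ sum to something bounded (a geometric-type series controlled by $\sum_k 1/J_k^u(\cdots)$, which converges uniformly). Multiplying via the chain rule, $\|d_{\rho'}F^n\| \leq C \|d_{\rho_s}F^n\|$. Finally $\|d_{\rho_s}F^n\| \leq C J_n^u(\rho)$: along $W_s(\rho)$ the map $d_{\rho_s}F^n$ contracts the stable direction and expands the unstable direction at a rate comparable (uniformly, by continuity of the splitting near $\mathcal{T}$ and the bounds \eqref{lamba0et1}) to $J_n^u(\rho)$, so its operator norm is $\sim J_n^u(\rho)$. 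Combining gives \eqref{control_of_jacobian}.

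\textbf{Main obstacle.} The delicate point is Step 1: upgrading the qualitative contraction rate $\theta^n$ from Lemma \ref{classical_hyperbolic}(6) to the sharp, pointwise quantity $1/J_n^u(\rho)$. This requires working in local coordinates adapted to the splitting (or using the graph transform) and carefully exploiting that the unstable expansion along the orbit of $\rho$ over $n$ steps is exactly $J_n^u(\rho)$ (up to uniform constants), while the hypothesis confines the orbit of $\rho'$ to an $\varepsilon_1$-tube. A secondary technical nuisance is that $\rho'$ and $\rho_s$ need not lie in $\mathcal{T}$, so the Jacobians $J_k^u$ and the stable/unstable subspaces are only \emph{defined} on $\mathcal{T}$; one circumvents this by always anchoring estimates at the point $\rho \in \mathcal{T}$ (and its forward orbit) and transferring to nearby points via continuity of $dF$ and the uniform bounds on $\mathcal{T}$. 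Everything else is a routine chain-rule plus convergent-product computation modeled on the proof of Lemma \ref{Local_hpyerbolic_1}.
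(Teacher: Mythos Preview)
Your Step 1 plan is in the right spirit and matches what the paper does: the paper works in adapted coordinate charts $\kappa_k$ centered at each $\rho_k=F^k(\rho)$, writes $\tilde F_k^{-1}$ in the form $(\pm J^u_{-1}(\rho_k)u^k+\alpha_k,\,\pm J^s_{-1}(\rho_k)s^k+\beta_k)$ and iterates the bound $|u^{k-1}|\le J^u_{-1}(\rho_k)|u^k|(1+C\varepsilon_1\theta^{\min(k,n-k)})$ backwards, obtaining \eqref{close_to_leaves} after multiplying and using that the product $\prod_k(1+C\varepsilon_1\theta^{\min(k,n-k)})$ is bounded. This is precisely the ``graph-transform/local-coordinates argument'' you allude to, so no disagreement there.

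The real gap is in your Step 2. You write ``Multiplying via the chain rule, $\|d_{\rho'}F^n\|\le C\|d_{\rho_s}F^n\|$'', but the chain rule only gives the \emph{submultiplicative} bound
\[
\|d_{\rho'}F^n\|\le\prod_{k=0}^{n-1}\|d_{F^k(\rho')}F\|\le C\prod_{k=0}^{n-1}\|d_{F^k(\rho_s)}F\|,
\]
and in a generic Riemannian metric the right-hand product can exceed $\|d_{\rho_s}F^n\|$ (or $J^u_n(\rho)$) by a factor exponential in $n$: each $\|d_{F^k(\rho_s)}F\|$ can be strictly larger than $J^u_1(F^k(\rho))$ when $E_u$ and $E_s$ are not orthogonal, and these defects compound. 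Likewise, your final claim $\|d_{\rho_s}F^n\|\le CJ^u_n(\rho)$ is essentially the statement you are trying to prove (for $\rho_s$ in place of $\rho'$), so invoking it is circular.

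The paper's fix is a genuine extra idea: one constructs a (merely H\"older) Riemannian metric on $\mathcal T$ in which $(v_u(\rho),v_s(\rho))$ is \emph{orthonormal}, so that for $\rho\in\mathcal T$ one has the exact equality $\|d_\rho F^n\|=\prod_{k=0}^{n-1}\|d_{F^k(\rho)}F\|=|\tilde J^u_n(\rho)|$ (this is the ``nice trick'' \eqref{nice_trick}). One then extends this metric H\"older-continuously to a neighborhood, uses H\"older continuity of $\rho\mapsto\|d_\rho F\|$ together with the $\theta^{\gamma\min(i,n-i)}$ decay of $d(F^i(\rho'),F^i(\rho))$ to get $\|d_{F^i(\rho')}F\|\le\|d_{F^i(\rho)}F\|(1+C\varepsilon_1\theta^{\gamma\min(i,n-i)})$, and only then does the product telescope to $CJ^u_n(\rho)$. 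Without this adapted metric the multiplicative step fails; you should either reproduce this construction or find another way to prevent the per-step defects from accumulating.
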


\begin{proof}
We prove (1). (2) is proved in a similar way by inverting the time direction. Let $\rho \in \mathcal{T}$ and $\rho^\prime \in U$ such that $d(F^i(\rho), F^i(\rho^\prime)) \leq \varepsilon_1$ for $ 0 \leq i \leq n$ with $\varepsilon_1$ to be determined. 
Denote $\rho_k = F^k(\rho)$. The first condition on $\varepsilon_1$ is that it is smaller than the one of lemma \ref{classical_hyperbolic} so that we ensure the folowing estimates : for $ k \in \{0, \dots, n \}$
\begin{align}
d \Big( F^k(\rho^\prime), W_s(F^k(\rho)) \Big) \leq C \theta^{n-k} \varepsilon_1 \label{lh1}  \\
d\Big( F^k(\rho^\prime), W_s(F^k(\rho)) \Big) \leq C \theta^k \varepsilon_1 \label{lh2}
\end{align}
We will use coordinates charts $\kappa_k : \hat{\rho} \in U_k \mapsto (u^k,s^k) \in V_k$ adapted to the dynamical system (see \cite{KH}, Theorem 6.2.3, the explanations below and Theorem 6.2.8 for the existence of this chart). More precisely, we want these charts to satisfy 
\begin{itemize}[itemsep=0.2em]
\item $\kappa_k(\rho_k) = (0,0)$
\item $\kappa_k \left( W_s(\rho_k) \cap U_k \right) = \{ (0,s) , s \in \R \} \cap V_k$
\item $\kappa_k \left( W_u(\rho_k)  \cap U_k \right) = \{ (u,0) , u \in \R \} \cap V_k$
\item For $\hat{\rho} \in U_k$, $ |u^k| \sim d(\hat{\rho}, W_s (\rho_k) ) ; |s^k| \sim d(\hat{\rho}, W_u( \rho_k) ) ; |s^k|^2 + |u^k|^2 \sim d(\rho_k, \hat{\rho})^2$.
\item  $(\kappa_k)_{0 \leq k \leq n}$ are uniformly bounded in the $C^N$ topology for all $N$, with constant independant of $\rho_0$ and $n$. In particular, we may assume that $\varepsilon_1$ is chosen small enough so that $B(\rho_k, \varepsilon_1) \subset U_k$ for all $0 \leq k \leq n$. 
\item Up to changing the metric we work with (which is not problematic), we may assume that the restrictions of $d\kappa_k(\rho)$ to $E_s(\rho)$ and $E_u(\rho)$ are isometries for the metrics $|\cdot|_s$ and $|\cdot|_u$. 
\end{itemize}
If we note $\widetilde{F}_k = \kappa_{k} \circ F \circ \kappa_{k-1}^{-1}$, we can check that in this pair of coordinates charts, the action of $F^{-1}$ is given by 
\begin{equation}\label{lh3}
\widetilde{F}_k^{-1} (u^k, s^k) = (\pm J^u_{-1}(\rho_k) u^k + \alpha_k(u^k, s^k) , \pm J_{-1}^s(\rho_k)s^k + \beta_k(u^k,s^k) ) 
\end{equation}
where $\alpha_k, \beta_k$ are smooth functions, uniformly bounded in $k$ for the $C^2$ topology and such that $\alpha_k(0,s^k)=0, \beta_k(u^k,0) =0, d\alpha_k(0,0) = 0, d\beta_k(0,0) = 0$. \\
With these properties, one can check that 
\begin{equation}\label{lh4}
\alpha_k(u^k,s^k) \leq C|u^k| \;||(u^k,s^k)||
\end{equation}
Let's now denote $\rho^\prime_k = F^k(\rho^\prime)$ and $(u^k,s^k)=\kappa_k(\rho^\prime_k)$. By (\ref{lh1}), (\ref{lh2}), (\ref{lh3}), (\ref{lh4}), we can write 
\begin{align*}
|u^{k-1}| &\leq J^u_{-1}  (\rho_k) ) |u^k| +  C|u^k| ||(u^k,s^k)|| \\
& \leq J^u_{-1}  (F^k (\rho) ) |u^k| \left(1+ C \varepsilon_1 (\theta_1^k + \theta_1^{n-k} ) \right) \\
& \leq J^u_{-1}  (F^k (\rho) ) |u^k| \left( 1+ C \varepsilon_1 \theta^{\min (k, n-k)} \right) \\
\end{align*}
Then, using the chain rule, one has 
\begin{equation}
d(\rho^\prime, W_s(\rho)  ) \leq C |u^0| \leq CJ^u_{-n}(F^n(\rho)) \prod_{k=0}^{n-1}\left( 1+ C \varepsilon_1 \theta^{\min (k, n-k)} \right)
\end{equation}
Finally, we can estimate 
$$ \prod_{k=0}^{n}\left( 1+ C \varepsilon_1 \theta^{\min (k, n-k)} \right) \leq \prod_{k=0}^{\lceil n/2 \rceil}\left( 1+ C \varepsilon_1 \theta^{k} \right)^2 \leq C $$
which gives 
\begin{equation}
d(\rho^\prime, W_s(\rho)) \leq C J^u_{-n} (F^n(\rho)) = \frac{C}{J^u_{n} (\rho)}
\end{equation}
This proves (\ref{close_to_leaves}). 

To prove (\ref{control_of_jacobian}), we first construct a metric which simplifies the computations. If $\rho \in \mathcal{T}$, we pick $v_\star(\rho) \in E_\star(\rho)$\footnote{Here, we are not concerned by the orientation. It is simply a matter of direction. } such that $||v_\star(\rho)|| = 1$.  There exists a Riemannian metric $|\cdot|$ on $\mathcal{T}$ such that for every $\rho \in \mathcal{T}, (v_u(\rho), v_s(\rho))$ is an orthonormal basis of $T_\rho U$. This metric is $\gamma$-Hölder in $\rho \in \mathcal{T}$ since stable and unstable distributions are $\gamma$-Hölder for some $\gamma \in (0,1)$. \\
If $\rho \in \mathcal{T}$ and $n \in \Z$, we note $\tilde{J}^{u/s}_n(\rho) \in \R$ the numbers such that 
$$d_\rho(F^n)(v_u(\rho)) = \tilde{J}^u_n(\rho)  v_u(F^n(\rho)) \; ; \; d_\rho(F^{n})(v_s(\rho)) = \tilde{J}^s_n(\rho) v_s(F^n(\rho))$$ 
As already observed, $|\tilde{J}^u_n(\rho)| \sim J^u_n(\rho)$, for all $n$ (with constants independent of $n$). We can also assume that $|\tilde{J}^u_1(\rho)| >|\tilde{J}^s_1(\rho)|$ for all $\rho$.  In the orthonormal basis $(v_u(\rho), v_s(\rho))$ and $(v_u(F^n(\rho), v_s(F^n(\rho)))$, $d_\rho F^n$ has the form $$\left(\begin{array}{c c } 
\tilde{J}^u_n(\rho) & 0 \\
0  &\tilde{J}^s_n(\rho)
\end{array} \right)$$ 
Due to the ortonormality of these basis, we have that for the subordinate norms, $||d_\rho F^n || = |\tilde{J}^u_n(\rho)|$.  Hence, the chain rule implies the following equality for this particular Riemannian metric defined on $\mathcal{T}$ : 
\begin{equation}\label{nice_trick}
\forall \rho \in \mathcal{T}, ||d_\rho(F^n)|| = |\tilde{J}^u_n(\rho)| = \prod_{i=0}^{n-1} |\tilde{J}^u_1(F^i\left(\rho) \right)| = \prod_{i=0}^{n-1} ||d_{F^i(\rho)} F  ||
\end{equation}
We now claim that we can extend $| \cdot |$ to a relatively compact neighborhood $V$ of $\mathcal{T}$ such that $ \rho \in V \mapsto | \cdot |_\rho$ is still $\gamma$-Hölder. To do so, it is enough to extend the coefficients of the metric in a coordinate chart in a $\gamma-$Hölder way, which is possible (for instance, in virtue of Corollary 1 in \cite{McShane}), which still defines a non-degenerate 2-form in a sufficiently small neighborhood of $\mathcal{T}$. \\
We now aim at proving (\ref{control_of_jacobian}) for this particular metric. (\ref{control_of_jacobian}) will hold in the general case since two continuous metric are always uniformly equivalent in a compact neighborhood of $\mathcal{T}$. \\
In the following, we assume that $\varepsilon_1$ is small enough so that $\rho$ belongs to the neighborhood of $\mathcal{T}$ in which $| \cdot|$ is defined.  
Since $\rho \mapsto ||d_\rho F||_{T_\rho U \to T_{F(\rho)}U }$ is $\gamma$-Hölder (in the following, we will drop the subscript  in the norm) we have, for all $i \in \{0, \dots, n \}$ 
\begin{equation}
\Big| \, ||d_{F^i(\rho^\prime)}F|| - ||d_{ F^i(\rho)}F ||  \, \Big|  \leq C d(F^i (\rho^\prime) , F^i(\rho) )^\gamma \leq C \varepsilon_1 \theta^{\gamma \min(i,n-i)} 
\end{equation}
Using the chain rule and the submultiplicativity of $|| \cdot ||$, we have 
\begin{equation}
|| d_{\rho^\prime}F^n || \leq \prod_{i=0}^n || d_{F^i(\rho^\prime)} F || \leq \prod_{i=0}^n || d_{F^i(\rho)} F || \left( 1 + C \varepsilon_1 \theta^{\gamma \min(i,n-i)}  \right)
\end{equation}
Eventually, by (\ref{nice_trick}) and the fact that $\prod_{i=0}^n  \left( 1 + C \varepsilon_1 \theta^{\gamma \min(i,n-i)}  \right)$ is convergent, (\ref{control_of_jacobian}) holds. 
\end{proof}

As an immediate consequence of this lemma, we get : 
\begin{cor}\label{cor_control_jacobian}
There exist $C >0$ and $\varepsilon_1 >0$ (depending only on $(U,F)$) such that for all $\rho, \rho^\prime \in \mathcal{T}$ and $n \in \N$ : 
\begin{enumerate}[label = (\arabic*)]
\item if $d\left( F^i(\rho) , F^i(\rho^\prime) \right) \leq \varepsilon_1$ for all $i \in \{0, \dots, n \}$ then 
\begin{equation}
C^{-1} J^u_n(\rho) \leq  J^u_n(\rho^\prime) \leq CJ^u_n(\rho)
\end{equation}
\item if $d\left( F^{-i}(\rho) , F^{-i}(\rho^\prime) \right) \leq \varepsilon_1$ for all $i \in \{0, \dots, n \}$ then 
\begin{equation} 
  C^{-1} J^s_{-n}(\rho) \leq J^s_{-n}(\rho^\prime) \leq CJ^s_{-n}(\rho)
\end{equation}
\end{enumerate}
\end{cor}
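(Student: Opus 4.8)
The plan is to deduce the corollary directly from Lemma~\ref{Local_hpyerbolic_2}, the only new observation being that the hypothesis of that lemma is symmetric in the two points, so it can be applied with either of $\rho,\rho'$ playing the role of the base point.

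First I would record the elementary fact that for any $\rho'\in\mathcal{T}$ and $n\in\N$ one has $J^u_n(\rho')\le ||d_{\rho'}F^n||$: by the definition~(\ref{Def_jacobian}), $J^u_n(\rho')=||d_{\rho'}F^n(v)||$ for a unit vector $v\in E_u(\rho')\subset T_{\rho'}U$, and this is bounded by the subordinate norm $||d_{\rho'}F^n||=\sup_{||w||=1}||d_{\rho'}F^n(w)||$; the analogous statement holds for $J^s_{-n}$ with $F^{-n}$ in place of $F^n$. Then, for part~(1), I apply Lemma~\ref{Local_hpyerbolic_2}(1) with base point $\rho\in\mathcal{T}$ and comparison point $\rho'$ (which lies in $U$ since $\mathcal{T}\subset U$): the orbit hypothesis is exactly the one assumed, so~(\ref{control_of_jacobian}) gives $||d_{\rho'}F^n||\le CJ^u_n(\rho)$, hence $J^u_n(\rho')\le CJ^u_n(\rho)$. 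Next I apply the very same lemma with the two points interchanged — legitimate because $\rho'\in\mathcal{T}$ and $\rho\in\mathcal{T}\subset U$, and because $d(F^i(\rho'),F^i(\rho))=d(F^i(\rho),F^i(\rho'))\le\varepsilon_1$ for $i\in\{0,\dots,n\}$ — which gives $||d_{\rho}F^n||\le CJ^u_n(\rho')$, hence $J^u_n(\rho)\le CJ^u_n(\rho')$. Combining the two inequalities yields $C^{-1}J^u_n(\rho)\le J^u_n(\rho')\le CJ^u_n(\rho)$ with $C,\varepsilon_1$ the constants provided by the lemma (depending only on $(U,F)$).

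Part~(2) is obtained by the identical argument run on the backward orbit segment $\{F^{-i}(\rho)\}_{0\le i\le n}$, invoking Lemma~\ref{Local_hpyerbolic_2}(2) (and its bound on $||d_{\rho'}F^{-n}||$) twice with the two points interchanged. There is essentially no obstacle: the entire analytic content sits in Lemma~\ref{Local_hpyerbolic_2}, and the only things to verify are the symmetry of its hypothesis and the membership $\rho,\rho'\in U$, so that the lemma may be used in both directions; one could alternatively derive the lower bound in~(1) from the upper bound in~(2) using that $F$ is a symplectomorphism of a two-dimensional phase space (so $J^u_nJ^s_n\sim1$ on the compact set $\mathcal{T}$, the angle between $E_u$ and $E_s$ being bounded away from $0$), but the symmetric application above is more economical.
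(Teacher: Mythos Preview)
Your argument is correct and is essentially the paper's own proof: both deduce the corollary by applying Lemma~\ref{Local_hpyerbolic_2} twice with the roles of $\rho$ and $\rho'$ interchanged, using that for points of $\mathcal{T}$ the subordinate norm $||d_\rho F^n||$ controls (and is comparable to) $J^u_n(\rho)$. Your write-up is in fact slightly more economical, since you only invoke the trivial inequality $J^u_n(\rho')\le||d_{\rho'}F^n||$ rather than the full equivalence $||d_\rho F^n||\sim J^u_n(\rho)$ that the paper cites.
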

\begin{proof}
This is a consequence of the previous lemma and of the fact that uniformly in $\rho$ and $n \in \N$,  \begin{align*}
||d_\rho F^n|| \sim J^u_n(\rho) \\ ||d_\rho F^{-n}|| \sim J^s_{-n}(\rho)
\end{align*} 
\end{proof}

\subsection{Regularity of the invariant splitting}
It is known for Anosov diffeomorphisms that stable and unstable distributions are in fact $C^{2 - \varepsilon}$ in dimension 2 (see \cite{HuKa}).  For our purpose, we need to extend this result to our setting, where the hyperbolic invariant set $\mathcal{T}$ is not the full phase space, but a fractal subset of it. In fact, we will show that one can extend the stable and unstable distributions to an open neighborhood of $\mathcal{T}$ and that theses extensions are $C^{1, \beta}$ for some $\beta >0$. Actually, since what happens outside a fixed neighborhood of $\mathcal{T}$ is irrelevant (one can always use cut-offs), we will prove the following theorem which might be of independent interest.

\begin{thm}\label{Thm_regularity}
Let us denote $\mathcal{G}_1(U)$ the Grassmanian bundle of $1$-plane in $TU$. 
There exists $\beta >0$ and sections $E_u, E_s :  U \to \mathcal{G}_1(U)$ such that : 
\begin{itemize}
\item For every $\rho \in \mathcal{T}, E_u(\rho)$ (resp. $E_s(\rho)$) is the unstable (resp. stable) distribution at $\rho$ ; 
\item $E_u$ and $E_s$ have regularity $C^{1, \beta}$ 
\end{itemize}
\end{thm}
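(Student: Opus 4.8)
The plan is to obtain the invariant distributions $E_u$, $E_s$ on a neighborhood of $\mathcal{T}$ as fixed points of a graph transform (Hadamard--Perron scheme), and then to upgrade the regularity from Hölder to $C^{1,\beta}$ by running the graph transform one more level up, namely on the derivative cocycle. Concretely, work in the adapted coordinate charts $\kappa_k$ of Lemma~\ref{Local_hpyerbolic_2} (or rather in charts straightening $W_u(\rho)$, $W_s(\rho)$ near points of $\mathcal{T}$), and parametrize a candidate line field near $\rho$ by its slope $a(\rho)\in\R$ relative to the unstable direction. The differential $d_\rho F$ acts on slopes by a projective (Möbius) transformation $\mathcal{M}_\rho$, and the invariance condition $E_u(F\rho)=d_\rho F\,E_u(\rho)$ becomes a functional equation $a\circ F = \mathcal{M}(a)$. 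Because $F$ expands the unstable direction and contracts the stable one, $\mathcal{M}_\rho$ is a uniform contraction on the relevant slope interval; hence the operator $\mathcal{G}:a\mapsto \mathcal{M}\circ a\circ F^{-1}$ (suitably localized with cut-offs away from $\mathcal{T}$, since $F$ is only an open relation) is a contraction on a closed ball of $C^0$ functions on a neighborhood $V$ of $\mathcal{T}$, producing a continuous invariant section $E_u$; on $\mathcal{T}$ it must agree with the true unstable distribution by uniqueness. The same argument applied to $F^{-1}$ gives $E_s$.

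The regularity is the crux. First, the standard Hölder estimate: since $\mathcal{G}$ contracts in $C^0$ at rate $\sim e^{-\lambda_0}$ (the contraction of $\mathcal{M}_\rho$, controlled by $J^s_1/J^u_1\le e^{-2\lambda_0}$ via \eqref{lamba0et1}) while $F^{-1}$ distorts Hölder seminorms by at most $e^{\lambda_1\gamma}$, one gets invariance of a ball in $C^{0,\gamma}$ as soon as $e^{-2\lambda_0}e^{\lambda_1\gamma}<1$, i.e. for $\gamma$ small; this reproves the known Hölder regularity on all of $V$. To reach $C^{1,\beta}$ one differentiates the functional equation $a\circ F=\mathcal{M}(a)$ once: $b:=da$ then satisfies its own functional equation of the form $b\circ F\cdot dF = (\partial_a\mathcal{M})\,b + (\text{explicit }C^{0,\gamma}\text{ term in }a,dF)$. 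The point (this is the standard $C^{k+\alpha}$-section / bunching argument, cf.\ \cite{HuKa} in the Anosov case and \cite{KH}) is that the new fiber contraction rate is $e^{-2\lambda_0}$ multiplied by the factor $\|d_\rho F|_{E_u}\|\sim e^{\lambda_1}$ coming from differentiating $a\circ F$, so the effective rate is $\lesssim e^{\lambda_1-2\lambda_0}$; the base distortion on the Hölder seminorm of $b$ is again $e^{\lambda_1\gamma}$. Hence, provided the \emph{bunching inequality} $e^{\lambda_1-2\lambda_0}e^{\lambda_1\gamma}<1$ holds for small $\gamma>0$, the derivative cocycle has an invariant $C^{0,\beta}$ section, giving $E_u\in C^{1,\beta}$; symmetrically for $E_s$.

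Here I foresee the main obstacle: in full generality $e^{\lambda_1-2\lambda_0}$ need not be less than one — that is exactly the pinching/bunching condition, and it can fail when the ratio $\lambda_1/\lambda_0$ is large. The way around it, which is what makes this work in our two-dimensional setting, is to first \emph{renormalize} along orbits: instead of iterating $F$ once, iterate a high power $F^{N}$ and use that on the trapped set the Jacobians satisfy the sharper multiplicative bounds \eqref{lamba0et1}, so that in the chart the linearized map is arbitrarily close to its diagonal part and the "$\lambda_1$" in the base-distortion factor can be replaced, after averaging over the $N$ steps, by a quantity as close to the true Lyapunov data as we wish — while the fiber contraction $e^{-2N\lambda_0}$ improves geometrically. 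More precisely one runs the graph transform for $F^N$ and uses Lemma~\ref{Local_hpyerbolic_2}\,\eqref{control_of_jacobian} to control $\|d_{\rho'}F^N\|$ by $CJ^u_N(\rho)$ for nearby points, together with Corollary~\ref{cor_control_jacobian}; choosing $N$ large and then $\gamma=\beta$ small makes the bunching inequality $\bigl(\sup_{\mathcal T}J^s_N/(J^u_N)^2\bigr)\cdot e^{\lambda_1\beta}<1$ hold. The remaining work is bookkeeping: checking that the cut-offs localizing $\mathcal{G}$ to $V$ do not destroy the contraction (they only affect points away from $\mathcal{T}$, where we are free to prescribe any smooth line field and interpolate), verifying that the fixed point restricted to $\mathcal{T}$ is the genuine invariant distribution (by uniqueness of bounded invariant sections), and assembling the local pieces with a partition of unity into global sections $E_u,E_s:U\to\mathcal{G}_1(U)$ of class $C^{1,\beta}$.
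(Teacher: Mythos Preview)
Your overall architecture is exactly that of the paper: represent $E_u$ by a slope function, obtain it as the fixed point of a graph transform $\mathcal G$ localized by a cutoff near $\mathcal T$, then run the fiber contraction theorem on the induced map for $da$ to get $C^1$, and finally push to $C^{1,\beta}$ by a bunching inequality. The gap is in the one estimate that makes the whole scheme go through, namely the $C^0$ contraction rate of the derivative map.

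You bound the two factors separately: $|\partial_a\mathcal M|\lesssim J^s/J^u\le e^{-2\lambda_0}$ and, from differentiating through $F^{-1}$, a factor $\|(d F)^{-1}\|\le e^{\lambda_1}$ (your ``$\|dF|_{E_u}\|$'' is the wrong object, but the bound $e^{\lambda_1}$ is the same). Multiplying the sups gives $e^{\lambda_1-2\lambda_0}$, which indeed can exceed $1$. But at each point these factors are \emph{not} independent: the linear part of the fiber map on $b=da$ is $b\mapsto (\partial_\lambda t)\,(b\circ F^{-1})\,(dF)^{-1}$, and pointwise
\[
|\partial_\lambda t(\rho,\cdot)|\cdot\|(d_\rho F)^{-1}\|\ \sim\ \frac{J^s_1(\rho)}{J^u_1(\rho)}\cdot\frac{1}{J^s_1(\rho)}\ =\ \frac{1}{J^u_1(\rho)}\ \le\ e^{-\lambda_0}\ <\ 1.
\]
This cancellation of $J^s$ is precisely the two-dimensional feature; it is the computation the paper carries out (there, $|\partial_\lambda t|\sim |d|/|a|$ and $\|(dF)^{-1}\|\sim 1/|d|$, product $\sim 1/|a|$). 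Hence the derivative map is \emph{always} a $C^0$ contraction with rate $\nu:=\sup_\rho (J^u_1(\rho))^{-1}<1$, and no pinching hypothesis is needed to reach $C^1$. The only bunching inequality that enters is for the H\"older seminorm of $b$,
\[
\nu\,\mu^\beta<1,\qquad \mu:=\mathrm{Lip}(F^{-1}),
\]
which is satisfiable for some $\beta>0$ simply because $\nu<1$. Your proposed renormalization by high iterates $F^N$ is therefore unnecessary; and note that with your separated-sup estimate it would not help either, since the same computation gives $e^{N(\lambda_1-2\lambda_0)}$, which diverges when $\lambda_1>2\lambda_0$. The ``averaging'' you invoke is exactly the pointwise cancellation above, and once you use it already for $N=1$ the obstruction disappears.
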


 \begin{rem}
It is likely that one can improve this regularity using the method of \cite{HuKa}. Our proof relies on the techniques of \cite{HiPu}. In fact, in \cite{KH} 19.1.d, the authors show how one can obtain $C^1$ regularity of the map $\rho \in \mathcal{T} \mapsto E_u(\rho)$ and explains how to prove $C^{1, \beta}$ regularity. Their notion of differentiability on the set $\mathcal{T}$ (which is clearly not open in our case) relies on the existence of linear approximations. Here, we choose to show a slightly different version of this regularity by proving that $\rho \in \mathcal{T} \mapsto E_u(\rho)$ can be obtained as the restriction of a $C^{1, \beta}$ map defined in an open neighborhood of $\mathcal{T}$. 
\end{rem}

 \subsubsection{Proof of the $C^{1,\beta}$ regularity}

\paragraph{Preliminaries.}

We recall that $\mathcal{T}$ is an invariant hyperbolic set for $F$. Hence, there exists a continuous splitting of $T_{\mathcal{T}} U$, into stable and unstable spaces $ \rho \in \mathcal{T} \mapsto E_s(\rho) , \rho \in \mathcal{T} \mapsto E_u(\rho)$.  We use a continuous Riemannian metric on $T_{\mathcal{T}} U$ such that $d_\rho F$ is a contraction from $E_s(\rho) \to E_s (F(\rho))$ and expanding from $E_u(\rho) \to E_u(F(\rho))$, and making $E_u(\rho)$ and $E_s(\rho)$ orthogonal. 

Let $ \rho \in \mathcal{T} \mapsto e_u(\rho) \in TU$ and $ \rho \in \mathcal{T} \mapsto e_s(\rho) \in TU$  be two continuous sections \footnote{Note that there is no problem of orientation to construct such global sections. Indeed, $\mathcal{T}$ is totally disconnected and hence, one can cover $\mathcal{T}$ by a disjoint union of open sets small enough so that it is possible to construct local sections in each such sets. Since these open sets are disjoint, these local sections allow us to build a global continuous section.  } such that, for every $\rho \in \mathcal{T}$, 
\begin{itemize}[nosep]
\item $e_u(\rho)$ spans $E_u(\rho)$, 
\item $e_s(\rho)$ spans $E_s(\rho)$, 
\item $||e_u(\rho)||=1, ||e_s(\rho)||=1$
\end{itemize} 

The matrix representation of $d_\rho F$\footnote{The definition of $\tilde{J}^{u/s}$ may differ from the one of $J^{u/s}_1$ above since we don't work \emph{a priori} with the same metric.} in theses basis is 
$$ d_\rho F = \left( \begin{matrix}
\tilde{J}^u(\rho) & 0 \\
0 & \tilde{J}^s(\rho) 
\end{matrix} \right)$$
with $ \nu \coloneqq \sup_{\rho \in \mathcal{T}} \max \left[ \left(\left|\tilde{J}^u(\rho)\right|\right)^{-1}, \left|\tilde{J}^s(\rho)\right| \right] < 1$. 

We can extend $e_u$ and $e_s$ to $U$ to continuous functions, still denoted $e_u$ and $e_s$. Let us consider smooth vector fields $v_u$ and $v_s$ on $U$ approximating $e_u$ and $e_s$ and a smooth Riemannian metric approximating the one considered above. By slightly modifying this vector fields, we can assume that for this new metric, $(v_u(\rho), v_s(\rho))$ is an orthonormal basis for all $\rho \in U$. In these new basis, we now write 

$$ d_\rho F = \left( \begin{matrix}
a(\rho) & b(\rho) \\
c(\rho) & d(\rho) 
\end{matrix} \right)$$
We assume that $v_u$ and $v_s$ are sufficiently close to $e_u$ and $e_s$ to ensure that, for some $\eta>0$ small enough, 
\begin{align*}
\sup_{ \rho \in \mathcal{T} } \max \left(  |b(\rho)|, |c(\rho) |\right) \leq \eta \\
\sup_{ \rho \in \mathcal{T} }  |d(\rho)| \leq \nu + \eta \leq 1 - 4\eta \\
\inf_{ \rho \in \mathcal{T} } |a(\rho) | \geq \nu^{-1} -\eta \geq 1 + 4\eta 
\end{align*}
We consider an open neighborhood $\Omega$ of $\mathcal{T}$ such that the following holds : 
\begin{align*}
\sup_{ \rho \in \Omega} \max \left(  |b(\rho)|, |c(\rho) |\right) \leq 2\eta \\
\sup_{ \rho \in \Omega }  |d(\rho)| \leq \nu + 2 \eta \leq 1 - 3\eta \\
\inf_{ \rho \in \Omega } |a(\rho) |  \geq \nu^{-1} - 2 \eta \geq 1 + 3\eta \\
\end{align*}

Our method relies on different uses of the Contraction Map Theorem. We state the  Fiber Contraction Theorem of \cite{HiPu} (Section 1), which will be used further. We recall that a fixed point $x_0$ of a continuous map $f : X \to X$ is  said to be \emph{attractive} if for every $x \in X, f^n(x) \to x_0$. 

\begin{thm}\textbf{Fiber Contraction Theorem} \label{Fiber_Contraction_Theorem}\\
Let $(X,d)$ be a metric space and $h : X \to X$ a map having an attractive fixed point $x_0$. Let us consider $Y$ another metric space and a family of maps $(g_x : Y \to Y)_{x \in X}$ and denote by $H$ the map 
$$ H : (x,y) \in X \times Y \mapsto (h(x) , g_x (y) ) \in X \times Y$$
Assume that 
\begin{itemize}[nosep]
\item $H$ is continuous ; 
\item For all $x \in X$, $\limsup_{n \to + \infty} L\left( g_{h^n(x)} \right)  <1$ where $L\left( g_{h^n(x)} \right) $ denotes the best Lipschitz constant for $g_{h^n(x)}$  ; 
\item $y_0$ is an attractive fixed point for $g_{x_0}$.
\end{itemize}
Then $(x_0,y_0)$ is an attractive fixed point for $H$. 
\end{thm}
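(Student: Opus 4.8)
The plan is a direct iteration argument: combine the attractiveness of $x_0$ under $h$, the \emph{eventual} contraction of the fiber maps along the $h$-orbit, and joint continuity to control the resulting error terms. First I would note that $(x_0,y_0)$ is a genuine fixed point, since $H(x_0,y_0)=(h(x_0),g_{x_0}(y_0))=(x_0,y_0)$ by the assumptions that $x_0$ is fixed by $h$ and $y_0$ by $g_{x_0}$. Then I fix an arbitrary $(x,y)\in X\times Y$ and unwind the definition of $H$: writing $H^n(x,y)=(x_n,y_n)$ one gets $x_n=h^n(x)$ and the recursion $y_{n+1}=g_{x_n}(y_n)$. By hypothesis $x_n\to x_0$, so the whole content of the statement reduces to showing $y_n\to y_0$.

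Next I would use the $\limsup$ hypothesis \emph{for this particular $x$}: there exist $\lambda\in(0,1)$ and $N\in\N$ such that $L(g_{x_n})\le\lambda$ for all $n\ge N$ (in particular each such $g_{x_n}$ is Lipschitz). Setting $a_n=d(y_n,y_0)$ and $\delta_n=d(g_{x_n}(y_0),y_0)$, the triangle inequality together with $g_{x_0}(y_0)=y_0$ gives, for $n\ge N$,
\[
a_{n+1}=d\big(g_{x_n}(y_n),y_0\big)\le d\big(g_{x_n}(y_n),g_{x_n}(y_0)\big)+d\big(g_{x_n}(y_0),y_0\big)\le \lambda a_n+\delta_n .
\]

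I then claim $\delta_n\to 0$: since $x_n\to x_0$ and $H$ is continuous at $(x_0,y_0)$, the sequence $H(x_n,y_0)=(h(x_n),g_{x_n}(y_0))$ converges to $H(x_0,y_0)=(x_0,y_0)$, whence $g_{x_n}(y_0)\to y_0$, i.e.\ $\delta_n\to 0$. To finish I would invoke the elementary fact that a nonnegative sequence with $a_{n+1}\le\lambda a_n+\delta_n$, $\lambda\in(0,1)$, $\delta_n\to0$, tends to $0$: given $\varepsilon>0$, choose $M\ge N$ with $\delta_n\le(1-\lambda)\varepsilon$ for $n\ge M$, so that $a_{n+1}-\varepsilon\le\lambda(a_n-\varepsilon)$ and hence $a_n-\varepsilon\le\lambda^{\,n-M}(a_M-\varepsilon)$ for $n\ge M$, giving $\limsup_n a_n\le\varepsilon$; letting $\varepsilon\downarrow0$ yields $a_n\to0$. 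Thus $y_n\to y_0$, and combined with $x_n\to x_0$ this shows $H^n(x,y)\to(x_0,y_0)$ for every $(x,y)$, i.e.\ $(x_0,y_0)$ is an attractive fixed point of $H$.

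There is no deep obstacle here; the argument is essentially a perturbed geometric-convergence estimate. The only points requiring care are that the Lipschitz bound is available only from the index $N$ onward, so the recursion must be anchored at $N$ rather than at $n=0$, and that the vanishing of the error $\delta_n$ genuinely uses the \emph{joint} continuity of $H$ at $(x_0,y_0)$ — continuity of each individual $g_x$ would not suffice. I expect the bookkeeping in the elementary convergence lemma to be the most technical (though entirely routine) step.
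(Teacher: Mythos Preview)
The paper does not prove this theorem: it quotes it from \cite{HiPu} (Section 1) and uses it as a black box. Your argument is the standard proof and is correct. One tiny cosmetic point: in the elementary recursion step, the inequality $a_n-\varepsilon\le\lambda^{\,n-M}(a_M-\varepsilon)$ is only literally true when $a_M\ge\varepsilon$; the clean way to phrase it (valid in all cases) is to iterate $a_{n+1}\le\lambda a_n+(1-\lambda)\varepsilon$ directly to get $a_n\le\lambda^{\,n-M}a_M+\varepsilon(1-\lambda^{\,n-M})$, hence $\limsup_n a_n\le\varepsilon$. This changes nothing in substance.
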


In the following, we study the regularity of the unstable distribution. The same holds for the stable distribution by changing the roles of $F^{-1}$ and $F$.

\paragraph{$E_u$ is a fixed point of a contraction.}
By our assumption on $v_u$ and $v_s$, there exists a continuous function $\lambda : U \to \R$ such that 
$$\R e_u(\rho) = \R ( v_u(\rho) + \lambda(\rho) v_s(\rho) ) $$
Hence, we will represent the extension of the unstable distribution by a continuous map $\lambda : \Omega \to \R$.  Our aim is to show that we can find $\lambda$ regular enough such that for $\rho \in \mathcal{T}$, 
$$ E_u(\rho) = \R( v_u(\rho) + \lambda(\rho) v_s(\rho) ) $$
To do so, we will start by constructing $\lambda$ as a fixed point of a contraction in a nice space. This contraction will be related to invariance properties of the unstable distribution. \\
First of all, if $\rho^\prime = F(\rho) \in \Omega \cap F(\Omega)$, and if $v = v_u(\rho) + \lambda v_s(\rho)$, $d_\rho F$ maps $v$ to $$w = \big( a(\rho) + \lambda b(\rho) \big)v_u( \rho^\prime ) + \big(c(\rho) + \lambda d(\rho) \big) v_s(\rho^\prime)  $$
Hence, the line of $T_\rho U$ represented by $\lambda$ is sent to the line represented by $t(\rho,\lambda)$ in $T_{\rho^\prime} U$ where 
\begin{equation}
t(\rho,\lambda) = \frac{\lambda d(\rho) + c(\rho) }{ a(\rho) + \lambda b(\rho) }
\end{equation}
Set $\Omega_1 = \Omega \cap F( \Omega)$ and let us consider a cut-off function $\chi \in \cinfc(\Omega_1)$ such that $0 \leq \chi \leq 1$ and $\chi \equiv 1$ in a neighborhood of $\mathcal{T}$. 
Let us introduce the complete metric space $$ X = \{ \lambda \in C( \Omega, \R), ||\lambda||_\infty \leq 1 \}$$
and consider the map $T : X \to X$ defined, for $\lambda \in X$ and $\rho^\prime \in \Omega$, 
\begin{equation}\label{def_of_T}
(T \lambda)(\rho^\prime)  = \chi(\rho^\prime) t\left( F^{-1}(\rho^\prime) , \lambda \big( F^{-1}(\rho^\prime) \big)  \right)
\end{equation}
To see that this is well defined, first note that $F^{-1}$ is well defined on $\supp \chi$ and $F^{-1} ( \supp \chi) \subset \Omega$. It is clear that if $\lambda \in X$, $T \lambda$ is continuous. To see that $||T \lambda ||_\infty \leq 1$, it is enough to note that if $\rho \in \Omega$ and $|\lambda| \leq 1$, 
$$ \left| t(\rho, \lambda ) \right| \leq  \frac{|d(\rho)| + |c(\rho)|}{|a(\rho)| - |b(\rho) | }  \leq \frac{1- 3 \eta + 2 \eta}{1 + 3 \eta - 2 \eta} \leq \frac{1- \eta}{1 + \eta}< 1$$
Let us now prove the following 
\begin{prop} \text{}
\begin{itemize} 
\item $T$ is a contraction. 
\item If $\lambda_u$ denotes its unique fixed point, then, for every $\rho \in \mathcal{T}$, 
$E_u(\rho) = \R \Big(  v_u(\rho) + \lambda_u(\rho) v_s(\rho) \Big)$
\end{itemize}
\end{prop}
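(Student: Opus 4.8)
The plan is to prove the two assertions in turn.

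\emph{Contraction.} The only thing to check is the Lipschitz constant of $\lambda \mapsto t(\rho,\lambda)$, uniformly in $\rho \in \Omega$ and $\lambda \in [-1,1]$. Differentiating,
\[
\partial_\lambda t(\rho,\lambda) = \frac{a(\rho)d(\rho) - b(\rho)c(\rho)}{\bigl(a(\rho)+\lambda b(\rho)\bigr)^2}.
\]
Feeding in the bounds available on $\Omega$ — $|a| \geq 1+3\eta$, $|d| \leq 1-3\eta$, $\max(|b|,|c|) \leq 2\eta$ — one gets, for $|\lambda|\leq 1$, that $|a+\lambda b| \geq \tfrac{1+\eta}{1+3\eta}|a|$ and $|ad-bc| \leq \tfrac{1-5\eta^2}{1+3\eta}|a|$, hence $|\partial_\lambda t(\rho,\lambda)| \leq \tfrac{1-5\eta^2}{(1+\eta)^2} =: k < 1$. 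Since $t(\rho,\cdot)$ is smooth in a neighbourhood of the convex set $[-1,1]$, the mean value inequality together with $0 \leq \chi \leq 1$ and $F^{-1}(\supp\chi) \subset \Omega$ gives $\|T\lambda_1 - T\lambda_2\|_\infty \leq k\|\lambda_1-\lambda_2\|_\infty$; as $X$ is complete this produces the unique fixed point $\lambda_u$. Moreover the crude bound $|t(\rho,\lambda)| \leq \tfrac{1-\eta}{1+\eta}$ for $|\lambda|\leq 1$ forces $\|\lambda_u\|_\infty \leq \tfrac{1-\eta}{1+\eta} < 1$, a fact we shall use below.

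\emph{Identification of the fixed point.} Write $L(\sigma) = \mathbb{R}\bigl(v_u(\sigma)+\lambda_u(\sigma)v_s(\sigma)\bigr)$ for $\sigma \in \Omega$. First observe that for $\rho \in \mathcal{T}$ one has $F^{-1}(\rho) \in \mathcal{T}$ and $\chi \equiv 1$ in a neighbourhood of $\mathcal{T}$, so the fixed point relation collapses to $\lambda_u(\rho) = t\bigl(F^{-1}(\rho),\lambda_u(F^{-1}(\rho))\bigr)$; by the very way $t$ was defined, this says exactly that $d_{F^{-1}(\rho)}F$ maps the line $L(F^{-1}(\rho))$ onto the line $L(\rho)$. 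Equivalently, $d_\sigma F\bigl(L(\sigma)\bigr) = L(F(\sigma))$ for every $\sigma \in \mathcal{T}$, i.e.\ $L$ is a continuous $dF$-invariant line field along $\mathcal{T}$. To conclude $L = E_u$ on $\mathcal{T}$, I would invoke a cone argument: the bounds on $a,b,c,d$ make the cone field $C_u(\rho) = \{\alpha v_u(\rho)+\beta v_s(\rho): |\beta|\leq|\alpha|\}$ strictly forward-invariant under $dF$ on $\Omega$ (the image of $\alpha v_u+\beta v_s$ with $|\beta|\leq|\alpha|$ has $v_u$-coefficient of modulus $\geq (1+\eta)|\alpha|$ and $v_s$-coefficient of modulus $\leq (1-\eta)|\alpha|$), and the strict bound $\|\lambda_u\|_\infty < 1$ puts $L$ strictly inside $C_u$ along $\mathcal{T}$. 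For each $n$, invariance gives $L(\rho) = d_{F^{-n}(\rho)}F^{\,n}\bigl(L(F^{-n}(\rho))\bigr) \subset d_{F^{-n}(\rho)}F^{\,n}\bigl(C_u(F^{-n}(\rho))\bigr)$, and the classical fact that these nested forward images of the cone collapse onto $E_u(\rho)$ — a consequence of the uniform hyperbolicity of $\mathcal{T}$, cf.\ \cite{KH} — forces $L(\rho) = E_u(\rho)$, that is $E_u(\rho) = \mathbb{R}\bigl(v_u(\rho)+\lambda_u(\rho)v_s(\rho)\bigr)$ for all $\rho \in \mathcal{T}$.

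The routine part is the contraction estimate: it is essentially a one-line computation once the inequalities defining $\Omega$ are recorded, the only mild care being to bound everything in terms of $|a|$ and then use $|a| \geq 1+3\eta$. The real content lies in the second step, and the main obstacle there is that the cone field $C_u$ is built from the \emph{ambient} smooth frame $(v_u,v_s)$ rather than from the true (only Hölder) splitting $E_s\oplus E_u$; ensuring that this cone is genuinely invariant and that its forward iterates along an orbit collapse onto $E_u$ is precisely what forces $\eta$ (equivalently, the $C^0$-closeness of $(v_u,v_s)$ to the true splitting on $\Omega$) to be small, and it is also why we insisted on the strict bound $\|\lambda_u\|_\infty < 1$ rather than merely $\leq 1$.
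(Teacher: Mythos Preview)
Your proof is correct. The contraction estimate is essentially the paper's, just with the derivative written in the closed form $\frac{ad-bc}{(a+\lambda b)^2}$ rather than split into two pieces; the resulting bounds are equivalent.

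For the identification of the fixed point you take a genuinely different (though standard) route. The paper changes frame from $(v_u,v_s)$ to the true hyperbolic frame $(e_u,e_s)$: writing $\mathbb{R}(v_u+\lambda_u v_s) = \mathbb{R}(e_u + \mu\, e_s)$ for some continuous bounded $\mu$, the invariance relation collapses to the explicit scalar recursion $\mu(F(\rho)) = \mu(\rho)\,\tilde J^s_1(\rho)/\tilde J^u_1(\rho)$ on $\mathcal{T}$, and since this factor has modulus $\leq \nu^2<1$ while $\mu$ is bounded, iterating backward forces $\mu\equiv 0$. You instead stay in the $(v_u,v_s)$ frame and use the cone $\{|\beta|\leq|\alpha|\}$: strict forward invariance together with $\|\lambda_u\|_\infty<1$ traps $L$ in the nested images $d F^{\,n}\bigl(C_u(F^{-n}\rho)\bigr)$, which collapse to $E_u$. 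The paper's version is entirely self-contained (no black box from \cite{KH}) and exposes the single contraction doing all the work; your version is more geometric and makes transparent why the smallness of $\eta$ is what matters. One point you leave implicit is that $E_u(\rho)$ itself sits inside $C_u(\rho)$ --- this is what guarantees the collapsed line is $E_u$ rather than some other invariant direction --- but it follows immediately from the same $C^0$-closeness of $(v_u,v_s)$ to $(e_u,e_s)$ that you already invoke, and is worth stating explicitly.
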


\begin{proof}
Let $ \lambda, \mu \in X$. If $\rho^\prime \in \Omega \setminus \supp \chi$, we have  $T\mu(\rho^\prime) = T\lambda(\rho^\prime) = 0$. Now assume that $\rho^\prime \in \supp \chi$ and write $\rho^\prime = F(\rho)$ with $\rho \in \Omega$. 
$$ |T\lambda(\rho^\prime) - T\mu(\rho^\prime)| = |\chi(\rho^\prime)| | t(\rho, \lambda(\rho) ) - t(\rho, \mu(\rho) )| \leq | t(\rho, \lambda(\rho) ) - t(\rho, \mu(\rho) )| $$ 
The map $ \lambda \in [-1,1] \mapsto t(\rho,\lambda)$ is smooth, so that we can write 
$$||T\lambda- T \mu||_\infty \leq \sup_{ \rho^\prime \in \supp \chi} |T\lambda(\rho^\prime) - T\mu(\rho^\prime) | \leq \sup_{ \Omega \times [-1, 1]}|\partial_\lambda t| \times ||\lambda- \mu||_\infty$$
It is then enough to show that $\sup_{ \Omega \times [-1, 1]}|\partial_\lambda t|  <1$. For $(\rho, \lambda) \in \Omega \times [-1,1]$, we have
\begin{equation}\label{partial_lambda_T}
 \partial_{\lambda} t(\rho,\lambda) = \frac{d(\rho)}{a(\rho)  + \lambda b(\rho)} - b(\rho)   \frac{\lambda d(\rho) + c(\rho) }{\left( a(\rho)) + \lambda b(\rho) \right)^2}
\end{equation}
Hence, we can control 
\begin{align*}
|\partial_{\lambda} t(\rho,\lambda)| \leq \frac{1-3 \eta}{1 + \eta} + \eta \frac{1- \eta}{\left( 1+ \eta \right)^2} = \kappa_\eta < 1
\end{align*}
if $\eta$ is small enough. This demonstrates that $T$ is a contraction. 

As a consequence, $T$ has a unique fixed point, $\lambda_u$. We note $v(\rho) = v_u(\rho) + \lambda_u(\rho) v_s(\rho)$. We want to show that $v(\rho) \in \R e_u(\rho)$ for $\rho \in \mathcal{T}$ (recall that $e_u : U \to TU$ is continuous and that $e_u(\rho)$ spans $E_u(\rho)$ if $\rho \in \mathcal{T}$). Since $\chi = 1$ on $\mathcal{T}$, we see by definition of $T$ that for every $\rho \in \mathcal{T}$, 
\begin{equation}\label{fixed_point_prop}
d_\rho F (v(\rho)) \in \R v (F(\rho) )
\end{equation}
If $v_u$ is sufficiently close to $e_u$, we can find a continuous and bounded function $\mu$ such that 
$$ \R v(x) = \R \left( e_u(x) + \mu(x) e_s(x) \right)$$
From (\ref{fixed_point_prop}), if $\rho^\prime = F(\rho) \in \mathcal{T}$, 

$$d_\rho F \Big( e_u(\rho) + \mu(\rho) e_s(\rho) \Big) = \tilde{J}^u_1 (\rho) \left( e_u(\rho^\prime) + \mu(\rho) \frac{\tilde{J}^s_1(\rho)}{\tilde{J}^u_1(\rho)} e_s(\rho^\prime) \right) \in \R \Big( e_u(\rho^\prime) + \mu(\rho^\prime) e_s(\rho^\prime) \Big)$$
This implies the equality 
\begin{equation}
\mu(\rho^\prime) = \mu(\rho) \frac{\tilde{J}^s_1(\rho)}{\tilde{J}^u_1(\rho)}
\end{equation}
This equality implies that $\mu =0$ on $\mathcal{T}$ and hence, $v = e_u$ on $\mathcal{T}$, as expected. 
\end{proof}

\begin{rem}
As long as $\rho^\prime \in \{ \chi =1 \}$, the vector field $v(\rho^\prime) = v_u(\rho^\prime) + \lambda(\rho^\prime) v_s(\rho^\prime)$ is invariant by $dF$. When $\rho^\prime \in W_u(\rho) \cap \{ \chi =1 \}$ for some $\rho \in \mathcal{T}$, we will see below that the direction given by $v(\rho^\prime)$ coincides with the tangent space to $W_u(\rho)$, namely $T_{\rho^\prime}W_u(\rho) = \R v(\rho^\prime)$. When $\rho^\prime \not \in \bigcup_{\rho \in \mathcal{T}}W_u(\rho)$, there exists $n \in \N$ such that $F^{-n}(\rho^\prime) \not \in \supp \chi$. Hence, $\lambda_u(\rho^\prime)$ is given by an explicit expression obtained by iterating the fixed point formula. 
\end{rem}

\paragraph{Differentiability of $\lambda_u$.} We go on by showing that $\lambda$ is $C^1$ by adapting the method of \cite{HiPu}. 
We now introduce the Banach space $Y$ of bounded continuous sections $\alpha : \Omega \to T^* \Omega$. We will use the norm on $T^* \Omega$ adapted to the metric on $T\Omega$, namely 
if $\alpha \in Y$, $$||\alpha||_Y = \sup_{ \rho \in \Omega} \sup_{ v \in T_\rho \Omega , v \neq 0} \frac{| \alpha(\rho) (v) |}{||v||_{T_\rho \Omega}}$$
For $\lambda \in X$, let us introduce the map $G_{\lambda} : Y \to Y$, defined as follows. For $\alpha \in Y$ and $\rho^\prime \in \Omega$, 

\begin{align}\label{def_of_G_lambda}
\left( G_\lambda \alpha \right)(\rho^\prime) = &\chi(\rho^\prime) \Big[ d_\rho t \left( \rho  , \lambda (\rho) \right) +   \partial_\lambda t \left(\rho  , \lambda (\rho) \right) \alpha \left(\rho \right) \Big] \circ \left( d_\rho F \right)^{-1}+ t \left( \rho  , \lambda (\rho)\right) d_{\rho^\prime} \chi
\end{align}
with $\rho=F^{-1}(\rho^\prime)$, which is well defined  since $ \rho \in \Omega$ if $\rho^\prime \in \supp(\chi)$.
$G_\lambda$ is constructed to satisfy : for $\lambda \in X$, if $\lambda$ is $C^1$, then the following relation holds : 
\begin{equation}\label{rec_for_G}
G_\lambda (d \lambda)   = d \left( T \lambda \right)
\end{equation}
Let us first state the key tool to show the differentiability of $\lambda_u$.  
\begin{prop}\label{G_lambda_contraction}
For every $\lambda \in X$, $G_\lambda$ is a contraction with Lipschitz constant $L_\lambda$ satisfying 
$$ \sup_{ \lambda \in X} L_\lambda <1$$
\end{prop}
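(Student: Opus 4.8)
The plan is to exploit that, for a fixed $\lambda\in X$, the map $G_\lambda$ is \emph{affine} in $\alpha$. Writing $G_\lambda\alpha=G_\lambda 0+L_\lambda\alpha$, the linear part is
$$ (L_\lambda\alpha)(\rho') = \chi(\rho')\,\partial_\lambda t\big(\rho,\lambda(\rho)\big)\,\big[\,\alpha(\rho)\circ (d_\rho F)^{-1}\,\big],\qquad \rho = F^{-1}(\rho'), $$
and $(L_\lambda\alpha)(\rho')=0$ when $\rho'\notin\supp\chi$. The best Lipschitz constant $L_\lambda$ of $G_\lambda$ is then exactly the operator norm $||L_\lambda||_{Y\to Y}$, so the proposition reduces to bounding this norm by a constant $<1$ independent of $\lambda\in X$. (That $G_\lambda$ really sends $Y$ into $Y$ is a routine verification using smoothness of $t$, $F$, $\chi$, the fact that $F^{-1}$ is defined on $\supp\chi\subset\Omega\cap F(\Omega)$ with $\lambda(\rho)\in[-1,1]$, and the norm bound below; continuity across $\partial\supp\chi$ holds because $\chi\ge 0$ vanishes to first order there.)

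The heart of the argument is an algebraic cancellation, which I would make explicit in the orthonormal frames $(v_u,v_s)$. Differentiating $t(\rho,\lambda)=\big(\lambda d(\rho)+c(\rho)\big)/\big(a(\rho)+\lambda b(\rho)\big)$ gives
$$ \partial_\lambda t(\rho,\lambda) = \frac{a(\rho)d(\rho)-b(\rho)c(\rho)}{\big(a(\rho)+\lambda b(\rho)\big)^2} = \frac{\det(d_\rho F)}{\big(a(\rho)+\lambda b(\rho)\big)^2}, $$
whereas composing a covector with $(d_\rho F)^{-1}$ is the same as applying $\big((d_\rho F)^{-1}\big)^{*}$, which in the dual frames equals $\dfrac{1}{\det(d_\rho F)}\left(\begin{smallmatrix} d(\rho) & -c(\rho)\\ -b(\rho) & a(\rho)\end{smallmatrix}\right)$. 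Multiplying the two, $\det(d_\rho F)$ cancels, and one is left with
$$ (L_\lambda\alpha)(\rho') = \frac{\chi(\rho')}{\big(a(\rho)+\lambda(\rho) b(\rho)\big)^2}\begin{pmatrix} d(\rho) & -c(\rho)\\ -b(\rho) & a(\rho)\end{pmatrix}\alpha(\rho),\qquad \rho=F^{-1}(\rho'). $$

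Given this, the estimate is elementary. For $\rho'\in\supp\chi$ the point $\rho=F^{-1}(\rho')$ lies in $\Omega$, so the bounds $|b(\rho)|,|c(\rho)|\le 2\eta$, $|d(\rho)|\le \nu+2\eta$, $|a(\rho)|\ge \nu^{-1}-2\eta$ hold; hence $|a(\rho)+\lambda(\rho)b(\rho)|\ge \nu^{-1}-4\eta$, and splitting the matrix into its diagonal and antidiagonal parts bounds its operator norm by $\max(|a(\rho)|,|d(\rho)|)+\max(|b(\rho)|,|c(\rho)|)\le |a(\rho)|+2\eta$. Since $x\mapsto (x+2\eta)/(x-2\eta)^2$ is decreasing on $(2\eta,\infty)$ and $|a(\rho)|\ge\nu^{-1}-2\eta$,
$$ ||(L_\lambda\alpha)(\rho')|| \;\le\; \frac{|a(\rho)|+2\eta}{(|a(\rho)|-2\eta)^2}\,||\alpha(\rho)|| \;\le\; \frac{\nu^{-1}}{(\nu^{-1}-4\eta)^2}\,||\alpha||_Y \;=\; \frac{\nu}{(1-4\eta\nu)^2}\,||\alpha||_Y . $$
Taking the supremum over $\rho'$ gives $L_\lambda\le \nu/(1-4\eta\nu)^2=:\theta_0$, uniformly in $\lambda\in X$, and $\theta_0<1$ as soon as $\eta<(1-\sqrt\nu)/(4\nu)$ — which, as $\nu<1$, is one of the smallness conditions imposed on $\eta$ (equivalently, on $\Omega$) at the outset.

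The only genuinely non-mechanical step is spotting the cancellation of $\det(d_\rho F)$: bounding $\sup|\partial_\lambda t|$ (merely slightly below $1$) and $\sup||(d_\rho F)^{-1}||$ (of size $\sim\nu^{-1}\gg 1$) separately leads nowhere, so one must keep the fiber contraction $\partial_\lambda t$ and the transported covector together. Once this is observed, the whole statement collapses to the fact that $||L_\lambda||$ is governed by $1/\inf_\Omega|a|\sim\nu<1$, i.e. by the unstable expansion rate, the $O(\eta)$ corrections being absorbed by shrinking $\Omega$ — precisely the bunching mechanism behind $C^{1}$ regularity in the Hirsch–Pugh scheme. The same computation with $F$ and $F^{-1}$ interchanged handles the stable distribution.
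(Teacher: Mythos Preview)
Your proof is correct and reaches the same endpoint as the paper's, namely $L_\lambda \le \nu + O(\eta) < 1$, but the route to the key cancellation differs in presentation. You combine $\partial_\lambda t$ and $(d_\rho F)^{-1}$ algebraically, observing that $\partial_\lambda t = \det(d_\rho F)/(a+\lambda b)^2$ and that the cofactor matrix of $(d_\rho F)^{-1}$ carries a $1/\det(d_\rho F)$, so the determinant cancels and only $\frac{1}{(a+\lambda b)^2}\left(\begin{smallmatrix} d & -c \\ -b & a\end{smallmatrix}\right)$ survives, with norm $\sim |a|/|a|^2 = 1/|a|\le\nu$. The paper instead bounds the two factors separately but keeps the precise leading terms: $|\partial_\lambda t| = |d|/|a| + O_\nu(\eta)$ and $\|(d_\rho F)^{-1}\| = 1/|d| + O_\nu(\eta)$, so the product is $1/|a| + O_\nu(\eta)$. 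Your remark that bounding the factors separately ``leads nowhere'' is thus a little off --- the paper does exactly that, but retaining the $|d|$ in the numerator of one estimate and the $1/|d|$ in the other, rather than replacing them by crude bounds $<1$ and $\sim\nu^{-1}$. Your explicit determinant cancellation is arguably cleaner and makes the mechanism more transparent; the paper's version is more in the spirit of perturbative estimates. Both are valid.
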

Before proving it, let us show how it leads us to 
\begin{prop}
$\lambda_u$ is $C^1$. 
\end{prop}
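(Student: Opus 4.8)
The plan is to apply the Fiber Contraction Theorem (Theorem \ref{Fiber_Contraction_Theorem}) with base map $h = T$ acting on $X$, and fiber maps $(G_\lambda)_{\lambda \in X}$ acting on $Y$, then identify the resulting attractive fixed point with $(\lambda_u, d\lambda_u)$. First I would record the three hypotheses of Theorem \ref{Fiber_Contraction_Theorem}. The base map $T : X \to X$ is a contraction of the complete metric space $X$ by the previous proposition, so it has $\lambda_u$ as an attractive fixed point. The fiber maps $G_\lambda : Y \to Y$ are uniformly contracting: this is exactly the content of Proposition \ref{G_lambda_contraction}, which gives $\sup_{\lambda \in X} L_\lambda < 1$; in particular, for any $\lambda$, $\limsup_n L(G_{T^n \lambda}) \leq \sup_{\mu \in X} L_\mu < 1$. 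For the third hypothesis, $G_{\lambda_u}$ is a contraction of the Banach space $Y$, hence has a unique fixed point $\alpha_u \in Y$, which is automatically attractive. Finally I would check that the combined map
$$ H : (\lambda, \alpha) \in X \times Y \mapsto (T\lambda, G_\lambda \alpha) $$
is continuous; this follows from the explicit formulas \eqref{def_of_T} and \eqref{def_of_G_lambda}, since $t$ and its derivatives $d_\rho t$, $\partial_\lambda t$ are smooth in $(\rho,\lambda)$, $F^{-1}$ is smooth on $\supp\chi$, and $\chi, d\chi$ are smooth; joint continuity in $(\lambda,\alpha)$ in the sup-norms is then routine. Theorem \ref{Fiber_Contraction_Theorem} then yields that $(\lambda_u, \alpha_u)$ is an attractive fixed point of $H$.

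Next I would upgrade this to genuine $C^1$ regularity via the standard Hirsch--Pugh argument. Start from any smooth $\lambda_0 \in X$ (for instance $\lambda_0 \equiv 0$); then $d\lambda_0 \in Y$ is well-defined, and the intertwining relation \eqref{rec_for_G} gives $G_{\lambda_0}(d\lambda_0) = d(T\lambda_0)$, and inductively $G_{T^{n-1}\lambda_0}\circ \cdots \circ G_{\lambda_0}(d\lambda_0) = d(T^n \lambda_0)$. By the attractivity of $(\lambda_u,\alpha_u)$ for $H$, we have $T^n \lambda_0 \to \lambda_u$ uniformly and $d(T^n\lambda_0) \to \alpha_u$ uniformly on $\Omega$. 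A sequence of $C^1$ functions converging uniformly, whose derivatives also converge uniformly, has a $C^1$ limit whose derivative is the limit of the derivatives; hence $\lambda_u \in C^1(\Omega)$ with $d\lambda_u = \alpha_u$. (One should note here that $T^n\lambda_0$ is indeed $C^1$: this is immediate since $t$ is smooth, $F^{-1}$ is smooth on $\supp\chi$, and $\chi$ is smooth, so $T$ preserves $C^1$.)

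The main obstacle — and really the only substantive point, since everything above is bookkeeping around Theorem \ref{Fiber_Contraction_Theorem} — is the uniform contraction estimate of Proposition \ref{G_lambda_contraction}, which I would expect to be proved separately just after this proposition. The mechanism is that $G_\lambda$ acts on $\alpha$ by precomposition with $(d_\rho F)^{-1}$ and multiplication by the scalar $\partial_\lambda t(\rho,\lambda)$; the precomposition contributes a factor of order $\|(d_\rho F)^{-1}\|$ restricted to the relevant directions, which on and near $\mathcal{T}$ is dominated by $\nu^{-1}$ in the unstable direction but this is compensated because the object being transported is a covector along the image — the net Lipschitz constant ends up controlled by a quantity like $\|\partial_\lambda t\|_\infty \cdot \sup \|(d F)^{-1}\|$, and the earlier bound $|\partial_\lambda t| \leq \kappa_\eta < 1$ together with the hyperbolicity bounds $|a| \geq 1+3\eta$, $|d| \leq 1-3\eta$, $|b|,|c| \leq 2\eta$ on $\Omega$ makes this product strictly less than $1$ for $\eta$ small, uniformly in $\lambda \in X$. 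The cut-off terms $t\,d\chi$ and the factor $\chi$ only help (they are bounded and $\chi \leq 1$), so they do not spoil the contraction. Granting Proposition \ref{G_lambda_contraction}, the proof of "$\lambda_u$ is $C^1$" is complete.
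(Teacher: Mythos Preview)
Your proof is correct and follows essentially the same route as the paper: apply the Fiber Contraction Theorem to $H(\lambda,\alpha)=(T\lambda,G_\lambda\alpha)$ using Proposition~\ref{G_lambda_contraction}, then start from a $C^1$ seed $\lambda_0$, use the intertwining relation \eqref{rec_for_G} to identify the fiber iterates with $d(T^n\lambda_0)$, and pass to the limit. Your extra care in spelling out the continuity of $H$, the preservation of $C^1$ under $T$, and the elementary real-analysis fact about uniform limits is welcome but not a departure from the paper's argument.
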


\begin{proof}
We use the Contraction Fiber Theorem. Let $\alpha_u$ be the unique fixed point of $G_{\lambda_u}$. The map $$H : (\lambda, \alpha) \in X \times Y \mapsto (T \lambda, G_\lambda \alpha )\in X \times Y $$ is continuous and the previous proposition shows that for every $\lambda \in X$, $ \sup_n L \left( G_{T^n \lambda} \right) <1$. The Contraction Fiber Theorem implies that $(\lambda_u, \alpha_u)$ is an attractive fixed point for $H$.\\
Let $\lambda \in X$ be $C^1$. Hence, $H^{n} ( \lambda, d \lambda) \to (\lambda_u, \alpha_u)$. But $H^{n} (\lambda, d \lambda) = ( T^n\lambda , \alpha_n)$
with $$\alpha_n = G_{ T^{n-1} \lambda} \circ \dots  \circ G_{\lambda} d \lambda$$ It is clear that if $\lambda \in C^1$, so is $T \lambda$ and an iterative use of (\ref{rec_for_G}) implies that $\alpha_n = d \left( T^n \lambda \right) $. This shows that $\lambda_u$ is $C^1$ and $d \lambda_u = \alpha_u$. 
\end{proof}

Let us now prove Proposition \ref{G_lambda_contraction}. 
\begin{proof}
Let $\lambda \in X$ and fix $\alpha, \beta \in Y$.  It is of course enough to control $||G_\lambda \alpha(\rho^\prime) - G_\lambda \beta(\rho^\prime)||$ for $\rho^\prime \in \supp (\chi)$ since both $G_\lambda \alpha$ and $G_\lambda \beta$ vanishes outside. Let us fix $\rho^\prime = F(\rho) \in \supp (\chi)$. \\
$G_\lambda \alpha(\rho^\prime) - G_\lambda \beta(\rho^\prime)$ is given by 
$$\chi(\rho^\prime) \partial_\lambda t (\rho,\lambda(\rho) ) [\alpha(\rho)- \beta(\rho) ] \circ (d_\rho F)^{-1}$$ so it is enough to control  $\partial_\lambda t (\rho,\lambda(\rho) ) \gamma(\rho) \circ (d_\rho F)^{-1}$ for $\gamma = \alpha- \beta$. 
With the precise expression of $\partial_\lambda t (\rho,\lambda(\rho) )$ given by (\ref{partial_lambda_T}), we can estimate 
$$|\partial_\lambda t (\rho,\lambda(\rho) ) | =  \frac{|d(\rho)|}{|a(\rho)  + \lambda(\rho) b(\rho)|} +O_\nu(\eta) = \frac{|d(\rho)|}{|a(\rho)|}  + O_\nu(\eta)   $$
(By the notation $O_\nu(\eta)$, we mean that this term is bounded by $C \eta$ where $C$ is a constant depending only on $\nu$ and $(F,U)$). \\
Moreover, we have $|| (d_\rho F)^{-1}||  = \max\left( \frac{1}{a(\rho)}, \frac{1}{d(\rho)} \right)+ O_\nu(\eta) =  \frac{1}{d(\rho)} + O_\nu(\eta)$. 
Hence, 
$$||\partial_\lambda t (\rho,\lambda(\rho) ) \gamma(\rho) \circ (d_\rho F)^{-1}|| \leq \left( \frac{1}{a(\rho)} + O_\nu(\eta) \right) || \gamma(\rho)|| \leq \left( \nu + O_\nu (\eta)  \right) ||\gamma||_Y$$
Hence, if $\eta$ is small enough, the proposition is proved.
\end{proof}

\paragraph{Hölder regularity of $\alpha_u$.} 
In fact, as explained at the end of 19.1.d in \cite{KH}, we can improve the $C^1$ regularity. 

To deal with Hölder regularity of sections $\alpha : \Omega \to T^*\Omega$ , we will simply evaluate the distance between $\alpha(\rho_1)$ and $\alpha(\rho_2)$ for $\rho_1, \rho_2 \in \Omega$ using the natural identification  $T^*\Omega = \Omega \times (\R^2)^*$, where we see $\alpha(\rho_1)$ as an element of $(\R^2)^*$. This allows us to write $\alpha(\rho_1)- \alpha(\rho_2)$ and compute $||\alpha(\rho_1)- \alpha(\rho_2)||$ where $|| \cdot ||$ is a norm on $(\R^2)^*$. There exists $C >0$ such that for every $\alpha \in Y$, 
$\sup_{\rho \in \Omega} ||\alpha(\rho) || \leq C ||\alpha||_Y$. 

Let us introduce $\mu$ a Lipschitz constant for $F^{-1}$ on $\Omega$ and an exponent $\beta >0$ such that 
\begin{equation}
\nu \mu^\beta  < 1
\end{equation}
This condition is called a \emph{bunching condition} in \cite{KH} (19.1.d). Such a $\beta$ exists. 
We will then show the following, which finally concludes the proof of Theorem \ref{Thm_regularity}. 

\begin{prop}
$\alpha_u$ is $\beta$-Hölder, that is to say, $\lambda_u$ is $C^{1, \beta}$. 
\end{prop}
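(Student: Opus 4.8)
The plan is to show that the fixed point $\alpha_u$ of $G_{\lambda_u}$, which we already know lies in $Y$, in fact lies in the subspace of $\beta$-H\"older sections. Using the trivialization $T^*\Omega\simeq\Omega\times(\R^2)^*$ fixed above, I would first introduce, for $\alpha\in Y$, the H\"older seminorm
\[
[\alpha]_\beta=\sup_{\rho_1\neq\rho_2\in\Omega}\frac{\|\alpha(\rho_1)-\alpha(\rho_2)\|}{d(\rho_1,\rho_2)^\beta},
\]
and, for $R,K>0$, the set $B_{R,K}=\{\alpha\in Y:\|\alpha\|_Y\leq R,\ [\alpha]_\beta\leq K\}$, which is nonempty and closed in the Banach space $(Y,\|\cdot\|_Y)$. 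Since $G_{\lambda_u}$ is a contraction of $Y$ (Proposition \ref{G_lambda_contraction}) with unique fixed point $\alpha_u$, it suffices to exhibit $R$ and $K$ with $G_{\lambda_u}(B_{R,K})\subset B_{R,K}$: the contraction then forces $\alpha_u\in B_{R,K}$, that is $d\lambda_u=\alpha_u$ is $\beta$-H\"older, i.e. $\lambda_u\in C^{1,\beta}$. (We may assume $0<\beta\le 1$, shrinking $\beta$ if necessary, which preserves the bunching condition.)

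Choosing $R$ is immediate: if $L<1$ is the Lipschitz constant of $G_{\lambda_u}$ on $Y$, then $\|G_{\lambda_u}\alpha\|_Y\leq L\|\alpha-\alpha_u\|_Y+\|\alpha_u\|_Y\leq L(R+\|\alpha_u\|_Y)+\|\alpha_u\|_Y\leq R$ as soon as $R\geq(1+L)\|\alpha_u\|_Y/(1-L)$. The substance of the argument is the H\"older estimate for $G_{\lambda_u}$. Writing $\rho_k=F^{-1}(\rho'_k)$ and inspecting (\ref{def_of_G_lambda}), I would split $G_{\lambda_u}\alpha(\rho'_1)-G_{\lambda_u}\alpha(\rho'_2)$ into: a term in which only the $\alpha$-dependent factor varies, carrying $\alpha(\rho_1)-\alpha(\rho_2)$ multiplied by the prefactor $\chi\,\partial_\lambda t(\rho,\lambda_u(\rho))$ composed with $(d_\rho F)^{-1}$; and terms in which $\alpha$ is frozen and one of the coefficient functions $\chi$, $d_{\rho'}\chi$, $\rho'\mapsto d_\rho t(\rho,\lambda_u(\rho))$, $\rho'\mapsto\partial_\lambda t(\rho,\lambda_u(\rho))$, $\rho'\mapsto t(\rho,\lambda_u(\rho))$, $\rho'\mapsto(d_\rho F)^{-1}$ varies. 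Each of these coefficient functions is Lipschitz on the compact set $\Omega$, because $t$ is smooth, $\lambda_u$ is $C^1$ (hence Lipschitz) by the previous proposition, $F$ is smooth and $F^{-1}$ is Lipschitz; using moreover the pointwise bound $\|\alpha(\rho)\|\le C\|\alpha\|_Y$, these frozen-$\alpha$ terms are bounded by $C(1+\|\alpha\|_Y)\,d(\rho'_1,\rho'_2)\leq C'(1+\|\alpha\|_Y)\,d(\rho'_1,\rho'_2)^\beta$, since $\beta\le 1$ and $\mathrm{diam}(\Omega)<\infty$.

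For the remaining term, the key inputs are already present in the proof of Proposition \ref{G_lambda_contraction}: $|\partial_\lambda t(\rho,\lambda_u(\rho))|\cdot\|(d_\rho F)^{-1}\|\leq\nu+O_\nu(\eta)$, while $\|\alpha(\rho_1)-\alpha(\rho_2)\|\leq[\alpha]_\beta\,d(\rho_1,\rho_2)^\beta\leq[\alpha]_\beta\,\mu^\beta\,d(\rho'_1,\rho'_2)^\beta$ by the Lipschitz bound $\mu$ on $F^{-1}$. Combining, for $\eta$ small enough,
\[
[G_{\lambda_u}\alpha]_\beta\leq\nu\mu^\beta\,[\alpha]_\beta+C\,(1+\|\alpha\|_Y),
\]
where $C$ depends only on $(U,F)$, $\eta$ and the (now fixed) data $\chi,\lambda_u$. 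This is precisely where the bunching condition $\nu\mu^\beta<1$ enters: with $\|\alpha\|_Y\leq R$ and $K\geq C(1+R)/(1-\nu\mu^\beta)$ one gets $[G_{\lambda_u}\alpha]_\beta\leq\nu\mu^\beta K+C(1+R)\leq K$, so $G_{\lambda_u}(B_{R,K})\subset B_{R,K}$ and the argument closes. (Equivalently, one may iterate $G_{\lambda_u}^n\alpha_0$ from a smooth $\alpha_0$: the recursion bounds $[G_{\lambda_u}^n\alpha_0]_\beta$ uniformly in $n$ while $G_{\lambda_u}^n\alpha_0\to\alpha_u$ uniformly, so the limit inherits the bound.) The main obstacle is the bookkeeping of this frozen-$\alpha$ versus varying-$\alpha$ decomposition and, in particular, checking that $\rho'\mapsto\partial_\lambda t\big(F^{-1}(\rho'),\lambda_u(F^{-1}(\rho'))\big)$ and the other coefficient functions are genuinely Lipschitz, which is what makes the $C^1$ regularity of $\lambda_u$ obtained in the previous step essential here.
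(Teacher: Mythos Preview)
Your proposal is correct and the core H\"older estimate is the same as in the paper, but you organise the conclusion differently, and in a way that is genuinely simpler.

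The paper introduces the Banach space $Y^\beta$ of bounded $\beta$-H\"older sections, equipped with a weighted norm $\|\alpha\|_Y+\varepsilon\|\alpha\|_\beta$, and shows that for every $\lambda$ in a suitable $C^1$-bounded subset $X'\subset X$ the map $G_\lambda$ is a contraction on $Y^\beta$, with Lipschitz constant uniform in $\lambda$. It then applies the Fiber Contraction Theorem to $H_\beta:(\lambda,\alpha)\mapsto(T\lambda,G_\lambda\alpha)$ on $X'\times Y^\beta$, and concludes by tracking the iterates $(T^n\lambda, d(T^n\lambda))$ from smooth initial data.

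You instead work only with the single map $G_{\lambda_u}$, which you already know is a contraction of $(Y,\|\cdot\|_Y)$, and you exhibit a closed invariant subset $B_{R,K}=\{\|\alpha\|_Y\le R,\ [\alpha]_\beta\le K\}$. Since $B_{R,K}$ is closed in $Y$ (uniform limits inherit the H\"older bound), the unique fixed point $\alpha_u$ is forced to lie in it. This avoids both the Fiber Contraction Theorem and the need to control the whole family $(G_\lambda)_{\lambda\in X'}$; the only extra input you use is the $C^1$ regularity of $\lambda_u$ established just before, which makes all coefficient functions in $G_{\lambda_u}$ Lipschitz. The decisive inequality $[G_{\lambda_u}\alpha]_\beta\le \nu\mu^\beta[\alpha]_\beta+C(1+\|\alpha\|_Y)$, and the role of the bunching condition $\nu\mu^\beta<1$, are identical in both arguments. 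One small point you do not spell out but which the paper handles explicitly: when $\rho_1'\in\supp\chi$ and $\rho_2'\notin\Omega\cap F(\Omega)$, the expression $F^{-1}(\rho_2')$ is undefined; this case is harmless because $d(\rho_1',\rho_2')$ is then bounded below by the distance $\delta>0$ from $\supp\chi$ to $(\Omega\cap F(\Omega))^c$, making the H\"older quotient trivially bounded.
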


\begin{proof}
Let us introduce 
\begin{align*}
Y^\beta \coloneqq \{ \alpha \in Y ; \alpha \text{ is } \beta\text{- Hölder} \}
\end{align*}
Let us consider some $\varepsilon >0$ to be determined later and we equip $Y^\beta$ with the norm 
$$ || \alpha ||_{Y^\beta} = ||\alpha ||_Y + \varepsilon ||\alpha||_\beta\; ; \; || \alpha||_\beta= \sup_{ \rho_1 \neq \rho_2 } \frac{||\alpha(\rho_1) - \alpha(\rho_2)||}{d(\rho_1,\rho_2)^\beta}$$
The map $T: X \to X$ defined by (\ref{def_of_T}) actually maps $X \cap C^1(\Omega , \R)$ to  $X \cap  C^1(\Omega , \R)$. Moreover, our previous results have proved that $\lambda_u$ is an attractive fixed point for $T$ in $X \cap C^1 (\Omega, \R)$, where $X \cap C^1 (\Omega, \R)$ is now equipped with the $C^1$ norm.
For $\lambda \in X$ and $\alpha \in Y$, we can write, 
$$G_\lambda \alpha = \gamma_\lambda + \tilde{G}_\lambda \alpha$$ where for $\rho^\prime = F(\rho) \in \supp \chi$, 
\begin{align*}
\gamma_\lambda(\rho^\prime) = \chi(\rho^\prime) d_\rho t(\rho, \lambda(\rho) ) + t(\rho, \lambda(\rho)) d_{\rho^\prime} \chi  \\
\tilde{G}_\lambda \alpha (\rho^\prime) = \chi(\rho^\prime) \partial_\lambda t(\rho, \lambda(\rho)) \alpha(\rho) \circ (d_\rho F )^{-1}
\end{align*}
We state here some obvious facts on $\gamma_\lambda$ and $\tilde{G}_\lambda$
\begin{itemize}
\item $C_1 \coloneqq \sup_{ \lambda \in X} ||\gamma_\lambda||_\infty < + \infty$ ;
\item if $\lambda \in X \cap C^1 (\Omega, \R)$, $\gamma_\lambda$ is also $C^1$; 
\item According to Proposition \ref{G_lambda_contraction}; $\tilde{G}_\lambda : Y \to Y$ is a contraction with Lipschitz constant $L_\lambda$  and $\nu_1 \coloneqq \sup_{\lambda \in X} L_\lambda < 1$ ; 
\item if $\lambda \in X \cap C^1 (\Omega, \R)$ and $\alpha$ is $\beta$-Hölder, $\tilde{G}_\lambda \alpha$ is $\beta$-Hölder. 
\end{itemize}
If $M > \frac{C_1}{1- \nu_1}$ and $\lambda \in X \cap C^1(\Omega, \R)$, then $||d \lambda||_Y \leq M \implies ||d (T\lambda) ||_Y \leq M$. 
Indeed, we have 
$$ ||d (T \lambda)||_Y = ||G_\lambda (d\lambda)||_Y =|| \gamma_\lambda + \tilde{G}_\lambda d \lambda||_Y \leq C_1 + \nu_1 M \leq M $$
Hence, we introduce the complete metric space 
\begin{equation}
X^\prime = \{ \lambda \in X \cap C^1 (\Omega, \R), ||d\lambda||_Y \leq M \}
\end{equation}
$T(X^\prime) \subset X^\prime$ and $\lambda_u$ is an attractive fixed point for $(X^\prime,T)$. \\
We now wish to apply the Fiber Contraction Theorem to 
$$H_\beta : (\lambda, \alpha) \in X^\prime \times Y^\beta \mapsto (T \lambda, G_\lambda \alpha) \in X^\prime \times Y^\beta$$
To do so, we need to show that for every $\lambda \in X^\prime$, $G_\lambda : Y^\beta \to Y^\beta$ is a contraction and find a uniform estimate for the Lipschitz constants. 

Let's consider $\alpha_1, \alpha_2 \in Y^\beta$ and set $\gamma = \alpha_1 - \alpha_2$. We want to estimate the $Y^\beta$ norm of $\tilde{G}_\lambda \gamma$. We already know that $||\tilde{G}_\lambda \gamma||_Y \leq \nu_1 ||\gamma||_Y$. Take $\rho^\prime_1, \rho^\prime_2 \in \Omega$ and let's estimate $||\tilde{G}_\lambda \gamma (\rho^\prime_1) -\tilde{G}_\lambda \gamma(\rho^\prime_2)||$. We distinguish 3 cases  : 

\begin{itemize}[label = -]
\item $\rho^\prime_1, \rho^\prime_2 \not \in \supp \chi$ : there is nothing to write. 
\item $\rho^\prime_1 \in \supp \chi , \rho^\prime_2  \not \in \Omega \cap F( \Omega) $. In this case, $d(\rho^\prime_1, \rho^\prime_2) \geq \delta >0$ where $\delta$ is the distance between $\supp \chi$ and $\left(\Omega \cap F( \Omega) \right)^c$. Hence, 

$$\frac{||\tilde{G}_\lambda \gamma (\rho_1^\prime) - \tilde{G}_\lambda(\rho_2^\prime)||}{d(\rho^\prime_1, \rho^\prime_2)^\beta}  \leq \delta^{-\beta} || \tilde{G}_\lambda \gamma (\rho_1^\prime) || \leq  \delta^{-\beta} C ||\tilde{G}_\lambda \gamma||_Y \leq \nu_1 \delta^{-\beta} C ||\gamma||_Y$$
\item $\rho^\prime_1, \rho^\prime_2 \in \Omega \cap F(\Omega)$. Let's write $\rho^\prime_1 = F(\rho_1), \rho_2^\prime = F(\rho_2)$ and note that $d(\rho_1,\rho_2) \leq \mu d(\rho^\prime_1, \rho^\prime_2)$. 
\begin{align*}
\tilde{G}_\lambda \gamma (\rho_1^\prime) - \tilde{G}_\lambda \gamma (\rho_2^\prime) &= \chi(\rho_1^\prime) \partial_\lambda t(\rho_1,\lambda(\rho_1) ) [ \gamma(\rho_1) - \gamma(\rho_2) ] \circ (d_{\rho_1} F)^{-1}  \left. \right\} (1) \\
&+ \left[\chi(\rho_1^\prime) \partial_\lambda t(\rho_1,\lambda(\rho_1) ) - \chi(\rho_2^\prime)\partial_\lambda t(\rho_2,\lambda(\rho_2) )\right] \gamma(\rho_2)  \circ (d_{\rho_1} F)^{-1}   \left. \right\} (2) \\ 
&+ \chi(\rho_2^\prime) \partial_\lambda t(\rho_2,\lambda(\rho_2)) \gamma(\rho_2) \circ \left[(d_{\rho_1} F)^{-1} - (d_{\rho_2} F)^{-1} \right]  \left. \right\} (3) 
\end{align*}
\end{itemize}
To handle the last two terms (2) and (3), we notice that $\rho^\prime \in \Omega \cap F(\Omega) \mapsto \chi(\rho^\prime) \partial_\lambda t(\rho, \lambda(\rho) ) $ is Lipschitz since $\lambda$ is $C^1$, with Lipschitz constant which can be chosen uniform for $\lambda \in X^\prime$. The same is true for $\rho \mapsto d_{\rho} F^{-1}$. Hence, there exists a uniform constant $C>0$ such that 
$$ ||(2) + (3)|| \leq C d(\rho_1^\prime,\rho_2^\prime)^\beta ||\gamma||_Y$$
To deal with the first term (1), we recall that by previous computations, 
$$ | \chi(\rho^\prime) \partial_\lambda t (\rho, \lambda(\rho) ) | \cdot ||(d_\rho F)^{-1} || \leq \nu + O_\nu(\eta)$$
As consequence, we have 
$$ ||(1)|| \leq ( \nu + O_\nu(\eta) ) ||\gamma||_\beta d(\rho_1,\rho_2)^\beta \leq ( \nu + O_\nu(\eta) )\mu^\beta ||\gamma||_\beta d(\rho_1^\prime,\rho_2^\prime)^\beta $$
Henceforth, if $\eta$ is small enough, so that $ \nu_2 \coloneqq (\nu + O_\nu(\eta) )\mu^\beta <1$, 
\begin{equation*}
||H_\lambda \gamma||_\beta \leq \nu_2 ||\gamma||_\beta + C ||\gamma||_Y
\end{equation*}
Eventually, \begin{align*}
||\tilde{G}_\lambda \gamma||_{Y^\beta} &\leq \nu_1 || \gamma||_Y + \varepsilon \left( \nu_2 ||\gamma||_\beta + C ||\gamma||_Y \right) \\
& \leq \left( \nu_1 + \varepsilon C \right) ||\gamma||_Y +  \nu_2 \varepsilon ||\gamma||_\beta \\
& \leq \nu_3 ||\gamma||_{Y^\beta}
\end{align*}
where $\nu_3 = \max \left(\nu_1 + \varepsilon C, \nu_2 \right) <1$ if $\varepsilon$ is small enough. 

The Fiber Contraction Theorem applies and says that $(\lambda_u, \alpha_u)$ is an attractive fixed point for $H_\beta$. We conclude as previously : consider $\lambda \in C^{1, \beta} (\Omega, \R) \cap X^\prime$ so that $(\lambda, d \lambda) \in X^\prime \times Y^\beta$. $H_\beta^n (\lambda, d \lambda) = (T^n \lambda, d T^n \lambda) \to (\lambda_u, \alpha_u)$ in $X^\prime \times Y^\beta$. That ensures that $\alpha_u$ is $\beta$-Hölder.  
\end{proof}

\subsubsection{Regularity of the stable and unstable leaves}

Once we've extended the unstable distribution to a an open neighborhood of $\mathcal{T}$, we take advantage of the fact that these distribution are 1-dimensional to integrate the vector field defined by their unit vector. 

We set $E_u(\rho) = \R (v_u(\rho) + \lambda_u(\rho) v_s(\rho)) $. Recall that in a compact neighborhood of $\mathcal{T}$, the relation $d_\rho F(E_u(\rho) ) = E_u(F(\rho))$ is valid due to the definition of $\lambda_u$ as the fixed point of $T$ defined in (\ref{def_of_T}). $T^*U$ is equipped with a smooth Riemannian metric such that $dF^{-1}$ is a contraction on $E_u(\rho)$ for $\rho \in \mathcal{T}$ and hence, in a compact neighborhood of $\mathcal{T}$, this is also true. We can consider the vector field 
$$ \rho  \in U \mapsto e_u(\rho) $$ 
where $e_u(\rho)$ is a unit vector spanning $E_u(\rho)$. By our previous result, this vector field is $C^{1, \beta}$ and if $\rho$ lies in a sufficiently small neighborhood of $\mathcal{T}$, $d_\rho (F^{-1})( e_u(\rho)) = \tilde{J}^u(\rho) e_u(F^{-1}(\rho) ) $ where $|\tilde{J}^u(\rho) | \leq \nu < 1$. 

We denote by $\varphi^t_u(\rho)$ the flow generated by $e_u(\rho)$ and we will show that one can identify the unstable manifolds and the flow lines of $e_u$ in a small neighborhood of $\mathcal{T}$. 

\begin{prop}\label{equivalence_flow_leaves}
There exists $t_0$ such that for every $\rho \in \mathcal{T}$, 
$ \{ \varphi^t_u (\rho) , |t| \leq t_0 \} \subset W_u(\rho) $ 
\end{prop}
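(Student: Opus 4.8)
The guiding principle is that, near $\mathcal{T}$, the integral curves of the unit vector field $e_u$ are invariant under $F^{-1}$ and are contracted under each backward iterate. Indeed, on a small neighbourhood $V'$ of $\mathcal{T}$ one has the identity $d_\rho(F^{-1})(e_u(\rho))=\tilde J^u(\rho)\,e_u(F^{-1}(\rho))$ with $|\tilde J^u(\rho)|\le\nu<1$; since $e_u$ has unit length, this says precisely that $F^{-1}$ carries an integral curve of $e_u$ to a reparametrised integral curve of $e_u$ whose length has shrunk by a factor at most $\nu$. Consequently, if $\rho\in\mathcal{T}$ and $\gamma(t)=\varphi_u^t(\rho)$, the backward orbit $\big(F^{-n}(\gamma(t))\big)_{n\ge0}$ stays exponentially close to $\big(F^{-n}(\rho)\big)_{n\ge0}$, which is exactly the defining behaviour of a point of the local unstable manifold $W_u(\rho)$. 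I would then feed this into item (6) of Lemma \ref{classical_hyperbolic} to conclude.

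\textbf{Steps.} First, using compactness of $\mathcal{T}$ and the fact that $e_u\in C^{1,\beta}$ (so its flow is well defined), I would fix $t_0>0$ small enough that: for every $\rho$ near $\mathcal{T}$ and every $|t|\le t_0$ the point $\varphi_u^t(\rho)$ is defined and stays in the neighbourhood where the above identity holds; the $t_0$-neighbourhood of $\mathcal{T}$ lies in $V'$; and $t_0\le\varepsilon_1$ with $\varepsilon_1$ as in Lemma \ref{classical_hyperbolic}. Second, fixing $\rho\in\mathcal{T}$ and $|t|\le t_0$, and noting that $\gamma$ is parametrised by arclength so that $d(\gamma(s),\rho)\le|s|\le t_0$, I would prove by induction on $n$ that $s\mapsto F^{-n}(\gamma(s))$ is a reparametrised integral curve of $e_u$ lying in $V'$ and of length at most $\nu^n|t|$ over the relevant parameter interval: the inductive step is the chain rule combined with the contraction identity, and the resulting control $L_{n+1}\le\nu L_n$ keeps the curve within distance $t_0$ of $\mathcal{T}$, so the identity may be reapplied. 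Third, this yields $d\big(F^{-i}(\rho),F^{-i}(\gamma(t))\big)\le\nu^i|t|\le\varepsilon_1$ for every $i\in\N$; choosing $\theta$ with $e^{-\lambda_0}<\theta<1$ and applying item (6) of Lemma \ref{classical_hyperbolic} (with $\star=u$) gives $d(\gamma(t),W_u(\rho))\le C\theta^{\,n}\varepsilon_1$ for every $n$, hence $d(\gamma(t),W_u(\rho))=0$; since $|t|\le t_0\le\varepsilon_1$ and, by items (1)--(2) of Lemma \ref{classical_hyperbolic}, $W_u(\rho)\cap\overline{B(\rho,\varepsilon_1)}$ is a closed embedded arc, this forces $\gamma(t)\in W_u(\rho)$. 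As $t_0$ is independent of $\rho$, this proves the proposition.

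\textbf{Main obstacle.} The only genuinely substantial point is the inductive claim that backward iterates of a short $e_u$-flow line remain reparametrised $e_u$-flow lines and contract geometrically; the rest is bookkeeping. The delicate part is not the differential identity itself but verifying that the iterated curves never leave the neighbourhood $V'$ on which the bound $|\tilde J^u|\le\nu$ holds — this is what makes the decay $L_n\le\nu^n|t|$ self-sustaining, and it is why one must take $t_0\le\varepsilon_1$ small from the outset. A minor extra care is then needed to upgrade $d(\gamma(t),W_u(\rho))=0$ to actual membership, for which one uses the local closedness of the unstable leaf near $\rho$.
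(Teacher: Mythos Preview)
Your proposal is correct and follows essentially the same approach as the paper: both arguments show that $F^{-1}$ carries short $e_u$-flow arcs to reparametrised $e_u$-flow arcs contracted by the factor $|\tilde J^u|\le\nu$, iterate to obtain $d\big(F^{-n}(\varphi_u^t(\rho)),F^{-n}(\rho)\big)\le C\nu^n$, and conclude membership in $W_u(\rho)$. The paper makes the reparametrisation explicit via $\mu(t,\rho)=\int_0^t\tilde J^u(\varphi_u^s(\rho))\,ds$ and verifies $F^{-1}(\varphi_u^t(\rho))=\varphi_u^{\mu(t,\rho)}(F^{-1}(\rho))$ by ODE uniqueness, then appeals directly to the characterisation of the global unstable manifold, whereas you phrase the same step as ``reparametrised integral curve of length $\le\nu^n|t|$'' and finish through item~(6) of Lemma~\ref{classical_hyperbolic} plus local closedness; these are cosmetic differences only.
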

\begin{proof}
 Consider $t_0$ is sufficiently small such that $|\tilde{J}^u(\varphi^t_u(\rho))| \leq \nu < 1 $ for $\rho \in \mathcal{T}$, $t \in [-t_0,t_0]$. 
 For $(t,\rho) \in \R \times U$, set $\mu(t,\rho) = \int_0^t \tilde{J}^u ( \varphi^s_u(\rho) ) ds$ and we claim that for $t_0$ small enough, if $|t| \leq t_0$, 
  $$F^{-1} (\varphi^t_u (\rho) ) = \varphi_u^{\mu(t,\rho)} (F^{-1} (\rho) )$$
Indeed, in $t =0$, both are equal to $F^{-1}(\rho)$ and a quick computation shows that both satisfy the ODE 
 $$ \frac{d}{dt}Y(t) = J^u (\varphi^t_u(\rho) ) e_u(Y(t)) $$
 As a consequence, by induction, we see that one can write for $n \in \N$, 
 $$ F^{-n} (\varphi^t_u(\rho) ) = \varphi_u^{\mu_n(t,\rho) } (F^{-n} (\rho) )$$
 where $\mu_n$ is defined by induction by $\mu_{n+1}(t,\rho) = \mu ( \mu_n(t,\rho) , F^{-n}(\rho)) $. Hence, if $|t| \leq t_0$ and $\rho \in \mathcal{T}$,  we see that $\mu_n(t,\rho)$ stays in $[-t_0, t_0]$ and moreover $|\mu_n(t,\rho) | \leq \nu^n |t|$. 
We then see that if $|t| \leq t_0$ and $\rho \in \mathcal{T}$,
$$d(F^{-n} ( \varphi^t_u(\rho) ) , F^{-n}(\rho) ) = d( \varphi_u^{\mu_n(t,\rho)} (F^{-n}(\rho)), F^{-n}(\rho) ) \leq C|\mu_n(t,\rho) | \leq C \nu^n$$
This shows that $\varphi^t_u(\rho)$ belongs to the global unstable manifold at $\rho$, and hence, if $t_0$ is small enough, $\varphi_u^t(\rho)$ belongs to the local manifold $W_u(\rho)$ and $t_0$ can be chosen uniformly with respect to $\rho \in \mathcal{T}$. 
\end{proof}

Since the regularity of the unstable distributions implies the same regularity for the flow $\varphi^t_u$ (see Lemma \ref{flow_regularity} in the Appendix), we deduce that, up to reducing the size of the local unstable manifolds, these local unstable manifolds $W_u(\rho)$ depend $C^{1, \beta}$ on the base point $\rho \in \mathcal{T}$. We'll also use this proposition to show the same regularity for holonomy maps. Suppose that $\varepsilon_0$ is small enough. We know that if $\rho_1, \rho_2 \in \mathcal{T}$ satisfy $d(\rho_1,\rho_2) \leq \varepsilon_0$, then $W_u(\rho_2) \cap W_s(\rho_1)$ consists of exactly one point. Let's note it $H_{\rho_1}^u(\rho_2)$.  

Finally, we define the holonomy map $$ H^u_{\rho_1,\rho_2} : \rho_3 \in W_s(\rho_2) \cap \mathcal{T} \mapsto H^u_{\rho_1}(\rho_3) \in W_s(\rho_1)\cap \mathcal{T}$$

\begin{figure}[h!]
\centering
\includegraphics[scale=0.4]{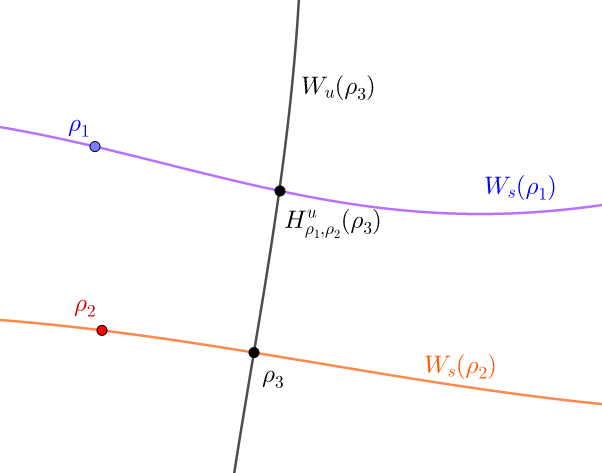}
\end{figure}
\begin{lem}\label{regularity_holonomy_maps}
If $\varepsilon_0$ is small enough, for every $\rho_1 \in \mathcal{T}$, the map 
$$H_{\rho_1}^u : \mathcal{T} \cap B(\rho_1, \varepsilon_0) \to W_s(\rho_1)\cap \mathcal{T}$$
is the restriction of a map $\tilde{H}_{\rho_1}^u : B(\rho_1, \varepsilon_0) \to W_s(\rho_1)$ which is $C^{1, \beta}$. 
\end{lem}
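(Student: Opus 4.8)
The idea is to realize $\tilde H^u_{\rho_1}$ by flowing along the (extended) unit unstable vector field $e_u$ until one meets the curve $W_s(\rho_1)$, and to extract the $C^{1,\beta}$ regularity from the implicit function theorem together with the regularity of the flow $\varphi^t_u$. Fix $\rho_1 \in \mathcal{T}$. Since $W_s(\rho_1)$ is a $C^\infty$ embedded curve through $\rho_1$ with $T_{\rho_1}W_s(\rho_1) = E_s(\rho_1)$ (Lemma \ref{classical_hyperbolic}), choose a $C^\infty$ chart $\Psi = (x_1,x_2) : B(\rho_1,\varepsilon_0) \to \R^2$ with $\Psi(\rho_1) = 0$ in which $W_s(\rho_1)\cap B(\rho_1,\varepsilon_0) = \{x_1 = 0\}$. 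Recall that $E_u$ extends to a $C^{1,\beta}$ line field near $\mathcal{T}$ (Theorem \ref{Thm_regularity}), so $e_u$ is a $C^{1,\beta}$ unit vector field and, by Lemma \ref{flow_regularity}, its flow $\varphi^t_u$ is $C^{1,\beta}$ jointly in $(t,\rho)$. After shrinking $\varepsilon_0$ and fixing $t_0>0$ small, we may assume $\varphi^t_u(\rho) \in B(\rho_1,\varepsilon_0)$ for $\rho \in B(\rho_1,\varepsilon_0/2)$, $|t|\le t_0$.

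Define $g(\rho,t) = x_1\big(\varphi^t_u(\rho)\big)$ on $B(\rho_1,\varepsilon_0/2)\times(-t_0,t_0)$. Then $g$ is $C^{1,\beta}$, $g(\rho_1,0)=0$, and $\partial_t g(\rho_1,0) = dx_1\big(e_u(\rho_1)\big) \ne 0$ because $E_u(\rho_1)$ is transverse to $E_s(\rho_1) = T_{\rho_1}W_s(\rho_1) = \ker dx_1|_{\rho_1}$. By continuity $\partial_t g$ stays bounded away from $0$ near $(\rho_1,0)$, so, shrinking $\varepsilon_0, t_0$, the implicit function theorem produces a unique map $\rho \mapsto t(\rho)$ with $t(\rho_1)=0$, $|t(\rho)|<t_0$, and $g(\rho,t(\rho))=0$. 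The $C^1$ implicit function theorem already gives $t(\cdot)\in C^1$ with $dt(\rho) = -\big(\partial_t g\big)^{-1}\partial_\rho g$ evaluated at $(\rho,t(\rho))$; this is a product of $\beta$-H\"older functions (the reciprocal of a non-vanishing $\beta$-H\"older function is $\beta$-H\"older) composed with the Lipschitz map $\rho\mapsto(\rho,t(\rho))$, hence $\beta$-H\"older, so $t(\cdot)\in C^{1,\beta}$. Set
$$ \tilde H^u_{\rho_1}(\rho) := \varphi^{t(\rho)}_u(\rho). $$
By construction $x_1\big(\tilde H^u_{\rho_1}(\rho)\big) = 0$, so $\tilde H^u_{\rho_1}$ maps $B(\rho_1,\varepsilon_0)$ into $W_s(\rho_1)$, and it is $C^{1,\beta}$ as a composition of $C^{1,\beta}$ maps.

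It remains to identify $\tilde H^u_{\rho_1}$ with $H^u_{\rho_1}$ on $\mathcal{T}\cap B(\rho_1,\varepsilon_0)$. Take $\rho\in\mathcal{T}\cap B(\rho_1,\varepsilon_0)$ with $\varepsilon_0 \le \varepsilon_1$; since $t(\rho_1)=0$ and $t(\cdot)$ is continuous we may also assume $\varepsilon_0$ small enough that $|t(\rho)|\le t_0$. Then Proposition \ref{equivalence_flow_leaves} gives $\tilde H^u_{\rho_1}(\rho) = \varphi^{t(\rho)}_u(\rho) \in W_u(\rho)$, while $\tilde H^u_{\rho_1}(\rho)\in W_s(\rho_1)$ by the previous paragraph. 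By Lemma \ref{classical_hyperbolic}, $W_u(\rho)\cap W_s(\rho_1)$ consists of a single point, which lies in $\mathcal{T}$, and this point is by definition $H^u_{\rho_1}(\rho)$. Hence $\tilde H^u_{\rho_1}(\rho) = H^u_{\rho_1}(\rho)$, which finishes the argument.

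\textbf{Expected main obstacle.} There is no conceptual difficulty beyond what is already contained in Theorem \ref{Thm_regularity} and Proposition \ref{equivalence_flow_leaves}; the only care needed is the bookkeeping of constants — choosing $\varepsilon_0$ small enough simultaneously for the straightening chart, for the non-vanishing of $\partial_t g$ (transversality), for the implicit function theorem, and for the range bound $|t(\rho)|\le t_0$ required by Proposition \ref{equivalence_flow_leaves} — together with the (elementary) verification that the implicit function theorem preserves $C^{1,\beta}$ regularity, which follows at once from the explicit formula for $dt$.
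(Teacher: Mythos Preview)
Your proof is correct and follows essentially the same strategy as the paper: project onto $W_s(\rho_1)$ by flowing along the $C^{1,\beta}$ vector field $e_u$, invoking Proposition~\ref{equivalence_flow_leaves} to identify the result with the holonomy on $\mathcal{T}$. The only cosmetic difference is that the paper packages the construction via the inverse function theorem (building a straightening chart $\kappa_0 = \psi^{-1}\circ\kappa$ with $\psi(u,s)=\Phi^u(0,s)$ and then setting $\tilde H^u_{\rho_1}=\kappa_0^{-1}\circ\pi_s\circ\kappa_0$), whereas you go through the implicit function theorem directly; your route is slightly more economical and your explicit verification that the implicit function inherits $C^{1,\beta}$ regularity is a nice touch.
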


\begin{proof}
Let $\rho_1 \in \mathcal{T}$. As in the proof of Lemma \ref{Local_hpyerbolic_2}, consider a smooth chart $\kappa : U_1 \to V_1 \subset \R^2$ , $\rho_1 \in U_1, 0 \in V_1$ such that : 
\begin{itemize}[nosep]
\item $\kappa (\rho_1) = (0,0)$
\item $\kappa \left( W_s(\rho_1) \cap U_1 \right) = \{ (0,s) , s \in \R \} \cap V_1$
\item $\kappa \left( W_u(\rho_1)  \cap U_1 \right) = \{ (u,0) , u \in \R \} \cap V_1$
\item $d_{\rho_1} \kappa(e_u(\rho_1) ) = (1,0)$.
\end{itemize}
We now work in this chart $V_1$ and note $\Phi^t = \kappa \circ \varphi^t_u \circ \kappa^{-1}$ the flow in this chart, well defined for $t$ small enough. Consider the map 
$$ \psi(u,s) = \Phi^{u}(0,s)$$
$\psi$ is $C^{1,\beta}$ and $d_0\psi=\I_2$. By the Inverse Function Theorem, $\psi$ is a local diffeomorphism between neighborhoods of $0$ : 
$$ \psi : V_2 \to V_2^\prime$$
Since $d_{(u,s)} \left(\psi^{-1} \right)   =  \left(  d_{ \psi^{-1}(u,s)}\psi\right)^{-1}$, $\psi^{-1}$ is $C^{1, \beta}$. We now consider $$\kappa_0 = \psi^{-1} \circ \kappa : \kappa^{-1} (V_2) \coloneqq U_2 \to V^\prime_2$$ and observe that : 
\begin{itemize}[nosep]
\item $\kappa_0 (W_s(\rho_1) \cap U_2) = \{ (0,s) , s \in \R \} \cap V_2^\prime$ ; 
\item$ \kappa_0 \circ \varphi^t_u \circ \kappa_0^{-1} (u,s)) = (u + t, s)$. In other words $\kappa_0 $ rectifies the unstable manifolds.
\end{itemize}
Armed with theses facts, we define 
$$ \tilde{H}_{\rho_1}^u  : U_2 \to W_s(\rho_1) \quad ; \quad  \tilde{H}_{\rho_1}^u = \kappa_0^{-1} \circ \pi_s \circ \kappa_0 $$
where $\pi_s(u,s) = (0,s)$.  $\tilde{H}_{\rho_1}^u$ is $C^{1, \beta}$.
We assume that $B(0,\varepsilon_0) \subset U_1$.  Let us check that $ \tilde{H}_{\rho_1}^u$ extends the holonomy map in $B(\rho_1, \varepsilon_0)$ (if $\varepsilon_0$ is small enough).  Let $\rho_2 \in \mathcal{T} \cap B(\rho_1,\varepsilon_0)$ and note $\rho^\prime_2 =  \tilde{H}_{\rho_1}^u(\rho_2)$. By definition of $ \tilde{H}_{\rho_1}^u$, $\rho_2^\prime$ can be written $\rho_2^\prime=\varphi^t_u(\rho_1)$ and hence, if $\varepsilon_0$ is small enough, $\rho^\prime_2 \in W_u(\rho_1)$. Since, $\rho_2^\prime \in W_s(\rho_2)$, we see that $\rho_2^\prime = H_{\rho_1}^u(\rho_2)$.  
\end{proof}

Note that by compactness, $\varepsilon_0$ can be chosen uniformly in $\rho_1 \in \mathcal{T}$ and the $C^{1, \beta}$ norms of  $\tilde{H}_{\rho_1}^u$ are uniform.
As a corollary, we get the following : 
\begin{cor}
Suppose that $\varepsilon_0$ is small enough. Then, the holonomy maps, defined for $\rho_1,\rho_2 \in \mathcal{T}$ with $d(\rho_1,\rho_2) \leq \varepsilon_0$, 
$$ H_{\rho_1,\rho_2}^u : W_s(\rho_2) \cap \mathcal{T} \to W_s(\rho_1) \cap \mathcal{T}$$
are the restrictions of $C^{1,\beta} : \tilde{H}_{\rho_1,\rho_2}^u : W_s(\rho_1) \to W_s(\rho_2)$ with $C^{1,\beta}$ norms uniform in $\rho_1, \rho_2$. 
\end{cor}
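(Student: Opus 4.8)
The corollary is an almost immediate consequence of Lemma \ref{regularity_holonomy_maps} combined with the smoothness of the stable leaves, so the plan is simply to restrict the extension $\tilde{H}^u_{\rho_1}$ already constructed there to a stable manifold and to track uniformity.

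First I would recall from Lemma \ref{regularity_holonomy_maps} (and the remark on compactness following its proof) that, for $\varepsilon_0$ small enough and every $\rho_1 \in \mathcal{T}$, the map $H^u_{\rho_1}$ is the restriction to $\mathcal{T}\cap B(\rho_1,\varepsilon_0)$ of a $C^{1,\beta}$ map $\tilde{H}^u_{\rho_1} : B(\rho_1,\varepsilon_0) \to W_s(\rho_1)$, with $\varepsilon_0$ and the $C^{1,\beta}$ norm of $\tilde{H}^u_{\rho_1}$ uniform in $\rho_1$. Using the uniform lower bound on the size of the local stable manifolds and the fact that $\partial W_s(\rho_2)$ stays away from $\overline{B(\rho_2,\varepsilon_1)}$ (Lemma \ref{classical_hyperbolic}(2)), together with the triangle inequality, one can shrink $\varepsilon_0$ so that whenever $d(\rho_1,\rho_2)\le\varepsilon_0$ the relevant portion of $W_s(\rho_2)$ lies inside $B(\rho_1,\varepsilon_0)$; arranging this domain-matching is the only point requiring a little care.

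Then I would set $\tilde{H}^u_{\rho_1,\rho_2} \coloneqq \tilde{H}^u_{\rho_1}|_{W_s(\rho_2)} : W_s(\rho_2) \to W_s(\rho_1)$. By Lemma \ref{classical_hyperbolic}(1), $W_s(\rho_2)$ is a $C^\infty$-embedded curve with the $C^\infty$ norm of the embedding uniform in $\rho_2$; choosing a $C^\infty$ parametrization $\gamma$ of $W_s(\rho_2)$, the map $\tilde{H}^u_{\rho_1}\circ\gamma$ is a composition of a $C^{1,\beta}$ map with a $C^\infty$ map, hence $C^{1,\beta}$, and precomposing with the $C^\infty$ inverse $\gamma^{-1}$ shows $\tilde{H}^u_{\rho_1,\rho_2}$ is $C^{1,\beta}$, with norm bounded in terms of the $C^{1,\beta}$ norm of $\tilde{H}^u_{\rho_1}$ and the $C^2$ norm of $\gamma$, both uniform; thus the $C^{1,\beta}$ norms of $\tilde{H}^u_{\rho_1,\rho_2}$ are uniform in $(\rho_1,\rho_2)$. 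Finally, for $\rho_3 \in W_s(\rho_2)\cap\mathcal{T}$ we have $\rho_3\in\mathcal{T}\cap B(\rho_1,\varepsilon_0)$, whence $\tilde{H}^u_{\rho_1}(\rho_3) = H^u_{\rho_1}(\rho_3) = H^u_{\rho_1,\rho_2}(\rho_3)$, so $\tilde{H}^u_{\rho_1,\rho_2}$ restricts to the holonomy map on $\mathcal{T}$. There is no substantial obstacle here: the statement is a bookkeeping consequence of Lemma \ref{regularity_holonomy_maps}, the only mildly delicate issue being the choice of $\varepsilon_0$ in the second step.
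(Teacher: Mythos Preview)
Your proof is correct and follows exactly the approach the paper intends: the corollary is stated without proof, as an immediate consequence of Lemma~\ref{regularity_holonomy_maps} and the uniform smoothness of the stable leaves, and you have simply spelled out the obvious restriction argument and the bookkeeping for uniformity. The only thing the paper adds is the one-line remark preceding the corollary that compactness gives uniformity of $\varepsilon_0$ and of the $C^{1,\beta}$ norms of $\tilde{H}^u_{\rho_1}$, which you also invoke.
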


\subsection{Adapted charts}\label{Adapted_charts}

We construct charts in which the unstable manifolds are close to horizontal lines. These charts will be used at different places and their existence relies on the $C^{1+ \beta}$ regularity of the unstable distribution. 

\paragraph{Weak version. }
We start with a weak version of these charts. 

\begin{lem}\label{weak_adapted_chart}
Suppose that $C>0$ is a fixed global constant and $\varepsilon_0$ is chosen small enough. For every $\rho_0 \in \mathcal{T}$, there exists a canonical transformation 
\begin{equation*}
\kappa_0 : U_{\rho_0}^\prime  \to V_{\rho_0}^\prime \subset \R^2
\end{equation*}
satisfying (we note $(y,\eta)$ the variable in $\R^2$) : 
\begin{enumerate}[label=(\arabic*), nosep]
\item $B(\rho_0, C \varepsilon_0) \subset U_{\rho_0}^\prime$; 
\item $\kappa_0(\rho_0) = 0$ , $d_{\rho_0} \kappa_0 (E_u(\rho_0) ) = \R \times \{ 0 \} ; d_{\rho_0} \kappa_0 (E_s(x) ) = \{0\} \times \R $  ;
\item The image of the unstable manifold $W_u(\rho_0) \cap U_{\rho_0}^\prime$ is exactly $\{ (y,0) , y \in \R \} \cap V_{\rho_0}^\prime$.
\end{enumerate}
Moreover, for every $N$, the $C^N$ norms of $\kappa_0$ are bounded uniformly with respect to $\rho_0 \in \mathcal{T}$. 
\end{lem}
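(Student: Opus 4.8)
The plan is to realize $\kappa_0$ as a composition $\kappa_0 = \kappa_2 \circ \kappa_1$, where $\kappa_1$ is a Darboux-type chart at $\rho_0$ putting the two invariant directions in standard position, and $\kappa_2$ is a symplectic shear straightening the (automatically isotropic, hence Lagrangian in dimension $2$) curve $W_u(\rho_0)$ onto the horizontal axis.

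First I would build $\kappa_1$. Fix once and for all a finite atlas of Darboux charts covering a fixed neighbourhood of the compact set $\mathcal{T}$, each with $C^N$ norms bounded uniformly. Given $\rho_0 \in \mathcal{T}$, pick a chart of this atlas containing $\rho_0$, translate it so that $\rho_0 \mapsto 0$, and postcompose with a constant linear symplectic map $A_{\rho_0} \in \mathrm{Sp}(2,\R) = \SL(2,\R)$ chosen so that the resulting differential at $\rho_0$ sends $E_u(\rho_0)$ to $\R \times \{0\}$ and $E_s(\rho_0)$ to $\{0\} \times \R$; such an $A_{\rho_0}$ exists because $E_u(\rho_0)$ and $E_s(\rho_0)$ are transverse lines (any matrix in $\GL_2(\R)$ straightening two transverse lines to the coordinate axes can be rescaled by a diagonal matrix to have determinant $1$ without moving the axes), and it can be taken bounded uniformly in $\rho_0$ since $\rho_0 \mapsto (E_u(\rho_0), E_s(\rho_0))$ is continuous on the compact set $\mathcal{T}$. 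This gives a symplectomorphism $\kappa_1 : U^{(1)}_{\rho_0} \to V^{(1)}_{\rho_0}$ with $\kappa_1(\rho_0) = 0$, $d_{\rho_0}\kappa_1(E_u(\rho_0)) = \R \times \{0\}$, $d_{\rho_0}\kappa_1(E_s(\rho_0)) = \{0\} \times \R$, and $C^N$ norms uniform in $\rho_0$; in particular, for $\varepsilon_0$ small enough (uniformly), $B(\rho_0, C\varepsilon_0) \subset U^{(1)}_{\rho_0}$.

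Next I would straighten the image of $W_u(\rho_0)$. By Lemma \ref{classical_hyperbolic}, $W_u(\rho_0)$ is a $C^\infty$-embedded curve, with $C^\infty$ norms of the embedding and size bounded uniformly in $\rho_0$, and we may assume $W_u(\rho_0) \subset B(\rho_0, 2\varepsilon_1)$. Since $d_{\rho_0}\kappa_1\big(T_{\rho_0} W_u(\rho_0)\big) = d_{\rho_0}\kappa_1(E_u(\rho_0)) = \R \times \{0\}$, the curve $\Gamma \coloneqq \kappa_1\big(W_u(\rho_0)\big)$ is, near $0$, the graph $\{(y, g(y))\}$ of a smooth function with $g(0) = g'(0) = 0$, defined on an interval of uniform size and with $C^N$ norms bounded uniformly in $\rho_0$. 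Let $\tilde g$ be a uniformly controlled, compactly supported smooth extension of the restriction of $g$ to a small interval around $0$, and set
$$ \kappa_2 : (y, \eta) \longmapsto \big(\,y,\ \eta - \tilde g(y)\,\big). $$
Its Jacobian has determinant $1$, so $\kappa_2$ is a global symplectomorphism of $\R^2$; since $g'(0) = 0$ we have $d_0\kappa_2 = \mathrm{Id}$, so composing with $\kappa_2$ does not disturb the conditions at $\rho_0$; and on the region where $\tilde g = g$ it maps $\Gamma$ exactly onto $\{\eta = 0\}$.

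Finally I would set $\kappa_0 = \kappa_2 \circ \kappa_1$, take $V'_{\rho_0}$ to be a small box $(-a,a) \times (-b,b)$ with $a,b>0$ uniform and contained in the region where the above is valid, and $U'_{\rho_0} = \kappa_0^{-1}(V'_{\rho_0})$. Then (1) holds once $\varepsilon_0$ is small enough (uniformly), (2) holds by construction of $\kappa_1$ and $\kappa_2$, and (3) holds because inside the box the straightened curve is exactly the horizontal segment, while the embeddedness of $W_u(\rho_0)$ and its containment in $B(\rho_0, 2\varepsilon_1)$ prevent any other portion of it from re-entering $U'_{\rho_0}$ once $a$ is small; uniformity of the $C^N$ norms of $\kappa_0$ follows from that of $\kappa_1$ and $\kappa_2$. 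The only genuine work is to carry uniformity in $\rho_0$ through each step, which is exactly what the uniform estimates in Lemma \ref{classical_hyperbolic} and the compactness of $\mathcal{T}$ provide; this bookkeeping — rather than any analytic subtlety — is the main obstacle.
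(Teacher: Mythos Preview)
Your argument is correct and uses the same basic ingredients as the paper (Darboux charts, a linear symplectic adjustment, and a shear), but in the reverse order. The paper starts from a smooth defining function $\eta$ for $W_u(\rho_0)$ and invokes Darboux's theorem to complete it to symplectic coordinates $(y,\eta)$, so that condition~(3) holds by construction; it then applies the linear shear $\begin{pmatrix}1&-a\\0&1\end{pmatrix}$ to rotate $E_s(\rho_0)$ onto the vertical axis, obtaining~(2). You instead first fix both invariant directions at $\rho_0$ by a linear $\SL(2,\R)$ map on top of a generic Darboux chart, and only afterwards straighten $W_u(\rho_0)$ by the nonlinear symplectic shear $(y,\eta)\mapsto(y,\eta-\tilde g(y))$; the key observation that $g'(0)=0$ makes $d_0\kappa_2=\Id$, so~(2) survives. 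The paper's route is marginally slicker in that Darboux does the straightening for free, while yours is perhaps more explicit about uniformity (working from a fixed finite atlas rather than appealing to compactness at the end). Either way the argument is short and the content is the same.
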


\begin{rem}
The difference with the charts used in the proof of Lemma \ref{Local_hpyerbolic_2} is that we require $\kappa_0$ to be a smooth canonical transformation.
\end{rem}

\begin{proof}
 $W_u(\rho_0)$ is a $\cinf$ manifold, hence there exists a  $\cinf$ defining function $\eta$ defined in a neighborhood $\rho_0$ : namely $d_{\rho_0} \eta  \neq 0$ and $W_u(\rho_0) = \{ \eta = 0 \}$ locally near $\rho_0$. Darboux's theorem gives a function $y$ defined in a neighborhood of $\rho_0$ such that $(y,\eta)$ forms a system of symplectic coordinates. We can assume that $y(\rho_0) = 0$. If $\kappa(\rho) = (y, \eta)$, the third point is satisfied by assumption on $\eta$ and we need to ensure that $d_{\rho_0} \kappa (E_s(\rho_0) ) = \{0\} \times \R$ by modifying $\eta$ in a symplectic way. \\
Assume that $d_{\rho_0} \kappa (E_s(\rho_0) ) = \R {}^t(a,1)$. 
 The sympletic matrix $$A = \left( \begin{matrix}
1 & -a \\
0 & 1
\end{matrix}
 \right)$$ maps the basis $\left({}^t (1,0) , {}^t(a,1) \right)$ to the canonical basis of $\R^2$ and we can set $\kappa_0 \coloneqq A \circ \kappa$ which is the required canonical transformation, defined in a small neighborhood $U_{\rho_0}^\prime$ of $\rho_0$. 

We can ensure that $B(\rho_0, C\varepsilon_0) \subset U_{\rho_0}^\prime$ for $\varepsilon_0$ small enough and the uniformity of the $C^N$ norms of $\kappa$ thanks to the compactness of $\mathcal{T}$ and the fact that the unstable distribution depends continuously on $\rho_0 \in \mathcal{T}$. 
\end{proof}

\paragraph{Straightened version. }

We now straighten the unstable manifolds in a stronger version of the previous charts.  The construction and the use of these charts is similar to \cite{NDJ19} (Lemma 2.3).

\begin{lem}\label{Existence_of_adapated_charts}
Suppose that $\varepsilon_0$ is chosen small enough. For every $\rho_0 \in \mathcal{T}$ there exists a canonical transformation 
\begin{equation*}
\kappa = \kappa_{\rho_0}  : U_{\rho_0} \subset U \to V_{\rho_0} \subset \R^2
\end{equation*}
satisfying (we note $(y,\eta)$ the variable in $\R^2$) : 
\begin{enumerate}[label=(\arabic*), nosep]
\item $B(\rho_0, 2\varepsilon_0) \subset U_{\rho_0}$  ; 
\item $\kappa(\rho_0) = 0$ , $d_{\rho_0} \kappa  (E_u(\rho_0) ) = \R \times \{ 0 \} ; d_{\rho_0} \kappa (E_s(\rho_0) ) = \{0\} \times \R $ 
\item The images of the unstable manifolds $W_u(\rho), \rho \in U_{\rho_0} \cap  \mathcal{T}$, are described by \begin{equation}
\kappa   \left( W_u(\rho) \cap U_{\rho_0} \right)  = \Big\{ \Big( y, g(y, \zeta(\rho)) \Big) , y \in \Omega \Big\} 
\end{equation}
where $\Omega \subset \R$ is an open set, $\zeta : U_{\rho_0} \to \R$ is $C^{1 + \beta}$, $g : \Omega \times I\to \R$ is $C^{1+ \beta}$ (where $I$ is a neighborhood of $ \zeta(U_{\rho_0} ) $) and they satisfy 
\begin{enumerate}[label = (\roman*)]
\item $\zeta$ is constant on the unstable manifolds ; 
\item $\zeta(\rho_0)  =0 $, $g(y,0) = 0$ ; 
\item $g(0, \zeta) = \zeta$ ; 
\item $\partial_\zeta g(y,0) = 1$ 
\end{enumerate}
\end{enumerate}
The derivatives of $\kappa_{\rho_0}$ and the $C^{1+\beta}$ norms of $g, \zeta$ are bounded uniformly in $\rho_0$. 
\end{lem}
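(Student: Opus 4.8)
The plan is to start from the weak adapted chart $\kappa_0$ of Lemma~\ref{weak_adapted_chart} and then post-compose it with a canonical transformation that straightens the whole family of unstable manifolds passing through $U_{\rho_0}\cap\mathcal T$. In the chart $\kappa_0$, the unit vector field $e_u$ spanning the extended unstable distribution $E_u$ is $C^{1+\beta}$ (by Theorem~\ref{Thm_regularity}) and is close to the horizontal direction $\partial_y$; moreover $W_u(\rho_0)$ is already the horizontal axis. The flow $\varphi^t_u$ generated by $e_u$ is $C^{1+\beta}$ by the regularity lemma quoted after Proposition~\ref{equivalence_flow_leaves} (Lemma~\ref{flow_regularity} in the appendix), and by Proposition~\ref{equivalence_flow_leaves} its orbits through points of $\mathcal T$ coincide locally with the unstable manifolds. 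So the first step is: push the vector field $e_u$ and its flow $\varphi^t_u$ into the chart, and use the ``time along $e_u$'' as a new first coordinate.

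Concretely, in the $\kappa_0$-chart write $\Phi^t=\kappa_0\circ\varphi^t_u\circ\kappa_0^{-1}$ and define, as in the proof of Lemma~\ref{regularity_holonomy_maps}, the map $\psi(y,s)=\Phi^y(0,s)$, which is $C^{1+\beta}$ with $d_0\psi=\I_2$, hence a $C^{1+\beta}$ local diffeomorphism near $0$ by the inverse function theorem. Then $\psi^{-1}\circ\kappa_0$ rectifies the unstable foliation: each leaf $W_u(\rho)$ becomes a horizontal line $\{s=\mathrm{const}\}$. This however is only $C^{1+\beta}$, not smooth, so it is not a valid canonical transformation. The remedy — exactly the trick of \cite{NDJ19}, Lemma~2.3 — is to not rectify, but only to measure: let $\zeta(\rho)$ be the $s$-coordinate of $\psi^{-1}(\kappa_0(\rho))$, i.e.\ the leaf-label, which is $C^{1+\beta}$ and constant on unstable manifolds (property (i)), with $\zeta(\rho_0)=0$ (part of (ii)). The leaf through a point with label $\zeta$ is then, in the $\kappa_0$-coordinates, the graph $\{(y,s):\, s = \pi_s(\psi(y,\zeta))\}$, where $\pi_s(y,s)=s$. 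Setting $g(y,\zeta)=\pi_s(\psi(y,\zeta))$ gives a $C^{1+\beta}$ function, and $\psi(0,s)=\Phi^0(0,s)=(0,s)$ gives $g(0,\zeta)=\zeta$ (property (iii)); $\psi(y,0)=\Phi^y(0,0)$ stays on $W_u(\rho_0)=\{s=0\}$ so $g(y,0)=0$ (rest of (ii)); and $\partial_\zeta\psi(0,0)=\,^t(0,1)$, i.e.\ $d_0\psi=\I_2$, gives $\partial_\zeta g(y,0)=1$ at $y=0$ — to get $\partial_\zeta g(y,0)=1$ for all $y$ (property (iv)) one notes $\partial_\zeta g(y,0)=\partial_s[\pi_s\Phi^y(0,s)]|_{s=0}$, which is the $(s,s)$-entry of $d\Phi^y$ along the leaf $s=0$, and one may need a final smooth symplectic adjustment (a $y$-dependent shear fixing the horizontal axis and its tangent) to normalise this; alternatively absorb it into the definition of $g$, since (iv) only constrains the value on $\{\zeta=0\}$. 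Either way, after possibly composing $\kappa_0$ with one more smooth canonical transformation of the form $(y,\eta)\mapsto(y,\eta-h(y))$ with $h$ smooth and $h(0)=h'(0)=0$ (to restore $d_{\rho_0}\kappa(E_s(\rho_0))=\{0\}\times\R$ if it was disturbed), we set $\kappa=\kappa_0$ and we are done.

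The point worth emphasising is that $\kappa$ itself remains the smooth canonical transformation $\kappa_0$; all the non-smoothness of the foliation is pushed into the functions $g$ and $\zeta$, which only need $C^{1+\beta}$ regularity and which describe, not effect, the straightening. The uniformity in $\rho_0\in\mathcal T$ follows from: the uniform $C^N$ bounds on $\kappa_0$ from Lemma~\ref{weak_adapted_chart}; the uniform $C^{1+\beta}$ bounds on $e_u$ near $\mathcal T$ from Theorem~\ref{Thm_regularity} together with compactness of $\mathcal T$; the uniform $C^{1+\beta}$ bounds on the flow $\varphi^t_u$ for $|t|$ bounded (Lemma~\ref{flow_regularity}); and a uniform lower bound on the domain of the inverse function theorem applied to $\psi$, valid because $d_0\psi=\I_2$ with uniformly controlled higher data. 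One shrinks $\varepsilon_0$ once, globally, so that $B(\rho_0,2\varepsilon_0)\subset U_{\rho_0}$ for all $\rho_0$.

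The main obstacle I expect is purely the bookkeeping needed to read off properties (i)--(iv) simultaneously while keeping $\kappa$ smooth and canonical: one has to be careful that the ``straightening'' data $(g,\zeta)$ genuinely capture all leaves in $U_{\rho_0}\cap\mathcal T$ (not just the one through $\rho_0$), that $\zeta$ is well-defined and single-valued on a full neighbourhood (which is where Proposition~\ref{equivalence_flow_leaves} and the local product structure of the hyperbolic set, Lemma~\ref{classical_hyperbolic}(7), are used), and that the normalisation $\partial_\zeta g(y,0)=1$ does not secretly require smoothness of the foliation. Everything else is a direct transcription of the $C^{2^-}$-foliation argument of \cite{NDJ19} into the present $C^{1+\beta}$ setting, with Theorem~\ref{Thm_regularity} supplying exactly the regularity their argument consumes.
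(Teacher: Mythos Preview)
Your approach follows the same strategy as the paper's: start from the weak chart $\kappa_0$, use the flow of the $C^{1+\beta}$ vector field $e_u$ to label leaves by $\zeta$ and describe them as graphs $\eta=g(y,\zeta)$, and then adjust. Properties (i)--(iii) come out exactly as you say. The gap is in (iv).

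You write that $\partial_\zeta g(y,0)=1$ can be achieved by ``a final smooth symplectic adjustment (a $y$-dependent shear fixing the horizontal axis and its tangent)'' or by ``absorbing it into the definition of $g$''. Neither works as stated. The shear $(y,\eta)\mapsto(y,\eta-h(y))$ you propose replaces $g$ by $g-h(y)$ and does nothing to $\partial_\zeta g$. The correct adjustment is of the form $(y,\eta)\mapsto\big(\psi(y),\,\eta/\psi'(y)\big)$ with $\psi(y)=\int_0^y \partial_\zeta g(s,0)\,ds$, which rescales the graph so that $\partial_\zeta$ of the new $g$ at $\zeta=0$ becomes $1$. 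For $\kappa=\Psi\circ\kappa_0$ to remain a \emph{smooth} canonical transformation, $\psi$ must be smooth, i.e.\ the map $y\mapsto\partial_\zeta g(y,0)$ must be $C^\infty$. A priori this map is only $C^\beta$, since $g$ is only $C^{1+\beta}$. This is precisely the issue you flag in your last paragraph (``does not secretly require smoothness of the foliation''): it does.

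The paper isolates this as a separate lemma (Lemma~\ref{Lemma_regularity_partial}): the vertical derivative of the slope function along the leaf $W_u(\rho_0)$, i.e.\ $y\mapsto\partial_\eta f_0(y,0)$, is $C^\infty$ with uniform bounds, even though $f_0$ itself is only $C^{1+\beta}$. From this one deduces smoothness of $y\mapsto\partial_\zeta g(y,0)$ by integrating the linearised flow along $\{\eta=0\}$. The proof of this lemma is not a bookkeeping exercise: it uses the invariance relation $d_\rho F(E_u(\rho))=E_u(F(\rho))$ along the full orbit $(F^m(\rho_0))_{m\in\Z}$ to set up a fiber contraction for the sequence $(\partial_\eta s_m(\cdot,0))_m$ of vertical slope derivatives in charts at each $\rho_m$, and then climbs to arbitrary $C^N$ regularity one derivative at a time. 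Without this extra input, the chart you build is only $C^{1+\beta}$, not smooth, and the rest of the paper (which uses $\kappa_{\rho_0}$ as a genuine symplectic change of variables quantised by Fourier integral operators) breaks.
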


\begin{rem}
The most important condition, which will be used later on, is the last one : it makes the unstable manifolds very close to horizontal lines. The model situation we expect is when the unstable distribution is constant and horizontal. 
\end{rem}

\begin{proof}
Around a point $\rho_0 \in \mathcal{T}$, we work in the charts given by Lemma \ref{weak_adapted_chart} : $\kappa_0 : U_{\rho_0}^\prime \to V_{\rho_0}^\prime$. We recall that the unstable distribution is given by the restriction of a $C^{1+ \beta}$ vector field $e_u$. If $U_{\rho_0}^\prime$ is a sufficiently small neighborhood of $\rho_0$, we can write, for $\rho \in  U^\prime_{\rho_0}$, 
\begin{equation}\label{reparametrage}
d_\rho \kappa_0 (e_u(\rho) )  \in \R \tilde{e}_u(\rho) \quad \text{with }\tilde{e}_u (\rho) = {}^t (1 , f_0(\rho) ) 
\end{equation} 
where $f_0  : U^\prime_{\rho_0} \to \R$ is a $C^{1+ \beta}$ function which is nothing but the slope of the unstable direction in the chart $\kappa_0$.  
In the $(y,\eta)$ variable, we still note $f_0(\rho) = f_0(y,\eta)$ and we observe that due to the assumption on $\kappa_0$, we have 
$$ f_0(y, 0)  = 0 \quad , \quad (y,0) \in V_{\rho_0}^\prime$$ 
We consider $\Phi^t (y,\eta) $ the flow generated by the vector field $\tilde{e}_u$. Due to the form of $\tilde{e}_u$, we can write, 
$$ \Phi^t(y,\eta) = (y+t , Z^t(y, \eta) )$$
The reparametrization made in (\ref{reparametrage}) does not change the flow lines of the vector field $(\kappa_0)_* e_u$. In particular, in virtue of Proposition \ref{equivalence_flow_leaves}, they coincide locally with the unstable manifolds. More precisely, if we set, $$g_0(y,\eta) \coloneqq Z^y(0,\eta)$$ then, for $ (0, \eta)  = \kappa_0(\rho) \in  \kappa_0(\mathcal{T} \cap W_s(\rho_0))$,
$$ \kappa_0 \Big( W_u(\rho) \Big) \cap \{ |y| < y_0 \} = \left\{ \Big(y, g_0(y, \eta) \Big) , |y| < y_0 \right\}$$
for some $y_0$ small enough (which can be chosen uniformly in $\rho_0$). 
To define $\zeta$, we go back up the flow : 
suppose that $\rho \in U^\prime_{\rho_0}$ and write $\kappa_0(\rho) = (y, \eta)$ and assume $|y| < y_0$. We set 
$$ \zeta(\rho) \coloneqq Z^{-y} (y, \eta)$$ 
To say it differently, $\kappa_0(W_u(\rho)$ intersects the axis $\{y=0 \}$ at $(0, \zeta(\rho) ) $. 
\begin{figure}[h]
\begin{center}
\includegraphics[scale=0.5]{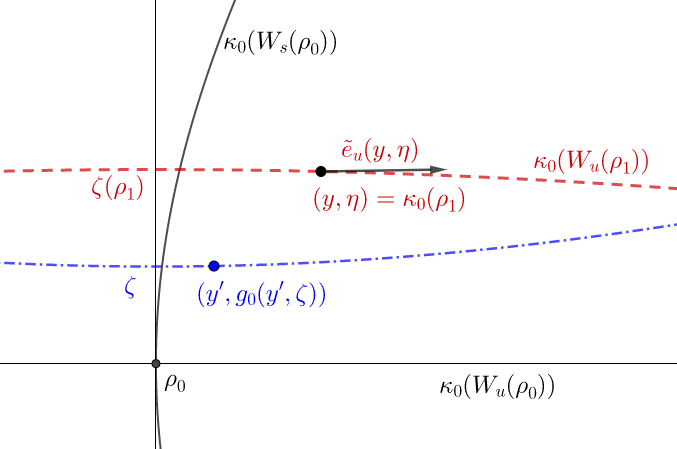}
\caption{The definitions of $g_0$ and $\zeta$ use the flow generated by $\tilde{e}_u$. }
\end{center}
\end{figure}

$\zeta$ and $g_0$ are $C^{1+ \beta}$, their $C^{1+ \beta}$ norms depend uniformly on $\rho_0$ and they satisfy : 

\begin{itemize}[nosep]
\item By definition, $\zeta$ is constant on the flow lines, and hence, on the unstable manifolds $W_u(\rho)$ if $\rho \in \mathcal{T} \cap U_{\rho_0}^\prime \cap \{ |y| < y_0 \} $ ; 
\item $\zeta(\rho_0) = 0$  ; 
\item Since $f_0(y,0)=0$, $Z^y(0, 0)  = 0$ and hence $g_0(y, 0) = 0$ ; 
\item Since $Z^0(0, \eta) = \eta$, $g_0(0,\eta) = \eta$.
\end{itemize}
However, at this stage, the last condition ($\partial_\zeta g_0(y,0) = 1$) is not satisfied by $g_0$ and we need to modify the chart. To do so, we'll make use of the following lemma, which is proved in the appendix \ref{appendix_regularity_partial}. 

\begin{lem}\label{Lemma_regularity_partial}
The map $y \in \{ |y| < y_0 \} \mapsto \partial_\eta f_0(y, 0)$ is smooth, with $C^N$ norms bounded uniformly in $\rho_0$. 
\end{lem}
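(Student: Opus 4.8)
\emph{Plan.} This is a regularity-along-the-leaf statement: although the extended slope function $f_0$ is only $C^{1+\beta}$ in the two variables $(y,\eta)$, its transverse derivative restricted to the unstable leaf $\{\eta=0\}$ through $\rho_0$ should be as smooth as the leaf and the dynamics allow, i.e.\ $C^\infty$, because $F$ is a smooth symplectomorphism and each local unstable manifold is a uniformly $C^\infty$ embedded curve (Lemma \ref{classical_hyperbolic}~(1)). The plan is to exhibit $u_0(y):=\partial_\eta f_0(y,0)$ as the unique bounded fixed point of a contraction whose data are $C^\infty$ along the leaf, and then to notice that the same contraction acts contractively on $C^k$ for every $k$, exactly as in the fixed-point arguments already used in this section.

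First I would set up a sequence of charts: for each $n\ge 0$ apply Lemma \ref{weak_adapted_chart} at $\rho_n:=F^{-n}(\rho_0)\in\mathcal{T}$ to get a canonical transformation $\kappa^{(n)}_0$ with $\kappa^{(n)}_0(\rho_n)=0$ sending $W_u(\rho_n)$ onto the axis $\{\eta=0\}$, and write $f^{(n)}_0$ for the slope of $E_u$ in chart $n$. By the remark following Proposition \ref{equivalence_flow_leaves} the extended $E_u$ is tangent to $W_u(\rho_n)$ along it, so $f^{(n)}_0(\cdot,0)\equiv 0$ near $0$, hence also $\partial_y f^{(n)}_0(\cdot,0)\equiv 0$. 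Let $\widetilde F_n:=\kappa^{(n+1)}_0\circ F^{-1}\circ(\kappa^{(n)}_0)^{-1}$; this is $C^\infty$ with bounds uniform in $\rho_0$ (composition of the smooth $F^{-1}$ with uniformly $C^\infty$ charts), it maps $\{\eta=0\}$ to $\{\eta=0\}$, and along $\{\eta=0\}$ it contracts the $y$-variable since it represents $F^{-1}$ on the unstable leaf (Lemma \ref{Local_hpyerbolic_1}~(2)). Consequently its differential along $\{\eta=0\}$ is upper triangular, with diagonal entries $\alpha_n(y)$ (the along-leaf contraction rate, $|\alpha_n|\le C\nu$) and $\delta_n(y)$, all $C^\infty$ in $y$. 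The invariance $dF^{-1}(E_u)=E_u\circ F^{-1}$ becomes a graph-transform identity $f^{(n+1)}_0\circ\widetilde F_n=\Phi_n(\,\cdot\,,f^{(n)}_0)$ for an explicit $C^\infty$ family $\Phi_n$; differentiating in $\eta$ at $\eta=0$ and using $f^{(m)}_0(\cdot,0)\equiv0$, $\partial_y f^{(m)}_0(\cdot,0)\equiv0$ and the vanishing of the lower-left entry gives, with $\bar y_n:=\widetilde F_n^{(1)}(\cdot,0)$,
\begin{equation}\label{cocycle_u}
u_n(y)=\alpha_n(y)\,u_{n+1}\bigl(\bar y_n(y)\bigr)+r_n(y),
\end{equation}
where $\alpha_n,\bar y_n,r_n$ are $C^\infty$ with uniform bounds, $|\alpha_n|\le C\nu$, and $\bar y_n$ is a contraction of rate $\le C\nu$; crucially, composing $N$ copies of \eqref{cocycle_u} the prefactor is the along-leaf derivative of $F^{-N}$ in chart coordinates, which is $O(\nu^N)$, and the composed inner map has derivative $O(\nu^N)$, the implied constants not growing with $N$ because only one chart change enters at each end of the orbit.

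Then I would run the contraction-mapping argument as in Proposition \ref{G_lambda_contraction} and the Hölder step above. Iterating \eqref{cocycle_u} presents $u_0$ as a fixed point of an operator $\mathcal L$ on bounded continuous functions near $0$; for $N$ large $\mathcal L^N$ contracts the $C^0$ norm, so $u_0$ — which exists as a $C^\beta$ function since $E_u$ is $C^{1+\beta}$ — is its unique bounded fixed point. The operator $\mathcal L$ also maps $C^k$ into $C^k$ for every $k$ (the $r_n$ and the coefficients are $C^\infty$), and differentiating \eqref{cocycle_u} $k$ times the top-order term picks up the factor $\alpha_n(y)(\bar y_n'(y))^k$, which after $N$ iterations is $O(\nu^N\cdot\nu^{Nk})$; hence $\mathcal L^N$ contracts $C^k$ too (in a suitably weighted norm, à la $\|\cdot\|_{Y^\beta}$). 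Its $C^k$ fixed point, being bounded, equals $u_0$; therefore $u_0\in C^k$ for all $k$, i.e.\ $\partial_\eta f_0(\cdot,0)\in C^\infty$, with $C^k$ norms controlled by the uniform bounds on $F$, the charts $\kappa^{(n)}_0$ and the constants $\nu,\lambda_0$ — hence uniform in $\rho_0\in\mathcal{T}$.

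The main obstacle is the bookkeeping: arranging the charts $\kappa^{(n)}_0$ and the derived quantities $\alpha_n,\bar y_n,r_n$ to be \emph{uniformly} $C^\infty$ along the entire backward orbit of $\rho_0$, and checking that the products of prefactors and of inner-map derivatives appearing after $N$ iterations are genuinely $O(\nu^N)$ with $\rho_0$-independent constants, so that the $C^k$-contraction estimate holds uniformly for every $k$. Once that is secured, the argument is the same scheme already used repeatedly here.
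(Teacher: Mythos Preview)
Your approach is correct and essentially the same as the paper's: both derive an affine cocycle relation for $\sigma_m := \partial_\eta f_0^{(m)}(\cdot,0)$ along the orbit of $\rho_0$ (the paper writes $\sigma_{m+1}(y_{m+1})=h_m(y_m)+c_m(y_m)\,\sigma_m(y_m)$ with $|c_m|<1$ and $y_{m+1}\mapsto y_m$ contracting, which is your relation up to reindexing $F$ versus $F^{-1}$), and then bootstrap smoothness by a contraction argument --- the paper invoking the Fiber Contraction Theorem on the sequence space $(X_N)^{\Z}$ level by level in $N$. One imprecision to fix: your operator $\mathcal L$ must act on \emph{sequences} $(u_n)_{n\ge 0}$ of functions (one per chart along the orbit), not on a single function near $0$, since the cocycle is non-autonomous; once you make that adjustment, your weighted-$C^k$-norm contraction is exactly the paper's Fiber Contraction bootstrap, and the uniform-in-$\rho_0$ bounds follow from the uniformity in Lemma~\ref{weak_adapted_chart}.
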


We first show that this lemma implies that $y \in \{ |y| < y_0 \} \mapsto \partial_\eta g_0(y, 0)$ is smooth. Indeed, due to the $C^{1+ \beta}$ regularity of $E_u$, $(t, y, \eta) \mapsto Z^t (y,\eta)$ is $C^1$ and satisfies : 
$$ \frac{d}{dt} \partial_\eta Z^t(y,\eta) = \partial_\eta f_0 \left( y+t, Z^t (y, \eta) \right)$$
Specifying in $(y,\eta) = (0,0)$, we have 
$$ \frac{d}{dt}  \partial_\eta Z^t (0,0) = \partial_\eta f_0 (t,0) $$ 
This exactly says that $ y \mapsto \partial_\eta g_0(y,0)$ is $C^1$ and has $\partial_\eta f_0(y, 0)$ as derivative with respect to $y$ and hence $y \mapsto \partial_\eta g_0(y, 0)$ is smooth, as required.

Due to the relation $g_0(0, \eta) = \eta$, we have $\partial_\eta g_0 (0,0)= 1$. As a consequence, if $y_0$ is small enough, we can assume that 
$\partial_\eta g_0 (y,0) >0$ for $|y| < y_0$ and consider the smooth diffeomorphism defined in $\{ |y| < y_0 \}$ 
$$\psi : y \mapsto \int_0^y \partial_\eta g_0 (s,0) ds $$
We then use the canonical transformation 
$$ \Psi : (y, \eta) \mapsto \left( \psi(y) , \frac{\eta}{\psi^\prime(y) } \right)$$
We finally consider the chart $\kappa_{\rho_0} = \Psi \circ \kappa_0$ defined in $U_{\rho_0} = U^\prime_{\rho_0} \cap \{ |y| < y_0 \}$ and if $\varepsilon_0$ is small enough, we can ensure that $B(\rho_0, 2 \varepsilon_0) \subset U_{\rho_0}$. In this chart, the graph of $g_0 (\cdot, \zeta)$ is sent to the graph of the function 
$$ g : y \in \Omega \coloneqq \psi\left( (-y_0, y_0) \right) \mapsto \frac{g_0(\psi^{-1}(y), \zeta)}{\psi^\prime\left(\psi^{-1}(y) \right)} $$ 
We eventually check that 
\begin{itemize}[nosep]
\item $g(y,0) = 0$ since $g_0(y,0) = 0$ ; 
\item $g(0,\zeta) = \zeta$ since $\psi(0) = 0$, $\psi^\prime(0) = 1$ and $g(0,\zeta) = \zeta$ ; 
\item $\partial_\eta g(y,0) = 1$; 
\item The $C^{1+\beta}$ norm of $g$ is bounded uniformly in $\rho_0$ ; 
\item The $C^N$ norms of $\kappa_{\rho_0}$ are bounded uniformly in $\rho_0$. 
\end{itemize}
\end{proof}

\section{Construction of a refined quantum partition} \label{Section_first_ingredient}

We start the proof of Theorem \ref{Spectral_radius}. 
We consider $T=T(h) \in I_{0^+}(Y \times Y, F^\prime)$ a semi-classical Fourier integral operator associated with $F$, microlocally unitary in a neighborhood of $\mathcal{T}$, and a symbol $\alpha \in S_{0^+}(U)$. We want to show a bound for the spectral radius of $M(h) = T(h) \op(\alpha)$, independent of $h$. 

\subsection{Numerology}\label{Numerology}
We'll use the standard fact : 
$$ ||M^n||_{L^2 \to L^2}\leq \rho \implies \rho_{spec}(M) \leq \rho^{1/n}$$ 
The trivial lemma which follows reduces the theorem to the study of $||M^n||$ with $n=n(h) \sim  \delta |\log h|$. 
\begin{lem}
Let $\delta >0$ and $N(h) \in \N$ satisfy $N(h)\sim  \delta |\log h|$. Suppose that there exists $h_0 >0$ and $ \gamma>0$ such that 
\begin{equation}\label{Goal_thm}
\forall 0 < h < h_0 \quad , \quad ||M(h)^{N(h) } || \leq h^\gamma ||\alpha||_\infty^{N(h) } 
\end{equation} 
Then, for every $\varepsilon>0$, there exists $h_\varepsilon$ such that, for $h \leq h_\varepsilon$, 
$$\rho_{spec}(M(h) ) \leq e^{-\frac{\gamma}{\delta} + \varepsilon}||\alpha||_\infty$$
\end{lem}

\begin{proof}
It suffices to observe that under the assumption (\ref{Goal_thm}), we have $\rho_{spec}(M(h) ) \leq e^{ \frac{\gamma \log h}{N(h)}} ||\alpha||_\infty$ and use the equivalence for $N(h)$. 
\end{proof}

\begin{rem}
If we use the bound $||M|| \leq ||\alpha||_\infty + O(h^{1/2-\varepsilon})$, one get the obvious bound $||M^N|| \leq ||\alpha||_\infty^N (1+ o(1))$. Hence, (\ref{Goal_thm}) is a decay bound.
\end{rem}

The proof of Theorem \ref{Spectral_radius} is then reduced to the proof of the following proposition. 

\begin{prop}\label{Prop_thm}
There exists $\delta >0$, a family of integer $N(h) \sim \delta |\log (h)|$ and $\gamma>0$ such that, for $h$ small enough, (\ref{Goal_thm}) holds. 
\end{prop}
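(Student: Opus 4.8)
The plan is to follow the strategy sketched in the model example of the introduction, but with the three main refinements announced: adapted charts, a splitting of the long time into $N_0(h)+N_1(h)$, and a cloud-based Cotlar--Stein argument feeding into the fractal uncertainty principle. Since by the preceding lemma it suffices to prove (\ref{Goal_thm}) for a suitable $N(h)\sim\delta|\log h|$, I would set $N(h)=N_0(h)+N_1(h)$ where $N_0(h)\sim\delta_0|\log h|$ is a \emph{short} logarithmic time, chosen below the Ehrenfest time (so that $h^{-\delta_0}\ll h^{-1/2}$, i.e. $\delta_0<1/2$, ensuring a genuine semiclassical calculus on the symbols $\alpha\circ F^{k}$ for $|k|\le N_0$), and $N_1(h)\sim\delta_1|\log h|$ is a \emph{long} logarithmic time, of size roughly twice the Ehrenfest time, on which the naive commutator trick fails and one must instead use fine microlocal propagation (the content of the later Section~\ref{section_propagation_Ehrenfest}) adapted to the local Jacobians $J^u_n(\rho)$.

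First I would write $M^{N}=\bigl(T\op(\alpha)\bigr)^{N}$ and decompose it using a quantum partition of unity subordinate to the adapted charts of Lemma~\ref{Existence_of_adapated_charts}: iterating the partition along the dynamics produces a sum $M^{N_1}=\sum_{\mathbf q}U_{\mathbf q}+O(h^\infty)$ indexed by admissible symbolic words $\mathbf q$ of length $N_1$ tracking the itinerary of a point through the charts, where each $U_{\mathbf q}$ is a Fourier integral operator microlocalized in a small tube around a trajectory segment; the number of such words grows like $e^{c N_1}\sim h^{-c\delta_1}$, a negative power of $h$. For the short time $N_0$, I would handle $M^{N_0}$ directly by the precise Egorov theorem (Proposition~\ref{prop_Egorov}, Lemma~\ref{lem_precise_Egorov}), exactly as in the model case, using that $\alpha\circ F^{k}\in S_{\delta_0}$ uniformly. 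The point of the adapted charts is that, after straightening the unstable manifolds, the supports of the propagated symbols become, in the chart coordinates, $h^{\alpha_Y}$-neighborhoods of \emph{one-dimensional} fractal sets in the stable variable (and dually in the frequency variable for the backward-propagated pieces), whose upper box dimension is $<1$ by the (\ref{Fractal}) assumption and Bowen's formula.

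Next, the sum over clouds. I would group the words $\mathbf q$ into clouds (Section~\ref{Clouds}) consisting of trajectories that are $\varepsilon$-close at all times, so that: (a) two distinct clouds are essentially microlocally orthogonal except for a uniformly bounded number of neighbours, which lets a Cotlar--Stein / almost-orthogonality estimate reduce $\|M^{N_0+N_1}\|$ to $\max_{\text{cloud}}\|M^{N_0}(\sum_{\mathbf q\in\text{cloud}}U_{\mathbf q})\|$ up to a polylog loss; and (b) the elements of a single cloud all see, in a common adapted chart, the \emph{same} pair of fractal sets $X(h^{\alpha_X}),Y(h^{\alpha_Y})$ up to controlled distortion — here the $C^{1+\beta}$ regularity of the unstable foliation and the uniformity of the charts are essential, since they guarantee the distortion is $O(h^{\beta\delta})$ and the holonomies $C^{1+\beta}$ (Lemma~\ref{regularity_holonomy_maps}), so the sets really are (bi-Lipschitz images of) genuine fractal sets. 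Then on a single cloud I would insert the sharp cutoffs $\mathds 1_{X(h^{\alpha_X})}$ in position and $\mathds 1_{Y(h^{\alpha_Y})}(hD)$ in frequency, as in the model case, with the saturation $\alpha_X+\alpha_Y>1$ arranged by the choice of $\delta_0,\delta_1$, and apply the fractal uncertainty principle of \cite{BD18} (in the form stated in \cite{NDJ19}, i.e. Proposition~\ref{FUP}) to get a bound $\|\cdots\|\le h^\beta$ for some $\beta>0$ depending only on the box dimension of the trapped set slices; since $\gamma_{cl}>0$ this box dimension is $<1$ and the FUP applies with a positive exponent. Combining: $\|M^{N}\|\le C h^{\beta-c'\delta_1}(\log h)^{c''}\le h^{\gamma}$ for $\gamma>0$, provided $\delta_1$ (hence $\delta$) is chosen small enough that $\beta$ beats the entropy loss $c'\delta_1$ and the polylog.

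The main obstacle, as the authors themselves flag, is Step (a)--(b) together: making the commutator/propagation estimates work on the long time $N_1$ up to twice the Ehrenfest time \emph{locally} in the charts (one cannot use the global scaling trick of the model case because the expansion rates vary pointwise), and then controlling the sum of the exponentially many $U_{\mathbf q}$ without losing the FUP gain — the Cotlar--Stein bookkeeping on clouds, and the verification that within a cloud the distortions are small enough ($O(h^{\beta\delta})$, using $C^{1+\beta}$) for a single application of the FUP to cover the whole cloud, is where essentially all the work lies. The corollary about $R(h)=O(h^\eta)$ then follows because the argument only ever uses $\|M^N\|\le h^\gamma\|\alpha\|_\infty^N$ and a perturbation $M\rightsquigarrow M+R$ changes $M^N$ by $O(N h^{\eta}(\|\alpha\|_\infty+1)^{N-1})$, which is still $\le h^{\gamma'}\|\alpha\|_\infty^N$ with $\gamma'$ depending only on $\gamma,\eta$ (and $\gamma'=\gamma$ once $\eta$ exceeds a threshold determined by $\delta$), as claimed in Corollary~\ref{Cor_Thm}.
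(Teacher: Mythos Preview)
Your overall architecture matches the paper's: split $N=N_0+N_1$, decompose $\mathfrak{M}^{N_1}=\sum_\mathbf{q}U_\mathbf{q}$, group into clouds, use Cotlar--Stein across clouds, and apply the fractal uncertainty principle on a single cloud in an adapted chart. The role of the $C^{1+\beta}$ foliation and of the adapted charts is also correctly identified.

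There is, however, a genuine gap in your endgame. You arrive at a bound $\|M^N\|\le Ch^{\beta-c'\delta_1}(\log h)^{c''}$ and then propose to choose $\delta_1$ small so that the FUP exponent $\beta$ ``beats the entropy loss $c'\delta_1$''. This is internally inconsistent (you also declare $N_1\sim\delta_1|\log h|$ to be the \emph{long} time; in the paper $N_1=\lceil|\log h|/\lambda_0\rceil$ is fixed so that every word has $J_\mathbf{q}^+\ge h^{-1}$, and is not a free small parameter) and it would in any case destroy the saturation condition: the frequency set $\Omega^+$ is porous only down to scale $\sim h^{\tau}$ with $\tau$ tied to the Jacobian growth along $N_1$, so making $N_1$ short pushes $\tau$ toward $0$ and the condition $\delta_2+\tau>1$ needed for Proposition~\ref{FUP} fails. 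There is no FUP-versus-entropy tradeoff in the actual proof.

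What you are missing is the mechanism that removes the exponential loss \emph{inside} a cloud. This is Proposition~\ref{sum_over_many_words}: for any subset $\mathcal{Q}\subset\mathcal{Q}(n,\tau,C_0)$ one has
\[
\Big\|\sum_{\mathbf{q}\in\mathcal{Q}}U_\mathbf{q}\Big\|\le C\,\|\alpha\|_\infty^{\,n}\,|\log h|,
\]
with \emph{no} factor $|\mathcal{A}|^n$. The proof splits each word at the time where its local Jacobian crosses $h^{-\tau/2}$, writes $U_\mathbf{r}U_\mathbf{s}=T^{n-l}\op(\sum_k h^k a^{(k)})T^l+O(h^{M})$ via the iterative Egorov construction of Section~\ref{section_propagation_Ehrenfest}, and then observes that the principal symbols satisfy $\sum_{\mathbf{q}}|(a_\mathbf{q}^\pm)_0|\le\|\alpha\|_\infty^{\,n}$ pointwise because the $\chi_q$ form a partition of unity; the higher $a^{(k)}$ are controlled in $S_{\delta_1}$ by the same combinatorics. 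Once this is available, Cotlar--Stein (bounded interaction number between clouds) reduces to a single cloud, one factors
\[
\mathfrak{M}^{N_0}U_{\mathcal{Q}}=A\,\mathds{1}_{\Omega^-}(y)\,\mathds{1}_{\Omega^+}(hD_y)\,B+O(h^\infty)
\]
with $\|A\|\le C\|\alpha\|_\infty^{N_0}$ and $\|B\|\le C|\log h|\,\|\alpha\|_\infty^{\,n}$, and the FUP delivers the pure gain $h^\gamma$ with $\gamma$ depending only on $(\delta_2+\tau-1)$ and on the porosity constant coming from $\overline{\dim}(\mathcal{T}\cap W_{u/s})<1$. No competition with entropy, and no freedom to shrink $\delta_1$.
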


Actually, this proposition is enough to show Corollary \ref{Cor_Thm} concerning perturbed operators, in virtue of 

\begin{cor}\label{cor_thm_perturbation}
Suppose that $R(h) : L^2(Y) \to L^2(Y)$ is a family of bounded operators such that $R(h) = O(h^\eta)$ for some $\eta >0$. Then, there exists $\gamma^\prime = \gamma^\prime(\gamma, \eta)$ such that for $h$ small enough, 
$$\left| \left| (M(h) + R(h) ) ^{N(h)} \right|\right| \leq  h^{\gamma^\prime} ||\alpha||_\infty^{N(h) }  $$
\end{cor}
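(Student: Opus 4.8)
The goal is to prove Corollary \ref{cor_thm_perturbation}: if $M(h)$ satisfies the decay bound (\ref{Goal_thm}) with exponent $\gamma$ over the logarithmic time $N(h) \sim \delta |\log h|$, and $R(h) = O(h^\eta)$, then $M(h) + R(h)$ satisfies an analogous bound with some $\gamma' = \gamma'(\gamma,\eta)$.

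\textbf{Plan of the proof.} The natural approach is to expand $(M+R)^N$ by the (noncommutative) binomial-type formula into a sum of $2^N$ words in the letters $M$ and $R$, and to group the words according to how many times the letter $R$ appears. Write
\begin{equation}\label{eq:expansion_MR}
(M+R)^N = \sum_{k=0}^{N} \sum_{\substack{W \text{ word of length } N \\ \text{with exactly } k \text{ letters } R}} W.
\end{equation}
For a word $W$ with $k$ occurrences of $R$, the $N-k$ letters equal to $M$ are distributed into at most $k+1$ consecutive blocks $M^{a_0}, M^{a_1}, \dots, M^{a_k}$ with $a_0 + \dots + a_k = N-k$. The first step is to bound $\|W\|$. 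We use three ingredients: first, $\|R(h)\| \le C_R h^\eta$; second, the trivial bound $\|M(h)\| \le \|\alpha\|_\infty + C h^{1/2-\varepsilon} \le 2\|\alpha\|_\infty$ for $h$ small (from Proposition \ref{Garding} applied to the symbol of $T\op(\alpha)$, or rather $\|M\| = \|\alpha\|_\infty(1+o(1))$ as noted in the remark following Lemma 4.1); third — and this is the point that makes the scheme work — the decay bound (\ref{Goal_thm}) applied to those blocks $M^{a_j}$ which are long, i.e. of length $\ge N$. Actually since each block has length $\le N$, we cannot directly apply (\ref{Goal_thm}) to a single block unless $a_j = N$ (which forces $k=0$); instead, for a block of length $a_j$ we bound $\|M^{a_j}\| \le (2\|\alpha\|_\infty)^{a_j}$, and we gain the decay only globally.

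\textbf{The key estimate.} The real mechanism is: in a word $W$ with $k$ letters $R$, we have $\|W\| \le C_R^k\, h^{k\eta}\, (2\|\alpha\|_\infty)^{N-k}$. Since $k \ge 1$ costs a factor $h^\eta$ but the number of words with exactly $k$ letters $R$ is $\binom{N}{k} \le N^k$, and $N \sim \delta|\log h|$, the contribution of all words with $1 \le k \le K_0$ letters $R$ (for a fixed $K_0$) is bounded by $\sum_{k=1}^{K_0} N^k C_R^k h^{k\eta}(2\|\alpha\|_\infty)^N \le C h^{\eta/2}(2\|\alpha\|_\infty)^N$ for $h$ small (absorbing the $N^k$ and $C_R^k$ into $h^{-\eta/2}$ since $k$ is bounded). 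The term $k=0$ is exactly $M^N$, controlled by (\ref{Goal_thm}): $\|M^N\| \le h^\gamma \|\alpha\|_\infty^N$. The problematic range is large $k$: when $k$ is comparable to $N$, the number of words $\binom Nk$ can be as large as $2^N = h^{-\delta\log 2}$, which is a negative power of $h$, but each such word carries a factor $h^{k\eta}$ with $k$ large, hence $h^{k\eta} \le h^{N\eta/2}$ say for $k \ge N/2$, which beats any fixed negative power of $h$. So one splits the sum in \eqref{eq:expansion_MR} at $k = k_1 := \lceil \eta_0 N\rceil$ for a suitable small $\eta_0$: for $k \ge k_1$ one uses $\binom Nk \le 2^N$ and $h^{k\eta} \le h^{k_1\eta}$ together with $h^{-\delta\log 2}\cdot h^{k_1\eta} = h^{k_1 \eta - \delta \log 2} \to 0$ provided $\eta_0$ is chosen with $\eta_0\delta\eta > \delta\log 2$, i.e. $\eta_0 > \log 2 / \eta$... wait, that requires $\eta_0 > (\log 2)/\eta$ which may exceed $1$; instead one simply chooses $k_1$ so that $k_1 \eta \ge 2\delta\log 2 + \gamma$, i.e. $k_1 = \lceil (2\delta\log2+\gamma)/\eta\rceil$, a \emph{constant} independent of $h$. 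Then words with $1 \le k < k_1$ are handled by the "bounded $k$" argument above (polynomial-in-$N$ count, gain $h^{k\eta} \ge h^{(k_1-1)\eta}$... no — one wants decay, so one needs $k \ge 1$ to already give a positive power; fine, $h^{k\eta} \le h^\eta$ and the count $N^k \le N^{k_1}$ is only logarithmic, absorbed by $h^{-\eta/2}$). Words with $k \ge k_1$ give $\le 2^N C_R^N h^{k_1\eta}(2\|\alpha\|_\infty)^N \le h^{\gamma+\delta\log 2}(2\|\alpha\|_\infty)^N \le h^{\gamma}\|\alpha\|_\infty^N \cdot (2^{N}h^{\delta\log 2})$, and $2^N h^{\delta \log 2} = 2^N \cdot 2^{\delta \log_2 h \cdot \log 2}$... here one must be careful that $N \sim \delta|\log h|$ only asymptotically; choosing the constant in $k_1$ with a margin fixes this.

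\textbf{Conclusion and main obstacle.} Combining the three ranges $k=0$, $1\le k<k_1$, $k\ge k_1$, one gets
\begin{equation*}
\|(M+R)^N\| \le h^\gamma\|\alpha\|_\infty^N + C h^{\eta/2}(2\|\alpha\|_\infty)^N + C h^{\gamma}(2\|\alpha\|_\infty)^N \le h^{\gamma'}\|\alpha\|_\infty^N
\end{equation*}
for a suitable $\gamma' = \gamma'(\gamma,\eta) > 0$, after paying for the finitely many factors of $2$ (there are at most $N \sim \delta|\log h|$ of them, so $2^N = h^{-\delta\log2}$, which forces $\gamma' < \gamma$ — e.g. $\gamma' = \min(\gamma, \eta/2) - \delta\log 2$, or rather $\gamma'=\tfrac12\min(\gamma,\eta/2)$ after re-tuning $\delta$ if needed). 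The main nuisance — not really an obstacle — is bookkeeping: tracking the factor $2^N$ (equivalently the loss between $2\|\alpha\|_\infty$ and $\|\alpha\|_\infty$) and making sure the fixed constant $k_1$ is chosen \emph{before} $h\to 0$ so that $N^{k_1}$ and $C_R^{k_1}$ are genuinely $h$-independent constants, hence absorbed in an arbitrarily small power $h^{-\varepsilon}$. The remark after the corollary (\og if $\eta$ is strictly bigger than some threshold then $\gamma'=\gamma$\fg) corresponds to the case where $\eta$ is large enough that $\eta/2 > \gamma + \delta\log 2$, so the $k\ge 1$ terms are dominated by $h^\gamma\|\alpha\|_\infty^N$ outright and no loss in the exponent is incurred. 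I would also double-check that the bound $\|M(h)\|\le 2\|\alpha\|_\infty$ is uniform in $h\le h_0$, which follows from the fact that $M = T\op(\alpha)$ with $T$ microlocally unitary, so $\|M\|\le \|T\|\,\|\op(\alpha)\| \le (1+O(h^\varepsilon))(\|\alpha\|_\infty + O(h^{1/2-\varepsilon'}))$.
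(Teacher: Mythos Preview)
Your approach and the paper's are the same: expand $(M+R)^N$ into $2^N$ words and bound crudely all words containing at least one letter $R$. But your execution has a real gap, and the paper's is much shorter.

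\textbf{The gap.} You bound $\|M\| \le 2\|\alpha\|_\infty$, which costs a factor $2^N = h^{-\delta\log 2}$. Your proposed $\gamma' \approx \min(\gamma,\eta/2) - \delta\log 2$ need not be positive: nothing in the hypotheses relates $\gamma$ or $\eta$ to $\delta\log 2$. And you cannot ``re-tune $\delta$'': the value of $\delta$ (hence of $N(h)$) is fixed once and for all by Proposition~\ref{Prop_thm}; this corollary takes it as given.

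\textbf{The fix.} You already have it in your last paragraph: $\|M\| \le \|\alpha\|_\infty(1 + O(h^{\varepsilon}))$. Since $N = O(|\log h|)$, this gives $\|M\|^m \le C\|\alpha\|_\infty^m$ for all $0 \le m \le N$ with a \emph{single} constant $C$ (because $(1+O(h^\varepsilon))^N \to 1$). Use this in place of $(2\|\alpha\|_\infty)^m$ throughout; the split at $k_1$ then becomes unnecessary, and you recover any $\gamma'$ strictly below $\min(\gamma,\eta)$.

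\textbf{The paper's proof.} It is two lines. The sum of the norms of all words with at least one $R$ is at most $(\|M\|+\|R\|)^N - \|M\|^N$; by the scalar inequality $a^N - b^N \le N(a-b)a^{N-1}$ this is $\le N\|R\|(\|M\|+\|R\|)^{N-1}$. With $\|R\| = O(h^\eta)$, $N = O(|\log h|)$, and $(\|M\|+\|R\|)^{N-1} \le C\|\alpha\|_\infty^{N-1}$ (same sharp bound as above), one gets
\[
\|(M+R)^N\| \le h^\gamma\|\alpha\|_\infty^N + C|\log h|\,h^\eta\|\alpha\|_\infty^{N-1} = O\big((h^\gamma + h^{\eta-})\|\alpha\|_\infty^N\big),
\]
so any $\gamma' < \min(\gamma,\eta)$ works. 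No splitting by $k$, no threshold $k_1$.
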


\begin{proof}
We write $$(M+R)^N = M^N + \sum_{\substack{ \epsilon \in \{0,1\}^N  \\ \varepsilon \neq (1,\dots,1)}} (\varepsilon_1 M +(1- \varepsilon_1) R) \dots (\varepsilon_N M +(1- \varepsilon_N) R)$$
Using this, we can estimate 
\begin{align*}
\left| \left| (M+R )^{N} \right|\right|& \leq  h^{\gamma} ||\alpha||_\infty^N + \left( \left( ||M|| + ||R|| \right)^N - ||M||^N \right) \\
&\leq h^{\gamma} ||\alpha||_\infty^N + N ||R|| \left( ||M|| + ||R|| \right)^{N-1} \\
&\leq h^{\gamma} ||\alpha||_\infty^N + C |\log h| h^\eta ||\alpha||_{\infty}^{N-1} ( 1 + O(h^\eta) ) \\
&= O( (h^{\gamma} + h^{\eta - }) ||\alpha||_\infty^N  )
\end{align*}
This gives the desired bound for any $\gamma^\prime < \min(\gamma, \eta)$. 
\end{proof}
Actually, the precise value of $N(h)$ we'll use is rather explicit and we now describe it. 
 We set \begin{equation}\label{alpha}
\mathfrak{b}= \frac{1}{1+\beta}
\end{equation}
where $\beta$ is the one appearing in Theorem \ref{Thm_regularity} concerning the regularity of the unstable distribution. 
We now choose $\delta_0 \in (0, \frac{1}{2})$ such that 
\begin{equation}\label{alpha+delta0}
\mathfrak{b} + \delta_0 < 1
\end{equation}
For instance, let us set $$\delta_0 = \frac{1-\mathfrak{b}}{2} = \frac{\beta}{2(1+ \beta)}$$
Recalling the definitions of the exponent $\lambda_0  \leq \lambda_1$ in (\ref{lamba0et1}) and (\ref{lambdao0et1bis}), we introduce the following notations 

\begin{equation}
 N(h) = N_0(h) + N_1(h) \quad  ; \quad N_0(h) = \left\lceil  \frac{\delta_0}{\lambda_1} |\log (h)| \right\rceil  \quad ; \quad N_1(h) = \left\lceil  \frac{1}{\lambda_0} |\log (h) | \right\rceil 
\end{equation}
 $N_0(h)$ (resp. $N_1(h)$) corresponds to a short (resp. long) logarithmic time. We will omit the dependence on $h$ in the following. 

To be complete with the numerology, we introduce another number $\tau <1$ such that 
\begin{equation}\label{condition_on_tau}
\mathfrak{b} < \tau < 1 \text{ and } \delta_0 \frac{\lambda_0}{\lambda_1} + \tau >1
\end{equation}
The meaning of these conditions will be clear in the core of the proof and we won't miss to recall where they are used. For instance, we set 
\begin{equation}\label{tau}
\tau = 1 - \frac{\lambda_0}{\lambda_1} \frac{1-\mathfrak{b}}{4}
\end{equation}

\paragraph{An important remark.} If two operators $M_1(h)$ and $M_2(h)$ are equal modulo $\hinf$, this is also the case for $M_1(h)^{N(h)}$ and $M_2(h)^{N(h)}$ as long as 
\begin{itemize}[label=-]
\item $N(h) = O(\log h)$.
\item $M_1(h) , M_2(h) = O(h^{-K})$ for some $K$. 
\end{itemize}
This will be widely used in the following. In particular, recall that we work with operators acting on $L^2(Y)$ but these operators take the form $ M_1(h) = \Psi_Y M_2(h) \Psi_Y$ where $\Psi_Y \in \cinfc(Y, [0,1])$  and $M_2(h)$ is a bounded operator on $\bigoplus_{j=1}^J L^2(\R)$ such that $M_2(h) = \Psi_Y M_2(h) \Psi_Y + \hinf_{L^2}$. As a consequence, modulo $\hinf$, it is enough to focus on $M_2(h)^{N(h)}$. For this reason, from now on and even if we keep the same notation, we work with  
$$M(h) = T(h)\op(\alpha): \bigoplus_{j=1}^J L^2(\R) \to \bigoplus_{j=1}^J L^2(\R)$$ where $T(h) = (T_{ij}(h) ) $ with $T_{ij} \in I_{0+}(\R \times \R, F_{ij}^\prime)$ and 
$$ \op(\alpha) = \Diag(\op(\alpha_1), \dots,\op( \alpha_J) ) $$

\subsection{Microlocal partition of unity and notations}
We consider some $\varepsilon_0 >0$, which is supposed small enough to satisfy all the assumptions which will appear in the following. \\
We consider a cover of $\mathcal{T}$ by a finite number of balls of radius $\varepsilon_0$ : 
$$ \mathcal{T} \subset \bigcup_{q=1}^Q B(\rho_q , \varepsilon_0)  \quad ; \quad \rho_q \in \mathcal{T}$$ and we assume that for all $ q \in \{1, \dots, Q \}$, there exists $j_q,l_q,m_q \in \{1, \dots, J \}$ such that 
$$B(\rho_q , 2 \varepsilon_0) \subset \widetilde{A}_{ j_q l_q } \cap \widetilde{D}_{m_q j_q} \subset U_{j_q}$$
We also assume that $T$ is microlocally unitary in $B(\rho_q, 4\varepsilon_0)$. 
We then note 
\begin{equation}\label{V_i}
\mathcal{V}_q = B(\rho_q , 2 \varepsilon_0)
\end{equation}
We complete this cover with
\begin{equation}\label{V_infini}
\mathcal{V}_\infty = U^\prime \setminus \bigcup_{q=1}^Q \overline{B(\rho_q, \varepsilon_0)} 
\end{equation}
$U^\prime \Subset U$ is an open set such that$ \WF(M) \Subset U^\prime \times U^\prime$. We note $U^\prime_j$ the component of $U^\prime$ inside $U_j$. 

\begin{figure}
\centering
\includegraphics[scale=0.5]{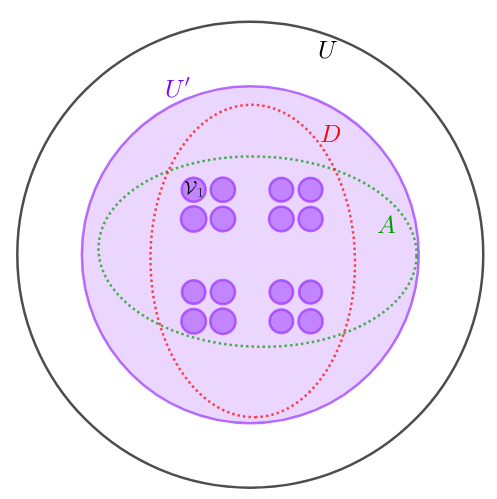}
\caption{The partition $(\mathcal{V}_q)_{q \in \mathcal{A}_\infty}$ is made by small neighborhoods of $\mathcal{T}$ (small purple disks) and a big open set included in $U^\prime$.  }
\end{figure}

We note $ \mathcal{A} = \{ 1, \dots, Q \}$ and $\mathcal{A}_\infty = \mathcal{A} \cup \{ \infty \}$. \\
We then consider a partition of unity associated with the cover $\mathcal{V}_1, \dots, \mathcal{V}_Q, \mathcal{V}_\infty$, namely a family of smooth functions $\chi_q \in \cinfc (U),$ for $q \in \mathcal{A}_\infty$ such that : 
\begin{itemize}
\item $\supp \chi_q \subset \mathcal{V}_q $
\item $0 \leq \chi_q \leq 1$
\item $1 = \sum_{q \in \mathcal{A}_\infty} \chi_q \text{ in }  \bigcup_{ q \in \mathcal{A}_\infty} \mathcal{V}_q$
\end{itemize}
More precisely, if $q \in \mathcal{A}, \chi_q \in \cinf(U_{j_q} )$ and for every $j \in \{1 , \dots , J\}$, there exists $b_j \in \cinfc(U_j)$ such that on $U_j^\prime$, $1 = b_j + \sum_{q \in \mathcal{A} , j_q =j } \chi_q $. Thus, $\chi_\infty = \sum_{j=1}^J b_j$. 

We can then quantize these symbols so as to get a pseudodifferential partition of unity. More precisely, to respect the matrix structure, we may write this quantization in a diagonal operator valued matrix, still denoted $\op$ : 
\begin{itemize}
\item for $q \in \mathcal{A}$, $A_q = \op(\chi_q)$ is the diagonal matrix $\Diag(0, \dots, \op(\chi_q) , 0, \dots ,0)$ where the block $\op(\chi_q)$ is in the $j_q$-ith position ; 
\item $\op (\chi_\infty) = \Diag(\op(b_1), \dots, \op(b_J))$. 
\end{itemize}
The family $(A_q)_{ q \in \mathcal{A}_\infty}$ satisfies the following properties : 
\begin{equation}\label{properties_Aq}
\sum_{ q \in \mathcal{A}_\infty} A_q = \Id \text{ microlocally in } U^\prime \quad ; \quad \forall q \in \mathcal{A}_\infty , ||A_q|| \leq 1 + O(h^{1/2} )
\end{equation}
Since $M = \sum_{ q \in \mathcal{A}_\infty} MA_q + \hinf$, we may write 
$$ M^n = \sum_{ \mathbf{q} \in \mathcal{A}^n_\infty} U_{\mathbf{q} } + \hinf$$ 
where for $\mathbf{q} = q_0 \dots q_{n-1}  \in \mathcal{A}^n_\infty$, 

\begin{equation}\label{U_q}
U_{\mathbf{q} }  \coloneqq MA_{q_{n-1}} \dots MA_{q_0}
\end{equation} 
For $\mathbf{q} = q_0 \dots, q_{n-1}  \in \mathcal{A}^n_\infty$, we also define a family of refined neighborhoods, forming a refined cover of $\mathcal{T}$, 

\begin{equation} \label{V_q}
 \mathcal{V}_{\textbf{q}}^{-} = \bigcap_{i=0}^{n-1} F^{-i} \left( \mathcal{V}_{q_i} \right) \quad ; \quad \mathcal{V}_{\textbf{q}}^+ = F^n \left( \mathcal{V}_{\textbf{q}}^{-}\right) = \bigcap_{i=0}^{n-1} F^{n-i}\left(\mathcal{V}_{q_i} \right)
  \end{equation}
This definition imply that a point $\rho \in \mathcal{V}_{\textbf{q}}^{-}$ lies in $\mathcal{V}_{q_i}$ at time $i$ (i.e $F^i(\rho) \in \mathcal{V}_{q_i}$) for $0 \leq i \leq n-1$ and a point $\rho \in \mathcal{V}_{\textbf{q}}^{+}$ lies in $\mathcal{V}_{q_{n-i}}$ at time $-i$, for $1 \leq i \leq n$. 
Roughly speaking, we expect that each operator $U_\mathbf{q}$ acts from $\mathcal{V}_\mathbf{q}^-$ to $\mathcal{V}_\mathbf{q}^+$ and is negligible (in some sense to be specified later on) elsewhere. Combining (\ref{properties_Aq}) and the bound on $M$ , the following bound is valid (for any $\varepsilon>0$) : 

\begin{equation}\label{bound_trivial_Uq}
||U_\mathbf{q}||_{L^2 \to L^2} \leq \left(||\alpha||_\infty + O(h^{1/2 -\varepsilon})\right)^n
\end{equation}
As soon as $|n| \leq C_0 |\log h|$, we have $||U_\mathbf{q}||_{L^2 \to L^2} \leq C||\alpha||_\infty^n$, for some $C$ depending on $C_0$ and a finite number of semi-norms of $\alpha$. 

\paragraph{Reduction to words in $\mathcal{A}$. } 
We can find a uniform $T_0 \in \N$ such that if $\rho \in \mathcal{V}_\infty$, there exists $k \in \{ - T_0, \dots, T_0 \}$ such that $F^k(\rho)$ "falls" in the hole. By standard properties of the Fourier integral operators, each component $(M^{T_0})_{i j }$ of $M^{T_0}$ is a Fourier integral operator associated with the component $(F^{T_0})_{i j }$ of $F^{T_0}$. In particular, $\WF^\prime (M^{T_0}) \subset \gr^\prime( F^{T_0})$. 
\vspace{0.5cm}

Let us study $M^{2T_0 + N(h)}$. If $\mathbf{q}=q_0 \dots q_{N-1} \in \mathcal{A}^N_\infty$ and if there exists an index $i \in \{0, \dots, N-1 \} $ such that $q_i = \infty$, one can isolate this index $i$ and trap $A_{q_i}$ between two Fourier integral operators $M_1, M_2$, belonging to a finite family of FIO associated with $F^{T_0}$, so that we can write 

$$M^{T_0} U_\mathbf{q} M^{T_0}  = B_1 M_1 A_{\infty} M_2 B_2$$ 
where $B_1, B_2$ satisfy the $L^2$-bound : 
$$ ||B_1|| \times ||B_2|| \leq (||\alpha||_\infty + O(h^{1/4}))^{N-1} =O(h^{-K})$$
for some integer $K$. Since, 
 $$ \WF^\prime (M_1 A_\infty M_2 ) \subset \{ \left( F^{T_0}( \rho) , F^{-T_0} (\rho) \right)  ; \rho \in \WF(A_\infty) \} = \emptyset$$
we have $M_1 A_\infty M_2 = \hinf $, with constants that can be chosen independent of $\mathbf{q}$. Hence, the same is true for $M^{T_0} U_\mathbf{q} M^{T_0}$.
$| \mathcal{A}^N | $ is bounded by a negative power of $h$. So, we can write : 
\begin{align*}
M^{N+ 2 T_0} = &\sum_{ \textbf{q} \in \mathcal{A}^N_\infty} M^{T_0} U_\mathbf{q} M^{T_0} \\
= &\sum_{ \textbf{q} \in \mathcal{A}^N} M^{T_0} U_\textbf{q} M^{T_0} + \hinf \\
= & M^{T_0} \left( \sum_{ \textbf{q} \in \mathcal{A}^N} U_\textbf{q} \right)M^{T_0} + \hinf \\
\end{align*}
We can then replace $M$ by 
\begin{equation}
\mathfrak{M} = \sum_{q \in \mathcal{A}} MA_q = M(\Id- A_\infty) + \hinf_{L^2 \to L^2}
\end{equation}
A decay bound 
\begin{equation}\label{Goal_thm_2}
||\mathfrak{M}(h)^{N(h)}||\leq h^\gamma ||\alpha||_\infty^{N(h)}
\end{equation}
will imply the required decay bound
(\ref{Goal_thm}) for $M$ with $N(h)$ replaced by $N(h) + 2T_0$. We are hence reduced to prove the decay bound (\ref{Goal_thm_2}). 

\subsection{Local Jabobian}\label{Local_jacobian}
\paragraph{A first definition.}
Following  \cite{NDJ19}, we introduce local unstable and stable Jacobians and we then state several properties.  For $n \in \N^*$ and $\textbf{q} \in \mathcal{A}^n$, let us define its local stable and unstable Jacobian. 

\begin{equation}\label{def_jacobian_1}
J^-_\textbf{q} \coloneqq \inf_{ \rho \in \mathcal{T} \cap \mathcal{V}^-_\textbf{q} } J^u_n (\rho) \; , \: J^+_\textbf{q} \coloneqq \inf_{ \rho \in \mathcal{T} \cap \mathcal{V}^+_\textbf{q} } J^s_{-n} (\rho)
\end{equation}
By the chain rule, we have for $\rho \in \mathcal{T} \cap \mathcal{V}^-_\textbf{q}$, 
$$ J^u_n (\rho) = \prod_{i=0}^{n-1} J_1^u\left( F^i (\rho) \right) $$ 
A similar formula is true for $\rho \in \mathcal{T} \cap \mathcal{V}^+_\textbf{q}$ : 
$$ J^s_{-n} (\rho) = \prod_{i=0}^{n-1} \left( J_1^s\left( F^{i-n} (\rho) \right)\right)^{-1} =  \prod_{i=0}^{n-1} J_{-1}^s\left( F^{-i} (\rho) \right)$$ 
Hence, we've got the basic estimates : 
\begin{align}
\mathcal{T} \cap \mathcal{V}^-_\textbf{q} \neq \emptyset \implies e^{\lambda_0 n } \leq J^-_\textbf{q}  \leq e^{\lambda_1 n } \\
 \mathcal{T} \cap \mathcal{V}^+_\textbf{q} \neq \emptyset \implies e^{\lambda_0 n } \leq J^+_\textbf{q}  \leq e^{\lambda_1 n }
\end{align}
If $\textbf{q} = q_0 \dots q_{n-1}$ and $\mathbf{q_-} = q_0 \dots q_{n-2}$, then $\mathcal{V}_\mathbf{q}^- \subset \mathcal{V}_\mathbf{q_-}^- $ and thus 

\begin{equation}
J_\mathbf{q}^- \geq e^{\lambda_0} J_\mathbf{q_-}^-
\end{equation}
Similarly, if $\mathbf{q_+} = q_1 \dots q_{n-1}$, $\mathcal{V}_\mathbf{q}^+ \subset \mathcal{V}_\mathbf{q_+}^+ $ and 
\begin{equation}
J_\mathbf{q}^+ \geq e^{\lambda_0} J_\mathbf{q_+}^+
\end{equation}
As a consequence of Corollary \ref{cor_control_jacobian}, if $\varepsilon_0$ is small enough, the local stable and unstable Jacobians give the expansion rate of the flow at every point of $\mathcal{T} \cap \mathcal{V}_{\mathbf{q}}^\pm$. If $\mathcal{T} \cap \mathcal{V}_{\mathbf{q}}^\pm \neq \emptyset$,  
\begin{align}
\forall \rho \in \mathcal{T} \cap \mathcal{V}_{\mathbf{q}}^- , \;  J^u_n(\rho) \sim J_{\mathbf{q}}^- \\
\forall \rho \in \mathcal{T} \cap \mathcal{V}_{\mathbf{q}}^+ , \;  J^s_{-n}(\rho) \sim J_{\mathbf{q}}^+
\end{align}

This definition is slightly not satisfactory since $J_\mathbf{q}^\pm = + \infty$ as soon as $\mathcal{V}_\mathbf{q}^\pm  \cap \mathcal{T}= \emptyset$. However, when $\mathcal{V}_\mathbf{q}^\pm \neq \emptyset$, this set can still stay relevant. For this purpose, we will give a definition of local stable and unstable Jacobian for such words with help of the Shadowing Lemma (\cite{KH} , Section 18.1). 

\paragraph{Enlarged definition.}
Let $n \in \N$ and $\textbf{q} = q_0 \dots q_{n-1} \in \mathcal{A}^n$. We focus on $\mathcal{V}_\mathbf{q}^-$, with the case of $\mathcal{V}_\mathbf{q}^+$ handled similarly by considering $F^{-1}$ instead of $F$.  

 If $\mathcal{V}_\mathbf{q}^-  \cap \mathcal{T} \neq \emptyset$, we keep the definition given in \ref{def_jacobian_1}. Assume now that $\mathcal{V}_\mathbf{q}^- \neq \emptyset$ but $\mathcal{V}_\mathbf{q}^-  \cap \mathcal{T}= \emptyset$. Fix $\rho \in \mathcal{V}_\mathbf{q}^-$. By definition of $\mathcal{V}_{q_i}$, for $i \in \{0, \dots, n-1\}$, we have $d(\rho_i, F^i(\rho) ) \leq 2 \varepsilon_0$. Hence, 

$$d(F(\rho_i), \rho_{i+1}) \leq d( F(\rho_i) , F^{i+1}(\rho) ) + d (F^{i+1}(\rho), \rho_{i+1} ) \leq C \varepsilon_0$$
for a constant $C$ only depending on $F$. That is to say, $(\rho_0, \dots, \rho_{n-1})$ is a $C\varepsilon_0$ pseudo orbit. Assume that $\delta_0 >0$ is a small fixed parameter. In virtue of the shadowing lemma, if $\varepsilon_0$ is sufficiently small, $(\rho_0, \dots, \rho_{n-1})$ is $\delta_0$ shadowed by an orbit of $F$ : there exists $\rho^\prime \in \mathcal{T}$ such that for $i \in \{0, \dots, n-1 \}$, $d(\rho_i, F(\rho^\prime) ) \leq \delta_0$. Consequently, 
$d(F^i (\rho) , F^i(\rho^\prime)) \leq \delta_0 + C\varepsilon_0$. If $\rho_2$ is another point in $\mathcal{V}_\mathbf{q}^-$, for $i= 0, \dots, n-1$, $d(F^i(\rho_2), F^i(\rho^\prime) ) \leq 2 \varepsilon_0 + C\varepsilon_0 + \delta_0$.  For convenience, set $\varepsilon_2 =2 \varepsilon_0+ \delta_0 + C\varepsilon_0$ and note that $\varepsilon_2$ can be arbitrarily small depending on $\varepsilon_0$. As a consequence, we have proven the following

\begin{lem}\label{help_def_jacobian_2}
If $\mathcal{V}_\mathbf{q}^- \neq \emptyset$, there exists $\rho^\prime \in \mathcal{T}$ such that $\forall i \in \{0, \dots, n-1 \}$ and $\rho \in \mathcal{V}_\mathbf{q}^-$, $d(F^{i}(\rho), F^i(\rho^\prime)) \leq \varepsilon_2$.
\end{lem}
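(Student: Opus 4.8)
The statement to prove is: if $\mathcal{V}_\mathbf{q}^- \neq \emptyset$, there exists $\rho^\prime \in \mathcal{T}$ such that for all $i \in \{0,\dots,n-1\}$ and all $\rho \in \mathcal{V}_\mathbf{q}^-$, one has $d(F^i(\rho),F^i(\rho^\prime)) \leq \varepsilon_2$.

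The plan is to follow verbatim the reasoning already sketched in the paragraph "Enlarged definition" preceding the lemma, turning it into a self-contained argument. First I would fix any point $\rho \in \mathcal{V}_\mathbf{q}^-$ (which exists by hypothesis) and recall that, by definition of $\mathcal{V}_\mathbf{q}^- = \bigcap_{i=0}^{n-1} F^{-i}(\mathcal{V}_{q_i})$ and of $\mathcal{V}_{q_i} = B(\rho_{q_i}, 2\varepsilon_0)$, we have $d(\rho_{q_i}, F^i(\rho)) \leq 2\varepsilon_0$ for $i = 0,\dots,n-1$ (writing $\rho_i \coloneqq \rho_{q_i}$ for brevity). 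Then, using that $F$ is Lipschitz on the relevant compact set with a constant $C$ depending only on $(U,F)$, I would estimate, for $i = 0,\dots,n-2$,
\begin{equation*}
d(F(\rho_i),\rho_{i+1}) \leq d(F(\rho_i),F^{i+1}(\rho)) + d(F^{i+1}(\rho),\rho_{i+1}) \leq C\cdot 2\varepsilon_0 + 2\varepsilon_0 \leq C'\varepsilon_0,
\end{equation*}
so that $(\rho_0,\dots,\rho_{n-1})$ is a $C'\varepsilon_0$-pseudo-orbit for $F$ consisting of points of $\mathcal{T}$.

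Next I would invoke the Shadowing Lemma (Anosov Shadowing / Anosov Closing Lemma for the hyperbolic set $\mathcal{T}$, e.g.\ \cite{KH}, Section 18.1): given $\delta_0 > 0$, there is $\varepsilon_0$ small enough (depending only on $(U,F)$ and $\delta_0$) so that every $C'\varepsilon_0$-pseudo-orbit in $\mathcal{T}$ is $\delta_0$-shadowed by a genuine orbit, i.e.\ there exists $\rho^\prime \in \mathcal{T}$ with $d(\rho_i, F^i(\rho^\prime)) \leq \delta_0$ for $i = 0,\dots,n-1$. Here it is worth noting the uniformity: the shadowing constant does not depend on the length $n$ of the pseudo-orbit, which is exactly what makes the bound usable for the logarithmically long words that appear later. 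Finally, for an \emph{arbitrary} second point $\rho_2 \in \mathcal{V}_\mathbf{q}^-$, the triangle inequality gives, for each $i$,
\begin{equation*}
d(F^i(\rho_2), F^i(\rho^\prime)) \leq d(F^i(\rho_2),\rho_i) + d(\rho_i, F^i(\rho^\prime)) \leq 2\varepsilon_0 + \delta_0 \leq \varepsilon_2,
\end{equation*}
with $\varepsilon_2 = 2\varepsilon_0 + \delta_0 + C\varepsilon_0$ as defined in the text (the extra $C\varepsilon_0$ giving room, and $\varepsilon_2$ being as small as we wish by choosing $\varepsilon_0$, then $\delta_0$, small). This is precisely the claimed estimate, so the lemma follows. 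The one point requiring a little care — really the only "obstacle" — is making sure the Shadowing Lemma is applied with constants independent of $n$ and of the particular word $\mathbf{q}$; this is standard for a compact hyperbolic set, but should be stated explicitly since the whole point of this enlarged definition is to handle words $\mathbf{q}$ of length $\sim |\log h|$ with $\mathcal{V}_\mathbf{q}^- \cap \mathcal{T} = \emptyset$.
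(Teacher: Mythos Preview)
Your proposal is correct and follows exactly the argument the paper gives in the paragraph ``Enlarged definition'' just before the lemma: form the $C\varepsilon_0$-pseudo-orbit $(\rho_{q_0},\dots,\rho_{q_{n-1}})$, apply the Shadowing Lemma to get $\rho'\in\mathcal{T}$, and then use the triangle inequality for an arbitrary $\rho_2\in\mathcal{V}_\mathbf{q}^-$. Your added remark about uniformity of the shadowing constants in $n$ is a worthwhile clarification but does not change the approach.
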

Fix any $\rho^\prime$ satisfying the conclusions of this lemma and we arbitrarily set  
\begin{equation}\label{def_jacobian_2}
J_\mathbf{q}^- = J^u_n(\rho^\prime)
\end{equation}
If $\rho^\prime_1$ is another point satisfying this conclusion, we have $d(F^i(\rho^\prime), F^i(\rho^\prime_1)) \leq 2 \varepsilon_2$ for $i \in \{0, \dots, n-1 \}$ and in virtue of Corollary (\ref{cor_control_jacobian}), 
$$ J^u_n(\rho^\prime) \sim J^u_n(\rho^\prime_1)$$
Hence, up to global multiplicative constants, the definition of this unstable Jacobian is independent of the choice of $\rho^\prime$. Notice that if $\mathcal{V}_\mathbf{q}^-  \cap \mathcal{T} \neq \emptyset$, any $\rho^\prime \in \mathcal{T} \cap \mathcal{V}_\mathbf{q}^-$ satisfies the conclusions of Lemma \ref{help_def_jacobian_2} and $J_\mathbf{q}^- \sim J^u_n(\rho^\prime)$.

To define $J_\mathbf{q}^+$, we can argue similarly and show that there exists $\rho^\prime$ satisfying $d(F^i(\rho^\prime), F^i(\rho)) \leq  \varepsilon_2$ for $i \in \{-n, \dots, -1 \}$ and $\rho \in \mathcal{V}_\mathbf{q}^+$. We can assume that this is the same $\varepsilon_2$ as before and we set $J_\mathbf{q}^+ = J^s_{-n}(\rho^\prime)$ for any $\rho^\prime$. 
\begin{figure}[!h]
\includegraphics[scale=15]{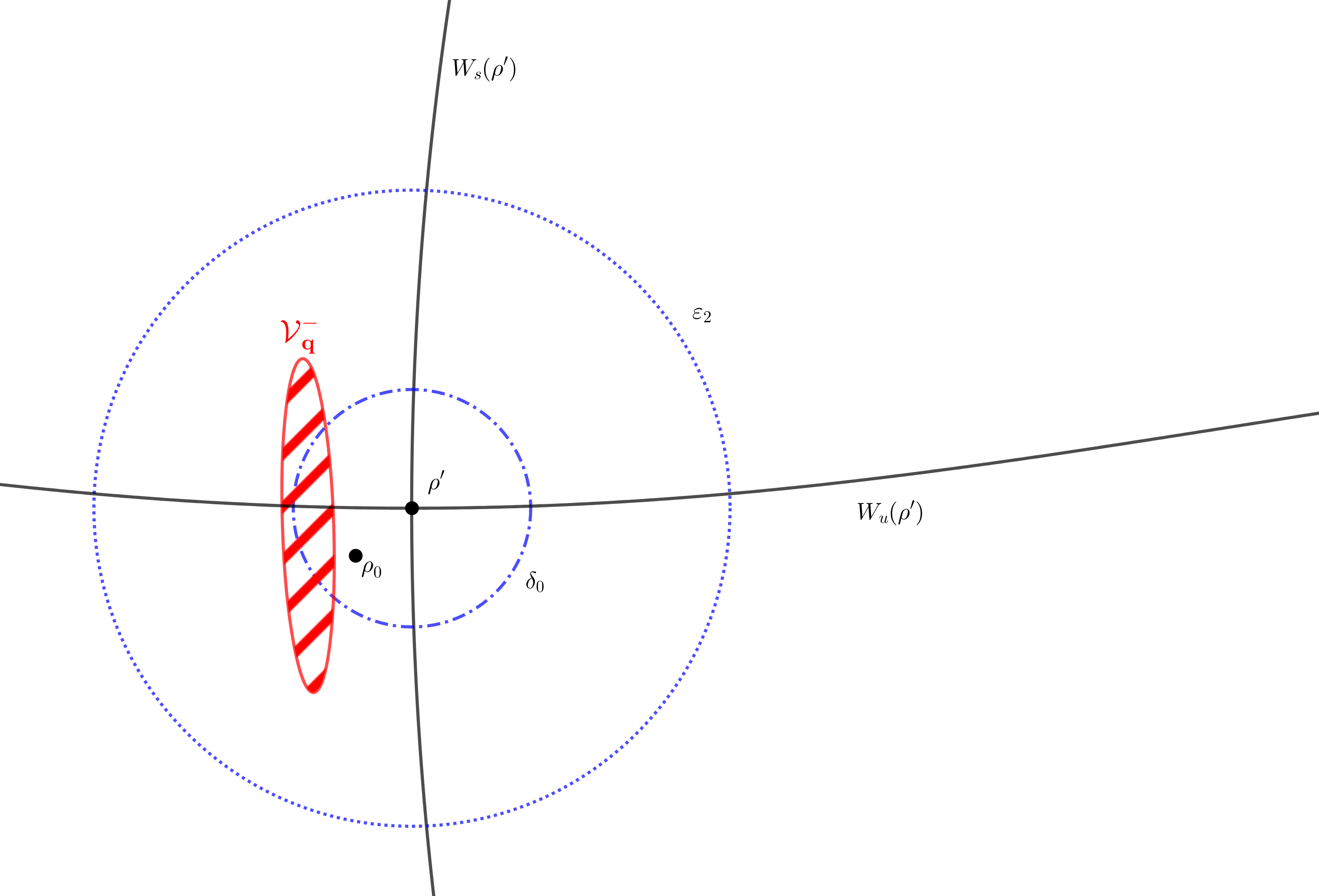}
\includegraphics[scale=15]{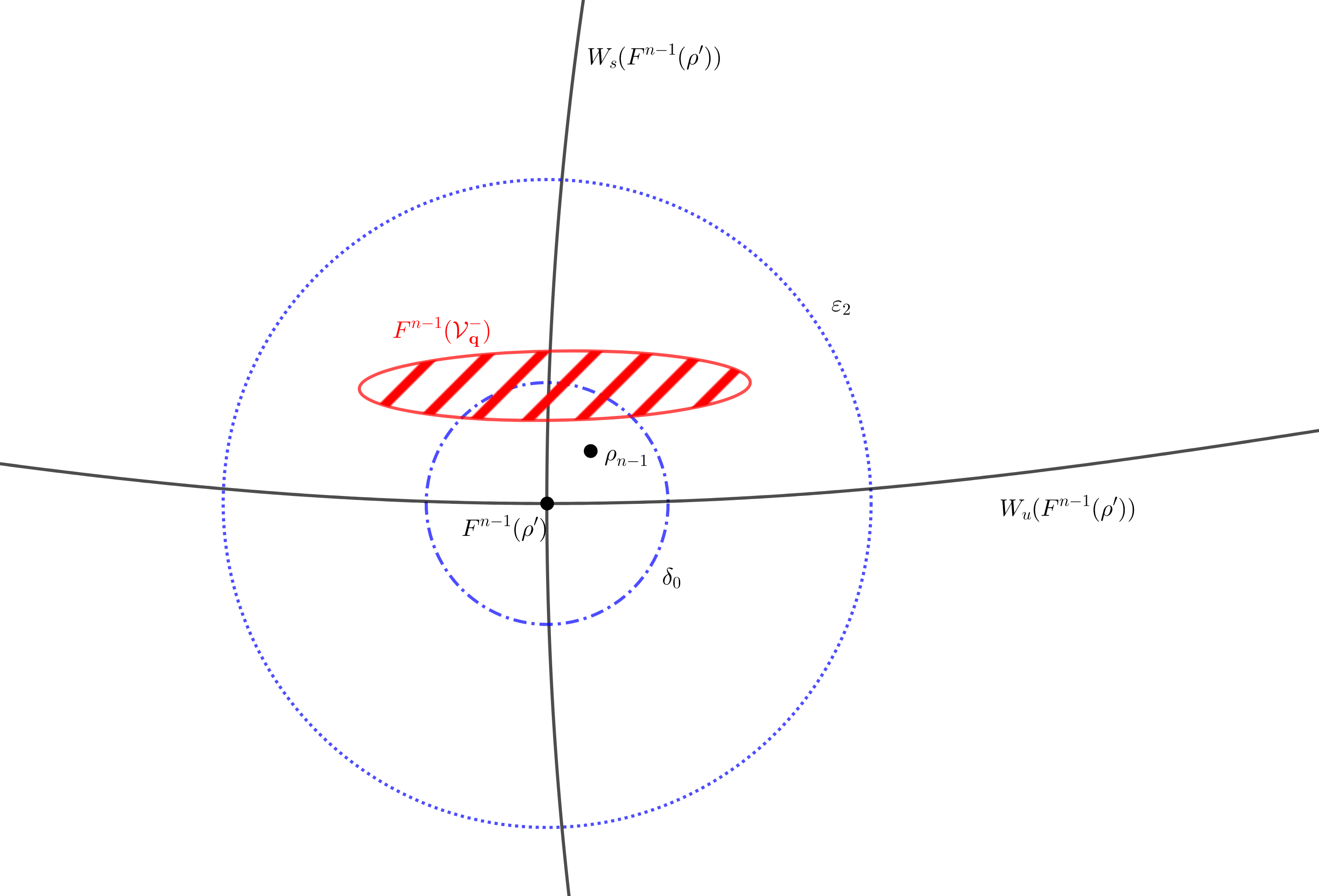}
\begin{center}
\caption{Evolution of the set $\mathcal{V}_\mathbf{q}^-$ (the red hatched set) at time $0$ and $n-1$. The points $\rho_i$, $F^i(\rho^\prime)$ are represented at these times, so as the balls $B(F^i(\rho^\prime), \varepsilon_2)$ and $B(F^i(\rho^\prime), \delta_0)$ (their boundaries are the blue dotted lines). We've also represented the stable (resp. unstable) manifold at $F^i(\rho^\prime)$ to show the directions in which $F$ contracts (resp. expands).}
\end{center}
\end{figure}

\paragraph{Behavior of the local Jacobian.}

 The following three lemmas are crucial to understand the behavior of the evolution of points in the sets $\mathcal{V}_{\mathbf{q}}^\pm$. The first one gives estimates to handle these quantities. 
  
\begin{lem}\label{manipulations_J_q}
Let $n \in \N$ and $\mathbf{q}, \mathbf{p}$ in $\mathcal{A}^n$. If $\varepsilon_0$ is chosen small enough, then the following holds 
\begin{enumerate}[label = \arabic*)]
\item $\mathcal{V}_{\mathbf{q}}^+ \neq  \emptyset \iff \mathcal{V}_{\mathbf{q}}^- \neq  \emptyset$ and in that case $J^-_\mathbf{q} \sim J^+_\mathbf{q}$. 
\item If two propagated neighborhoods intersects, the local Jacobians are comparable : 
\begin{equation}
\mathcal{V}_{\mathbf{q}}^\pm \cap \mathcal{V}_{\mathbf{p}}^\pm \neq \emptyset \implies J_{\mathbf{q}}^\pm \sim J_{\mathbf{p}}^\pm 
\end{equation}
\item If $\mathbf{q}$ can be written as the concatenation of $\mathbf{q_1}$ and $\mathbf{q_2}$ of lengths $n_1$ and $n_2$ such that $n_1 +n_2 = n$ and if $\mathcal{V}_\mathbf{q}^\pm \neq \emptyset$, then 
\begin{equation}\label{product_of_jacobian}
J_{\mathbf{q}}^\pm \sim J_{\mathbf{q_1}}^\pm  J_{\mathbf{q_2}}^\pm 
\end{equation}
\end{enumerate}
\end{lem}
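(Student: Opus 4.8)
The plan is to reduce all three assertions to two ingredients. The first is the well-posedness, up to the equivalence $\sim$, of the enlarged definition of $J^\pm_{\mathbf q}$: as established in the discussion following (\ref{def_jacobian_2}), whenever $\mathcal V^-_{\mathbf q}\neq\emptyset$ one has $J^-_{\mathbf q}\sim J^u_n(\rho')$ for \emph{every} $\rho'\in\mathcal T$ satisfying the conclusion of Lemma~\ref{help_def_jacobian_2} for $\mathbf q$ (in particular for any $\rho'\in\mathcal T\cap\mathcal V^-_{\mathbf q}$ when that set is nonempty), with constants depending only on $(U,F)$, and symmetrically $J^+_{\mathbf q}\sim J^s_{-n}(\rho')$ for any $\rho'\in\mathcal T$ which $\varepsilon_2$-shadows $\mathcal V^+_{\mathbf q}$ at times $\{-n,\dots,-1\}$. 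The second ingredient is the identity
\[
J^u_n(\rho)\sim J^s_{-n}(F^n(\rho)),\qquad J^s_{-n}(\rho)\sim J^u_n(F^{-n}(\rho))\qquad(\rho\in\mathcal T,\ n\in\N),
\]
which I would obtain from the fact that $F$ is a symplectomorphism and that $E_u,E_s$ are uniformly transverse along the compact set $\mathcal T$: writing $d_\rho F^n$ in bases of $E_u(\rho)$ and $E_s(\rho)$ normalized by the symplectic form, the matrix is diagonal with product of entries equal to $1$, one entry comparable to $J^u_n(\rho)$ and the other equal to $J^s_n(\rho)=1/J^s_{-n}(F^n(\rho))$ (the last equality being exact, from the chain rule $d_{F^n(\rho)}F^{-n}\circ d_\rho F^n=\Id$ on $E_s$).

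For assertion~1, the equivalence $\mathcal V^+_{\mathbf q}\neq\emptyset\iff\mathcal V^-_{\mathbf q}\neq\emptyset$ is immediate, since $F^n$ is well defined and injective on $\mathcal V^-_{\mathbf q}$ (each $F^i(\rho)\in\mathcal V_{q_i}\subset\widetilde D$ for $0\le i\le n-1$) and $\mathcal V^+_{\mathbf q}=F^n(\mathcal V^-_{\mathbf q})$. For the comparison of Jacobians, pick $\rho'\in\mathcal T$ realizing the conclusion of Lemma~\ref{help_def_jacobian_2} for $\mathbf q$ on the $-$ side. Then $F^n(\rho')\in\mathcal T$ realizes the conclusion on the $+$ side: for $\rho=F^n(\sigma)\in\mathcal V^+_{\mathbf q}$ with $\sigma\in\mathcal V^-_{\mathbf q}$ and $i\in\{-n,\dots,-1\}$ one has $F^i(\rho)=F^{n+i}(\sigma)$ and $F^i(F^n(\rho'))=F^{n+i}(\rho')$ with $n+i\in\{0,\dots,n-1\}$, whence $d(F^i(\rho),F^i(F^n(\rho')))\le\varepsilon_2$. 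Thus $J^+_{\mathbf q}\sim J^s_{-n}(F^n(\rho'))$ by the first ingredient, and $J^s_{-n}(F^n(\rho'))\sim J^u_n(\rho')\sim J^-_{\mathbf q}$ by the second.

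For assertion~2 (the $-$ side; the $+$ side is symmetric, with $F^{-1}$ in place of $F$), take $\rho_*\in\mathcal V^-_{\mathbf q}\cap\mathcal V^-_{\mathbf p}$ and shadowing points $\rho',\rho''\in\mathcal T$ for $\mathbf q$ and $\mathbf p$; applying the shadowing conclusion to $\rho_*$ on each word gives $d(F^i(\rho'),F^i(\rho''))\le 2\varepsilon_2$ for $0\le i\le n-1$, and since $F$ is Lipschitz on a neighbourhood of $\mathcal T$ this also bounds $d(F^n(\rho'),F^n(\rho''))$ by a fixed multiple of $\varepsilon_2$, hence by $\varepsilon_1$ once $\varepsilon_0$ (and so $\varepsilon_2$) is small enough, so Corollary~\ref{cor_control_jacobian} yields $J^u_n(\rho')\sim J^u_n(\rho'')$, i.e.\ $J^-_{\mathbf q}\sim J^-_{\mathbf p}$. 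For assertion~3, write $\mathbf q=\mathbf{q_1}\mathbf{q_2}$ with $\mathbf{q_1}=q_0\dots q_{n_1-1}$ and $\mathbf{q_2}=q_{n_1}\dots q_{n-1}$; from $\mathcal V^-_{\mathbf q}=\mathcal V^-_{\mathbf{q_1}}\cap F^{-n_1}(\mathcal V^-_{\mathbf{q_2}})\neq\emptyset$ both $\mathcal V^-_{\mathbf{q_1}},\mathcal V^-_{\mathbf{q_2}}$ are nonempty. Pick a shadowing point $\rho'\in\mathcal T$ for $\mathbf q$, which by the shadowing lemma construction may be chosen with $d(\rho_{q_i},F^i(\rho'))\le\delta_0$ for $0\le i\le n-1$; restricting to $0\le i\le n_1-1$ shows $\rho'$ is a valid shadowing point for $\mathbf{q_1}$, and shifting the time index by $n_1$ shows $F^{n_1}(\rho')\in\mathcal T$ is a valid shadowing point for $\mathbf{q_2}$. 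The chain rule gives $J^u_n(\rho')=J^u_{n_1}(\rho')\,J^u_{n_2}(F^{n_1}(\rho'))$, and the first ingredient turns this into $J^-_{\mathbf q}\sim J^-_{\mathbf{q_1}}J^-_{\mathbf{q_2}}$; the $+$ version follows from assertion~1.

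The only step that is not pure bookkeeping is the second ingredient, where the symplectic structure is used to trade the forward unstable Jacobian for the backward stable one; I expect this to be the genuinely substantive point. Everything else amounts to checking that a point which $\varepsilon_2$-shadows a word also shadows each of its sub-words under the appropriate restriction or shift of time (assertion~3) and pushes forward to a shadowing point of the corresponding $+$ word (assertion~1), and to choosing $\varepsilon_0$ small enough that every value of $\varepsilon_2$ occurring fits inside the thresholds $\varepsilon_1$ of Lemma~\ref{classical_hyperbolic}, Lemma~\ref{Local_hpyerbolic_2} and Corollary~\ref{cor_control_jacobian}.
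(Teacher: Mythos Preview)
Your proposal is correct and follows essentially the same route as the paper: both reduce everything to the well-posedness (up to $\sim$) of the enlarged Jacobian and to the symplectic identity $J^u_n(\rho)\sim J^s_{-n}(F^n(\rho))$, then verify that a shadowing point for $\mathbf q$ transfers to one for sub-words (part~3) and pushes forward under $F^n$ to one for the $+$ side (part~1). If anything you are slightly more careful than the paper in extending the orbit proximity from $i\le n-1$ to $i=n$ via a Lipschitz bound before invoking Corollary~\ref{cor_control_jacobian}.
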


\begin{nota}
The constants in $\sim$ are independent of $\rho$ and $n$. They depend on $F$ but also on the partition $(\mathcal{V}_q)_q$. In the following, we'll be lead to use constants with the same kind of dependence. These constants will be allowed to depend also on the partition of unity $(\chi_q)_q$ and on $M$. Constants with such dependence will be called \textbf{global} constants. 
\end{nota}

\begin{proof}
\begin{enumerate}[label = \arabic*)]
\item The equivalence is obvious. From the fact that $F$ is a volume-preserving canonical transformation, we have for some $C>0$, $$ \forall \rho \in \mathcal{T}, \forall n \in \N, C^{-1} \leq J^u_n(\rho) J^s_n(\rho) \leq C$$
and we write $J^u_n(\rho) \sim  J^s_n(\rho)^{-1}$. From $F^{-n} \circ F^n (\rho) = \rho$, we also get $J_n^s(\rho)^{-1} = J_{-n}^{s}(F^n(\rho) ) $. Eventually, if $\rho^\prime \in \mathcal{T}$ satisfies $d(F^i(\rho), F^i(\rho^\prime) \leq \varepsilon_2$ for $i \in \{0, \dots, n-1 \}$ and $\rho \in \mathcal{V}_\mathbf{q}^-$, $F^n(\rho^\prime) = \rho^+$ satisfies $d(F^i(\rho), F^i(\rho^+)) \leq \varepsilon_2$ for $i \in \{-n, \dots, -1 \}$ and $\rho \in \mathcal{V}_\mathbf{q}^+$. Hence $$J_\mathbf{q}^+ \sim J^s_{-n}(\rho^+ ) \sim J^u_n(\rho^\prime) \sim J_\mathbf{q}^-$$
Thanks to this first point, it is enough to show the remaining point only for $-$. 
\item Pick $\rho_\mathbf{q} \in \mathcal{T}$ (resp. $\rho_\mathbf{p}$) satisying the conclusions of lemma \ref{help_def_jacobian_2} for $\mathcal{V}_{\mathbf{q}}^-$ (resp.  $\mathcal{V}_{\mathbf{p}}^-$).\\ $d(F^i (\rho_\mathbf{q}), F^i(\rho_\mathbf{p})) \leq 2 \varepsilon_2$ and hence, in virtue of Corollary \ref{cor_control_jacobian},  $ J^u_n(\rho_\mathbf{q}) \sim J^u_n(\rho_\mathbf{p})$. This gives 2). 
\item Pick $\rho \in \mathcal{T}$ satisfying the conclusions of lemma \ref{help_def_jacobian_2} for  $\mathcal{V}_{\mathbf{q}}^-$. \\By the chain rule, $J^u_n(\rho) = J^u_{n_2}\left(F^{n_1} (\rho) \right)J^u_{n_1} (\rho)$. Remark that $$\mathcal{V}_{\mathbf{q}}^- = \mathcal{V}_{\mathbf{q_1}}^- \cap F^{-n_1} \left(  \mathcal{V}_{\mathbf{q_2}}^- \right)   $$ Hence, $\rho$ satisfies the conclusions of Lemma \ref{help_def_jacobian_2} for $\mathbf{q}_1$ with $\varepsilon_2$ and the same is true for $F^{n_1}(\rho)$ and $\mathbf{q_2}$. It follows that $J_\mathbf{q_1}^- \sim J^u_{n_1} (\rho) $ and $J_\mathbf{q_2}^- \sim J^u_{n_2} (F^{n_1}(\rho) ) $.
This gives 3). 
\end{enumerate}
\end{proof}

\begin{rem}
The first point of the previous lemma shows that we could consider only one of the two quantities. Nevertheless, we prefer keeping trace of it. The reason is that \textit{a priori} $J^+$ and $J^-$ support two different kind of information : $J^+_\mathbf{q}$ controls the growth of $F^n$ whereas $J^-_\mathbf{q}$ controls the growth of $F^{-n}$. The fact that the two dynamics (in the past and in the future) have similar behaviors is a consequence of the fact that $F$ is volume-preserving. 
\end{rem}

The next lemmas relate the local Jacobian to the expansion rates of the flow in the $\mathcal{V}_\mathbf{q}^\pm$. It will be important in our semiclassical study of operators microlocally supported in $\mathcal{V}_\mathbf{q}^\pm$.

\begin{lem}\textbf{Control of expansion rate by unstable Jacobian}. If $\varepsilon_0$ is small enough, there exists a global constant $C>0$ satisfying the following inequalities. \label{lemma_control_local_expansion_rate}\\
For every $n \in \N^*$ and $\mathbf{q} \in \mathcal{A}^n$ such that $\mathcal{V}_\mathbf{q}^-  \neq \emptyset$ we have : 
\begin{align}
\sup_{ \rho \in \mathcal{V}_\mathbf{q}^-} || d_\rho F^n ||\leq C J_{\mathbf{q}}^- \\
\sup_{ \rho \in \mathcal{V}_\mathbf{q}^+} || d_\rho F^{-n}  || \leq C J_{\mathbf{q}}^+
\end{align}
\end{lem}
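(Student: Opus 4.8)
The plan is to combine the shadowing estimate of Lemma~\ref{help_def_jacobian_2} with the bound (\ref{control_of_jacobian}) of Lemma~\ref{Local_hpyerbolic_2}. First I would fix once and for all a relatively compact neighborhood of $\mathcal{T}$ in $U$ on which $F$ is Lipschitz with some constant $L$, and then shrink $\varepsilon_0$ so that the parameter $\varepsilon_2$ produced in Lemma~\ref{help_def_jacobian_2} satisfies $L\varepsilon_2 \le \varepsilon_1$, where $\varepsilon_1$ is the constant of Lemma~\ref{Local_hpyerbolic_2}; this is legitimate because $\varepsilon_2$ can be made arbitrarily small, as observed just after Lemma~\ref{help_def_jacobian_2}.

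For the first inequality, let $n \in \N^*$, $\mathbf{q} \in \mathcal{A}^n$ with $\mathcal{V}_\mathbf{q}^- \neq \emptyset$, and let $\rho' \in \mathcal{T}$ be the point furnished by Lemma~\ref{help_def_jacobian_2}, so that $d(F^i(\rho),F^i(\rho')) \le \varepsilon_2$ for all $i \in \{0,\dots,n-1\}$ and all $\rho \in \mathcal{V}_\mathbf{q}^-$, while $J_\mathbf{q}^- \sim J^u_n(\rho')$ by the definition (\ref{def_jacobian_2}). Fixing $\rho \in \mathcal{V}_\mathbf{q}^-$, the only index missing from the hypothesis of part~(1) of Lemma~\ref{Local_hpyerbolic_2} is $i=n$, and I would recover it from the Lipschitz bound $d(F^n(\rho),F^n(\rho')) \le L\,d(F^{n-1}(\rho),F^{n-1}(\rho')) \le L\varepsilon_2 \le \varepsilon_1$. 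Then (\ref{control_of_jacobian}) gives $\|d_\rho F^n\| \le C\,J^u_n(\rho') \le C'\,J_\mathbf{q}^-$ with $C'$ a global constant, and taking the supremum over $\rho \in \mathcal{V}_\mathbf{q}^-$ finishes this case.

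The second inequality I would treat in the same way with time reversed, invoking part~(2) of Lemma~\ref{Local_hpyerbolic_2} together with the $+$ analogue of Lemma~\ref{help_def_jacobian_2}: pick $\rho' \in \mathcal{T}$ with $d(F^{-i}(\rho),F^{-i}(\rho')) \le \varepsilon_2$ for $i \in \{1,\dots,n\}$ and $\rho \in \mathcal{V}_\mathbf{q}^+$, recover the index $i=0$ from $d(\rho,\rho') = d\big(F(F^{-1}\rho),F(F^{-1}\rho')\big) \le L\varepsilon_2 \le \varepsilon_1$, and conclude $\|d_\rho F^{-n}\| \le C\,J^s_{-n}(\rho') \sim C\,J_\mathbf{q}^+$. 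Alternatively, since $F$ is an area-preserving symplectomorphism of a two-dimensional phase space, one can simply write $\rho = F^n(\tilde\rho)$ with $\tilde\rho \in \mathcal{V}_\mathbf{q}^-$ and use $\|d_\rho F^{-n}\| = \|(d_{\tilde\rho}F^n)^{-1}\| \sim \|d_{\tilde\rho}F^n\| \le C\,J_\mathbf{q}^- \sim J_\mathbf{q}^+$, the last equivalence being Lemma~\ref{manipulations_J_q}(1). The only genuinely delicate point is this off-by-one in the index range — the sets $\mathcal{V}_\mathbf{q}^\pm$ control the orbit only at $n$ consecutive times whereas (\ref{control_of_jacobian}) asks for $n+1$ — and it is handled harmlessly by the single extra Lipschitz step once $\varepsilon_0$ is small; everything else is a direct application of the lemmas already established.
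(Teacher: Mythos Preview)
Your proposal is correct and follows essentially the same route as the paper: invoke the shadowing point $\rho'$ from Lemma~\ref{help_def_jacobian_2} and feed it into the bound (\ref{control_of_jacobian}) of Lemma~\ref{Local_hpyerbolic_2}, then identify $J^u_n(\rho')$ with $J_\mathbf{q}^-$. You are in fact more careful than the paper's one-line proof, which glosses over the off-by-one in the index range; your Lipschitz step (or, equivalently, applying (\ref{control_of_jacobian}) with $n-1$ and absorbing one extra factor of $\|dF\|$) is exactly the right fix, and your alternative for the second inequality via $\|(d_{\tilde\rho}F^n)^{-1}\| = \|d_{\tilde\rho}F^n\|$ in the area-preserving $2$D setting is a nice shortcut.
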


\begin{proof}
This is a consequence of (\ref{control_of_jacobian}). Indeed, if $\mathcal{V}_\mathbf{q}^- \neq \emptyset$ and if $\rho^\prime \in \mathcal{T}$ satisfies the conclusions of lemma \ref{help_def_jacobian_2}, for every $\rho \in \mathcal{V}_\mathbf{q}^-$, $||d_\rho F^n  || \leq C J^u_n(\rho)$ with $C$ a global constant depending only on $\varepsilon_2$. 
\end{proof}

This third lemma emphasizes that $\mathcal{V}_\mathbf{q}^-$ lies in a small neighborhood  of a stable manifold and $\mathcal{V}_\mathbf{q}^+$ lies in a small neighborhood  of an unstable manifold, with the size of this neighborhood controlled by the local Jacobian.  It is a direct consequence of Lemma \ref{Local_hpyerbolic_2}. 
\begin{lem} \label{Localization_V_q} \textbf{Localization of the} $\mathcal{V}_\mathbf{q}^\pm$.
There exists a global constant $C>0$ such that for all $n \in \N$ and $\mathbf{q} \in \mathcal{A}^n$, 
\begin{enumerate}[label= (\arabic*)]
\item If $\mathcal{V}_\mathbf{q}^- \neq \emptyset$ and if $\rho^\prime \in \mathcal{T}$ satisfies the conclusion of lemma \ref{help_def_jacobian_2}, then, for all $\rho \in \mathcal{V}_\mathbf{q}^-$, 
\begin{equation}
d\left( \rho, W_s(\rho^\prime) \right) \leq  \frac{C}{J_\mathbf{q}^-} 
\end{equation} 
\item If $\mathcal{V}_\mathbf{q}^+ \neq \emptyset$ and if $\rho^\prime \in \mathcal{T}$ satisfies the conclusion of lemma \ref{help_def_jacobian_2} in the future (namely, $d(F^i(\rho), F^i(\rho^\prime) )\leq \varepsilon_2$ for all $\rho \in \mathcal{V}_\mathbf{q}^+$ and $i \in \{-n, \dots, -1 \}$), then for all $\rho \in \mathcal{V}_\mathbf{q}^+$, 
\begin{equation}
d\left( \rho, W_u(\rho^\prime) \right) \leq  \frac{C}{J_\mathbf{q}^+} 
\end{equation} 
\end{enumerate}
\end{lem}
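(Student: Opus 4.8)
The plan is to deduce both statements directly from Lemma~\ref{Local_hpyerbolic_2}, the only subtlety being a mismatch in the range of time indices which is absorbed into the choice of $\varepsilon_0$ (hence of $\varepsilon_2$). We prove (1); statement (2) is obtained by the same argument applied to $F^{-1}$ in place of $F$, using part (2) of Lemma~\ref{Local_hpyerbolic_2} and the definition $J_\mathbf{q}^+ = J^s_{-n}(\rho')$.

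Assume $\mathcal{V}_\mathbf{q}^- \neq \emptyset$, let $\rho' \in \mathcal{T}$ be as in Lemma~\ref{help_def_jacobian_2}, and fix an arbitrary $\rho \in \mathcal{V}_\mathbf{q}^-$. By that lemma, $d(F^i(\rho), F^i(\rho')) \leq \varepsilon_2$ for all $i \in \{0, \dots, n-1\}$. Let $L \geq 1$ be a Lipschitz constant for $F$ on a fixed relatively compact neighborhood of $\mathcal{T}$; all the orbit segments $F^i(\rho), F^i(\rho')$ with $0 \le i \le n$ stay in such a neighborhood once $\varepsilon_0$ is small (for $0\le i\le n-1$ because $F^i(\rho)\in \mathcal{V}_{q_i}$ and $F^i(\rho')\in\mathcal{T}$, and for $i=n$ because $d(F^n(\rho),F^n(\rho'))\le L d(F^{n-1}(\rho),F^{n-1}(\rho')) \le L\varepsilon_2$ with $F^n(\rho')\in\mathcal{T}$). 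Hence $d(F^i(\rho), F^i(\rho')) \leq L\varepsilon_2$ for every $i \in \{0,\dots,n\}$. Recalling that $\varepsilon_2 = (2+C)\varepsilon_0 + \delta_0$ can be made arbitrarily small by choosing $\varepsilon_0, \delta_0$ small, we may arrange $L\varepsilon_2 \leq \varepsilon_1$, where $\varepsilon_1$ is the constant in Lemma~\ref{Local_hpyerbolic_2}.

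We can therefore apply Lemma~\ref{Local_hpyerbolic_2}(1) with base point $\rho' \in \mathcal{T}$ and comparison point $\rho$: by~(\ref{close_to_leaves}),
\[ d\big(\rho, W_s(\rho')\big) \leq \frac{C}{J^u_n(\rho')}. \]
Finally, as noted right after the enlarged definition~(\ref{def_jacobian_2}) — this being a consequence of Corollary~\ref{cor_control_jacobian} applied to any two points satisfying the conclusion of Lemma~\ref{help_def_jacobian_2}, whose pairwise orbit distances stay $\leq 2\varepsilon_2 \le \varepsilon_1$ — one has $J^u_n(\rho') \sim J_\mathbf{q}^-$. Substituting this into the previous inequality and renaming $C$ as a global constant yields $d(\rho, W_s(\rho')) \leq C / J_\mathbf{q}^-$ for all $\rho \in \mathcal{V}_\mathbf{q}^-$, which is exactly (1).

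The only (very mild) obstacle is the index bookkeeping described above: Lemma~\ref{Local_hpyerbolic_2} requires the orbit to stay $\varepsilon_1$-close up to time $n$, whereas membership in $\mathcal{V}_\mathbf{q}^-$ only provides this up to time $n-1$; this gap is closed by a single application of the Lipschitz bound for $F$ together with the freedom to shrink $\varepsilon_0$. Everything else is a direct citation of Lemma~\ref{Local_hpyerbolic_2}, Lemma~\ref{help_def_jacobian_2}, and Corollary~\ref{cor_control_jacobian}.
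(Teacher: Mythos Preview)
Your proof is correct and follows exactly the approach indicated in the paper, which simply states that the lemma is a direct consequence of Lemma~\ref{Local_hpyerbolic_2}. You have filled in the details carefully, including the minor index-shift issue (closeness at times $0,\dots,n-1$ versus $0,\dots,n$), which you handle cleanly via the Lipschitz bound for $F$; an equally valid shortcut would be to apply Lemma~\ref{Local_hpyerbolic_2} with $n-1$ in place of $n$ and absorb the factor $J^u_1$ into the constant.
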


\subsection{Propagation up to local Ehrenfest time}\label{section_propagation_Ehrenfest}
In this section, we show that under some control of the local Jacobian defined above, one can handle the operators $U_\mathbf{q}$ and prove the existence of symbols $a^\pm_{\mathbf{q}}$ (in exotic classes $S_\delta$) such that 
\begin{align}
U_\mathbf{q} = \op\left( a_\mathbf{q}^+ \right) T^{|\textbf{q}|} + \hinf\\
U_\mathbf{q} = T^{|\textbf{q}|} \op\left( a_\mathbf{q}^- \right) + \hinf
\end{align}
with symbols $a_\mathbf{q}^\pm$ supported in $\mathcal{V}_\mathbf{q}^\pm$. 
We recall that $U_\mathbf{q} = MA_{q_{n-1}}\dots MA_{q_0}$ with $M = T \op(\alpha)$. Let us state the precise statement we will prove.

\begin{prop}\label{prop_symbols_a_q}
Fix $0 < \delta<\delta_1 < \frac{1}{2}$ and $C_0>0$. 
\begin{enumerate}[label = (\arabic*)]
\item For every $n \in \N$ and for all $\mathbf{q} \in \mathcal{A}^n$ satisfying 

\begin{equation} \label{Control_local_jacobian}
J_\mathbf{q}^+ \leq C_0 h^{- \delta}
\end{equation}
there exists $ a_\mathbf{q}^+ \in ||\alpha||_\infty^{n} S_{\delta_1}^{comp}$ such that 
\begin{equation}
U_\mathbf{q} = \op\left( a_\mathbf{q}^+ \right) T^{n} + \hinf_{L^2 \to L^2}
\end{equation}
and \begin{equation}
\supp  a_\mathbf{q}^+ \subset \mathcal{V}_\mathbf{q}^+
\end{equation}

\item For every $n \in \N$ and for all $\mathbf{q} \in \mathcal{A}^n$ satisfying 

\begin{equation} \label{Control_local_jacobian_-}
J_\mathbf{q}^- \leq C_0 h^{- \delta}
\end{equation}
there exists $ a_\mathbf{q}^- \in ||\alpha||_\infty^{n}S_{\delta_1}^{comp}$ such that 
\begin{equation}
U_\mathbf{q} = T^{n} \op\left( a_\mathbf{q}^- \right) + \hinf_{L^2 \to L^2}
\end{equation}
 \begin{equation}
\supp  a_\mathbf{q}^- \subset \mathcal{V}_\mathbf{q}^-
\end{equation}
\end{enumerate}
\end{prop}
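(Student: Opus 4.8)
The plan is to prove statement (1); statement (2) follows by the same argument applied to $F^{-1}$ (equivalently, by taking adjoints and using that $T^*$ is a Fourier integral operator associated with $F^{-1}$, microlocally unitary near $\mathcal{T}$). The idea is to push each cut-off $A_{q_i}=\op(\chi_{q_i})$ through the intertwining operators $T$ (and $\op(\alpha)$) all the way to the left. Concretely, write $U_\mathbf{q}=MA_{q_{n-1}}\cdots MA_{q_0}$ with $M=T\op(\alpha)$, and use the precise Egorov lemma (Lemma \ref{lem_precise_Egorov}) together with the composition rules for the Moyal product (Lemma \ref{Moyal_produc_op}) to obtain, for each index $k$,
$$ T\op(b) = \op\!\left( b\circ F^{-1} + \sum_{l\ge 1} h^l (D_{l+1}b)\circ F^{-1}\right) T + O\!\left(h^N\|b\|_{C^{2N+15}}\right), $$
valid microlocally near the relevant ball, and similarly for $\op(\alpha)\op(b)=\op(\alpha\# b)$. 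Iterating this from $q_0$ up to $q_{n-1}$ collapses $U_\mathbf{q}$ into $\op(a_\mathbf{q}^+)T^n+\hinf$, where the principal part of $a_\mathbf{q}^+$ is $\alpha(\rho)\,\chi_{q_0}(F^{-n}\rho)\cdots\chi_{q_{n-1}}(F^{-1}\rho)\,\alpha(F^{-1}\rho)\cdots$ — a product of $n$ copies of $\alpha$ composed with backward iterates, times the transported cut-offs — so the $\|\alpha\|_\infty^n$ factor is manifest and $\supp a_\mathbf{q}^+\subset\bigcap_{i=0}^{n-1}F^{n-i}(\mathcal{V}_{q_i})=\mathcal{V}_\mathbf{q}^+$ by construction. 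The full symbol is an asymptotic sum of the principal term plus lower-order terms coming from the $D_{l+1}$'s and the Moyal corrections.

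\textbf{Symbol class control.} The key point — and the main obstacle — is to verify that $a_\mathbf{q}^+\in\|\alpha\|_\infty^n S_{\delta_1}^{comp}$, i.e. that differentiating $a_\mathbf{q}^+$ costs at most $h^{-\delta_1}$ per derivative, uniformly in $\mathbf{q}$ and $h$. The dangerous factors are the transported cut-offs $\chi_{q_i}\circ F^{-(n-i)}$: a derivative of $\chi_{q_i}\circ F^{-(n-i)}$ at $\rho$ brings down $\|d_\rho F^{-(n-i)}\|$, which could a priori be as large as $e^{\lambda_1 (n-i)}$. However, $a_\mathbf{q}^+$ is supported in $\mathcal{V}_\mathbf{q}^+$, and on $\mathcal{V}_\mathbf{q}^+$ Lemma \ref{lemma_control_local_expansion_rate} gives $\sup_{\rho\in\mathcal{V}_\mathbf{q}^+}\|d_\rho F^{-n}\|\le C J_\mathbf{q}^+$, while the intermediate derivatives are controlled by the same lemma applied to the sub-words (or by the chain rule and Lemma \ref{manipulations_J_q}). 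Hence each derivative of the full symbol costs at most a global constant times $J_\mathbf{q}^+\le C_0 h^{-\delta}$; higher derivatives cost powers of $J_\mathbf{q}^+$ plus lower-order terms, all bounded by $h^{-\delta|\alpha|}$ up to a multiplicative constant depending on $|\alpha|$. Since $\delta<\delta_1$, these are acceptable $S_{\delta_1}$ bounds. One must also check that $\alpha\in S_{0^+}$ composed with $F^{-j}$ for $j\le n$ stays in $S_{\delta_1}$ on $\mathcal{V}_\mathbf{q}^+$: the same Jacobian bound applies, and the $S_{0^+}$ seminorms of $\alpha$ contribute only a factor $h^{-\varepsilon|\alpha|}$, absorbed since $\delta_1>\delta+0^+$ after adjusting.

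\textbf{Controlling the remainders.} It remains to check that the error terms accumulated in the iteration are $\hinf$. At step $k$ of the iteration the remainder from Lemma \ref{lem_precise_Egorov} is $O(h^N\|b_k\|_{C^{2N+15}})$ where $b_k$ is the symbol obtained so far; by the symbol-class estimate just discussed, $\|b_k\|_{C^{2N+15}}\le C_{N}\,h^{-\delta_1(2N+15)}\|\alpha\|_\infty^k$ uniformly in $\mathbf{q}$, so choosing $N$ large (depending on $\delta_1$ and the desired power) makes each single error $O(h^{N(1-2\delta_1)-15\delta_1})\|\alpha\|_\infty^k$, which can be made $O(h^K)$ for any $K$. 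The number of steps is $n=O(|\log h|)$ and the operators $M,T,\op(\alpha)$ are all $O(h^{-\mathrm{const}})$, so by the "important remark" in Section \ref{Numerology} on products of length $O(\log h)$, the accumulated error is still $\hinf$ (take $N$ slightly larger to kill the polynomial and logarithmic factors). This proves (1). For (2), apply (1) to the Fourier integral operator $T^*$, which is associated with $F^{-1}$ and microlocally unitary near $\mathcal{T}$, noting $U_\mathbf{q}^* = A_{q_0}M^*\cdots A_{q_{n-1}}M^*$ and $M^*=\op(\bar\alpha)T^*$, then take adjoints of the resulting identity $U_\mathbf{q}^*=\op(c)T^{*n}+\hinf$ to get $U_\mathbf{q}=T^n\op(\bar c)+\hinf$ with $\bar c\in\|\alpha\|_\infty^n S_{\delta_1}^{comp}$ supported in $\mathcal{V}_\mathbf{q}^-=\bigcap_{i=0}^{n-1}F^{-i}(\mathcal{V}_{q_i})$, the image of $\supp c$ under $F^{-n}$. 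The main obstacle throughout is bookkeeping the exotic symbol estimates uniformly in the word $\mathbf{q}$ under only the one-sided Jacobian hypothesis \eqref{Control_local_jacobian}, which is exactly where Lemmas \ref{lemma_control_local_expansion_rate}, \ref{manipulations_J_q} and the adapted metric of Section \ref{section_hyperbolic} do the work.
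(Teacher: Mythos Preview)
Your strategy coincides with the paper's: iterate the precise Egorov expansion (Lemma~\ref{lem_precise_Egorov}) combined with the Moyal product to collapse $U_\mathbf{q}$ into $\op(a_\mathbf{q}^+)T^n$ plus a remainder, with Lemma~\ref{lemma_control_local_expansion_rate} supplying the Jacobian bound on $\mathcal{V}_\mathbf{q}^+$. Two points deserve more care.

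\emph{Symbol estimates.} Your claim that ``each derivative of the full symbol costs at most a global constant times $J_\mathbf{q}^+$'' is not quite right and hides the main technical work. The principal symbol is a product of $n$ transported factors, so one derivative already yields $n$ terms; moreover the higher Egorov corrections are differential operators $L_{k,q}$ of order $2k$, which compound this. The paper proves the sharp bound $\|a_r^{(k)}\|_{C^m}\le C(k,m)\,r^{\Gamma_{k,m}}\,(J^+_{q_0\cdots q_{r-1}})^{2k+m}\,\|\alpha\|_\infty^r$ (Proposition~\ref{control_of_the_symbols}) by organising all derivatives $d^m a_r^{(k)}$ into a vector and analysing the recursion as multiplication by a lower-triangular matrix whose diagonal blocks are $(d F^{-1})^{\otimes m}$; the combinatorics of the off-diagonal ``descent paths'' in the index set $\{(k,m):2k+m\le N_0\}$ produces the polynomial prefactor $r^{\Gamma_{k,m}}$ with $r=O(|\log h|)$. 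That prefactor is precisely why one needs $\delta_1>\delta$ strictly, a point your sketch does not explain. The full symbol is then assembled by Borel summation in the effective parameter $h^{1-2\delta_1}$.

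\emph{Reduction for (2).} Your adjoint argument does not work as written: $U_\mathbf{q}^*=A_{q_0}M^*A_{q_1}M^*\cdots A_{q_{n-1}}M^*$ with $M^*=\op(\bar\alpha)T^*$ is not of the form $\tilde M\tilde A_{p_{n-1}}\cdots\tilde M\tilde A_{p_0}$ for any $\tilde M=\tilde T\op(\tilde\alpha)$, so part~(1) does not apply to it directly. The paper simply redoes the iteration pushing symbols to the right via the reverse Egorov identity $\op(b)T=T\op(b\circ F+\cdots)+O(h^N\|b\|_{C^{2N+15}})$; this is what ``the same techniques'' means there, and the support then lands in $\mathcal{V}_\mathbf{q}^-$ with the Jacobian control supplied by $J_\mathbf{q}^-$.
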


\begin{rem}\text{}
\begin{itemize}[nosep]
\item The implied constants appearing in the $\hinf$ are quasi-global : they have the same dependence as global constants but depend also on $C_0, \delta, \delta_1$.   What is important is that they are independent of $n$ and $\mathbf{q}$ as soon as the assumption (\ref{Control_local_jacobian}) is satisfied. 
\item (\ref{Control_local_jacobian}) implies that $ \mathcal{V}_\mathbf{q}^+ \neq \emptyset$. In particular, if $\mathbf{q}$ satisfies this assumption, there exists a sequence $(i_0, \dots, i_{n})$ such that for all $p \in \{ 0 , \dots, n-1\}, 
\mathcal{V}_{q_p} \subset \widetilde{D}_{i_{p+1}, i_p} \subset U_{i_p}$
\item In fact, $\supp  a_\mathbf{q}^+ \subset F \left(  \mathcal{V}_{q_{n-1}} \right) \subset U_{i_n}$. Hence, the operator $\op\left( a_\mathbf{q}^+\right)$ acting on $\bigoplus_{i=1}^J L^2 (\R)$ is the diagonal matrix $\Diag( 0, \dots,\op\left( a_\mathbf{q}^+ \right) , \dots, 0) $. 
\item The symbol $a_\mathbf{q}^+$ has an asymptotic expansion in power of $h$. The principal symbol is given by 

\begin{equation}\label{principal_symbol_aq+}
\left( a_\mathbf{q}^+ \right)_0 = \prod_{p=1}^{n} a_{q_{n-p}} \circ F^{-p} 
\end{equation} 
where $a_q = \chi_q \times \alpha$. 
Note that if the functions $a_{q_{n-p}} \circ F^{-p} $ are not necessarily well defined, the product is well defined thanks to the assumptions on the supports of $\chi_q$, namely $\supp \chi_q \Subset \mathcal{V}_q$. Indeed, such a symbol can be constructed inductively as the $n$-th term $b_n$ of the sequence of functions $b_1 = a_{q_0} \circ F^{-1} $ and $b_{i+1}$ is obtained from $a_i$ by the following 

$$ b_{i+1} = \left( a_{q_i} \times a_i \right) \circ F^{-1} $$
If we assume that $\supp b_{i} \Subset \mathcal{V}_{ q_0\dots q_{i-1}}^+$, then $\supp ( a_{q_i} \times b_{i}) \Subset F^{-1} \left(\mathcal{V}_{ q_0 \dots q_{i}}^+ \right) $. This property allows us to define $b_{i+1}$ and $\supp b_{i+1} \Subset \mathcal{V}_{ q_0 \dots q_{i}}^+ $.
\item The same hols for $a_{\mathbf{q}}^-$ with principal symbol 
\begin{equation}\label{principal_symbol_aq-}
\left( a_\mathbf{q}^- \right)_0 = \prod_{p=0}^{n-1} a_{q_p} \circ F^p
\end{equation}
\item Our proof follows the sketch of proof of \cite{NDJ19} (Section 5) and \cite{Ri10} (Section 7). 
\end{itemize}
\end{rem}

In the end of this section, we focus on proving this proposition. We only prove the first point. The second point can be proved similarly by using the same techniques. 

\subsubsection{Iterative construction of the symbols}
 Let us start by a lemma combining the precise versions of the expansion of the Moyal product (Lemma \ref{Moyal_produc_op}) and of Egorov theorem (Proposition \ref{prop_Egorov}). This lemma is the key ingredient for the iterative formulas below.
 
 \begin{lem}
Let $q \in \mathcal{A}$ and let $a \in S_{\delta_1}^{comp}$ such that $ \supp a \Subset U_j$ for some $j \in \{1, \dots, J \}$. 
 Then, there exists a family of differential operators $L_{k,q}$ of order $2k$, with smooth coefficients compactly supported in $\mathcal{V}_q$, such that for every $N \in \N$, we have the following expansion 
\begin{equation} \label{Egorov+moyal}
M A_q \op(a) = \op \left( \sum_{k=0}^{N-1} h^k( L_{k,q} a ) \circ F^{-1}\right) T + O \left(||a||_{C^{2N + 15}} h^N \right)_{L^2 \to L^2}
\end{equation}
Moreover, one has $L_{0,q} = \chi_q \times \alpha\coloneqq a_q $.
 \end{lem}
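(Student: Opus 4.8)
The plan is to combine the precise Egorov theorem (Proposition~\ref{prop_Egorov}, or rather its microlocal version Lemma~\ref{lem_precise_Egorov}) with the product formula for pseudodifferential operators (Lemma~\ref{Moyal_produc_op}). The point is that $MA_q = T\op(\alpha)\op(\chi_q)$, and since $T$ is microlocally unitary near $\mathcal{V}_q$ (in fact near $B(\rho_q,4\varepsilon_0)$, by the choice made in the partition of unity), we are in a position to commute pseudodifferential operators through $T$ at the cost of applying a diffeomorphism to their symbols.

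First I would reduce to a single block: since $\op(a)$ is supported in $\widetilde D_{ij}$-type sets and $\op(\chi_q)$ in $\mathcal{V}_q\subset U_{j_q}$, the only nonzero contribution to $MA_q\op(a)$ comes from the component $T_{i_{p+1}j_q}$ of $T$ carrying $\mathcal{V}_q$ to $F(\mathcal{V}_q)$, so we may work with a scalar $T\in I_\delta(\R\times\R,F'_{ij})$ microlocally unitary on $F(B(\rho_q,4\varepsilon_0))\times B(\rho_q,4\varepsilon_0)$. Next, compose the two pseudodifferential operators: by Lemma~\ref{Moyal_produc_op},
$$\op(\alpha)\op(\chi_q)\op(a) = \op\!\left(\sum_{k=0}^{N-1}\frac{i^kh^k}{k!}A(D)^k(\alpha\otimes\chi_q\otimes a)\Big|_{\rho=\rho_1=\rho_2}\right) + O(h^N\|a\|_{C^{2N+C}})_{L^2\to L^2},$$
where the $k$-th term is a differential operator of order $\le 2k$ applied to $a$ (the extra derivatives hitting $\alpha$ and $\chi_q$ only contribute bounded coefficients), with all coefficients supported in $\mathcal{V}_q$; call this operator $\widetilde L_{k,q}a$, so $\widetilde L_{0,q}=\chi_q\alpha=a_q$. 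Then apply Lemma~\ref{lem_precise_Egorov} to each $T\op(\widetilde L_{k,q}a)$: this produces
$$T\op(b)=\op\!\left(b\circ F^{-1}+\sum_{l\ge1}h^l(D_{l+1}b)\circ F^{-1}\right)T+O(h^N\|b\|_{C^{2N+15}})_{L^2\to L^2},$$
with $D_{l+1}$ differential operators of order $l+1$, compactly supported near $F^{-1}$ of the support. Composing the order count, the coefficient of $h^k$ on the right is $(L_{k,q}a)\circ F^{-1}$ where $L_{k,q}$ is a differential operator of order $2k$ (the worst term being $\widetilde L_{k,q}$ contributing order $2k$, the Egorov corrections contributing less), with $L_{0,q}=\widetilde L_{0,q}=a_q=\chi_q\alpha$, which is exactly the claimed formula.

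The bookkeeping I expect to be the main obstacle is twofold. First, tracking the Sobolev/$C^N$-norm losses carefully so that the final remainder is genuinely $O(h^N\|a\|_{C^{2N+15}})$: at each step one loses a fixed number of derivatives, and one must verify the constant $15$ (or some explicit $2N+C$) survives the composition, which is why both earlier lemmas were stated with precise norm exponents. Second, one must be careful that $a\in S^{comp}_{\delta_1}$ and the differential operators $L_{k,q}$ produce symbols in $S^{comp}_{\delta_1}$ again — this is automatic since $L_{k,q}$ has $h$-independent smooth coefficients and differentiation loses $h^{-\delta_1}$ per order, which is absorbed into the $h^k$ prefactor as long as $2k\delta_1<k$, i.e. $\delta_1<1/2$, exactly the standing hypothesis. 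The support statements ($\supp L_{k,q}a\subset \mathcal{V}_q$, and after $\circ F^{-1}$ everything lands where it should) follow directly from $\supp\chi_q\Subset\mathcal{V}_q$ and the compact support of the $D_{l+1}$'s, together with $\mathcal{V}_q\Subset\widetilde D_{i_{p+1}i_p}$ so that $F^{-1}$ is honestly defined there.
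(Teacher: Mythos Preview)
Your proposal is correct and follows essentially the same route as the paper: first expand $\op(\alpha)\op(\chi_q)\op(a)$ via the Moyal product (Lemma~\ref{Moyal_produc_op}), then push the resulting symbol through $T$ using the precise Egorov expansion (Lemma~\ref{lem_precise_Egorov}), and finally observe that only the block $T_{m_q j_q}$ survives modulo $\hinf$ because of the wavefront constraints. The paper carries out the Moyal step in two stages ($\chi_q$ with $a$, then $\alpha_{j_q}$ with the result) rather than as a single triple product, and spends more space than you do on the matrix bookkeeping---verifying explicitly that the off-block entries $M_{mj_q}\op(\chi_q)$ for $m\neq m_q$ and $\op(\tilde\chi_q)T_{m_q j}$ for $j\neq j_q$ are $O(h^\infty)$---but the argument is the same.
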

 
 \begin{rem}\text{}
 \begin{itemize}
 \item  Again, since $\supp a\subset U_j$,  $\op(a)$ is a diagonal matrix with only one non-zero block equal to $\op(a)$. 
 \item Recall that we've supposed that $\mathcal{V}_q \subset \widetilde{D}_{m_q j_q}$. As a consequence, the symbols  $$a^{(k)}_1 \coloneqq L_{k,q} a  \circ F^{-1}$$ are equal to $L_{k,q} a  \circ \left(F_{m_q j_q}\right)^{-1}$ and are supported in $U_{m_q}$ : $\op(a_1^{(k)})$ is still a diagonal matrix. 
 \end{itemize}
 \end{rem} 
 
 \begin{proof}
 Let us first work at the order of operators $L^2(\R) \to L^2(\R)$ and let us study : 
$$M_{m_q j_q} \op(\chi_q) \op(a) = T_{m_q j_q} \op(\alpha_{j_q})  \op(\chi_q) \op(a)$$ 
Using Lemma \ref{Moyal_produc_op}, we write

$$ \op(\chi_q) \op(a)  = \op \left( \sum_{k=0}^{N-1} \frac{i^k h^k}{k!} A(D)^{k} (\chi_q \otimes a)|_{\rho= \rho_1= \rho_2}  \right)  +O\left( h^N ||\chi_q \otimes a ||_{C^{2N+13}}\right)$$ 
the principal term of the expansion being $ \chi_q a$. Set $a_{q,k}(\rho) = A(D)^{k} (\chi_q \otimes a)|_{\rho= \rho_1=\rho_2}$ and use Lemma \ref{Moyal_produc_op} to write 
\begin{align*}
\op(\alpha_{j_q}) \op(\chi_q) \op(a) &= 
&= \sum_{k_1+k_2 <N} \frac{i^{k_1+k_2} h^{k_1+k_2}}{k_1!k_2!} \op \left( A(D)^{k_2} (\alpha_{j_q} \otimes  a_{q,k_1})|_{\rho= \rho_1= \rho_2} \right) +O\left( h^N || a ||_{C^{2N+13}}\right)
\end{align*}
The principal term in the expansion is $\alpha_{j_q} \chi_q a$. 
We note that $$a \mapsto \sum_{k_1+k_2 =k} A(D)^{k_2} (\alpha_{j_q} \otimes  a_{q,k_1})|_{\rho= \rho_1= \rho_2}$$ is a differential operator of order $2k$. 
Using the precise version of Egorov theorem in Lemma \ref{lem_precise_Egorov}, we see that for any $b$ with $\supp(b) \subset \mathcal{V}_q$, 
$$T_{m_q j_q} \op(b) = \op \left( b \circ (F_{m_q j_q})^{-1} + \sum_{k=1}^{N-1} h^k(D_k b) \circ (F_{m_q j_q})^{-1}\right) + O\left(h^N ||b||_{C^{2N+15}}\right)$$
where $D_k$ are differential of order $2k$ compactly supported in $\mathcal{V}_q$. 
Applying this to the previous expansion, we see that we can write : 
$$ T_{m_q j_q} \op(\alpha_{j_q}) \op(\chi_q) \op(a) = \op\left((\alpha_{j_q} \chi_q a) \circ F^{-1} + \sum_{k=1}^{N-1} k^k(L_{k,q} a) \circ F^{-1} \right) + O\left(h^N ||a||_{C^{2N+15}}\right)$$

We now come to the entire matrix operator. Note that the matrix $M \op(\chi_q) \op(a)$ is of the form 
$$ \left( \begin{matrix}
0& \dots & M_{1j_q} \op(\chi_q) & \dots& 0 \\
\vdots &\vdots & \vdots & \vdots &\vdots \\
0& \dots & M_{Jj_q} \op(\chi_q) & \dots& 0 \\
\end{matrix} \right) \op(a) $$
Recall that $\WF(\op(\chi_q)) \subset \widetilde{D}_{m_q j_q}$ and $\WF^\prime(M_{m_q j_q})  \subset \Gr^\prime( F_{m_q j_q})$. Hence, for $m \neq m_q$, $M_{m j_q}\op(\chi_q) = \hinf$ and the previous matrix can be written 
$$ \left( \begin{matrix}
0& \dots &  \hinf & \dots& 0 \\
\vdots &\vdots & \vdots & \vdots &\vdots \\
0& \dots & M_{m_q j_q} \op(\chi_q)  & \dots& 0  \\ 
\vdots &\vdots & \vdots & \vdots &\vdots \\
0& \dots & \hinf  & \dots& 0 \\
\end{matrix} \right)\op(a)   =
 \left( \begin{matrix}
0& \dots &  0 & \dots& 0 \\
\vdots &\vdots & \vdots & \vdots &\vdots \\
0& \dots & M_{m_qj_q} \op(\chi_q) \op(a) & \dots& 0  \\ 
\vdots &\vdots & \vdots & \vdots &\vdots \\
0& \dots & 0  & \dots& 0 \\
\end{matrix} \right) + \hinf ||\op(a)||_{L^2}$$
With constant in $\hinf$ depending on $\chi_q, M$ and $||\op(a)||_{L^2 \to L^2} = O(||a||_{C^8})$. 
Let's note $$a_1^{(k)} = L_{k,q} a \circ F^{-1}$$ 
and observe that $\supp(a_1^{(k)}) \subset F(\supp \chi_q) \Subset \widetilde{A}_{m_q j_q}$. Consider a cut-off function $\tilde{\chi}_q$ such that $\tilde{\chi}_q \equiv 1$ in a neighborhood of $F(\supp \chi_q)$ and $\supp \tilde{\chi}_q \subset \widetilde{A}_{m_q j_q}$. Using Lemma \ref{Moyal_produc_op} and the support properties of $\tilde{\chi}_q$, one has 
$$ \op(a_1^{(k)})= \op(a_1^{(k)}) \op(\tilde{\chi}_q)+O\left( h^{N-k} ||a_1^{(k)}||_{C^{2(N-k) +13}}\right) = \op(a_1^{(k)}) \op(\tilde{\chi}_q)+O\left(h^{N-k} ||a||_{C^{2N +13}} \right)$$
Then, one can write $\op(a_1^{(k)}) T$ on the form 
 $$\left( \begin{matrix}
0& \dots & 0 \\
\vdots &\vdots & \vdots  \\
\op(a_1^{(k)}) \op(\tilde{\chi}_q)T_{m_q1} &\dots & \op(a_1^{(k)})\op(\tilde{\chi}_q) T_{m_qJ}&  \\ 
\vdots &\vdots &\vdots \\
0& \dots & 0 \\
\end{matrix} \right)+O\left(h^{N-k} ||a||_{C^{2N +13}} \right) $$
and for $j \neq j_q, \op(\tilde{\chi}_q) T_{m_q j} = \hinf$. We can conclude that 
\begin{align*}
\op(a_1^{(k)}) T&= \left( \begin{matrix}
0& \dots & \dots & \dots & 0 \\
\vdots &\dots & \dots & \dots & \vdots  \\
0 &\dots &  \op(a_1^{(k)}) \op(\tilde{\chi}_q) T_{m_q j_q}&  \dots& 0   \\ 
\vdots &\dots & \dots & \dots &\vdots \\
0& \dots & \dots & \dots& 0 \\
\end{matrix} \right) + \hinf ||\op(a_1^{(k)}) ||_{L^2 \to L^2} + O\left(h^{N-k} ||a||_{C^{2N +13}} \right) \\ &=  \left( \begin{matrix}
0& \dots & \dots & \dots & 0 \\
\vdots &\dots & \dots & \dots & \vdots  \\
0 &\dots &  \op(a_1^{(k)})  T_{m_q j_q}&  \dots& 0   \\ 
\vdots &\dots & \dots & \dots &\vdots \\
0& \dots & \dots & \dots& 0 \\
\end{matrix} \right) + O\left(h^{N-k} ||a||_{C^{2N +13}} \right) 
\end{align*}
 Combining this with the version obtained with $M_{m_q j_q}$, we get (\ref{Egorov+moyal}). 
 \end{proof}

 Let us now start the iterative construction of the symbols. Fix $N \in \N$ which can be taken arbitrarily large. Recall that we want to write  
 \begin{equation}
U_\mathbf{q} = \op\left( a_\mathbf{q}^+ \right) T^{|\mathbf{q}|} + \hinf_{L^2 \to L^2}
\end{equation}
Note $U_r = U_{  q_0 \dots q_{r-1}}$. We want to write 
\begin{equation}
U_r = \op \left( \sum_{k=0}^{N-1} h^k a_r^{(k)} \right) T^r + R_r^{(N)}
\end{equation}
We start by writing 
\begin{equation}
U_1= \op \left( \sum_{k=0}^{N-1} h^k a_1^{(k)} \right) T + R_1^{(N)}
\end{equation}
which is possible in virtue of (\ref{Egorov+moyal}).
To pass form $U_r$ to $U_{r+1}$, we have the relation 
$$ U_{r+1} = M A_{q_r} U_r = \sum_{k=0}^{N-1} h^k M A_{q_r} \op \left( a_r^{(k)} \right) T^r +M A_{q_r} R_r^{(N)}$$
So, we will construct inductively our symbols by setting 
\begin{equation}\label{iteration_formula_1}
a_{r+1}^{(k)} = \sum_{p=0}^k \left( L_{p, q_r} a_r^{(k-p)} \right) \circ \left(F_{i_{r+1}, i_r}\right)^{-1} 
\end{equation}
and 

\begin{equation}
R_{r+1}^{(N)} = M A_{q_r} R_r^{(N)} + \sum_{k=0}^{N-1}  O \left( ||a^{(k)}_r ||_{C^{2(N-k) +15}} \right)
\end{equation}
The $O$ encompasses the remainder terms in Lemma \ref{Egorov+moyal}. The constants in the $O$ only depend on $M$ and the $\chi_q, q \in \mathcal{A}$, but not on $\mathbf{q}$.  

To make this construction work, we will have to prove that the symbols $a_r^{(k)}$ lie in a good symbol class $S_{\delta_1}^{comp}$. 

Before reaching this step, let us just note that by induction one sees that : \begin{itemize}[nosep]
\item \begin{equation}\label{control_remainder}
||R_r^{(N)} || \leq C_N h^N \left( 1 + \sum_{k=0}^{N-1} \sum_{l=0}^{r-1} || a_l^{(k)}||_{C^{2(N-k)+15}} \right) 
\end{equation}
with $C_N$ depending on $N$, $M$ and the $a_q$, but neither on $r$ nor $\mathbf{q}$. 
\item Since $L_{p,q_r}$ has coefficient supported in $\mathcal{V}_{q_r}$,  we see by induction  that $\supp a_{r+1}^{(k)} \subset \mathcal{V}^+_{ q_0 \dots q_r} $ as announced. 
\item $a_{r+1}^{(0)} = \prod_{p=1}^{r+1} a_{q_{r+1-p}} \circ F^{-p}$ 
\end{itemize}

\subsubsection{Control of the symbols}
We aim at estimating the semi-norms $||a^{(k)}_r||_{C^m}$ for $ k < N$, $1 \leq r \leq n$ and $m \in \N$. 
We will show the following : 
\begin{prop}\label{control_of_the_symbols}
For every $r \in \{ 1 , \dots, n \}$, $k \in \{ 0, \dots, N-1 \}$ and $m \in \N$, there exists $C(k,m)$, such that with $\Gamma_{k,m}=(k+1)(m+k+1)$,
\begin{equation}
||a^{(k)}_r ||_{C^m} \leq C(k,m) r^{\Gamma_{k,m}} \left(J_{q_0 \dots q_{r-1} }^+ \right)^{2k +m} ||\alpha||_\infty^r
\end{equation}
\end{prop}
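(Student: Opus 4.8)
The plan is to prove Proposition~\ref{control_of_the_symbols} by induction on $r$, using the recursion formula \eqref{iteration_formula_1} and the precise structure of the differential operators $L_{p,q_r}$ appearing in Lemma~\ref{Egorov+moyal}. Recall that $L_{p,q_r}$ is a differential operator of order $2p$ with smooth coefficients supported in $\mathcal{V}_{q_r}$ whose $C^m$ norms are bounded by global constants independent of $r$ and $\mathbf{q}$. The base case $r=1$ follows directly from Lemma~\ref{Egorov+moyal} together with the fact that $a_{q_0}\in S^{comp}_{0^+}$ has $h$-independent semi-norms (here the factor $J^+_{q_0}$ is bounded since words of length one have bounded Jacobian, so the estimate is trivially true up to adjusting $C(k,m)$). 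For the inductive step, I would apply the chain rule (Faà di Bruno) to the composition with $\left(F_{i_{r+1},i_r}\right)^{-1}$ in \eqref{iteration_formula_1}; the key point is that on $\supp a^{(k-p)}_r\subset\mathcal{V}^+_{q_0\dots q_{r-1}}$, the derivatives of $F^{-1}$ are controlled by $\|d_\rho F^{-1}\|$, and iterating this control over the whole orbit amounts to controlling $\|d_\rho F^{-r}\|$ on $\mathcal{V}^+_{q_0\dots q_{r-1}}$, which by Lemma~\ref{lemma_control_local_expansion_rate} is bounded by $CJ^+_{q_0\dots q_{r-1}}$.

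Concretely, differentiating $m$ times a composition $g\circ\phi$ with $\phi$ a diffeomorphism produces terms of the form $(\partial^j g)\circ\phi$ times products of derivatives of $\phi$ of total order $m$; since the highest-order derivative of $g$ that appears is of order $m$ and it comes multiplied by $(D\phi)^{\otimes m}$, the worst contribution in our setting is $\|a^{(k-p)}_r\|_{C^{m}}$ times $\left(\sup\|dF^{-1}\|\right)^{m}$-type factors, but the cumulative effect over $r$ steps must be tracked carefully. This is where the polynomial prefactor $r^{\Gamma_{k,m}}$ enters: each of the $r$ iterations contributes a bounded multiplicative constant plus a combinatorial blow-up from the Faà di Bruno coefficients and from the sum over $p\in\{0,\dots,k\}$; summing geometric-type series with slowly growing coefficients yields a polynomial in $r$ rather than an exponential, provided we absorb the exponential growth correctly into the Jacobian factor $\left(J^+_{q_0\dots q_{r-1}}\right)^{2k+m}$. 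The exponent $2k+m$ is natural: the $m$ comes from the $m$ spatial derivatives each costing one power of the expansion rate (via $\|d_\rho F^{-r}\|\leq CJ^+_\mathbf{q}$), and the $2k$ comes from the order $2p\le 2k$ of the operators $L_{p,\cdot}$ accumulated along the iteration, each derivative in $L_{p,q_r}$ again costing one power of the local expansion rate; the multiplicativity property \eqref{product_of_jacobian} of $J^+_\mathbf{q}$ lets us telescope the per-step Jacobian factors into the single factor for the whole word.

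The main obstacle, I expect, will be the bookkeeping of the combinatorial constants so as to obtain a genuinely \emph{polynomial} (not exponential) dependence on $r$. A naive induction gives $\|a^{(k)}_{r+1}\|_{C^m}\le C\sum_{p=0}^k\sum_{j}(\text{binomial factors})\|a^{(k-p)}_r\|_{C^{m'}}\times(\text{Jacobian one-step factor})$, and if one is not careful the constant $C>1$ at each step compounds to $C^r$. The trick---following \cite{NDJ19} and \cite{Ri10}---is to pull out \emph{all} the exponential growth into the Jacobian factor $\left(J^+\right)^{2k+m}$ (using that $J^+_\mathbf{q}\ge e^{\lambda_0 n}$ grows exponentially, so there is ``room'' to absorb bounded-ratio losses at each step via $J^+_{\mathbf{q}}\sim e^{\lambda_0}J^+_{\mathbf{q}_-}$ from Lemma~\ref{manipulations_J_q}), so that what remains after factoring it out grows only polynomially. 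One must also handle the interplay between increasing $m$ (each Faà di Bruno term of $a^{(k)}_{r+1}$ at order $m$ draws on $a^{(k-p)}_r$ at orders up to $m$) and the fixed exponent $\Gamma_{k,m}=(k+1)(m+k+1)$: checking that $\Gamma_{k,m}$ is large enough to close the induction across all the coupled terms is the delicate numerical verification. A secondary, purely technical point is to confirm that the resulting $a^{(k)}_r$ indeed lie in $S^{comp}_{\delta_1}$: since $J^+_\mathbf{q}\le C_0 h^{-\delta}$ by hypothesis \eqref{Control_local_jacobian} and $\delta<\delta_1<1/2$, the bound $\|a^{(k)}_r\|_{C^m}\lesssim r^{\Gamma_{k,m}}h^{-\delta(2k+m)}\|\alpha\|_\infty^r$ with $r=O(|\log h|)$ gives $h^{-\delta_1 m}\|\alpha\|_\infty^r$ growth for $h$ small, as required, which closes the proof of Proposition~\ref{prop_symbols_a_q} via the remainder estimate \eqref{control_remainder}.
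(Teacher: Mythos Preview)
Your plan is in the right neighborhood but it leaves the central difficulty unresolved, and the paper's proof handles it by a genuinely different organizational device.

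You correctly identify the recursion \eqref{iteration_formula_1}, the role of Lemma~\ref{lemma_control_local_expansion_rate} in producing the factor $(J^+_\mathbf{q})^{2k+m}$, and the fact that the real issue is obtaining \emph{polynomial} rather than exponential growth in $r$. But your proposed remedy --- ``pull out all the exponential growth into the Jacobian factor, using that $J^+_\mathbf{q}\ge e^{\lambda_0 r}$ has room to absorb bounded losses'' --- does not work as stated. The per-step constants coming from Fa\`a di Bruno and from the sum $\sum_{p=0}^k$ are genuinely $>1$ and there is no mechanism in your sketch that prevents them from compounding to $C^r$; absorbing a bounded ratio into $J^+$ at each step would change the exponent $2k+m$, which must stay fixed. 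The ``delicate numerical verification'' you defer is precisely where a direct induction on $r$ tends to fail.

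The paper's approach sidesteps this by packaging all the coupled quantities $d^m a_r^{(k)}$ with $k\le k_0$, $2k+m\le 2k_0+m_0$ into a single vector $A_r(\rho)$ of fixed length $\Gamma_{k_0,m_0}$, and observing that the recursion becomes $A_r(\rho)=P^{(r)}(\rho)A_{r-1}(F^{-1}(\rho))$ with $P^{(r)}$ \emph{lower triangular} for the lexicographic order. Expanding the product $P^{(r)}\cdots P^{(2)}$ along monotone paths $\overrightarrow{\gamma}\in\mathcal{E}_r(\gamma)$, the polynomial bound $r^{\Gamma_{k_0,m_0}}$ arises \emph{purely combinatorially} from $\#\mathcal{E}_r(\gamma)\le\Gamma_{k_0,m_0}\,r^{\Gamma_{k_0,m_0}}$: a decreasing path of length $r$ in a totally ordered set of size $\Gamma_{k_0,m_0}$ can jump at most $\Gamma_{k_0,m_0}$ times, and the count of such paths is $\sum_b\binom{b+r-2}{r-2}$. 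Each individual path is then controlled by splitting it into at most $\Gamma_{k_0,m_0}+1$ diagonal runs (where Lemma~\ref{lemma_control_local_expansion_rate} and the chain rule give a clean $(J^+)^{N_0}$) separated by boundedly many off-diagonal transitions (each costing only a uniform constant). This is the missing idea in your outline: the triangular--matrix/path-counting structure is what converts the apparently exponential compounding into a polynomial of controlled degree.
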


\begin{rem}\text{}
\begin{itemize}[nosep]
\item What is important in this result is the way in which the bound depends on $r$ and $\mathbf{q}$.  Up to the term $r^{\Gamma_{k,m}}$, which  is supposed to behave like $O\left( | \log h|^{\Gamma_{k,m}}\right)$, the significant part of the estimate is that we can control the symbols by the local Jacobian. 
\item Since $\supp a^{(k)}_r \subset  \mathcal{V}^+_{ q_0 \dots q_{r-1}}$, we need to focus on points $\rho \in \mathcal{V}^+_{ q_0 \dots q_{r-1}}$. 
\item Our method is very close to the ones developed in \cite{Ri10} and \cite{NDJ19}. However, we've changed a few things at the cost of being less precise on the exponent $\Gamma_{k,m}$. Our aim was to treat our problem as if we wanted to control the product of $r$ triangular matrices. 
\end{itemize}
\end{rem}

Let us pick $\rho \in \mathcal{V}^+_{ q_0 \dots q_{r-1}}$. With (\ref{iteration_formula_1}), one sees that if $k, m \in \N$, $d^m a_{r+1}^{(k)}$ depends on $d^{m^\prime} a_{r}^{ (k^\prime)} ( F^{-1} (\rho))$ for several $m^\prime, k^\prime$. Before going deeper in the analysis of this dependence, let us note two obvious facts : 
\begin{itemize}[nosep]
\item This dependence is linear, with coefficients smoothly depending on $\rho$.
\item If $d^m a_{r+1}^{(k)}$ depends effectively on $d^{m^\prime} a_{r}^{ (k^\prime)} ( F^{-1} (\rho))$, then $k^\prime \leq k$ and $2k^\prime + m^\prime \leq 2k + m$. 
\end{itemize}

\paragraph{Precise analysis of the dependence.}
That being said, let us pick $m_0 , k_0 \in \N$. Set $N_0 = 2k_0 + m_0$ and consider the (column) vector 

\begin{equation}
A_r (\rho) \coloneqq \left( d^m a_r^{(k)} (\rho)\right)_{ k \leq k_0 , 2k + m \leq N_0} \in \bigoplus_{k \leq k_0, 2k +m \leq N_0} S^m T^*_\rho U 
\end{equation}
Here $S^m T^*_\rho U$ is the spaces of $m$-linear symmetric form on $T_\rho U$. To define a norm on the fibers $S^m T^*_\rho U$, we can use for $f \in S^m T^*_\rho U$,
\begin{equation}\label{def_norm_m}
 ||f||_{m,\rho} = \sup_{ v_1,\dots, v_m \in T_\rho U} \frac{f(v_1, \dots, v_m)}{||v_1||_\rho \dots ||v_m||_\rho}
\end{equation}
where $||v||_\rho$ for $v \in T_\rho U$ is the norm induced by the Riemannian metric used to define $J_1^u$ in \ref{Def_jacobian}. Note that for any fixed neighborhood of $\mathcal{T}$, there exists a global constant $C>0$ such that for each $a \in \cinfc(U)$ supported in this neighborhood, one has 
$$ C^{-1} ||a||_{C^m} \leq \sup_{m^\prime \leq m }\sup_{\rho \in U} ||d^{m^\prime} a||_{m^\prime,\rho} \leq  C ||a||_{C^m}$$
\begin{figure}
\centering
\includegraphics[scale=0.4]{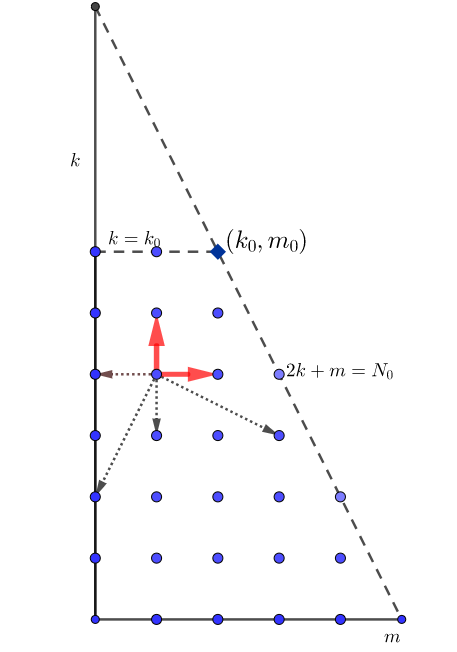}
\caption{The starting point $(k_0,m_0)$ is represented by a diamond. The set $\mathcal{I}$ corresponds to the couple $(k,m) \in \N^2$ in the region under the dotted lines $k=k_0$ and $2k+m=N_0$. We've represented a family of arrows starting from a point $\gamma_1 \in \mathcal{I}$. The dotted arrows points toward $\beta$ such that $\gamma_2 \prec \gamma_1$. The big red arrows points toward points $\gamma_2$ such that $P_{\gamma_1 \gamma_2}^{(r)}=0$. }
\label{descent}
\end{figure}
We will denote by $\gamma_1, \gamma_2$, etc. elements of $\mathcal{I} \coloneqq \mathcal{I}(k_0,m_0) = \{ (k, m) \in \N^2 ,k \leq k_0, 2k+m \leq N_0 \}$. We equip $\mathcal{I}$ with the lexicographic order $\prec$ and note $\# \mathcal{I}\coloneqq \Gamma_{k_0,m_0}$ (see Figure \ref{descent}). We order the indices of $A_r(\rho)$ with $\prec$. 
$A_r(\rho)$ depends linearly on $A_{r-1} ( F^{-1} (\rho))$ and this dependence can be made explicit by a matrix 
$$P^{(r)} (\rho)  = \left( P^{(r)}_{ \gamma_1 \gamma_2 } (\rho)\right)_{ \gamma_1, \gamma_2 \in \mathcal{I}}, \text{ where } P^{(r)}_{\gamma_1 \gamma_2} (\rho)  \in L \Big( S^{m^\prime} T^*_{F^{-1}(\rho)} U, S^m T^*_\rho U\Big)\} \text{ if } \gamma_1 =(k,m), \gamma_2=(k^\prime, m^\prime) $$
so that \begin{equation}\label{def_matrix_P}
 A_r(\rho) = P^{(r)}(\rho) \; A_{r-1} \left( F^{-1} (\rho) \right)
\end{equation}

\begin{nota}
If $\gamma_1=(k,m),\gamma_2 = (m^\prime, k^\prime), \rho , \rho^\prime \in U$ and if $A : S^{m^\prime}T^*_{\rho^\prime} U\to S^{m}T^*_{\rho}U$ is a linear operator, we will note $$||\cdot||_{\gamma_1,\rho,\gamma_2,\rho^\prime} $$ its subordinate norm for the norms defined by (\ref{def_norm_m}). 
\end{nota}

Analyzing (\ref{iteration_formula_1}), it turns out that if $\gamma_1 = (k,m), \gamma_2=(k^\prime,m^\prime) \in \mathcal{I}$, then 
\begin{itemize}[nosep]
\item if $ k^\prime > k$, $P_{\gamma_1 \gamma_2}^{(r)} (\rho) =0$ ; 
\item if $k=k^\prime$, the contribution to $d^m a^{(k)}_r (\rho)$ of $a_{r-1}^{(k)}$ comes from 
\begin{align*}
 &d^m \left( (a_{q_{r-1}} a^{(k)}_{r-1}) \circ F^{-1} \right)(\rho)  \\
&=a_{q_{r-1}} \left(F^{-1}(\rho)\right)  \times d^m \left( a^{(k)}_{r-1} \circ F^{-1} \right)(\rho)+ (\textit{derivatives of order stricly less than m for } a_{r-1}^{(k)}  )\\
&= a_{q_{r-1}} \left( F^{-1} (\rho) \right) \times \left({}^t dF^{-1} (\rho)  \right)^{ \otimes m} d^m a^{(k)}_{r-1} \left(F^{-1} (\rho) \right) + (\textit{idem}  )
\end{align*} 
In particular, if $\gamma_1=(k,m) \prec \gamma_2 = (k,m^\prime)$ doesn't hold, we see that $P_{\gamma_1 \gamma_2}^{(r)}(\rho)=0$. 
\item If $k^\prime < k$, we can have $P_{\gamma_1 \gamma_2}^{(r)}(\rho)\neq 0$ with $m^\prime > m$. But, the use of the lexicographic order ensures that $\gamma_1 \prec \gamma_2$ in that case. 
\end{itemize}

Hence, $P^{(r)} (\rho)$ is a lower triangular matrix and the diagonal coefficients for the index $\gamma_1 =(k,m)$ are given by 
\begin{equation}
P_{\gamma_1 \gamma_1}^{(r)} (\rho) : f \in S^{m}T^*_{F^{-1} (\rho)}U \mapsto a_{q_{r-1}} \left( F^{-1} (\rho) \right) \times \left({}^t dF^{-1} (\rho)  \right)^{ \otimes m}  f \in S^{m}T^*_{\rho} U
\end{equation}
Iterating (\ref{def_matrix_P}), we have 
$$ A_r(\rho) =  P^{(r)}(\rho)  P^{(r-1)} \left(F^{-1} (\rho) \right) \dots P^{(2)} \left(F^{-(r-2)}(\rho) \right) A_1 \left( F^{1-r} (\rho) \right)$$
For $\gamma \in \mathcal{I}$, we note $$\mathcal{E}_r( \gamma)  = \{ \overrightarrow{\gamma} = (\gamma_1, \dots, \gamma_r) \in \mathcal{I}^r ; \quad \gamma_r= \gamma, \gamma_{i} \prec \gamma_{i+1} \} $$
\begin{figure}[!h]
\centering
\includegraphics[scale=0.4]{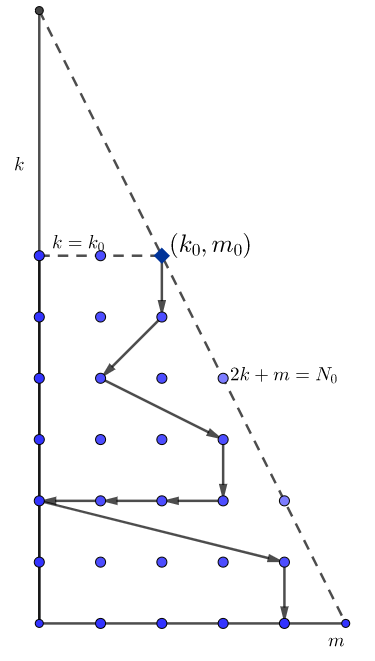}
\caption{We've represented the reduction of an element  $\protect \overrightarrow{\gamma} \in \mathcal{E}_r (k_0,m_0)$, i.e, the arrows between $\gamma_i $ and $ \gamma_{i+1}$ when $\gamma_i \neq \gamma_{i+1}$. During the descent, the value of $m$ can only increase when $k$ decreases strictly.}
\end{figure}
The triangular property of $P$ allows us to write : 
$$\left(A_r(\rho)\right)_\gamma = \sum_{ \overrightarrow{\gamma}\in \mathcal{E}_r(\gamma)} P^{(r)}_{ \gamma_r \gamma_{r-1} }(\rho)  \dots P^{(2)}_{\gamma_2 \gamma_1} \left(F^{-(r-2)}(\rho) \right) \left( A_1 \left( F^{1-r} (\rho) \right)\right)_{\gamma_1}$$

\paragraph{Control of individual terms.}
Let us fix $\gamma=(k,m)$ and pick $\overrightarrow{\gamma} \in \mathcal{E}_r( \gamma)$. We wish to analyze the operator  $$
P_{\overrightarrow{\gamma}}(\rho) \coloneqq 
P^{(r)}_{ \gamma_r \gamma_{r-1} }(\rho)  \dots P^{(2)}_{\gamma_2 \gamma_1} \left(F^{-(r-2)}(\rho) \right) $$
First of all, $\# \{ i \in \{ 1 , \dots, r-1 \} , \gamma_{i+1} \neq \gamma_i \} \leq \Gamma_{k_0,m_0}$.
So let us write $$ \{ i \in \{ 1 , \dots, r-1 \} , \gamma_{i+1} \neq \gamma_i \}  = \{ t_1 < \dots < t_d \}$$ with $d \leq \Gamma_{k_0,m_0}$. We can set $t_{d+1} = r, t_0 =0$ and we can rewrite 
$$ \overrightarrow{\gamma} = ( \underbrace{\beta_1, \dots, \beta_1}_{t_1}, \underbrace{\beta_2, \dots, \beta_2}_{t_2-t_1}, \dots, \underbrace{\beta_d, \dots, \beta_d}_{t_d - t_{d-1}} , \underbrace{\beta_{d+1}, \dots, \beta_{d+1}}_{t_{d+1}- t_d} ) $$
For $ p \in \{ 1, \dots, d+1 \}$, we introduce the operator 
\begin{align*}
D_p(\rho) &= P^{(t_p)}_{\beta_p \beta_p} \left( F^{- ( r-t_p) }(\rho) \right)  \dots P^{(t_{p-1} +2 )}_{\beta_p \beta_p} \left( F^{- ( r-t_{p-1} -2) } (\rho) \right) 
\end{align*}
and for $p \in \{1, \dots, d \} $
\begin{equation*}
T_p (\rho) = P_{ \beta_{p+1} \beta_p}^{t_p+1} \left( F^{- (r- t_p - 1)} (\rho) \right) 
\end{equation*}
so that we can write 

\begin{equation*}
P_{\overrightarrow{\gamma}} (\rho)= D_{d+1}(\rho) T_d(\rho) D_d (\rho)\dots T_1(\rho) D_1 (\rho)
\end{equation*}
For $p \in \{1 , \dots, d+1\}$, if $\beta_p = (k,m)$, we can see that 
\begin{align*}
D_p(\rho) &= \left[\prod_{j=t_{p-1} +1}^{t_p -1} a_{q_j} \circ F^{-(r-j)} (\rho)\right] \left[ \left( {}^t dF^{-1} \left(F^{-(r-t_p)} (\rho)  \right) \right)^{\otimes m} \circ \dots \circ \left( {}^t dF^{-1} \left(F^{-(r-t_{p-1}-2)} (\rho) \right) \right)^{\otimes m} \right] \\
&=\left[\prod_{j=t_{p-1} +1}^{t_p -1} a_{q_j} \circ F^{-(r-j)} (\rho)\right] \left( {}^t dF^{-(t_p -t_{p-1} -1)} \left(F^{-(r-t_p)} (\rho)  \right) \right)^{\otimes m}
\end{align*}
We introduce the word $$\mathbf{q_p} = q_{t_{p-1}} \dots q_{t_p -1}$$ and set $\rho_p = F^{-(r-t_p)} (\rho), \rho_p^\prime= F^{-(t_p-t_{p-1}-1)}(\rho_p) $.
To estimate the subordinate norm of $D_p(\rho)$, we use Lemma \ref{lemma_control_local_expansion_rate}. Since $ \rho \in \mathcal{V}_\mathbf{q}^+, \rho_p \in \mathcal{V}_\mathbf{q_p}^+$ and we have

\begin{align*}
 ||D_p(\rho) ||_{\beta_p, \rho_p, \beta_p,\rho_p^\prime} &\leq \left|\prod_{j=t_{p-1} +1}^{t_p -1} a_{q_j} \circ F^{-(r-j)} (\rho)\right| \sup_{\rho_p \in \mathcal{V}_\mathbf{q_p}^+ } || dF^{-(t_p-t_{p-1}-1)}(\rho_p) ||^m\\
 & \leq \left(CJ_\mathbf{q_p}^+\right)^m  \left|\prod_{j=t_{p-1} +1}^{t_p -1} a_{q_j} \circ F^{-(r-j)} (\rho)\right| \\
 &\leq C_{k_0,m_0} \left(J_\mathbf{q_p}^+ \right)^{N_0}  \left|\prod_{j=t_{p-1} +1}^{t_p -1} a_{q_j} \circ F^{-(r-j)} (\rho)\right|
\end{align*}

To estimate the norms of $T_p(\rho)$, we simply note that they depend smoothly on $\rho_p$ which lies in a compact set, so we can bound them by a uniform constant $C_1$. This is not a problem since they appear $d$ times in $P_{\overrightarrow{\gamma}}$ with $d \leq \Gamma_{k_0,m_0}$.
Consequently, we can estimate $||P_{\overrightarrow{\gamma}}(\rho)||_{\gamma,\rho, \gamma_1, F^{-(r-1)}(\rho)}$, 
\begin{equation}
||P_{\overrightarrow{\gamma}}(\rho) ||_{\gamma,\rho, \gamma_1, F^{-(r-1)}(\rho)} \leq C_{k_0,m_0} \left( J^+_\mathbf{q_1} \dots J^+_\mathbf{q_{d+1}} \right)^{N_0} | a_{\mathbf{q},\overrightarrow{\gamma}}(\rho) | \leq C_{k_0,m_0} \left(J^+_\mathbf{q} \right)^{N_0} | a_{\mathbf{q},\overrightarrow{\gamma}}(\rho) | 
\end{equation}
where 
\begin{equation}
a_{\mathbf{q},\overrightarrow{\gamma}}= \prod_{p=1}^{d+1} \prod_{j=t_{p-1} +1}^{t_p -1} a_{q_j} \circ F^{-(r-j)} 
\end{equation}
Here, the last inequality holds by applying $d$ times  (\ref{product_of_jacobian}), with $d \leq \Gamma_{k_0,m_0}$, once we've noted that $$\mathbf{q} = \mathbf{q}_1 \dots \mathbf{q_{d+1}}$$
Finally, if $\gamma_1 = (k_1,m_1)$, to estimate $ ||\left( A_1 \left( F^{1-r} (\rho) \right)\right)_{\gamma_1}||_{m_1, F^{1-r}(\rho)} $, we simply note that it depends smoothly on  $F^{1-r}(\rho)$, so that we can bound it by a uniform constant. Hence, we have 
\begin{equation}\label{control_of_P_gamma}
||P_{\overrightarrow{\gamma}} (\rho) A_1 \left( F^{1-r} (\rho)\right)||_{m,\rho}  \leq C_{k_0,m_0} \left(J^+_\mathbf{q} \right)^{N_0} | a_{\mathbf{q},\overrightarrow{\gamma}}(\rho) | 
\end{equation}

\paragraph{Cardinality of $\mathcal{E}_r(\gamma)$.}
The bound we will provide is far from being optimal but it will turn out to be enough for our purpose.
To count the number of elements in $\mathcal{E}_r(\gamma)$, we remark that it is similar than counting the number of decreasing sequences of length $r$ starting from $\gamma$. This number is smaller than the number of increasing sequences of length $r$ in $\{ 1, \dots, \Gamma_{k_0,m_0} \}$ . 
Recalling that the number of sequences $u_1 \leq u_2 \leq \dots \leq u_r$ satisfying $u_1= 1$ and $u_r=b$ is equal to ${  b+r-2 \choose r-2}$, one can estimate 
\begin{equation}\label{cardinality of E_r_alpha}
 \# \mathcal{E}_r(\gamma) \leq \sum_{b=1}^{\Gamma_{k_0,m_0}} { b+r-2 \choose r-2 } \leq \Gamma_{k_0,m_0} (r-1)^{\Gamma_{k_0,m_0}}
\end{equation}
Finally, we can compute explicitly $\Gamma_{k_0,m_0}$ and we find $\Gamma_{k_0,m_0} = (k_0+1) (m_0 + 1 + k_0)$. 

\paragraph{Conclusion.} We finally combine (\ref{cardinality of E_r_alpha}) and (\ref{control_of_P_gamma}) to prove Proposition \ref{control_of_the_symbols}. Recall that $|a_q| = |\alpha| \chi_q \leq ||\alpha||_\infty$. 

\begin{align*}
 \sup_{\rho \in  \mathcal{V}_{q_0 \dots q_{r-1} } } || d^{m_0} a_r^{(k_0)}||_{m_0,\rho} &= \sup_{ \rho \in \mathcal{V}_{q_0 \dots q_{r-1} }} ||\left( A_r(\rho) \right)_{(k_0,m_0)}||_{m_0,\rho} \\
&\leq \sum_{ \overrightarrow{\gamma} \in \mathcal{E}_r(k_0,m_0)} ||P_{\overrightarrow{\gamma}} (\rho) A_1 \left( F^{1-r} (\rho)\right)||_{m_0,\rho}  \\
& \leq \Gamma_{k_0,m_0} r^{\Gamma_{k_0,m_0}} C_{k_0,m_0} \left(J^+_\mathbf{q} \right)^{N_0} | a_{\mathbf{q},\overrightarrow{\gamma}}(\rho) | \\
& \leq C_{k_0,m_0} r^{\Gamma_{k_0,m_0}} \left(J^+_\mathbf{q} \right)^{N_0} ||\alpha||_\infty^r 
\end{align*}
Finally, we get as expected
$$||a_r^{(k_0)}||_{C^{m_0}}  \leq C_{k_0,m_0} r^{\Gamma_{k_0,m_0}} \left(J^+_\mathbf{q} \right)^{N_0} ||\alpha||_\infty^r $$

\subsubsection{End of proof of proposition \ref{prop_symbols_a_q}}
Armed with these estimates, we are now able to conclude the proof of Proposition \ref{prop_symbols_a_q} under the assumptions (\ref{Control_local_jacobian}). Assume that this assumption is satisfied and construct inductively the symbols $a_r^{(k)}$ with the formula (\ref{iteration_formula_1}). 
Since $ J_{\mathbf{q}}^+ \leq Ch^{-\delta}$, it implies that $n = O ( \log h)$. Hence, we have for $r \leq n$, 
$$ || a_r^{(k)} ||_{C^m} \leq C_{k,m} h^{- \delta m} h^{-2k \delta} |\log h |^{\Gamma_{k,m}} ||\alpha||_\infty^r\leq  C_{k,m} h^{- \delta_1 m} h^{-2k \delta_1} ||\alpha||_\infty^{r}$$
The symbol $ h^{2 \delta_1 k}  a_r^{(k)}$ lies in $||\alpha||_\infty^r S_{\delta_1}^{comp}(T^* \R)$. 
Using Borel's theorem with the parameter $h^\prime = h^{1- 2 \delta_1}$, we can construct a symbol 

$$ a^+_{q_0 \dots q_{r-1}} \sim \sum_{k=0}^\infty (h^\prime)^k  h^{2 \delta_1 k} a_r^{(k)} = \sum_{k=0}^\infty h^k  a_r^{(k)} \in ||\alpha||_\infty^r S_{\delta_1}^{comp} $$ 
that is, for every $N \in \N$, 
$$ a_{q_0 \dots q_{r-1}}^+ - \sum_{k=0}^{N-1} h^k  a_r^{(k)} = O \left( h^{(1- 2 \delta_1)N}||\alpha||_\infty^r \right)$$
By construction of the $a_r^{(k)}$, for every $N \in \N$, we have 
$$U_\mathbf{q}^+ -  \op(a_\mathbf{q}^+)T^{|\mathbf{q}|} = R_n^{(N)} + O \left( h^{(1- 2 \delta_1)}||\alpha||_\infty^r\right)$$
Fix some $K\geq 0$ such that $\min(1, ||\alpha||_\infty^n) = O(h^{-K})$, so that $||\alpha||_\infty^r = O(k^{-K})$. 
With (\ref{control_remainder}) and our estimates, we can control 
$$ ||R_n^{(N)}|| \leq C_N h^N \left(1 + |\log h|^{\Gamma_{k,m}+1} h^{- \delta (2N + 15) }h^{-K}\right) \leq C_N h^{-15 \delta_1 + N (1- 2 \delta_1)-K}$$ 
Since we can choose $N$ as large as we want, we have finally proved that 
$$U_\mathbf{q}^+ -  \op(a_\mathbf{q}^+)T^{|\mathbf{q}|} = \hinf$$
\begin{flushright}
\qed
\end{flushright}

\subsubsection{Norm of sums over many words}

We'll make use of the tools and notations developed in this subsection to prove the following proposition.
To state it, we introduce the notations 

\begin{equation}
\mathcal{Q}(n,\tau, C_0) \coloneqq \{ \mathbf{q} \in \mathcal{A}^n ; J_\mathbf{q}^+ \leq C_0h^{-\tau} \}
\end{equation}

\begin{prop}\label{sum_over_many_words}
There exists $C=C(C_0,\tau)$ such that for every $\mathcal{Q} \subset \mathcal{Q}(n,\tau,C_0)$, the following bound holds : 
\begin{equation}
\left| \left| \sum_{\mathbf{q} \in \mathcal{Q}} U_\mathbf{q} \right| \right|_{L^2 \to L^2} \leq C||\alpha||^n |\log h |
\end{equation}
\end{prop}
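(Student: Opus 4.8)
The plan is to use a Cotlar--Stein type almost-orthogonality argument applied to the decomposition of $\sum_{\mathbf{q} \in \mathcal{Q}} U_\mathbf{q}$ induced by the propagated neighborhoods $\mathcal{V}_\mathbf{q}^+$. First I would invoke Proposition \ref{prop_symbols_a_q} (part (1)): since every $\mathbf{q} \in \mathcal{Q}(n,\tau,C_0)$ satisfies $J_\mathbf{q}^+ \leq C_0 h^{-\tau}$ with $\tau < 1$, and since $\tau$ can be arranged to play the role of $\delta$ with some $\delta_1 \in (\tau,1/2)$ — actually here we need $\tau < 1/2$, which holds by \eqref{tau} provided $\mathfrak{b}<1/2$; if not, one replaces the hypothesis threshold by $\min(\tau,\delta_1)$ for a fixed $\delta_1<1/2$, and this is where a small technical care is needed — we may write
\[
U_\mathbf{q} = \op(a_\mathbf{q}^+) T^n + O(h^\infty)_{L^2\to L^2}, \qquad \supp a_\mathbf{q}^+ \subset \mathcal{V}_\mathbf{q}^+,\quad a_\mathbf{q}^+ \in \|\alpha\|_\infty^n S_{\delta_1}^{comp}.
\]
Thus $\sum_{\mathbf{q}\in\mathcal{Q}} U_\mathbf{q} = \op\big(\sum_{\mathbf{q}\in\mathcal{Q}} a_\mathbf{q}^+\big) T^n + O(h^\infty)$ (the number of words is polynomial in $h^{-1}$, so the $O(h^\infty)$ errors sum to $O(h^\infty)$), and since $\|T^n\|$ is bounded by a fixed constant (uniformly in $n = O(\log h)$), it suffices to bound $\|\op(b)\|_{L^2\to L^2}$ with $b = \sum_{\mathbf{q}\in\mathcal{Q}} a_\mathbf{q}^+$.

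Next I would estimate $\|\op(b)\|$ by Cotlar--Stein. The key geometric input is that the sets $\mathcal{V}_\mathbf{q}^+$ (for $\mathbf{q}$ of fixed length $n$) have bounded overlap in a strong sense: because $\mathcal{V}_\mathbf{q}^+ = \bigcap_{i} F^{n-i}(\mathcal{V}_{q_i})$ and each $\mathcal{V}_{q_i}$ is one of a \emph{fixed finite} collection, the number of words $\mathbf{p}$ with $\mathcal{V}_\mathbf{p}^+ \cap \mathcal{V}_\mathbf{q}^+ \neq \emptyset$ need not be bounded a priori; however what \emph{is} controlled is that if $\mathcal{V}_\mathbf{p}^+\cap\mathcal{V}_\mathbf{q}^+=\emptyset$ then, by the support properties and Lemma \ref{Moyal_produc_op}, $\op(a_\mathbf{p}^+)^*\op(a_\mathbf{q}^+) = O(h^\infty)$ and $\op(a_\mathbf{q}^+)\op(a_\mathbf{p}^+)^* = O(h^\infty)$. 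So in the Cotlar--Stein sums $\sum_\mathbf{p} \|\op(a_\mathbf{q}^+)^*\op(a_\mathbf{p}^+)\|^{1/2}$ only words with $\mathcal{V}_\mathbf{p}^+\cap\mathcal{V}_\mathbf{q}^+\neq\emptyset$ contribute non-negligibly. For those, I would use Proposition \ref{Garding} together with the $S_{\delta_1}$ bounds on $a_\mathbf{q}^+$ to get $\|\op(a_\mathbf{q}^+)^*\op(a_\mathbf{p}^+)\| \leq C\|\alpha\|_\infty^{2n}$ plus lower order, hence $\|\op(a_\mathbf{q}^+)^*\op(a_\mathbf{p}^+)\|^{1/2}\leq C\|\alpha\|_\infty^n$, and one obtains
\[
\Big\|\op(b)\Big\| \leq \sup_\mathbf{q} \Big( \sum_{\mathbf{p}:\ \mathcal{V}_\mathbf{p}^+\cap\mathcal{V}_\mathbf{q}^+\neq\emptyset} \|\op(a_\mathbf{q}^+)^*\op(a_\mathbf{p}^+)\|^{1/2} \Big)^{1/2} \Big(\sup_\mathbf{q}\sum_{\mathbf{p}} \|\op(a_\mathbf{q}^+)\op(a_\mathbf{p}^+)^*\|^{1/2}\Big)^{1/2} + O(h^\infty).
\]

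The crucial remaining point is therefore to bound, for each fixed $\mathbf{q}$, the number $N(\mathbf{q})$ of words $\mathbf{p}\in\mathcal{Q}(n,\tau,C_0)$ with $\mathcal{V}_\mathbf{p}^+\cap\mathcal{V}_\mathbf{q}^+\neq\emptyset$ by $C|\log h|$ (or even a fixed constant). This is the main obstacle, and I would handle it as follows. If $\mathcal{V}_\mathbf{p}^+\cap\mathcal{V}_\mathbf{q}^+\neq\emptyset$, then by Lemma \ref{manipulations_J_q}(2) the Jacobians satisfy $J_\mathbf{p}^+\sim J_\mathbf{q}^+$, and by Lemma \ref{Localization_V_q} both sets lie in a $C/J_\mathbf{q}^+$-neighborhood of a common unstable manifold piece; moreover, reading the words backward from the end, $F^{-i}(\mathcal{V}_\mathbf{p}^+)$ and $F^{-i}(\mathcal{V}_\mathbf{q}^+)$ both lie in $\mathcal{V}_{p_{n-i}}$ resp. $\mathcal{V}_{q_{n-i}}$ and intersect, so the balls $B(\rho_{p_{n-i}},2\varepsilon_0)$ and $B(\rho_{q_{n-i}},2\varepsilon_0)$ overlap — with $\varepsilon_0$ small, each center $\rho_{q_{n-i}}$ is compatible with only a bounded number $R_0$ of centers $\rho_{p_{n-i}}$. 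A naive count gives $R_0^n$, which is too large; the refinement is that the constraint $J_\mathbf{p}^+\leq C_0 h^{-\tau}$ together with $J_\mathbf{p}^+\geq e^{\lambda_0 n}$ forces $n \leq (\tau/\lambda_0)|\log h| + O(1)$, and — more importantly — the compatible words $\mathbf{p}$ are exactly those obtained from $\mathbf{q}$ by switching, at the positions where there is a genuine ambiguity, among the few admissible alternatives; since $\mathbf{p}$ must stay in $\mathcal{Q}$ and shadow the same orbit up to $\varepsilon_2$, the Shadowing/expansivity Lemma \ref{help_def_jacobian_2} together with hyperbolicity forces $\mathbf{p}$ and $\mathbf{q}$ to differ only in a window of bounded length near the final letters (ambiguities far in the past are erased by contraction of $F^{-1}$ on unstable directions). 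This yields $N(\mathbf{q}) \leq C$, and a fortiori $\leq C|\log h|$. Plugging this into the Cotlar--Stein bound gives $\|\op(b)\| \leq C\|\alpha\|_\infty^n |\log h|$, and hence the claimed bound on $\big\|\sum_{\mathbf{q}\in\mathcal{Q}} U_\mathbf{q}\big\|$. If the refined "bounded window" argument turns out to be delicate, the weaker bound $N(\mathbf{q})\le C|\log h|$ — obtained by noting that the relevant ambiguity positions, being controlled by the $\varepsilon_2$-shadowing and the single expansion scale, number at most $O(\log h)$ with $O(1)$ choices each but with a product telescoping to $O(\log h)$ after accounting for the $J^+$ constraint — already suffices, which is why the statement is phrased with the $|\log h|$ factor.
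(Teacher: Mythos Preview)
Your approach has two genuine gaps. First, Proposition \ref{prop_symbols_a_q} requires the exponent to be strictly below $1/2$, but in this paper $\tau$ is chosen close to $1$ (see \eqref{tau}), so you cannot write $U_\mathbf{q} = \op(a_\mathbf{q}^+)T^n + O(h^\infty)$ directly; your aside ``actually here we need $\tau<1/2$'' is not a technicality you can brush past but the whole difficulty. Second, and more fundamentally, your count of $N(\mathbf{q})$ is wrong. If $\rho \in \mathcal{V}_\mathbf{p}^+\cap\mathcal{V}_\mathbf{q}^+$ then indeed $F^{-i}(\rho)\in\mathcal{V}_{p_{n-i}}\cap\mathcal{V}_{q_{n-i}}$ for all $i$, but the $\mathcal{V}_q$ are \emph{overlapping} balls supporting a smooth partition of unity, not a Markov partition: at every step there can be $R_0>1$ admissible letters, and the word is not determined by the orbit. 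Shadowing and expansivity constrain the orbit, not the labeling redundancy, so $N(\mathbf{q})$ can genuinely be of order $R_0^{\,n}=h^{-c}$, and Cotlar--Stein yields nothing.

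The paper proceeds quite differently. It first splits each $\mathbf{q}=\mathbf{s}\mathbf{r}$ at the index $l=l(\mathbf{q})$ where $J_{q_0\dots q_{l-1}}^+$ crosses $h^{-\tau/2}$; each half then has Jacobian $\leq Ch^{-\tau/2}$ with $\tau/2<1/2$, so Proposition \ref{prop_symbols_a_q} applies to both pieces, and the sum over the $O(n)=O(|\log h|)$ possible values of $l$ is exactly where the $|\log h|$ factor comes from. For fixed $l$ one writes $\sum_{\mathbf{s},\mathbf{r}} F_l(\mathbf{s},\mathbf{r})\,U_\mathbf{r}U_\mathbf{s} = T^{n-l}\op(a)T^l + (\text{error})$ with $a=\sum_k h^k a^{(k)}$ obtained by summing all the Moyal products. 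The decisive observation (Step 4) is the \emph{pointwise} bound on the principal part: since $a_q=\chi_q\alpha$ and $\sum_{q\in\mathcal{A}}\chi_q\leq 1$, one has
\[
|a^{(0)}| \;\leq\; \prod_{p}\Big(\sum_{q\in\mathcal{A}}|a_q|\Big)\circ F^{\pm p} \;\leq\; \|\alpha\|_\infty^{\,n},
\]
so the exponential-in-$n$ sum over words collapses by the partition-of-unity identity, not by orthogonality. The same mechanism, combined with the detailed bookkeeping of Proposition \ref{control_of_the_symbols}, controls the higher $a^{(k)}$ in $S_{\delta_1}$ (Step 5), and Calder\'on--Vaillancourt finishes the proof. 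This is the idea your proposal is missing.
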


\begin{proof}
Throughout the proof, we'll denote by $C$ quasi-global constants, i.e. constants depending on $C_0, \tau$ and the same other parameters as global constants. We will also be lead to use a constant $C_1$: it has the same dependence. 

\vspace{0.5cm}
\emph{Step 1: } First note that since $J_\mathbf{q}^+ \leq C_0 h^{-\tau}$, $n$ satisfies the bound $n=O (\log h )$. 
\vspace{0.5cm}

\emph{Step 2 : } If $\mathbf{q} \in \mathcal{Q}(n,\tau, C_0)$, denote by $l(\mathbf{q}) = l$ the largest integer such that 
$$ J_{q_0 \dots q_{l-1}}^+ \leq h^{-\tau /2}$$
Since $ J_{q_0 \dots q_{l}} > h^{-\tau /2}$, $ J_{q_0 \dots q_{l-1}}^+ > Ch^{-\tau/2}$ and hence 
$$J_{q_l \dots q_{n-1}}^+ \leq C \frac{h^{-\tau}}{J_{q_0 \dots q_{l-1}}^+} \leq C_1 h^{-\tau /2}$$
We can then write $\mathbf{q} = \mathbf{s} \mathbf{r}$ with $ \mathbf{s} \in \mathcal{Q}(l,\tau/2, 1) , \mathbf{r} \in \mathcal{Q}(n-l,\tau/2, C_1)$. 
It follows that we can write 

$$  \sum_{\mathbf{q} \in \mathcal{Q}} U_\mathbf{q} = \sum_{l=1}^n  \sum_{\substack{ \mathbf{s} \in \mathcal{Q}(l,\tau/2, 1) \\ \mathbf{r} \in \mathcal{Q}(n-l,\tau/2, C_1)}} F_l (\mathbf{s},\mathbf{r}) U_\mathbf{r} U_\mathbf{s} $$
with $F_l (\mathbf{s},\mathbf{r}) = \mathds{1}_{\mathbf{s}\mathbf{r} \in \mathcal{Q}}$.
It is then enough to show the bound 
\begin{equation}
\max_{1 \leq l \leq n } \left| \left| \sum_{\substack{ \mathbf{s} \in \mathcal{Q}(l,\tau/2, 1) \\ \mathbf{r} \in \mathcal{Q}(n-l,\tau/2, C_1)}} F_l (s,r) U_\mathbf{r} U_\mathbf{s}  \right| \right| \leq C||\alpha||_\infty^n
\end{equation}
In the following, we fix some $1 \leq l \leq n$ and we'll simply note $\sum_{\mathbf{s}, \mathbf{r}}$ to alleviate the notations. Note that the number of terms in the sum is bounded by 
 
 $$ | \mathcal{Q}(l,\tau/2, 1) \times \mathcal{Q}(n-l,\tau/2, C_1)| \leq |\mathcal{A}|^l \times |\mathcal{A}|^{n-l} \leq |\mathcal{A}|^n \leq h^{-Q}$$ 
 where $Q = C \log |\mathcal{A}|$. 
 
\vspace{0.5cm}
\emph{Step 3: } We fix some large $N \in \N$ and $\delta_1 \in (\tau/2 , 1/2)$. Recall that we can write, 

\begin{align*}
&U_\mathbf{s} = \left( \op \left( \sum_{k=0}^{N-1} h^k a_{\mathbf{s}}^{(k)} \right) + O_{L^2 \to L^2} \left(h^{(1-2 \delta_1)N - 15 \delta_1}||\alpha||_\infty^l  \right)  \right)T^{l} \\
 &U_\mathbf{r} = T^{n-l} \left( \op \left( \sum_{k=0}^{N-1} h^k a_{\mathbf{r}}^{(k)} \right) + O_{L^2 \to L^2} \left( h^{(1-2 \delta_1)N - 15 \delta_1}||\alpha||_\infty^{n-l}  \right)  \right)
\end{align*}
with bounds on $a_{\mathbf{s}}^{(k)}$ and $a_{\mathbf{r}}^{(k)}$ given by Proposition \ref{prop_symbols_a_q}. 

We then use the formula for the composition of operators in $\Psi_{\delta_1}^{comp}(T^*\R)$ (Lemma \ref{Moyal_produc_op}) and for simplicity, we note $\mathcal{L}_k (a , b) (\rho) = \frac{i^k}{k!}(A(D))^k (a \otimes b)(\rho, \rho)$.  For $0 \leq k \leq N-1$, we set 
$$a_{\mathbf{s},\mathbf{r}, k} = \sum_{j + k_- + k_+=k} \mathcal{L}_j \left( a_{\mathbf{r}}^{(k_-)}, a_{\mathbf{s}}^{(k_+)} \right)$$
Note that if $j+k_- + k_+ \geq N$, 

\begin{align*}
 ||a_{\mathbf{r}}^{(k_-)} \otimes a_{\mathbf{s}}^{(k_+)}||_{C^{2j+13}} 
&\leq C_j\sup_{m_+ + m_- = 2j+13} ||a_{\mathbf{r}}^{(k_-)}||_{C^{m_-}} ||a_{\mathbf{s}}^{(k_+)}||_{C^{m_+}} \\
&\leq C_{j,k_-,k_+}  h^{-(2k_-+m_-)\delta_1} h^{-(2k_-+m_+)\delta_1} ||\alpha||_\infty^n\\
& \leq C_{j,k_-,k_+}  h^{-2\delta_1(j+k_-+k_+) -  13\delta_1 }||\alpha||_\infty^n \\
&\leq C_{j,k_-,k_+}  h^{-2 \delta_1 N - 13 \delta_1}||\alpha||_\infty^n
\end{align*}
and henceforth, $$O\left( h^{j+k_-+k_+}  ||a_{\mathbf{r}}^{(k_-)} \otimes a_{\mathbf{s}}^{(k_+)}||_{C^{2j+13}} \right) = O\left( h^{(1-2\delta_1)N - 15 \delta_1}||\alpha||_\infty^n \right)$$
As a consequence, we can write 

$$U_\mathbf{r} U_\mathbf{s} = T^{n-l} \left(\op \left(\sum_{k=0}^{N-1} h^k a_{\mathbf{s}, \mathbf{r}, k} \right) \right) T^l + O_{L^2 \to L^2} \left( h^{(1-2\delta_1)N - 15\delta_1} ||\alpha||_\infty^n\right)$$
It follows that $$ \sum_{\mathbf{s}, \mathbf{r}} U_\mathbf{r} U_\mathbf{s} =T^{n-l} \left(\op \left(\sum_{k=0}^{N-1} h^ka^{(k)} \right) \right) T^l + O_{L^2 \to L^2} \left( h^{(1-2\delta_1)N - 15 \delta_1 - Q}||\alpha||_\infty^n \right)$$
where 
\begin{equation}
a^{(k)} = \sum_{\mathbf{s}, \mathbf{r}}  F(\mathbf{s}, \mathbf{r})  a_{\mathbf{s}, \mathbf{r}, k}
\end{equation}
Suppose that $N$ has been chosen such that 
$$ (1- 2 \delta_1)N > 15 \delta_1 + Q$$
The remainder term is thus controlled by the desired bound since it is of order $O(||\alpha||_\infty^n )$.

\vspace{0.5cm}
\emph{Step 4: $C^0$ norm of $a^{(0)}$.}
$$a^{(0)} = \sum_{\mathbf{s}, \mathbf{r}} F(\mathbf{s}, \mathbf{r}) a_{\mathbf{s}}^{(0)} a_{\mathbf{r}}^{(0)} $$
where, in virtue of (\ref{principal_symbol_aq+}) and (\ref{principal_symbol_aq-}), 
$$ a_{\mathbf{s}}^{(0)}  = \prod_{p=1}^{l}a_{s_{l-p}} \circ F^{-p} \; ; \;  a_{\mathbf{r}}^{(0)}  = \prod_{p=0}^{n-l-1}a_{r_p} \circ F^{p}$$
As a consequence, we can estimate
\begin{align*}
|a^{(0)}| &\leq \sum_{\mathbf{s}, \mathbf{r}} | a_{\mathbf{s}}^{(0)}| |a_{\mathbf{r}}^{(0)}| \\
&\leq  \prod_{p=1}^{l} \left( \sum_{q \in \mathcal{A} } |a_q| \right) \circ F^{-p}  \times  \prod_{p=0}^{n-l-1} \left( \sum_{q \in \mathcal{A} } |a_q| \right) \circ F^{p}   \\
&\leq || \alpha||_\infty^n 
\end{align*}

\emph{Step 5 : $C^m$ norms of $a^{(k)}$.} 
We will show the following : there exists constants $C_{k,m}$ (depending only on $C_0, \delta_1, \tau$ and $m,k$) such that for all $0 \leq k \leq N-1$ and $m \in \N$, 

\begin{equation}\label{bound_on_symbol_sum}
 ||a^{(k)} ||_{C^m} \leq C_{k,m} h^{-(2k +m)\delta_1 }||\alpha||_\infty^n
\end{equation}
Let's compute : 
\begin{align*}
||a^{(k)}||_{C^m} &\leq \sum_{\mathbf{s}, \mathbf{r}}  ||a_{\mathbf{s}, \mathbf{r},k}||_{C^m} \\
&\leq  \sum_{\mathbf{s}, \mathbf{r}}  \sum_{j + k_+ + k_- = k} \left| \left| \mathcal{L}_j \left( a_{\mathbf{r}}^{(k_-)} ,  a_{\mathbf{s}}^{(k_+)} \right) \right| \right|_{C^m} \\
&\leq \sum_{\mathbf{s}, \mathbf{r}}  \sum_{j + k_+ + k_- = k} \left| \left|a_{\mathbf{r}}^{(k_-)} \otimes a_{\mathbf{s}}^{(k_+)} \right| \right|_{C^{2j+m}} \\
&\leq \sum_{\mathbf{s}, \mathbf{r}}  \sum_{\substack{j + k_+ + k_- = k \\m_+ + m_- \leq m+2j}}  \left| \left|a_{\mathbf{r}}^{(k_-)} \right| \right|_{C^{m_-}}   \left| \left|a_{\mathbf{s}}^{(k_+)} \right| \right|_{C^{m_+}} \\
\end{align*} and hence 

\begin{equation}\label{last}
||a^{(k)}||_{C^m} \leq C_{k,m} \sup_{\substack{j + k_+ + k_- = k \\ m_+ + m_- \leq m+2j}}  \sum_{\mathbf{s}, \mathbf{r}}   \left| \left|a_{\mathbf{r}}^{(k_-)} \right| \right|_{C^{m_-}}   \left| \left|a_{\mathbf{s}}^{(k_+)} \right| \right|_{C^{m_+}} 
\end{equation}
Let us fix $j,k_+,k_-,m_+,m_-$ satisfying $j+k_++k_- = k, m_-+m_+ \leq m +2j$ and let us estimate $$ \sum_{\mathbf{s}}   \left| \left|a_{\mathbf{s}}^{(k_+)} \right| \right|_{C^{m_+}} \times \sum_{\mathbf{r}}\left| \left|a_{\mathbf{r}}^{(k_-)} \right| \right|_{C^{m_-}}  $$
We estimate the sum over $\mathbf{s}$. The same kind of estimates will hold for $\mathbf{r}$ with the same methods.  We reuse the tools developed in the last subsections. Namely, we set $N_+ = 2k_+ + m_+$, $\gamma_+ = (k_+,m_+)$, $\mathcal{I}=\mathcal{I}(\gamma_+)$ and $$\left( A_\mathbf{s}(\rho) \right) = \left(d^m a_\mathbf{s}^{(k)} \right)_{ k \leq k_+, 2k +m \leq N_+}$$
We have shown that there exists a global constant $C >0$ such that 
\begin{align*}
|| a_\mathbf{s}^{(k_+)} ||_{C^{m_+}} \leq \sup_{\rho} \left| \left| A_\mathbf{s}(\rho)\right| \right| &\leq C \sum_{\overrightarrow{\gamma} \in \mathcal{E}_l(\gamma_+) } \left| \left|  P_{\overrightarrow{\gamma}}(\rho)\right|\right| \\
&\leq \sum_{\overrightarrow{\gamma} \in \mathcal{E}_l(\gamma_+) } C_{N_+,k_+} \left( J_\mathbf{s}^+ \right)^{N_+} |a_{\mathbf{s}, \overrightarrow{\gamma}} (\rho)| \\
&\leq C_{N_+,k_+} h^{-\tau N_+ /2} \sum_{\overrightarrow{\gamma} \in \mathcal{E}_l(\gamma_+) } |a_{\mathbf{s}, \overrightarrow{\gamma}} (\rho)|
\end{align*} 
where  $C_{N_+,k_+}$ depends on $C_0, \tau, N_+, k_+$ and global parameters. 
We hence have to estimate 
$$ \sum_\mathbf{s} \sum_{\overrightarrow{\gamma} \in \mathcal{E}_l(\gamma_+) } |a_{\mathbf{s}, \overrightarrow{\gamma}} (\rho)|$$
Fix $\overrightarrow{\gamma} \in \mathcal{E}_l(\alpha_+)$ and write it 
$$ \overrightarrow{\gamma} =  ( \underbrace{\beta_1, \dots, \beta_1}_{t_1}, \underbrace{\beta_2, \dots, \beta_2}_{t_2-t_1}, \dots, \underbrace{\beta_d, \dots, \beta_d}_{t_d - t_{d-1}} , \underbrace{\beta_{d+1}, \dots, \beta_{d+1}}_{t_{d+1}- t_d} ) \text{ where } d \leq \Gamma_{k_+,m_+}$$
and recall that 
$$a_{\mathbf{s},\overrightarrow{\gamma}}= \prod_{p=1}^{d+1} \prod_{j=t_{p-1} +1}^{t_p -1} a_{s_j} \circ F^{-(l-j)} $$
When one sums over $\mathbf{s} \in \mathcal{A}^l$, the values of $\mathbf{s}$ at the indices $t_i, 1 \leq i \leq d$ do not play a role and we write : 

\begin{align*}
\sum_\mathbf{s} |a_{\mathbf{s},\overrightarrow{\gamma}}| &= \sum_{s_{t_1} \in \mathcal{A} } \dots \sum_{s_{t_d} \in \mathcal{A} }  \prod_{p=1}^{d+1} \prod_{j=t_{p-1}+1}^{t_p -1} \left( \sum_{s \in \mathcal{A}} |a_s| \right) \circ F^{-(l-j)} \\
&\leq |\mathcal{A}|^d \sup_{\rho} \left( \sum_{s \in \mathcal{A}} |a_s |\right)^{l} \\
&\leq K^{\Gamma_{k_+,m_+}}||\alpha||_\infty^l \leq C_{k_+,m_+}||\alpha||_\infty^l
\end{align*}
As a consequence, 
$$  \sum_\mathbf{s} \sum_{\overrightarrow{\gamma} \in \mathcal{E}_l(\gamma_+) } |a_{\mathbf{s}, \overrightarrow{\gamma}}| \leq \# \mathcal{E}_l(\gamma_+) C_{k_+,m_+} ||\alpha||_\infty^l\leq C_{k_+,m_+}  (l-1)^{\Gamma_{k_+,m_+}} ||\alpha||_\infty^l$$
which gives 
$$\sum_{\mathbf{s}}   \left| \left|a_{\mathbf{s}}^{(k_+)} \right| \right|_{C^{m_+}} \leq C_{k_+,m_+} h^{- \tau N_+/2} (l-1)^{\Gamma_{k_+,m_+}}  ||\alpha||_\infty^l\leq  C_{k_+,m_+} h^{ -\delta_1 N_+} ||\alpha||_\infty^l$$
where the last inequality (with a different value of $C_{k_+,m_+}$) follows from the fact that $ l =O(\log h)$ and $\delta_1 > \frac{\tau}{2}$. The same kind of estimates holds for the sum over $\mathbf{r}$ : 

$$\sum_{\mathbf{r}}\left| \left|a_{\mathbf{r}}^{(k_-)} \right| \right|_{C^{m_-}} \leq C_{k_-,m_-} h^{ -\delta_1 N_-} ||\alpha||_\infty^{n-l} $$
Eventually, using (\ref{last}), we get (\ref{bound_on_symbol_sum}) since $$N_+ + N_- = 2k_+ + m_+ + 2k_- + m_- \leq 2(k_+ + k_- + j) +m  = 2k + m $$

\emph{Step 6 : Conclusion.} We can conclude the proof of the Proposition \ref{sum_over_many_words}. The bound (\ref{bound_on_symbol_sum}) shows that for $0 \leq k \leq N-1$,  $a^{(k)} \in h^{-2k \delta_1}||\alpha||_\infty^n S_{\delta_1}^{comp}$ and thus $\sum_{k=0}^{N-1}  h^k a^{(k)} \in S_{\delta_1}^{comp}||\alpha||_\infty^n$. From the $L^2$-boundedness of pseudodifferential operators with symbol in $S_{\delta_1} $,
 $$ \left| \left| \op\left(\sum_{k=0}^{N-1} h^k a^{(k)}  \right) \right|\right|  \leq \sum_{k=0}^{N-1} \sum_{ m \leq M} h^{k+m/2} || a^{(k)}||_{C^m }\leq \sum_{k=0}^{N-1} \sum_{ m \leq M} C_{k,m} h^{(k+2m)(1/2 - \delta_1)} ||\alpha||_\infty^n \leq C||\alpha||_\infty^n $$ 
where $C$ depends only on $C_0, \tau, \delta_1$. Since $||T||\leq 1$, we get 
 $$ \left| \left| \sum_{\mathbf{s}, \mathbf{r}} F(\mathbf{s}, \mathbf{r}) U_\mathbf{r} U_\mathbf{s} \right|\right| \leq C ||\alpha||_\infty^n $$ which concludes the proof of Proposition \ref{sum_over_many_words}.
\end{proof}

\subsection{Manipulations of the $U_\mathbf{q}$}
\subsubsection{First consequences}
We now make use of Proposition \ref{prop_symbols_a_q} to deduce several important facts. We go on following \cite{NDJ19}. In the whole subsection, we fix $0 \leq \delta < \delta_1 < \frac{1}{2}$ and $C_0 >0$. We denote $\mathcal{A}^{\rightarrow} = \bigcup_{n \in \N}  \mathcal{A}^n$. 

\begin{rem} 
The constants in $\hinf$ depend on $\mathbf{p}$ and $\mathbf{q}$ only through $C_0, \delta, \delta_1$, not on the precise value of $\mathbf{p}$ and $\mathbf{q}$. It will always be the case in the following and we won't precise it anymore. As already done, all the quasi-global constants (i.e. depending on global parameters and $C_0, \delta, \tau, \delta_1$) will be noted by the letter $C$. 
\end{rem}

\begin{lem}\label{empty_+-}
Let $\mathbf{q}, \mathbf{p} \in \mathcal{A}^{\rightarrow}$ satisfying $\mathcal{V}_\mathbf{q}^+ \cap \mathcal{V}_\mathbf{p}^- = \emptyset$ and $\max (J_\mathbf{q}^+, J_\mathbf{p}^- ) \leq C_0 h^{-\delta}$. Then $$U_{\mathbf{p}} U_\mathbf{q} = \hinf_{L^2 \to L^2}$$
\end{lem}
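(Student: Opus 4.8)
The plan is to use Proposition \ref{prop_symbols_a_q} to replace $U_\mathbf{q}$ and $U_\mathbf{p}$ by their microlocal normal forms and then exploit the disjointness of the supports together with the microlocal unitarity of $T$. Concretely: since $J_\mathbf{q}^+ \leq C_0 h^{-\delta}$ with $\delta < \delta_1 < 1/2$, part (1) of Proposition \ref{prop_symbols_a_q} gives a symbol $a_\mathbf{q}^+ \in \|\alpha\|_\infty^{|\mathbf{q}|} S_{\delta_1}^{comp}$ supported in $\mathcal{V}_\mathbf{q}^+$ with $U_\mathbf{q} = \op(a_\mathbf{q}^+) T^{|\mathbf{q}|} + \hinf$. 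Similarly, since $J_\mathbf{p}^- \leq C_0 h^{-\delta}$, part (2) gives $a_\mathbf{p}^- \in \|\alpha\|_\infty^{|\mathbf{p}|} S_{\delta_1}^{comp}$ supported in $\mathcal{V}_\mathbf{p}^-$ with $U_\mathbf{p} = T^{|\mathbf{p}|}\op(a_\mathbf{p}^-) + \hinf$. Multiplying, and using that both $|\mathbf{q}|, |\mathbf{p}| = O(|\log h|)$ (forced by the Jacobian bounds) together with the polynomial $L^2$ bounds on $U_\mathbf{q}, U_\mathbf{p}$ (so that the $\hinf$ error terms, when composed with operators of norm $O(h^{-K})$, remain $\hinf$), we get
$$ U_\mathbf{p} U_\mathbf{q} = T^{|\mathbf{p}|}\op(a_\mathbf{p}^-)\op(a_\mathbf{q}^+) T^{|\mathbf{q}|} + \hinf. $$

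The heart of the argument is then that $\op(a_\mathbf{p}^-)\op(a_\mathbf{q}^+) = \hinf$. This is the pseudodifferential composition of two operators whose symbols have essential supports contained respectively in $\mathcal{V}_\mathbf{p}^-$ and $\mathcal{V}_\mathbf{q}^+$, and by hypothesis $\mathcal{V}_\mathbf{q}^+ \cap \mathcal{V}_\mathbf{p}^- = \emptyset$. Since the $\mathcal{V}$'s are open and the symbols $S_{\delta_1}^{comp}$, their essential supports are at a fixed positive distance apart; one can then invoke the standard stationary-phase / Moyal-product argument (Lemma \ref{Moyal_produc_op} applied after inserting a cutoff, or directly the non-intersecting wavefront sets) to conclude the composition is $O(h^\infty)$ in every seminorm, i.e. $\hinf$. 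A slightly delicate point is that $\delta_1 < 1/2$ must be used so that the remainder in the Moyal expansion really is negligible; the distance between supports being $h$-independent makes all terms in the asymptotic expansion vanish identically near the diagonal and the remainder is genuinely $O(h^\infty)$.

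Finally, composing on the left by $T^{|\mathbf{p}|}$ and on the right by $T^{|\mathbf{q}|}$, which are bounded by $O(1)$ (indeed $\|T\| \leq 1 + O(h^\varepsilon)$ and $|\mathbf{p}|,|\mathbf{q}| = O(|\log h|)$, so $\|T^{|\mathbf{p}|}\|, \|T^{|\mathbf{q}|}\| = O(1)$), preserves the $\hinf$ property. Hence $U_\mathbf{p}U_\mathbf{q} = \hinf_{L^2 \to L^2}$, with constants depending only on $C_0, \delta, \delta_1$ and global parameters. The main obstacle — really the only non-bookkeeping step — is the clean statement that two pseudodifferential operators with disjoint essential supports compose to something $\hinf$; everything else is tracking that the logarithmic word lengths and polynomial operator bounds do not spoil the $O(h^\infty)$ errors, which is exactly the ``important remark'' of Section \ref{Numerology}.
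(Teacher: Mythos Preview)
Your proof is correct and follows essentially the same approach as the paper's: apply Proposition~\ref{prop_symbols_a_q} to write $U_\mathbf{p}=T^{|\mathbf{p}|}\op(a_\mathbf{p}^-)+\hinf$ and $U_\mathbf{q}=\op(a_\mathbf{q}^+)T^{|\mathbf{q}|}+\hinf$, then use disjoint supports and the $S_{\delta_1}$ composition formula to get $\op(a_\mathbf{p}^-)\op(a_\mathbf{q}^+)=\hinf$, and conclude via $\|T^n\|=O(1)$. One small remark: your claim that the supports are at a \emph{fixed} ($h$-independent) positive distance is neither needed nor necessarily true---the sets $\mathcal{V}_\mathbf{q}^+,\mathcal{V}_\mathbf{p}^-$ depend on $h$ through the word lengths---but disjointness alone suffices, since every finite-order term in the Moyal expansion (Lemma~\ref{Moyal_produc_op}) vanishes identically and the remainder is controlled by the $S_{\delta_1}$ seminorms with $\delta_1<1/2$.
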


\begin{proof}
In virtue of Proposition \ref{prop_symbols_a_q}, we can write 
$$ U_\mathbf{p} = T^{|\mathbf{p}|} \op(a_\mathbf{p}^- ) + \hinf $$
 $$ U_\mathbf{q} = \op(a_\mathbf{q}^+)  T^{|\mathbf{q}|} + \hinf $$
 With $a_\mathbf{q}^+ \in ||\alpha||_\infty^{|\mathbf{q}|} S_{\delta_1}^{comp}, a_\mathbf{p}^- \in ||\alpha||_\infty^{|\mathbf{p}|}S_{\delta_1}^{comp}$ and $\supp a_\mathbf{p}^- \subset \mathcal{V}_\mathbf{p}^- , \supp a_\mathbf{q}^+ \subset \mathcal{V}_\mathbf{q}^+$. Since $\mathcal{V}_\mathbf{q}^+ \cap \mathcal{V}_\mathbf{p}^- = \emptyset$,  $\op(a_\mathbf{p}^- )\op(a_\mathbf{q}^+) = \hinf$ as a consequence of the composition of two symbols of $S_{\delta_1}$. The constants in $\hinf$ depend on semi-norms of these symbols, themselves depending on $C_0, \tau, \delta_1$. Since $T^n = O(1)$, the result is proved. 
\end{proof}

Lemma \ref{empty_+-} will have interesting consequences, starting with the following lemma which enables use to get rid (that is to say to control by $\hinf$) of words $\mathbf{q}$ where $\mathcal{V}_\mathbf{q}^\pm = \emptyset$, under some assumptions. In particular, it can be applied without trouble to words of "small" lengths $N \leq \frac{1}{2 \lambda_1} | \log h|$, what could also be deduced from applying Egorov's theorem up to the global Ehrenfest time $\frac{1}{2 \lambda_1} |\log h|$. 

\begin{lem}\label{empty_V_q}
Let $\mathbf{q} \in \mathcal{A}^{\rightarrow}$ such that $n=|\mathbf{q}| \leq C_0| \log h|$ and assume that $\mathcal{V}_\mathbf{q}^- = \emptyset$. We suppose that one of the above assumptions is satisfied : 
\begin{enumerate}[label=(\roman*)]
\item If $m = \max \{ k \in \{1, \dots, n \} , \mathcal{V}_{q_0 \dots q_{k-1}}^- \neq \emptyset \} $, $J_{q_0 \dots q_{m-1}}^- \leq C_0 h^{-2 \delta}$. 
\item If $m = \min \{ k \in \{0, \dots, n-1 \} , \mathcal{V}_{q_m \dots q_{n-1}}^- \neq \emptyset \} $, $J_{q_m \dots q_{n-1}}^- \leq C_0 h^{-2 \delta}$. 
\end{enumerate}
Then, $U_\mathbf{q} = \hinf$. 
\end{lem}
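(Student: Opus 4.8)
The statement is about a word $\mathbf{q}$ of logarithmic length for which $\mathcal{V}_\mathbf{q}^-=\emptyset$, but along which a sub-prefix (case (i)) or a sub-suffix (case (ii)) does have a nonempty propagated neighbourhood with controlled Jacobian. The strategy is to split $\mathbf{q}$ at the critical index $m$, apply Proposition \ref{prop_symbols_a_q} to the piece whose Jacobian is controlled, and then use Lemma \ref{empty_+-} (or a direct support argument) to kill the composition. First I would treat case (i). Write $\mathbf{q}=\mathbf{q}''\mathbf{q}'$ where $\mathbf{q}'=q_0\dots q_{m-1}$ is the longest prefix with $\mathcal{V}_{\mathbf{q}'}^-\ne\emptyset$, and $\mathbf{q}''=q_m\dots q_{n-1}$. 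By maximality, the word $q_0\dots q_m$ already has empty propagated neighbourhood, i.e. $\mathcal{V}_{\mathbf{q}'}^-\cap F^{-m}(\mathcal{V}_{q_m})=\emptyset$, equivalently $F^m(\mathcal{V}_{\mathbf{q}'}^-)\cap\mathcal{V}_{q_m}=\emptyset$; but $F^m(\mathcal{V}_{\mathbf{q}'}^-)=\mathcal{V}_{\mathbf{q}'}^+$, so $\mathcal{V}_{\mathbf{q}'}^+\cap\mathcal{V}_{q_m}=\emptyset$.

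Now, by hypothesis (i), $J_{\mathbf{q}'}^-\le C_0 h^{-2\delta}$, hence (Lemma \ref{manipulations_J_q}, point 1) also $J_{\mathbf{q}'}^+\lesssim C_0 h^{-2\delta}$. Applying Proposition \ref{prop_symbols_a_q}(1) with the exponent $2\delta<2\delta_1<1$ — here one must first have fixed $\delta,\delta_1$ with $2\delta_1<1$, which is why the ambient hypotheses in the subsection only require $\delta<\delta_1<1/2$ but one should read this lemma with $\delta$ replaced by $2\delta$ and $\delta_1$ adjusted accordingly, or simply invoke the proposition for the smaller pair — we get a symbol $a_{\mathbf{q}'}^+\in\|\alpha\|_\infty^{m}S_{\delta_1}^{comp}$ with $\supp a_{\mathbf{q}'}^+\subset\mathcal{V}_{\mathbf{q}'}^+$ such that $U_{\mathbf{q}'}=\op(a_{\mathbf{q}'}^+)T^{m}+O(h^\infty)_{L^2\to L^2}$. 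Then
\[
U_\mathbf{q}=U_{\mathbf{q}''}U_{\mathbf{q}'}=U_{\mathbf{q}''}\,\op(a_{\mathbf{q}'}^+)\,T^{m}+O(h^\infty).
\]
Since $\mathcal{V}_{\mathbf{q}'}^+\cap\mathcal{V}_{q_m}=\emptyset$ and $\supp a_{\mathbf{q}'}^+\subset\mathcal{V}_{\mathbf{q}'}^+$, while $U_{\mathbf{q}''}$ to the left of $T^{m}$ is of the form $M A_{q_{n-1}}\dots M A_{q_m}$ whose rightmost factor $A_{q_m}=\op(\chi_{q_m})$ has wavefront set in $\mathcal{V}_{q_m}$, the composition $\op(\chi_{q_m})\op(a_{\mathbf{q}'}^+)$ is $O(h^\infty)$ by disjointness of essential supports (both symbols being in $S_{\delta_1}^{comp}$). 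Using that $U_{\mathbf{q}''}$ and $T^m$ are $O(h^{-K})$ for $K$ controlled by $n=O(\log h)$, this $O(h^\infty)$ propagates to $U_\mathbf{q}=O(h^\infty)_{L^2\to L^2}$. Case (ii) is handled symmetrically: set $\mathbf{q}=\mathbf{q}''\mathbf{q}'$ with $\mathbf{q}''=q_m\dots q_{n-1}$ the minimal suffix with $\mathcal{V}_{\mathbf{q}''}^-\ne\emptyset$, so $q_{m-1}\dots q_{n-1}$ has empty propagated neighbourhood, giving $\mathcal{V}_{q_{m-1}}\cap F(\mathcal{V}_{\mathbf{q}''}^-)=\emptyset$, i.e. (shifting by one step) the neighbourhood $\mathcal{V}_{\mathbf{q}''}^-$ does not meet $F^{-1}(\mathcal{V}_{q_{m-1}})$; apply Proposition \ref{prop_symbols_a_q}(2) to $\mathbf{q}''$ to write $U_{\mathbf{q}''}=T^{n-m}\op(a_{\mathbf{q}''}^-)+O(h^\infty)$ with $\supp a_{\mathbf{q}''}^-\subset\mathcal{V}_{\mathbf{q}''}^-$, and compose with $U_{\mathbf{q}'}=MA_{q_{m-1}}\dots$ whose leftmost cutoff sits in $\mathcal{V}_{q_{m-1}}$, then conclude by the same disjoint-support argument (working one reflection step at a time, or directly with $\op(a_{\mathbf{q}''}^-)\op(\chi_{q_{m-1}})=O(h^\infty)$).

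The main obstacle — and the only genuinely non-routine point — is the bookkeeping of the disjointness of the essential supports through the Fourier integral operators: one cannot directly multiply the symbol $a_{\mathbf{q}'}^+$ against a cutoff living "one dynamical step away", so one must peel off a single factor $M A_{q_m}$ from $U_{\mathbf{q}''}$, use that $T$ is a Fourier integral operator associated with $F$ to transport the essential support of $a_{\mathbf{q}'}^+$ correctly, and only then invoke the disjointness $\mathcal{V}_{\mathbf{q}'}^+\cap\mathcal{V}_{q_m}=\emptyset$. Equivalently, one can rephrase everything so as to apply Lemma \ref{empty_+-} directly: since $J_{\mathbf{q}'}^\pm\le C_0 h^{-2\delta}$ and $J_{q_m}^\pm=O(1)$, the hypotheses of Lemma \ref{empty_+-} are met for the pair $(\mathbf{q}',q_m)$ (after possibly replacing $\delta$ by $2\delta$ in its statement, which is harmless as long as $2\delta<1/2$ or more precisely $2\delta_1<1$), giving $U_{q_m}U_{\mathbf{q}'}=O(h^\infty)$ and hence $U_\mathbf{q}=U_{q_{n-1}\dots q_{m+1}}(U_{q_m}U_{\mathbf{q}'})=O(h^\infty)$, with constants uniform in $\mathbf{q}$ because $n=O(\log h)$ and all intermediate operators are polynomially bounded in $h^{-1}$. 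I would present the argument in this second, cleaner form, checking carefully that the Jacobian bound $C_0h^{-2\delta}$ still falls within the range of validity of Proposition \ref{prop_symbols_a_q} used inside Lemma \ref{empty_+-}.
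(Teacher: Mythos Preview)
Your identification of the key disjointness $\mathcal{V}_{q_0\dots q_{m-1}}^+ \cap \mathcal{V}_{q_m} = \emptyset$ is correct, but the direct application of Proposition \ref{prop_symbols_a_q} (or of Lemma \ref{empty_+-}) to $\mathbf{q}' = q_0\dots q_{m-1}$ fails in general: the hypothesis only gives $J_{\mathbf{q}'}^\pm \le C_0 h^{-2\delta}$, while both results require the Jacobian exponent to lie strictly below $1/2$. This is the constraint $\delta_1 < 1/2$ on the symbol class, needed for the pseudodifferential calculus in $S_{\delta_1}$ and for the Borel summation with parameter $h^{1-2\delta_1}$ in the proof of Proposition \ref{prop_symbols_a_q}. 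Since the ambient $\delta$ ranges over all of $[0, 1/2)$, the exponent $2\delta$ can be anywhere in $[0,1)$, and the lemma is in fact applied later with $2\delta = \tau$ close to $1$. Your parenthetical condition ``$2\delta_1 < 1$'' is therefore not the right one; what would be needed is $2\delta < 1/2$, which is not assumed.

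The paper's proof closes this gap by inserting an intermediate cut at the index $l = \max\{k \le m : J_{q_0\dots q_{k-1}}^- \le h^{-\delta}\}$. By maximality of $l$ one has $J_{q_0\dots q_l}^- > h^{-\delta}$, and combining this with $J_{q_0\dots q_{m-1}}^- \le C_0 h^{-2\delta}$ and the multiplicativity in Lemma \ref{manipulations_J_q} gives $J_{q_{l+1}\dots q_{m-1}}^- \le C h^{-\delta}$. One then splits $U_{q_0\dots q_m}$ at $l$ (distinguishing whether $\mathcal{V}_{q_l\dots q_m}^-$ is empty or not) and applies Lemma \ref{empty_+-} to two pieces each carrying a Jacobian bounded by $C h^{-\delta}$. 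This halving of the Jacobian exponent is the missing step in your argument.
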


\begin{proof}
We prove this lemma under assumption (i). This is similar under (ii). We note $m= \max \{ k \in \{1, \dots, n \} , \mathcal{V}_{q_0 \dots q_{m-1}}^- \neq \emptyset \}$ and assume $J_{q_0 \dots q_{m-1}}^- \leq C_0 h^{-2 \delta}$. Due to (\ref{bound_trivial_Uq}), it is enough to show that $U_{q_0 \dots q_m} = \hinf$.  Let us denote $l = \max \{ k \in \{ 1, \dots, m \}, J_{q_0 \dots q_{l-1}}^- \leq h^{-\delta} \}$ and notice that $l <m$ (if $h$ is small enough). By maximality of $l$, it is clear that $J_{q_0 \dots q_l}^- \geq h^{-\delta}$. According to the third point of Lemma \ref{manipulations_J_q}, 
$$ J_{q_{l+1} \dots q_{m-1}}^- \sim \frac{J_{q_0 \dots q_{m-1}}^-}{J_{q_0 \dots q_l}^-} \leq Ch^{-\delta}$$
Set $\mathbf{p} = q_l \dots q_{m}$. We distinguish now between two cases
\begin{itemize}[label = \ding{228}]
\item  $\mathcal{V}_\mathbf{p}^- \neq \emptyset$ :  We set $\mathbf{r} = q_0 \dots q_{l-1}$.  It follows that $$\max( J_\mathbf{p}^-,J_\mathbf{r}^-)  \leq Ch^{-\delta}$$ Moreover, $$\mathcal{V}_\mathbf{p}^- \cap \mathcal{V}_\mathbf{r}^+ = F^l \left( \mathcal{V}_{q_0 \dots q_m}^- \right) = \emptyset$$
By Lemma \ref{empty_+-}, $U_\mathbf{p} U_\mathbf{r} = U_{q_0 \dots q_m} = \hinf$. 
\item $\mathcal{V}_\mathbf{p}^- = \emptyset$ : This time, we have $\max (J_{q_l \dots q_{m-1}}^-, J_{q_m}^-) \leq Ch^{-\delta}$ and $\mathcal{V}_{q_m}^- \cap \mathcal{V}_{q_l \dots q_{m-1}}^+ = \emptyset$. According to Lemma \ref{empty_+-}, $U_{q_l \dots q_m} = U_{q_m} U_{q_l \dots q_{m-1}} = \hinf$. It follows that $U_{q_0 \dots q_m} = \hinf$. 
\end{itemize}
\end{proof}

\subsubsection{Orthogonality of the $U_\mathbf{q}$}
We now focus on terms $U_\mathbf{q} U_\mathbf{p}^*$ and $U_\mathbf{q}^* U_\mathbf{p}$ when $\mathcal{V}_\mathbf{q}^+ $ and $\mathcal{V}_\mathbf{p}^+ $ are disjoint, under  growth conditions of the Jacobian. The following result shows that the operators $U_\mathbf{q}$ and $U_\mathbf{p}$ are (up to $\hinf$) orthogonal. These estimates will turn out to be important to apply Cotlar-Stein type estimates.

\begin{prop}\label{orthogoality_disjoint_support}
Assume that $\mathbf{q}, \mathbf{p} \in \mathcal{A}^{\rightarrow}$ are two words of same length $|\mathbf{q}| = | \mathbf{p}|=n$ satisfying $\mathcal{V}_\mathbf{q}^+ \cap \mathcal{V}_\mathbf{p}^+ = \emptyset$ and $\max ( J_\mathbf{q}^+, J_\mathbf{p}^+) \leq C_0 h^{-2 \delta}$. Then, 
\begin{align*}
U_\mathbf{q} U_\mathbf{p}^* = \hinf \\
U_\mathbf{q}^* U_\mathbf{p} = \hinf 
\end{align*}
\end{prop}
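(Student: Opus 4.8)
The plan is to prove the two statements $U_\mathbf{q}U_\mathbf{p}^* = \hinf$ and $U_\mathbf{q}^*U_\mathbf{p} = \hinf$ separately, each by expressing the relevant operators in the microlocal normal form provided by Proposition \ref{prop_symbols_a_q} and then exploiting the disjointness of the supports together with the microlocal unitarity of $T$. The hypothesis $\max(J_\mathbf{q}^+, J_\mathbf{p}^+) \leq C_0 h^{-2\delta}$ guarantees (by the first point of Lemma \ref{manipulations_J_q}, $J_\mathbf{q}^- \sim J_\mathbf{q}^+$, and possibly shrinking $\delta$) that $\mathbf{q}$ and $\mathbf{p}$ satisfy the assumptions $(\ref{Control_local_jacobian})$ and $(\ref{Control_local_jacobian_-})$ with a slightly larger exponent; since the parameters $\delta < \delta_1 < 1/2$ in Proposition \ref{prop_symbols_a_q} are free, one can choose $\delta_1$ accordingly. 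This is the mechanism that lets us use both the $a^+$ and the $a^-$ forms.

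For the first identity $U_\mathbf{q}U_\mathbf{p}^* = \hinf$, I would use the $a^+$-form for both: $U_\mathbf{q} = \op(a_\mathbf{q}^+)T^n + \hinf$ and $U_\mathbf{p} = \op(a_\mathbf{p}^+)T^n + \hinf$, so that $U_\mathbf{p}^* = (T^n)^*\op(\overline{a_\mathbf{p}^+}) + \hinf$ (the Weyl quantization of a symbol has adjoint the quantization of the conjugate symbol). Then
$$
U_\mathbf{q}U_\mathbf{p}^* = \op(a_\mathbf{q}^+)\,T^n (T^n)^*\,\op(\overline{a_\mathbf{p}^+}) + \hinf.
$$
Since $T$ is microlocally unitary near $\mathcal{T}$ and $\supp a_\mathbf{p}^+ \subset \mathcal{V}_\mathbf{p}^+$ lies in a small neighborhood of $\mathcal{T}$ (which we may assume is contained in the region $\Omega$ where the microlocal unitarity holds, after checking that $\mathcal{V}_\mathbf{p}^+$ does lie in such a neighborhood — this uses that words with $\mathcal{V}^+ \neq \emptyset$ stay near $\mathcal{T}$, cf.\ Lemma \ref{help_def_jacobian_2}), we have $T^n(T^n)^*\op(\overline{a_\mathbf{p}^+}) = \op(\overline{a_\mathbf{p}^+}) + \hinf\|\,\cdot\,\|$. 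Hence $U_\mathbf{q}U_\mathbf{p}^* = \op(a_\mathbf{q}^+)\op(\overline{a_\mathbf{p}^+}) + \hinf$, and since $\supp a_\mathbf{q}^+ \cap \supp a_\mathbf{p}^+ \subset \mathcal{V}_\mathbf{q}^+ \cap \mathcal{V}_\mathbf{p}^+ = \emptyset$, the composition of these two $S_{\delta_1}$-symbols with disjoint essential supports is $\hinf$ by the Moyal calculus (Lemma \ref{Moyal_produc_op}), with constants controlled by finitely many semi-norms, themselves bounded in terms of $C_0,\delta,\delta_1$.

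For the second identity $U_\mathbf{q}^*U_\mathbf{p} = \hinf$ the same scheme applies with the roles of the $a^+$ and $a^-$ forms chosen so that the $T^n$ and $(T^n)^*$ meet in the middle: write $U_\mathbf{q} = \op(a_\mathbf{q}^+)T^n + \hinf$, so $U_\mathbf{q}^* = (T^n)^*\op(\overline{a_\mathbf{q}^+}) + \hinf$, and again $U_\mathbf{p} = \op(a_\mathbf{p}^+)T^n + \hinf$. Then $U_\mathbf{q}^*U_\mathbf{p} = (T^n)^*\op(\overline{a_\mathbf{q}^+})\op(a_\mathbf{p}^+)T^n + \hinf$, and $\op(\overline{a_\mathbf{q}^+})\op(a_\mathbf{p}^+) = \hinf$ already because the supports are disjoint, so the whole thing is $\hinf$ after conjugating by the bounded operators $(T^n)^*$ and $T^n$. (Alternatively one could use the $a^-$ forms at the price of needing $\mathcal{V}_\mathbf{q}^- \cap \mathcal{V}_\mathbf{p}^- = \emptyset$, which holds since $\mathcal{V}^-_\mathbf{q} = F^{-n}(\mathcal{V}^+_\mathbf{q})$ and $F$ is a bijection.) The only genuine point requiring care — and the step I expect to be the main obstacle — is justifying that the microlocal unitarity of $T$ can be invoked: one must verify that the symbols $a_\mathbf{q}^\pm, a_\mathbf{p}^\pm$ are microsupported in the fixed neighborhood $\Omega$ of $\mathcal{T}$ on which $TT^* = \Id + \hinf$ and $T^*T = \Id + \hinf$, uniformly in $\mathbf{q}, \mathbf{p}$ and $n = O(|\log h|)$. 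This is where one uses that $\mathcal{V}_\mathbf{q}^+$ (being nonempty) is $\varepsilon_2$-close to a genuine orbit in $\mathcal{T}$ by Lemma \ref{help_def_jacobian_2}, so that, $\varepsilon_0$ being chosen small enough at the outset, all the $\mathcal{V}_\mathbf{q}^\pm$ with nonempty interior sit inside $\Omega$; combined with the uniformity of the remainders in Proposition \ref{prop_symbols_a_q} (valid precisely because of the Jacobian bound) and the fact that products of $O(|\log h|)$ factors of $O(h^\infty)$ remain $O(h^\infty)$, this yields the claim with uniform constants.
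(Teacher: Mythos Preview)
There is a genuine gap in your argument, and it lies precisely in the sentence where you wave your hands: ``since the parameters $\delta<\delta_1<1/2$ in Proposition \ref{prop_symbols_a_q} are free, one can choose $\delta_1$ accordingly.'' The hypothesis of the proposition is $J_\mathbf{q}^+\le C_0h^{-2\delta}$, so the effective exponent you must feed into Proposition \ref{prop_symbols_a_q} is $2\delta$, not $\delta$. But $\delta$ ranges over $[0,1/2)$ (this is fixed at the start of the subsection), so $2\delta$ can be any number in $[0,1)$; in particular it may exceed $1/2$, and then \emph{no} admissible $\delta_1<1/2$ exists. This is not a cosmetic restriction: in the application (proof of the proposition following Definition \ref{close_to_each_other}) the relevant value is $\delta=(\delta_0+\mathfrak b)/2=(1+\mathfrak b)/4\ge 3/8$, so $2\delta\ge 3/4$. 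Your direct invocation of Proposition \ref{prop_symbols_a_q} on the full words $\mathbf q,\mathbf p$ is therefore illegitimate exactly in the regime of interest.

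The paper's proof is designed around this obstruction. It first establishes an auxiliary lemma (by induction on the word length, peeling off one letter at a time and using $TT^*=\Id+\hinf$ microlocally at each step) showing that under the \emph{stronger} bound $J^+\le C_0h^{-\delta}$ one has $U_\mathbf{q}U_\mathbf{p}^*=\op(a_\mathbf{q}^+)\op(a_\mathbf{p}^+)^*+\hinf$. Then, for the proposition itself, it splits each word at the largest index $l$ for which $\max(J_{q_0\cdots q_{l-1}}^+,J_{p_0\cdots p_{l-1}}^+)\le h^{-\delta}$, writes $\mathbf q=\mathbf q_\leftarrow\mathbf q_\rightarrow$, and argues in two cases according to whether $\mathcal V_{\mathbf q_\leftarrow}^+\cap\mathcal V_{\mathbf p_\leftarrow}^+$ is empty or not. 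In the second case the multiplicativity of Jacobians (Lemma \ref{manipulations_J_q}) forces $J_{\mathbf q_\rightarrow}^+\sim h^{-\delta}$ as well, so all four half-words satisfy the good bound and one can bring four symbols together whose supports have empty common intersection. This halving trick is the missing idea; without it your scheme only proves the proposition under the additional restriction $\delta<1/4$.

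A secondary remark: your handling of $U_\mathbf{q}U_\mathbf{p}^*$ via $T^n(T^n)^*\op(\overline{a_\mathbf{p}^+})=\op(\overline{a_\mathbf{p}^+})+\hinf$ is not free either, since microlocal unitarity of $T$ does not immediately give microlocal unitarity of $T^n$ for $n\sim|\log h|$ without an inductive argument tracking where the wavefront sets land at each step. You correctly note that using the $a^-$ form instead (so that $T^n$ and $(T^n)^*$ sit on the outside as bounded operators) sidesteps this; had you done so for both identities, this point would disappear. But the exponent gap above remains.
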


Before proving it, we need the following lemma, whose proof relies on the iterative construction of the symbols $a_\mathbf{q}^\pm$.

\begin{lem}
Assume that $\mathbf{q}, \mathbf{p} \in \mathcal{A}^{\rightarrow}$ are two words of same length $|\mathbf{q}| = | \mathbf{p}|=n$ satisfying $\max ( J_\mathbf{q}^+, J_\mathbf{p}^+) \leq C_0 h^{- \delta}$. Then, 
\begin{align*}
U_\mathbf{q} U_\mathbf{p}^* =  \op(a_\mathbf{q}^+ )  \op(a_\mathbf{p}^+ )^* + \hinf \\
U_\mathbf{q}^* U_\mathbf{p} = \op(a_\mathbf{q}^- )^*    \op(a_\mathbf{p}^- ) + \hinf \\
\end{align*}
\end{lem}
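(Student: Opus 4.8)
The strategy is to feed the two representations of $U_\mathbf{q}$ and $U_\mathbf{p}$ given by Proposition~\ref{prop_symbols_a_q} into the products $U_\mathbf{q}U_\mathbf{p}^*$ and $U_\mathbf{q}^*U_\mathbf{p}$, and then to remove the surviving strings of $T$'s by a step-by-step use of the microlocal unitarity of $T$ along the orbit. First I would note that, since $J_\mathbf{q}^-\sim J_\mathbf{q}^+$ and $J_\mathbf{p}^-\sim J_\mathbf{p}^+$ by point~(1) of Lemma~\ref{manipulations_J_q}, the hypothesis $\max(J_\mathbf{q}^+,J_\mathbf{p}^+)\le C_0h^{-\delta}$ also yields $\max(J_\mathbf{q}^-,J_\mathbf{p}^-)\le C_0'h^{-\delta}$. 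Hence Proposition~\ref{prop_symbols_a_q} applies in both directions: with $n=|\mathbf{q}|=|\mathbf{p}|$,
$$U_\mathbf{q}=\op(a_\mathbf{q}^+)\,T^n+\hinf=T^n\op(a_\mathbf{q}^-)+\hinf,\qquad \supp a_\mathbf{q}^\pm\subset\mathcal{V}_\mathbf{q}^\pm,$$
and likewise for $\mathbf{p}$, all remainders being uniform in $\mathbf{q},\mathbf{p}$. Taking adjoints (for Weyl quantization $\op(a)^*=\op(\bar a)$, so $\WF(\op(a_\mathbf{p}^\pm)^*)\subset\mathcal{V}_\mathbf{p}^\pm$) and absorbing the cross terms using $\|\op(a_\bullet^\pm)T^n\|=O(\|\alpha\|_\infty^n)=O(h^{-K})$, one arrives at
$$U_\mathbf{q}U_\mathbf{p}^*=\op(a_\mathbf{q}^+)\,T^n(T^*)^n\,\op(a_\mathbf{p}^+)^*+\hinf,\qquad U_\mathbf{q}^*U_\mathbf{p}=\op(a_\mathbf{q}^-)^*\,(T^*)^nT^n\,\op(a_\mathbf{p}^-)+\hinf.$$
It therefore suffices to prove that $T^n(T^*)^n=\Id$ microlocally on $\mathcal{V}_\mathbf{p}^+$ and $(T^*)^nT^n=\Id$ microlocally on $\mathcal{V}_\mathbf{p}^-$.

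For the first of these I would argue by peeling off the innermost factor $TT^*$ one at a time. Writing $T^n(T^*)^n=T^{n-1}(TT^*)(T^*)^{n-1}$ and applying it to $v$ with $\WF(v)\subset\mathcal{V}_\mathbf{p}^+$, the state $(T^*)^{n-1}v$ is microlocalized in $F^{-(n-1)}(\mathcal{V}_\mathbf{p}^+)$ (as $T^*$ is an FIO associated with $F^{-1}$), and since $\mathcal{V}_\mathbf{p}^+=\bigcap_{i=0}^{n-1}F^{n-i}(\mathcal{V}_{p_i})\subset F^{n}(\mathcal{V}_{p_0})$, this set is contained in $F(\mathcal{V}_{p_0})\subset F(B(\rho_{p_0},4\varepsilon_0))$, where $TT^*=\Id$ microlocally by the assumed microlocal unitarity of $T$. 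Hence $T^n(T^*)^nv=T^{n-1}(T^*)^{n-1}v+\hinf\|v\|$, and iterating --- at the $j$-th step ($1\le j\le n$) using $\mathcal{V}_\mathbf{p}^+\subset F^{n-(j-1)}(\mathcal{V}_{p_{j-1}})$, so that $(T^*)^{n-j}v$ is microlocalized in $F(\mathcal{V}_{p_{j-1}})\subset F(B(\rho_{p_{j-1}},4\varepsilon_0))$ --- yields $T^n(T^*)^nv=v+\hinf\|v\|$. The $n$ accumulated remainders still sum to $\hinf$ because $n=O(|\log h|)$ (as $J_\mathbf{p}^+\le C_0h^{-\delta}$) and $\|T^k\|,\|(T^*)^k\|=O(1)$ for such $k$ (from $\|TT^*\|=1+o(1)$). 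The second identity is handled symmetrically with $T^*T$ in place of $TT^*$: one peels $(T^*)^nT^n=(T^*)^{n-1}(T^*T)T^{n-1}$ acting on $w$ with $\WF(w)\subset\mathcal{V}_\mathbf{p}^-$, and at the $j$-th step uses $\mathcal{V}_\mathbf{p}^-\subset F^{-(n-j)}(\mathcal{V}_{p_{n-j}})$ so that $T^{n-j}w$ is microlocalized in $\mathcal{V}_{p_{n-j}}\subset B(\rho_{p_{n-j}},4\varepsilon_0)$, where $T^*T=\Id$ microlocally. Combining these with the displayed identities for $U_\mathbf{q}U_\mathbf{p}^*$ and $U_\mathbf{q}^*U_\mathbf{p}$ closes the argument.

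The only real difficulty is the bookkeeping: at each peeling step one must pick the correct index $i$ in the intersection defining $\mathcal{V}_\mathbf{p}^\pm$ and invoke the correct side of the microlocal-unitarity hypothesis ($TT^*=\Id$ on the image region $F(B(\rho_q,4\varepsilon_0))$ for the first identity, $T^*T=\Id$ on the source region $B(\rho_q,4\varepsilon_0)$ for the second), while checking that all the $\hinf$ and $O(h^{-K})$ bounds remain uniform in $\mathbf{q},\mathbf{p}$ --- which they do, inheriting this from Proposition~\ref{prop_symbols_a_q} and the $h$-independence of the partition $(\mathcal{V}_q)_q$.
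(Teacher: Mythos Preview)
Your proof is correct. It differs from the paper's only in organization: the paper argues by induction on the prefix length $r$, showing directly that $V_r:=U_{q_0\dots q_{r-1}}U_{p_0\dots p_{r-1}}^*=\op(a_{q_0\dots q_{r-1}}^+)\op(a_{p_0\dots p_{r-1}}^+)^*+\hinf$. The inductive step uses the recursive identity $MA_{q_r}\op(a_{q_0\dots q_{r-1}}^+)=\op(a_{q_0\dots q_r}^+)T+\hinf$ (which is the heart of the construction in Section~\ref{section_propagation_Ehrenfest}) to write $V_{r+1}=\op(a_{q_0\dots q_r}^+)\,TT^*\,\op(a_{p_0\dots p_r}^+)^*+\hinf$ and then cancels a single $TT^*$ in the fixed region $\mathcal{V}_{p_r}^+=F(\mathcal{V}_{p_r})$. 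You instead invoke the completed Proposition~\ref{prop_symbols_a_q} once to get $\op(a_\mathbf{q}^+)T^n(T^*)^n\op(a_\mathbf{p}^+)^*$, and then peel off the $n$ factors of $TT^*$ in a separate lemma, using the inclusions $F^{-(n-j)}(\mathcal{V}_\mathbf{p}^+)\subset F(\mathcal{V}_{p_{j-1}})$ to localize each cancellation. The two arguments are the same computation unfolded in a different order; the paper's version is slightly more economical because the support of $a_{p_0\dots p_r}^+$ already encodes the wavefront localization you track by hand, while your version has the virtue of isolating the statement ``$T^n(T^*)^n=\Id$ microlocally on $\mathcal{V}_\mathbf{p}^+$'' as a standalone fact.
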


\begin{proof} (\textit{of the lemma})
We prove the first equality. The second one could be treated similarly. 
Recall the construction procedure of the subsection \ref{section_propagation_Ehrenfest}. We adopt the same notations. We will show by induction on $r \in \{0, \dots, n-1 \}$ that : 
$$ V_r \coloneqq U_{q_0 \dots q_{r-1}} U_{p_0 \dots p_{r-1}}^* = \op( a_{q_0 \dots q_{r-1}}^+ ) \op(a_{p_0 \dots p_{r-1} }^+ )^* + \hinf $$
The case $r=1$ follows from $$MA_{q_0}A_{p_0}^* M^* = \op(a_{q_0}^+ ) T T^* \op(a_{p_0}^+ )^* + \hinf = \op(a_{q_0}^+ )\op(a_{p_0}^+ )^* + \hinf$$
where we use the fact that $TT^* = I + \hinf$ microlocally in $\mathcal{V}_{p_0}^+$, 
Assume that the assumption is satisfied for $r$, namely : 
$$ V_r  = \op( a_{q_0 \dots q_{r-1}}^+ ) \op(a_{p_0 \dots p_{r-1} }^+ ) + \hinf $$
and let's prove it for $r+1$. 
\begin{align*}
V_{r+1} &= MA_{q_r} V_r A_{p_r}^*M^* \\
&= MA_{q_r} \op( a_{q_0 \dots q_{r-1}}^+ ) \op(a_{p_0 \dots p_{r-1} }^+ )^* A_{p_r}^* M^*r + \hinf \\
&= \op( a_{q_0 \dots q_{r}}^+ )TT^* \op(a_{p_0 \dots p_{r} }^+ )^* + \hinf\\
&= \op( a_{q_0 \dots q_{r}}^+ ) \op(a_{p_0 \dots p_{r} }^+ )^* + \hinf
\end{align*}
 The last equality follows from $TT^* = I + \hinf$ microlocally in $\mathcal{V}_{p_r}^+$ and the one before is due to the recursive construction of the symbols $a_{q_0 \dots q_{r}}^+$ in the subsection \ref{section_propagation_Ehrenfest}. 

\end{proof}

\begin{proof} (\textit{of the proposition})
Let us begin with the first equality. Consider the largest integer $l$ such that 
$$ \max (J_{ q_0 \dots q_{l-1}}^+ , J_{p_0 \dots p_{l-1} }^+ ) \leq h^{- \delta}$$ 
We set $\mathbf{q}_\leftarrow = q_0 \dots q_{l-1}$ and $\mathbf{q}_\rightarrow = q_l \dots q_{n-1}$, and the same notations for $\mathbf{p}$. We obviously have : $$U_\mathbf{q} U_\mathbf{p}^* = U_{\mathbf{q}_\rightarrow} U_{\mathbf{q}_\leftarrow} U_{\mathbf{p}_\leftarrow}^* U_{\mathbf{p}_\rightarrow}^*$$
We then consider two cases, 
\begin{itemize}[label = \ding{228}]
\item $\mathcal{V}_{ \mathbf{q}_\leftarrow}^+ \cap \mathcal{V}_{ \mathbf{p}_\leftarrow}^+= \emptyset$ : we may write $$ U_{\mathbf{q}_\leftarrow} U_{\mathbf{p}_\leftarrow}^* = T^l \op( a_{\mathbf{q}_\leftarrow}^- ) \op( a_{\mathbf{q}_\leftarrow}^-)^* T^l + \hinf$$
Since, $\mathcal{V}_{\mathbf{q}_\leftarrow}^- \cap \mathcal{V}_{\mathbf{p}_\leftarrow}^- = \emptyset$, we can use the composition formula in $S_{\delta_1}^{comp}$ to conclude that $\op( a_{\mathbf{q}_\leftarrow}^- ) \op( a_{\mathbf{q}_\leftarrow}^-)^*  = \hinf$, which gives the desire result, recalling that $U_\mathbf{q} = O(1)$. 
\item $\mathcal{V}_{ \mathbf{q}_\leftarrow}^+ \cap \mathcal{V}_{ \mathbf{p}_\leftarrow}^+\neq \emptyset$ : in this case, we use the previous lemma and we can write $$U_{\mathbf{q}_\leftarrow} U_{\mathbf{p}_\leftarrow}^* =  \op(a_{\mathbf{q}_\leftarrow}^+ )  \op(a_{\mathbf{p}_\leftarrow}^+ )^* + \hinf $$ 
In virtue of the second point of Lemma \ref{manipulations_J_q}, $J_{\mathbf{q}_\leftarrow}^+ \sim J_{\mathbf{p}_\leftarrow}^+$. 
Moreover, by maximality of $l$, either $J_{\mathbf{q}_\leftarrow q_l}^+ > h^{-\delta}$ or $J_{\mathbf{p}_\leftarrow p_l}^+ > h^{-\delta}$. But 
$$ J_{\mathbf{q}_\leftarrow q_l}^+ \sim J_{\mathbf{q}_\leftarrow}^+$$
Hence, $J_{\mathbf{q}_\leftarrow}^+ \sim h^{-\delta}$. Using now the third point of Lemma \ref{manipulations_J_q}, we conclude that 
$$ J_{\mathbf{q}_\rightarrow}^+ \sim J_{\mathbf{p}_\rightarrow}^+ \sim h^{-\delta}$$
This estimate allows us to write 
$$ U_\mathbf{q} U_\mathbf{p}^*  = T^{n-l}     \op(a_{\mathbf{q}_\rightarrow}^- ) \op(a_{\mathbf{q}_\leftarrow}^+ )  \op(a_{\mathbf{p}_\leftarrow}^+ )^*   \op(a_{\mathbf{p}_\rightarrow}^- )^*  ( T^*)^{n-l} + \hinf$$
with all the symbols in $h^{-M}S_{\delta_1}^{comp}$ for some $M >0$. To conclude, we use the composition formula in this symbol class, noting that 
$$ \mathcal{V}_{\mathbf{q}_\leftarrow}^+ \cap \mathcal{V}_{\mathbf{q}_\rightarrow}^- \cap \mathcal{V}_{\mathbf{p}_\leftarrow}^+  \cap \mathcal{V}_{\mathbf{p}_\rightarrow}^-  = F^l \left( \mathcal{V}_{\mathbf{q}}^-  \cap \mathcal{V}_{\mathbf{p}}^-  \right) = \emptyset$$ 
\end{itemize}
To deal with the second equality, we consider the smallest integer $l$ such that : 
$$ \max (J_{ q_l \dots q_{n-1}}^+ , J_{p_l \dots p_{n-1} }^+ ) \leq h^{- \delta}$$ 
As before, we write $\mathbf{q}_\leftarrow = q_0 \dots q_{l-1}$ and $\mathbf{q}_\rightarrow = q_l \dots q_{n-1}$, and the same notations for $\mathbf{p}$. We obviously have : $$U_\mathbf{q}^* U_\mathbf{p} = U_{\mathbf{q}_\leftarrow}^* U_{\mathbf{q}_\rightarrow}^* U_{\mathbf{p}_\rightarrow} U_{\mathbf{p}_\leftarrow}$$
We distinguish the cases $\mathcal{V}_{ \mathbf{q}_\rightarrow}^+ \cap \mathcal{V}_{ \mathbf{p}_\rightarrow}^+= \emptyset$ or not and argue similarly. 
\end{proof}

\subsection{Reduction to sub-words with precise growth of their Jacobian }

Recall that we are interested in a decay bound for $|| \mathfrak{M}^{N_0 + N_1} ||$ where $\mathfrak{M}= M (\Id- A_\infty) = \sum_{q \in \mathcal{A}} M A_q$. For this purpose, we decompose $\mathfrak{M}^{N_1} = \sum_{ \mathbf{q} \in \mathcal{A}^{N_1} } U_\mathbf{q}$.

If $\mathbf{q} \in \mathcal{A}^{N_1}$, either $\mathcal{V}_\mathbf{q}^+ = \emptyset$, and in this case $J_\mathbf{q}^+ = + \infty$, or $\mathcal{V}_\mathbf{q}^+ \neq \emptyset$, which implies that $J_\mathbf{q}^+  \geq e^{\lambda_1 N_1 } \geq h^{-1} \gg h^{-\tau}$. In both cases, 
the following integer is well defined : 

\begin{equation}
n(\mathbf{q}) = \max \{k \in \{1, N_1 \}, J_{q_{N_1 - k } \dots q_{N_1 -1} }^+ \leq h^{-\tau} \}
\end{equation}
We then set $\mathbf{q}_\tau = q_{N_1 - n(q) - 1 } \dots q_{N_1 -1 }$. The case $\mathcal{V}_{ \mathbf{q}_\tau} = \emptyset$ is irrelevant. Indeed, if $\mathbf{q} \in \mathcal{A}^{N_1}$ and if $\mathcal{V}_{ \mathbf{q}_\tau} = \emptyset$, then $U_\mathbf{q} = \hinf$, as an obvious consequence of Lemma \ref{empty_V_q}. Then, we set 
\begin{equation}
Q= \{ \mathbf{q} \in \mathcal{A}^{N_1} , \mathcal{V}_{ \mathbf{q}_\tau} \neq \emptyset \} 
\end{equation}
so that, due to the fact that $|\mathcal{A}^{N_1}| = O(h^{-M})$, for some $M >0$, we have 
$$\mathfrak{M}^{N_1} = \sum_{\mathbf{q} \in Q} U_\mathbf{q} + \hinf $$
We partition $Q$  in function of the length of $\mathbf{q}_\tau$ and the value of $q_{N_1-1}$. Namely, we set 

$$ Q_0(n,a) = \{ \mathbf{q} \in Q ; |\mathbf{q}_\tau| = n , q_{N_1-1} = a \}$$
We finally set $Q(n,a) = \{ \mathbf{q}_\tau , \mathbf{q} \in Q_0(n,a) \}$ which is simply the set of words $\mathbf{q} \in \mathcal{A}^n$ such that  $q_{n-1} = a$ and $J_{q_1 \dots q_{n-1}}^+ \leq h^{-\tau} < J_\mathbf{q}^+ $. Note that every word $\mathbf{q} \in Q_0(n,a)$ can be written in the form $\mathbf{q} = \mathbf{r}\mathbf{p} $ with $\mathbf{p} \in Q(n,a)$ and $\mathbf{r} \in \mathcal{A}^{N_1 - n}$. We deduce that, \emph{modulo $\hinf$}, 
\begin{align*}
\mathfrak{M}^{N_1} &= \sum_{n=1}^{N_1} \sum_{a \in \mathcal{A}} \sum_{ \mathbf{q} \in Q_0(n,a) } U_\mathbf{q} \\
&= \sum_{n=1}^{N_1} \sum_{a \in \mathcal{A}}  \sum_{\substack{ \mathbf{p} \in Q(n,a)\\\mathbf{r} \in \mathcal{A}^{N_1 - n } }} U_\mathbf{p} U_\mathbf{r} \\
&=  \sum_{n=1}^{N_1} \sum_{a \in \mathcal{A}} \left( \sum_{ \mathbf{q} \in Q(n,a) } U_\mathbf{q} \right) \mathfrak{M}^{N_1 - n }
\end{align*}
As a consequence, we get 
\begin{equation}
||\mathfrak{M}^{N_0 + N_1} || \leq C N_1 | \mathcal{A} | \sup_{\substack{ 1 \leq n \leq N_1 \\a \in \mathcal{A} }} || \mathfrak{M}^{N_0} U_{Q(n,a)} || \left(||\alpha||_\infty\right)^{N_1-n}
\end{equation}
where we've noted 
\begin{equation}
U_{Q(n,a)} = \sum_{\mathbf{q} \in Q(n,a) }  U_\mathbf{q}
\end{equation}
Since $N_1 = O(\log h)$, the proof of (\ref{Goal_thm_2}) is the reduced to prove : 

\begin{prop}
There exists $\gamma>0$ such that, for $h$ small enough, we have 
\begin{equation}
\sup_{\substack{ 1 \leq n \leq N_1 \\a \in \mathcal{A} }} \frac{||\mathfrak{M}^{N_0} U_{Q(n,a)} || }{||\alpha||_\infty^{n+N_0}}  \leq h^\gamma
\end{equation}
\end{prop}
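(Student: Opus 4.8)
The plan is to reduce the norm $\|\mathfrak{M}^{N_0} U_{Q(n,a)}\|$ to an application of the fractal uncertainty principle, following the strategy laid out in the model example and in \cite{NDJ19}. First I would dispose of the case where $n$ is too small: if $n \le \frac{\tau}{\lambda_1}|\log h|$, then for every $\mathbf{q} \in Q(n,a)$ one has $J^+_\mathbf{q} \le e^{\lambda_1 n} \le h^{-\tau}$, but actually by definition of $Q(n,a)$ we have $J^+_\mathbf{q} > h^{-\tau}$, so this range is empty; more to the point, the relevant split is that $J^+_{\mathbf{q}_-} \le h^{-\tau}$ where $\mathbf{q}_- = q_1\dots q_{n-1}$, and since $J^+_\mathbf{q} \le e^{\lambda_1} J^+_{\mathbf{q}_-} \le e^{\lambda_1} h^{-\tau}$, all words in $Q(n,a)$ satisfy the hypotheses of Proposition \ref{sum_over_many_words} with $C_0 = e^{\lambda_1}$. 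Hence $\|U_{Q(n,a)}\| \le C\|\alpha\|_\infty^n |\log h|$, and combined with the trivial bound $\|\mathfrak{M}^{N_0}\| \le (\|\alpha\|_\infty + O(h^{1/2-\varepsilon}))^{N_0} \le C\|\alpha\|_\infty^{N_0}$, this already gives $\|\mathfrak{M}^{N_0} U_{Q(n,a)}\| \le C\|\alpha\|_\infty^{n+N_0}|\log h|$ — which is \emph{not} yet a decay bound. So the point of the argument must be to gain a genuine power $h^\gamma$, and this is where the fractal uncertainty principle enters.

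The key structural fact is that for $\mathbf{q} \in Q(n,a)$ we have $h^{-\tau} < J^+_\mathbf{q} \le e^{\lambda_1} h^{-\tau}$, so all these words have \emph{comparable} Jacobian of size $\sim h^{-\tau}$, and by Proposition \ref{prop_symbols_a_q} (applicable since $\tau < 1/2$... wait, $\tau$ is close to $1$) — more precisely, since $\tau$ may exceed $1/2$, one cannot directly quantize $U_\mathbf{q}$ into a single $S_{\delta_1}$ symbol; instead one splits $\mathbf{q} = \mathbf{q}' \mathbf{q}''$ at the point where $J^+ \sim h^{-\tau/2}$, applies Proposition \ref{prop_symbols_a_q} to each half with exponent $\tau/2 < 1/2$, and writes $U_\mathbf{q} = \op(a^+_{\mathbf{q}''}) T^{|\mathbf{q}''|}\op(a^-_{\mathbf{q}'}) T^{|\mathbf{q}'|} + O(h^\infty)$ with symbols supported in $\mathcal{V}^+_{\mathbf{q}''}$ and $\mathcal{V}^-_{\mathbf{q}'}$ respectively. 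The set $\mathcal{V}^+_{\mathbf{q}''}$ lies, by Lemma \ref{Localization_V_q}, within $C/J^+_{\mathbf{q}''} \sim h^{\tau/2}$ of an unstable manifold, and $\mathcal{V}^-_{\mathbf{q}'}$ within $\sim h^{\tau/2}$ of a stable manifold. Now the words must be grouped into \emph{clouds} (Section \ref{Clouds}, referenced in the introduction): words whose propagated neighborhoods are mutually close get collected together, so that Proposition \ref{orthogoality_disjoint_support} gives near-orthogonality \emph{between} clouds (Cotlar--Stein reduces the sum to the max over clouds), while \emph{within} a cloud all the $\mathcal{V}^\pm$ are close enough that a single pair of fractal sets $X$ (in the stable direction) and $Y$ (in the unstable direction) contains all the relevant supports. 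Here one uses the adapted charts of Lemma \ref{Existence_of_adapated_charts} to straighten the unstable foliation, turning the supports into (nearly) $h^{\tau/2}$-neighborhoods of fixed fractal subsets of $\R$.

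The heart of the matter is then: inside a single cloud, after conjugating by the adapted chart and inserting the indicator functions $\mathds{1}_{X(h^{\tau/2})}$ and $\mathds{1}_{Y(h^{\tau/2})}$ (which absorb the symbols up to $O(h^\infty)$ because the symbols are microlocalized in $h^{\tau/2}$-neighborhoods of $X$, resp.\ $Y$, in the straightened coordinates), the operator $\mathfrak{M}^{N_0}$ plays the role of the propagator that converts position-microlocalization near the stable set into frequency-microlocalization near the unstable set — exactly as $U^n$ did in the model example — so that the composite is sandwiched between $\mathds{1}_{Y(h^{\tau/2})}\mathcal{F}_h \mathds{1}_{X(h^{\tau/2})}$ (modulo bounded operators and chart changes). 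The sets $X, Y$ are, up to bi-Lipschitz maps given by the $C^{1,\beta}$ holonomies (Lemma \ref{regularity_holonomy_maps}), traces of $\mathcal{T}$ along stable/unstable manifolds; these are porous with upper box dimension $< 1$ by the fractal assumption (\ref{Fractal}) and Bowen's formula. Provided the saturation condition $\tau/2 + \tau/2 = \tau > $ (threshold) holds — and the numerology (\ref{condition_on_tau}), $\mathfrak{b} < \tau < 1$ with $\delta_0\lambda_0/\lambda_1 + \tau > 1$, is precisely engineered to give room for this after accounting for the $h^{\mathfrak{b}}$-scale distortion of the foliation due to only $C^{1,\beta}$ regularity — Proposition \ref{FUP} (the FUP of \cite{BD18} in the \cite{NDJ19} formulation) yields $\|\mathds{1}_{Y(h^{\tau/2})}\mathcal{F}_h \mathds{1}_{X(h^{\tau/2})}\| \le Ch^{\beta_0}$ for some $\beta_0 > 0$. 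Tracking constants through the cloud decomposition (the number of clouds is a negative power of $h$, but Cotlar--Stein turns this into a logarithmic loss), and choosing $\gamma < \beta_0$, gives the claimed bound $\|\mathfrak{M}^{N_0} U_{Q(n,a)}\| \le h^\gamma \|\alpha\|_\infty^{n+N_0}$.

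The main obstacle I expect is organizing the cloud decomposition so that (a) the within-cloud supports are uniformly small enough in the \emph{straightened} coordinates — this requires the $C^{1+\beta}$ regularity of the unstable foliation (Theorem \ref{Thm_regularity}) and a careful tracking of how the $h^{\tau/2}$-neighborhoods transform under the adapted charts, where the $\beta$-Hölder regularity of the derivative introduces an error of order $(h^{\tau/2})^{1+\beta} = h^{\mathfrak{b}^{-1}\tau/2}$ that must stay below the quantum scale $h$ — and (b) the between-cloud orthogonality from Proposition \ref{orthogoality_disjoint_support} is genuinely usable, i.e.\ each cloud interacts with only $O(1)$ others, which is what makes the Cotlar--Stein estimate produce only a polynomial-in-$|\log h|$ loss rather than a power of $h$. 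This is exactly the point where the numerology constraints (\ref{condition_on_tau}) and (\ref{tau}) are consumed, and verifying that the chosen $\tau = 1 - \frac{\lambda_0}{\lambda_1}\frac{1-\mathfrak{b}}{4}$ simultaneously satisfies the FUP saturation requirement and the foliation-distortion constraint is the delicate bookkeeping at the core of the proof.
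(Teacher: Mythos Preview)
Your overall architecture (partition into clouds, Cotlar--Stein between clouds with $O(1)$ interactions, FUP within a cloud, adapted charts to straighten the unstable foliation) matches the paper. But the mechanism you propose for producing the two localizations that feed into the FUP is not the one that works, and in fact your version does not satisfy the saturation condition.

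You split $\mathbf{q}$ at the midpoint where $J^+\sim h^{-\tau/2}$ and apply Proposition~\ref{prop_symbols_a_q} to each half, obtaining symbols supported in $h^{\tau/2}$-neighborhoods of an unstable leaf (for $\mathbf{q}''$) and a stable leaf (for $\mathbf{q}'$). But those two localizations live at \emph{different} phase-space points (one at the junction, one at time $0$ or $n$, depending on which representation you take), so to bring them together you would need Egorov across $T^{|\mathbf{q}'|}$ or $T^{|\mathbf{q}''|}$, which is again beyond Ehrenfest time. If instead you place both symbols at the junction, you get exponents $\tau/2+\tau/2=\tau<1$ in the FUP, and Proposition~\ref{FUP} gives nothing. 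The numerology condition $\delta_0\lambda_0/\lambda_1+\tau>1$ in (\ref{condition_on_tau}) is \emph{not} designed to rescue a $\tau/2+\tau/2$ scheme; it is designed for the asymmetric pair of scales the paper actually uses.

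The paper's mechanism is genuinely asymmetric and uses $\mathfrak{M}^{N_0}$ and $U_\mathcal{Q}$ in different ways at the \emph{same} interface point (the output of $U_\mathcal{Q}$, which is the input of $\mathfrak{M}^{N_0}$). On one side, the \emph{input} of $\mathfrak{M}^{N_0}$ is microlocalized in a $h^{\delta_2}$-neighborhood of $\mathcal{T}_-^{\mathrm{loc}}$ (Proposition~\ref{prop_micro_left}); this is pure symbol calculus, legitimate because $N_0$ is below Ehrenfest time and $\delta_2=\delta_0\lambda_0/\lambda_1<1/2$. In the adapted chart this becomes a position constraint $\mathds{1}_{\Omega^-}(y)$ with $\Omega^-$ porous down to scale $h^{\delta_2}$. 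On the other side, the \emph{output} of $U_\mathcal{Q}$ is microlocalized in the $\eta$-direction at scale $h^\tau$ (Proposition~\ref{prop_micro_right}); since $\tau>1/2$, this \emph{cannot} be done by symbol calculus and requires the propagation of Lagrangian states (Section~5.2, Proposition~\ref{Prop_evolution_lagrangian_state}), which you never invoke. In the adapted chart this becomes a frequency constraint $\mathds{1}_{\Omega^+}(hD_y)$ with $\Omega^+$ porous down to scale $h^\tau$. The FUP then applies with exponents $\delta_2$ and $\tau$, and the saturation condition $\delta_2+\tau>1$ is exactly (\ref{condition_on_tau}). So $\mathfrak{M}^{N_0}$ is not a ``converter'' in the sense of the model example's $U^n$; its role is to supply the stable-direction localization at a scale independent of $\tau$, and the disymmetry $N_0\ll N_1$ announced in Section~\ref{Numerology} is precisely what makes the two scales add to more than $1$.
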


\subsection{Partition into clouds }\label{Clouds}

We fix $1 \leq n \leq N_1$ and $a \in \mathcal{A}$. We aim at gathering pieces of $\mathfrak{M}^{N_0} U_{Q(n,a) }$ into clouds and we want these clouds to interact (with a meaning we will define further) with only a finite and uniform number of other clouds, so that the global norm of $||\mathfrak{M}^{N_0} U_{Q(n,a})|| $  can be deduced from a uniform bound for each cloud. 

Recall that $\delta_0$ and $\tau$ (see (\ref{alpha}), (\ref{alpha+delta0}) and (\ref{condition_on_tau})) have be chosen such that 
\begin{equation*}
\mathfrak{b}+ \delta_0 < 1 \; ; \; \mathfrak{b} < \tau 
\end{equation*}

We start by defining a notion of closeness between two words $\mathbf{q}, \mathbf{p} \in Q(n,a)$. We choose $\varepsilon_2$ as in Lemma \ref{help_def_jacobian_2}. 

\begin{defi}\label{close_to_each_other}
Let $\mathbf{q}, \mathbf{p} \in Q(n,a)$. We say that these two words are\emph{ close to each other} if there exists $\rho_0 \in \mathcal{T} \cap F\left(\mathcal{V}_a (\varepsilon_2)\right)$ such that :  
$$ \forall \rho \in \mathcal{V}_\mathbf{q}^+ \cup \mathcal{V}_\mathbf{p}^+ \; , \; d ( \rho, W_u(\rho_0) ) \leq h^\mathfrak{b}$$
Otherwise, we say that $\mathbf{q}$ and $\mathbf{p}$ are \emph{far from each other}. 
\end{defi}

\begin{rem}
By definition of $\mathcal{V}_\mathbf{q}^+$, if $\mathbf{q} \in \mathcal{Q}(n,a)$ and if $\rho \in \mathcal{V}_\mathbf{q}^+$, $\rho$ does not lie in $\mathcal{V}_a$, but $F^{-1}(\rho)$ does. Hence, we work with $F(\mathcal{V}_a)$ instead of $\mathcal{V}_a$. Moreover, the set $F\left(\mathcal{V}_a (\varepsilon_2)\right)$ is chosen to fit well in the computations below and in particular in the proof of Lemma \ref{Lemma_decom_far}. We could replace it by $\mathcal{V}_a^+(C\varepsilon_2)$, where $C$ is any Lipschitz constant for $F$. 
\end{rem}

\begin{figure}
\includegraphics[scale=0.5]{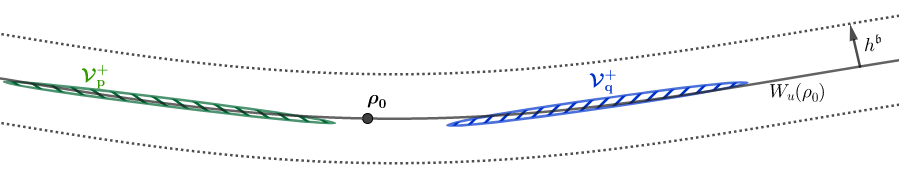}
\caption{Two words  $\mathbf{q}, \mathbf{p} \in Q(n,a)$ are close to each other if $\mathcal{V}_\mathbf{q}^+$ and  $\mathcal{V}_\mathbf{p}^+$ lie in the $h^\mathfrak{b}$-neighborhood of the same unstable leaves, as stated in Definition \ref{close_to_each_other}. }
\end{figure}
The important fact on words far from each other is that the associated operator $\mathfrak{M}^{N_0} U_\mathbf{q}$ are almost orthogonal : 

\begin{prop}
Assume that $\mathbf{q}, \mathbf{p} \in Q(n,a)$ are far from each other. Then, 
\begin{align}
 \left( \mathfrak{M}^{N_0} U_\mathbf{q} \right)^*\left( \mathfrak{M}^{N_0} U_\mathbf{p} \right)= \hinf \\
 \left( \mathfrak{M}^{N_0} U_\mathbf{q} \right) \left( \mathfrak{M}^{N_0} U_\mathbf{q} \right)^* = \hinf 
\end{align}
\end{prop}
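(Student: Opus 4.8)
The two identities are adjoint/transpose variants of one another, so it suffices to treat, say, $\left( \mathfrak{M}^{N_0} U_\mathbf{q} \right)^*\left( \mathfrak{M}^{N_0} U_\mathbf{p} \right)= \hinf$. The basic idea is the same as in Lemma \ref{empty_+-} and Proposition \ref{orthogoality_disjoint_support}: express $U_\mathbf{q}$ and $U_\mathbf{p}$ via the symbols produced by Proposition \ref{prop_symbols_a_q}, so that after composition the relevant part is a product of pseudodifferential operators whose symbols are microlocalized near $\mathcal{V}_\mathbf{q}^+$ and $\mathcal{V}_\mathbf{p}^+$ respectively; then ``far from each other'' must be upgraded to a genuine disjointness-of-supports statement at the quantum scale $h^{1/2}$ (more precisely at scale $h^{\delta_1}$ for a suitable $\delta_1 \in (\mathfrak{b},1/2)$), which makes the composition $\hinf$ in $S_{\delta_1}^{comp}$.

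First I would cut $\mathbf{q}$ and $\mathbf{p}$ at the point where their Jacobian reaches $h^{-\delta}$ for some small $\delta$: write $\mathbf{q}=\mathbf{q}_\rightarrow \mathbf{q}_\leftarrow$, $\mathbf{p}=\mathbf{p}_\rightarrow \mathbf{p}_\leftarrow$ with $J^+_{\mathbf{q}_\leftarrow}, J^+_{\mathbf{p}_\leftarrow} \leq h^{-\delta}$ maximal, so Proposition \ref{prop_symbols_a_q} applies to $U_{\mathbf{q}_\leftarrow}$, $U_{\mathbf{p}_\leftarrow}$ and gives $U_{\mathbf{q}_\leftarrow} = \op(a^+_{\mathbf{q}_\leftarrow}) T^{|\mathbf{q}_\leftarrow|} + \hinf$, similarly for $\mathbf{p}$. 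Here $Q(n,a)$ forces $n(\mathbf{q})$ small enough that $J^+_{\mathbf{q}} \leq h^{-\tau}$, so the ``right'' pieces $U_{\mathbf{q}_\rightarrow}$, $U_{\mathbf{p}_\rightarrow}$ have controlled Jacobian and the whole composition $\left(\mathfrak{M}^{N_0}U_\mathbf{q}\right)^*\left(\mathfrak{M}^{N_0}U_\mathbf{p}\right)$ is (modulo $\hinf$) $T^{N_0 + |\mathbf{q}_\rightarrow|}\left[\op(a^+_{\mathbf{q}_\rightarrow})^* \op(a^+_{\mathbf{q}_\leftarrow})^* \cdots \op(a^+_{\mathbf{p}_\leftarrow}) \op(a^+_{\mathbf{p}_\rightarrow})\right] (T^*)^{N_0+|\mathbf{p}_\rightarrow|}$, with all symbols in $h^{-M}S_{\delta_1}^{comp}$. (If one of the ``left'' propagated neighborhoods is empty, Lemma \ref{empty_V_q} already kills the term; the relevant compositions of pseudors are then handled exactly as in Proposition \ref{orthogoality_disjoint_support}.) The key composition to make $\hinf$ is $\op(a^+_{\mathbf{q}_\leftarrow})^*\op(a^+_{\mathbf{p}_\leftarrow})$, for which we need $\overline{\mathcal{V}^+_{\mathbf{q}_\leftarrow}} \cap \overline{\mathcal{V}^+_{\mathbf{p}_\leftarrow}} = \emptyset$, or more robustly that their $h^{\delta_1}$-neighborhoods are disjoint.

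The main point — and the main obstacle — is exactly this disjointness, and this is where ``far from each other'' enters. By Lemma \ref{Localization_V_q}, $\mathcal{V}^+_\mathbf{q}$ sits within distance $C/J^+_\mathbf{q}$ of an unstable leaf $W_u(\rho^\mathbf{q})$ with $\rho^\mathbf{q}\in\mathcal{T}$, and since $\mathbf{q}\in Q(n,a)$ we have $J^+_\mathbf{q} > h^{-\tau}$, so $\mathcal{V}^+_\mathbf{q}$ is within $Ch^{\tau}$ of $W_u(\rho^\mathbf{q})$, and $\tau>\mathfrak{b}$ means it is well inside the $h^\mathfrak{b}$-tube around that leaf. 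The base points $\rho^\mathbf{q}$, $\rho^\mathbf{p}$ both lie in $\mathcal{T}\cap F(\mathcal{V}_a(\varepsilon_2))$ (this is why $\varepsilon_2$ and $F(\mathcal{V}_a)$ appear in Definition \ref{close_to_each_other}), so they are within $C\varepsilon_2$ of each other and the holonomy $C^{1+\beta}$ structure from Lemma \ref{regularity_holonomy_maps} applies. If $\mathbf{q},\mathbf{p}$ are far from each other, then by definition there is \emph{no} single $\rho_0$ whose $h^\mathfrak{b}$-tube contains both $\mathcal{V}^+_\mathbf{q}$ and $\mathcal{V}^+_\mathbf{p}$; since each of $\mathcal{V}^+_\mathbf{q}$, $\mathcal{V}^+_\mathbf{p}$ \emph{is} in the $h^\mathfrak{b}$-tube of its own $\rho^\mathbf{q}$, $\rho^\mathbf{p}$, the failure forces the two tubes (transverse coordinate measured along $W_s$ via the holonomy) to be separated by more than $c\,h^\mathfrak{b}$ in the stable direction, uniformly. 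Since $\mathfrak{b}=\frac{1}{1+\beta}<1$ and one can choose $\delta_1\in(\mathfrak{b},1/2)$, a transverse separation $\gtrsim h^\mathfrak{b} \gg h^{\delta_1}$ means the $h^{\delta_1}$-neighborhoods of $\mathcal{V}^+_\mathbf{q}$ and $\mathcal{V}^+_\mathbf{p}$ are disjoint; hence $\op(a^+_{\mathbf{q}_\leftarrow})^*\op(a^+_{\mathbf{p}_\leftarrow})=\hinf$ by the composition calculus in $S_{\delta_1}^{comp}$, with constants depending only on quasi-global parameters. Propagating $\hinf$ through the (bounded, $O(h^{-M})$) operators $T^{N_0}$, $\op(a^+_{\mathbf{q}_\rightarrow})$, etc., and using $N_0,N_1=O(\log h)$, yields $\left(\mathfrak{M}^{N_0}U_\mathbf{q}\right)^*\left(\mathfrak{M}^{N_0}U_\mathbf{p}\right)=\hinf$. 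The delicate verification is the geometric claim that failure of the single-tube condition quantitatively forces transverse separation $\gtrsim h^\mathfrak{b}$; this uses the uniform $C^{1+\beta}$ control of the unstable foliation (Theorem \ref{Thm_regularity}, Proposition \ref{equivalence_flow_leaves}) so that nearby leaves $W_u(\rho^\mathbf{q})$, $W_u(\rho^\mathbf{p})$ either $h^\mathfrak{b}$-coincide on the region of interest or are $\gtrsim h^\mathfrak{b}$-apart there — there is no intermediate regime — which is precisely the point where the regularity exponent $\beta$ (through $\mathfrak{b}=1/(1+\beta)$) is used.
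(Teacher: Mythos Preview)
Your approach has a genuine gap at the numerology step, and this is not a technicality but the heart of the argument.

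You claim one can choose $\delta_1\in(\mathfrak{b},1/2)$ so that a transverse separation $\gtrsim h^{\mathfrak b}\gg h^{\delta_1}$ makes the composition $\op(a^+_{\mathbf{q}_\leftarrow})^*\op(a^+_{\mathbf{p}_\leftarrow})=\hinf$ in $S_{\delta_1}^{comp}$. But $\mathfrak b=\frac{1}{1+\beta}$ with $\beta>0$ small in general, so typically $\mathfrak b>1/2$ and no such $\delta_1$ exists. Relatedly, if you cut $\mathbf q,\mathbf p$ at a \emph{small} Jacobian threshold $h^{-\delta}$ (so that Proposition~\ref{prop_symbols_a_q} applies), the sets $\mathcal V^+_{\mathbf q_\leftarrow},\mathcal V^+_{\mathbf p_\leftarrow}$ have transverse width $\sim h^{\delta}\gg h^{\mathfrak b}$ and will generally overlap even when $\mathcal V^+_\mathbf q,\mathcal V^+_\mathbf p$ are $h^{\mathfrak b}$-separated. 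So neither the symbol class nor the disjointness is available at the scale you propose.

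The paper resolves this via Lemma~\ref{Lemma_decom_far}: from ``far from each other'' one extracts \emph{tails} $\mathbf q_2,\mathbf p_2$ (of equal length) with $\mathcal V^+_{\mathbf q_2}\cap\mathcal V^+_{\mathbf p_2}=\emptyset$ and $\max(J^+_{\mathbf q_2},J^+_{\mathbf p_2})\le Ch^{-\mathfrak b}$. This is a dynamical statement (the first time the refined neighborhoods separate, the Jacobian is $\lesssim h^{-\mathfrak b}$), not a direct separation estimate on $\mathcal V^+_\mathbf q,\mathcal V^+_\mathbf p$. For the easier identity $\big(\mathfrak M^{N_0}U_\mathbf q\big)\big(\mathfrak M^{N_0}U_\mathbf p\big)^*$ one just needs $\mathcal V^+_\mathbf q\cap\mathcal V^+_\mathbf p=\emptyset$ and Proposition~\ref{orthogoality_disjoint_support} with $2\delta=\tau$. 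For the harder identity $\big(\mathfrak M^{N_0}U_\mathbf q\big)^*\big(\mathfrak M^{N_0}U_\mathbf p\big)=U_\mathbf q^*(\mathfrak M^{N_0})^*\mathfrak M^{N_0}U_\mathbf p$, the middle $(\mathfrak M^{N_0})^*\mathfrak M^{N_0}$ does \emph{not} reduce to $T$'s as in your sketch; one expands $\mathfrak M^{N_0}=\sum_{\mathbf s}U_\mathbf s$ and is led to $U_{\mathbf q_2}^*U_\mathbf s^*U_\mathbf t U_{\mathbf p_2}= (U_{\mathbf q_2\mathbf s})^*U_{\mathbf p_2\mathbf t}$. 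Now $\mathcal V^+_{\mathbf q_2\mathbf s}\cap\mathcal V^+_{\mathbf p_2\mathbf t}\subset F^{N_0}\big(\mathcal V^+_{\mathbf q_2}\cap\mathcal V^+_{\mathbf p_2}\big)=\emptyset$ and $J^+_{\mathbf q_2\mathbf s}\le C\,J^+_{\mathbf s}J^+_{\mathbf q_2}\le Ch^{-(\delta_0+\mathfrak b)}$, and \emph{this} is where the numerology (\ref{alpha+delta0}) enters: $\frac{\delta_0+\mathfrak b}{2}<\frac12$, so Proposition~\ref{orthogoality_disjoint_support} applies. In other words, the extra $N_0$ iterates of $\mathfrak M$ are not a nuisance to be conjugated away but are precisely what pushes the Jacobian exponent from $\mathfrak b$ (possibly $>1/2$) to $\mathfrak b+\delta_0<1$, bringing the problem back inside a usable symbol class. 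Your plan misses this mechanism.
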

We will need the following lemma. 

\begin{lem}\label{Lemma_decom_far}
If $\mathbf{q}, \mathbf{p} \in Q(n,a)$ are far from each other, there exist words $\mathbf{p_1}, \mathbf{q_1}, \mathbf{p_2}, \mathbf{q_2}$ such that 
\begin{itemize}[label = - , nosep]
\item $|\mathbf{p_1}|=|\mathbf{q_1}|, |\mathbf{p_2}|=|\mathbf{q_2}|$ ; 
\item $\mathbf{q} = \mathbf{q_1}\mathbf{q_2} , \mathbf{p}= \mathbf{p_1} \mathbf{p_2}$ ; 
\item $\mathcal{V}_{\mathbf{q_2}}^+ \cap \mathcal{V}_{\mathbf{p_2}}^+ = \emptyset$;
\item $\max( J_{\mathbf{q_2}}^+, J_\mathbf{p_2}^+ ) \leq C h^{-\mathfrak{b}}$ (for some global constant $C>0$). 
\end{itemize}
In particular, $\mathcal{V}_{\mathbf{q}}^+ \cap \mathcal{V}_{\mathbf{p}}^+ = \emptyset$
\end{lem}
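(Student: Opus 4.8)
The plan is to reconstruct, word by word from the right, the largest common ``tail'' where the two orbits stay close, and then exploit the fact that ``far from each other'' forces this tail to be short enough. Concretely, for $k\in\{1,\dots,n\}$ write $\mathbf{q}(k)=q_{n-k}\dots q_{n-1}$ and $\mathbf{p}(k)=p_{n-k}\dots p_{n-1}$ for the suffixes of length $k$. First I would let $l$ be the largest integer such that $\mathcal{V}_{\mathbf{q}(l)}^+\cap\mathcal{V}_{\mathbf{p}(l)}^+\neq\emptyset$ (such $l$ exists and is $\geq 1$ since both words end with the letter $a$, so $\mathcal{V}_{q_{n-1}}^+=\mathcal{V}_{p_{n-1}}^+=F(\mathcal{V}_a)\neq\emptyset$). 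If $l=n$ then $\mathcal{V}_{\mathbf{q}}^+\cap\mathcal{V}_{\mathbf{p}}^+\neq\emptyset$, and I claim this contradicts ``far from each other'': taking any $\rho$ in the intersection, Lemma~\ref{help_def_jacobian_2} (applied in the future, as in Lemma~\ref{Localization_V_q}) provides $\rho_0\in\mathcal{T}$ which shadows both, hence $\rho_0\in\mathcal{T}\cap F(\mathcal{V}_a(\varepsilon_2))$, and by Lemma~\ref{Localization_V_q} every point of $\mathcal{V}_{\mathbf{q}}^+\cup\mathcal{V}_{\mathbf{p}}^+$ lies within $C/J^+\leq C\,e^{-\lambda_0 n}$ of $W_u(\rho_0)$; for $h$ small this is $\leq h^{\mathfrak{b}}$, making $\mathbf{q},\mathbf{p}$ close --- contradiction. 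So $l<n$.

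Next I would set $\mathbf{q_2}=\mathbf{q}(l),\ \mathbf{p_2}=\mathbf{p}(l)$, and $\mathbf{q_1},\mathbf{p_1}$ the complementary prefixes, so automatically $|\mathbf{q_1}|=|\mathbf{p_1}|$, $|\mathbf{q_2}|=|\mathbf{p_2}|=l$, $\mathbf{q}=\mathbf{q_1}\mathbf{q_2}$, $\mathbf{p}=\mathbf{p_1}\mathbf{p_2}$. By maximality of $l$ we have $\mathcal{V}_{\mathbf{q}(l+1)}^+\cap\mathcal{V}_{\mathbf{p}(l+1)}^+=\emptyset$. I need two things: that $\mathcal{V}_{\mathbf{q_2}}^+\cap\mathcal{V}_{\mathbf{p_2}}^+=\emptyset$ (this is exactly the maximality statement with $l+1$ --- wait, here I must be careful: I should instead take $l$ to be the largest integer with $\mathcal{V}_{\mathbf{q}(l)}^+\cap\mathcal{V}_{\mathbf{p}(l)}^+\neq\emptyset$ is \emph{not} what I want; I want the \emph{smallest} $l$ such that the length-$l$ suffixes already have disjoint propagated neighborhoods, equivalently the largest $l_0$ such that the length-$l_0$ suffixes intersect, and then put $l=l_0+1$). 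With that correction: $\mathbf{q_2}=\mathbf{q}(l)$ with $l=l_0+1$, so $\mathcal{V}_{\mathbf{q_2}}^+\cap\mathcal{V}_{\mathbf{p_2}}^+=\emptyset$, while the length-$l_0$ suffixes intersect. The remaining point --- the Jacobian bound $\max(J_{\mathbf{q_2}}^+,J_{\mathbf{p_2}}^+)\leq C h^{-\mathfrak{b}}$ --- is where the ``far from each other'' hypothesis finally pays off, and this is the step I expect to be the main obstacle.

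For the Jacobian bound, the idea is: pick $\rho$ in $\mathcal{V}_{\mathbf{q}(l_0)}^+\cap\mathcal{V}_{\mathbf{p}(l_0)}^+$; by Lemma~\ref{help_def_jacobian_2} (future version) there is $\rho_0\in\mathcal{T}$ with $d(F^{-i}(\rho),F^{-i}(\rho_0))\leq\varepsilon_2$ for $i\in\{1,\dots,l_0\}$, and since the $q$- and $p$-words both end in $a$ one checks $F^{-1}(\rho)\in\mathcal{V}_a$, hence $\rho_0$ can be arranged (using $F^{-1}(\rho_0)$ if needed, as in Remark after Definition~\ref{close_to_each_other}) to lie in $\mathcal{T}\cap F(\mathcal{V}_a(\varepsilon_2))$. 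Now, because $\mathbf{q},\mathbf{p}$ are far from each other, this $\rho_0$ \emph{cannot} satisfy the closeness condition, so there exists a point $\rho_*\in\mathcal{V}_\mathbf{q}^+\cup\mathcal{V}_\mathbf{p}^+$ with $d(\rho_*,W_u(\rho_0))>h^{\mathfrak{b}}$. On the other hand $\rho_*\in\mathcal{V}_\mathbf{q}^+$ say, and by Lemma~\ref{Localization_V_q}(2) combined with Lemma~\ref{manipulations_J_q}, $d(\rho_*,W_u(\rho'))\leq C/J_\mathbf{q}^+$ where $\rho'$ shadows $\mathcal{V}_\mathbf{q}^+$ in the future; since $\mathbf{q}(l_0)$ and $\mathbf{q}$ share the last $l_0$ letters and $\rho'$, $\rho_0$ both shadow along those, $d(F^{-i}(\rho'),F^{-i}(\rho_0))\leq 2\varepsilon_2$ for $i\le l_0$, so (by uniform $C^{1,\beta}$ regularity of unstable leaves and Corollary~\ref{cor_control_jacobian}) $W_u(\rho')$ and $W_u(\rho_0)$ are uniformly close over the relevant scale and $d(\rho_*,W_u(\rho_0))\leq C/J_{\mathbf{q}(l_0)}^+ + C\varepsilon_2\cdot(\text{something small})$; comparing with $d(\rho_*,W_u(\rho_0))>h^{\mathfrak{b}}$ forces $1/J_{\mathbf{q}(l_0)}^+\gtrsim h^{\mathfrak{b}}$ up to constants --- but that is the wrong direction. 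The correct argument uses the contrapositive at the level of the \emph{suffixes}: if $J_{\mathbf{q_2}}^+=J_{\mathbf{q}(l)}^+$ were $>Ch^{-\mathfrak{b}}$ one would show, using $J_{\mathbf{q}}^+\sim J_{\mathbf{q_1}}^+J_{\mathbf{q_2}}^+$ and the future-shadowing point $\rho_0$ above (which shadows along the whole suffix $\mathbf{q}(l_0)$, hence along $\mathbf{q_2}$ up to one letter), that \emph{every} point of $\mathcal{V}_{\mathbf{q}}^+$ lies within $C/J_{\mathbf{q_2}}^+<h^{\mathfrak{b}}$ of $W_u(\rho_0)$ and similarly for $\mathbf{p}$, contradicting farness; here one crucially uses that $\mathfrak{b}<\tau$ together with $J_{\mathbf{q_2}}^+=J_{\mathbf{q}(l)}^+\ge J_{\mathbf{q}(l_0)}^+ > h^{-\tau}$ (by the defining maximality of $n(\mathbf{q})$ for words in $Q(n,a)$) to make the quantities compatible. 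I would write this out carefully balancing the $\varepsilon_2$-errors; the final clause $\mathcal{V}_\mathbf{q}^+\cap\mathcal{V}_\mathbf{p}^+=\emptyset$ follows since $\mathcal{V}_\mathbf{q}^+\subset F^{|\mathbf{q_1}|}(\mathcal{V}_{\mathbf{q_2}}^+)$ and $\mathcal{V}_\mathbf{p}^+\subset F^{|\mathbf{p_1}|}(\mathcal{V}_{\mathbf{p_2}}^+)$ with equal exponents $|\mathbf{q_1}|=|\mathbf{p_1}|$ and disjoint $\mathcal{V}_{\mathbf{q_2}}^+,\mathcal{V}_{\mathbf{p_2}}^+$.
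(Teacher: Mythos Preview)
Your decomposition is exactly the paper's: in the paper's notation, your $l_0$ equals $n-m$ where $m$ is the smallest index with $\mathcal{V}_{q_m\dots q_{n-1}}^+\cap\mathcal{V}_{p_m\dots p_{n-1}}^+\neq\emptyset$, and your $\mathbf{q_2}=\mathbf{q}(l_0+1)$ is the paper's $q_{m-1}\dots q_{n-1}$. The argument that $l_0<n$ is also correct (though use $J_\mathbf{q}^+>h^{-\tau}$ directly rather than $e^{-\lambda_0 n}$, since the latter exponent need not exceed $\mathfrak{b}$).

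The genuine gap is in the Jacobian step, and it is a self-inflicted one. Your first attempt is \emph{already} the paper's argument, and the inequality you derive goes the \emph{right} way: $1/J_{\mathbf{q}(l_0)}^+\gtrsim h^{\mathfrak{b}}$ means precisely $J_{\mathbf{q}(l_0)}^+\leq C h^{-\mathfrak{b}}$, which is the desired bound (and then $J_{\mathbf{q_2}}^+\sim J_{\mathbf{q}(l_0)}^+$ since they differ by one letter; likewise $J_{\mathbf{p}(l_0)}^+\sim J_{\mathbf{q}(l_0)}^+$ by Lemma~\ref{manipulations_J_q}(2) since the length-$l_0$ suffixes intersect). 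The detour through a second shadow point $\rho'$ is unnecessary: since $\rho_*\in\mathcal{V}_\mathbf{q}^+\subset\mathcal{V}_{\mathbf{q}(l_0)}^+$ and since $\rho_0$ $\varepsilon_2$-shadows the chosen intersection point, a triangle inequality gives $d(F^{-i}(\rho_*),F^{-i}(\rho_0))\leq 2\varepsilon_0+\varepsilon_2$ for $i=1,\dots,l_0$, and Lemma~\ref{Local_hpyerbolic_2}(2) applied directly to the pair $(\rho_*,\rho_0)$ yields $d(\rho_*,W_u(\rho_0))\leq C/J^s_{-l_0}(\rho_0)\sim C/J_{\mathbf{q}(l_0)}^+$.

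Two further corrections. The claim ``$J_{\mathbf{q}(l_0)}^+>h^{-\tau}$ by the defining maximality of $n(\mathbf{q})$'' is false: membership in $Q(n,a)$ gives $J_{q_1\dots q_{n-1}}^+\leq h^{-\tau}$, so proper suffixes have Jacobians \emph{at most} $h^{-\tau}$, not at least. The condition $\tau>\mathfrak{b}$ is used only to rule out $l_0=n$ (equivalently $m=0$), which you already handled at the start. Finally, for the closing clause note simply that $\mathcal{V}_\mathbf{q}^+\subset\mathcal{V}_{\mathbf{q_2}}^+$ (suffix inclusion, no factor of $F^{|\mathbf{q_1}|}$), so $\mathcal{V}_\mathbf{q}^+\cap\mathcal{V}_\mathbf{p}^+\subset\mathcal{V}_{\mathbf{q_2}}^+\cap\mathcal{V}_{\mathbf{p_2}}^+=\emptyset$.
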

Let's momentarily admit it and prove the proposition. 
\begin{proof}(\textit{of the proposition}). Fix $\mathbf{q}, \mathbf{p} \in Q(n,a)$ far from each other. 
Since $\mathcal{V}_{\mathbf{q}}^+ \cap \mathcal{V}_{\mathbf{p}}^+ = \emptyset, U_\mathbf{q} U_\mathbf{p}^* = \hinf$ in virtue of Proposition \ref{orthogoality_disjoint_support}. Hence, using the polynomial bounrds $||\mathfrak{M}^{N_0} ||=O(h^{-M})$ (for some $M>0$), we have  $$\left( \mathfrak{M}^{N_0} U_\mathbf{q} \right) \left( \mathfrak{M}^{N_0} U_\mathbf{p} \right)^* = \hinf $$
To prove the first point, we write 
\begin{align*}
\left( \mathfrak{M}^{N_0} U_\mathbf{q} \right)^*\left( \mathfrak{M}^{N_0} U_\mathbf{p} \right)= \sum_{\mathbf{s},\mathbf{t} \in \mathcal{A}^{N_0} } U_\mathbf{q_1}^\star U_\mathbf{q_2}^* U_\mathbf{s}^* U_\mathbf{t} U_\mathbf{p_2} U_\mathbf{p_1}
\end{align*}
Hence, it is enough to show that $U_\mathbf{q_2}^* U_\mathbf{s}^* U_\mathbf{t} U_\mathbf{p_2} =  \hinf$ uniformly in $\mathbf{s},\mathbf{t}$. 
To do so, we note that 
\begin{align*}
&\mathcal{V}^+_{\mathbf{q_2} \mathbf{s}} \cap \mathcal{V}^+_{\mathbf{p_2} \mathbf{t} }  \subset F^{N_0} \left( \mathcal{V}_{\mathbf{q_2}}^+ \cap \mathcal{V}_{\mathbf{p_2}}^+ \right) = \emptyset \\
&J^+_{\mathbf{q_2} \mathbf{s}} \leq C J_\mathbf{s}^+ J_\mathbf{q_2}^+ \leq Ce^{\lambda_1 N_0} h^{-\mathfrak{b}} \leq Ch^{ -(\delta_0 + \mathfrak{b})} \\
&J^+_{\mathbf{p_2} \mathbf{t} } \leq Ch^{ -(\delta_0 + \mathfrak{b})}
\end{align*}
and apply Proposition \ref{orthogoality_disjoint_support}, with $\delta = \frac{\delta_0 + \mathfrak{b}}{2} <1/2$ (here we use the condition (\ref{alpha+delta0})). 
\end{proof}

We now prove the lemma. 

\begin{proof}(\textit{of the lemma})
Consider $\mathbf{q}, \mathbf{p} \in Q(n,a)$ far from each other. 
Consider the smallest integer $m$ such that $\mathcal{V}_{q_m \dots q_{n-1} }^+  \cap \mathcal{V}_{p_m \dots p_{n-1} }^+ \neq \emptyset$. We will show that $m >0$ and set $\mathbf{p_2} = p_{m-1} \dots p_{n-1} , \mathbf{q_2} = q_{m-1} \dots q_{n-1}$. 
Pick $\rho \in \mathcal{V}_{q_m \dots q_{n-1} }^+  \cap \mathcal{V}_{p_m \dots p_{n-1} }^+ $. By choice of $\varepsilon_2$ after Lemma \ref{help_def_jacobian_2}, there exists $\rho_0 \in \mathcal{T}$ such that $d(F^{-i}(\rho) , F^{-i}(\rho_0) ) \leq \varepsilon_2$ for $i \in \{ 1, \dots, n-m\}$. In particular, $d(F^{-1}(\rho), F^{-1}(\rho_0)) \leq \varepsilon_2$ and $F^{-1}(\rho) \in \mathcal{V}_a$, so that $\rho_0 \in F\left(\mathcal{V}_a(\varepsilon_2) \right) $. Since, $\mathbf{q}, \mathbf{p}$ are far from each other, there exists $\rho_1 \in \mathcal{V}_\mathbf{q}^+ \cup \mathcal{V}_\mathbf{p}^+$ such that $d(\rho_1, W_u(\rho_0) ) > h^{\mathfrak{b}}$ (otherwise, it would contradict the definition \ref{close_to_each_other}).

Suppose for instance that $\rho_1 \in \mathcal{V}_\mathbf{q}^+ \subset  \mathcal{V}_{q_m \dots q_{n-1} }^+$. Hence, $d(F^{-i}(\rho_0), F^{-i} (\rho_1) ) \leq 2\varepsilon_0 + \varepsilon_2$ for $i \in \{1, \dots, n-m \}$. 
From (\ref{close_to_leaves}), $d(\rho_1, W_u(\rho_0) ) \leq C \left(J_s^{n-m}(\rho_0)\right)^{-1}$ and hence, $J_s^{n-m}(\rho_0) \leq C h^{-\mathfrak{b}}$.\\
But,  $ J_s^{n-m}(\rho_0) \sim J_{p_{m} \dots p_{n-1} }^+ \sim J_{q_{m} \dots q_{n-1} }^+$, which gives 
$$\max\left( J_{p_{m} \dots p_{n-1} }^+ , J_{q_{m} \dots q_{n-1} }^+ \right) \leq C h^{-\mathfrak{b}}$$
Since $\min (J_\mathbf{q}^+,J_\mathbf{p}^+) > h^{- \tau} \gg h^{-\mathfrak{b}}$ (here we use (\ref{condition_on_tau})), we cannot have $m=0$ (if $h$ small enough). Thus, we can set $\mathbf{p_2} = p_{m-1} \dots p_{n-1}$, $\mathbf{q_2} = q_{m-1} \dots q_{n-1}$ which satisfy the required properties by minimality of $m$. 
\end{proof}

We now decompose $U_{Q(n,a)}$ into a sum of operators, each of them corresponding to a \emph{cloud} of words. In the following, we'll use the term \emph{cloud} to mean a subset $\mathcal{Q} \subset Q(n,a)$ and we'll adopt the notation $$\mathcal{V}_{\mathcal{Q}}^+ = \bigcup_{\mathbf{q} \in \mathcal{Q}} \mathcal{V}_\mathbf{q}^+$$ and the definition : 
\begin{defi}
We say that two clouds $\mathcal{Q}_1, \mathcal{Q}_2$ \emph{do not interact} if for all couples $(\mathbf{q_1}, \mathbf{q_2} ) \in \mathcal{Q}_1 \times\mathcal{Q}_2$, $\mathbf{q_1}$ and $\mathbf{q_2}$ are far from each other. 
\end{defi}

The existence of such a decomposition follows from the key proposition : 

\begin{prop}\label{partition_into_clouds}
Suppose $\varepsilon_0$ is small enough. 

There exists a partition of $Q(n,a)$ into clouds $\mathcal{Q}_1, \dots, \mathcal{Q}_r$ and a global constant $C>0$ such that, for $i = 1 , \dots , r$, 
\begin{enumerate}[label=\roman*)]
\item there exists $\rho_i \in \mathcal{T}$ such that for all $\rho \in \mathcal{V}^+_{\mathcal{Q}_i}$, $d(\rho, W_u(\rho_i) ) \leq Ch^\mathfrak{b}$ ; 
\item if $\mathcal{Q}_i$ interacts with exactly $c_i$ clouds, then $c_i \leq C$. 
\end{enumerate}
\end{prop}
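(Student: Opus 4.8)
\textbf{Plan of proof of Proposition \ref{partition_into_clouds}.}

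The plan is to build the clouds by covering $Q(n,a)$ with neighborhoods of unstable leaves of a fixed width comparable to $h^{\mathfrak{b}}$, and then to extract from this cover a partition whose pieces still lie in a $Ch^{\mathfrak{b}}$-neighborhood of a single leaf (item (i)) and have bounded overlap multiplicity (item (ii)). First I would fix, for each $\mathbf{q}\in Q(n,a)$, a base point $\rho(\mathbf{q})\in\mathcal{T}\cap F(\mathcal{V}_a(\varepsilon_2))$ coming from Lemma \ref{help_def_jacobian_2} applied to $\mathcal{V}_{\mathbf{q}}^+$ (with the convention that one shadows the pseudo-orbit backward in time, as in the definition of $J_{\mathbf{q}}^+$), so that Lemma \ref{Localization_V_q}(2) together with the estimate $J_{\mathbf{q}_\tau}^+\le h^{-\tau}<J_{\mathbf{q}}^+$ and Lemma \ref{manipulations_J_q} gives
$$ \forall\rho\in\mathcal{V}_{\mathbf{q}}^+\,,\quad d(\rho,W_u(\rho(\mathbf{q})))\le \frac{C}{J_{\mathbf{q}}^+}\le C h^{\tau}\le C h^{\mathfrak{b}}, $$
where the last inequality uses $\mathfrak{b}<\tau$ from (\ref{condition_on_tau}). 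Thus each single word already sits in a $Ch^{\mathfrak{b}}$-tube around one leaf; the content of the proposition is to organize these tubes.

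Next I would use the adapted chart of Lemma \ref{Existence_of_adapated_charts} centered at (a point of $\mathcal{T}$ near) $\rho_a:=F(\rho_a')$ where $\rho_a'$ is the center of $\mathcal{V}_a$, valid on a ball of radius $2\varepsilon_0$ containing $F(\mathcal{V}_a(\varepsilon_2))$ and all the relevant $\mathcal{V}_{\mathbf{q}}^+$ (here one uses that $\varepsilon_0$ is small). In that chart each leaf $W_u(\rho_0)$, $\rho_0\in\mathcal{T}$, is a near-horizontal graph $\{(y,g(y,\zeta(\rho_0)))\}$ with $\partial_\zeta g(y,0)=1$, so it is $\frac12$-Lipschitz-close to the horizontal line at height $\zeta(\rho_0)$; in particular two leaves $W_u(\rho_0),W_u(\rho_0')$ are within $Ch^{\mathfrak{b}}$ of each other on the whole chart if and only if $|\zeta(\rho_0)-\zeta(\rho_0')|\le C'h^{\mathfrak{b}}$, and the $h^{\mathfrak{b}}$-tube around a leaf is contained in the strip $\{|\eta-\zeta(\rho_0)|\le C''h^{\mathfrak{b}}\}$ and contains the strip of width $c h^{\mathfrak{b}}$. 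Now to each $\mathbf{q}$ associate the real number $\zeta(\mathbf{q}):=\zeta(\rho(\mathbf{q}))\in I$, where $I$ is the (bounded) interval $\zeta(U_{\rho_a})$. Partition $I$ into consecutive intervals $I_1,\dots,I_r$ of length exactly $h^{\mathfrak{b}}$ (so $r=O(h^{-\mathfrak{b}})$, which is harmless because only item (ii) matters), and set $\mathcal{Q}_i=\{\mathbf{q}\in Q(n,a):\zeta(\mathbf{q})\in I_i\}$. This is a partition of $Q(n,a)$. For item (i): if $\mathbf{q}\in\mathcal{Q}_i$ then for $\rho\in\mathcal{V}_{\mathbf{q}}^+$ one has $d(\rho,W_u(\rho(\mathbf{q})))\le Ch^{\mathfrak{b}}$ by the first step and $d(W_u(\rho(\mathbf{q})),W_u(\rho_i))\le C h^{\mathfrak{b}}$ for $\rho_i$ any point of $\mathcal{T}$ with $\zeta(\rho_i)$ the midpoint of $I_i$ (such a point need not exist in $\mathcal{T}$, so instead I take $\rho_i=\rho(\mathbf{q}_i)$ for some fixed $\mathbf{q}_i\in\mathcal{Q}_i$, or simply note $|I_i|=h^{\mathfrak{b}}$ so all leaves indexed by $\mathcal{Q}_i$ are mutually $Ch^{\mathfrak{b}}$-close), whence $d(\rho,W_u(\rho_i))\le 2Ch^{\mathfrak{b}}$.

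For item (ii), suppose $\mathcal{Q}_i$ interacts with $\mathcal{Q}_j$: then there are $\mathbf{q}\in\mathcal{Q}_i$, $\mathbf{p}\in\mathcal{Q}_j$ close to each other, so by Definition \ref{close_to_each_other} there is $\rho_0\in\mathcal{T}\cap F(\mathcal{V}_a(\varepsilon_2))$ with $\mathcal{V}_{\mathbf{q}}^+\cup\mathcal{V}_{\mathbf{p}}^+$ in the $h^{\mathfrak{b}}$-tube of $W_u(\rho_0)$; picking $\rho\in\mathcal{V}_{\mathbf{q}}^+$ and $\rho'\in\mathcal{V}_{\mathbf{p}}^+$ and combining with the first-step bounds $d(\rho,W_u(\rho(\mathbf{q})))\le Ch^{\mathfrak b}$, $d(\rho',W_u(\rho(\mathbf{p})))\le Ch^{\mathfrak b}$ forces $|\zeta(\rho(\mathbf{q}))-\zeta(\rho_0)|\le C h^{\mathfrak b}$ and $|\zeta(\rho(\mathbf{p}))-\zeta(\rho_0)|\le C h^{\mathfrak b}$ (here one uses the quantitative relation, in the adapted chart, between proximity of two leaves on the $\varepsilon_0$-ball and proximity of their $\zeta$-values, valid since all the relevant leaves pass through the $\varepsilon_2$-neighborhood of $\rho_a$); hence $|\zeta(\mathbf{q})-\zeta(\mathbf{p})|\le C h^{\mathfrak b}$, i.e. $I_i$ and $I_j$ are within distance $Ch^{\mathfrak b}$, i.e. $|i-j|\le C$. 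So each $\mathcal{Q}_i$ interacts with at most $2C+1$ clouds. The main obstacle I anticipate is the second step: making rigorous and uniform (in $\rho_0,\mathbf{q},\mathbf{p}$, and in $n,a$) the equivalence ``two leaves are $h^{\mathfrak b}$-close on the relevant region $\iff$ their $\zeta$-coordinates are $O(h^{\mathfrak b})$-close,'' which requires the uniform $C^{1+\beta}$ control of $g$ and $\zeta$ from Lemma \ref{Existence_of_adapated_charts} and a careful check that $F(\mathcal{V}_a(\varepsilon_2))$ and all $\mathcal{V}_{\mathbf{q}}^+$, $\mathbf{q}\in Q(n,a)$, indeed fit inside one adapted chart $U_{\rho_a}$ once $\varepsilon_0$ is small — the bound $d(\rho,W_u(\rho(\mathbf{q})))\le Ch^{\mathfrak b}$ only localizes $\mathcal{V}_{\mathbf{q}}^+$ transversally to the leaf, so one also needs that along the leaf direction $\mathcal{V}_{\mathbf{q}}^+$ stays within $O(\varepsilon_0)$ of $\rho_a$, which follows since $F^{-1}(\mathcal{V}_{\mathbf q}^+)\subset\mathcal{V}_a=B(\rho_a',2\varepsilon_0)$ and $F$ is Lipschitz.
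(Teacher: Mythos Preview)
Your proposal is correct and follows essentially the same strategy as the paper: parametrize the unstable leaves passing through $F(\mathcal{V}_a)$ by a one-dimensional transverse coordinate, then group the words $\mathbf{q}\in Q(n,a)$ according to this coordinate at scale $h^{\mathfrak b}$, using that $J_{\mathbf q}^+>h^{-\tau}$ forces each $\mathcal{V}_{\mathbf q}^+$ into a $Ch^\tau$-tube (hence $Ch^{\mathfrak b}$-tube) around a single leaf.

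The only differences are cosmetic. First, the paper uses the unstable holonomy $z_{\mathbf q}=H^u_{\rho_a}(\rho_{\mathbf q})\in W_s(\rho_a)$ from Lemma~\ref{regularity_holonomy_maps} as the transverse coordinate, while you use $\zeta(\rho(\mathbf q))$ from the adapted chart of Lemma~\ref{Existence_of_adapated_charts}; these are the same object up to a bi-Lipschitz change of variable, and the ``main obstacle'' you flag (the two-sided comparison between leaf proximity and $\zeta$-proximity) is exactly the Lipschitz regularity of the holonomy/of $\zeta$, already established in Section~\ref{Adapted_charts}. Second, the paper selects the cloud centers as a maximal $h^{\mathfrak b}$-separated subset of $\{z_{\mathbf q}\}$ and then assigns each $\mathbf q$ to a nearest center, whereas you chop the range of $\zeta$ into a fixed grid of intervals of length $h^{\mathfrak b}$; both produce a partition satisfying (i), and both yield (ii) by the same packing argument (in the paper: $h^{\mathfrak b}$-separated points within $Ch^{\mathfrak b}$ of a fixed one; for you: grid intervals within $Ch^{\mathfrak b}$ of a fixed one). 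Your grid version is arguably simpler to write down; the maximal-separated version has the mild advantage that every cloud is automatically nonempty and that $\rho_i\in\mathcal T$ comes for free as $z_i$.
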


\begin{rem}
Actually, $r$ and the clouds $\mathcal{Q}_i$ depend on $n$ and $a$. We do not write this dependence explicitly here to make the notations lighter. The second point is relevant since \textit{a priori}, the only obvious bound on $r=r(n,a)$ is $|r| \leq |\mathcal{A}|^n$, where $n = O(\log h)$. 
\end{rem}

\begin{proof}
Keeping in mind that for all $\mathbf{q} \in Q(n,a)$, $\mathcal{V}_\mathbf{q}^+ \subset \mathcal{V}_a^+$, we fix $\rho_a \in \mathcal{V}_a^+$. If $\varepsilon_0$ is small enough, $\mathcal{V}_a^+$ do not intersect the boundaries of $W_s(\rho_a)$ and $W_u(\rho_a)$. 

For $\mathbf{q} \in Q(n,a)$, there exists $\rho_\mathbf{q} \in \mathcal{T}$ such that $d(F^{-i}(\rho), F^{-i}(\rho_\mathbf{q} ) ) \leq \varepsilon_2$ for all $\rho \in \mathcal{V}_\mathbf{q}^+$ and for $i = 1 , \dots, n$, according to Lemma \ref{help_def_jacobian_2} and since $J_\mathbf{q}^+ \sim h^{\tau}$, 

$$d(\rho, W_u(\rho_\mathbf{q}) ) \leq C h^{-\tau}$$
$d(\rho_a, \rho_\mathbf{q} ) \leq C(\varepsilon_2 + \varepsilon_0)$ and hence, if $\varepsilon_0$ is small enough, $ z_\mathbf{q} \coloneqq H^u_{\rho_a} (\rho_\mathbf{q})$ (here, $H^u_{\rho_a} :B(\rho_a, \varepsilon_0^\prime) \to W_s(\rho_a) $) is the unstable holonomy map defined before Lemma \ref{regularity_holonomy_maps}) is well defined, and depends Lipschitz-continuously on $\rho_\mathbf{q}$ (with global Lipschitz constant). 

Next, consider a maximal subset $\{ z_1 , \dots, z_r \} \subset \{ z_\mathbf{q} , \mathbf{q} \in Q(n,a) \}$ which is $h^\mathfrak{b}$ separated. 
By maximality, for every $\mathbf{q} \in Q(n,a)$, there exists $i \in \{1,\dots, r \}$ such that $|z_i - z_\mathbf{q}| \leq h^{\mathfrak{b}}$ and we use these $z_i$ to partition $Q(n,a)$ into clouds $\mathcal{Q}_i$ where for $ i \in \{ 1, \dots, r \}$, $|z_i - z_\mathbf{q} | \leq h^\mathfrak{b}$ for all $\mathbf{q} \in \mathcal{Q}_i$. We now show that this partition satisfies the required properties. 

Let $i \in \{1, \dots, r \}$, $\mathbf{q} \in \mathcal{Q}_i$ and $\rho \in \mathcal{V}_\mathbf{q}^+$. By local uniqueness of the unstable leaves, we may assume that $\varepsilon_0$ is small enough so that $W_u(\rho_\mathbf{q}) \cap \mathcal{V}_a^+ = W_u(z_\mathbf{q}) \cap \mathcal{V}_a^+$. Hence, 
$$ d(\rho, W_u(z_\mathbf{q}) ) \leq Ch^{-\tau}$$
Since the unstable leaves depend  Lipschitz-continuously on $\rho \in \mathcal{T}$, we have 
$$ d(\rho, W_u(z_i) ) \leq C|z_i - z_\mathbf{q}| +  C d(\rho, W_u(z_\mathbf{q})) \leq Ch^\mathfrak{b} + Ch^\tau \leq Ch^\mathfrak{b}$$
This gives i). 

To show ii), suppose that $\mathcal{Q}_i$ and $\mathcal{Q}_j$ interact. Then, there exists $(\mathbf{q}, \mathbf{p} ) \in \mathcal{Q}_i \times \mathcal{Q}_j$ and $\rho_0 \in \mathcal{T}$ such that for all $\rho \in \mathcal{V}_\mathbf{q}^+ \cup \mathcal{V}_\mathbf{p}^+$, $d(\rho, W_u(\rho_0) ) \leq h^\mathfrak{b}$.  It follows that $d(z_\mathbf{q}, W_u(\rho_0)) \leq Ch^\tau + h^\mathfrak{b} \leq Ch^\mathfrak{b}$ and if we note $z_0 = H^u_{\rho_a}(\rho_0)$ the unique point in $W_u(\rho_0) \cap W_s(\rho_a)$ then $|z_0 - z_\mathbf{q}| \leq Ch^\mathfrak{b}$. The same is true for $\mathbf{p}$ and we have $|z_\mathbf{q}- z_\mathbf{p}| \leq Ch^\mathfrak{b}$ and eventually, $|z_i - z_j | \leq Ch^\mathfrak{b}$. Since $ z_1, \dots, z_r$ are $h^\mathfrak{b}$ separated, we see after rescaling that the number of $j$ such that $\mathcal{Q}_i$ and $\mathcal{Q}_j$ interact is smaller than the maximal number of
points in $B(0,C)$ which are $1$-separated (one can for instance bound it by $(2C+1)^2$, but what matters is that it is a global constant). 
\end{proof}

\begin{figure}
\includegraphics[scale=0.5]{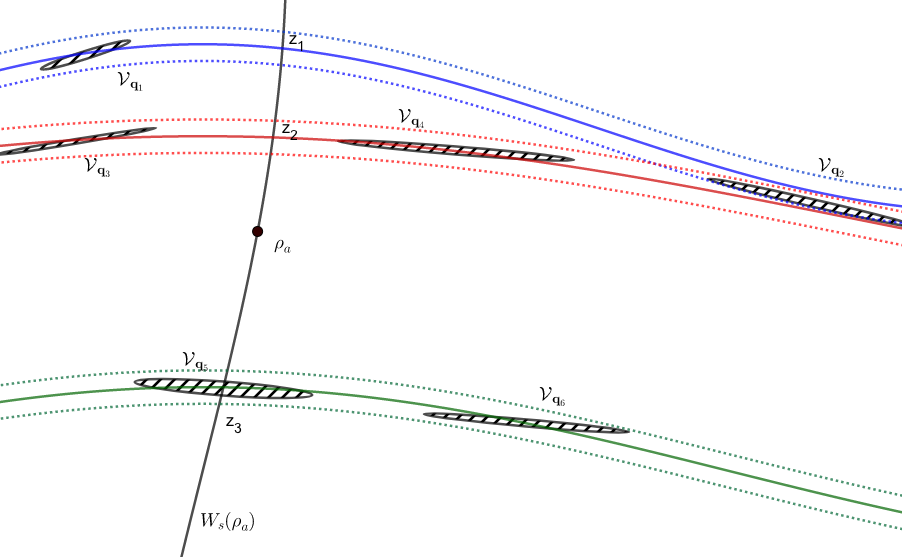}
\caption{We gather the 6 small sets $\mathcal{V}_\mathbf{q}$ into 3 clouds corresponding to $z_1, z_2$ and $z_3$. Here, $\mathcal{Q}_1 = \{ \mathbf{q}_1\} , \mathcal{Q}_2 = \{ \mathbf{q}_2, \mathbf{q}_3, \mathbf{q}_4 \}, \mathcal{Q}_3 = \{ \mathbf{q}_5 ,\mathbf{q}_6 \}$. The clouds $\mathcal{Q}_1$ and $\mathcal{Q}_2$ interact. The dotted lines draw tubes of width $Ch^\mathfrak{b}$ around the unstable leaves $W_u(z_i)$. The sets $\mathcal{V}_\mathbf{q}$ have width of order $h^\tau$. }
\end{figure}

This partition into clouds allows us to decompose $\mathfrak{M}^{N_0} U_{Q(n,a)}$ into a sum of operators
\begin{equation}
B_i =\mathfrak{M}^{N_0} U_{\mathcal{Q}_i}= \sum_{\mathbf{q} \in \mathcal{Q}_i} \mathfrak{M}^{N_0} U_\mathbf{q} \quad ; \quad 
\mathfrak{M}^{N_0} U_{Q(n,a)} = \sum_{i=1}^r B_i 
\end{equation}

The use of Cotlar-Stein theorem (\cite{ZW}), Theorem C.5) reduces the control of the sum by the control of individual clouds : 

\begin{lem} With the above notations, there exists a global constant $C>0$ such that 
\begin{equation}
|| \mathfrak{M}^{N_0} U_{Q(n,a)} || \leq C \sup_{1 \leq i \leq r } ||B_i|| + \hinf
\end{equation}
\end{lem}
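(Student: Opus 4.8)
The plan is to apply the Cotlar--Stein almost-orthogonality theorem to the family $(B_i)_{1\le i\le r}$, where $B_i = \mathfrak{M}^{N_0}U_{\mathcal{Q}_i}$. Recall Cotlar--Stein (as in \cite{ZW}, Theorem C.5): if $(B_i)_i$ is a finite family of bounded operators on a Hilbert space, then
$$ \left\| \sum_{i=1}^r B_i \right\| \le \sup_{1\le i \le r} \sum_{j=1}^r \left( \|B_i^* B_j\|^{1/2} + \|B_i B_j^*\|^{1/2} \right). $$
So the first step is to split the inner sum over $j$ into those $j$ for which $\mathcal{Q}_i$ and $\mathcal{Q}_j$ interact, and those for which they do not. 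By part ii) of Proposition \ref{partition_into_clouds}, the number of interacting indices $j$ is bounded by a global constant $C$; for each such $j$ we use the trivial bound $\|B_i^*B_j\|^{1/2} \le \|B_i\|^{1/2}\|B_j\|^{1/2} \le \sup_k \|B_k\|$, so the interacting terms contribute at most $2C \sup_k\|B_k\|$.

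The second step is to handle the non-interacting indices. If $\mathcal{Q}_i$ and $\mathcal{Q}_j$ do not interact, then every pair $(\mathbf{q},\mathbf{p})\in\mathcal{Q}_i\times\mathcal{Q}_j$ consists of words far from each other, so by the preceding Proposition (almost-orthogonality for words far from each other), $(\mathfrak{M}^{N_0}U_\mathbf{q})^*(\mathfrak{M}^{N_0}U_\mathbf{p}) = \hinf$ and $(\mathfrak{M}^{N_0}U_\mathbf{q})(\mathfrak{M}^{N_0}U_\mathbf{p})^* = \hinf$, with constants uniform in the pair (they depend only on global parameters and on $C_0,\delta,\tau,\delta_1$ through the hypotheses of that Proposition). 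Expanding $B_i^*B_j = \sum_{\mathbf{q}\in\mathcal{Q}_i}\sum_{\mathbf{p}\in\mathcal{Q}_j}(\mathfrak{M}^{N_0}U_\mathbf{q})^*(\mathfrak{M}^{N_0}U_\mathbf{p})$ and using that $|\mathcal{Q}_i|,|\mathcal{Q}_j| \le |\mathcal{A}|^n$ with $n=O(\log h)$, hence at most a negative power of $h$, we get $\|B_i^*B_j\| = \hinf$ (a loss of a polynomial factor in $h$ is absorbed by the $\hinf$ decay), and similarly $\|B_iB_j^*\| = \hinf$. Since $r \le |\mathcal{A}|^n$ is also at most a negative power of $h$, the total contribution of all non-interacting terms in the Cotlar--Stein sum is $r \cdot \hinf = \hinf$.

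Putting the two steps together yields, for $h$ small enough,
$$ \|\mathfrak{M}^{N_0}U_{Q(n,a)}\| = \left\| \sum_{i=1}^r B_i \right\| \le 2C\, \sup_{1\le i\le r}\|B_i\| + \hinf, $$
which is exactly the claimed bound (after renaming $2C$ as $C$). The only mild subtlety — and the place where one must be careful rather than where any real difficulty lies — is bookkeeping the uniformity of the $\hinf$ constants: they come from the two almost-orthogonality statements whose constants depend only on $C_0,\delta,\tau,\delta_1$ and global data, and in applying them one should check that the splitting $\mathbf{q}=\mathbf{q_1}\mathbf{q_2}$ from Lemma \ref{Lemma_decom_far} produces Jacobian bounds $\max(J^+_{\mathbf{q_2}},J^+_{\mathbf{p_2}})\le Ch^{-\mathfrak{b}}$ compatible with the hypothesis $\max(J^+,J^+)\le C_0 h^{-2\delta}$ of Proposition \ref{orthogoality_disjoint_support} for an appropriate choice of $\delta$, which was already arranged via condition (\ref{alpha+delta0}). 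There is no genuine obstacle here; the lemma is essentially a packaging of the two almost-orthogonality results plus the finite-interaction property of the cloud decomposition.
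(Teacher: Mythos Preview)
Your proof is correct and follows essentially the same approach as the paper: apply Cotlar--Stein to the family $(B_i)$, bound the interacting terms using the finite-interaction property ii) of Proposition~\ref{partition_into_clouds}, and control the non-interacting terms as $\hinf$ via the almost-orthogonality proposition for words far from each other, absorbing the polynomial-in-$h$ number of terms and clouds into the $\hinf$. The only cosmetic difference is that you spell out the polynomial bounds on $|\mathcal{Q}_i|$ and $r$ a bit more explicitly than the paper does.
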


\begin{proof}
Cotlar-Stein theorem reduces to control 
\begin{align*}
\max_i \sum_j ||B_i^* B_j||^{1/2} \\
\max_i \sum_j ||B_j B_i^* ||^{1/2} 
\end{align*}
Fix $i \in \{1, \dots, r \}$. \\
If $\mathcal{Q}_i $ and $\mathcal{Q}_j$ do not interact, $||B_i^* B_j||^{1/2}$ (resp. $ ||B_j B_i^* ||^{1/2}$) is a sum of terms of the form  $\left( \mathfrak{M}^{N_0} U_\mathbf{q} \right)^*\left( \mathfrak{M}^{N_0} U_\mathbf{p} \right)$ (resp. $\left( \mathfrak{M}^{N_0} U_\mathbf{q} \right) \left( \mathfrak{M}^{N_0} U_\mathbf{p} \right)^*$) where $\mathbf{p}$ and $\mathbf{q}$ are far from each other. In virtue of Proposition \ref{orthogoality_disjoint_support}, these terms are uniformly $\hinf$ and since the number of terms in the sum grows at most polynomially with $h$, we can gather all these terms in a single uniform $\hinf$. As a consequence, we have 
\begin{align*}
\sum_j ||B_i^* B_j||^{1/2} & \leq \sum_{ \mathcal{Q}_i \text{ and } \mathcal{Q}_j \text{ interact}} ||B_i^* B_j||^{1/2} + \hinf  \\
&\leq \sum_{ \mathcal{Q}_i \text{ and } \mathcal{Q}_j \text{ interact}} \max_k ||B_k|| + \hinf \\
&\leq C \max_k||B_k|| + \hinf
\end{align*}
and the same holds for the second sum. This gives the desired inequalities. 
\end{proof}

The proof of (\ref{Goal_thm_2}) and, as a consequence, of Proposition \ref{Prop_thm} is then reduced to the proof of 

\begin{prop}\label{Prop_on_clouds} There exists $\gamma >0$ such that the following holds for $h$ small enough. 
Assume that $\mathcal{Q} \subset \mathcal{Q}(n,a)$ satisfies, for some global constant $C >0$, 
$$ \exists \rho_0 \in \mathcal{T} , \quad  \forall \rho \in \mathcal{V}^+_\mathcal{Q} , \quad d(\rho, W_u(\rho_0) ) \leq Ch^\mathfrak{b}$$
where $\mathfrak{b} = \frac{1}{1+\beta}$ is defined in (\ref{alpha}). 
Then, 
$$\frac{|| \mathfrak{M}^{N_0} U_\mathcal{Q} ||}{ || \alpha||_\infty^{N_0 + n}} \leq h^\gamma$$
\end{prop}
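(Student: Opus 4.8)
The plan, following \cite{NDJ19}, is to express $\mathfrak{M}^{N_0}U_{\mathcal Q}$, modulo $\hinf$ and after conjugation into a chart adapted to the cloud, in the schematic form $\mathfrak{A}\,\mathds{1}_{E}(hD_y)\,\mathds{1}_{P}(y)\,\mathfrak{B}$, where $\mathfrak{A},\mathfrak{B}$ are bounded operators of norm $O(\|\alpha\|_\infty^{N_0+n})$ and $E,P\subset\R$ are $h^{a}$- and $h^{b}$-neighbourhoods of $\nu$-porous subsets of $\R$, coming from the traces of $\mathcal T$ along the unstable and stable directions, with $a+b>1$. Since $\|\mathds{1}_{E}(hD_y)\,\mathds{1}_{P}(y)\|=\|\mathds{1}_{E}\,\mathcal F_h\,\mathds{1}_{P}\|$, the proposition then becomes exactly an instance of the fractal uncertainty principle.

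\textbf{Step 1 (adapted chart and propagated symbols).} First I would conjugate all operators by a microlocally unitary Fourier integral operator quantizing the straightening chart $\kappa_{\rho_0}$ of Lemma \ref{Existence_of_adapated_charts} around the point $\rho_0$ attached to $\mathcal Q$, so that the local unstable manifolds become $C^{1+\beta}$ graphs $\eta=g(y,\zeta)$ with $\partial_\zeta g(y,0)=1$; thus each $\mathcal V_{\mathbf q}^{+}$ is a nearly horizontal tube of ``vertical'' width $\sim(J_{\mathbf q}^{+})^{-1}$, and by hypothesis all of them lie in $\{|\eta|\le Ch^{\mathfrak b}\}$. Writing $\mathfrak{M}^{N_0}U_{\mathcal Q}=\sum_{\mathbf s\in\mathcal A^{N_0},\,\mathbf q\in\mathcal Q}U_{\mathbf s}U_{\mathbf q}+\hinf$, I would split each long word $\mathbf q\in Q(n,a)$ at the place where its forward Jacobian first exceeds a fixed threshold $h^{-\delta}$ with $\delta\in(\tau-\tfrac12,\tfrac12)$ (non-empty since $\tau<1$): then $J^{+}_{\mathbf q_\leftarrow}\le C h^{-\delta}$ and $J^{+}_{\mathbf q_\rightarrow}\sim h^{-(\tau-\delta)}$ with $\tau-\delta<\tfrac12$, and, since $N_0$ is below the global Ehrenfest time, the pieces $\mathbf q_\leftarrow,\mathbf q_\rightarrow,\mathbf s$ all meet the hypothesis of Proposition \ref{prop_symbols_a_q}. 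Hence $U_{\mathbf q_\leftarrow}=\op(a^{+}_{\mathbf q_\leftarrow})T^{|\mathbf q_\leftarrow|}+\hinf$, $U_{\mathbf q_\rightarrow}=T^{|\mathbf q_\rightarrow|}\op(a^{-}_{\mathbf q_\rightarrow})+\hinf$, and $\mathfrak{M}^{N_0}=T^{N_0}\op(b)+\hinf$ (Proposition \ref{sum_over_many_words}), all symbols lying in $S^{comp}_{\delta_1}$ for some $\delta_1<\tfrac12$, with the expected power of $\|\alpha\|_\infty$, and supported in the relevant thin tubes.

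\textbf{Step 2 (extracting the uncertainty-principle core).} Composing, $U_{\mathbf q}=T^{|\mathbf q_\rightarrow|}\big(\op(a^{-}_{\mathbf q_\rightarrow})\op(a^{+}_{\mathbf q_\leftarrow})\big)T^{|\mathbf q_\leftarrow|}+\hinf$; the middle product is a pseudodifferential operator whose symbol is supported in the ``middle box'' $\mathcal V^{-}_{\mathbf q_\rightarrow}\cap\mathcal V^{+}_{\mathbf q_\leftarrow}$, which in the adapted chart is a genuine product rectangle, thin in frequency around an unstable leaf and thin in position around a stable leaf. As in the model example, the tube widths $h^{\delta}$ and $h^{\tau-\delta}$ being $>h^{1/2}$ and the symbols lying in a good class $S_{\delta_1}$, $\delta_1<\tfrac12$, the smooth cut-offs may be replaced by sharp ones, $\op(a^{+}_{\mathbf q_\leftarrow})=\op(a^{+}_{\mathbf q_\leftarrow})\mathds{1}_{E_{\mathbf q}}(hD_y)+\hinf$ and $\op(a^{-}_{\mathbf q_\rightarrow})=\mathds{1}_{P_{\mathbf q}}(y)\op(a^{-}_{\mathbf q_\rightarrow})+\hinf$. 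I would then push all the $T$'s to the outside and sum over $\mathbf q\in\mathcal Q$ and $\mathbf s\in\mathcal A^{N_0}$: the cloud structure, in particular the fact (coming from Proposition \ref{partition_into_clouds} and the $C^{1+\beta}$ holonomies of Lemma \ref{regularity_holonomy_maps}) that the sets $\mathcal V^{-}_{\mathbf q}$ all lie within $Ch^{\tau}$ of one common stable leaf, allows the middle boxes of the various words to be brought into a single common chart; Proposition \ref{control_of_the_symbols} controls the symbol growth so that the outer factors $\mathfrak A,\mathfrak B$ keep norm $O(\|\alpha\|_\infty^{N_0+n})$, and Proposition \ref{orthogoality_disjoint_support} together with a Cotlar--Stein argument prevents the $h^{-O(1)}$ number of summands from producing a loss. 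The sharp cut-offs collect into $\mathds{1}_{E}(hD_y)$ and $\mathds{1}_{P}(y)$: propagating the frequency cut-off through the short block multiplies its width by $(J^{u}_{N_0})^{-1}$, giving frequency scale $h^{a}$ with $a\approx\delta+\delta_0\lambda_0/\lambda_1$ (also $\le h^{\mathfrak b}$ by the hypothesis on $\mathcal V^{+}_{\mathcal Q}$, which keeps its centre controlled), while the complementary piece gives position scale $h^{b}$ with $b\approx\tau-\delta$; thus $a+b\approx\delta_0\frac{\lambda_0}{\lambda_1}+\tau>1$ by \eqref{condition_on_tau}, the role of $\mathfrak b+\delta_0<1$ and $\mathfrak b<\tau$ being to make the various splittings legitimate. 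One is thereby reduced to bounding $\|\mathds{1}_{E}\,\mathcal F_h\,\mathds{1}_{P}\|$.

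\textbf{Step 3 (porosity, the FUP, and the main obstacle).} By the arguments recalled in Section \ref{upper-box dimension}, the traces of $\mathcal T$ along stable and unstable manifolds have upper box dimension $<1$ and are $\nu$-porous with $\nu$ depending only on $(U,F)$; the maps relating them to $E$ and $P$ (the straightening chart, $g(y,\cdot)$, and the holonomy maps) are $C^{1+\beta}$ by Theorem \ref{Thm_regularity} and Lemma \ref{regularity_holonomy_maps}, hence preserve porosity up to adjusting $\nu$ --- this is precisely where the $C^{1+\beta}$ regularity of the invariant splitting is indispensable. Applying the fractal uncertainty principle of \cite{BD18} in the form of Proposition \ref{FUP}, and using $a+b>1$, gives $\|\mathds{1}_{E}\,\mathcal F_h\,\mathds{1}_{P}\|\le Ch^{\beta}$ with $\beta=\beta(\nu)>0$ depending only on $(U,F)$; combined with Step 2 this yields $\|\mathfrak{M}^{N_0}U_{\mathcal Q}\|\le Ch^{\beta}\|\alpha\|_\infty^{N_0+n}$, and since $N_0+n=O(|\log h|)$ the polynomial and logarithmic prefactors are absorbed, proving the proposition for any $0<\gamma<\beta$. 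The heart of the argument is Step 2: assembling the sum of operators into the single clean product $\mathfrak A\,\mathds{1}_{E}(hD_y)\,\mathds{1}_{P}(y)\,\mathfrak B$ requires delicate microlocal bookkeeping, the main difficulties being (i) that the words $\mathbf q$ carry a Jacobian $\sim h^{-\tau}$ with $\tau$ close to $1$, outside the symbol classes of Proposition \ref{prop_symbols_a_q}, forcing the splitting and the use of sharp cut-offs rather than smooth quantizations; (ii) that the middle boxes attached to the different words of a cloud must be unified into one chart, which uses the common-leaf structure together with the $C^{1+\beta}$ regularity; and (iii) that the $h^{-O(1)}$ summands must be handled by the orthogonality estimates without a fatal loss.
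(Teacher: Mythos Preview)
Your overall architecture (adapted chart at $\rho_0$, reduction to $\|\mathds{1}_E\,\mathcal F_h\,\mathds{1}_P\|$, porosity via the upper box dimension of $\mathcal T$ along stable/unstable leaves, then Proposition \ref{FUP}) matches the paper, and Step~3 is essentially what is done in Section~\ref{Section_FUP}. The gap is in Step~2: your extraction of the uncertainty-principle core via the splitting $\mathbf q=\mathbf q_\leftarrow\mathbf q_\rightarrow$ and Proposition \ref{prop_symbols_a_q} does not go through, and the paper does \emph{not} proceed this way.

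Concretely: after your split, the product $\op(a^-_{\mathbf q_\rightarrow})\op(a^+_{\mathbf q_\leftarrow})$ lives at the ``middle time'' $|\mathbf q_\leftarrow|$, which depends on $\mathbf q$ through the Jacobian growth; different words of the cloud land in different $\mathcal V_{q_{|\mathbf q_\leftarrow|}}$, so there is no common chart for the middle boxes and summing cannot produce a single pair $(E,P)$. More seriously, the two sharp cutoffs you obtain at the middle have scales $h^\delta$ (frequency) and $h^{\tau-\delta}$ (position), whose exponents add up to $\tau<1$: the FUP does not apply there. Your fix --- ``propagating the frequency cut-off through the short block $\mathfrak M^{N_0}$ to gain $h^{\delta_2}$'' --- conflates two different locations: $\mathfrak M^{N_0}$ acts at time $n$, not at the middle, so after propagation the frequency cutoff sits at time $n+N_0$ while the position cutoff stays at the middle; they are never side by side. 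If instead you try to Egorov everything to a common place, the conjugation is through operators of Jacobian $\sim h^{-(\tau-\delta)-\delta_0}$ acting on symbols already in $S_{\delta_1}$, and this exits the admissible classes.

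The paper avoids all of this by placing \emph{both} sharp cutoffs at the single interface between $U_{\mathcal Q}$ and $\mathfrak M^{N_0}$ (time $n$, chart $\kappa_{\rho_0}$). The position cutoff $\mathds 1_{\Omega^-}(y)$ at scale $h^{\delta_2}$ comes from $\mathfrak M^{N_0}$ via Proposition \ref{prop_micro_left} (sub-Ehrenfest Egorov on $\op(\chi_h)$). The frequency cutoff $\mathds 1_{\Omega^+}(hD_y)$ at the much finer scale $h^\tau$ comes from $U_{\mathcal Q}$ via \emph{Lagrangian state propagation} (Section 5.2, Proposition \ref{Prop_evolution_lagrangian_state} and Lemma \ref{micro_U_q}): one decomposes a state over momenta $\theta$, evolves each WKB state $e^{i\theta\cdot/h}$ through the whole long word, obtains a Lagrangian state on a leaf $\mathcal C_{\mathbf q,\theta}$ of controlled geometry, and then invokes a sharp Fourier localization statement for Lagrangian states. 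This technique bypasses the $\delta<1/2$ barrier of the pseudodifferential calculus entirely and is the missing ingredient in your plan; it is also what makes the cloud hypothesis (all $\mathcal V_{\mathbf q}^+$ near one $W_u(\rho_0)$) and Lemma \ref{straigh_Wu} (the $C^{1+\beta}$ straightening, giving $\operatorname{diam}\Gamma_{\mathbf q}^+\le Ch^\tau$) directly usable.
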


\section{Reduction to a fractal uncertainty principle via microlocalization properties }\label{Section_reduction_to_FUP}
In this section, we reduce the proof of Proposition \ref{Prop_on_clouds} to a fractal uncertainty principle. To do so, we aim at showing microlocalization properties of the operators involved. The disymmetry between $N_0$ and $N_1$ in the decomposition $N=N_0 + N_1$ will appear clearly in this section. Since $N_0$ is below the Ehrenfest time, we can actually use semiclassical tools. By contrast, things are more complicated for operators $U_\mathbf{q}$, with $\mathbf{q} \in \mathcal{Q}(n,a)$ and we'll use methods of propagation of Lagrangian leaves. These methods are inspired by \cite{AnN01}, \cite{AnN02} and \cite{NZ09} and are also used in \cite{NDJ19}.

\subsection{Microlocalziation of $\mathfrak{M}^{N_0}$}

We first state a microlocalization result for $\mathfrak{M}^{N_0}$. This is the easiest one to obtain since $N_0$ is below the Ehrenfest time. 
We recall the definition of $\mathcal{T}_-$ the set of the future trapped points $$\mathcal{T}_- = \bigcap_{ n \in \N} F^{-n} \left( U \right)$$ and focus on $\mathcal{T}_{-}^{\text{loc}} \coloneqq \mathcal{T}_- \cap \mathcal{T}(4\varepsilon_0)$. $\mathcal{T}_-$ is laminated by the weak global stable leaves. Hence, if $\varepsilon_0$ is small enough, ensuring that the boundaries of the local stable leaves $W_s(\rho), \rho \in \mathcal{T}$ do not intersect $\mathcal{T}(4 \varepsilon_0)$, we have $$\mathcal{T}_{-}^{\text{loc}}   \subset \bigcup_{ \rho \in \mathcal{T}} W_s(\rho)$$
When $\mathbf{q} \in \mathcal{A}^{N_0}$ and $\mathcal{V}_\mathbf{q}^- \neq \emptyset$,$\mathcal{V}_\mathbf{q}^- $ lies in a $O\left(h^{\delta_0 \frac{\lambda_0}{\lambda_1}} \right)$ neighborhood of a stable leaves, as stated in the following lemma. In the following, we write 
\begin{equation}
\delta_2 = \delta_0 \frac{\lambda_0}{\lambda_1}
\end{equation}
We recall that we have defined $\mathfrak{b}$ in (\ref{alpha}) and $\tau$ in (\ref{tau}) such that $\alpha < \tau < 1$ and $\delta_2 + \tau >1$ (see \ref{condition_on_tau}). Moreover, $N_0 = \lceil \frac{\delta_0}{ \lambda_1} |\log h | \rceil$. 
\begin{lem}
There exists a global constant $C_2>0$ such that for all $ \mathbf{q} \in \mathcal{A}^{N_0}$ satisfying $\mathcal{V}_\mathbf{q}^- \neq \emptyset$, 
$$ d \left( \mathcal{V}_\mathbf{q}^-, \mathcal{T}_{-}^{\text{loc}} \right) \leq C_2 h^{ \delta_2} $$
\end{lem}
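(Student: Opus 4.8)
The plan is to feed the shadowing point produced by Lemma~\ref{help_def_jacobian_2} into the local hyperbolicity estimates of Section~\ref{section_hyperbolic}, and to convert the resulting lower bound on the word length into the power $h^{\delta_2}$. Fix $\mathbf{q}\in\mathcal{A}^{N_0}$ with $\mathcal{V}_\mathbf{q}^-\neq\emptyset$. By Lemma~\ref{help_def_jacobian_2} there is $\rho'\in\mathcal{T}$ such that $d(F^i(\rho),F^i(\rho'))\le\varepsilon_2$ for all $i\in\{0,\dots,N_0-1\}$ and all $\rho\in\mathcal{V}_\mathbf{q}^-$; recall that $\varepsilon_2$ can be taken as small as we please by shrinking $\varepsilon_0$, so we may assume $\varepsilon_2\le\varepsilon_1$. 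Fixing $\rho\in\mathcal{V}_\mathbf{q}^-$ and applying Lemma~\ref{Local_hpyerbolic_2}(1) with $(\rho',\rho,N_0-1)$ playing the role of $(\rho,\rho',n)$ there (the hypothesis $d(F^i(\rho'),F^i(\rho))\le\varepsilon_1$ holds for $i\in\{0,\dots,N_0-1\}$, which is exactly the range $\{0,\dots,n\}$ with $n=N_0-1$), we obtain $d(\rho,W_s(\rho'))\le C/J^u_{N_0-1}(\rho')$.

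Next I would carry out the numerology. Since $\rho'\in\mathcal{T}$, the lower bound in (\ref{lamba0et1}) gives $J^u_{N_0-1}(\rho')\ge e^{\lambda_0(N_0-1)}$, and using $N_0\ge\frac{\delta_0}{\lambda_1}|\log h|$ together with $\delta_2=\delta_0\lambda_0/\lambda_1$ this yields $J^u_{N_0-1}(\rho')\ge e^{-\lambda_0}h^{-\delta_2}$, hence $d(\rho,W_s(\rho'))\le Ce^{\lambda_0}h^{\delta_2}$. It then remains to promote this to a bound on the distance to $\mathcal{T}_-^{\mathrm{loc}}$. Choose $\rho''\in W_s(\rho')$ with $d(\rho,\rho'')\le 2Ce^{\lambda_0}h^{\delta_2}$. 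Since $d(\rho'',\rho')\le d(\rho'',\rho)+d(\rho,\rho')\le 2Ce^{\lambda_0}h^{\delta_2}+\varepsilon_2$, which is $\le 4\varepsilon_0$ for $h$ small (as $\varepsilon_2\ll\varepsilon_0$), we have $\rho''\in\mathcal{T}(4\varepsilon_0)$. To see $\rho''\in\mathcal{T}_-$: by compactness of $\mathcal{T}$ there is $\varepsilon_3>0$ whose associated neighbourhood $\mathcal{T}(\varepsilon_3)$ is contained in $U$, and by Lemma~\ref{Local_hpyerbolic_1}(1), applied with $\rho'\in\mathcal{T}$ and $\rho''\in W_s(\rho')$, one gets $d(F^n(\rho''),F^n(\rho'))\le CJ^s_n(\rho')\,d(\rho'',\rho')\le C\,d(\rho'',\rho')$ for all $n\in\N$; choosing $\varepsilon_2$ small enough (and $h$ small) so that $C(2Ce^{\lambda_0}h^{\delta_2}+\varepsilon_2)<\varepsilon_3$, this forces $F^n(\rho'')\in\mathcal{T}(\varepsilon_3)\subset U$ for every $n$, i.e. $\rho''\in\mathcal{T}_-$. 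Thus $\rho''\in\mathcal{T}_-^{\mathrm{loc}}$ and $d(\rho,\mathcal{T}_-^{\mathrm{loc}})\le d(\rho,\rho'')\le C_2h^{\delta_2}$ with $C_2=2Ce^{\lambda_0}$; since $\rho\in\mathcal{V}_\mathbf{q}^-$ was arbitrary, the lemma follows.

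The only step that is not pure bookkeeping is the verification that the nearby point $\rho''$ on the stable leaf genuinely lies in $\mathcal{T}_-$: this requires the uniform neighbourhood $\mathcal{T}(\varepsilon_3)\subset U$ coming from compactness, combined with the stable-contraction estimate of Lemma~\ref{Local_hpyerbolic_1}(1), and a compatible choice of $\varepsilon_2$ (which is legitimate precisely because $\varepsilon_2$ is controlled by $\varepsilon_0$). The other delicate point, purely formal, is matching the index range $\{0,\dots,n\}$ demanded by Lemma~\ref{Local_hpyerbolic_2} with the range $\{0,\dots,N_0-1\}$ supplied by Lemma~\ref{help_def_jacobian_2}, which is why one applies it with $n=N_0-1$ and only loses an innocuous factor $e^{\lambda_0}$ in the constant.
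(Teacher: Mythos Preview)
Your argument is essentially the paper's own proof, just unfolded: the paper invokes Lemma~\ref{Localization_V_q} (which packages your use of Lemma~\ref{help_def_jacobian_2} together with Lemma~\ref{Local_hpyerbolic_2}) to get $d(\mathcal{V}_\mathbf{q}^-,W_s(\rho'))\le C/J_\mathbf{q}^-$, then uses $J_\mathbf{q}^-\ge e^{\lambda_0 N_0}\ge C^{-1}h^{-\delta_2}$, and leaves the passage from $W_s(\rho')$ to $\mathcal{T}_-^{\mathrm{loc}}$ implicit. Your extra care in verifying $\rho''\in\mathcal{T}_-^{\mathrm{loc}}$ is welcome and correct.

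One small slip: you write ``as $\varepsilon_2\ll\varepsilon_0$'', but in the paper $\varepsilon_2=2\varepsilon_0+\delta_0+C\varepsilon_0$, so in fact $\varepsilon_2>2\varepsilon_0$; what is true is only that $\varepsilon_2\to 0$ as $\varepsilon_0\to 0$. This invalidates your bound $d(\rho'',\rho')\le 4\varepsilon_0$ as written. The fix is immediate: since $\rho\in\mathcal{V}_{q_0}=B(\rho_{q_0},2\varepsilon_0)$ with $\rho_{q_0}\in\mathcal{T}$, you have $d(\rho'',\rho_{q_0})\le d(\rho'',\rho)+d(\rho,\rho_{q_0})\le 2Ce^{\lambda_0}h^{\delta_2}+2\varepsilon_0<4\varepsilon_0$ for $h$ small, so $\rho''\in\mathcal{T}(4\varepsilon_0)$ as needed. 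Your $\mathcal{T}_-$ verification is unaffected, since there you only need $\varepsilon_2$ small relative to the fixed $\varepsilon_3$, which is legitimate.
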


\begin{rem}
In the end of this section, the use of $C_2$ will always refer to the constant appearing in this lemma. On other places, we keep our convention on global constants, noting them always $C$. 
\end{rem}

\begin{proof}
We already know by Lemma \ref{Localization_V_q} that there exists $C>0$ such that if $\mathcal{V}_\mathbf{q}^- \neq \emptyset$, there exists $\rho_0 \in \mathcal{T}$ such that $$d\left(\mathcal{V}_\mathbf{q}^-, W_s(\rho_0)\right) \leq \frac{C}{J_\mathbf{q}^-}$$ 
But $J_\mathbf{q}^- \geq e^{\lambda_0 N_0} \geq C^{-1} h^{- \delta_0 \frac{\lambda_0}{\lambda_1}}$. Finally, $d(\mathcal{V}_\mathbf{q}^-,\mathcal{T}_{-}^{\text{loc}}) \leq Ch^{ \delta_2} $, as required. 
\end{proof}

The following lemma allows us to construct symbols in nice symbol classes with supports in $h^\delta$ neighborhood. Its proof can be found in \cite{DZ16} (Lemma 3.3). 

\begin{lem}\label{lemma_construction_cut_off}
Let $\varepsilon>0$ and $\delta \in [0, \frac{1}{2}[$. Let $V_0(h)\subset V_1(h) \subset \R^d$ be sets depending on $h$ and assume that for $ 0 \leq h \leq 1, d(V_0(h) , V_1(h)^c ) > \varepsilon h^\delta$. Then, there exist a family $\chi_h \in \cinfc(\R^d)$ such that, for all $h \leq 1$, 
\begin{itemize}[nosep]
\item $\chi_h =1$ on $V_0(h)$ ; 
\item $\supp \chi \subset V_1(h) $.
\item For every $\alpha \in \N^d$, there exists $C_\alpha$ depending only on $\varepsilon$ such that for all $x \in \R^d$ and for all $0 < h \leq 1$, 
$$ |\partial^\alpha \chi_h (x) | \leq C_\alpha h^{-\delta |\alpha|}$$
\end{itemize}
\end{lem}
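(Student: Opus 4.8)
The plan is to obtain $\chi_h$ by mollifying the indicator function of a set squeezed between $V_0(h)$ and $V_1(h)$. First I would fix, once and for all, a profile $\psi \in \cinfc(\R^d)$ with $\psi \geq 0$, $\supp \psi \subset B(0,1)$ and $\int_{\R^d}\psi = 1$, and write $\psi_r(x) = r^{-d}\psi(x/r)$ for $r>0$, so that $\supp\psi_r \subset B(0,r)$ and $\int\psi_r = 1$. I will work at the scale $r_h = \tfrac{\varepsilon}{4}h^{\delta}$. Then I introduce the intermediate set
\[
W(h) = \bigl\{ x \in \R^d ;\ d(x, V_0(h)) < \tfrac{\varepsilon}{2}h^{\delta} \bigr\},
\]
which contains $V_0(h)$ and, by the triangle inequality combined with the hypothesis $d(V_0(h),V_1(h)^c) > \varepsilon h^\delta$, satisfies $d(W(h), V_1(h)^c) \geq \tfrac{\varepsilon}{2}h^\delta > 0$. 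Finally I set
\[
\chi_h = \mathds{1}_{W(h)} * \psi_{r_h}.
\]
Since $\psi_{r_h} \in \cinfc$ and $\mathds{1}_{W(h)}$ is bounded, $\chi_h$ is $\cinf$; it is compactly supported (hence in $\cinfc$) because $V_1(h)$ is bounded in all situations where the lemma is applied — otherwise one first intersects with a large fixed ball.

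Next I would verify the three properties in turn. If $x \in V_0(h)$, then for every $y \in \supp\psi_{r_h}$ one has $d(x-y,V_0(h)) \leq |y| < r_h < \tfrac{\varepsilon}{2}h^\delta$, so $x-y \in W(h)$, whence $\chi_h(x) = \int \psi_{r_h}(y)\,dy = 1$. If instead $\chi_h(x) \neq 0$, there is $y$ with $|y| < r_h$ and $x-y \in W(h)$, so $d(x,V_0(h)) \leq |y| + d(x-y,V_0(h)) < r_h + \tfrac{\varepsilon}{2}h^\delta < \varepsilon h^\delta$; since every point of $V_1(h)^c$ is at distance $> \varepsilon h^\delta$ from $V_0(h)$, this forces $x \in V_1(h)$, i.e. $\supp\chi_h \subset V_1(h)$. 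For the derivatives, $\partial^\alpha\chi_h = \mathds{1}_{W(h)} * \partial^\alpha\psi_{r_h}$ and $\partial^\alpha\psi_{r_h}(x) = r_h^{-|\alpha|}(\partial^\alpha\psi)_{r_h}(x)$, so by Young's inequality
\[
\|\partial^\alpha\chi_h\|_{L^\infty} \leq \|\partial^\alpha\psi_{r_h}\|_{L^1} = r_h^{-|\alpha|}\|\partial^\alpha\psi\|_{L^1} = \Bigl(\tfrac{4}{\varepsilon}\Bigr)^{|\alpha|}\|\partial^\alpha\psi\|_{L^1}\, h^{-\delta|\alpha|},
\]
which gives the claim with $C_\alpha = (4/\varepsilon)^{|\alpha|}\|\partial^\alpha\psi\|_{L^1}$, depending only on $\varepsilon$ and the fixed profile $\psi$.

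There is no real obstacle here: this is a routine mollification argument, and the only point needing (mild) care is the bookkeeping of the three nested scales $r_h < \tfrac{\varepsilon}{2}h^\delta < \varepsilon h^\delta$, arranged so that the three conclusions hold simultaneously while the constants $C_\alpha$ stay independent of $h$ and of the sets $V_0(h),V_1(h)$. For completeness I would simply note that this is exactly Lemma~3.3 of \cite{DZ16}, so in the text it is legitimate to invoke that reference rather than reproduce the computation.
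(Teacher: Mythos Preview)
Your argument is correct and is precisely the standard mollification construction; the paper itself gives no proof and simply refers to Lemma~3.3 of \cite{DZ16}, which you also cite at the end. So your proposal is entirely aligned with the paper's treatment, only more explicit.
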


Applying this lemma with $V_0(h) = \mathcal{T}_-^{loc}\left(2C_2h^{\delta_2} \right)$, $V_1(h) = \mathcal{T}_-^{loc}\left(4C_2h^{\delta_2} \right)$ with $\varepsilon = 2C_2$, we consider a family of smooth cut-offs $\chi_h \in S_{\delta_2}^{comp}$ and we can consider it as an element of $S_{\delta_2}^{comp}(U)$ since (at least for $h$ small enough) the support of $\chi_h$ is included in $U$. We are now ready to state the microlocalization property of $\mathfrak{M}^{N_0}$.

\begin{prop}\label{prop_micro_left}
\begin{equation}
\mathfrak{M}^{N_0} = \mathfrak{M}^{N_0} \op(\chi_h) + \hinf_{L^2(Y) \to L^2(Y)}
\end{equation}
\end{prop}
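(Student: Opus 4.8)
The strategy is to expand $\mathfrak{M}^{N_0}$ as a sum over words $\mathbf{q}\in\mathcal{A}^{N_0}$ and show that each $U_\mathbf{q}$ is, modulo $\hinf$, microlocally supported in $\mathcal{V}_\mathbf{q}^-$, which in turn lies inside the set where $\chi_h\equiv 1$. Concretely, write $\mathfrak{M}^{N_0}=\sum_{\mathbf{q}\in\mathcal{A}^{N_0}}U_\mathbf{q}+\hinf$ as in the construction of the refined partition. For words with $\mathcal{V}_\mathbf{q}^-=\emptyset$ one invokes Lemma \ref{empty_V_q}: here $N_0=\lceil\frac{\delta_0}{\lambda_1}|\log h|\rceil$ is well below the Ehrenfest time, so for every subword the local Jacobian satisfies $J^-\leq e^{\lambda_1 N_0}\leq Ch^{-\delta_0}\leq C_0h^{-2\delta}$ for a suitable $\delta<\tfrac12$ (take $\delta=\delta_0/2+o(1)$, or simply $\delta$ slightly above $\delta_0/2$ using $\delta_0<1/2$), so the hypothesis of Lemma \ref{empty_V_q} is met and $U_\mathbf{q}=\hinf$. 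These words are negligible since $|\mathcal{A}^{N_0}|=O(h^{-M})$ and $N_0=O(\log h)$.

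For the remaining words, $\mathcal{V}_\mathbf{q}^-\neq\emptyset$ and $J_\mathbf{q}^-\leq e^{\lambda_1 N_0}\leq C_0 h^{-\delta}$ with the same $\delta$. Apply Proposition \ref{prop_symbols_a_q}(2): there exists $a_\mathbf{q}^-\in\|\alpha\|_\infty^{N_0}S_{\delta_1}^{comp}$ with $\supp a_\mathbf{q}^-\subset\mathcal{V}_\mathbf{q}^-$ and $U_\mathbf{q}=T^{N_0}\op(a_\mathbf{q}^-)+\hinf$ (any fixed $\delta_1\in(\delta,1/2)$ works, and one may take $\delta_1$ below $\delta_2=\delta_0\lambda_0/\lambda_1$ is \emph{not} needed — see below). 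Now the key geometric input: by the lemma proved just above, $d(\mathcal{V}_\mathbf{q}^-,\mathcal{T}_-^{\text{loc}})\leq C_2 h^{\delta_2}$, hence $\mathcal{V}_\mathbf{q}^-\subset\mathcal{T}_-^{\text{loc}}(2C_2h^{\delta_2})=V_0(h)$, the set on which $\chi_h\equiv 1$. Therefore $a_\mathbf{q}^-=\chi_h\,a_\mathbf{q}^-$ on $\supp a_\mathbf{q}^-$ (as exact functions, since $\chi_h=1$ there), so by the product formula for pseudodifferential operators (Lemma \ref{Moyal_produc_op}) applied with symbols in $S_{\max(\delta_1,\delta_2)}^{comp}$,
\begin{equation*}
\op(\chi_h)\op(a_\mathbf{q}^-)=\op(a_\mathbf{q}^-)+O(h^\infty)_{L^2\to L^2},
\end{equation*}
with the constants uniform in $\mathbf{q}$ because the $S_{\delta_1}$-seminorms of $a_\mathbf{q}^-$ are bounded by $\|\alpha\|_\infty^{N_0}$ times constants depending only on $(U,F)$, $C_0$, $\delta$, $\delta_1$ via Proposition \ref{control_of_the_symbols}, and the $S_{\delta_2}$-seminorms of $\chi_h$ depend only on $C_2$. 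Consequently $U_\mathbf{q}=T^{N_0}\op(a_\mathbf{q}^-)=T^{N_0}\op(a_\mathbf{q}^-)\op(\chi_h)^* $ — more directly, $U_\mathbf{q}=U_\mathbf{q}\op(\chi_h)+\hinf$ is not quite what we want; rather we want $\op(\chi_h)$ on the right. To get that, note $\op(a_\mathbf{q}^-)=\op(a_\mathbf{q}^-)\op(\chi_h)+\hinf$ by the same product formula (multiplying on the other side), so $U_\mathbf{q}=T^{N_0}\op(a_\mathbf{q}^-)\op(\chi_h)+\hinf=U_\mathbf{q}\op(\chi_h)+\hinf$, uniformly in $\mathbf{q}$.

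Summing over the $O(h^{-M})$ words and absorbing the uniformly-$\hinf$ contributions (which remains $\hinf$ after multiplication by the polynomially-bounded number of terms and composition, since all operators are $O(h^{-K})$ and $N_0=O(\log h)$, by the ``important remark'' in Section \ref{Numerology}),
\begin{equation*}
\mathfrak{M}^{N_0}=\sum_{\mathbf{q}:\,\mathcal{V}_\mathbf{q}^-\neq\emptyset}U_\mathbf{q}+\hinf=\Big(\sum_{\mathbf{q}:\,\mathcal{V}_\mathbf{q}^-\neq\emptyset}U_\mathbf{q}\Big)\op(\chi_h)+\hinf=\mathfrak{M}^{N_0}\op(\chi_h)+\hinf,
\end{equation*}
which is the claim. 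The only mildly delicate point is bookkeeping the uniformity of all the $\hinf$ remainders across the exponentially-in-$N_0$ (hence polynomially-in-$h^{-1}$) many words; this is exactly the content of the uniformity clauses in Propositions \ref{prop_symbols_a_q} and \ref{control_of_the_symbols} together with the elementary remark that a polynomial-in-$h^{-1}$ number of uniform $\hinf$ terms is still $\hinf$. No genuine obstacle arises because $N_0$ sits below the Ehrenfest time, so standard semiclassical calculus applies throughout; the harder microlocalization — that of $U_\mathcal{Q}$ up to twice the Ehrenfest time — is treated separately.
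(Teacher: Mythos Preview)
Your proof is correct and follows essentially the same route as the paper: decompose $\mathfrak{M}^{N_0}=\sum_{\mathbf{q}\in\mathcal{A}^{N_0}}U_\mathbf{q}$, discard words with $\mathcal{V}_\mathbf{q}^-=\emptyset$ via Lemma~\ref{empty_V_q} (using $J^-\leq e^{\lambda_1 N_0}\leq Ch^{-\delta_0}$), and for the remaining words write $U_\mathbf{q}=T^{N_0}\op(a_\mathbf{q}^-)+\hinf$ via Proposition~\ref{prop_symbols_a_q} and use that $\supp a_\mathbf{q}^-\subset\mathcal{V}_\mathbf{q}^-\subset\{\chi_h=1\}$ together with the composition formula in $S_{\delta_1}$ (with $\delta_1\in(\delta_0,\tfrac12)$, so that $\chi_h\in S_{\delta_2}\subset S_{\delta_1}$). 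The paper phrases the last step equivalently as $\op(a_\mathbf{q}^-)\op(1-\chi_h)=\hinf$, but this is the same computation.
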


\begin{proof}
We need to show that $\mathfrak{M}^{N_0} ( \op(1 - \chi_h) ) = \hinf$. To do so, we decompose $\mathfrak{M}^{N_0} = \sum_{ \mathbf{q} \in \mathcal{A}^{N_0} } U_\mathbf{q}$. Since the number of terms in this sum grows polynomially with $h$, it is enough to show that 
$$ \forall \mathbf{q} \in \mathcal{A}^{N_0} , U_\mathbf{q}( \op(1 - \chi_h) ) = \hinf$$
with bounds uniform in $\mathbf{q}$. We then consider two cases : 
\begin{itemize}[label = \ding{228}]
\item $\mathcal{V}_\mathbf{q}^- = \emptyset$ :  Lemma \ref{empty_V_q} applies. Indeed, if $m \leq N_0$ and $\mathcal{V}_{q_0 \dots q_{m-1}}^- \neq \emptyset$, we have 
$$ J^-_{q_0 \dots q_{m-1}} \leq e^{m \lambda_1} \leq e^{N_0 \lambda_1} \leq Ch^{-\delta_0}$$Hence, $U_\mathbf{q} = \hinf$, with global constants in the $\hinf$. 
\item $\mathcal{V}_\mathbf{q}^- \neq \emptyset$ : We apply Proposition \ref{prop_symbols_a_q}. Since $J_\mathbf{q}^- \leq C e^{\lambda_1 N_0} \leq C h^{-\delta_0}$, we take some $\delta_1 \in ]\delta_0 , \frac{1}{2} [$ (in particular, $\delta_2 < \delta_1$) and we can write $U_\mathbf{q} = T^{N_0} \op(a_\mathbf{q}^- ) + \hinf$ with $a_\mathbf{q}^- \in S_{\delta_1}^{comp}(U)$ and $\supp a_\mathbf{q}^- \subset \mathcal{V}_\mathbf{q}^-$.
Noticing that $\chi_h =1$ on $\mathcal{V}_\mathbf{q}^- \subset \mathcal{T}_-^{loc} \left( 2C_2h^{\delta_2}  \right) $, the composition formula in $S_{\delta_1}^{comp}$ implies that $\op(a_\mathbf{q}^- ) \op(1- \chi_h) = \hinf$. Since the seminorms of $a_\mathbf{q}^-$ are uniformly bounded in $\mathbf{q}$, the constants appearing in $\hinf$ are uniform in $\mathbf{q}$. 
\end{itemize}
This concludes the proof. 
\end{proof}

\subsection{Propagation of Lagrangian leaves and Lagrangian states}
So as to study the microlocalization of $U_\mathbf{q}$, we'll use the same strategy as in \cite{NDJ19}, themselves inspired by \cite{AnN01}, \cite{AnN02} and \cite{NZ09}. We cannot show that $U_\mathbf{q}$ is a Fourier integral operator since the propagation goes behind the Ehrenfest time. Instead, we show a weaker result which will be enough for our purpose. The idea is to decompose a state $u$ in a sum of Lagrangian states associated with Lagrangian leaves almost parallel to unstable leaves, what we will call horizontal leaves (because we will consider them in charts where the unstable leaves are close to be horizontal).  Studying the precise behavior of these states, we can get fine information on the microlocalization of  $U_\mathbf{q} u$. Roughly speaking, we'll show that if $u$ is a Lagrangian state associated with an original horizontal Lagrangian $\mathcal{L}_{q_0, \theta} \subset \mathcal{V}_{q_0}$, then $U_\mathbf{q} u $ is a Lagrangian state associated with the piece of the evolved Lagrangian $F^n \left( \mathcal{L}_{q_0, \theta} \right)$ inside $\mathcal{V}_\mathbf{q}^+$.

To define "horizontal" Lagrangian leaves, we need to work in adapted coordinate charts in which the notion of horizontality (thinking $W_u(\rho)$ as the reference) makes sens. For this purpose, for $q \in \mathcal{A}$, we consider charts centered around the points $\rho_q$, associated with the fixed macroscopic partition of $\mathcal{T}$ by the $\mathcal{V}_q = B(\rho_q, 2 \varepsilon_0)$. First, we consider symplectic maps

\begin{equation*}
\kappa_q  : W_q \subset U_{k_q} \to V_q\subset \R^2
\end{equation*}
satisfying (we note $(x,\xi)$ the variable in $U$ and $(y,\eta)$ in $\R^2$) : 
\begin{enumerate}[label=(\arabic*), nosep]
\item $B(\rho_q, C \varepsilon_0) \subset W_q$  for some global constant $C \gg 2$; 
\item $\kappa(\rho_q) = 0$ , $d\kappa(\rho_q) (E_u(\rho_q) ) = \R \times \{ 0 \} ; d\kappa(\rho_q) (E_s(\rho_q) ) = \{0\} \times \R $  ;
\item The image of the unstable leave $W_u(\rho_q)$ is exactly $\{ (y,0) , y \in \R \} \cap \tilde{V}_q$.
\end{enumerate}

Theses charts are for instance given by Lemma \ref{weak_adapted_chart} (at this stage, the strong straightening property is not necessary). In these adapted charts where $W_u(\rho_q)$ coincides with $\R \times \{0 \}$, the horizontal Lagrangian leaves will be the of the form

\begin{equation}
\mathcal{C}_\theta \coloneqq \{ (y, \theta ), y \in \R \} 
\end{equation}

Finally, we fix unit vectors on $E_u(\rho_q)$ and $E_s(\rho_q)$, $e_u(\rho_q)$ and $e_s(\rho_q)$, used to defined the unstable and stable Jacobians in section \ref{section_hyperbolic}. Let's write
$$ d\kappa_q(e_u (\rho_q)) = (\lambda_{q,u} ,0) \quad ;  \quad d\kappa_q(e_s (\rho_q)) = (0,\lambda_{q,s})$$
Note $D_q =  \left(\begin{matrix}
\lambda_{q,u} & 0 \\
0& \lambda_{q,s} 
\end{matrix} \right)$. 
 We dilate the chart $\tilde{\kappa}_q$ and define
$$ \tilde{\kappa}_q : \rho \in W_q \mapsto D_q \kappa_q(\rho) \in \tilde{V}_q \coloneqq D_q \left(V_q \right) $$

\subsubsection{Horizontal Lagrangian and their evolution}
Let us fix a word $\mathbf{q} \in \mathcal{A}^n$ and let us define 
\begin{equation}
\mathcal{L}_{q_0, \theta} = \kappa_{q_0}^{-1} \left( \mathcal{C}_\theta \cap V_{q_0} \right) \cap \mathcal{V}_{q_0} \\
\end{equation}
Then, let's define inductively 
\begin{equation}
\mathcal{L}_{q_0 \dots q_j , \theta} = F \left( \mathcal{L}_{q_0 \dots q_{j-1}, \theta}\right) \cap  \mathcal{V}_{q_j} 
\end{equation}
which allows to define $ \mathcal{L}_{\mathbf{q}, \theta}$. One can check that 
\begin{equation}
\mathcal{L}_{\mathbf{q}, \theta} = F^{-1}\left( \mathcal{V}_\mathbf{q}^+ \right) \cap F^{n-1} \left(\mathcal{L}_{q_0, \theta}  \right)
\end{equation}
The term $F^{-1}$ comes from the definition of $\mathcal{V}_{\mathbf{q}}^+$ : $$\rho \in \mathcal{V}_{\mathbf{q}}^+ \iff \forall 1 \leq i \leq n, F^{-i}(\rho) \in \mathcal{V}_{q_{n-i}}$$
Finally, let's define
\begin{align}
\mathcal{C}_{\mathbf{q}, \theta} = \kappa_{q_{n-1}} \left(\mathcal{L}_{\mathbf{q}, \theta}  \right) 
\end{align}
We first focus on one step of the iterative process. 

In $\tilde{V}_q \subset \R^2$, we use the notations 
$\tilde{B}_q(0, r)$ for the cube$ ]-r, r \times ]-r,r[$ . We keep the subscript $q$ to keep trace of the chart in which this cube is supposed to live. Finally, we set $$B_q(0,r) = D_q^{-1} \left( \tilde{B}_q(0, r) \right) \subset V_q$$ $B_q(0,r)$ is simply a rectangle centered at zero with size only depending on $q$ (this is also a ball for some norm in $\R^2$). The advantage of $\tilde{B}_q$ and $\tilde{\kappa}_q$ compared with $B_q$ and $\kappa_q$ will appear below. However, $\tilde{\kappa}_q$ is not symplectic, and for further use, it is not possible to use $\tilde{\kappa}_q$ as a symplectic change of coordinates.

Let $q,p \in \mathcal{A}$ and suppose that $\mathcal{V}_q \cap F^{-1}(\mathcal{V}_p) \neq \emptyset$. As a consequence there exists a global constant $C^\prime >0$ such that $d(F(\rho_q), \rho_p) \leq C^\prime \varepsilon_0$ and if $C$ in (1) of Lemma \ref{weak_adapted_chart} is large enough, we can assume that for some global constant $C_1>0$, 
\begin{equation}\label{inclusion_needed}
\kappa_q \left( \mathcal{V}_q \right) \subset B_q(0, C_1 \varepsilon_0) \subset V_q \quad  \; \quad \kappa_p \circ F \circ \kappa_q^{-1} \left(  B_q(0, C_1 \varepsilon_0) \right)  \subset V_p
\end{equation} 
The following map is hence well defined $$ \tau_{p,q} \coloneqq  \kappa_p \circ F \circ \kappa_q^{-1} :  B_q(0, C_1 \varepsilon_0)  \to \tau_{p,q}(  B_q(0, C_1 \varepsilon_0) ) \subset V_p$$
$\tau_{p,q}$ is nothing but the writing of $F$ between the charts $V_q$ and $V_p$. 
Note that since the number of possible transitions is finite, we can assume that $C_1$ is uniform for all $q,p \in \mathcal{A}$ such that $\mathcal{V}_q \cap F^{-1}(\mathcal{V}_p) \neq \emptyset$. 

We also adopt the following definitions and notations : 

\begin{defi}
Let $G_q : ]-C_1 \varepsilon_0 , C_1\varepsilon_0[ \to ]-C_1 \varepsilon_0 , C_1\varepsilon_0[ $ be a smooth map. It represents the horizontal Lagrangian 
$$ \mathcal{L}_{G_q} \coloneqq D_q^{-1} \big( \left\{ (y, G_q(y) , y \in  ]-C_1 \varepsilon_0 , C_1\varepsilon_0[  \right\} \big)  \subset B_q ( 0,C_1 \varepsilon_0) \subset V_q$$
We say that such a Lagrangian lies in the $\gamma$-unstable cone if 
$$||G_q^\prime||_\infty \leq \gamma$$
and we note $G_q \in \mathcal{C}^u_q(C_1\varepsilon_0, \gamma)$. 
\end{defi}
\vspace{0.2cm}
\begin{rem}
This is where the use of $\tilde{\kappa}_q$ and $\tilde{B}_q$ turns out to be useful : to represent horizontal Lagrangian in $V_q$, we use the  cube $\tilde{B}_q(0,C_1\varepsilon_0) \subset \tilde{V}_q$ of fixed size. 
\end{rem}

With this definition, we show in the following lemma an invariance property of the $\gamma$-unstable cones :

\begin{lem}\label{lemma_propagation_lagrangian_leaves}
There exist global constants $C >0,C_1 >0$ such that if $\varepsilon_0$ is sufficiently small, then the following holds. 

For every $G_q \in \mathcal{C}^u_q(C_1\varepsilon_0, C\varepsilon_0)$, there exists $G_p \in \mathcal{C}^u_p(C_1\varepsilon_0, C\varepsilon_0)$ such that
\begin{enumerate}[label=(\roman*)]
\item $\tau_{p,q}\left( \mathcal{L}_{G_q} \right)\cap B_p(0,C_1 \varepsilon_0) =  \mathcal{L}_{G_p} $ ; 

\item For some  global constants $C_l, l \geq 2$, $||G_q||_{C^l} \leq C_l \implies ||G_p||_{C^l} \leq C_l$ ; 
\end{enumerate}
Moreover, let's define $\phi_{qp} : ]-C_1\varepsilon_0, C_1 \varepsilon_0[ \to \R$ by $$
y_q= \phi_{qp} (y_p) \iff (y_p,G_p(y_p)) = D_p \circ \tau_{pq} \circ D_q^{-1} \Big( (\phi_{qp}(y_p), G_q \circ \phi_{qp} (y_p)  \Big) $$
Then, $\phi_{pq}$ is smooth contracting diffeomorphism onto its image. In particular, there exists a global constant $\nu<1$ such that 
$||\phi_{pq}^\prime||_\infty \leq \nu $. 
\end{lem}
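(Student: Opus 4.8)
The plan is to unravel the definitions and reduce everything to the behaviour of the map $\tau_{pq}$ near $0$, using the hyperbolicity of $F$ read in the adapted charts. First I would record what $F$ looks like in the charts $\tilde\kappa_q,\tilde\kappa_p$: since $\kappa_q(\rho_q)=0$, $\kappa_p(\rho_p)=0$, and the differentials of $\kappa_q,\kappa_p$ send $E_u,E_s$ to the horizontal and vertical axes, the map $D_p\tau_{pq}D_q^{-1}$ fixes $0$ and its differential at $0$ is diagonal, of the form $\mathrm{diag}(\lambda,\lambda^{-1}\det)$-type; more precisely, because of the rescaling by $D_q$, $D_p$ and the fact that $F$ is a symplectomorphism sending $e_u(\rho_q)$ to a multiple of $e_u(\rho_p)$, one gets $d_0(D_p\tau_{pq}D_q^{-1})=\mathrm{diag}(J,J^{-1})$ with $J=J^u_1(\rho_q)\ge e^{\lambda_0}>1$. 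Hence in the rescaled charts the linearisation at the origin genuinely expands the horizontal direction by a factor $\ge e^{\lambda_0}$ and contracts the vertical one by the same factor; away from $0$ this remains true up to an $O(\varepsilon_0)$ error by continuity, once $\varepsilon_0$ is small. This is exactly the classical ``graph transform'' / invariant cone picture, so the content of the lemma is the standard Hadamard--Perron graph-transform step adapted to our finite family of charts.

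Next I would carry out the graph transform itself. Writing $\tilde G_q = D_q\mathcal L_{G_q}$ as the graph $\{(y,\tilde G_q(y))\}$ in $\tilde B_q(0,C_1\varepsilon_0)$ (so $\tilde G_q$ has the same $C^1$ size as $G_q$ up to the fixed factors in $D_q$, which is precisely why we rescale), the image $\tilde\tau_{pq}(\{(y,\tilde G_q(y))\})$, where $\tilde\tau_{pq}=D_p\tau_{pq}D_q^{-1}$, is a curve whose tangent slope is controlled by the cone-field computation: if $\tilde\tau_{pq}=(\tilde\tau^1,\tilde\tau^2)$, the new slope along the image is $(\partial_y\tilde\tau^2+\partial_\eta\tilde\tau^2\,\tilde G_q')/(\partial_y\tilde\tau^1+\partial_\eta\tilde\tau^1\,\tilde G_q')$, and since $\partial_y\tilde\tau^1\ge e^{\lambda_0}-O(\varepsilon_0)$, $|\partial_\eta\tilde\tau^1|,|\partial_y\tilde\tau^2|,|\partial_\eta\tilde\tau^2-J^{-1}|=O(\varepsilon_0)$, one checks that $\|\tilde G_q'\|_\infty\le C\varepsilon_0$ forces the new slope to be $\le C\varepsilon_0$ as well, provided $C$ is chosen large relative to the implied constants and $\varepsilon_0$ small. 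This proves that the image curve is again a graph over the first coordinate in $\tilde B_p(0,C_1\varepsilon_0)$ (here one also uses that the horizontal projection of $\tilde\tau_{pq}$ restricted to the graph is a diffeomorphism onto an interval containing $]-C_1\varepsilon_0,C_1\varepsilon_0[$, which follows from the expansion $\partial_y\tilde\tau^1\ge e^{\lambda_0}-O(\varepsilon_0)>1$ together with the inclusion (\ref{inclusion_needed})), so there is a unique $G_p$ with $\tau_{pq}(\mathcal L_{G_q})\cap B_p(0,C_1\varepsilon_0)=\mathcal L_{G_p}$ and $G_p\in\mathcal C^u_p(C_1\varepsilon_0,C\varepsilon_0)$, giving (i). For (ii), I would differentiate the graph-transform relation $\ell$ times; each further derivative of $G_p$ is expressed through derivatives of $\tau_{pq}$ (which are bounded uniformly over the finite set of admissible transitions, since the $\kappa_q$ have uniformly bounded $C^N$ norms) and lower-order derivatives of $G_q$ divided by positive powers of $\partial_y\tilde\tau^1\ge e^{\lambda_0}-O(\varepsilon_0)>1$; a downward induction on $\ell$, choosing the constants $C_\ell$ successively large enough, then shows $\|G_q\|_{C^\ell}\le C_\ell\Rightarrow\|G_p\|_{C^\ell}\le C_\ell$.

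Finally, for the statement about $\phi_{pq}$: by construction $y_q=\phi_{pq}(y_p)$ is exactly the first component of $D_q\tau_{qp}D_p^{-1}$ (the inverse map) evaluated along the graph $\mathcal L_{G_p}$, i.e. $\phi_{pq}$ is the horizontal component of $\tilde\tau_{qp}=\tilde\tau_{pq}^{-1}$ restricted to the image graph. Since $\tilde\tau_{pq}$ expands the near-horizontal direction by a factor $\ge e^{\lambda_0}-O(\varepsilon_0)$ (this is again the cone estimate: on a curve with slope $\le C\varepsilon_0$, the horizontal stretching of $\tilde\tau_{pq}$ is at least $e^{\lambda_0}-O(\varepsilon_0)$), its inverse contracts it: $\|\phi_{pq}'\|_\infty\le(e^{\lambda_0}-O(\varepsilon_0))^{-1}=:\nu<1$ for $\varepsilon_0$ small, and smoothness of $\phi_{pq}$ follows from smoothness of $\tau_{qp}$ and $G_p$ and the inverse function theorem. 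I expect the main obstacle to be purely bookkeeping: organizing the cone estimate so that the \emph{same} constants $C,C_1,\nu$ work simultaneously for all finitely many admissible pairs $(q,p)$ with $\mathcal V_q\cap F^{-1}(\mathcal V_p)\ne\emptyset$, and verifying that the domains match up (the inclusions (\ref{inclusion_needed}) and the fact that horizontal projections remain onto $]-C_1\varepsilon_0,C_1\varepsilon_0[$), rather than any conceptual difficulty — the hyperbolic graph transform itself is standard (cf. \cite{KH}, Chapter 6).
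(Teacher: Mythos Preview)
Your approach is essentially the paper's: both carry out the standard graph-transform step in the rescaled charts $\tilde\kappa_q,\tilde\kappa_p$, writing $\tilde\tau=D_p\tau_{pq}D_q^{-1}$ as a perturbation of a hyperbolic linear map, checking the cone is preserved, inverting the horizontal projection to define $\phi_{pq}$, and propagating the $C^\ell$ bounds by induction on $\ell$.

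One factual correction: $\tilde\tau_{pq}$ does \emph{not} fix $0$, and its differential at $0$ is \emph{not} exactly diagonal. The point is that $\rho_p\neq F(\rho_q)$ in general; the only information is $\mathcal V_q\cap F^{-1}(\mathcal V_p)\neq\emptyset$, which gives $d(F(\rho_q),\rho_p)=O(\varepsilon_0)$. Hence $\tilde\tau(0)=\tilde\kappa_p(F(\rho_q))=O(\varepsilon_0)$, and $d_0\tilde\tau(\partial_y)=\lambda_q\partial_y+O(\varepsilon_0)$, $d_0\tilde\tau(\partial_\eta)=\mu_q\partial_\eta+O(\varepsilon_0)$ (here $\lambda_q=J^u_1(\rho_q)$, $\mu_q=J^s_1(\rho_q)$; the $O(\varepsilon_0)$ comes from comparing $e_{u/s}(F(\rho_q))$ with $e_{u/s}(\rho_p)$ via the Lipschitz regularity of the distributions). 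This $O(\varepsilon_0)$ shift is harmless for the cone estimate, but it must be tracked when you verify that the horizontal projection $\psi(y)=\lambda_q y+y_r(y,G_q(y))$ has image containing $]-C_1\varepsilon_0,C_1\varepsilon_0[$: since $\psi(0)=O(\varepsilon_0)$ rather than $0$, you need $C_1$ large enough that the expansion by $\nu^{-1}$ beats the constant shift, which is exactly the paper's choice of $C_1$. Your induction for (ii) should be upward in $\ell$, not downward: each $C_{\ell+1}$ is built from $C_\ell$ and the contraction factor.
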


\begin{proof}
Take $C_1$ large but fixed (with conditions further imposed) and assume that $\varepsilon_0$ is small enough so that (\ref{inclusion_needed}) holds.
Let us note $\lambda_q = J^u_1(\rho_q) > 1$ and $\mu_q = J^s_1(\rho_q) < 1$ and let us fix some global $\nu$ satisfying
$$ \forall q \in \mathcal{A},  \max( \lambda_q^{-1}, \mu_q) < \nu < 1$$
Recall that $e_u$ and $e_s$ are $C^{1, \varepsilon}$ in $\rho$.  We write $\partial_y$ and $\partial_\eta$ to denote the unit vector of $\R \times \{0 \} $ and $\{0 \} \times \R$ respectively. We fix a constant $C >0$ with conditions imposed further and we assume that $||G_p^\prime||_\infty \leq C \varepsilon_0$. 
We note $\tilde{\tau} = D_p \circ \tau_{p,q} \circ D_q^{-1}$ (we drop the subscript for $\tilde{\tau}$ to alleviate the notations). In the computations below, the implied constants in the $O$ are global constants (depending also on the choices on $\kappa_q$):
\begin{itemize}[label = *]
\item $\tilde{\tau}(0) = \tilde{\kappa}_p \circ F (\rho_q) = O (\varepsilon_0) $; 
\item $d\tilde{\tau}(0)  = d \tilde{\kappa}_p(F(\rho_q) )  \circ dF(\rho_q) \circ \left[ d\tilde{\kappa}_q (\rho_q) \right]^{-1} $ ; 
\item $d\tilde{\tau}(0) (\partial_y) = d \tilde{\kappa}_p (F(\rho_q) ) ( \lambda_q e_u(F(\rho_q) ) )  =  \lambda_q \left( d \tilde{\kappa}_p (\rho_p) + O(\varepsilon_0) \right) \left( e_u(\rho_p) + O(\varepsilon_0) \right) = \lambda_q \partial_y + O(\varepsilon_0)$, where we use the Lipschitz regularity of $\rho \mapsto e_u(\rho) $ in the second equality ; 
\item Similarly, $d\tilde{\tau}(0) (\partial_\eta) = \mu_q \partial_\eta + O(\varepsilon_0)$; 
\end{itemize}
(this is here that we use the renormalization of $\kappa_q$ into $ \tilde{\kappa}_q$).
Eventually, we use the fact that $\tilde{\tau} - \tilde{\tau}(0) - d\tilde{\tau}(0) =O(C_1\varepsilon_0)_{C^1 ( B (0, C_1 \varepsilon_0) ) }$ and we get that 
\begin{equation}
\tilde{\tau}(y,\eta) = (\lambda_q y  + y_r(y, \eta) , \mu_q \eta + \eta_r(y,\eta) )  , (y,\eta) \in \tilde{B}_q(0, C_1\varepsilon_0)
\end{equation}
where $y_r(y, \eta)$ and $\eta_r(y, \eta)$ are $O(C_1\varepsilon_0)_{C^1}$. 
Before going further, let us show that we can fix $C_1$ such that  
\begin{equation}\label{condition_C1}
(y,\eta) \in \tilde{B}_q(0, C_1\varepsilon_0) \implies  \left|\mu_q \eta + \eta_r(y,\eta) )\right|\leq C_1 \varepsilon_0 
\end{equation}
To do so, let us note that in fact $\tilde{\tau} - \tilde{\tau}(0) - d\tilde{\tau}(0) =O\left( (C_1\varepsilon_0)^2 \right)_{C^0 ( B (0, C_1 \varepsilon_0) ) }$ and hence if $(y,\eta) \in \tilde{B}_q(0, C_1\varepsilon_0)$ we have : 
$$ |\eta_r(y,\eta)| = O(\varepsilon_0) + O\left( (C_1\varepsilon_0)^2 \right)_{C^0 ( B (0, C_1 \varepsilon_0) ) } \leq C^\prime \varepsilon_0 \left ( 1 + C_1^2 \varepsilon_0\right)$$ 
Assume that $C_1$ is large enough such that $\nu C_1 +  C^\prime < C_1 \frac{\nu +1}{2}$. If $(y,\eta) \in \tilde{B}_q(0, C_1\varepsilon_0)$, we have 
$$\left|\mu_q \eta + \eta_r(y,\eta) )\right| \leq \nu C_1 \varepsilon_0 + C^\prime \varepsilon_0 \left ( 1 + C_1^2 \varepsilon_0\right)  \leq \left(  C_1 \frac{\nu +1}{2} + C_1^2 \varepsilon_0 \right) \varepsilon_0 $$
This fixes $C_1$. Since $C_1$ is now a global fixed parameter, we can remove it from the $O$ in the estimates. 
If $\varepsilon_0$ is small enough, depending on our choice of $C_1$, (\ref{condition_C1}) holds. \\
To write the image of the leaf as a graph, we observe that, if $\varepsilon_0$ is small enough (depending only on global parameters) the map $$\psi : y \in ]-C_1 \varepsilon_0, C_1 \varepsilon_0 [ \mapsto \lambda_q y + y_r(y, G_q(y) ) $$ is expanding and we can impose $|\psi^\prime| \geq  \nu^{-1}$. 
In particular, $\im \psi $ contains an interval of size $2 \nu^{-1} C_1 \varepsilon_0$. Moreover, $\psi(0) = y_r(0,G_q(0)) \leq ||y_r||_{C^1} |G_q(y) | =O(\varepsilon_0^2) $. We claim that if $\varepsilon_0$ is small enough, $\im \psi$ contains $ ]-C_1 \varepsilon_0, C_1 \varepsilon_0[$.  Indeed, it suffices to have
$$ \nu^{-1} C_1 \varepsilon_0 - |\psi(0)| \geq C_1 \varepsilon_0$$
But we have 
$$C_1 \varepsilon_0 + |\psi(0)| \leq C_1 \varepsilon_0 (1 + O( \varepsilon_0) ) \leq C_1 \varepsilon_0 \nu^{-1} $$
if $1 + O( \varepsilon_0)  \leq \nu^{-1}$, condition that can be satisfied if $\varepsilon_0$ is small enough. 
Hence, $\phi \coloneqq \phi_{pq}  = \psi^{-1}_{| ]-C_1 \varepsilon_0, C_1 \varepsilon_0[}$ is well defined and we set 
\begin{equation}\label{Gq_ti_Gp}
G_p ( y) = \mu_q G_q( \phi(y) ) + \eta_r \Big( \phi(y) , G_q(\phi(y) )\Big)   , y \in  ]-C_1 \varepsilon_0, C_1 \varepsilon_0[
\end{equation}
By definition, it is clear that $\tau_{p,q}\left( \mathcal{L}_{G_q} \right)\cap B_p(0,C_1 \varepsilon_0) =  \mathcal{L}_{G_p} $  and $(y, G_p(y) ) = \tilde{\tau} \Big( \phi(y) , G_q(\phi(y) ) \Big) $. $\phi$ is obviously a smooth contracting diffeomorhpism and $||\phi^\prime|| \leq \frac{1}{\inf |\psi^\prime(y) |} \leq \nu$. 
Moreover, due to (\ref{condition_C1}), $ |G_p(y) | \leq C_1 \varepsilon_0$.
To prove that $G_p \in \mathcal{C}^u_p(C_1 \varepsilon_0, C\varepsilon_0)$, we compute : 
\begin{align*}
&G_p^\prime(y) = \mu_q G_q^\prime(\phi(y) ) \times \phi^\prime(y) + \left(\partial_y \eta_r + \partial_\eta \eta_r \times G_q^\prime(\phi(y) )  \right)  \phi^\prime(y)\\
&|G_p^\prime(y)| \leq \nu^2 C \varepsilon_0 + O ( \varepsilon_0 (1+ C \varepsilon_0)) \nu \leq [\nu^2 C + \nu C^\prime ( 1 + C \varepsilon_0)] \varepsilon_0
\end{align*}
for some global $C^\prime>0$. If we assume $\nu^2 + \varepsilon_0 C^\prime \nu <1$, which is possible if $\varepsilon_0$ is small enough, then we can choose $C$ large enough satisfying 
$$ C \times \left( \nu^2 + \nu  C^\prime \varepsilon_0 \right) + \nu C^\prime \leq C$$ 
This ensures that $||G_p^\prime||_\infty \leq C \varepsilon_0$.
 
Finally, we prove (ii) by induction on $l$ : the case $l=1$ is done. Assume that there exists a constant $C_l$ such that $||G_q||_{C^l} \leq C_l \implies ||G_p||_{C^l} \leq C_l$. We want to find a constant $C_{l+1}$ fitting for the $C^{l+1}$ norm. Using (\ref{Gq_ti_Gp}), we see by induction that the $(l+1)$ derivatives of $G_p$ has the form 
$$ G_p^{(l+1)}(y) = \phi^\prime(y)^{l+1} \times G_q^{(l+1)}(y)  \times \Big(1 + \partial_\eta \eta_r (y, \phi(y) ) \Big) + P_y\left(G_q(y), \dots, G_q^{(l)}(y) \right)$$
where $P_y(\tau_0, \dots, \tau_l)$ is a polynomial with smooth coefficients in $y$. Hence, there exists a constant $M(C_l)$ such that for $y \in ]-C_1\varepsilon_0, C_1 \varepsilon_0[$, $\left|P_y\left(G_q(y), \dots, G_q^{(l)}(y) \right)\right| \leq M(C_l)$. Since 
$$ \left| \phi^\prime(y)^{l+1} \Big( 1 + \partial_\eta \eta_r (y, \phi(y) ) \Big) \right| \leq \nu(1 + \varepsilon_0 C^\prime) \coloneqq \nu_1$$
 if $\varepsilon_0$ is small enough ensuring that $\nu_1 <1$, we can take 
$$C_{l+1} = \max \left( C_l, \frac{M(C_l)}{1 - \nu_1}\right) $$
Indeed, with such a constant, assuming that $||G_q||_{C^{l+1}} \leq C_{l+1}$, we have
$$ |G_p^{(l+1)}(y)| \leq C_{l+1}  \nu_1 + M(C_l) \leq C_{l+1}$$
\end{proof}

Armed with this lemma, we can now iterate the process and get the following proposition describing the evolution of the Lagrangian $\mathcal{C}_{\mathbf{q}, \theta}$. 
\begin{prop}\label{lagrangian_leaves_evolved}
Assume that $\varepsilon_0$ is small enough. Then, for every $n \in \N^*$, $\mathbf{q} \in \mathcal{A}^n$ , and $\theta \in \R$, there exists an open subset $I_{\mathbf{q}, \theta} \subset \R$ and a smooth map $G_{\mathbf{q}, \theta}$ such that : 
\begin{itemize}
\item $ \mathcal{C}_{\mathbf{q}, \theta} = \Big\{ (y, G_{\mathbf{q}, \theta}(y) ) , y \in I_{\mathbf{q}, \theta} \Big\}$ ;
\item $||G_{\mathbf{q}, \theta}^\prime||_\infty \leq C \varepsilon_0$ for some global constant $C$; 
\item For every $l \geq 2$, $||G_{\mathbf{q}, \theta}||_{C^l} \leq C_l$ for some global $C_l$; 
\item If $\phi_{\mathbf{q}, \theta} : I_{\mathbf{q}, \theta} \to \R$ is defined by 
$$ \kappa_{q_{n-1}} \circ F^{n-1} \circ \kappa_{q_0}^{-1} \left( \phi_{\mathbf{q}, \theta}(y), \theta\right) = (y, G_{\mathbf{q}, \theta}(y) ) $$
Then, for some global constants $C>0$ and $0<\nu<1$, $||\phi_{\mathbf{q}, \theta}^\prime || \leq C\nu^{n-1}$ . 
\end{itemize} 
\end{prop}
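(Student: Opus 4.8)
The statement is the iteration of Lemma~\ref{lemma_propagation_lagrangian_leaves}. The plan is to proceed by induction on $n$, building the data $(I_{\mathbf{q},\theta}, G_{\mathbf{q},\theta}, \phi_{\mathbf{q},\theta})$ step by step along the word $\mathbf{q} = q_0 \dots q_{n-1}$, and to keep track at each stage of the Lagrangian written as a graph in the chart $\kappa_{q_j}$.

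\textit{Initialization.} For $n=1$ and $\mathbf{q} = q_0$, by definition $\mathcal{L}_{q_0,\theta} = \kappa_{q_0}^{-1}(\mathcal{C}_\theta \cap V_{q_0}) \cap \mathcal{V}_{q_0}$, so $\mathcal{C}_{q_0,\theta} = \kappa_{q_0}(\mathcal{L}_{q_0,\theta})$ is a horizontal segment $\{(y,\theta)\}$; here $G_{q_0,\theta} \equiv \theta$ (constant, provided $|\theta| \le C_1 \varepsilon_0$ so that this is in the allowed cone, which we may assume since otherwise the leaf is empty after one step), $\phi_{q_0,\theta} = \mathrm{id}$, and all bounds hold trivially with the constants $C, C_l$ of Lemma~\ref{lemma_propagation_lagrangian_leaves}.

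\textit{Inductive step.} Suppose the statement holds for $\mathbf{q}_- = q_0 \dots q_{n-2}$, giving a graph $\mathcal{C}_{\mathbf{q}_-, \theta} = \{(y, G_{\mathbf{q}_-, \theta}(y)), y \in I_{\mathbf{q}_-,\theta}\}$ in the chart $\kappa_{q_{n-2}}$, with $\|G_{\mathbf{q}_-,\theta}'\|_\infty \le C\varepsilon_0$, $\|G_{\mathbf{q}_-,\theta}\|_{C^l} \le C_l$, and $\|\phi_{\mathbf{q}_-,\theta}'\| \le C\nu^{n-2}$. Since $\mathcal{L}_{\mathbf{q},\theta} = F(\mathcal{L}_{\mathbf{q}_-,\theta}) \cap \mathcal{V}_{q_{n-1}}$, passing to the chart $\kappa_{q_{n-1}}$ amounts to applying $\tau_{q_{n-1}, q_{n-2}} = \kappa_{q_{n-1}} \circ F \circ \kappa_{q_{n-2}}^{-1}$ to the graph of $G_{\mathbf{q}_-,\theta}$ and intersecting with $B_{q_{n-1}}(0, C_1 \varepsilon_0)$. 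Lemma~\ref{lemma_propagation_lagrangian_leaves} applies (the cone hypothesis $G_{\mathbf{q}_-,\theta} \in \mathcal{C}^u_{q_{n-2}}(C_1\varepsilon_0, C\varepsilon_0)$ is exactly the inductive hypothesis) and produces $G_{q_{n-1}} =: G_{\mathbf{q},\theta} \in \mathcal{C}^u_{q_{n-1}}(C_1\varepsilon_0, C\varepsilon_0)$ with $\|G_{\mathbf{q},\theta}\|_{C^l} \le C_l$, together with the contracting diffeomorphism $\phi_{q_{n-1} q_{n-2}} =: \psi$ onto its image with $\|\psi'\|_\infty \le \nu$. One then sets $I_{\mathbf{q},\theta} = \psi^{-1}(I_{\mathbf{q}_-,\theta}) \cap (-C_1\varepsilon_0, C_1\varepsilon_0)$ (or rather the image interval on which everything is defined) and composes: $\phi_{\mathbf{q},\theta} = \phi_{\mathbf{q}_-,\theta} \circ \psi$, so that by the chain rule $\|\phi_{\mathbf{q},\theta}'\|_\infty \le \|\phi_{\mathbf{q}_-,\theta}'\|_\infty \, \|\psi'\|_\infty \le C\nu^{n-2} \cdot \nu = C\nu^{n-1}$. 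The identity $\kappa_{q_{n-1}} \circ F^{n-1} \circ \kappa_{q_0}^{-1}(\phi_{\mathbf{q},\theta}(y), \theta) = (y, G_{\mathbf{q},\theta}(y))$ follows by combining the inductive identity $\kappa_{q_{n-2}} \circ F^{n-2} \circ \kappa_{q_0}^{-1}(\phi_{\mathbf{q}_-,\theta}(\cdot),\theta) = (\cdot, G_{\mathbf{q}_-,\theta}(\cdot))$ with the one-step relation defining $\psi$ in Lemma~\ref{lemma_propagation_lagrangian_leaves}, after the change of variable $y \mapsto \psi(y)$.

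\textit{Main obstacle.} The only genuinely delicate point is bookkeeping for the domains $I_{\mathbf{q},\theta}$: one must check that the intersections $F(\mathcal{L}_{\mathbf{q}_-,\theta}) \cap \mathcal{V}_{q_{n-1}}$ are nonempty precisely when $\mathcal{V}_{\mathbf{q}}^+ \neq \emptyset$ (via the relation $\mathcal{L}_{\mathbf{q},\theta} = F^{-1}(\mathcal{V}_{\mathbf{q}}^+) \cap F^{n-1}(\mathcal{L}_{q_0,\theta})$ recorded before the proposition), and that the composed pull-back intervals stay inside the region where the charts $\kappa_{q_j}$ and the maps $\tau_{q_{j+1}q_j}$ are defined — but this is ensured by the inclusions (\ref{inclusion_needed}) and the fact that $\tau_{q_{j+1}q_j}$ maps $B_{q_j}(0, C_1\varepsilon_0)$ into $V_{q_{j+1}}$ uniformly over the finitely many admissible transitions. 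The uniformity of all constants in $\rho_0$ (equivalently in $\mathbf{q}$) is inherited from the uniformity already built into Lemma~\ref{lemma_propagation_lagrangian_leaves} and Lemma~\ref{weak_adapted_chart}; since the number of admissible transition pairs $(q,p)$ is finite, one takes the worst constant among them. This completes the induction.
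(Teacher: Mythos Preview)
Your proof is correct and follows essentially the same approach as the paper: iterate Lemma~\ref{lemma_propagation_lagrangian_leaves} along the word $\mathbf{q}$, keeping the Lagrangian in the $\gamma$-unstable cone at each step and composing the one-step contractions $\phi_{q_jq_{j-1}}$ to get the global $\phi_{\mathbf{q},\theta}$. The only detail you gloss over is the rescaling by $D_q$ between the charts $\kappa_q$ and $\tilde\kappa_q$ (the Lemma is stated for graphs in the rescaled cubes $\tilde B_q$, so the paper takes $G_0=\lambda_{q_0,s}\theta$ rather than $\theta$ and un-rescales at the end via $G_{\mathbf{q},\theta}(y)=\lambda_{q_{n-1},s}^{-1}G_{n-1}(\lambda_{q_{n-1},u}y)$), but since the $\lambda_{q,u},\lambda_{q,s}$ are uniformly bounded above and below this only affects the constants and not the argument.
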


\begin{proof}
Assume that $\mathcal{L}_{\mathbf{q}, \theta} \neq \emptyset$, otherwise, there is nothing to prove. In particular, we can restrict our attention to small $\theta$, $|\theta| \leq C_1 \varepsilon_0$. As a consequence, for every $i \in \{1, \dots, n \}$, $F (\mathcal{V}_{q_{i-1}} ) \cap \mathcal{V}_{q_i} \neq \emptyset$. Hence, we can consider the maps $\tau_i \coloneqq \tau_{q_i, q_{i-1} }$ and since we assume that $\kappa_{q_i} \left(\mathcal{V}_{q_i} \right) \subset B_{q_i}(0,C_1 \varepsilon_0)$, 
$$C_{q_0 \dots q_{i} , \theta} = \tau_i \left( C_{q_0 \dots q_{i-1} , \theta} \right) \cap \kappa_{q_i} (\mathcal{V}_{q_i} )  $$
We start with a constant function $G_0  \in \mathcal{C}^u_0(C_1 \varepsilon_0, 0)$ such that $\mathcal{L}_{G_0} =\mathcal{C}_\theta$ (it suffices to take $G_0= \lambda_{q_0,s} \theta$) and we inductively apply the previous lemma to show the existence of a family $G_j \in \mathcal{C}^u_{q_j}(C_1 \varepsilon_0, C \varepsilon_0),  0 \leq j \leq n-1$, such that 
\begin{enumerate}[label=(\roman*)]
\item $\tau_i \left( \mathcal{L}_{G_i} \right) \cap B_{q_i}(0,C_1 \varepsilon_0)=  \mathcal{L}_{G_{i+1}}$ ; 
\item$||G_i||_{C^l} \leq C_l$; 
\item If we define $\phi_i : ]-C_1\varepsilon_0, C_1 \varepsilon_0[ \to ]-C_1\varepsilon_0, C_1 \varepsilon_0[ $ by 
$$ (y,G_i(y)) =  D_{q_i} \circ \tau_i \circ D_{q_{i-1}}^{-1}  \Big( \phi_{i}(y), G_{i-1} \circ \phi_{i} (y)  \Big) $$
then there exists $\nu<1$ such that
$||\phi_{i}^\prime||_\infty \leq \nu $. 
\item $\mathcal{C}_{q_0 \dots q_i, \theta}$ is an open subset of $\mathcal{L}_{G_i}$. 
\end{enumerate}
We have 
$$ \mathcal{L}_{G_{n-1}} = D_{q_{n-1}}^{-1} \Big( \{ (y, G_{n-1}(y) ) , y \in ]-C_1 \varepsilon_0, C_1 \varepsilon_0 [ \} \Big) $$
This can be also written 
$$  \mathcal{L}_{G_{n-1}} =  \left\{ \left( y, \lambda_{q_{n-1},s}^{-1}G_{n-1}(\lambda_{q_{n-1},u}y) \right) , |y| <\lambda_{q_{n-1},u}^{-1} C_1 \varepsilon_0 \right\} $$
It suffices to consider 
\begin{align*}
&G_{\mathbf{q}, \theta}(y) = \lambda_{q_{n-1},s}^{-1} G_{n-1}(\lambda_{q_{n-1},u} y )\\
&I_{\mathbf{q}, \theta}  = \Big\{ y \in ]- \lambda_{q_{n-1},u}^{-1}C_1 \varepsilon_0,  \lambda_{q_{n-1},u}^{-1}C_1 \varepsilon_0 [ , (y, G_{\mathbf{q}, \theta}(y) ) \in \mathcal{C}_{\mathbf{q}, \theta} \Big\}\\ &\phi_{\mathbf{q}, \theta}(y) = \lambda_{q_1,u}^{-1} \phi_1 \circ \dots \circ \phi_{n-1}( \lambda_{q_{n-1},u} y ) 
\end{align*}
\end{proof}

\subsubsection{Evolution of Lagrangian states}
Once we've studied the evolution of the Lagrangian leaves starting from $\mathcal{C}_\theta$, we can study the evolution of the corresponding Lagrangian states. In our case, since the leaves stay rather horizontal, the form of the Lagrangian states we'll consider is the simplest : $$ a (x) e^{i \psi(x)/h}$$ 
where $a$ is an amplitude and $\psi$ a generating phase function. It is associated with the Lagrangian, 
$$ \mathcal{L}= \{ (y, \psi^\prime(y) ) ,  y \in \supp a \}$$

For $q \in \mathcal{A}$, we quantize $\kappa_q$. Remind that we denoted $k_q$ the integer such that $\mathcal{V}_q \Subset U_{k_q}$. There exist Fourier integral operators $B_q, B_q^\prime \in I_0^{comp}(\kappa_q) \times I_0^{comp}(\kappa_q^{-1})$, 

\begin{align*}
B_q : L^2 (Y_{k_q}) \to L^2(\R) ; \\
B_q^\prime : L^2 (\R) \to L^2 (Y_{k_q} ) 
\end{align*}
such that they quantize $\kappa_q$ in a neighborhood of $\kappa_q \left( \overline{ \mathcal{V}_q}  \right) \times \overline{\mathcal{V}_q}$. Moreover, we impose that $\WF (B_q B^\prime_q)$ is a compact subset of $\R^2$.
We will still denoted $B_q$ and $B_q^\prime$ the operators 

\begin{align*}
B_q = ( 0 ,\dots, \underbrace{B_q}_{k_q}, \dots ,0) : L^2 (Y) \to L^2 (\R) \quad ; \quad  
B_q^\prime ={}^t (0 ,\dots ,\underbrace{B_q^\prime}_{k_q}, \dots,0) : L^2 (\R) \to L^2(Y) 
\end{align*}
If $\supp (c_q) \subset \mathcal{V}_q$ and if $C$ denotes the operator valued matrix with only one non zero entry $\op(c_q)$ in position $(k_q,k_q)$, then as operators $L^2(Y) \to L^2(Y)$, 
$$ B_q^\prime B_q C = C + \hinf  \; ; \; CB_q^\prime B_q= C + \hinf$$

The proposition we aim at proving in the following :
\begin{prop}\label{Prop_evolution_lagrangian_state}
Fix $C_0 >0$. For every $n \in \N,\mathbf{q} \in \mathcal{A}^n$ and $\theta \in \R$ satisfying 
\begin{equation}
n \leq C_0 |\log h| \; ; \; |\theta| \leq C_0
\end{equation}
and for every $N \in \N$, there exists a symbol $a_{\mathbf{q}, \theta, N} \in \cinfc(I_{\mathbf{q}, \theta})$ such that : 
\begin{enumerate}[label= (\roman*)]
\item $U_\mathbf{q} \left( B_{q_0}^\prime e^{i \frac{\theta \cdot}{h} } \right) =  MA_{q_{n-1}}B^\prime_{q_{n-1}}  \left( e^{i \frac{\psi_\mathbf{q} }{h} } a_{\mathbf{q}, \theta, N} \right) + O(h^N)_{L^2}$
\item$ || a_{\mathbf{q}, \theta, N}||_{C_l} \leq C_{l,N} h^{-C_0 \log B} $
\item There exists $\delta >0$ such that $ d\left( \supp(a_{\mathbf{q},\theta,N}), \R \setminus I_{\mathbf{q}, N, \theta} \right) \geq \delta $
\end{enumerate}
where $\psi_{\mathbf{q}, \theta}$ is a primitive of $G_{\mathbf{q}, \theta}$ and $B>0$ is a global constant. 
\end{prop}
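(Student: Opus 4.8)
The plan is to prove Proposition \ref{Prop_evolution_lagrangian_state} by induction on the length $n$ of the word $\mathbf{q}$, propagating a WKB ansatz through one application of $MA_q$ at a time and using the already-established control of the Lagrangian leaves (Proposition \ref{lagrangian_leaves_evolved}, Lemma \ref{lemma_propagation_lagrangian_leaves}). First I would set up the single-step building block. Working in the adapted charts $\kappa_q$, we have $MA_{q_j}B'_{q_j} = B'_{q_{j+1}} \widetilde M_j B_{q_j}\dots$ — more precisely, conjugating $MA_{q_j}$ by the $B_q,B'_q$ turns it (microlocally where it matters) into a compactly-supported Fourier integral operator $\widetilde M_j \in I_0^{comp}(\tau_{q_{j+1},q_j})$ acting on $L^2(\R)$, quantizing the local map $\tau_{q_{j+1},q_j}$, with an elliptic symbol; the cutoff $A_{q_j}$ becomes multiplication by (a quantization of) $\chi_{q_j}$ expressed in the chart. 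The key classical fact is that $\tau_{q_{j+1},q_j}$ satisfies the non-degeneracy condition of the subsubsection ``An important example'': the map $(y,\xi,y',\eta')\mapsto(y,\eta')$ on its twisted graph is a local diffeomorphism, precisely because the unstable direction (the $\partial_y$ direction in the chart) is expanded. Hence $\widetilde M_j$ has an oscillatory-integral representation with a generating phase, and applying it to a WKB state $a\,e^{i\psi/h}$ whose Lagrangian $\{(y,\psi'(y))\}$ lies in the $C\varepsilon_0$-unstable cone, stationary phase in the $\eta$-variable (the phase is nondegenerate there, uniformly, by the cone condition) produces a new WKB state $\widetilde a\,e^{i\widetilde\psi/h}$ on the image Lagrangian, with $\widetilde\psi' = G_{j+1}$ in the notation of Lemma \ref{lemma_propagation_lagrangian_leaves}, plus an $O(h^N)$ error and an asymptotic expansion for $\widetilde a$.

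The core of the induction is then tracking the amplitudes. At each step the new amplitude is, to leading order, $\widetilde a(y) = c_j(\ldots)\, a(\phi_j(y))\, |{\det}|^{1/2}$ where $\phi_j$ is the contracting change of variables from Lemma \ref{lemma_propagation_lagrangian_leaves} ($\|\phi_j'\|_\infty\le\nu<1$) and the extra factor comes from the symbol of $\widetilde M_j$ and the stationary-phase Jacobian; lower-order terms in $h$ are differential operators applied to $a$ composed with $\phi_j$. The contraction $\|\phi_j'\|_\infty\le\nu$ is what controls the $C^l$ norms: composing with a uniformly-contracting diffeomorphism cannot make higher derivatives grow, and the other factors ($c_j$, the Jacobians) have uniformly-bounded $C^l$ norms by Proposition \ref{lagrangian_leaves_evolved}(ii)–(iii) and the smoothness of the $\tau_{q_{j+1},q_j}$. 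The only source of growth is the multiplicative factor at each of the $n = O(|\log h|)$ steps: each $c_{q_j}$ is bounded by $\|\alpha\|_\infty$ but the stationary-phase Jacobian factor is bounded by some global $B$, giving after $n$ steps a bound $B^n \le h^{-C_0\log B}$, which is (ii). For the support property (iii): one defines $a_{\mathbf q,\theta,N}$ by multiplying, at each step, by a fixed cutoff equal to $1$ on $\kappa_{q_{j+1}}(\mathcal V_{q_{j+1}})$ and supported slightly inside it; pulling this back through the contractions $\phi_j$, the support of the final amplitude stays at distance $\ge\delta$ (a global constant, since only finitely many transitions occur and $\phi_j$ is uniformly bi-Lipschitz on the relevant scale) from the boundary of $I_{\mathbf q,\theta}$. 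Finally the phase: by construction $\psi_{\mathbf q,\theta}$ is a primitive of $G_{\mathbf q,\theta}$ from Proposition \ref{lagrangian_leaves_evolved}, after the rescaling by $\lambda_{q_{n-1},u},\lambda_{q_{n-1},s}$ built into the definition of $\mathcal C_{\mathbf q,\theta}$ versus $\mathcal L_{G_{n-1}}$; one just has to check the rescaling is consistent with the chart normalizations, which is routine.

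I would organize the induction statement to carry along, at step $r$, not just $(i)$–$(iii)$ but also the precise description ``$U_{q_0\dots q_{r-1}}(B'_{q_0}e^{i\theta\cdot/h}) = MA_{q_{r-1}}B'_{q_{r-1}}(e^{i\psi_r/h}a_r) + O(h^N)$ with $a_r$ supported where $G_r$ lives and $\psi_r' = G_r$'', so that applying one more $MA_{q_r}$ (write $MA_{q_r}B'_{q_{r-1}} = $ (operator conjugated into the chart) $B_{q_{r-1}}$ composed with $B'_{q_{r-1}}$, then use $B_{q_{r-1}}B'_{q_{r-1}} = \Id + \hinf$ microlocally) lands one back in the same form with $r+1$. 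The base case $r=1$ is a direct stationary-phase computation: $MA_{q_0}B'_{q_0}e^{i\theta\cdot/h}$ is a WKB state on $F(\mathcal C_\theta\cap\mathcal V_{q_0})\cap\mathcal V_{q_1}$, written in the chart $\kappa_{q_1}$ via $B'_{q_1}B_{q_1}$.

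The main obstacle I expect is the bookkeeping of the stationary-phase step performed $O(|\log h|)$ times while keeping all remainder estimates uniform in $n$ and $\mathbf q$: one must verify that the phase in the oscillatory integral for $\widetilde M_j$ is nondegenerate in the integration variable \emph{with a lower bound on the Hessian that is uniform over the finitely many transitions and over all leaves in the $C\varepsilon_0$-cone}, so that the stationary-phase expansion has $C^l$-seminorm remainders controlled by a fixed number of derivatives of the amplitude with constants independent of the step; then one accumulates errors as $\sum_{j} h^N (\text{bounded})^{n-j} = O(h^{N-C_0\log B})$ which, since $N$ is arbitrary, gives $O(h^{N'})$ for any $N'$. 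This is exactly the type of argument in \cite{NDJ19} (and \cite{AnN01}, \cite{AnN02}) and the essential new input — the $C^{1+\beta}$ regularity of the unstable foliation guaranteeing the charts of Lemma \ref{weak_adapted_chart} and the uniform-cone invariance of Lemma \ref{lemma_propagation_lagrangian_leaves} — has already been put in place, so the proof is a (careful but) standard propagation-of-coherent-states computation.
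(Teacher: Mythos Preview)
Your proposal is correct and follows essentially the same approach as the paper: rewrite $U_\mathbf{q} B_{q_0}'$ as a product of local Fourier integral operators $T_{q_{j+1},q_j}$ quantizing the chart maps $\tau_{q_{j+1},q_j}$, apply stationary phase at each step to propagate the WKB ansatz along the leaves of Proposition~\ref{lagrangian_leaves_evolved}, and use the contraction $\|\phi_j'\|_\infty\le\nu$ to control the $C^l$ norms of the iterated amplitudes while the multiplicative factor $B$ per step produces the $h^{-C_0\log B}$ loss. The only minor difference is in the support property~(iii): the paper does not insert extra cutoffs but instead uses that the cutoffs $A_{q_j},\tilde A_{q_j}$ are already built into $T_{q_{j+1},q_j}$, and then argues by contradiction via auxiliary neighborhoods $\mathcal W_q$ with $\supp\tilde a_q\Subset\mathcal W_q\Subset\mathcal V_q$, pulling back through the contractions $\phi_i$ exactly as you describe; this is an implementation detail, not a different idea.
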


\begin{rem}
\begin{itemize}
\item  As usual, $\delta, C_{l,N}$ and $C_N$ depend only on $F, A_q, B_q, B^\prime_q, \kappa_q$ and the indices indicated in their notations. 
\item In other words, the Lagrangian state $e^{i\frac{\theta \cdot}{h}}$ is changed to a Lagrangian state associated with $\mathcal{C}_{\mathbf{q}, \theta}$. 
\end{itemize}
\end{rem} 

The end of this subsection is devoted to the proof of Proposition \ref{Prop_evolution_lagrangian_state}. In the rest of this section, we fix a constant $C_0>0$ and we work with a fixed word $\mathbf{q} \in \mathcal{A}^n$ with length $n \leq  C_0 |\log h|$ and a fixed momentun $|\theta| \leq C_0$. From now on and until the end of the proof, the constants below will always be  uniform in $\mathbf{q}, \theta$ satisfying the previous assumption. They will depend on global parameters and on $C_0$. If they depend on other parameters, we will specify it with subscripts.  This is also the case for implicit constants in $O$ (such as in $\hinf$). 

\paragraph{Preparatory work.}
We first note the following fact : if  $\mathcal{V}_q \cap F^{-1}(\mathcal{V}_p) = \emptyset$, $A_p M A_q = \hinf$. As a consequence, if $ \mathcal{V}_{q_{i-1}} \cap F^{-1}(\mathcal{V}_{q_i}) =\emptyset$ for some $i$, then $U_\mathbf{q} = \hinf$. In the sequel, it is enough to consider words $\mathbf{q}$ for which $ \mathcal{V}_{q_{i-1}} \cap F^{-1}(\mathcal{V}_{q_i}) \neq \emptyset$ for $1 \leq i \leq n-1$. 

We consider symbols $\tilde{a}_q$ such that $\supp(\tilde{a}_q) \subset \mathcal{V}_q$ and $\tilde{a}_q \equiv 1$ on $\supp(\chi_q)$. We denote $\tilde{A}_q = \op(\tilde{a}_q)$ (as usual thought as a diagonal operator valued matrix). 
The following computations holds since $n = O(\log h) $ and $ ||MA_q|| \leq ||\alpha||_\infty + o(1)$ uniformly in $q$ : 

\begin{align*}
U_\mathbf{q} B_{q_0}^\prime= &M A_{q_{n-1}} \tilde{A}_{q_{n-1}} M A_{q_{n-2}} \tilde{A}_{q_{n-2}} \dots M A_{q_1} \tilde{A}_{q_1} M A_{q_0} B_{q_0}^\prime+ \hinf  \\
&=M   A_{q_{n-1}} B^\prime_{q_{n-1}} B_{q_{n-1} } \tilde{A}_{q_{n-1}} M \dots M  A_{q_1} B^\prime_{q_1} B_{q_1}  \tilde{A}_{q_1} M A_{q_0} B_{q_0}^\prime  + \hinf 
\end{align*}
We set $T_{p,q} = B_p \tilde{A}_p M A_q  B_q^\prime$ and $M_q = MA_q B^\prime_q$, which allows us to write 

$$ U_\mathbf{q} B_{q_0}^\prime = M_{q_{n-1}} T_{q_{n-1}, q_{n-2}} \dots T_{q_1, q_0} + \hinf $$
For $p,q \in \mathcal{A}$ with $ \mathcal{V}_{q} \cap F^{-1}(\mathcal{V}_{p}) \neq \emptyset$, $T_{q,p} \in I_0^{comp}(\tau_{p,q})$. Moreover, the previous computations have shown that $\tau_{p,q}$ has the form
$$\tau_{p,q} (y,\eta) =  (\lambda_{p,q} y  + y_r(y, \eta) , \mu_{p,q} \eta + \eta_r(y,\eta) )  , (y,\eta) \in B_q(0, C_1\varepsilon_0)$$
where $y_r(y, \eta)$ and $\eta_r(y, \eta)$ are $O(\varepsilon_0)_{C^1}$.
This time, $\lambda_{p,q}, \mu_{p,q}$ are simply constants uniformly bounded from below and from above for $p, q \in \mathcal{A}$ (recall that $B_q(0,C_1 \varepsilon_0)$ is a rectangle in $\R^2$, built from the cube $\tilde{B}_q(0,C_1 \varepsilon_0)$ adapted to the definition of the unstable Jacobian). If $\varepsilon_0$ small enough, the projection  $\pi : (y,\eta,x,\xi) \in  \mathcal{L}_{q,p} \mapsto (y,\xi) \in \R^2$ is a diffeomorphism onto its image. where 
$$ \mathcal{L}_{q,p} = \Big\{ (\tau_{q,p}(x,\xi) , x, -\xi) ,( x, \xi) \in B_q(0, C_1 \varepsilon_0) \Big\}$$
is  the twisted graph of $\tau_{p,q}$. As a consequence, there exists a smooth phase function $S_{p,q}$ defined in an open set $\Omega_{p,q}$ of $\R^2$, generating $  \mathcal{L}_{p,q} $ locally i.e. 
$$  \mathcal{L}_{p,q}  \cap \tau_{p,q}\left(  B_q(0,C_1 \varepsilon_0) \right)  \times B_q(0,C_1 \varepsilon_0) = \Big\{ (y, \partial_y S_{p,q}(y,\xi), \partial_\xi S_{p,q}(y,\xi), - \xi), (y,\xi) \in \Omega_{q,p} \Big\} $$
Hence, $T_{p,q}$ can be written in the following form, up to a $\hinf$ remainder and for some symbol $\alpha_{p,q}(\cdot ; h) \in \cinfc(\Omega_{p,q} )$:

\begin{equation}\label{fio_in_nice_form}
T_{p,q} u(y) = \frac{1}{2\pi h}\int_{\R^2} e^{\frac{i}{h}( S_{p,q}(y,\xi) - x\xi)} \alpha_{p,q}(y,\xi ; h) u(x) dx d \xi
\end{equation}
Moreover, due to the operators $\tilde{A}_p$ and $A_q$ in the definition of $T_{p,q}$, we can assume that 

$$ (y,\xi) \in \supp (\alpha_{p,q} ) \implies (\partial_\xi S_{p,q}(y,\xi), \xi) \in \kappa_q (\supp a_q ) , (y, \partial_yS_{p,q}(y,\xi) ) \in \kappa_p (\supp \tilde{a}_p ) $$ 
In the sequel, we write 
$$\mathcal{C}_i  = \mathcal{C}_{q_0 \dots q_i, \theta}  $$
and we change the subscripts $(q_{i-1}, q_i)$ to $i$ in all the objects $T, \alpha, S, \tau$. Due to the previous results, we can write 
$\mathcal{C}_i = \Big\{ (y, G_i(y) ) , y \in I_i  \Big\}$ with $I_i \coloneqq I_{q_0 \dots q_i, \theta}$ and  $G_i \coloneqq G_{q_0 \dots q_i, \theta}$. We also have projection maps $\Phi_i :I_i \to \R$ defined by : 
$$ \tau_i \circ \dots \circ \tau_1 (\Phi_i(y), \theta) = (y, G_i(y) ) $$
satisfying $||\Phi_i^\prime||_\infty \leq C\nu^i < 1$. Moreover, if we note the intermediate corresponding projection $\phi_i \coloneqq \Phi_i \circ \Phi_{i-1}^{-1} : I_i \to I_{i-1}$, we observe that $\phi_i$ is constructed using the properties of $F$ and $G_{i-1}$ (see the proof of Lemma \ref{lagrangian_leaves_evolved}) and hence, for every $l$, $||\phi_i||_{C^l} \leq C_l$ for some $C_l$ not depending on $\mathbf{q}, \theta$ nor $i$. \\
 For $0 \leq i \leq n-1$, we consider a primitive $\psi_i$ of $G_i$ so that $\mathcal{C}_i$ is generated by $\psi_i$ i.e. 
 $$\mathcal{C}_i = \Big\{ (y, \psi_i^\prime(y) , y \in I_i \Big\}$$
 The following lemma can be found in \cite{NZ09} (Lemma 4.1). We state it without proof, since it is the reference but it is a direct application of the stationary phase theorem. 
 
 \begin{lem}\label{iteration_lemma}
 Pick $i \in \{1, \dots ,n-1\}$. \\
 For any $a \in \cinfc(I_{i-1})$, the application of $T_{i}$ to the Lagrangian state $a e^{i\frac{ \psi_{i-1}}{h}}$
 associated with $\mathcal{C}_{i-1}$ gives a Lagrangian state associated with $\mathcal{C}_{i}$ and satisfies 
 \begin{equation}
 T_i \left( a e^{i\frac{ \psi_{i-1}}{h}} \right) (y) = e^{i\frac{ \beta_i}{h}} e^{i\frac{ \psi_{i}(y)}{h}} \left(\sum_{j=0}^{N-1} b_j(y) h^j+ h^N r_N(y;h)  \right)
 \end{equation}
 where, if we note $x= \phi_i(y)$, 
 $b_j(y) = (L_{j,i}(x,D_x)  a) (x)$ for some differential operator $L_{j,i}$ of order $2j$ with  smooth coefficients supported in $I_{i-1}$ and $\beta_i \in \R$. Moreover, one have : 
 \begin{itemize}
 \item $b_0(y) = \frac{\alpha_i(y,\xi) }{|\det D^2_{y,\xi} S_i(y,\xi) |^{1/2}} \left| \phi_i^\prime(y) \right|^{1/2}  a (x) $ with $\xi = \psi^\prime_{i-1}(x)$;
 \item $||b_j||_{C^l(I_i)} \leq C_{l,j} ||a||_{C^{l+2j} (I_{i-1})} , l \in \N, 0 \leq j \leq N-1$ ; 
 \item $||r_N ||_{C^l(I_i)} \leq C_N ||a||_{  C^{l+1 + 2N}(I_{i-1})} $
 \end{itemize}
 The constants $C_N$ and $C_{l,j}$ depend on $\tau_i, \alpha_i, ||\psi_i^{(m)}||_{\infty, I_i}$.
 \end{lem}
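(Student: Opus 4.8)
The plan is to prove Lemma~\ref{iteration_lemma} by a direct application of the quantitative stationary phase theorem to the oscillatory integral representing $T_i$. First I would use the normal form (\ref{fio_in_nice_form}) for $T_i$ (the operator $T_{q_i,q_{i-1}}$, with the subscripts $(q_{i-1},q_i)$ relabelled to $i$) to write, for $y$ in a fixed neighbourhood of $I_i$,
\begin{equation*}
T_i\!\left(a\,e^{i\psi_{i-1}/h}\right)\!(y) = \frac{1}{2\pi h}\int_{\R^2} e^{\frac{i}{h}\Phi_y(x,\xi)}\,\alpha_i(y,\xi;h)\,a(x)\,dx\,d\xi,\qquad \Phi_y(x,\xi) = S_i(y,\xi) - x\xi + \psi_{i-1}(x).
\end{equation*}
Since the $\kappa_q$ (hence $S_i$, $\alpha_i$) come from a finite family, $|\theta|\le C_1\varepsilon_0$ and $\supp a\Subset I_{i-1}$, the integrand is compactly supported in $(x,\xi)$, uniformly in $\mathbf{q},\theta,i$. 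The critical point equations in $(x,\xi)$ are $\partial_x\Phi_y = \psi_{i-1}'(x)-\xi = 0$ and $\partial_\xi\Phi_y = \partial_\xi S_i(y,\xi)-x = 0$.

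Then I would identify the critical point. The generating-function property of $S_i$ says that $x=\partial_\xi S_i(y,\xi)$, $\eta=\partial_y S_i(y,\xi)$ is equivalent to $\tau_i(x,\xi)=(y,\eta)$, and that $(x,\xi)\mapsto(\text{1st coord of }\tau_i(x,\xi),\xi)$ is a diffeomorphism. Combining the second critical equation with $\xi=\psi_{i-1}'(x)=G_{i-1}(x)$ and with the defining relation of $\phi_i$, namely $\tau_i\!\big(\phi_i(y),G_{i-1}(\phi_i(y))\big)=(y,G_i(y))$, that diffeomorphism property forces the only critical point in $B_{q_{i-1}}(0,C_1\varepsilon_0)$ to be $(x_c(y),\xi_c(y))=\big(\phi_i(y),\psi_{i-1}'(\phi_i(y))\big)$, and moreover $\partial_y S_i(y,\xi_c(y))=G_i(y)=\psi_i'(y)$; uniqueness in the box also follows directly because the fixed-point map $x\mapsto\partial_\xi S_i(y,\psi_{i-1}'(x))$ has derivative $\partial^2_{\xi\xi}S_i\cdot\psi_{i-1}''=O(\varepsilon_0^2)$ (both $\partial^2_{\xi\xi}S_i$ and $\psi_{i-1}''=G_{i-1}'$ being $O(\varepsilon_0)$ by the cone structure of $\tau_i$ and $\|G_{i-1}'\|_\infty\le C\varepsilon_0$), hence is a contraction. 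Next I would compute
\begin{equation*}
\mathrm{Hess}_{(x,\xi)}\Phi_y = \begin{pmatrix}\psi_{i-1}''(x) & -1\\ -1 & \partial^2_{\xi\xi}S_i(y,\xi)\end{pmatrix},
\end{equation*}
with determinant $-1+O(\varepsilon_0^2)$, so it is nondegenerate and of signature $0$ on the (connected) relevant region; differentiating the critical-point relation gives $\phi_i'(y)=-\partial^2_{y\xi}S_i(y,\xi_c)/\det\mathrm{Hess}_{(x,\xi)}\Phi_y$, whence $|\det\mathrm{Hess}_{(x,\xi)}\Phi_y|=|\det D^2_{y,\xi}S_i(y,\xi_c)|/|\phi_i'(y)|$.

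Finally I would invoke stationary phase (see e.g.\ \cite{ZW}). In two variables the prefactor $(2\pi h)^{-1}$ cancels the $(2\pi h)$ of the stationary phase formula; by the envelope theorem $\tfrac{d}{dy}\Phi_y(x_c(y),\xi_c(y))=\partial_y S_i(y,\xi_c(y))=\psi_i'(y)$, so the critical value equals $\psi_i(y)+\beta_i$ with $\beta_i\in\R$ a $y$-independent constant, and together with the vanishing signature this gives
\begin{equation*}
T_i\!\left(a\,e^{i\psi_{i-1}/h}\right)\!(y) = e^{i\beta_i/h}e^{i\psi_i(y)/h}\Big(\sum_{j=0}^{N-1}h^j b_j(y) + h^N r_N(y;h)\Big).
\end{equation*}
Here $b_j(y)$ is the $j$-th stationary-phase coefficient: a differential operator of order $2j$ in $(x,\xi)$ (coefficients smooth and uniformly bounded, built from $\alpha_i$ and from derivatives of $\Phi_y$, i.e.\ of $S_i,\psi_{i-1}$) applied to $\alpha_i(y,\xi;h)a(x)$ and evaluated at $(x_c(y),\xi_c(y))$. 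Since $\partial_\xi$ never hits $a(x)$, each $b_j$ has the form $\big(L_{j,i}(x,D_x)a\big)(x)\big|_{x=\phi_i(y)}$ with $L_{j,i}$ of order $2j$ and coefficients supported in $I_{i-1}$ ($\phi_i$ being a diffeomorphism onto a relatively compact subset of $I_{i-1}$), and $b_0(y)=|\det\mathrm{Hess}_{(x,\xi)}\Phi_y|^{-1/2}\alpha_i(y,\xi_c)a(x_c)=\dfrac{\alpha_i(y,\xi_c)}{|\det D^2_{y,\xi}S_i(y,\xi_c)|^{1/2}}\,|\phi_i'(y)|^{1/2}\,a(x_c)$, with the lower-order $h$-dependence of the symbol $\alpha_i$ absorbed into the $b_j$, $j\ge1$. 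The estimates $\|b_j\|_{C^l(I_i)}\le C_{l,j}\|a\|_{C^{l+2j}(I_{i-1})}$ and $\|r_N\|_{C^l(I_i)}\le C_N\|a\|_{C^{l+1+2N}(I_{i-1})}$ then follow from the standard remainder and coefficient bounds in stationary phase via Faà di Bruno, using $\|\kappa_q\|_{C^m}\le C_m$ and $\|\phi_i\|_{C^m}\le C_m$: each $y$-derivative either lands on a bounded smooth coefficient or, through $x_c(y)=\phi_i(y)$, costs one extra $x$-derivative on $a$.

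The main obstacle is bookkeeping rather than analysis: matching the stationary-phase output to the geometric data — that the critical point is exactly $(\phi_i(y),\psi_{i-1}'(\phi_i(y)))$, that $\partial_y S_i$ there equals $\psi_i'(y)$ (so the output Lagrangian state sits on $\mathcal C_i$ and not merely on a nearby leaf), and that the Hessian determinant reproduces the factor $|\det D^2_{y,\xi}S_i|^{-1/2}|\phi_i'(y)|^{1/2}$ in $b_0$. All of these are the generating-function/Lagrangian-intersection identities already built into $S_i$, $\phi_i$, $\psi_i$ in Lemma~\ref{lemma_propagation_lagrangian_leaves} and Proposition~\ref{lagrangian_leaves_evolved}; once they are in hand, the expansion and the uniformity (in $\mathbf{q},\theta,i$ with $n\le C_0|\log h|$) are immediate from applying the quantitative stationary phase theorem to a finite family of phases on compact sets.
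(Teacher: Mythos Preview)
Your proposal is correct and follows precisely the approach the paper indicates: the paper does not give its own proof of this lemma but simply cites \cite{NZ09} (Lemma 4.1) and states that ``it is a direct application of the stationary phase theorem.'' Your careful identification of the critical point via the generating-function relations, the Hessian computation yielding $|\det\mathrm{Hess}_{(x,\xi)}\Phi_y|^{-1/2}=|\phi_i'(y)|^{1/2}/|\det D^2_{y,\xi}S_i|^{1/2}$, and the envelope-theorem argument for the critical value spell out exactly this stationary-phase application.
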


\begin{rem}\text{ } \\
\begin{itemize}
\item  In particular, in virtue of Proposition \ref{lagrangian_leaves_evolved}, the constant $C_{l,j}$ and $C_N$ can be chosen uniform in $\mathbf{q}, \theta$ as soon as they satisfy the required assumptions. $|q| \leq C_0 |\log h| , \theta \leq C_0$. 
\item Without loss of generality, we can replace $\psi_i$ by $\beta_i + \psi_i$ (this actually corresponds to fixing an antiderivative on  $\mathcal{C}_{i+1}$) and hence we can assume that $\beta_i = 0$. 
\item The properties on the support of $\alpha_i$ imply the following ones on the support of the differential operators $L_{j,i}$ : 
\begin{equation}\label{support_L_i_j}
 y \in \supp L_{j,i} \implies (y, \psi^\prime_i(y) ) \in \kappa_{q_i} ( \supp \tilde{a}_{q_i} )  \cap \tau_{i-1} \circ \kappa_{q_{i-1}} ( \supp a_{q_{i-1}} )
\end{equation}
\end{itemize}
\end{rem}

\paragraph{Iteration formulas and analysis of the symbols}
Then, we iterate this lemma starting from $\psi_0 (x) = x \cdot \theta$, in the spirit of Proposition 4.1 in \cite{NZ09}. 
In the sequel, we adopt the following convention : we note $x_k$ the variable in $I_k$ and we naturally denote $(x_k,x_{k-1}, \dots, x_1, x_0)$ the sequence defined by $x_{i-1}= \phi_i(x_i)$. 
We also note 
$$\beta_i(x_i) =   \frac{\alpha_i(x_i,\xi) }{|\det D^2_{x_i,\xi} S_i(x_i,\xi)|^{1/2}} \quad ; \quad  \xi = \psi_{i-1}^\prime(x_{i-1})$$

$$f_i(x_i)=\beta(x_i) \left| \phi_i^\prime(x_i) \right|^{1/2}$$   
 We fix a constant $B >0$ (depending only on $F,A_q, B_q, B^\prime_q,C_0)$ satisfying for all $1\leq i \leq n-1$, 

\begin{align*} 
\sup_{x_i\in I_{i}} |\beta_i(x_i) |\leq B  \\
||T_i ||  \leq B 
\end{align*} 
Roughly speaking, $B$ is of order $||\alpha||_\infty$, but in this part, the precise value of $B$ is not relevant. Finally, note that there exists $\nu <1$ (again depending only on $F,A_q, B_q, B^\prime_q$) such that $|\phi_i^\prime(x_i) | \leq \nu$ for $x_i \in I_i$. 
Fix $N \in \N$ and denote 
\begin{equation} \tilde{N} = 1 + \lceil N + C_0 \log B \rceil 
\end{equation}
We iteratively define a sequence of symbols $a_{i,j}$, $0 \leq i \leq n-1$, $0 \leq j \leq \tilde{N} -1$ by $a_{0,0}=1, a_{0,j} = 0$ and for $0 \leq j \leq \tilde{N} -1$ 

\begin{equation}\label{iteration_formula}
a_{i,j}(x_i) = \sum_{p=0}^j L_{j-p, i} ( a_{i-1,p} )  (x_{i-1})
\end{equation}

The following lemma controls the growth of the symbols. The proof is a precise analysis of the iteration formula (\ref{iteration_formula}) and is rather technical. We write the detailed proof in the appendix (See subsection \ref{Proof_of_lemma_bound_symbol}) and refer the reader to \cite{NZ09} (Proposition 4.1), where the author lead the same analysis (but in the case $B=1$). 
\begin{lem}\label{Lemma_bound_symbol}
For all $j \in \{0, \dots , \tilde{N} -1 \}, l \in \N$, there exists $C_{j,l} >0$ such that for all $i \in \{0, \dots, n-1 \}$, one has
\begin{equation}\label{bound_symbol}
||a_{i,j}||_{C^l(I_i)} \leq C_{j,l} \left(B \nu^{1/2} \right)^i (i+1)^{l + 3j}
\end{equation}
\end{lem}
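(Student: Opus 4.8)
The statement to be proved is Lemma \ref{Lemma_bound_symbol}, a quantitative control on the iterates $a_{i,j}$ defined by the recursion (\ref{iteration_formula}). The plan is to argue by induction on $i$, for all $j$ and $l$ simultaneously. The base case $i=0$ is immediate from $a_{0,0}=1$, $a_{0,j}=0$ for $j\geq 1$. For the inductive step, I would fix $i\geq 1$ and plug the inductive hypothesis into (\ref{iteration_formula}): writing $x_{i-1}=\phi_i(x_i)$, each term is $L_{j-p,i}(a_{i-1,p})(x_{i-1})$, where $L_{j-p,i}$ is a differential operator of order $2(j-p)$ with smooth coefficients (bounded uniformly in $\mathbf{q},\theta$ thanks to Proposition \ref{lagrangian_leaves_evolved} and Lemma \ref{iteration_lemma}), acting on a function pulled back by the contraction $\phi_i$ with $\|\phi_i'\|_\infty\leq\nu<1$ and $\|\phi_i\|_{C^l}\leq C_l$.

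The key computation is to estimate $\|L_{j-p,i}(a_{i-1,p})\circ\phi_i\|_{C^l(I_i)}$. By the Faà di Bruno formula, the $C^l$ norm of $g\circ\phi_i$ is controlled by $\sup_{l'\leq l}\|g^{(l')}\|_\infty \cdot (\text{polynomial in }\|\phi_i\|_{C^l})$, and crucially the top-order contribution $g^{(l)}\cdot(\phi_i')^l$ carries a factor $(\phi_i')^l$ which is $\leq\nu^l\leq 1$ — this is where the contraction is used to prevent blow-up in $l$. Applying the $C^{l+2(j-p)}$ bound on $a_{i-1,p}$ from the inductive hypothesis, and using that the coefficients of $L_{j-p,i}$ are uniformly bounded, one gets
\begin{equation*}
\|a_{i-1,p}\circ\phi_i\text{-type term}\|_{C^l(I_i)}\leq C\,C_{p,\,l+2(j-p)}\left(B\nu^{1/2}\right)^{i-1}i^{\,l+2(j-p)+3p}.
\end{equation*}
The factor $B$ enters through the bound $\sup|\beta_i|\leq B$ hidden in the principal symbol $b_0$ of Lemma \ref{iteration_lemma}; one $B$ is gained per application. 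The factor $\nu^{1/2}$ (rather than $\nu$) is chosen so that the square-root Jacobian $|\phi_i'|^{1/2}$ appearing in $b_0$ supplies exactly $\nu^{i/2}$; the remaining $\nu^{i/2}$ is spare and is used to absorb the polynomial loss $i^{\,l+3j}\to (i+1)^{\,l+3j}$ and the finitely many ($j+1\leq\tilde N$) terms of the sum over $p$.

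Summing over $p\in\{0,\dots,j\}$ and checking the exponents: the worst term is $p=j$ (giving $i^{\,l+3j}$ directly) versus $p<j$ (giving $i^{\,l+2(j-p)+3p}=i^{\,l+3j-(j-p)}$, strictly smaller), so the dominant power of $i$ is $l+3j$, matching the claimed $(i+1)^{\,l+3j}$. Collecting constants into a new $C_{j,l}$ (depending only on the $C_{p,l'}$ for $p\leq j$, $l'\leq l+2j$, hence ultimately only on global parameters and $C_0$) completes the induction. The main obstacle — and the only genuinely delicate point — is bookkeeping the interplay of the three exponents: one must verify that the $2(j-p)$ extra derivatives forced on $a_{i-1,p}$ by $L_{j-p,i}$ never push the total power of $i$ above $l+3j$, and that the polynomial factors from Faà di Bruno (which grow like $i^{\,O(l)}$ a priori if $\|\phi_i\|_{C^l}$ were large, but here are $O(1)$) together with the $\nu^{i/2}$ slack are enough. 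I would defer the detailed combinatorics to the appendix, as the excerpt indicates (subsection \ref{Proof_of_lemma_bound_symbol}), following the analogous computation in \cite{NZ09}, Proposition 4.1, with the single modification of tracking the extra base $B$.
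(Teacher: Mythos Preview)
Your overall strategy is correct and matches the paper's: induction using the recursion (\ref{iteration_formula}), isolating the principal term $p=j$ (which reproduces the same bound with constant $C_{j,l}$), and checking that all subleading contributions carry a strictly smaller power of $i$ so they can be absorbed into $(i+1)^{l+3j}-i^{l+3j}\gtrsim i^{l+3j-1}$ by enlarging $C_{j,l}$. Your exponent count for the terms $p<j$ is exactly right. The paper organizes the induction slightly differently --- outer induction on $j$, inner induction on $i$ --- which makes the recursive definition of the constants $C_{j,l}$ (first in $j$, then in $l$) more transparent, but this is a matter of bookkeeping, not substance.

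One point is genuinely off, though. You write that ``the remaining $\nu^{i/2}$ is spare and is used to absorb the polynomial loss''. There is no spare exponential factor: the principal symbol $b_0=\beta_i\,|\phi_i'|^{1/2}\,a$ contributes exactly $B\nu^{1/2}$ per step, so the full $(B\nu^{1/2})^i$ is already consumed by the leading term. The absorption of the subleading terms --- those coming from $p<j$, and those coming from the lower-order Fa\`a di Bruno pieces of $f_i\cdot(a_{i-1,j}\circ\phi_i)$ --- works \emph{only} because each carries a factor $i^{l+3j-1}$ (or smaller) rather than $i^{l+3j}$; the room is in the polynomial, not in the exponential. Concretely, the cross terms give something like $C_{j,l-1}'(B\nu^{1/2})^{i-1}i^{l+3j-1}$, and to fit this under $C_{j,l}(B\nu^{1/2})^{i}\bigl[(i+1)^{l+3j}-i^{l+3j}\bigr]$ you must take $C_{j,l}\gtrsim C_{j,l-1}'/\bigl((B\nu^{1/2})(l+3j)\bigr)$. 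This is precisely how the paper chooses its constants. If you rely on a nonexistent $\nu^{i/2}$ slack instead, the argument will not close when you write out the details.
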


\begin{rem}
Again, what is important is the fact that $C_{j,l}$ does \emph{not} depend on $\mathbf{q}, n, \theta$ nor $i$ :  it depends on $C_0$ and global parameters.
\end{rem}

\paragraph{Control of the remainder}
Let us call $r_{i,N}(a)$ the remainder appearing in Lemma \ref{iteration_lemma}. Define inductively $(R_{i,\tilde{N}})$ by $R_{0,\tilde{N}}=0$ and 
\begin{equation}
R_{i+1,\tilde{N}}= e^{-\frac{i \psi_{i+1}}{h}} T_{i+1} \left( e^{ \frac{i \psi_i}{h}} R_{i,\tilde{N}} \right) + \sum_{j=0}^{\tilde{N}-1} r_{i+1,\tilde{N}-j} (a_{i,j})
\end{equation}
This definition ensures that for all $1 \leq i \leq n$, 
\begin{equation}
T_i \dots T_1  \left( e^{ \frac{i \psi_0}{h} } \right) = e^{i\frac{ \psi_{i}(y)}{h}} \left(  \sum_{j=0}^{\tilde{N}-1} h^j a_{i,j} + h^{\tilde{N}} R_{i,\tilde{N}} \right)
\end{equation}

\begin{lem}
There exists $C_{\tilde{N}}$ depending only on $\tilde{N}$, $C_0$ and global parameters such that for all $1 \leq i \leq n-1$, 
$$ ||R_{i,\tilde{N}} ||_{L^2(\R)} \leq C_{\tilde{N}} B^i $$
\end{lem}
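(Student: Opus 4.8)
The statement is a routine consequence of the recursive definition of $R_{i,\tilde{N}}$ together with two facts already established: the uniform operator-norm bound $\|T_i\|\le B$, and the control of the symbols $a_{i,j}$ provided by Lemma~\ref{Lemma_bound_symbol}. The plan is to prove the estimate by induction on $i$, at each step estimating separately the two contributions to $R_{i+1,\tilde N}$: the transported remainder $e^{-i\psi_{i+1}/h}T_{i+1}(e^{i\psi_i/h}R_{i,\tilde N})$ and the new error terms $\sum_{j=0}^{\tilde N-1} r_{i+1,\tilde N-j}(a_{i,j})$.

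First I would handle the transported term. Since $T_{i+1}\in I_0^{comp}(\tau_{i+1})$ with $\|T_{i+1}\|\le B$ and multiplication by the unimodular factors $e^{\pm i\psi/h}$ does not change the $L^2$ norm, we get $\|e^{-i\psi_{i+1}/h}T_{i+1}(e^{i\psi_i/h}R_{i,\tilde N})\|_{L^2}\le B\,\|R_{i,\tilde N}\|_{L^2}$. Next, for the new error terms, the third bullet of Lemma~\ref{iteration_lemma} gives $\|r_{i+1,\tilde N-j}(a_{i,j})\|_{C^l(I_{i+1})}\le C_{\tilde N}\|a_{i,j}\|_{C^{l+1+2(\tilde N-j)}(I_i)}$, and in particular (taking $l=0$ and bounding the $L^2$ norm on a compact interval by a $C^0$ norm) $\|r_{i+1,\tilde N-j}(a_{i,j})\|_{L^2}\le C_{\tilde N}\|a_{i,j}\|_{C^{1+2\tilde N}(I_i)}$. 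By Lemma~\ref{Lemma_bound_symbol} this is bounded by $C_{\tilde N}(B\nu^{1/2})^i (i+1)^{1+2\tilde N+3j}\le C_{\tilde N}' (B\nu^{1/2})^i(i+1)^{M}$ for $M = 1+2\tilde N + 3\tilde N$, with $C_{\tilde N}'$ depending only on $\tilde N$, $C_0$ and global parameters. Summing over the $\tilde N$ values of $j$ only changes the constant, so $\big\|\sum_{j=0}^{\tilde N-1} r_{i+1,\tilde N-j}(a_{i,j})\big\|_{L^2}\le C_{\tilde N}''(B\nu^{1/2})^i(i+1)^{M}$.

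Putting these together yields the recursion $\|R_{i+1,\tilde N}\|_{L^2}\le B\|R_{i,\tilde N}\|_{L^2} + C_{\tilde N}''(B\nu^{1/2})^i(i+1)^M$, with $R_{0,\tilde N}=0$. Unrolling this gives $\|R_{i,\tilde N}\|_{L^2}\le C_{\tilde N}''\sum_{k=0}^{i-1} B^{i-1-k}(B\nu^{1/2})^k(k+1)^M = C_{\tilde N}'' B^{i-1}\sum_{k=0}^{i-1}\nu^{k/2}(k+1)^M$. The key point is that $\nu<1$, so the series $\sum_{k\ge 0}\nu^{k/2}(k+1)^M$ converges to a finite constant depending only on $\nu$ and $M$, hence only on $\tilde N$, $C_0$ and global parameters. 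This gives $\|R_{i,\tilde N}\|_{L^2}\le C_{\tilde N} B^{i-1}\le C_{\tilde N}B^i$ (absorbing $B^{-1}$ into the constant if $B\ge 1$, or adjusting constants otherwise), which is the claimed bound. I expect no genuine obstacle here; the only mildly delicate bookkeeping is tracking that the $C^{l'}$-norm of $a_{i,j}$ with $l'$ as large as $1+2\tilde N$ still obeys the bound of Lemma~\ref{Lemma_bound_symbol} with a constant uniform in $\mathbf q,\theta,i$ — which it does, by the remark following that lemma — and verifying that the polynomial factor $(i+1)^M$ is harmless against the geometric decay $\nu^{i/2}$.
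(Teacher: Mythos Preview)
Your proof is correct and follows essentially the same approach as the paper: both unroll the recursion $\|R_{i+1,\tilde N}\|_{L^2}\le B\|R_{i,\tilde N}\|_{L^2}+\sum_j C\|a_{i,j}\|_{C^{1+2(\tilde N-j)}}$, insert the symbol bounds from Lemma~\ref{Lemma_bound_symbol}, and use the convergence of $\sum_k \nu^{k/2}(k+1)^M$ to absorb the polynomial factor. The only cosmetic difference is that you bound the $C^{1+2(\tilde N-j)}$-norm by the cruder $C^{1+2\tilde N}$-norm, which is harmless.
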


\begin{proof}
Recalling that $||T_i||_{L^2 \to L^2} \leq B$ and the bound on the remainder in Lemma \ref{iteration_lemma}, the recursive definition of $R_{i,\tilde{N}}$ gives the following bound: 
$$ ||R_{i,\tilde{N}} ||_{L^2} \leq B ||R_{i-1,\tilde{N}}||_{L^2} + \sum_{j=0}^{\tilde{N} - 1} C_{\tilde{N} -j} ||a_{i-1, j} ||_{C^{1+ 2(\tilde{N} - j) }} $$
By induction and using the previous bounds on $||a_{i,j}||_{C^l}$, we get 
\begin{align*}
||R_{\tilde{N},i}||_{L^2} &\leq \sum_{p=0}^{i-1} B^{i-1-p} \sum_{j=0}^{\tilde{N} - 1} C_{\tilde{N}-j} ||a_{p,j}||_{C^{1+2(\tilde{N}-j)}} \\
&\leq  \sum_{p=0}^{i-1} B^{i-1-p} \sum_{j=0}^{N_1 - 1} C_{\tilde{N}-j} C_{\tilde{N}-j, 0} (B \nu^{1/2} )^p (p+1)^{1+ 2\tilde{N} + j} \\
&\leq C_{\tilde{N}} B^i \sum_{p=0}^{i-1} \nu^{p/2} (p+1)^{1 + 3N_1} \\
&\leq C_{\tilde{N}}B^i
\end{align*}
using that the sum is absolutely convergent. 
\end{proof}

\paragraph{End of proof of Proposition \ref{Prop_evolution_lagrangian_state}.}
We've got now all the elements to conclude the proof. We set $$a_{\mathbf{q},\theta , N} \coloneqq \sum_{j=0}^{\tilde{N} -1} h^j a_{n-1, j}$$
We know that 
$$ U_\mathbf{q} B_{q_0}^\prime \left( e^{i \frac{\theta}{h} } \right) =  M_{q_{n-1}}  \left( e^{i \frac{\psi_\mathbf{q} \cdot}{h} } a_{\mathbf{q}, \theta, N} \right) + M_{q_{n-1}}  (h^{\tilde{N}} R_{n-1,\tilde{N}} ) $$
Since $M_q$ are uniformly bounded in $q$ and $R_{n-1,\tilde{N}} \leq C_{\tilde{N}} B^{n-1} \leq C_{N_1} h^{-C_0 \log B}$, we have : 
$$ || M_{q_{n-1}}  (h^{\tilde{N}} R_{n-1,\tilde{N}} ) ||_{L^2} \leq C_N h^{\tilde{N} - C_0 \log B} \leq C_N h^N$$
Concerning the bounds on $a_{\mathbf{q},\theta , N}$, we have 
\begin{align*}
||a_{\mathbf{q},\theta , N}||_{C^l}& \leq \sum_{j=0}^{\tilde{N}-1} h^j ||a_{n-1,j}||_{C^l} \\
&\leq  \sum_{j=0}^{\tilde{N}-1} C_{j,l} \left( B\nu^{1/2} \right)^{n-1} n^{l+3j} h^j  \\
&\leq C_{l,N} n^{l+3\tilde{N}}\left( B\nu^{1/2} \right)^{n-1} \\
&\leq C_{l,N} h^{-C_0 \log B} n^{l+3\tilde{N}}\nu^{\frac{n-1}{2}} \\&\leq C_{l,N} h^{-C_0 \log B} 
\end{align*}
where we use the fact that $n \leq C_0 |\log h|$  and bound $n^{l+3\tilde{N}}\nu^{\frac{n-1}{2}}$ by some $C_{l,\tilde{N}}$ since $\nu < 1$. 

Finally, we need to prove the property on the support of $a_{\mathbf{q},\theta , N}$. To do so, let us introduce, for $q \in \mathcal{A}$, an open set $\mathcal{W}_q$ satisfying 

$$ \supp \tilde{a}_q \Subset \mathcal{W}_q \subset \mathcal{V}_q $$ 
This allows us to define new objects replacing $\mathcal{V}_q$ by $\mathcal{W}_q$ in the definitions : 
\begin{align*}
&\mathcal{W}_\mathbf{q}^+ = \bigcap_{i=0}^{n-1} F^{n-i}(\mathcal{W}_{q_i} )\Subset \mathcal{V}_\mathbf{q}^+ \\
&\mathcal{D}_{\mathbf{q}, \theta}  = \kappa_{q_{n-1}} \Big( F^{-1}\left( \mathcal{W}_\mathbf{q}^+ \right) \cap F^{n-1} \left(\mathcal{L}_{q_0, \theta}  \right) \Big) \Subset \mathcal{C}_{\mathbf{q}, \theta}
\end{align*} and the associated subinterval $J_{\mathbf{q}, \theta} \Subset I_{\mathbf{q}, \theta}$ built thanks to Proposition \ref{lagrangian_leaves_evolved} such that 
$$ \mathcal{D}_{\mathbf{q}, \theta} = \Big\{ (y, G_{\mathbf{q}, \theta}(y)) ; \; y \in J_{\mathbf{q}, \theta} \Big\}$$
Let us fix $\delta>0$ small (with further conditions imposed). We will show the following stronger statement $$ d\left( \supp(a_{\mathbf{q},\theta,N}), \R \setminus J_{\mathbf{q},\theta} \right) \geq \delta $$
 Suppose that this is not the case. We can find $x_{n-1} \in \supp a_{\mathbf{q},\theta,N}, y_{n-1} \in I_{\mathbf{q},\theta} \setminus J_{\mathbf{q},\theta}$ such that $|x_{n-1} - y_{n-1} | \leq \delta$. 
As already done, we denote by $x_i$ (resp. $y_i$) the points defined by $x_{i-1} = \phi_i(x_i)$ (resp. $y_{i-1} = \phi_i(y_i)$). Since $\phi_i$ are contractions, we have $ |x_i - y_i | \leq \delta$ for $1 \leq i \leq n-1$. If we note $$\rho_i = \kappa_{q_i}^{-1}(x_i, \psi_i^\prime(x_i)) \quad ; \quad  \zeta_i =  \kappa_{q_i}^{-1} (y_i, \psi_i^\prime(y_i))$$
 we have for some $C >0$ : 
$d(\rho_i, \zeta_i) \leq C \delta$. 
By definition, one also has 
$$ F^{-i} (\rho_{n-1} ) = \rho_{n-1-i}\quad  ; \quad F^{-i} (\zeta_{n-1} ) = \zeta_{n-1-i} $$
By the support property (\ref{support_L_i_j}) of the operators $L_{j,i}$, 
$\rho_i \in \supp \tilde{a}_{q_i}$ for $0 \leq i \leq n-1$. 
Let's assume that $\delta$ is small enough so that for all $q \in \mathcal{A}$, 
$$  d\big( \supp \tilde{a}_q , \left( \mathcal{W}_q \right)^c \big) \geq 2C \delta$$
Hence, 
$$\rho_i \in \supp \tilde{a}_{q_i} \text{ and } d(\rho_i, \zeta_i) \leq C \delta \implies \zeta_i \in \mathcal{W}_{q_i} $$
As a consequence, for all $0 \leq i \leq n-1$, $F^{i+1-n}(\zeta_{n-1} ) \in \mathcal{W}_{q_i}$, or equivalently $\zeta_{n-1} \in F^{-1} \left(\mathcal{W}_\mathbf{q}^+ \right)$. Hence, $$(y_{n-1}, \psi^\prime_{n-1} (y_{n-1}) ) \in \mathcal{C}_{\mathbf{q}, \theta} \cap \kappa_{q_{n-1} } \left(  F^{-1} \left(\mathcal{W}_\mathbf{q}^+\right) \right) \subset \mathcal{D}_{\mathbf{q}, \theta}$$
 showing that $y_{n-1} \in J_{\mathbf{q}, \theta}$, and giving a contradiction with $y_{n-1} \in  I_{\mathbf{q},\theta} \setminus J_{\mathbf{q},\theta}$.

\subsection{Microlocalization of $U_\mathcal{Q}$}\label{Micro_of_U_Q}
We now fix a cloud $\mathcal{Q} \subset \mathcal{Q}(n,a)$, centered at a point $\rho_0 \in \mathcal{T}$, namely satisfying the condition of Proposition \ref{Prop_on_clouds} : 

$$\forall \rho \in \bigcup_{ \mathbf{q} \in \mathcal{Q}} \mathcal{V}_\mathbf{q}^+ , d(\rho, W_u(\rho_0) ) \leq Ch^\mathfrak{b}$$
Let us note 
\begin{equation}U_\mathcal{Q} = \sum_{ \mathbf{q} \in \mathcal{Q}} U_\mathbf{q}
\end{equation}
 and 
 
 \begin{equation}
 \mathcal{V}_\mathcal{Q}^+ = \bigcup_{ \mathbf{q} \in \mathcal{Q}} \mathcal{V}_\mathbf{q}^+
 \end{equation}
 
 We fix an adapted chart $\kappa \coloneqq \kappa_{\rho_0} : U_0 \to V_0$ around $\rho_0$ as permitted by the Lemma \ref{Existence_of_adapated_charts}. We can assume that $\mathcal{V}_a^+ \Subset U_0$ (if $\varepsilon_0$ is small enough and since the local unstable leaf $W_u(rho_0)$ is close to points in $\mathcal{V}_a^+)$). We consider a cut-off function $\tilde{\chi}_a \in \cinfc(U_0)$ such that $\tilde{\chi}_a \equiv 1$ on $F(\supp \chi_a)$ and $\supp \tilde{\chi}_a \subset \mathcal{V}_a^+$. Let us note $\Xi_a = \op(\tilde{\chi}_a)$. Since $\Xi_a MA_a = MA_a+  \hinf$, $| \mathcal{Q}| = O(h^{-K})$ and $||U_\mathbf{q}|| = O(h^{-K})$ for some $K >0$, we have 
 
 \begin{equation*}
 \mathfrak{M}^{N_0} U_\mathcal{Q} =  \mathfrak{M}^{N_0} \Xi_a U_\mathcal{Q} + \hinf
 \end{equation*}
 Let us introduce Fourier integral operators $B, B^\prime$ quantizing $\kappa$ in $\supp(\chi_a)$ : 
 $$ B^\prime B = I + \hinf \text{ microlocally in } \supp(\chi_a) $$
  Hence : 
   \begin{equation*}
 \mathfrak{M}^{N_0} U_\mathcal{Q} =  \mathfrak{M}^{N_0} \Xi_a B^\prime B  U_\mathcal{Q} + \hinf
 \end{equation*}
 
 We introduce the following sets : 
 \begin{equation}\label{def_Omega_+}
 \Gamma^+ = \eta \left( \kappa\left( \mathcal{V}_\mathcal{Q}^+ \right) \right) \; ; \;  \Omega^+ = \Gamma^+ (h^\tau)
 \end{equation}
 and for $\mathbf{q} \in \mathcal{Q}$, 
 \begin{equation}
  \Gamma^+_\mathbf{q} = \eta \left( \kappa\left( \mathcal{V}_\mathbf{q}^+ \right) \right)
 \end{equation}

\begin{figure}[h]
\centering
\includegraphics[scale=0.5]{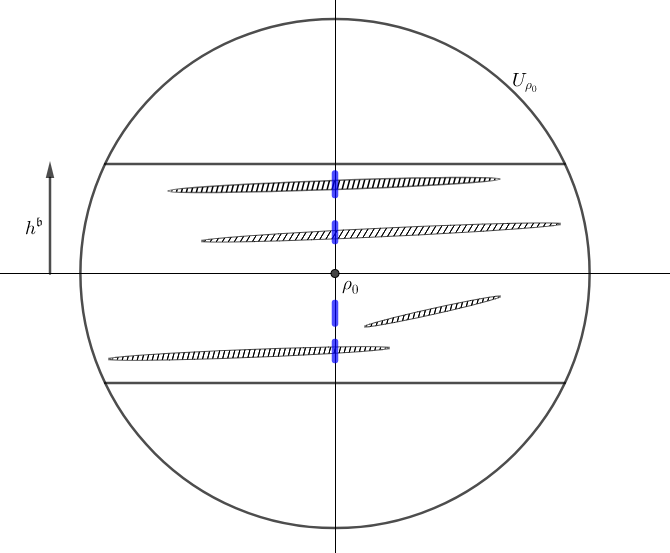}
\caption{The definition of the sets $\Gamma_\mathbf{q}^+$. They are represented by the blue segments on the $\eta$-axis and are the projections on the $\eta$ variable of the sets $\mathcal{V}_\mathbf{q}^+$ (the hached sets). They are of width of order $h^\tau$. }
\label{figure_gamma_+}
\end{figure}

 We will prove in the following lemma that the pieces $U_\mathbf{q}$ are microlocalized in thin horizontal rectangles (see Figure \ref{figure_gamma_+}).
 \begin{lem}\label{micro_U_q}
 For every $\mathbf{q} \in \mathcal{Q}$, 
 \begin{equation}
  \mathds{1}_{ \Gamma^+_\mathbf{q}(h^\tau) } \left( h D_y \right) B U_\mathbf{q} = B U_\mathbf{q} + \hinf_{L^2 \to L^2}
 \end{equation}
 with uniform bounds in the $\hinf$. 
 \end{lem}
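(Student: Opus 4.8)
The key idea is that the word $\mathbf{q}\in\mathcal{Q}$ has $J_\mathbf{q}^+ \sim h^{-\tau}$ (by definition of $\mathcal{Q}(n,a)$), so by Lemma \ref{Localization_V_q}, $\mathcal{V}_\mathbf{q}^+$ lies in a $C/J_\mathbf{q}^+ \sim Ch^\tau$-neighborhood of an unstable manifold $W_u(\rho')$ for a suitable $\rho' \in \mathcal{T}$; after applying the adapted chart $\kappa = \kappa_{\rho_0}$, where unstable leaves are nearly horizontal (Lemma \ref{Existence_of_adapated_charts}), the set $\kappa(\mathcal{V}_\mathbf{q}^+)$ is squeezed in the $\eta$-direction into a band of width $O(h^\tau)$, which is exactly $\Gamma_\mathbf{q}^+$. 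The plan is to decompose the Lagrangian state hitting $U_\mathbf{q}$ into horizontal Lagrangian states using Proposition \ref{Prop_evolution_lagrangian_state}, track their images, and show the images have $\eta$-frequency content confined to $\Gamma_\mathbf{q}^+(h^\tau)$.

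First I would write $BU_\mathbf{q}$ in terms of Lagrangian states. Using a Fourier decomposition of the identity in the $x$-variable of the chart $V_{q_0}$, I write (microlocally, modulo $\hinf$) the input as a superposition over $\theta$ of the plane waves $B_{q_0}^\prime e^{i\theta\cdot/h}$ — more precisely, $U_\mathbf{q}$ acts after $B^\prime B$ inserted near $\supp\chi_a$, and by composing with $B_{q_0}$, one reduces to understanding $U_\mathbf{q} B_{q_0}^\prime e^{i\theta\cdot/h}$ for $|\theta|\le C_0$ (the relevant range of momenta, since $\mathcal{C}_\theta$ must meet $\mathcal{V}_{q_0}$). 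By Proposition \ref{Prop_evolution_lagrangian_state}, for each such $\theta$ and each large $N$,
$$ U_\mathbf{q}\bigl(B_{q_0}^\prime e^{i\theta\cdot/h}\bigr) = MA_{q_{n-1}}B^\prime_{q_{n-1}}\Bigl(e^{i\psi_{\mathbf{q},\theta}/h}\,a_{\mathbf{q},\theta,N}\Bigr) + O(h^N)_{L^2}, $$
with $\supp a_{\mathbf{q},\theta,N}$ at distance $\ge\delta$ from $\R\setminus I_{\mathbf{q},\theta}$, with polynomially bounded seminorms, and with $\psi_{\mathbf{q},\theta}'= G_{\mathbf{q},\theta}$ satisfying $\|G_{\mathbf{q},\theta}'\|_\infty \le C\varepsilon_0$ (Proposition \ref{lagrangian_leaves_evolved}). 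Applying $B$ (which quantizes $\kappa\circ(\kappa_{q_{n-1}})^{-1}$ up to the relevant identifications) to this Lagrangian state associated with $\mathcal{C}_{\mathbf{q},\theta}\subset\kappa_{q_{n-1}}(\mathcal{V}_\mathbf{q}^+)$ transports it to a Lagrangian state associated with $\kappa(\mathcal{V}_\mathbf{q}^+)$ in the chart $V_0$; this is again of the form $e^{i\Psi/h}\tilde a$ with $\Psi$ a generating phase whose graph is $\kappa(\mathcal{L}_{\mathbf{q},\theta})$, hence $\partial_y\Psi(y) = \eta$ ranges exactly over $\Gamma_\mathbf{q}^+$ (plus $O(h^\infty)$ from the cutoffs).

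Then I would apply the frequency cutoff $\mathds 1_{\Gamma_\mathbf{q}^+(h^\tau)}(hD_y)$. A Lagrangian state $e^{i\Psi/h}\tilde a$ with $\partial_y\Psi$ taking values in $\Gamma_\mathbf{q}^+$ and with $\tilde a$ smooth with controlled (at worst $h^{-C}$) seminorms has $h$-wavefront set contained in $\{(y,\partial_y\Psi(y))\} \subset \{(y,\eta):\eta\in\Gamma_\mathbf{q}^+\}$; since $\Gamma_\mathbf{q}^+(h^\tau)$ is a $C h^\tau$-neighborhood of $\Gamma_\mathbf{q}^+$ with $\tau < 1/2$, the standard nonstationary-phase / almost-analyticity estimate for the Fourier multiplier $\mathds 1_{\Gamma_\mathbf{q}^+(h^\tau)}(hD_y)$ (which one should first regularize into a symbol of $S_\tau$ via Lemma \ref{lemma_construction_cut_off}, the change by $\mathds1$ versus a smooth cutoff being harmless here since the supports are separated by $\gtrsim h^\tau$) gives
$$ \mathds 1_{\Gamma_\mathbf{q}^+(h^\tau)}(hD_y)\,e^{i\Psi/h}\tilde a = e^{i\Psi/h}\tilde a + \hinf. $$
Integrating over $\theta$ (a bounded range, contributing only a polynomial-in-$h^{-1}$ factor absorbed into $\hinf$ after taking $N$ large), and using the polynomial bound $\|U_\mathbf{q}\|=O(h^{-K})$ together with $|\mathcal{Q}|=O(h^{-K})$ for uniformity, yields $\mathds 1_{\Gamma_\mathbf{q}^+(h^\tau)}(hD_y) BU_\mathbf{q} = BU_\mathbf{q} + \hinf$ with constants uniform in $\mathbf{q}\in\mathcal{Q}$ (they depend only on $F$, the $A_q$, $B_q$, $B_q'$, $\kappa_q$, $\kappa_{\rho_0}$, and $C_0$).

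\textbf{The main obstacle.} The delicate point is making the decomposition into horizontal Lagrangian states rigorous and \emph{uniform} in $\mathbf{q}$: the propagation time $n$ can be as large as $\sim\frac1{\lambda_0}|\log h|$, well beyond the Ehrenfest time, so $U_\mathbf{q}$ is not a Fourier integral operator in the usual sense and one genuinely needs Proposition \ref{Prop_evolution_lagrangian_state} and the amplitude bounds $\|a_{\mathbf{q},\theta,N}\|_{C^l}\le C_{l,N}h^{-C_0\log B}$, which are only polynomial — so $N$ in that proposition must be chosen large relative to $C_0\log B$ and the number of clouds to absorb everything into $\hinf$. A secondary subtlety is identifying $\partial_y\Psi$ precisely with $\eta\circ\kappa$ restricted to $\mathcal{V}_\mathbf{q}^+$: one must check that pushing forward through the chart $\kappa$ does not distort the $\eta$-support beyond $\Gamma_\mathbf{q}^+$, which follows because $\kappa$ straightens the unstable leaves and the Lagrangian $\kappa(\mathcal{L}_{\mathbf{q},\theta})$ is by construction a graph over $y$ lying inside $\kappa(\mathcal{V}_\mathbf{q}^+)$, so its $\eta$-projection is contained in $\Gamma_\mathbf{q}^+$ by definition \eqref{def_Omega_+}.
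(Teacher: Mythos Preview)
Your overall strategy matches the paper's: reduce to plane-wave inputs via the Fourier decomposition, apply Proposition~\ref{Prop_evolution_lagrangian_state} to get a Lagrangian state associated with $\mathcal{C}_{\mathbf{q},\theta}$, push it through the FIO $S=BMA_aB'_a$ into the adapted chart, and then show the resulting state $be^{i\Psi/h}$ is killed by $\mathds{1}_{\R\setminus\Gamma_\mathbf{q}^+(h^\tau)}(hD_y)$. The gap is in this last step.

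You write that ``$\Gamma_\mathbf{q}^+(h^\tau)$ is a $Ch^\tau$-neighborhood of $\Gamma_\mathbf{q}^+$ with $\tau<1/2$'' and then invoke standard nonstationary phase after regularizing the indicator to a symbol in $S_\tau$. But in this paper $\tau>3/4$ always: by~(\ref{tau}), $\tau=1-\tfrac{\lambda_0}{\lambda_1}\tfrac{1-\mathfrak{b}}{4}\ge 1-\tfrac14$. So $S_\tau$ is \emph{not} a good symbol class, and the naive nonstationary-phase bound fails: each integration by parts produces a term $h\,\Psi''/(\Psi'-\eta)^2$, and with $|\Psi'-\eta|\gtrsim h^\tau$ and $\Psi''=O(1)$ this costs $h^{1-2\tau}$, a \emph{negative} power of $h$. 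Knowing only that $\Psi'(y)\in\Gamma_\mathbf{q}^+$ is not enough to get $O(h^\infty)$ localization at scale $h^\tau$ when $\tau>1/2$.

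The paper supplies two ingredients you are missing. First, a genuinely geometric fact: $\operatorname{diam}(\Gamma_\mathbf{q}^+)\le C h^\tau$, i.e.\ the momentum projection is not merely contained in $\Gamma_\mathbf{q}^+$ but is itself an interval of length $O(h^\tau)$. This is proved via Lemma~\ref{straigh_Wu}, which uses the full strength of the adapted chart (the condition $\partial_\zeta g(y,0)=1$ and the $C^{1,\beta}$ regularity) together with $\mathfrak{b}(1+\beta)=1$; it does \emph{not} follow just from $\mathcal{V}_\mathbf{q}^+\subset W_u(\rho_\mathbf{q})(Ch^\tau)$ and the definition of $\Gamma_\mathbf{q}^+$. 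Second, the ``fine Fourier localization'' Proposition~2.7 of \cite{NDJ19}, which takes as hypothesis precisely this small-diameter condition $\operatorname{diam}(\Omega_\Psi)\le C_0 h^\tau$ and outputs $\|\mathds{1}_{\R\setminus\Omega_\Psi(h^\tau)}(be^{i\Psi/h})\|=O(h^N)$; this proposition is what replaces the standard nonstationary-phase argument in the regime $\tau>1/2$.
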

 Using the polynomial bounds $| \mathcal{Q}| = O(h^{-C})$ and $||U_\mathbf{q}|| = O(h^{-C})$), we immediately deduce the 
 \begin{prop}\label{prop_micro_right}
\begin{equation}
\mathds{1}_{\Omega^+} (hD_y)B U_\mathcal{Q} = B U_\mathcal{Q} + \hinf_{L^2 \to L^2}
\end{equation}
 \end{prop}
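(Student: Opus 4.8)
The plan is to prove Lemma \ref{micro_U_q} first, since Proposition \ref{prop_micro_right} follows from it by summing over the polynomially-many words $\mathbf{q}\in\mathcal{Q}$ and observing that $\bigcup_{\mathbf{q}\in\mathcal{Q}}\Gamma^+_\mathbf{q}(h^\tau)\subset\Omega^+$ (indeed $\Omega^+=\Gamma^+(h^\tau)$ and each $\Gamma^+_\mathbf{q}\subset\Gamma^+$), together with the fact that $\mathds{1}_{\Omega^+}(hD_y)$ acts as the identity on each piece that is already microlocalized in $\Gamma^+_\mathbf{q}(h^\tau)$ modulo $O(h^\infty)$. The uniformity of the remainders in $\mathbf{q}$ is what makes the passage from the lemma to the proposition legitimate given the bounds $|\mathcal{Q}|=O(h^{-C})$ and $\|U_\mathbf{q}\|=O(h^{-C})$.

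To prove Lemma \ref{micro_U_q} I would use the Lagrangian-state description from Proposition \ref{Prop_evolution_lagrangian_state}. The idea is to test $BU_\mathbf{q}$ against a family of Lagrangian states $B'_{q_0}e^{i\theta\cdot/h}$ which microlocally span $L^2$ near $\mathcal{V}_{q_0}$ (one decomposes an arbitrary state microlocalized near $\mathcal{V}_{q_0}$ as a superposition $\int c(\theta)\,B'_{q_0}e^{i\theta\cdot/h}\,d\theta$ over $|\theta|\le C_0\varepsilon_0$, using the quantization $B_{q_0}, B'_{q_0}$ of $\kappa_{q_0}$). By Proposition \ref{Prop_evolution_lagrangian_state}, $U_\mathbf{q}B'_{q_0}e^{i\theta\cdot/h}=MA_{q_{n-1}}B'_{q_{n-1}}\big(e^{i\psi_{\mathbf{q},\theta}/h}a_{\mathbf{q},\theta,N}\big)+O(h^N)$, with $a_{\mathbf{q},\theta,N}$ supported in $I_{\mathbf{q},\theta}$ and $\psi_{\mathbf{q},\theta}'=G_{\mathbf{q},\theta}$. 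Applying $B$ (which quantizes $\kappa\circ\kappa_{q_{n-1}}^{-1}$ near the relevant set), the output is, up to $O(h^N)$, a Lagrangian state in the adapted chart $\kappa$ associated with the evolved Lagrangian leaf $\kappa(\mathcal{L}_{\mathbf{q},\theta})$, whose $\eta$-component is by Proposition \ref{lagrangian_leaves_evolved} the graph of a function $g(\cdot,\zeta)$ with $\|\partial_y g(\cdot,\zeta)\|_\infty=O(h^\tau)$ — wait, more precisely the leaf is nearly horizontal with $\eta$-values confined to $\Gamma^+_\mathbf{q}$. Hence the frequency content in the $y$-variable, i.e. $hD_y$ applied to this Lagrangian state, is concentrated in an $O(h^\tau)$-neighborhood of $\Gamma^+_\mathbf{q}$: a non-stationary phase / wavefront-set argument shows $\mathds{1}_{\mathbb{R}\setminus\Gamma^+_\mathbf{q}(h^\tau)}(hD_y)$ kills it modulo $O(h^\infty)$. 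Running $N$ to infinity and using that the family of test states is a microlocal partition concludes the lemma.

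The main obstacle I expect is controlling the frequency localization of the evolved Lagrangian state \emph{with the correct scale} $h^\tau$, uniformly over the exponentially-many-in-$n$ words $\mathbf{q}$ and over $\theta$. The subtlety is that $U_\mathbf{q}$ propagates beyond the Ehrenfest time, so $U_\mathbf{q}$ is not a Fourier integral operator and one cannot just invoke $\mathrm{WF}'$-calculus; instead one must use the explicit iterated stationary-phase expansion (Lemma \ref{iteration_lemma}) and the control of the amplitudes $a_{i,j}$ (Lemma \ref{Lemma_bound_symbol}) to see that, although the amplitude can grow like $h^{-C_0\log B}$, the phase $\psi_{\mathbf{q},\theta}$ has derivative lying in $\Gamma^+_\mathbf{q}$ (because the image of the leaf under $F^{n-1}$ intersected with $\mathcal{V}_\mathbf{q}^+$ has $\eta$-coordinates exactly in $\Gamma^+_\mathbf{q}$, which has width $\sim h^\tau$). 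One then integrates by parts in the $\theta$-superposition, or directly estimates $\langle \mathds{1}_{\mathbb{R}\setminus\Gamma^+_\mathbf{q}(h^\tau)}(hD_y)BU_\mathbf{q}u,v\rangle$ by writing $\mathds{1}_{\mathbb{R}\setminus\Gamma^+_\mathbf{q}(h^\tau)}(hD_y)=\mathcal{F}_h^{-1}\mathds{1}_{\mathbb{R}\setminus\Gamma^+_\mathbf{q}(h^\tau)}\mathcal{F}_h$ and performing a non-stationary phase estimate in the variable dual to $\eta$: since the phase of $\mathcal{F}_h$ applied to $e^{i\psi_{\mathbf{q},\theta}/h}a_{\mathbf{q},\theta,N}$ has gradient $\psi'_{\mathbf{q},\theta}(y)-\eta$ which is $\ge h^\tau/2$ in modulus on the region $\eta\notin\Gamma^+_\mathbf{q}(h^\tau)$, each integration by parts gains a factor $h^{1-\tau}\ll1$, and since $\tau<1$ enough iterations beat the polynomial amplitude growth. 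Checking that all constants in this argument are genuinely uniform in $\mathbf{q}$ (using the uniformity clauses in Propositions \ref{lagrangian_leaves_evolved} and \ref{Prop_evolution_lagrangian_state}) is the bookkeeping-heavy part, but it is routine given those statements.
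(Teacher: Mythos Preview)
Your approach is essentially the same as the paper's: reduce to Lemma \ref{micro_U_q}, decompose an arbitrary state via $B'_{q_0}e^{i\theta\cdot/h}$, apply Proposition \ref{Prop_evolution_lagrangian_state}, push through the FIO $S=BMA_aB'_a$ (associated with $\kappa\circ F\circ\kappa_a^{-1}$, not $\kappa\circ\kappa_{q_{n-1}}^{-1}$ as you wrote) to get a Lagrangian state $be^{i\Psi/h}$ in the adapted chart $\kappa=\kappa_{\rho_0}$, and then kill it with $\mathds{1}_{\R\setminus\Gamma^+_\mathbf{q}(h^\tau)}(hD_y)$ by non-stationary phase. The paper packages that last step as a black-box (Proposition~2.7 of \cite{NDJ19}), but your integration-by-parts sketch is the same mechanism.

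There is, however, one nontrivial step you assert without justification: that $\Gamma^+_\mathbf{q}=\eta\big(\kappa(\mathcal{V}_\mathbf{q}^+)\big)$ has diameter $O(h^\tau)$. It is true that $\mathcal{V}_\mathbf{q}^+$ lies in a $Ch^\tau$-tube around an unstable leaf $W_u(\rho_\mathbf{q})$ (Lemma \ref{Localization_V_q}), but in a \emph{generic} symplectic chart centered at $\rho_0$ the image of $W_u(\rho_\mathbf{q})$ would itself have $\eta$-variation of order $d(\rho_\mathbf{q},W_u(\rho_0))\sim h^{\mathfrak{b}}$, which is much larger than $h^\tau$. What makes the bound work is precisely the straightening property $\partial_\zeta g(y,0)=1$ of the adapted chart in Lemma \ref{Existence_of_adapated_charts}: it forces the $\eta$-variation of $\kappa(W_u(\rho_\mathbf{q}))$ to be $O\big(\zeta(\rho_\mathbf{q})^{1+\beta}\big)=O(h^{\mathfrak{b}(1+\beta)})=O(h)$ (this is Lemma \ref{straigh_Wu}), hence negligible compared to the $h^\tau$ tube width. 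Without this step your non-stationary phase argument would only localize at scale $h^{\mathfrak{b}}$, which is not sharp enough for the fractal uncertainty principle later. So the proof goes through, but you should make explicit that the diameter bound on $\Gamma^+_\mathbf{q}$ is where the $C^{1+\beta}$ regularity of the unstable lamination and the careful construction of the adapted chart are actually used.
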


 \subsubsection{Proof of Lemma \ref{micro_U_q}}
We fix a word $\mathbf{q}= q_0 \dots q_{n-2} a \in \mathcal{Q}$. Since $\WF(A_{q_0})$ is compact, we can find $\chi \in \cinfc(\R)$ such that 
$$ A_{q_0} = A_{q_0} B^\prime_{q_0} \chi(hD_y) B_{q_0} + \hinf$$
Since there is a finite number of symbols in $\mathcal{A}$, we can choose one single $\chi$ for all the possible symbols $q_0$.
We are hence reduced to prove that 
\begin{equation}
\underbrace{\mathds{1}_{\R \setminus  \Gamma^+_\mathbf{q}(h^\tau) } (hD_y) B U_\mathbf{q} B^\prime_{q_0}}_{T} \chi(hD_y) = \hinf_{L^2 \to L^2}
\end{equation} 
If $u \in L^2(\R)$, writing 
$$ \left( \chi(hD_y) u \right)(y) = \frac{1}{(2 \pi h)^{1/2} } \int_\R \chi(\theta) \mathcal{F}_h u (\theta) e^{i \frac{\theta y}{h}} d\theta$$
we have
$$ T \big( \chi(hD_y) u \big) = \frac{1}{(2 \pi h)^{1/2} } \int_\R \chi(\theta) \mathcal{F}_h u (\theta)  \left( Te^{i \frac{\theta \cdot}{h} } \right)d\theta$$
Hence, 
\begin{align*}
 ||T \big( \chi(hD_y) u \big) ||_{L^2} &\leq \frac{1}{(2 \pi h)^{1/2} } \int_\R |\chi(\theta) \mathcal{F}_h u (\theta)|  \left|\left|Te^{i \frac{\theta \cdot}{h}}  \right|\right|_{L^2} d\theta \\
 & \leq \frac{1}{(2 \pi h)^{1/2}  } \int_\R |\chi(\theta) \mathcal{F}_h u (\theta)| \sup_{\theta \in \supp \chi} \left|\left|Te^{i \frac{\theta \cdot}{h}}  \right|\right|_{L^2}\\
 &\leq \frac{C_\chi}{h^{1/2}} ||\mathcal{F}_h u ||_{L^2} \sup_{\theta \in \supp \chi} \left|\left|Te^{i \frac{\theta \cdot}{h}}  \right|\right|_{L^2}\\ 
 & \leq \frac{C_\chi}{h^{1/2}} ||u||_{L^2} \sup_{\theta \in \supp \chi} \left|\left|Te^{i \frac{\theta \cdot}{h}}  \right|\right|_{L^2} \\
\end{align*}
As a consequence, we are lead to estimate $\sup_{\theta \in \supp \chi} \left|\left|Te^{i \frac{\theta \cdot}{h}}  \right|\right|_{L^2} $. We fix $\theta \in \supp \chi$. 
Writing that $\supp \chi \subset [-C_0, C_0]$ and recalling $|\mathbf{q}| = n \leq C_0 |\log h|$ for some global $C_0$,  we are in the framework of Proposition \ref{Prop_evolution_lagrangian_state}. 

We fix $N \in \N$ and we aim at proving that $T e^{i \frac{\theta \cdot}{h}} = O(h^N)$. By Proposition \ref{Prop_evolution_lagrangian_state}, there exists $a _{ \mathbf{q}, N, \theta} \in \cinfc(I_{\mathbf{q}, \theta} ) $ such that 

$$U_\mathbf{q} B^\prime_{q_0}  \left( e^{i \frac{\theta \cdot}{h}}  \right) = MA_aB_a^\prime \left( a_{\mathbf{q}, N, \theta} e^{i\frac{\Phi_{\mathbf{q}, \theta}}{h}}\right)  + O(h^N)$$ 
Set $S \coloneqq B MA_aB_a^\prime$. $S$ is a Fourier integral operator associated with $s \coloneqq \kappa \circ F \circ \kappa_a^{-1}$. Recall that the definitions and the description of the Lagrangian 
\begin{align*}
 C_{\mathbf{q}, \theta} &= \kappa_a \left( F^{-1}\left(\mathcal{V}_\mathbf{q}^+ \right) \cap F^{n-1} \left(\mathcal{L}_{q_0, \theta} \right)  \right) \\
 &= \{ (y, \Phi^\prime_{\mathbf{q} , \theta} (y) ) , y \in I_{\mathbf{q}, \theta} \} \\
\end{align*}
with 
$\Phi_{\mathbf{q},\theta} \in \cinf(I_{\mathbf{q}, \theta}) \; ; \;  ||\Phi_{\mathbf{q},\theta}||_{C^1} \leq C\varepsilon_0 \; ; \; ||\Phi_{\mathbf{q},\theta}||_{C^l} \leq C_l  $. \\
Assuming that $\varepsilon_0$ is small enough, we can assume that : 
\begin{itemize}
\item $s$ is well defined on $B_a(0, C_1 \varepsilon_0)$ and satisfies the conclusion of Lemma \ref{lemma_propagation_lagrangian_leaves}. As a consequence, the Lagrangian line 
$$s ( \mathcal{C}_{\mathbf{q}, \theta} ) = \kappa \left( \mathcal{V}_\mathbf{q}^+\right) \cap \kappa \circ F^n \left( \mathcal{L}_{q_0, \theta} \right) $$ can be written $\{ (y, \Psi^\prime(y) ) , y \in I \}$ for some open $I \subset \R$ and some function $\Psi \in \cinf(I)$ satisfying 
$$ ||\Psi||_{C^1} \leq C\varepsilon_0 \; ; \; ||\Psi||_{C^l} \leq C_l$$
with global constants $C$ and $C_l$. 
\item $S$ has the form (\ref{fio_in_nice_form}) with a phase function and a symbol having $C^l$ norms bounded by global constants (depending on $l$). 
\end{itemize}
Hence, we can apply Lemma \ref{iteration_lemma} to see that there exists $b \in \cinfc(I)$ such that 

\begin{equation*}
S \left( a_{\mathbf{q}, N ,\theta} e^{i  \frac{\Phi_{\mathbf{q},\theta}}{h}} \right) = b e^{i \frac{\Psi}{h}} + O(h^N)_{L^2}
\end{equation*}
 $b$ satisfies the same type of bounds as $a_{\mathbf{q}, N, \theta}$, namely : 
$$ ||b||_{C^l} \leq C_{l,N} h^{-C_0 \log B}$$
 Moreover, since  $d( \supp a_{\mathbf{q}, N, \theta},\R \setminus  I_{\mathbf{q}, \theta} ) \geq \delta$, there exists $\delta^\prime >0$ such that $d(\supp b, \R \setminus I) \geq \delta^\prime$. The constants $C_{l,N}$ and $\delta^\prime$ are global constants. 
Since $N$ is arbitrary, to conclude the proof of Lemma \ref{micro_U_q}, it  remains to show that 
\begin{equation}\label{micro_U_q_reduced}
\mathds{1}_{\R \setminus \Gamma_\mathbf{q}^+(h^\tau) } (hD_y) \left(b e^{i \frac{\Psi}{h}} \right) = O(h^N)
\end{equation}

To do so, we make use of the fine Fourier localization statement from Proposition 2.7 in \cite{NDJ19}. We state it for convenience but refer the reader to the quoted paper for the proof. 

\begin{prop}
Let $U \subset \R^n$ open, $K \subset U$ compact, $\Phi \in \cinf(U)$ and $a \in \cinfc(U)$ with $\supp a \subset K$. Assume that there is a constant $C_0$ and constants $C_N, N \in \N^*$ such that : 
\begin{align}
&\text{vol}(K) \leq C_0 \label{point1}\\
&d(K, \R^n \setminus U) \geq C_0^{-1} \label{point2}\\
&\max_{ 0 < |\alpha| \leq N } \sup_{U} |\partial^\alpha \Phi | \leq C_N ; N \geq 1 \label{point3} \\
&\max_{ 0 \leq |\alpha| \leq N } \sup_{U} |\partial^\alpha a | \leq C_N ; N \geq 1  \label{point4}\\
\end{align}
Finally, assume that the projection of the Lagrangian $\{ (x, \Phi^\prime(x)), x \in U \}$ on the momentum variable has a diameter of order $h^\tau$, namely : 
\begin{equation}\label{point5}
\text{diam}( \Omega_\Phi ) \leq C_0 h^\tau  \text{ where } \Omega_\Phi = \{ \Phi^\prime(x), x \in U \}
\end{equation}
Define the Lagrangian state 
\begin{equation}
u(x) = a(x) e^{i \frac{\Phi(x)}{h}} \in \cinfc(U) \subset \cinfc(\R^n)
\end{equation}
Then, for every $N \geq 1$, there exists $C^\prime_N$ such that 
\begin{equation}
|| \mathds{1}_{ \R^n \setminus \Omega_\Phi(h^\tau)} u || \leq C^\prime_N h^N 
\end{equation}
$C^\prime_N$ depends on $\tau, n, N, C_0, C_{N^\prime}$ for some $N^\prime(n,N,\tau)$. 
\end{prop}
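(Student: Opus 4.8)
The plan is to reduce the statement to a non-stationary phase argument after a suitable change of variables. First I would pass to the $h$-Fourier transform: writing $\mathcal{F}_h u(\xi) = (2\pi h)^{-n/2} \int_{\R^n} e^{-i x\cdot\xi/h} a(x) e^{i\Phi(x)/h} dx$, the quantity $\|\mathds{1}_{\R^n\setminus\Omega_\Phi(h^\tau)} u\|_{L^2}$ is controlled, via Plancherel, by $\|\mathds{1}_{\R^n\setminus\Omega_\Phi(h^\tau)}\mathcal{F}_h u\|_{L^2}$, so it suffices to show that $\mathcal{F}_h u(\xi) = O(h^\infty)$ with uniform constants for $\xi$ at distance at least $h^\tau$ from $\Omega_\Phi = \{\Phi'(x), x\in U\}$. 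Fix such a $\xi$. The phase of the oscillatory integral defining $\mathcal{F}_h u(\xi)$ is $x\mapsto \Phi(x) - x\cdot\xi$, with gradient $\Phi'(x) - \xi$; since $\xi\notin\Omega_\Phi(h^\tau)$, on the support of $a$ we have $|\Phi'(x)-\xi| \geq h^\tau$, so the phase has no stationary point and the gradient is bounded below by $h^\tau$ there.

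The key step is then a quantitative non-stationary phase estimate adapted to the fact that the lower bound $h^\tau$ degenerates with $h$. I would use the standard device of the differential operator $L = \frac{h}{i}\,\frac{(\Phi'(x)-\xi)\cdot\nabla_x}{|\Phi'(x)-\xi|^2}$, which satisfies $L\big(e^{i(\Phi(x)-x\cdot\xi)/h}\big) = e^{i(\Phi(x)-x\cdot\xi)/h}$, and integrate by parts $N'$ times, where $N'=N'(n,N,\tau)$ is chosen large enough. Each application of the transpose ${}^tL$ produces a factor $h$ and, by the quotient rule, at worst a factor $|\Phi'(x)-\xi|^{-1} \leq h^{-\tau}$ together with derivatives of $a$ and of $\Phi$ (up to order $N'+1$), all of which are bounded by the constants $C_{N'}$ in hypotheses (\ref{point3}) and (\ref{point4}). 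The volume bound (\ref{point1}) and the compact support control (\ref{point2}) guarantee that the remaining integral is taken over a set of bounded measure and that all integrations by parts stay inside $U$ (no boundary terms appear). After $N'$ steps one obtains a pointwise bound of the form $|\mathcal{F}_h u(\xi)| \leq C_{N'} h^{N'(1-\tau) - (\text{lower order})}$; since $\tau<1$, choosing $N'$ with $N'(1-\tau)$ exceeding $N$ plus the fixed loss yields $|\mathcal{F}_h u(\xi)| \leq C'_N h^N$ uniformly in $\xi\notin\Omega_\Phi(h^\tau)$.

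Finally I would integrate this pointwise bound over $\xi\in\R^n\setminus\Omega_\Phi(h^\tau)$: the amplitude $a$ has support of volume $\leq C_0$ and $\Phi'$ is Lipschitz on that support with constant $\leq C_0$ (hypothesis (\ref{point3}) with $|\alpha|=2$, together with the diameter bound (\ref{point5})), so $\mathcal{F}_h u$ is, up to $O(h^\infty)$, effectively supported in a slightly enlarged bounded region; combining the $O(h^N)$ pointwise bound with this bounded effective support gives $\|\mathds{1}_{\R^n\setminus\Omega_\Phi(h^\tau)}\mathcal{F}_h u\|_{L^2} \leq C'_N h^N$, hence the claim. The main obstacle is bookkeeping the $h$-dependence of the constants through the iterated integration by parts: one must verify carefully that each derivative falling on the factor $|\Phi'(x)-\xi|^{-2}$ costs only one extra power of $h^{-\tau}$ (not more), so that the net gain per step is $h^{1-\tau}>h^0$; this is where the precise form of the operator $L$ and the hypotheses (\ref{point3})–(\ref{point4}) on \emph{all} derivatives of $\Phi$ and $a$ are essential.
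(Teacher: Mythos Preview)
The paper does not give its own proof of this proposition; it quotes it verbatim from \cite{NDJ19} (Proposition~2.7) and refers the reader there. So your plan has to stand on its own.

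The overall strategy---pass to $\mathcal{F}_h u$ and run non-stationary phase on the oscillatory integral with phase $\Phi(x)-x\cdot\xi$---is correct, but the accounting you sketch does not close. You claim the net gain per application of ${}^tL$ is $h^{1-\tau}$. In fact ${}^tL f=-\tfrac{h}{i}\,\mathrm{div}\big(\tfrac{\Phi'-\xi}{|\Phi'-\xi|^{2}}f\big)$ contains the zeroth-order term $-\tfrac{h}{i}\,\big(\mathrm{div}\tfrac{\Phi'-\xi}{|\Phi'-\xi|^{2}}\big)f$, and that divergence is of size $|\Phi''|\cdot|\Phi'-\xi|^{-2}$. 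With only the a~priori bound $|\Phi''|\le C_2$ from (\ref{point3}), this is $O(h^{1-2\tau})$, and after $N'$ iterations the worst term is $O(h^{N'(1-2\tau)})$. In the paper $\tau$ is taken close to $1$ (see (\ref{tau})), so $1-2\tau<0$ and your iteration gives no decay at all. Your own last paragraph actually says each derivative hitting the denominator costs one extra $h^{-\tau}$; combined with the intrinsic $h\cdot|\Phi'-\xi|^{-1}\le h^{1-\tau}$ this yields $h^{1-2\tau}$ per step, not $h^{1-\tau}$.

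The missing step is to exploit hypothesis (\ref{point5}) \emph{before} integrating by parts: together with (\ref{point2}) and (\ref{point3}) it forces $|\partial^\alpha\Phi'|=O(h^{\tau-\varepsilon})$ on $K$ for every $|\alpha|\ge 1$ and every $\varepsilon>0$. Indeed, for $x\in K$ and a unit vector $e$ the map $t\mapsto\Phi'(x+te)$ has range of diameter $\le C_0h^\tau$ on $|t|<C_0^{-1}$ while its first $M$ derivatives are bounded by $C_{M+1}$; a Taylor/Markov interpolation on $|t|\le h^{\tau/(M+1)}$ gives $|\partial_e^{\,j}\Phi'(x)|\le C\,h^{\tau(1-j/(M+1))}$, and taking $M$ large yields the claim. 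With $|\Phi''|=O(h^{\tau-\varepsilon})$ the divergence term becomes $O(h^{1-\tau-\varepsilon})$, each derivative landing on a denominator now costs only $h^{-\varepsilon}$, and $N'$ iterations give $O(h^{N'(1-\tau-\varepsilon)})$; choosing $\varepsilon<1-\tau$ and $N'$ large then produces the desired $O(h^N)$. This interpolation is exactly where the dependence of $C'_N$ on $C_{N'}$ for some large $N'=N'(n,N,\tau)$ comes from, as the statement anticipates.
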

\vspace{0.5cm}

When $U = I$, $K = \supp b$, $a = h^{C_0 \log B} b$ , $\Phi = \Psi$, the assumptions  (\ref{point1}) to (\ref{point4}) are satisfied for some global constants $C_0, C_N$. In this case, 
$$\Omega_\Psi = \{ \Psi^\prime(y), y \in I \} = \eta \left(  \kappa \left( \mathcal{V}_\mathbf{q}^+\right) \cap \kappa \circ F^n \left( \mathcal{L}_{q_0, \theta} \right) \right) $$
Since $\Omega_\Psi  \subset  \Gamma_\mathbf{q}^+$, to prove (\ref{micro_U_q_reduced}), it is enough to prove it with $ \Gamma_\mathbf{q}^+$ replaced by $\Omega_\Psi$ and to apply the last proposition, it remains to check that the last point (\ref{point5}) is satisfied. Since who can do more, can do less, we will show that 
$$ \text{diam} \left( \Gamma_\mathbf{q}^+\right) \leq C_0 h^\tau$$
 This is where the strong assumption on the adapted charts will play a role. To insist on this role, we state the following lemma :

\begin{lem}\label{straigh_Wu}
Let $C_0 >0$. Assume that $\rho_1 \in \mathcal{T} \cap U_{\rho_0}$ satisfies $d(\rho_1, W_u(\rho_0)) \leq C_0 h^\mathfrak{b}$. If $\rho_2 \in W_u(\rho_1)$, then for some global constant $C>0$, 
\begin{equation}
| \eta ( \kappa(\rho_1) ) - \eta (\kappa(\rho_2) )| \leq C C_0^{1+\beta}  h
\end{equation}
\end{lem}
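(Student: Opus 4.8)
The statement concerns a point $\rho_1 \in \mathcal{T}$ lying within $C_0 h^{\mathfrak{b}}$ of $W_u(\rho_0)$, and a point $\rho_2 \in W_u(\rho_1)$; we must bound the difference of the $\eta$-coordinates in the adapted chart $\kappa = \kappa_{\rho_0}$ by $C C_0^{1+\beta} h$. The key is condition (3) of Lemma \ref{Existence_of_adapated_charts}: the unstable manifolds $W_u(\rho)$ are described in the chart as graphs $\{(y, g(y,\zeta(\rho)))\}$ with $g$ of class $C^{1+\beta}$, satisfying $g(y,0)=0$, $g(0,\zeta)=\zeta$, and crucially $\partial_\zeta g(y,0) = 1$. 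This last condition means that near the horizontal line $\{\eta=0\}$ (which is $\kappa(W_u(\rho_0))$), the function $g(y,\cdot)$ differs from the identity only to order $1+\beta$ in $\zeta$. First I would write $\kappa(\rho_1) = (y_1, \eta_1)$ and $\kappa(\rho_2) = (y_2, \eta_2)$. Since $\rho_2 \in W_u(\rho_1)$, both points lie on the same leaf, so $\zeta(\rho_1) = \zeta(\rho_2) =: \zeta_0$, and $\eta_i = g(y_i, \zeta_0)$ for $i=1,2$.

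Next I would estimate $\zeta_0$. Because $d(\rho_1, W_u(\rho_0)) \leq C_0 h^{\mathfrak{b}}$ and $\kappa$ is bi-Lipschitz with uniform constants (uniform $C^N$ bounds from Lemma \ref{Existence_of_adapated_charts}), the point $\kappa(\rho_1)$ lies within $C C_0 h^{\mathfrak{b}}$ of the curve $\kappa(W_u(\rho_0)) = \{\eta=0\}$, hence $|\eta_1| = |g(y_1,\zeta_0)| \leq C C_0 h^{\mathfrak{b}}$. Using $g(0,\zeta) = \zeta$ together with the uniform $C^{1}$ bound on $g$, or more directly using that $\zeta$ is $C^{1+\beta}$ and constant on leaves with $\zeta(W_u(\rho_0))$-value $0$, one gets $|\zeta_0| \leq C C_0 h^{\mathfrak{b}}$ as well (the map $y \mapsto g(y,\zeta_0)$ stays within a uniform constant of $\zeta_0$, since $\partial_y g$ is bounded and $g(0,\zeta_0) = \zeta_0$). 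Then I would Taylor expand $g(y,\cdot)$ around $\zeta=0$ at fixed $y$: since $g(y,0)=0$ and $\partial_\zeta g(y,0)=1$, and $\partial_\zeta g$ is $\beta$-Hölder in $\zeta$ uniformly in $y$, we have
\begin{equation}
g(y,\zeta) = \zeta + R(y,\zeta), \qquad |R(y,\zeta_0)| \leq C |\zeta_0|^{1+\beta}.
\end{equation}
Therefore $\eta_i = \zeta_0 + R(y_i,\zeta_0)$, and
\begin{equation}
|\eta_1 - \eta_2| = |R(y_1,\zeta_0) - R(y_2,\zeta_0)| \leq 2 C |\zeta_0|^{1+\beta} \leq C' (C_0 h^{\mathfrak{b}})^{1+\beta} = C' C_0^{1+\beta} h,
\end{equation}
using $\mathfrak{b}(1+\beta) = 1$ from (\ref{alpha}).

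The only mild subtlety is justifying $|\zeta_0| \leq C C_0 h^{\mathfrak{b}}$ rigorously — i.e. that proximity of $\rho_1$ to $W_u(\rho_0)$ in the ambient metric translates into smallness of the leaf-label $\zeta_0$. This follows from the fourth bullet in the proof of Lemma \ref{Local_hpyerbolic_2} (the estimate $|s^k| \sim d(\hat\rho, W_u(\rho_k))$ type relations) adapted to the present chart, or simply from the fact that in the straightened chart $W_u(\rho_0) = \{\eta = 0\}$ and the leaves $\{\eta = g(\cdot,\zeta)\}$ for small $\zeta$ are at distance comparable to $|\zeta|$ from it (since $\partial_\zeta g(y,0)=1$ and $g$ is $C^1$). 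I expect this bookkeeping of uniform constants to be the main (though routine) obstacle; the heart of the argument is the gain of the exponent $1+\beta$ coming directly from the $C^{1+\beta}$ regularity and the normalization $\partial_\zeta g(y,0)=1$ built into the adapted chart.
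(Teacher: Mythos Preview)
Your proposal is correct and follows essentially the same route as the paper. Both arguments use that $\eta(\kappa(\rho_i)) = g(y_i,\zeta_0)$ with $\zeta_0 = \zeta(\rho_1)$, bound $|\zeta_0| \leq C C_0 h^{\mathfrak{b}}$ via the Lipschitz property of $\zeta$ and $\zeta|_{W_u(\rho_0)}=0$, and then exploit $g(y,0)=0$, $\partial_\zeta g(y,0)=1$ together with the $\beta$-H\"older regularity of $\partial_\zeta g$ to obtain $|g(y,\zeta_0)-\zeta_0| \leq C|\zeta_0|^{1+\beta}$; the paper writes this last step as the integral remainder $\int_0^{\zeta_0}(\partial_\zeta g(y,\zeta)-\partial_\zeta g(y,0))\,d\zeta$, while you phrase it as a Taylor expansion with H\"older remainder, but the content is identical.
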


\begin{proof}
Recall that the chart $(\kappa, U_{\rho_0})$ is the one centered at $\rho_0$, given by Lemma \ref{Existence_of_adapated_charts}. In this chart, $\kappa(W_u(\rho_1))$ is almost horizontal : we have 
$$ \kappa(W_u(\rho_1) ) = \{ y , g(y, \zeta(\rho_1)) , y \in \Omega \}$$ 
where $\Omega$ is some open bounded set of $\R$, with $g$ and $\zeta$ satisfying the properties of Lemma \ref{Existence_of_adapated_charts}. Hence, to prove the lemma, it is enough to estimate $|g(y, \zeta(\rho_1)) - g(0, \zeta(\rho_1)) |, y \in \Omega$. 
Since $\zeta(\rho_0) =0$ and $\zeta$ is Lipschitz, $|\zeta(\rho_1) | \leq C_0 h^\mathfrak{b}$. Indeed, if $\rho_0^\prime \in W_u(\rho_0)$ satisfies $d(\rho_0^\prime, \rho_1) \leq 2 C_0 h^\mathfrak{b}$, $$|\zeta(\rho_1)| = |\zeta(\rho_1) - \zeta(\rho_0^\prime)|\leq C d(\rho_1, \rho_0^\prime) \leq C C_0 h^\mathfrak{b} $$
Then, we have
\begin{align*}
|g(y, \zeta(\rho_1)) - g(0, \zeta(\rho_1))| &= |g(y, \zeta(\rho_1)) - g(y, 0) - \partial_\zeta g(y, 0) \zeta(\rho_1) | \\
&= \left| \int_0^{\zeta(\rho_1)}  \left(\partial_\zeta g(y, \zeta)-\partial_\zeta g(y, 0) \right) d \zeta \right |\\
& \leq  \left| \int_0^{\zeta(\rho_1)} C \zeta^{\beta} d \zeta \right| \\
& \leq C \zeta(\rho_1)^{1 + \beta} \leq C  C_0^{1 + \beta} h^{\mathfrak{b}(1+\beta)}
\end{align*}
In the first equality, we've used the facts that $g(0, \zeta) = \zeta$, $\partial_\zeta g(y,0) = 1$ and $g(y,0)=0$.
This concludes the proof since, by definition (see (\ref{alpha})), $\mathfrak{b}(1+ \beta)= 1$. 
\end{proof}

\begin{rem}
This lemma explains our definition of $\mathfrak{b}$. 
\end{rem}

From this lemma, we can deduce (\ref{point5}). Indeed, recall that there exists $\rho_\mathbf{q} \in \mathcal{T}$ such that $\mathcal{V}_\mathbf{q}^+ \subset W_u(\rho_\mathbf{q}) (Ch^\tau)$. If $\rho_1, \rho_2 \in \mathcal{V}_\mathbf{q}^+$, there exists $\rho^\prime_1, \rho^\prime_2 \in W_u(\rho_\mathbf{q})$ such that 
$$ d(\rho_i, \rho_i^\prime) \leq C h^\tau \; ;\; i=1,2$$
Hence, one can estimate 

\begin{equation*}
|\eta(\kappa(\rho_1) ) - \eta(\kappa(\rho_2) ) | \leq  
\underbrace{ |\eta(\kappa(\rho_1) ) - \eta(\kappa(\rho_1^\prime) ) |}_{\leq Ch^\tau}
+ \underbrace{ |\eta(\kappa(\rho_1^\prime) ) - \eta(\kappa(\rho_2^\prime) ) |}_{\leq C h}+
\underbrace{ |\eta(\kappa(\rho_2) ) - \eta(\kappa(\rho_2^\prime) ) |}_{\leq Ch^\tau}
\end{equation*}
The inequality in the middle is a consequence of the previous lemma. Indeed,  $\rho_1^\prime, \rho_2^\prime \in W_u(\rho_1^\prime)$ where (recall that $\tau > \mathfrak{b}$) 
$$d(\rho_1^\prime, W_u(\rho_0)) \leq d(\rho_1, \rho_1^\prime) + d(\rho_1, W_u(\rho_0)) \leq Ch^\tau + Ch^\mathfrak{b} \leq 2Ch^\mathfrak{b}$$

\subsection{Reduction to a fractal uncertainty principle}
We go on the work started in the last subsection and we keep the same notations. In virtue of Proposition \ref{prop_micro_left} and Proposition \ref{prop_micro_right}, we can write 

\begin{equation}
\mathfrak{M}^{N_0} U_\mathcal{Q} = \mathfrak{M}^{N_0} B^\prime B \op(\chi_h) \Xi_a B^\prime \mathds{1}_{\Omega^+} (hD_y)B  U_\mathcal{Q} + \hinf_{L^2 \to L^2}
\end{equation}
where \begin{itemize}
\item $\chi_h \in S_{\delta_2}^{comp}$, $\chi_h \equiv 1$ on $\mathcal{T}_-^{loc}(2C_2h^{\delta_2})$ and $\supp \chi_h \in \mathcal{T}_-^{loc}(4C_2h^{\delta_2})$ (see Proposition \ref{prop_micro_left} and before); 
\item $\Xi_a = \op(\tilde{\chi}_a)$ where $\tilde{\chi}_a \in \cinfc(U_0)$ is a cut-off function such that $\tilde{\chi}_a \equiv 1$ on $F(\supp \chi_a)$ and $\supp \tilde{\chi}_a \subset \mathcal{V}_a^+$ (see the beginning of subsection \ref{Micro_of_U_Q}) ;  
\item $\Omega^+ = \eta \big( \kappa \left( \mathcal{V}_\mathcal{Q}^+ \right) \big)\big(h^\tau \big)$ (see \ref{def_Omega_+} and Proposition \ref{prop_micro_right}).
\end{itemize}
In $V_{\rho_0}$, $U_\mathcal{Q}$ is microlocalized in a region  $\{ |\eta| \leq Ch^\mathfrak{b}\}$. To work with symbols in usual symbol classes, we will rather consider a bigger region $\{ |\eta| \leq h^{\delta_0} \}$. For this purpose, let us denote 
\begin{equation}\label{def_Omega_-}
\Gamma^- = y \Big( \kappa\left( \mathcal{V}_a^+ \cap \mathcal{T}_-^{loc}(4C_2h^{\delta_2}) \right) \cap \{ |\eta| \leq h^{\delta_0} \} \Big) \; ; \; \Omega^- = \Gamma^- \left(h^{\delta_0} \right)
\end{equation}
Since $\mathcal{V}_\mathcal{Q}^+ \subset W_u(\rho_0)(Ch^\mathfrak{b})$, $\Omega_+ \subset [-C_0h^\mathfrak{b}, C_0 h^\mathfrak{b}] \subset [-h^{\delta_0}, h^{\delta_0}]$ for $h$ small enough. By Lemma \ref{lemma_construction_cut_off}, there exists $\chi_+(\eta) \coloneqq \chi_+ (\eta ; h) \in \cinfc(\R)$ such that : 
\begin{itemize}[nosep]
\item $\chi_+ \equiv 1$ on $\Omega^+$;
\item  $ \supp \chi_+ \subset [-h^{\delta_0}, h^{\delta_0}]$ ; 
\item $\forall k \in \N$ and $\eta \in \R$, $|\chi_+^{(k)}(\eta)| \leq C_k h^{-\delta_0 k }$ for some global constants $C_k$. 
\end{itemize}
$\chi_+$ satisfies : 
$$\mathds{1}_{\Omega^+} (hD_y) = \chi_+(hD_y) \mathds{1}_{\Omega^+} (hD_y)$$
Let's now consider the following subset of $\Gamma^-$ : 
$$\widetilde{\Gamma}^- = y \Big( \kappa\left( \mathcal{V}_a^+ \cap \mathcal{T}_-^{loc}(4C_2h^{\delta_2}) \right) \cap \{ \eta \in \supp \chi_+ \} \Big)$$
The inclusion $\widetilde{\Gamma}^- \subset \Gamma^-$ comes from the support property of $\chi_+$. 

\begin{figure}[h]
\centering
\includegraphics[scale=0.5]{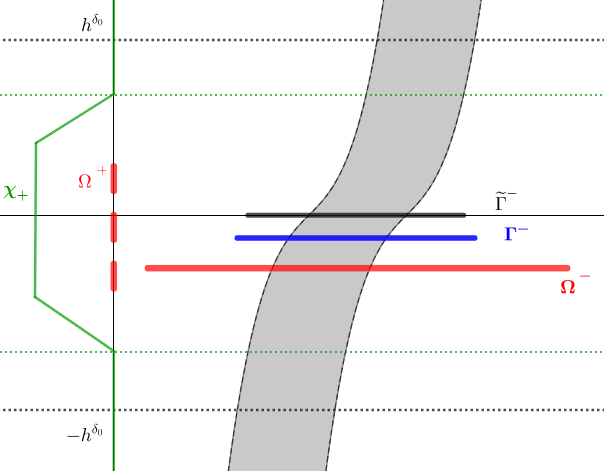}
\caption{The set $\Omega^+$ is represented on the $\eta$-axis, with the support of the function $\chi_+$. On the $y$-axis, we project the gray set $\kappa\left( \mathcal{V}_a^+ \cap \mathcal{T}_-^{loc}(4C_2h^{\delta_2}) \right) $ to obtain both $\Gamma^-$ and $\widetilde{\Gamma}^-$ depending on the size of the $\eta$-window. The larger set $\Omega^-$ is also represented in red. }
\end{figure}

Using again Lemma \ref{lemma_construction_cut_off}, we construct a family 
$\chi_-(y) \coloneqq \chi_- (y ; h) \in \cinfc(\R)$ such that : 
\begin{itemize}[nosep]
\item $\chi_- \equiv 1$ on $\widetilde{\Gamma}^-$;
\item  $ \supp \chi_- \subset \Omega^- = \Gamma^- (h^{\delta_0})$ ; 
\item $\forall k \in \N$ and $y \in \R$, $|\chi_-^{(k)}(y)| \leq C_k h^{-\delta_0 k }$. 
\end{itemize} 
and $\chi_-$ allows to write 
$$ \chi_- (y)\mathds{1}_{\Omega^-}(y) = \chi_-(y)$$
We now claim that

\begin{equation}\label{reduction_to_FUP}
\mathfrak{M}^{N_0} U_\mathcal{Q} = \mathfrak{M}^{N_0} \op(\chi_h) \Xi_a B^\prime \chi_- (y) \mathds{1}_{\Omega^-}(y) \mathds{1}_{\Omega^+} (hD_y)B  U_\mathcal{Q} + \hinf_{L^2 \to L^2}
\end{equation}
 Due to the polynomial bounds on $||\mathfrak{M}^{N_0}||$ and $||U_\mathcal{Q}||$, it is then enough to show that 

$$ \op(\chi_h) \Xi_a B^\prime (1-\chi_- (y)) \chi_+(hDy) = \hinf$$ 
Using Egorov's theorem in $\Psi_{\delta_2}(\R)$, we see that $\Xi_0 \coloneqq B  \op(\chi_h) \Xi_a B^\prime$ is in $\Psi_{\delta_2}(\R)$ and $\WF(\Xi_0) \subset  \kappa ( \supp \chi_a \cap \supp \chi_h)$. 
We now observe that 
\begin{align*}
(y,\eta) \in \WF (\Xi_0) \cap \WF\left(1-\chi_-(y) \right) \cap \WF \left( \chi_+ (hD_y) \right) \implies \\
(y,\eta) \in  \kappa ( \supp \chi_a \cap \supp \chi_h) , \eta \in \supp \chi_+ , y \not \in \widetilde{\Gamma}^-, \\
\end{align*}
But the first two conditions imply that $y \in \widetilde{\Gamma}^-$. Hence, $$\WF (\Xi_0) \cap \WF\left(1-\chi_-(y) \right) \cap \WF \left( \chi_+ (hD_y) \right) = \emptyset$$
 By the composition formulas in $\Psi_{\delta_0}(\R)$, $\Xi_0(1-\chi_-(y) ) \chi_+ (hD_y) = \hinf$. Note that the constants in $\hinf$ depend on the semi-norms of $\chi_\pm$ ,$\chi_h$ and $\chi_a$. Due to their construction, the semi-norms of $\chi_\pm$ and $\chi_h$ are bounded by global constants. As a consequence, the constants $\hinf$ are global constans.

This proves the claim \ref{reduction_to_FUP}. Recalling the bound 
$$ || \mathfrak{M}^{N_0} ||_{L^2 \to L^2} \leq ||\alpha||^{N_0} (1+ o(1)) \quad , \quad \left| \left|U_\mathcal{Q} \right|\right|_{L^2 \to L^2} \leq C |\log h| ||\alpha||_\infty^{N_1} $$  we see that the proof of Proposition \ref{Prop_on_clouds} and hence of Proposition  \ref{Prop_thm}, has been reduced to proving the following proposition. 

\begin{prop} \label{FUP_to_Omega}
With the above notations, 
There exists $\gamma >0$ and $h_0 >0$ such that : 
\begin{equation}
\forall h \leq h_0, \;  ||\mathds{1}_{\Omega^-}(y) \mathds{1}_{\Omega^+}(hD_y) ||_{L^2 \to L^2} \leq h^\gamma
\end{equation}
\end{prop}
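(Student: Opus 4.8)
The statement is a fractal uncertainty principle for the pair of sets $\Omega^- = \Gamma^-(h^{\delta_0})$ (in position) and $\Omega^+ = \Gamma^+(h^{\tau})$ (in frequency), so the plan is to bring the geometry of $\Gamma^\pm$ into the framework of the fractal uncertainty principle of \cite{BD18} as stated in \cite{NDJ19}. First I would establish that $\Gamma^-$ and $\Gamma^+$ are $h$-neighbourhoods (at scales $h^{\delta_0}$ and $h^{\tau}$ respectively) of \emph{porous} sets at the relevant scales. For $\Gamma^+ = \eta(\kappa(\mathcal{V}_\mathcal{Q}^+))$: recall from Section \ref{upper-box dimension} that the trace of $\mathcal{T}$ along an unstable leaf has upper box dimension $d_H < 1$; since $\mathcal{V}_\mathcal{Q}^+$ lies in an $h^{\tau}$-tube around $W_u(\rho_0)$ and the adapted chart $\kappa$ straightens $W_u(\rho_0)$ (Lemma \ref{Existence_of_adapated_charts}), the set $\Gamma^+$ is, up to the $h^{\tau}$-thickening, contained in the $\eta$-projection of the iterates of $\mathcal{T}$; using the uniform Jacobian bounds $e^{\lambda_0 n}\le J_\mathbf{q}^+\le e^{\lambda_1 n}$ and the bounded-distortion estimates of Lemma \ref{Local_hpyerbolic_2} one gets porosity on all scales between $h^{\tau}$ and a fixed constant. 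The set $\Gamma^-$ is handled identically with $W_s(\rho_0)$ in place of $W_u(\rho_0)$, the stable Jacobian, and scale $h^{\delta_0}$; here the relevant fact is that $\mathcal{T}_-^{\mathrm{loc}}$ is laminated by stable leaves and its trace transverse to the flow has upper box dimension $d_H < 1$.

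Next I would invoke the quantitative fractal uncertainty principle in the form quoted in \cite{NDJ19}: if $X$ is $\nu$-porous on scales $[h^{\alpha_X}, \varepsilon_0]$ and $Y$ is $\nu$-porous on scales $[h^{\alpha_Y}, \varepsilon_0]$ with $\alpha_X + \alpha_Y > 1$, then there exist $\beta > 0$ (depending only on $\nu$ and the dimensions, hence a global constant here) and $h_0 > 0$ such that
\begin{equation*}
\|\mathds{1}_{X(h^{\alpha_X})}\,\mathcal{F}_h\,\mathds{1}_{Y(h^{\alpha_Y})}\|_{L^2 \to L^2} \le h^{\beta}, \qquad h \le h_0.
\end{equation*}
Apply this with $X = \Gamma^-$, $\alpha_X = \delta_0$, $Y = \Gamma^+$, $\alpha_Y = \tau$. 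The saturation condition $\delta_0 + \tau > 1$ is exactly condition (\ref{condition_on_tau}), which was imposed precisely for this (note $\delta_2 = \delta_0 \lambda_0/\lambda_1 < \delta_0$, so $\delta_0 + \tau > \delta_2 + \tau > 1$). Since $\mathds{1}_{\Omega^+}(hD_y) = \mathcal{F}_h^{-1}\mathds{1}_{\Omega^+}\mathcal{F}_h$ and $\mathcal{F}_h$ is unitary on $L^2$, the operator $\mathds{1}_{\Omega^-}(y)\mathds{1}_{\Omega^+}(hD_y)$ has the same norm as $\mathds{1}_{\Gamma^-(h^{\delta_0})}\mathcal{F}_h\mathds{1}_{\Gamma^+(h^{\tau})}$, which is bounded by $h^{\beta}$; one then takes any $\gamma \in (0,\beta)$.

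The main obstacle is the first step: verifying \emph{uniformly in the cloud $\mathcal{Q}$, in $n$, and in $a$} that $\Gamma^\pm$ are $\nu$-porous at the stated range of scales with a \emph{global} constant $\nu$ independent of these data. The subtlety is that $\Gamma^+$ is not literally $\eta(\kappa(\mathcal{T}))$ thickened — it is an $h^{\tau}$-tube around a piece of an evolved unstable leaf — so one must show that the holes of $\mathcal{T}$ transverse to $W_u(\rho_0)$ survive both the pull-back through the $C^{1+\beta}$ chart $\kappa$ (which distorts lengths by bounded factors, using that $g(y,0)=0$, $\partial_\zeta g(y,0)=1$ forces $\eta$-distances to be comparable to $\zeta$-distances up to the $h$-error controlled in Lemma \ref{straigh_Wu}) and the $h^{\tau}$-thickening (which only affects scales below $h^{\tau}$, the lower end of the porosity window). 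The bounded-distortion control is furnished by Corollary \ref{cor_control_jacobian} and Lemma \ref{manipulations_J_q}, which guarantee that the Jacobians, hence the sizes of the surviving gaps, are comparable along the whole cloud; combining these with the definition of $\Gamma^\pm$ and the dimension bound $d_H < 1$ recalled in Section \ref{upper-box dimension} yields the required uniform porosity, and the rest of the argument is the direct citation above.
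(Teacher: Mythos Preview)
Your approach is the paper's: establish porosity of $\Omega^\pm$ from the upper-box-dimension bound on $\mathcal{T}\cap W_{u/s}$, then invoke the FUP of \cite{BD18}/\cite{NDJ19}. Two slips need correction.

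First, you have the stable and unstable roles reversed. The set $\Gamma^+ = \eta(\kappa(\mathcal{V}_\mathcal{Q}^+))$ is the projection onto the direction \emph{transverse} to $W_u(\rho_0)$, so its fractal structure is governed by $\mathcal{T}\cap W_s(\rho_0)$ (in the paper, by $\zeta(W_s(\rho_0)\cap\mathcal{T})$); conversely $\Gamma^-$, the $y$-projection, is governed by $\mathcal{T}\cap W_u(\rho_0)$ via $y\circ\kappa$. This swap does not break the argument since both traces have box dimension $<1$, but the geometry you describe is inverted.

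Second, and more consequential: $\Gamma^-$ is \emph{not} porous down to scale $h^{\delta_0}$. By its very definition $\Gamma^-$ already involves $\mathcal{T}_-^{\mathrm{loc}}(4C_2 h^{\delta_2})$, and since $\delta_2=\delta_0\lambda_0/\lambda_1<\delta_0$ the thickening scale $h^{\delta_2}$ is the \emph{larger} one. The paper's containment is $\Omega^-\subset y\circ\kappa(W_u(\rho_0)\cap\mathcal{T})(Ch^{\delta_2})$, yielding porosity only on scales $[C'h^{\delta_2},1]$, not $[C'h^{\delta_0},1]$. The saturation condition you must check is therefore $\delta_2+\tau>1$, which is exactly (\ref{condition_on_tau}); your parenthetical ``$\delta_0+\tau>\delta_2+\tau>1$'' had the right inequality in it but used it to justify the wrong porosity claim. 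With these two fixes your outline coincides with the paper's proof.
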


\begin{rem}
$\gamma$ and $h_0$ are global : they do not depend on the particular $\mathcal{Q} \subset \mathcal{Q}(n,a)$ satisfying the conditions of Proposition \ref{Prop_on_clouds}, nor on $n$. 
\end{rem}

The proof of this proposition is the aim of the next section and relies on a fractal uncertainty principle.

\section{Application of the fractal uncertainty principle}\label{Section_FUP}

The fractal uncertainty principle, first introduced by Dyatlov-Zahl in \cite{DZ16} and further proved in full generality by Bourgain-Dyatlov in \cite{BD18}, is the key tool for our decay estimate. We'll use the slightly more general version proved and used in \cite{NDJ19}.

\subsection{Porous sets}\label{subsection_porous_sets}
We start by recalling the definition of porous sets and then we state the version of the fractal uncertainty principle we'll use.  

\begin{defi}
Let $\nu \in (0,1)$ and $0 \leq \alpha_0 \leq \alpha_1$. We say that a subset $\Omega \subset \R$ is $\nu$-porous on scales $\alpha_0$ to $\alpha_1$ if for every interval $I \subset \R$ of size $|I| \in [\alpha_0,\alpha_1]$, there exists a subinterval $J \subset I$ of size $|J| = \nu |I|$ such that $J \cap \Omega = \emptyset$. 
\end{defi}

\begin{figure}[h]
\begin{center}
\includegraphics[scale=0.6]{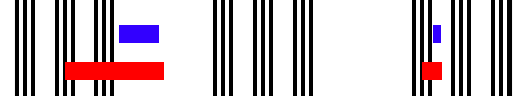}
\caption{Example of a porous set. Its construction is based on a Cantor-like set. Red intervals correspond to choices of $I$, blue ones correspond to $J$. }
\end{center}
\end{figure}

The following simple lemma shows that when one fattens a porous set, one gets another porous set. For its (very elementary) proof, see \cite{NDJ19} (Lemma 2.12). 

\begin{lem}\label{fattenning_porous_set}
Let $\nu \in (0,1)$ and $0 \leq \alpha_0 < \alpha_1$. Assume that $\alpha_2 \in (0, \frac{\nu}{3} \alpha_1]$ and $\Omega \subset \R$ is $\nu$-porous on scales $\alpha_0$ to $\alpha_1$. Then, the neighborhood $\Omega(\alpha_2) = \Omega + [-\alpha_2, \alpha_2]$ is $\frac{\nu}{3}-$ porous on scale $\max(\alpha_0, \frac{3}{\nu} \alpha_2)$ to $\alpha_1$. 
\end{lem}
\vspace{0.2cm}
The notion of porosity can be related to the different notions of fractal dimensions. Let us recall the definition of the upper box dimension of a metric space $(X,d)$. We denote by $N_X(\varepsilon)$ the minimal number of open balls of radius $\varepsilon$ needed to cover  $X$. Then, the upper box dimension of $X$ is defined by : 

\begin{equation}
\overline{\dim} X \coloneqq \limsup_{\varepsilon \to 0} \frac{\log N_X(\varepsilon)}{- \log \varepsilon}
\end{equation}

In particular, if $\delta > \overline{\dim}_X $, there exists $\varepsilon_0>0$ such that for every $\varepsilon \leq \varepsilon_0$, $N_X(\varepsilon) \leq \varepsilon^{-\delta}$. This observation motivates the following lemma :

\begin{lem}\label{from_fractal_to_porous} Let $\Omega \subset \R$. Suppose that there exist $0 < \delta < 1$, $C >0$ and $\varepsilon_0 >0$ such that 
$$\forall \varepsilon \leq \varepsilon_0 , \; N_\Omega(\varepsilon) \leq C \varepsilon^{-\delta}$$ 
Then, there exists $\nu = \nu(\delta, \varepsilon_0, C)$ such that $\Omega$ is $\nu$-porous on scale $0$ to $1$. 
\end{lem}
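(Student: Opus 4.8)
The plan is to show that failure of $\nu$-porosity for a well-chosen $\nu$ would force $\Omega$ to contain too many $\varepsilon$-separated points, contradicting the box-counting bound $N_\Omega(\varepsilon)\le C\varepsilon^{-\delta}$. First I would fix a scale-independent parameter: since $\delta<1$, choose an integer $m\ge 2$ with $m^{1-\delta}>2C\cdot m^{\,?}$... more precisely, pick $m$ so large that $m^{1-\delta}>2C$ (after absorbing constants), and set $\nu=\tfrac{1}{3m}$. The idea is that any interval $I$ of length $\ell$ can be cut into $m$ consecutive subintervals of length $\ell/m$; if none of the middle-thirds of these subintervals is disjoint from $\Omega$, then $\Omega\cap I$ meets a definite fraction of a grid at scale $\ell/m$, and iterating this down through $k$ generations produces at least $m^{k}$ (or a fixed fraction thereof) disjoint subintervals of length $\ell/m^{k}$ each meeting $\Omega$, hence $\gtrsim m^{k}$ points of $\Omega$ that are $\ell/m^{k}$-separated.

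Concretely, I would argue by contradiction and by a stopping-time/iteration scheme. Suppose $\Omega$ is \emph{not} $\nu$-porous on scales $0$ to $1$ for this $\nu$; then there is an interval $I_0\subset\R$, $|I_0|\le 1$, such that every subinterval $J\subset I_0$ with $|J|=\nu|I_0|$ satisfies $J\cap\Omega\ne\emptyset$. Divide $I_0$ into $m$ equal closed subintervals; the middle third of each has length $\tfrac{1}{3m}|I_0|=\nu|I_0|$, so each such middle third meets $\Omega$, and in particular each of the $m$ subintervals meets $\Omega$. Now here is the key point: I need the non-porosity to be inherited by the sub-pieces. Rather than assuming non-porosity of $\Omega$ globally gives non-porosity on each subinterval (which need not hold), I would instead observe that for a subinterval $I_1$ of length $|I_0|/m$ that meets $\Omega$, either $I_1$ itself contains a $\nu|I_1|$-subinterval missing $\Omega$ — but we only need to count points, so I can simply record one point of $\Omega$ in each of the $m$ pieces and then \emph{re-examine each piece}: a piece of length $|I_0|/m$ either admits an empty $\nu$-subinterval (porosity locally, which I don't get to use) or, worst case, I apply the hypothesis again only to those pieces on which the global non-porosity statement still applies. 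This is the delicate step, so I would restructure: the clean way is to prove the contrapositive of Lemma~\ref{from_fractal_to_porous} directly as an \emph{iterated} statement, choosing $\nu$ and then showing that if $\Omega$ fails $\nu$-porosity on scales $0$ to $1$ at interval $I_0$, then by subdividing I can find $I_1\subset I_0$ of length $|I_0|/m$ that still fails $\nu$-porosity on scales $0$ to $|I_1|$; indeed if every $\nu|I_1|$-subinterval of $I_1$ met $\Omega$ this is exactly the statement, and non-porosity at scale $I_0$ forces this for at least... Actually the cleanest route: non-porosity at $I_0$ literally says \emph{every} $\nu|I_0|$-length subinterval of $I_0$ meets $\Omega$, hence $\Omega$ is $\nu|I_0|$-dense in $I_0$; but that does not immediately give density at finer scales. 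So I must be more careful, and I now think the right approach is the self-similar one below.

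The robust approach: choose $m$ with $m^{1-\delta}>C^{-1}\cdot(\text{const})$ wait — instead, observe that non-$\nu$-porosity on scales $0$ to $1$ means there exists a \emph{sequence} of bad intervals at all small scales, or rather, I should phrase porosity failure as: there is $I_0$ with $|I_0|\le 1$ such that $\Omega$ intersects every subinterval of $I_0$ of length $\ge \nu|I_0|$. This says $N_{\Omega\cap I_0}(\nu|I_0|)\ge \lfloor 1/(2\nu)\rfloor$ roughly, i.e. $\Omega\cap I_0$ is $\nu|I_0|$-dense. To get a contradiction with $N_\Omega(\varepsilon)\le C\varepsilon^{-\delta}$ I need density at a \emph{single} small scale $\varepsilon$ to already contradict the bound, which it does \emph{not} since $\nu|I_0|$ could be $\asymp 1$. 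Hence I genuinely need to propagate to fine scales, and the only way is to build the bad set self-similarly — which means the statement as given needs $\Omega$ non-porous to be \emph{inherited}. The resolution (and what I expect the paper does) is: apply the hypothesis $N_\Omega(\varepsilon)\le C\varepsilon^{-\delta}$ at scale $\varepsilon=\nu^k$ for all $k$; if $\Omega$ were non-$\nu$-porous on scales $0$ to $1$, then starting from $I_0=[0,1]$ (WLOG, by the structure of the failure) one shows inductively that $\Omega$ meets all $\nu^{k}$-intervals inside $I_0$, hence $N_\Omega(\nu^{k})\gtrsim \nu^{-k}$, and since $\nu^{-k}> C\nu^{-k\delta}$ for $k$ large (as $\delta<1$), we get a contradiction. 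The induction step is exactly where I use that failure of $\nu$-porosity at \emph{every} scale is what's being assumed — reading the definition, "$\nu$-porous on scales $\alpha_0$ to $\alpha_1$" with $\alpha_0=0$ quantifies over \emph{all} intervals $I$ with $|I|\in(0,\alpha_1]$, so its negation gives a \emph{single} bad interval $I_*$ of some length $\le 1$; rescaling, assume $I_*=[0,1]$. Then "every subinterval of $[0,1]$ of length in $(0,1]$ meets $\Omega$" — wait, that's the negation of porosity, and it directly says $\Omega$ is dense in $[0,1]$, giving $N_\Omega(\varepsilon)\ge c/\varepsilon$ for all $\varepsilon\le 1$, contradicting $N_\Omega(\varepsilon)\le C\varepsilon^{-\delta}$ once $\varepsilon<(cC^{-1})^{1/(1-\delta)}$. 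So in fact \textbf{no iteration is needed}: the negation of "$\nu$-porous on scales $0$ to $1$", unpacked, already says that there is an interval $I$ such that for \emph{every} subinterval $J\subset I$ with $|J|=\nu|I|$, $J\cap\Omega\ne\emptyset$; covering $I$ by $\lceil 1/\nu\rceil$ such subintervals shows each contains a point of $\Omega$, hence any $\varepsilon$-cover of $\Omega$ with $\varepsilon<\nu|I|/2$ needs $\ge \lfloor |I|/(2\varepsilon)\rfloor\cdot$... hmm this still only controls scale $\nu|I|$. I would therefore present it as: choose $\nu$ small depending on $\delta,C,\varepsilon_0$; if $\Omega$ is not $\nu$-porous on scales $0$ to $1$, pick the witnessing interval $I$ and note $|I|$ can be taken $\le \varepsilon_0$ after subdividing and using that a subinterval of a non-porous witness of the same relative scale is again... — I'll handle this subdivision lemma as the \textbf{main obstacle}, then conclude by the counting contradiction at scale $\varepsilon=\nu|I|$ versus $N_\Omega(\varepsilon)\le C\varepsilon^{-\delta}$, choosing $\nu=\nu(\delta,\varepsilon_0,C)$ so that $\lfloor 1/(2\nu)\rfloor > C(\nu\varepsilon_0)^{-\delta}$ is violated — equivalently so that $\tfrac{1}{2\nu}>C\varepsilon_0^{-\delta}\nu^{-\delta}$, i.e. $\nu^{\delta-1}>2C\varepsilon_0^{-\delta}$, which holds for $\nu$ small since $\delta-1<0$. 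I refer to \cite{NDJ19} for the clean write-up of the subdivision/inheritance step.
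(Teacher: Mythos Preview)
Once your proposal settles, it is the same single-scale counting argument the paper gives: if $\nu$-porosity fails at some interval $I$ with $|I|\le 1$, then every subinterval of $I$ of length $\nu|I|$ meets $\Omega$, and a separation count yields $N_\Omega(\nu|I|/2)\ge c/\nu$; for $\nu=\nu(C,\delta,\varepsilon_0)$ small this is meant to contradict $N_\Omega(\varepsilon)\le C\varepsilon^{-\delta}$. The paper takes $\nu=1/(3T)$ with $T=\lfloor\max((6\varepsilon_0)^{-1},(6^\delta C)^{1/(1-\delta)})\rfloor+1$, cuts $I$ into $3T$ pieces, picks a point of $\Omega$ in every third piece, and derives $T^{1-\delta}\le 6^\delta C$.

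The ``subdivision/inheritance'' step you flag is a genuine gap, and your reading of it is inverted: the contradiction $1/(2\nu)>C(\nu|I|)^{-\delta}$ rewrites as $|I|^\delta>2C\,\nu^{1-\delta}$, a \emph{lower} bound on $|I|$, and there is no mechanism to enforce one --- non-porosity at $I$ says nothing about larger intervals. In fact the lemma as stated is false: for $\Omega=\{0\}\cup\{1/n:n\ge 1\}$ one has $N_\Omega(\varepsilon)\le 3\,\varepsilon^{-1/2}$ for all $\varepsilon\in(0,1]$, yet in $I=[0,\ell]$ every gap of $\Omega$ has length at most $\ell^2$, so for $\ell<\nu$ no subinterval of length $\nu\ell$ avoids $\Omega$, and $\Omega$ is not $\nu$-porous down to scale $0$ for any $\nu>0$. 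The paper's written proof cuts $I$ into pieces ``of size $\nu$'' (total length $1$), which only makes sense for $|I|=1$, so it too only treats the top scale. What makes the \emph{application} correct is that the relevant sets $W_{u/s}(\rho_0)\cap\mathcal T$ satisfy the stronger uniform \emph{local} bound $N_{\Omega\cap I}(\varepsilon)\le C(|I|/\varepsilon)^\delta$ for all intervals $I$ (Ahlfors $\delta$-regularity, from the Markov/Gibbs structure); under that hypothesis your one-scale argument, run inside each $I$, does give $\nu$-porosity at every scale.
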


\begin{rem}
The proof will give an explicit value of $\nu$. This quantitative statement will be important in the sequel to ensure the same porosity for all the sets $W_{u/s}(\rho_0) \cap \mathcal{T}$. 
\end{rem}

\begin{proof}
Let us set $T = \lfloor \max\left((6\varepsilon_0)^{-1}, (6^\delta C)^{\frac{1}{1-\delta}}\right) \rfloor +1$ and $\nu = (3T)^{-1}$. We will show that $\Omega$ is $\nu$-porous on scale $0$ to $1$. Let $I \subset \R$ be an interval of size $|I| \in (0,1]$. Cut $I$ into $3T$ consecutive closed intervals of size $\nu$: $J_0, \dots, J_{3T-1}$. We argue by contradiction and assume that each of these intervals does intersect $\Omega$.  Let us show that 
\begin{equation}\label{bound_N}
N_\Omega(\nu/2) \geq T
\end{equation}
Assume that $U_1, \dots, U_k$ is a family of open intervals of size $\nu$ covering $\Omega$. For $i = 0, \dots, T-1$, there exists $x_i \in J_{3i+1}$ and $j_i \in \{1, \dots, k \}$ such that $x_i \in U_{j_i}$. It follows that $U_{j_i} \subset J_{3i} \cup J_{3i+1} \cup J_{3i+2}$ and hence $i \neq l \implies U_{j_i} \cap U_{j_l} = \emptyset$. The map $i \in \{0, \dots, T-1\}  \mapsto j_i \in \{1, \dots, k \}$ is one-to-one, and it gives (\ref{bound_N}). 
Since $T \geq \frac{1}{6 \varepsilon_0}$, $\nu/2 \leq \varepsilon_0$. As a consequence , 
$$ T \leq N(\nu/2) \leq C(6T)^\delta$$
which implies $T^{1-\delta} \leq C6^\delta$. This contradicts the definition of $T$. 
\end{proof}

In the appendix \ref{from_porous_to_dim}, we give a result in the other way, namely, porous sets down to scale 0 have an upper box dimension strictly smaller than one. 

For further use, we also record the easy lemma : 
\begin{lem}\label{Lipshictz and fractal sets}
Assume that $(X,d)$, $(Y,d^\prime)$ are metric spaces and $f : X \to Y$ is $C$-Lipschitz. Then, for every $\varepsilon >0$, 
$$N_{f(X)}(\varepsilon) \leq N_X(\varepsilon/C)$$
In particular, if $N_X(\varepsilon) \leq C_1^\delta \varepsilon^\delta$ for $\varepsilon \leq \varepsilon_0$, then for $\varepsilon \leq C \varepsilon_0$, $N_{f(X)} (\varepsilon) \leq (C_1C)^\delta \varepsilon^{-\delta}$. 
\end{lem}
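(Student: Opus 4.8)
\textbf{Proof plan for Lemma~\ref{Lipshictz and fractal sets}.}

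The statement is almost a tautology once one unwinds the definition of $N_X(\varepsilon)$, so the plan is essentially to write down the covering argument carefully. First I would recall that $N_X(\varepsilon)$ is the minimal number of open balls of radius $\varepsilon$ needed to cover $X$. Fix $\varepsilon > 0$ and let $k = N_X(\varepsilon/C)$; choose points $x_1, \dots, x_k \in X$ such that $X \subset \bigcup_{i=1}^k B_X(x_i, \varepsilon/C)$, where $B_X$ denotes balls for the metric $d$ on $X$. The key observation is that a $C$-Lipschitz map sends a ball of radius $r$ into a ball of radius $Cr$: if $d(x, x_i) < \varepsilon/C$ then $d'(f(x), f(x_i)) \le C\, d(x, x_i) < \varepsilon$. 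Hence $f(X) \subset \bigcup_{i=1}^k B_Y(f(x_i), \varepsilon)$, which exhibits a cover of $f(X)$ by $k$ balls of radius $\varepsilon$ in $Y$. By minimality of $N_{f(X)}(\varepsilon)$ we conclude $N_{f(X)}(\varepsilon) \le k = N_X(\varepsilon/C)$, which is the first claim.

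For the second claim, suppose $N_X(\varepsilon) \le C_1^\delta \varepsilon^{-\delta}$ (I read the hypothesis as written, $N_X(\varepsilon) \le C_1^\delta \varepsilon^{-\delta}$, rather than the typo $C_1^\delta \varepsilon^\delta$) for all $\varepsilon \le \varepsilon_0$. Let $\varepsilon \le C\varepsilon_0$; then $\varepsilon/C \le \varepsilon_0$, so applying the first part and then the hypothesis with $\varepsilon/C$ in place of $\varepsilon$,
$$
N_{f(X)}(\varepsilon) \le N_X(\varepsilon/C) \le C_1^\delta (\varepsilon/C)^{-\delta} = (C_1 C)^\delta \varepsilon^{-\delta},
$$
which is exactly the asserted bound. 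This is the whole argument; there is no genuine obstacle, only the bookkeeping of constants and the (harmless) remark that the minimizing cover of $X$ exists whenever $X$ is totally bounded — which is the only case of interest here, since otherwise both sides are infinite and the inequality is trivial. I would present the proof in two short sentences of the covering argument followed by the one-line computation above.
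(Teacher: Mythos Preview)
Your proof is correct and is exactly the standard covering argument one would expect; the paper in fact records this lemma without proof, calling it ``easy'', so your write-up is more detailed than what the paper provides. Your identification of the typo $\varepsilon^\delta$ versus $\varepsilon^{-\delta}$ in the hypothesis is also correct.
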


\subsection{Fractal uncertainty principle}

We state here the version of the fractal uncertainty principle we'll use. This version is stated in Proposition 2.11 in \cite{NDJ19}. The difference with the original version in \cite{BD18} is that it relaxes the assumption regarding the scales on which the sets are porous. We refer the reader to the review of Dyatlov \cite{DyFUP} to an overview on the fractal uncertainty principle with other references and applications. 

\begin{prop}\label{FUP} \textbf{Fractal uncertainty principle.}
Fix numbers $\gamma_0^\pm, \gamma_1^\pm$ such that 
$$ 0 \leq \gamma_1^\pm < \gamma_0^\pm \leq 1, \; \gamma_1^+ + \gamma_1^- < 1 < \gamma_0^+ + \gamma_0^-$$ 
and define 
$$ \gamma \coloneqq \min (\gamma_0^+, 1- \gamma_1^-) - \max(\gamma_1^+, 1- \gamma_0^-) $$ 
Then for each $\nu >0$ , there exists $\beta = \beta(\nu) >0$ and $C = C(\nu)$ such that the estimate 
\begin{equation}
||\mathds{1}_{\Omega_-} \mathcal{F}_h \mathds{1}_{\Omega_+} ||_{L^2(\R) \to L^2(\R) } \leq Ch^{\gamma \beta} 
\end{equation}
holds for all $0 < h \leq 1$ and all $h$-dependent sets $\Omega_\pm \subset \R$ which are $\nu$-porous on scale $h^{\gamma_0^\pm}$ to $h^{\gamma_1^\pm}$.
\end{prop}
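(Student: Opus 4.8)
\textbf{Plan of proof for Proposition \ref{FUP}.}

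The plan is to deduce this statement from the original Bourgain--Dyatlov fractal uncertainty principle, whose natural formulation requires porosity on all scales from $h$ up to some fixed constant, by a rescaling argument that trades the restricted range of scales $[h^{\gamma_0^\pm},h^{\gamma_1^\pm}]$ for a full range down to a new effective Planck constant. First I would recall the following known result (this is essentially \cite{BD18}, and the version we use is stated in \cite{NDJ19} and \cite{DyFUP}): if $\nu>0$ is fixed and $0\le\sigma_+,\sigma_-<1$ with $X_\pm\subset\R$ being $\nu$-porous on scales $0$ to $1$ and contained in a fixed bounded interval, then there exist $\beta_0=\beta_0(\nu)>0$ and $C_0=C_0(\nu)$ such that $\|\mathds{1}_{X_-}\mathcal{F}_{\hbar}\mathds{1}_{X_+}\|_{L^2\to L^2}\le C_0\,\hbar^{\beta_0}$ for all $0<\hbar\le1$. (Strictly, the statement is usually phrased with explicit exponents governed by the box dimension; combining it with Lemma \ref{from_fractal_to_porous} — or rather the appendix result going from porous to dimension mentioned after that lemma — one gets that a $\nu$-porous set has upper box dimension $\le 1-\beta_0(\nu)$, which feeds into Bourgain--Dyatlov.)

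Next I would perform the rescaling. Set $\hbar_+=h^{\gamma_0^+-\gamma_1^+}$ and introduce the dilations $y\mapsto h^{\gamma_1^+}y$ on the position side and $\xi\mapsto h^{\gamma_1^-}\xi$ on the frequency side; conjugating $\mathcal{F}_h$ by these dilations turns it into $\mathcal{F}_{\hbar}$ with $\hbar = h^{1-\gamma_1^+-\gamma_1^-}$, while the characteristic functions $\mathds{1}_{\Omega_\pm}$ become $\mathds{1}_{\tilde\Omega_\pm}$ where $\tilde\Omega_+ = h^{-\gamma_1^+}\Omega_+$ and $\tilde\Omega_- = h^{-\gamma_1^-}\Omega_-$. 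Because $\Omega_+$ is $\nu$-porous on scales $h^{\gamma_0^+}$ to $h^{\gamma_1^+}$, the dilated set $\tilde\Omega_+$ is $\nu$-porous on scales $h^{\gamma_0^+-\gamma_1^+}$ to $1$, i.e. on scales $\hbar_+$ to $1$; one checks that this is enough to run Bourgain--Dyatlov at Planck constant $\hbar$ provided $\hbar_+\le \hbar$ up to a harmless constant — and here the key arithmetic is that $1-\gamma_1^+-\gamma_1^- \ge$ (some positive combination) ensures the porous range covers $[\hbar,1]$ after a further crude cut at scale $\hbar$ (intervals shorter than $\hbar$ are irrelevant for the norm up to $O(\hbar^\infty)$). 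The operator norm is invariant under these unitary dilations, so $\|\mathds{1}_{\Omega_-}\mathcal{F}_h\mathds{1}_{\Omega_+}\| = \|\mathds{1}_{\tilde\Omega_-}\mathcal{F}_{\hbar}\mathds{1}_{\tilde\Omega_+}\|$.

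Then I would apply the known FUP at Planck constant $\hbar$ to the pair $(\tilde\Omega_-,\tilde\Omega_+)$ and track the exponent. The known result gives a bound $C\hbar^{\beta'}$ where $\beta'$ depends on $\nu$ and on how the porous scales sit relative to $\hbar$; carrying out the bookkeeping (the scales $\gamma_0^\pm,\gamma_1^\pm$ enter exactly through the quantities $\min(\gamma_0^+,1-\gamma_1^-)$ and $\max(\gamma_1^+,1-\gamma_0^-)$, because after rescaling the effective "width parameters" on the two sides become $\gamma_0^+-\gamma_1^+$ and $\gamma_0^--\gamma_1^-$, and the FUP exponent is controlled by their overlap in a common window of size $h$) yields the bound $Ch^{\gamma\beta}$ with $\gamma = \min(\gamma_0^+,1-\gamma_1^-)-\max(\gamma_1^+,1-\gamma_0^-)$ and $\beta=\beta(\nu)>0$. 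The hypotheses $\gamma_1^++\gamma_1^-<1<\gamma_0^++\gamma_0^-$ are precisely what makes $\gamma>0$ and what guarantees the rescaled Planck constant $\hbar=h^{1-\gamma_1^+-\gamma_1^-}\to0$ stays in a regime where the porous range is nonempty.

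The main obstacle I expect is the bookkeeping in the rescaling step: one must verify that after the two asymmetric dilations the sets are porous on a range of scales that genuinely reaches down to $h^{1-\gamma_1^+-\gamma_1^-}$ (or can be extended there at the cost of an $O(h^\infty)$ error, since a set that is $\nu$-porous on $[\alpha_0,\alpha_1]$ behaves, for FUP purposes, as if porous on $[0,\alpha_1]$ once one discards intervals below the Planck scale), and that the exponent coming out of Bourgain--Dyatlov, which is stated in terms of box dimensions, translates correctly into the clean formula for $\gamma$ above. This is not deep but is the place where the precise inequalities among the $\gamma$'s must be used; everything else — the unitarity of the dilations, the invariance of the $L^2\to L^2$ norm, Lemma \ref{fattenning_porous_set} for controlling neighborhoods — is routine. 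Since this proposition is quoted verbatim from \cite{NDJ19} (their Proposition 2.11), the cleanest route is in fact to cite it directly; I would include the rescaling sketch above as the justification of how it follows from \cite{BD18}.
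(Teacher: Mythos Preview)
Your approach matches the paper's: the paper does not prove Proposition~\ref{FUP} at all but simply states it as Proposition~2.11 of \cite{NDJ19}, referring also to \cite{BD18} and \cite{DyFUP}. Your final recommendation to cite it directly is therefore exactly what the paper does; the rescaling sketch you outline is a reasonable description of how the reduction from the general-scales version to the original Bourgain--Dyatlov theorem goes, but it is not required here.
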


\begin{rem}
In the sequel, we will use this result with $\gamma_1^\pm =0$. In this case, the condition on $\gamma_0^\pm$ becomes $\gamma_0^- + \gamma_0^+ >1$ and the exponent $\gamma$ is $\gamma_0^- + \gamma_0^+ -1$. This condition can be interpreted as a condition of saturation of the standard uncertainty principle : a rectangle of size $h^{\gamma_0^+} \times h^{\gamma_0^-} $ will be subplanckian. 
\end{rem}

\subsection{Porosity of $\Omega^+$ and $\Omega^-$}

Since we want to apply Proposition \ref{FUP} to prove Proposition \ref{FUP_to_Omega}, we need to show the porosity of the sets $\Omega^\pm$ defined in (\ref{def_Omega_+}) and (\ref{def_Omega_-}). The main tool is the following proposition. 

\begin{prop}\label{upper_box_dim}
There exist $\delta \in [0,1[$, $C>0$ and $\varepsilon_0 >0$ such that for every $\rho_0 \in \mathcal{T}$, if $X= W_{u/s}(\rho_0) \cap \mathcal{T} \cap U_{\rho_0}$, 
\begin{align*}
N_X(\varepsilon) \leq C \varepsilon^{-\delta} ; \; \forall \varepsilon \leq \varepsilon_0
\end{align*}
\end{prop}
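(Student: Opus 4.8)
The statement is a uniform upper-box-dimension bound for the traces of $\mathcal{T}$ on the (un)stable leaves, with constants independent of the base point $\rho_0 \in \mathcal{T}$. The natural route is the classical Bowen-type / thermodynamic argument combined with the \eqref{Fractal} hypothesis $P(\phi_u,f) < 0$ (equivalently $P(\phi_u,f) = -\gamma_{cl} < 0$). First I would fix attention on $X = W_u(\rho_0) \cap \mathcal{T} \cap U_{\rho_0}$ (the stable case is identical with $F$ replaced by $F^{-1}$, using the symmetry between $\phi_u$ and $\phi_s$ and the fact that $F$ is volume-preserving so that the two pressures are related). The key geometric input is Lemma~\ref{Local_hpyerbolic_1}(2): if $\rho, \rho' \in W_u(\rho_0) \cap \mathcal{T}$ then $d(F^{-n}\rho, F^{-n}\rho') \le C J^u_{-n}(\rho_0)\, d(\rho,\rho')$, and conversely the hyperbolic expansion \eqref{lamba0et1}--\eqref{lambdao0et1bis} guarantees $J^u_{-n}(\rho_0) \le e^{-n\lambda_0}$, i.e. $F^{-n}$ \emph{contracts} along the unstable leaf at a definite exponential rate. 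So a cover of $\mathcal{T}$ at a fixed scale $\varepsilon_1$ by, say, the neighborhoods $\mathcal{V}_q$ (or any finite Markov-type cover coming from the symbolic coding of \cite{Mo91}) pulls back under $F^{-n}$ to a cover of $X$ by ``unstable intervals'' whose lengths are comparable to the local unstable Jacobians $J^u_{-n}$ along the orbit, hence to $\exp(\sum_{k=0}^{n-1}\phi_u(f^{-k}\cdot))$.

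The heart of the argument is then a counting estimate. For a fixed scale $\varepsilon$, choose $n = n(\varepsilon) \sim \frac{1}{\lambda_0}|\log \varepsilon|$ so that every cylinder of depth $n$ in the symbolic coding, traced on $W_u(\rho_0)$, has diameter $\le \varepsilon$. The number $N_X(\varepsilon)$ is then bounded by the number of admissible words $\mathbf{q}$ of length $n$ such that the corresponding cylinder meets $X$, which in turn (after discarding by a Vitali/bounded-overlap argument those whose unstable length is much smaller than $\varepsilon$) is controlled by a sum of the form $\sum_{|\mathbf{q}|=n} \varepsilon^{-\delta} \big(J^u_{-n}\big)^{\delta}$ for a suitable $\delta \in (0,1)$. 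By the definition of topological pressure recalled in the excerpt,
\begin{equation*}
\limsup_{n\to\infty} \frac1n \log \sum_{|\mathbf{q}|=n} \big(J^u_{-n}(\rho_{\mathbf{q}})\big)^{\delta} = \limsup_{n\to\infty}\frac1n\log\sum_{\mathbf q}\exp\Big(\delta\!\!\sum_{k=0}^{n-1}\phi_u(f^{-k}\rho_{\mathbf q})\Big) = P(\delta\phi_u, f).
\end{equation*}
Since $s \mapsto P(s\phi_u,f)$ is continuous, strictly decreasing, equals $P(0)=h_{\mathrm{top}}(f)\ge 0$ at $s=0$, and is $<0$ at $s=1$ by \eqref{Fractal}, Bowen's equation $P(s_0\phi_u,f)=0$ has a unique root $s_0 = d_H \in [0,1)$; picking any $\delta \in (s_0,1)$ gives $P(\delta\phi_u,f) =: -c < 0$. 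Feeding this back, $N_X(\varepsilon) \le C\, \varepsilon^{-\delta} e^{-cn(\varepsilon)} \le C\,\varepsilon^{-\delta}$ for $\varepsilon$ small, uniformly in $\rho_0$ because all the implied constants ($C$ in Lemma~\ref{Local_hpyerbolic_1}, the bounded multiplicity of the symbolic cover, the $\lambda_0,\lambda_1$ bounds, and Corollary~\ref{cor_control_jacobian}) depend only on $(U,F)$ and not on the base point. This is exactly the claimed bound, with $\delta$ any number strictly between $d_H$ and $1$.

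The main obstacle, and the point that needs genuine care, is the \emph{uniformity in $\rho_0$} together with the reduction from ``abstract cylinders in the symbolic model'' to ``honest intervals on the smooth leaf $W_u(\rho_0)$''. One must be sure that: (i) the finite Markov cover from \cite{Mo91} has bounded overlap and that a depth-$n$ cylinder, when intersected with $W_u(\rho_0)\cap\mathcal{T}$, really is contained in an interval whose length is $\sim J^u_{-n}$ along that leaf — this is where Lemma~\ref{Local_hpyerbolic_1} and the $C^{1,\beta}$-regularity of the unstable lamination (Theorem~\ref{Thm_regularity}, via the adapted charts of Lemma~\ref{Existence_of_adapated_charts}) are used to make the word ``length'' unambiguous; and (ii) the Jacobians $J^u_{-n}$ evaluated at different points of the same cylinder are comparable up to a global constant (Corollary~\ref{cor_control_jacobian}), so that the pressure sum over a finite orbit-representative set dominates $N_X(\varepsilon)$ uniformly. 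Once these bookkeeping points are settled, the thermodynamic estimate above closes the argument; combined with Lemma~\ref{from_fractal_to_porous} this will then yield the uniform porosity of $\Omega^\pm$ needed for Proposition~\ref{FUP_to_Omega}.
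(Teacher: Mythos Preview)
Your strategy is the right one and is very close in spirit to the paper's argument in Appendix~\ref{upper-box dimension}: Markov/symbolic coding, contraction of $F^{-n}$ along unstable leaves, and the pressure formalism with Bowen's equation $P(s_0\phi_u)=0$. Your identification of the uniformity issue and its resolution via Lipschitz holonomy is also on target; the paper does exactly this, reducing to finitely many reference leaves $V=\bigcup_i W_u(\rho_i)\cap R_i$ and transporting the bound to all other leaves by Lemma~\ref{regularity_holonomy_maps}.

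There is, however, a genuine gap in your counting step. You choose a \emph{fixed} depth $n(\varepsilon)\sim\lambda_0^{-1}|\log\varepsilon|$ so that every depth-$n$ cylinder has diameter $\le\varepsilon$, and then assert that the number of such cylinders is controlled by $\varepsilon^{-\delta}\sum_{|\mathbf q|=n}(J^u_{-n})^\delta$. But with this choice of $n$, every cylinder satisfies $J^u_{-n}\le C\varepsilon$, so each term $\varepsilon^{-\delta}(J^u_{-n})^\delta$ is at most $C^\delta$: the inequality $1\le\varepsilon^{-\delta}(J^u_{-n})^\delta$ goes the \emph{wrong} way, and the bound is vacuous. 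The ``Vitali/bounded-overlap'' remark does not repair this: discarding the cylinders whose length is $\ll\varepsilon$ removes exactly the ones you need for the cover. The underlying difficulty is the non-uniform expansion $\lambda_0<\lambda_1$: at fixed depth $n$, cylinder diameters range over $[\varepsilon^{\lambda_1/\lambda_0},\varepsilon]$, so neither a fixed-$n$ cover nor a naive count works.

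Two standard repairs are available. The first is a Moran cover: for each branch stop at the first depth $n_{\mathbf q}$ with $J^u_{-n_{\mathbf q}}\in[\varepsilon,e^{\lambda_1}\varepsilon)$; then the diameters are genuinely comparable to $\varepsilon$ and the count reduces to bounding $\sum_{\mathbf q\in\mathrm{Moran}}(J^u_{-n_{\mathbf q}})^\delta$, which is finite because $P(\delta\phi_u)<0$ forces $c_n(\delta)=\sum_{|\mathbf q|=n}(J^u_{-n})^\delta\le Ce^{-cn}$ and the Moran intervals form an antichain. The second repair is the paper's own route, which is logically inverted: rather than picking $\delta$ and bounding $N_V(\varepsilon)$, it sets $\delta=\overline{\dim}V$ and shows $P(\delta\phi_u)\ge 0$ via a self-similarity/iteration argument (push cylinders forward by $F^n$ and holonomy to land back in $V$, obtaining $N_V(\varepsilon)\le K^s c_n(s)\,\varepsilon^{-s}$ on smaller scales, then iterate in $m$). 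This bypasses the non-uniformity issue entirely because the bound is self-improving rather than obtained from a single-scale cover. Either route closes the argument, but your sketch as written does not.
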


\begin{rem}
Recall that $W_{u/s}(\rho_0)$ is a local unstable (resp. stable) manifold at $\rho_0$, and in particular a single smooth curve. $U_{\rho_0}$ is the domain of the chart adapted $\kappa_{\rho_0}$ (see \ref{Existence_of_adapated_charts}). 
\end{rem}

Roughly speaking, this proposition says that the upper box dimension of the sets $W_{u/s}(\rho) \cap \mathcal{T}$, the trace of $\mathcal{T}$ along the stable and unstable manifolds, is strictly smaller than one. This condition on the upper box dimension is a fractal condition. In our case, we need uniform estimates on the numbers $N_X(\varepsilon)$ for $X = W_{u/s}(\rho) \cap \mathcal{T}$. This uniformity is a consequence of the fact that the holonomy maps are $C^1$ with uniform $C^1$ bounds (and thus Lipschitz, which is enough to conclude). 
This result is clearly linked with Bowen's formula which has been proved in different contexts and links the dimension of $X$ with the topological pressure of the map $\phi_u = - \log |J_u^{1} |$. This is where the assumption $\ref{Fractal}$ is used. This proposition is proved in the Appendix \ref{upper-box dimension} where we borrow the arguments of \cite{Bar} (Section 4.3) to get the required bounds. 

From the Proposition \ref{upper_box_dim}, we get 

\begin{cor}
There exists $\nu>0$ such that for every $\rho_0 \in \mathcal{T}$, the sets $y \circ \kappa \left( W_u(\rho_0) \cap \mathcal{T} \cap U_{\rho_0} \right)$ and $ \zeta \left( W_s(\rho_0) \cap \mathcal{T} \cap U_{\rho_0} \right)$ are $\nu$-porous on scale $0$ to $1$. 
\end{cor}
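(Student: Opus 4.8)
The plan is to derive the porosity of the images $y\circ\kappa\bigl(W_u(\rho_0)\cap\mathcal{T}\cap U_{\rho_0}\bigr)$ and $\zeta\bigl(W_s(\rho_0)\cap\mathcal{T}\cap U_{\rho_0}\bigr)$ from the uniform box-dimension bound of Proposition~\ref{upper_box_dim} by transporting the covering estimate through the chart maps and then invoking Lemma~\ref{from_fractal_to_porous}. The only subtlety is keeping all the constants uniform in $\rho_0\in\mathcal{T}$, which is exactly why the quantitative version of Lemma~\ref{from_fractal_to_porous} (giving $\nu$ as an explicit function of $(\delta,\varepsilon_0,C)$) was stated.

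First I would fix the $\delta\in[0,1[$, $C>0$ and $\varepsilon_0>0$ produced by Proposition~\ref{upper_box_dim}, so that for every $\rho_0\in\mathcal{T}$ the set $X_u=W_u(\rho_0)\cap\mathcal{T}\cap U_{\rho_0}$ satisfies $N_{X_u}(\varepsilon)\le C\varepsilon^{-\delta}$ for all $\varepsilon\le\varepsilon_0$, and similarly for $X_s=W_s(\rho_0)\cap\mathcal{T}\cap U_{\rho_0}$. Next I would apply Lemma~\ref{Lipshictz and fractal sets}: the map $y\circ\kappa_{\rho_0}$ is the composition of the chart $\kappa_{\rho_0}$ (whose $C^N$ norms, hence Lipschitz constants, are uniformly bounded in $\rho_0$ by Lemma~\ref{Existence_of_adapated_charts}) with the coordinate projection $(y,\eta)\mapsto y$, so there is a \emph{global} Lipschitz constant $L$ with $N_{y\circ\kappa(X_u)}(\varepsilon)\le N_{X_u}(\varepsilon/L)\le (CL^\delta)\,\varepsilon^{-\delta}$ for $\varepsilon\le L\varepsilon_0$. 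The same argument applies to $\zeta$ restricted to $X_s$, since $\zeta:U_{\rho_0}\to\R$ is $C^{1+\beta}$ with uniform norms (again Lemma~\ref{Existence_of_adapated_charts}), hence uniformly Lipschitz on the relevant compact set. One can rescale so that both image sets lie in an interval of length $1$; shrinking $\varepsilon_0$ to a smaller global $\varepsilon_0'$ absorbs the factor $L$ and the multiplicative constant, giving a bound of the form $N(\varepsilon)\le C_1\varepsilon^{-\delta}$ for $\varepsilon\le\varepsilon_0'$, with $C_1,\varepsilon_0',\delta$ all independent of $\rho_0$.

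Finally I would feed these uniform estimates into Lemma~\ref{from_fractal_to_porous}: since $\nu=\nu(\delta,\varepsilon_0',C_1)$ depends only on the three global quantities, the lemma yields a single $\nu>0$, valid simultaneously for all $\rho_0\in\mathcal{T}$, such that each of the two sets $y\circ\kappa(W_u(\rho_0)\cap\mathcal{T}\cap U_{\rho_0})$ and $\zeta(W_s(\rho_0)\cap\mathcal{T}\cap U_{\rho_0})$ is $\nu$-porous on scale $0$ to $1$. This is precisely the statement of the corollary. The main obstacle, such as it is, is purely bookkeeping: one must verify that every constant entering the chain---the box-dimension bound, the chart Lipschitz bounds, the projection, and the porosity threshold---genuinely depends only on $(U,F)$ and not on $\rho_0$, which is guaranteed step by step by the uniformity clauses in Proposition~\ref{upper_box_dim} and Lemma~\ref{Existence_of_adapated_charts}; there is no analytic difficulty beyond that.
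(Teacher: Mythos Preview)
Your proposal is correct and follows essentially the same argument as the paper: transport the uniform box-counting bound of Proposition~\ref{upper_box_dim} through the globally Lipschitz maps $y\circ\kappa$ and $\zeta$ via Lemma~\ref{Lipshictz and fractal sets}, then apply Lemma~\ref{from_fractal_to_porous} with its explicit $\nu=\nu(\delta,\varepsilon_0,C)$ to obtain a porosity constant independent of $\rho_0$. Your write-up is in fact slightly more explicit than the paper's in justifying the uniformity of the Lipschitz constants.
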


\begin{proof}
The maps $y \circ \kappa$ and $\zeta$ are $C$-Lipschitz for a global constant $C$. As a consequence, the previous lemma and Lemma \ref{Lipshictz and fractal sets} give $$\forall \varepsilon \leq \varepsilon_0 /C  , N_\Omega(\varepsilon) \leq C^\delta \varepsilon^{-\delta} \quad, \text{ where }  \Omega = y \circ \kappa \left( W_u(\rho_0) \cap \mathcal{T} \cap U_{\rho_0} \right) \text{ or } \zeta \left( W_s(\rho_0) \cap \mathcal{T} \cap U_{\rho_0} \right)$$
 Applying Lemma \ref{from_fractal_to_porous}, the $\nu$-porosity is proved for some $\nu = \nu(\delta, C, \varepsilon_0)$. 
\end{proof}

To conclude, we use this corollary to show the porosity of $\Omega^\pm$. We start by studying $\Omega^+$. 
\begin{lem}
There exists a global constant $C>0$ such that 
$$ \Omega^+ \subset \zeta \left( W_s(\rho_0) \cap \mathcal{T} \cap U_{\rho_0} \right) (Ch^\tau)$$
\end{lem}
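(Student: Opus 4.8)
The plan is to fix an arbitrary $\rho\in\mathcal{V}_\mathcal{Q}^+$, find a point of $W_s(\rho_0)\cap\mathcal{T}\cap U_{\rho_0}$ lying on the (essentially unique) unstable leaf through $\rho$, and use the near-horizontality of the unstable foliation in the adapted chart $\kappa=\kappa_{\rho_0}$ (Lemma \ref{Existence_of_adapated_charts}) to show that $\eta\circ\kappa(\rho)$ is within $Ch^\tau$ of the $\zeta$-value of that point. Since $\Omega^+=\Gamma^+(h^\tau)=\eta\circ\kappa\big(\mathcal{V}_\mathcal{Q}^+\big)(h^\tau)$ (see (\ref{def_Omega_+})), this immediately yields the claimed inclusion after enlarging $C$ by one. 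Concretely: pick $\mathbf{q}\in\mathcal{Q}$ with $\rho\in\mathcal{V}_\mathbf{q}^+$; since $\mathbf{q}\in\mathcal{Q}(n,a)$ we have $J_\mathbf{q}^+>h^{-\tau}$, so Lemma \ref{Localization_V_q}(2) gives $\rho_\mathbf{q}\in\mathcal{T}$ with $d\big(\rho,W_u(\rho_\mathbf{q})\big)\le C/J_\mathbf{q}^+\le Ch^\tau$. For $\varepsilon_0$ small enough all the points involved stay in $U_{\rho_0}$ (recall $\mathcal{V}_a^+\Subset U_{\rho_0}$ and $W_u(\rho_0)$ is close to $\mathcal{V}_a^+$), and $d(\rho_\mathbf{q},\rho_0)$ is small, so Lemma \ref{classical_hyperbolic}(7) provides a single point $z:=W_u(\rho_\mathbf{q})\cap W_s(\rho_0)\in\mathcal{T}\cap U_{\rho_0}$. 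Because $\zeta$ is constant along unstable leaves (Lemma \ref{Existence_of_adapated_charts}(3)(i)) and $z,\rho_\mathbf{q}$ both lie on $W_u(\rho_\mathbf{q})\cap U_{\rho_0}$, we get $\zeta(z)=\zeta(\rho_\mathbf{q})$; thus it suffices to prove $|\eta\circ\kappa(\rho)-\zeta(\rho_\mathbf{q})|\le Ch^\tau$.

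To carry out this estimate I would work in the chart, where $\kappa\big(W_u(\rho_\mathbf{q})\cap U_{\rho_0}\big)=\{(y,g(y,\zeta(\rho_\mathbf{q})))\}$ and $\kappa\big(W_u(\rho_0)\cap U_{\rho_0}\big)=\{(y,0)\}$ (using $\zeta(\rho_0)=0$, $g(\cdot,0)\equiv0$ and $g(0,\zeta)=\zeta$ from Lemma \ref{Existence_of_adapated_charts}(3)). Choose $\rho'\in W_u(\rho_\mathbf{q})$ with $d(\rho,\rho')\le Ch^\tau$ and write $\kappa(\rho')=(y_1,g(y_1,\zeta(\rho_\mathbf{q})))$; since $\kappa$ is Lipschitz, $|\eta\circ\kappa(\rho)-g(y_1,\zeta(\rho_\mathbf{q}))|\le Ch^\tau$. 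Next, the cloud hypothesis $d(\rho,W_u(\rho_0))\le Ch^\mathfrak{b}$ together with $\tau>\mathfrak{b}$ gives $d(\rho',W_u(\rho_0))\le Ch^\mathfrak{b}$, hence (the $\eta$-coordinate being a lower bound for the Euclidean distance to the $y$-axis) $|g(y_1,\zeta(\rho_\mathbf{q}))|\le Ch^\mathfrak{b}$. Now, exactly as in the computation of Lemma \ref{straigh_Wu}, the $C^{1+\beta}$ regularity of $g$ with $g(y,0)=0$ and $\partial_\zeta g(y,0)=1$ yields $|g(y,\zeta)-\zeta|\le C|\zeta|^{1+\beta}$ uniformly in $y$; for $|\zeta|$ small this forces $|g(y,\zeta)|\ge|\zeta|/2$, so $|\zeta(\rho_\mathbf{q})|\le Ch^\mathfrak{b}$, and then
$$|g(y_1,\zeta(\rho_\mathbf{q}))-\zeta(\rho_\mathbf{q})|\le C|\zeta(\rho_\mathbf{q})|^{1+\beta}\le Ch^{\mathfrak{b}(1+\beta)}=Ch,$$
using $\mathfrak{b}(1+\beta)=1$ from (\ref{alpha}). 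Combining the two bounds, $|\eta\circ\kappa(\rho)-\zeta(\rho_\mathbf{q})|\le Ch^\tau+Ch\le Ch^\tau$ since $\tau<1$, which is what we wanted.

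The main point to be careful about is the double use of the near-horizontality estimate $|g(y,\zeta)-\zeta|\le C|\zeta|^{1+\beta}$: first to turn the pointwise closeness $d(\rho',W_u(\rho_0))\le Ch^\mathfrak{b}$ into a bound $|\zeta(\rho_\mathbf{q})|\le Ch^\mathfrak{b}$ on the leaf parameter, and then to gain the extra factor $h^{\mathfrak{b}\beta}$ that brings the error below $h^\tau$; this is the step where the choice $\mathfrak{b}=1/(1+\beta)$ is used. A secondary bookkeeping point is verifying that $\rho_\mathbf{q}$, $z$, $\rho'$ and the relevant portions of the leaves $W_u(\rho_\mathbf{q})$, $W_u(\rho_0)$ all lie in the chart domain $U_{\rho_0}$, which is clear for $\varepsilon_0$ (hence $\varepsilon_2$) small. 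All the constants $C$ produced depend only on $F$, the partition and the uniform $C^{1+\beta}$ bounds on $g,\zeta$ from Lemma \ref{Existence_of_adapated_charts}, i.e. they are global, so the statement holds with a global $C$.
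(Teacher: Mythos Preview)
Your proof is correct and follows essentially the same approach as the paper: both reduce to $\Gamma^+$, pick $\rho_\mathbf{q}\in\mathcal{T}$ with $d(\rho,W_u(\rho_\mathbf{q}))\le Ch^\tau$ via Lemma~\ref{Localization_V_q}, pass to the intersection with $W_s(\rho_0)$ using that $\zeta$ is constant along unstable leaves, and then invoke the near-horizontality estimate $|g(y,\zeta)-\zeta|\le C|\zeta|^{1+\beta}$ together with $\mathfrak{b}(1+\beta)=1$ to obtain the $Ch$ error. The only cosmetic difference is that the paper packages the last step as a reference to the diameter bound established right after Lemma~\ref{straigh_Wu}, whereas you unpack the same computation explicitly (and derive $|\zeta(\rho_\mathbf{q})|\le Ch^\mathfrak{b}$ from $|g(y_1,\zeta)|\ge|\zeta|/2$ rather than from the Lipschitz continuity of $\zeta$).
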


\begin{proof}
Since $\Omega^+ = \Gamma^+ (h^\tau)$, it is enough to show the same statement for $\Gamma^+ = \eta \circ \kappa_{\rho_0} \left(\mathcal{V}_\mathcal{Q}^+ \right) $. \\
Let $\rho \in \mathcal{V}_\mathcal{Q}^+$. By assumption on $\mathcal{Q}$ and $\rho_0$, $d(\rho, W_u(\rho_0) ) \leq Ch^\mathfrak{b}$. Since $\rho \in \mathcal{V}_\mathbf{q}$ for some $\mathbf{q} \in \mathcal{Q}$, there exists $\rho_1 \in \mathcal{T}$ such that $ d (\rho, W_u(\rho_1) ) \leq \frac{C}{J_\mathbf{q}^+(\rho_1)} \leq Ch^\tau$. Fix $\rho_2 \in W_u(\rho_1)$ such that $d(\rho,\rho_2) \leq Ch^\tau$. 
$$ |\eta \circ \kappa(\rho) - \zeta(\rho_1) | = |\eta \circ \kappa(\rho) - \zeta(\rho_2) |  \leq  |\eta \circ \kappa(\rho) -\eta \circ \kappa(\rho_2) | + |\eta \circ \kappa(\rho_2) - \zeta(\rho_2) | $$
Since $\eta \circ \kappa$ is Lipschitz, we can control the first term by 
$$  |\eta \circ \kappa(\rho) -\eta \circ \kappa(\rho_2) | \leq C d(\rho,\rho_2) \leq Ch^\tau$$ 
To estimate the second term, the same arguments used after Lemma \ref{straigh_Wu} show that $$ |\eta \circ \kappa(\rho_2) - \zeta(\rho_2) | \leq \text{diam} \big[ \eta \circ \kappa \left( W_u(\rho_2) \cap U_{\rho_0} \right)\big] \leq Ch$$ 
It gives $|\eta \circ \kappa(\rho) -\zeta(\rho_1) | \leq Ch^\tau$. To conclude, note that there exists a unique point $\rho_1^\prime \in W_s(\rho_0) \cap W_u(\rho_1)$ and $\zeta(\rho_1) = \zeta(\rho_1^\prime)$. 
\end{proof}

As a simple corollary of this lemma and of Lemma \ref{fattenning_porous_set}, we get 
\begin{cor}
$\Omega^+$ is $\nu/3$-porous on scale $\frac{3}{\nu}Ch^\tau$ to $1$. 
\end{cor}

We now turn to the study of $\Omega^-$. We can state and prove similar results with different scales of porosity. Recall that $\delta_2 = \frac{\lambda_0}{\lambda_1} \delta_0$. 

\begin{lem}
There exists a global constant $C>0$ such that 
$$ \Omega^- \subset y \circ \kappa \left( W_u(\rho_0) \cap \mathcal{T} \cap U_{\rho_0} \right) (Ch^{\delta_2})$$
\end{lem}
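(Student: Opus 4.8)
The plan is to mirror the proof of the analogous statement for $\Omega^+$, tracking how the chart $\kappa=\kappa_{\rho_0}$ moves points of $\mathcal{V}_a^+ \cap \mathcal{T}_-^{loc}(4C_2h^{\delta_2})$ onto a small neighbourhood of the $y$-trace of the unstable leaf at $\rho_0$. Recall that $\Omega^- = \Gamma^-(h^{\delta_0})$ and $\Gamma^- = y\big(\kappa(\mathcal{V}_a^+ \cap \mathcal{T}_-^{loc}(4C_2h^{\delta_2})) \cap \{|\eta|\le h^{\delta_0}\}\big)$, so since $h^{\delta_0}\le C h^{\delta_2}$ for $h$ small (as $\delta_2 = \tfrac{\lambda_0}{\lambda_1}\delta_0 \le \delta_0$) it suffices to prove the inclusion for $\Gamma^-$ up to enlarging the constant $C$. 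So first I would fix $\rho \in \mathcal{V}_a^+ \cap \mathcal{T}_-^{loc}(4C_2h^{\delta_2})$ with $|\eta(\kappa(\rho))| \le h^{\delta_0}$, and pick $\rho_1 \in \mathcal{T}_-^{loc}$ with $d(\rho,\rho_1)\le 4C_2 h^{\delta_2}$. Since $\mathcal{T}_-^{loc} \subset \bigcup_{\rho'\in\mathcal{T}} W_s(\rho')$, we get $\rho_1 \in W_s(\rho_2)$ for some $\rho_2\in\mathcal{T}$, and because $\rho_1$ is close to $\rho_2$ we may just as well take $\rho_1 \in W_s(\rho_2)\cap\mathcal{T}$ after a bounded adjustment — more precisely, by property (7) of Lemma \ref{classical_hyperbolic} there is a point of $W_s(\rho_2)\cap\mathcal{T}$, and using that $\mathcal{T}_-^{loc}$ stays within distance $Ch^{\delta_2}$ of a stable leaf through a trapped point, we find $\tilde\rho\in W_s(\rho_0)\cap\mathcal{T}$ with $d(\rho,\tilde\rho)\le Ch^{\delta_2}$ once $\varepsilon_0$ is small enough that $\rho_0$ is the natural centre (the relevant stable leaf intersects $\mathcal{V}_a^+$, and by local uniqueness it coincides with $W_s(\rho_0)\cap\mathcal{V}_a^+$; $\rho$ lies $Ch^{\delta_2}$-close to it).

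Then I would estimate $|y(\kappa(\rho)) - y(\kappa(\tilde\rho))|$. The point $\tilde\rho\in W_s(\rho_0)\cap\mathcal{T}\cap U_{\rho_0}$, and the straightening property of the adapted chart (Lemma \ref{Existence_of_adapated_charts}) puts $\kappa(W_s(\rho_0)\cap U_{\rho_0})$ on the line $\{y=0\}$ — wait, more carefully: the adapted chart straightens the \emph{unstable} leaves, not the stable ones, so $y\circ\kappa$ on $W_s(\rho_0)\cap\mathcal{T}$ is exactly the function $\zeta$ up to... no: by construction $\zeta$ is constant on unstable leaves and $\zeta(W_s(\rho_0))$ parametrises the stable leaf, while $y\circ\kappa(W_u(\rho_0))$ parametrises the unstable leaf. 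The correct comparison is therefore: $y\circ\kappa(\tilde\rho)$ for $\tilde\rho\in W_s(\rho_0)$ differs from a fixed value by at most the diameter of $y\circ\kappa$ along the stable leaf, which is $O(1)$, and that is \emph{not} what we want. Instead, the right target set is $y\circ\kappa(W_u(\rho_0)\cap\mathcal{T}\cap U_{\rho_0})$, and the claim should be that $y(\kappa(\rho))$ lies within $Ch^{\delta_2}$ of that set. To see this, use that $\rho\in\mathcal{T}_-^{loc}(4C_2h^{\delta_2})$ means $\rho$ is $Ch^{\delta_2}$-close to a trapped point $\tilde\rho$, and the $y$-coordinate of $\tilde\rho$ equals the $y$-coordinate of the unique point of $W_u(\rho_0)\cap W_s(\tilde\rho)$ (given by property (7) of Lemma \ref{classical_hyperbolic}, valid since $\tilde\rho$ is close to $\rho_0$), because along $W_s(\tilde\rho)$ the unstable-leaf label changes only by $O(h)$ just as in Lemma \ref{straigh_Wu} — here one uses $\partial_\zeta g(y,0)=1$, $g(y,0)=0$ and $\beta$-Hölder continuity of $\partial_\zeta g$ to get the $h^{\mathfrak{b}(1+\beta)}=h$ bound, which is negligible compared to $h^{\delta_2}$. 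Combining the $Ch^{\delta_2}$ closeness of $\rho$ to $\tilde\rho$, the Lipschitz bound on $y\circ\kappa$, and this $O(h)$ displacement along the stable leaf, we get $y(\kappa(\rho))$ within $Ch^{\delta_2}$ of $y\circ\kappa(W_u(\rho_0)\cap\mathcal{T}\cap U_{\rho_0})$; adding the extra $h^{\delta_0}\le Ch^{\delta_2}$ fattening from $\Gamma^-\mapsto\Omega^-$ gives the stated inclusion.

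The main obstacle is getting the bookkeeping of leaves right: the adapted chart straightens unstable leaves, so the natural "fractal" coordinate on $W_s(\rho_0)\cap\mathcal{T}$ is $\zeta$ (which is what the porosity corollary for $\Omega^+$ uses) whereas on $W_u(\rho_0)\cap\mathcal{T}$ it is $y\circ\kappa$; one must be careful that $\Omega^-$ is governed by the $y$-trace of the \emph{unstable} leaf at $\rho_0$ (points of $\mathcal{T}_-^{loc}$ are spread along stable leaves, but their $y$-coordinates sample the transverse unstable direction), consistent with the fact that the fractal uncertainty principle pairs a position window with a frequency window. Once the inclusion lemma is in place, the porosity of $\Omega^-$ follows exactly as for $\Omega^+$: the corollary to Proposition \ref{upper_box_dim} gives that $y\circ\kappa(W_u(\rho_0)\cap\mathcal{T}\cap U_{\rho_0})$ is $\nu$-porous on scale $0$ to $1$ with $\nu$ a global constant, and then Lemma \ref{fattenning_porous_set} applied with $\alpha_2 = Ch^{\delta_2}$ shows $\Omega^-$ is $\nu/3$-porous on scale $\tfrac{3}{\nu}Ch^{\delta_2}$ to $1$. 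Finally, with $\Omega^+$ being $\nu/3$-porous on scale $\tfrac{3}{\nu}Ch^{\tau}$ to $1$ and $\Omega^-$ on scale $\tfrac{3}{\nu}Ch^{\delta_2}$ to $1$, I would invoke Proposition \ref{FUP} with $\gamma_1^\pm=0$, $\gamma_0^+ = \tau$, $\gamma_0^- = \delta_2$, noting the saturation condition $\gamma_0^+ + \gamma_0^- = \tau + \delta_2 > 1$ holds by the numerology condition (\ref{condition_on_tau}) (recall $\delta_2 = \tfrac{\lambda_0}{\lambda_1}\delta_0$ and $\delta_0\tfrac{\lambda_0}{\lambda_1} + \tau > 1$), and using $\mathds{1}_{\Omega^-}(y)\mathds{1}_{\Omega^+}(hD_y) = \mathds{1}_{\Omega^-}\mathcal{F}_h^{-1}\mathds{1}_{\Omega^+}\mathcal{F}_h$ together with unitarity of $\mathcal{F}_h$ to match the operator norm in Proposition \ref{FUP_to_Omega}. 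This yields $\|\mathds{1}_{\Omega^-}(y)\mathds{1}_{\Omega^+}(hD_y)\|_{L^2\to L^2} \le Ch^{\beta(\tau+\delta_2-1)}$, and absorbing the constant $C$ into a slightly smaller exponent gives the claimed bound $h^\gamma$ for a global $\gamma>0$ and $h$ small, completing the proof of Proposition \ref{FUP_to_Omega} and hence of Proposition \ref{Prop_on_clouds}.
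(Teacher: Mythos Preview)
Your reduction to $\Gamma^-$ and the downstream porosity/FUP discussion are fine, but the core step --- showing that any $\rho \in \mathcal{V}_a^+ \cap \mathcal{T}_-^{loc}(4C_2 h^{\delta_2}) \cap \{|\eta| \le h^{\delta_0}\}$ has $y(\kappa(\rho))$ within $Ch^{\delta_2}$ of $y\circ\kappa(W_u(\rho_0) \cap \mathcal{T})$ --- has a real gap. Your appeal to a Lemma~\ref{straigh_Wu}-type argument is misplaced: that lemma controls the $\eta$-coordinate along an \emph{unstable} leaf close to $W_u(\rho_0)$, using the special straightening property $\partial_\zeta g(y,0)=1$ of the adapted chart; it says nothing about the $y$-coordinate along a \emph{stable} leaf, which is what you need. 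Relatedly, your claim that $\rho$ is $Ch^{\delta_2}$-close to a \emph{trapped} point is not justified: membership in $\mathcal{T}_-^{loc}(4C_2 h^{\delta_2})$ only puts $\rho$ within $Ch^{\delta_2}$ of some point on a stable leaf $W_s(\rho')$ with $\rho'\in\mathcal{T}$, and the nearest actual trapped point (e.g.\ $W_s(\rho')\cap W_u(\rho_0)$) can sit at distance of order $\varepsilon_0$, not $h^{\delta_2}$, from $\rho$.

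The paper's argument is simpler and crucially uses the constraint $|\eta(\kappa(\rho))|\le h^{\delta_0}$, which you state but never exploit. Since every local stable leaf meeting $\mathcal{V}_a^+$ intersects $W_u(\rho_0)$, one gets $\mathcal{V}_a^+ \cap \mathcal{T}_-^{loc}(4C_2 h^{\delta_2}) \subset \bigcup_{\rho'\in W_u(\rho_0)\cap\mathcal{T}} W_s(\rho')\,(4C_2 h^{\delta_2})$. In the chart, $d\kappa_{\rho_0}(E_s(\rho_0))=\R\partial_\eta$, so for $\varepsilon_0$ small each $\kappa(W_s(\rho'))$ is a Lipschitz graph $y = G_{\rho'}(\eta)$ with $G_{\rho'}(0) = y(\kappa(\rho'))$. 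The point $\rho$ is $Ch^{\delta_2}$-close to some $\rho_1\in W_s(\rho')$ whose $\eta$-coordinate is therefore $O(h^{\delta_0})$; then $|y(\kappa(\rho_1)) - y(\kappa(\rho'))| = |G_{\rho'}(\eta_1) - G_{\rho'}(0)| \le C h^{\delta_0}$ by the Lipschitz bound on $G_{\rho'}$, and a further $Ch^{\delta_2}$ passes from $\rho_1$ to $\rho$. Since $\delta_0\ge\delta_2$, both contributions are $\le Ch^{\delta_2}$. No $g$-function subtlety is needed here: the adapted chart straightens \emph{unstable} leaves, and for \emph{stable} leaves a plain Lipschitz-graph estimate suffices precisely because the $\eta$-window has already been cut to $|\eta|\le h^{\delta_0}$.
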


\begin{proof}
Since $\Omega^- = \Gamma^- (h^{\delta_0})$ with $\delta_0 > \delta_2$, it is enough to prove if for 
$$ \Gamma^- = y \circ \kappa \left( \mathcal{V}_a^+ \cap \mathcal{T}_-^{loc}\left( 4C_2h^{\delta_2} \right) \cap \{ |\eta| \leq h^{\delta_0} \}  \right) $$
Recall that $\mathcal{T}_-^{loc} \subset \bigcup_{\rho \in \mathcal{T}} W_s(\rho)$. Since in $\mathcal{V}_a^+$, all the local stable leaves intersect $W_u(\rho_0)$, we have 
$$ \mathcal{V}_a^+ \cap \mathcal{T}_-^{loc}(4C_2h^{\delta_2}) \subset \bigcup_{ \rho \in W_u(\rho_0) \cap \mathcal{T} } W_s(\rho) (4C_2 h^{\delta_2}) $$ 
Fix $\rho \in W_u(\rho_0) \cap \mathcal{T}$. Since $d\kappa(E_s(\rho_0)) = \R \partial_\eta$, if $\varepsilon_0$ is small enough, we can write $\kappa (W_s(\rho) ) = \left\{ (G_\rho(\eta), \eta) , \eta \in O \right\}$ where $O$ is some open subset of $\R$ and $G_\rho : O \to \R$ is $\cinf$. In particular, it is Lipschitz with a global Lipschitz constant $C$. If $|\eta| \leq h^{\delta_0}$, $|G_\rho(\eta) - G_\rho(0) | \leq C h^{\delta_0}$.
Recall that $\kappa(W_u(\rho_0) \cap U_{\rho_0}) \subset \R \times \{0\}$ and hence, $G_\rho(0) = y \circ \kappa (\rho)$. As a consequence, if $\rho_1 \in W_s(\rho) \cap \{ |\eta| \leq h^{\delta_0} \} $, writing $\kappa(\rho_1) =(G_\rho(\eta), \eta)$, we have 
$$ | y \circ \kappa (\rho_1) - y \circ \kappa (\rho) | = |G_\rho(\eta) - G_\rho(0) | \leq C h^{\delta_0}$$
 Then, if $\rho_2 \in W_s(\rho) (4C_2 h^{\delta_2})$, since $\kappa$ is Lipschitz with global Lipschitz constant , 
$$ |y \circ \kappa(\rho_2) - y \circ \kappa(\rho) | \leq Ch^{\delta_2} + Ch^{\delta_0} \leq Ch^{\delta_2} $$
This shows that  $y \circ \kappa(\rho_2)  \in y \circ \kappa (W_u(\rho_0) \cap \mathcal{T} ) (Ch^{\delta_2})$ and concludes the proof. 
\end{proof}

As a corollary, using Lemma \ref{fattenning_porous_set}, we get 
\begin{cor}
$\Omega^-$ is $\nu/3$-porous on scale $\frac{3}{\nu}Ch^{\delta_2}$ to $1$. 
\end{cor}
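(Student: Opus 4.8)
The corollary asserts that $\Omega^-$ is $\nu/3$-porous on scale $\tfrac{3}{\nu}Ch^{\delta_2}$ to $1$, where $\Omega^- = \Gamma^-(h^{\delta_0})$ is the $h^{\delta_0}$-neighborhood of the set $\Gamma^-$ defined in (\ref{def_Omega_-}), and $\nu$ is the porosity constant furnished by the corollary of Proposition \ref{upper_box_dim}. This is the direct analogue of the preceding corollary for $\Omega^+$, and the plan is to combine the inclusion lemma immediately above (which sandwiches $\Omega^-$ inside a neighborhood of the trace set $y\circ\kappa(W_u(\rho_0)\cap\mathcal{T}\cap U_{\rho_0})$) with the fattening lemma (Lemma \ref{fattenning_porous_set}) applied to a porous set that we already know is porous down to scale $0$.

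\textbf{Step 1: identify the porous core.} By the Corollary following Proposition \ref{upper_box_dim}, the set $\Lambda^- \coloneqq y\circ\kappa\left(W_u(\rho_0)\cap\mathcal{T}\cap U_{\rho_0}\right)$ is $\nu$-porous on scale $0$ to $1$, with $\nu = \nu(\delta,C,\varepsilon_0)$ a global constant independent of $\rho_0$. This is where the fractal assumption (\ref{Fractal}) is ultimately used (through Bowen's formula, Appendix \ref{upper-box dimension}), but I may quote it as a black box here.

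\textbf{Step 2: reduce $\Omega^-$ to a fattening of $\Lambda^-$.} The lemma immediately preceding the corollary gives a global constant $C>0$ with $\Omega^- \subset \Lambda^-(Ch^{\delta_2})$. Conversely, porosity is monotone under enlargement of the set only in the sense controlled by Lemma \ref{fattenning_porous_set}: if a set is $\nu$-porous on a range of scales, so is any $\alpha_2$-neighborhood of it, on a slightly shrunk range and with constant $\nu/3$, provided $\alpha_2 \le \tfrac{\nu}{3}\alpha_1$. Apply Lemma \ref{fattenning_porous_set} to $\Omega \rightsquigarrow \Lambda^-$ with $\alpha_0 = 0$, $\alpha_1 = 1$, and $\alpha_2 = Ch^{\delta_2}$: for $h$ small enough the hypothesis $Ch^{\delta_2}\le \tfrac{\nu}{3}$ holds, so $\Lambda^-(Ch^{\delta_2})$ is $\tfrac{\nu}{3}$-porous on scale $\max(0,\tfrac{3}{\nu}Ch^{\delta_2}) = \tfrac{3}{\nu}Ch^{\delta_2}$ to $1$. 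Since porosity on a scale range is inherited by any subset that is a subset of the porous one — here $\Omega^-\subset\Lambda^-(Ch^{\delta_2})$, and a subset of a $\nu'$-porous set is $\nu'$-porous on the same scales because the defining condition (existence of an empty subinterval $J\subset I$) is only easier for a smaller set — we conclude $\Omega^-$ is $\tfrac{\nu}{3}$-porous on scale $\tfrac{3}{\nu}Ch^{\delta_2}$ to $1$. This is exactly the claimed statement.

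\textbf{Main obstacle and remarks.} There is essentially no serious obstacle: the corollary is a one-line consequence of two previously established lemmas plus the trivial monotonicity of porosity under passing to subsets. The only points requiring a sentence of care are (i) checking the scale hypothesis $\alpha_2\le\tfrac{\nu}{3}\alpha_1$ of Lemma \ref{fattenning_porous_set}, which holds for all sufficiently small $h$ since $\delta_2>0$ and $\nu$ is a fixed global constant (and the statement is asymptotic in $h$ anyway); and (ii) making explicit that the constant $C$ appearing in the scale $\tfrac{3}{\nu}Ch^{\delta_2}$ is the global constant of the preceding lemma, so that $\nu/3$ and the scale endpoints are all global — independent of the cloud $\mathcal{Q}\subset\mathcal{Q}(n,a)$ and of $n$ — which is what is needed to feed Proposition \ref{FUP} uniformly in the final application. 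I would also note, for symmetry with the $\Omega^+$ case, that the only genuine input here beyond soft lemmas is the uniform upper-box-dimension bound of Proposition \ref{upper_box_dim}, whose uniformity over $\rho_0$ comes from the uniform $C^{1,\beta}$ (hence Lipschitz) bounds on the holonomy maps established in Lemma \ref{regularity_holonomy_maps} and its corollary.
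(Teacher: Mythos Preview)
Your proof is correct and follows exactly the paper's approach: the paper states this corollary as an immediate consequence of the preceding inclusion lemma together with Lemma~\ref{fattenning_porous_set}, and your write-up unpacks precisely those two ingredients (plus the trivial monotonicity of porosity under subsets). Your additional remarks on uniformity in $\rho_0$ and the smallness condition on $h$ are accurate and helpful.
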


We can now prove the last Proposition \ref{FUP_to_Omega} needed to end the proof of Proposition \ref{Prop_thm}. This is a consequence of the porosity of $\Omega^\pm$ and the fractal uncertainty principle.  To apply Proposition \ref{FUP}, we need to ensure that the scale condition is satisfied, that is to say 
$$ \delta_2 + \tau >1$$
which has been supposed when defining $\tau$ in (\ref{condition_on_tau}) and (\ref{tau}). 
Proposition \ref{Prop_thm} then comes with any $ 0< \gamma < (\delta_2 + \tau  -1) \beta(\nu/3)$. 

\pagebreak{}
\appendix
\section{}
\subsection{Holder regularity for flows}

\begin{lem}\label{flow_regularity}
Let $U \subset \R^n$ be open and $Y : U \to \R^n$ be a complete $C^{1+\beta}$ vector field. We note $\phi^t(x)$ the flow generated by $Y$. Then, for any $T \in \R$ and $ K \subset U$ compact, the map 
$$ (t,x) \in [-T,T] \times K \mapsto \phi^t(x)$$
is $C^{1+\beta}$. 
\end{lem}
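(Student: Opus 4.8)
The plan is to reduce the regularity of the flow map to the regularity of the vector field by a bootstrapping argument, treating the $t$ and $x$ dependence separately. First I would fix $T>0$ and a compact $K \subset U$, and work on a slightly larger compact neighborhood $K' \Subset U$ so that the flow $\phi^t(x)$ stays inside $K'$ for all $|t| \le T$ and $x \in K$ (possible after shrinking $T$ if necessary, and then iterating finitely many times to reach the general $T$). Since $Y$ is $C^1$, the classical theory of ODEs gives that $(t,x) \mapsto \phi^t(x)$ is $C^1$, with $\partial_t \phi^t(x) = Y(\phi^t(x))$ and the spatial differential $D_x\phi^t(x) =: M(t,x)$ solving the linearized (variational) equation
\begin{equation*}
\frac{d}{dt} M(t,x) = DY(\phi^t(x)) \, M(t,x), \qquad M(0,x) = \mathrm{Id}.
\end{equation*}
So it remains only to prove that the first derivatives of $\phi^t$, namely $Y(\phi^t(x))$ and $M(t,x)$, are $\beta$-Hölder jointly in $(t,x)$.

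For the $\beta$-Hölder continuity, I would argue as follows. The map $x \mapsto \phi^t(x)$ is Lipschitz uniformly in $|t|\le T$ (by Gronwall, with constant $e^{T\|DY\|_{\infty,K'}}$), and $t \mapsto \phi^t(x)$ is Lipschitz with constant $\|Y\|_{\infty,K'}$; hence $(t,x)\mapsto \phi^t(x)$ is Lipschitz, a fortiori $\beta$-Hölder. Composing the $\beta$-Hölder maps $DY$ and $\phi$, we get that $(t,x) \mapsto DY(\phi^t(x))$ is $\beta$-Hölder on $[-T,T]\times K$; similarly $(t,x) \mapsto Y(\phi^t(x)) = \partial_t \phi^t(x)$ is $\beta$-Hölder, which handles the time derivative. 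For the spatial derivative $M(t,x)$, write $M(t,x) = \mathrm{Id} + \int_0^t DY(\phi^s(x)) M(s,x)\,ds$. Since $M$ is continuous and bounded (again by Gronwall) and $DY\circ\phi$ is $\beta$-Hölder and bounded, the integrand $s \mapsto DY(\phi^s(x))M(s,x)$ is bounded uniformly in $x$; this already gives that $t \mapsto M(t,x)$ is Lipschitz in $t$ uniformly in $x$, hence $\beta$-Hölder in $t$. For the $x$-dependence: subtracting the integral equations for $M(t,x_1)$ and $M(t,x_2)$,
\begin{equation*}
M(t,x_1) - M(t,x_2) = \int_0^t \Big[ DY(\phi^s(x_1)) - DY(\phi^s(x_2)) \Big] M(s,x_1)\,ds + \int_0^t DY(\phi^s(x_2)) \big[ M(s,x_1) - M(s,x_2) \big] ds,
\end{equation*}
and the first term is bounded by $C\,|x_1-x_2|^\beta$ (using Hölder continuity of $DY$, Lipschitz continuity of $\phi^s$ in $x$, and boundedness of $M$), while the second is $\le \|DY\|_\infty \int_0^{|t|} \|M(s,x_1)-M(s,x_2)\|\,ds$; Gronwall then yields $\|M(t,x_1)-M(t,x_2)\| \le C\,|x_1-x_2|^\beta$ uniformly in $|t|\le T$. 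Combining the Hölder estimate in $t$ and in $x$ gives joint $\beta$-Hölder continuity of $M$ on $[-T,T]\times K$.

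Finally, to pass from small $T$ to arbitrary $T$, I would use the group property $\phi^{t+s} = \phi^t \circ \phi^s$: a finite composition of $C^{1+\beta}$ maps (with uniformly bounded $C^{1+\beta}$ data on the relevant compacts) is again $C^{1+\beta}$, since the chain rule expresses the derivative of a composition as a product of ($\beta$-Hölder, bounded) derivatives evaluated at ($\beta$-Hölder, Lipschitz) points, and products and compositions of bounded $\beta$-Hölder functions are bounded $\beta$-Hölder. I expect the main technical point to be the uniformity of all the Hölder constants — making sure the Gronwall constants, the bounds on $M$, and the Hölder seminorm of $DY$ are controlled purely in terms of $\|Y\|_{C^{1+\beta}(K')}$, $T$, and $\mathrm{dist}(K, \partial U)$ — but this is routine once the neighborhood $K'$ is fixed at the start; there is no genuinely hard obstacle, only bookkeeping.
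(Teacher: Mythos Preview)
Your proof is correct and follows essentially the same approach as the paper's: both invoke the variational equation for $D_x\phi^t$, split the Hölder estimate for the differential into a $t$-part (handled by boundedness of the integrand) and an $x$-part (handled by subtracting the two equations and applying Gronwall with the $\beta$-Hölder continuity of $DY\circ\phi$). Your treatment is in fact slightly more complete, since you explicitly check the Hölder regularity of the time derivative $\partial_t\phi^t = Y\circ\phi^t$ and add the group-property iteration for large $T$; the paper omits these (the former being immediate, the latter unnecessary since $Y$ is assumed complete).
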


\begin{proof}
We fix $T, K$ as in the statement. We'll use the same constants $C,C^\prime$ at different places, with different meaning. In addition to $Y$, they will depend on $T,K$ . 

Since $Y$ is $C^1$, Cauchy-Lipschitz theorem gives the local existence and uniqueness of the flow. It is standard that the flow is also $C^1$ and satisfies 
\begin{equation} \label{ODE_diff}
\partial_t d \phi^t(x) = d Y (\phi^t(x)) \circ d \phi^t(x) 
\end{equation}
Let's note $A^t(x) = d \phi^t(x)$ and $\Xi(t,x) = dY (\phi^t(x) )$.
The assumption on $Y$ implies that $\Xi$ is $\beta$-Hölder. \\
Fix $(t_0,x_0), (t_1,x_1) \in [-T, T] \times K$ and let's estimate $ || A^{t_1}(x_1) - A^{t_0}(x_0)||$. We split it into two pieces and control it with the triangle inequality : 
$$ || A^{t_1}(x_1) - A^{t_0}(x_0)|| \leq || A^{t_1}(x_1) - A^{t_0}(x_1)|| + || A^{t_0}(x_1) - A^{t_0}(x_0)||  $$ 
It is not hard to control the first term of the right hand side using (\ref{ODE_diff}) since 
$$  ||A^{t_1}(x_1) - A^{t_0}(x_1)|| = \left| \left| \int_{t_0}^{t_1} \Xi(s,x_1) \circ A^s(x_1) ds  \right| \right| \leq C |t_1- t_0|$$
To estimate the second term, 
we estimate \begin{align*}
|| \partial_t (A^t(x_1) - A^t(x_0)) || &\leq || \left(\Xi(t,x_1) - \Xi(t,x_0)  \right) \circ A^t(x_1) + \Xi(t,x_0) \circ (A^t(x_1) - A^t(x_0)) || \\
&\leq C d(x_0,x_1)^\beta + C^\prime ||A^t(x_1) - A^t(x_0)||
\end{align*}
By Gronwall's lemma, $$||A^{t_0}(x_1) - A^{t_0}(x_0)|| \leq C d(x_0,x_1)^\beta e^{C^\prime t_0} \leq Cd(x_0,x_1)^\beta$$
This concludes the proof. 
\end{proof}

\subsection{Proof of Lemma \ref{Lemma_regularity_partial}}\label{appendix_regularity_partial}
We give the missing proof of Lemma \ref{Lemma_regularity_partial} and widely use the notations of the subsection \ref{Adapted_charts}. Its proof uses the construction of $e_u$ in the proof of Theorem \ref{Thm_regularity}. It is inspired by techniques usually used to show the unstable manifold's theorem (see for instance \cite{Dy18}). In fact, the smoothness of $y \mapsto f_0(y,0)$ is a direct consequence of the smoothness of the unstable manifold $W_u(\rho_0)$. It was not clear for us if it was possible to easily deduce from this the required smoothness of $y \mapsto \partial_\eta f_0(y,0)$. This is why we decided to give a proof of this proposition. It uses the fact that $e_u$ has been constructed to satisfy $\R d_{\rho} F(e_u(\rho) ) = \R e_u (F(\rho) )$ for $\rho$ in a small neighborhood of $\mathcal{T}$. 
To show the lemma, we need information along all the orbit of $\rho_0$. For this purpose, we introduce the following, for $m \in \Z$, 

\begin{itemize}[nosep]
\item $\rho_m = F^m (\rho_0)$ ; 
\item $\kappa_m : U_m \to V_m \subset \R^2$ the chart given by Lemma \ref{weak_adapted_chart} centered at $\rho_m$ and we assume that the relation $\R d_{\rho}F(e_u(\rho))  = \R e_u (F(\rho) )$ holds for $\rho \in U_m$. We will note $(y_m, \eta_m)$ the variable in $V_m$ ; 
\item $G_m = \kappa_{m+1} \circ F \circ \kappa_m^{-1} : V_m \to V_{m+1}$ ; 
\item A reparametrization of the vector field $(\kappa_m)_* e_u$  : $\R (\kappa_m)_* e_u = \R e_m$ where  $e_m (y_m, \eta_m) = {}^t ( 1, s_m(y_m,\eta_m) ) $ where $s_m$ is a slope function which is known to be $C^{1+ \beta}$. 
\end{itemize} 
Note that $s_m(y_m,0) = 0$ due to the fact that $\kappa_m(W_u(\rho_m) ) \subset \R \times \{ 0 \}$. 
The hyperbolicity assumption on $F$ and the properties of $\kappa_m$ allow us to write 
$$ G_m(y_m,\eta_m) = \Big( \lambda_m y_m + \alpha_m(y_m,\eta_m) , \mu_m \eta_m + \beta_m(y_m,\eta_m) \Big)$$ 
where 
\begin{itemize}[nosep]
\item For some $\nu <1$, $0 \leq |\mu_m| \leq \nu , |\lambda_m| \geq \nu^{-1}$ for all $m \in \N$ ;
\item $\alpha_m(0,0) = \beta_m (0,0) = 0$; 
\item $\beta_m(y_m,0)=0$ for $(y_m,0) \in V_m$
\item $d\alpha_m(0,0) = d \beta_m(0,0) = 0$ ; 
\item We can assume that $U_m$ are sufficiently small neighborhoods of $\rho_m$ so that $\beta_m, \alpha_m = O(\delta_0)_{C^1(U_m)}$ for some small $\delta_0 >0$. 
\end{itemize}

The property $d_\rho F(e_u(\rho) )  \in \R e_u (F(\rho) )$ implies that $ d_{(y_m,\eta_m)} G_m \big(e_m(y_m,\eta_m) ) \big) \in \R e_{m+1} \big(G_m (y_m,\eta_m) \big)$. As a consequence, the transformation of the slopes gives an equation satisfied by the family of slopes $(s_m)_{m \in \Z}$ : 

\begin{equation}\label{equation_for_s_m}
s_{m+1} \left( G_m(y_m,\eta_m) \right) = Q_m  \big( y_m, \eta_m, s_m(y_m,\eta_m) \big)
\end{equation}
where $Q_m$ is the smooth function 
$$ Q_m(y_m, \eta_m ,s) = \frac{s \times \big(\mu_m + \partial_{\eta_m} \beta_m(y_m,\eta_m) \big) + \partial_{y_m} \beta_m(y_m,\eta_m)  }{ \lambda_m + \partial_{y_m} \alpha_m (y_m,\eta_m) + s \times \partial_{\eta_m} \alpha_m(y_m,\eta_m)} $$

Writing $G_m(y_m,\eta_m) = (y_{m+1}, \eta_{m+1})$, we deduce by differentiation of  (\ref{equation_for_s_m}) with respect to $\eta_{m+1}$: (we omit the point of evaluation of the maps involved in the right hand side to alleviate the line)
\begin{multline}\label{equation_sigma_m}
\partial_{\eta_{m+1}} s_{m+1} (y_{m+1}, \eta_{m+1}) = \partial_{y_m} Q_m \times \partial_{\eta_{m+1}} y_m + \partial_{\eta_m} Q_m \times \partial_{\eta_{m+1}} \eta_m \\
+ \partial_s Q_m \times \big( \partial_{y_m} s_m \times \partial_{\eta_{m+1}} y_m + \partial_{\eta_m} s_m \times \partial_{\eta_{m+1}} \eta_m \big) 
\end{multline}
This last equation gives the transformation of vertical derivative of the slope. 
We now evaluate this identity at the point $(y_{m+1},0)$.  In the following lines, when the variable $y_m$ and $y_{m+1}$ appear in the same equation, we implicitly assume that they are related by $ (y_{m+1},0) = G_m(y_m,0)$, namely $y_{m+1}  = \lambda_m y_m + \alpha_m(y_m,0)$. We remark that due to the fact that $\beta_m(y_m,0)=0$, $Q_m(y_m,0,0)=0$ and the first term of the right hand side vanishes. The term $\partial_{y_m}s_m$ also vanishes at $(y_m, 0)$.  We will note 
\begin{align*}
&\sigma_m(y_m) = \partial_{\eta_m} s_m(y_m,0) \\
&  h_m(y_m) = \partial_{\eta_m} Q_m(y_m,0,0) \times \partial_{\eta_{m+1}} \eta_m(y_{m+1},0) \\
&  c_m(y_m) = \partial_s  Q_m(y_m,0,0) \times \partial_{\eta_{m+1}} \eta_m(y_{m+1},0) 
\end{align*}
These notations allow us to rewrite (\ref{equation_sigma_m}) at $(y_{m+1},0)$ : 
\begin{equation}\label{relation_sigma_m}
\sigma_{m+1}(y_{m+1}) = h_m(y_m) + c_m(y_m) \times \sigma_m(y_m)
\end{equation}
 We observe that $|\partial_{\eta_{m+1}} \eta_m (y_m,0)| =| \mu_m^{-1} + O(\delta_0)_{C^0}| $ and after some computations, we see that 
 $$ \partial_s Q_m(y_m,0,0) = \frac{\mu_m}{\lambda_m} + O(\delta_0)_{C^0}$$
  As a consequence, 
\begin{equation}
 |c_m(y_m)| = |\lambda_m^{-1}| + O(\delta_0)_{C^0} \leq \nu_1
\end{equation} 
 where, if $\delta_0$ is small enough, we can fix $\nu_1 <1$. 
Moreover, $c_m$ and $h_m$ are smooth functions and their $C^N$ norms are bounded uniformly in $m$, and actually by global constants depending only on $F$.  Furthermore, $y_m \mapsto y_{m+1}$ is given by $y_m \mapsto \lambda_m y + \alpha_m(y_m,0)$ and is an expanding diffeomorphism provided $\delta_0$ is small enough. 

We fix some small $\varepsilon$ such that $ (-\varepsilon,\varepsilon) \times \{0 \} \subset U_m$ for all $m$. Let's note $I = (-\varepsilon,\varepsilon) $. We will make use of the Fiber Contraction Theorem to show that $y_m \in I \mapsto \sigma_m(y_m)$ is smooth for every $m$, with uniform $C^N$ norms. For this purpose, let us introduce the following notations : 
\begin{itemize}
\item $C_0 \leq C_1  \leq \dots C_N \leq \dots$ a family of constant which will be specified in the sequel ; 
\item The complete metric space $X_N = \{ \gamma \in C^N(I) ; ||\gamma||_{C^k} \leq C_k, 0 \leq k \leq N \}$ equipped with the $C^N$ norm ; 
\item The auxiliary metric space $X_N^{aux} = \{ \gamma \in C^0(I); ||\gamma||_\infty \leq C_N \}$  equipped with the $C^0$ norm ; 
\item The complete metric space $E_N = \left(X_N \right)^{\Z}$ equipped with the metric $$d(\gamma_1,\gamma_2) = \sup_{m \in \Z} ||(\gamma_1)_m -(\gamma_2)_m||_{C^N}$$ 
\item Its auxiliary counterpart $E_N^{aux} = \left(X_N^{aux} \right)^{\Z}$ equipped with the metric $$d(\gamma_1,\gamma_2) = \sup_{m \in \Z} ||(\gamma_1)_m -(\gamma_2)_m||_{C^0}$$
\end{itemize}
For $ \gamma \in E_N$, let's define $T \gamma $ with the formula (\ref{relation_sigma_m}) : 
$$ (T \gamma)_{m+1}(y_{m+1}) = \left( h_m + c_m \gamma_m \right) (y_m)$$
Since $y_m \mapsto y_{m+1}$ is expanding, we see that $y_{m+1} \in I \implies y_m \in I$. Hence, $(T\gamma)_{m+1}$ is well defined on $I$. 
Our aim is to show by induction on $N$ that for every $N \in \N$, $\sigma \coloneqq (\sigma_m)_{m \in \Z}$ is in $E_N$ and is an attractive fixed point of $T : E_N \to E_N$. 

We start with the case $N=0$. We need to check that $T(E_0) \subset E_0$. It will be the case as soon as 
$$ C_0 \nu_1 + \sup_{m} ||h_m||_\infty \leq C_0$$
For instance, take $C_0 = \frac{2 \sup_m ||h_m||_\infty}{1-\nu_1}$. Due to the fact that $||c_m||_{C^0(I)} \leq \nu_1$, $T$ is a contraction with contraction rate $\nu_1$ and hence $T : E_0 \to E_0$ has a unique attractive fixed point. This fixed point is necessarily $\sigma$ since $\sigma$ satisfies (\ref{relation_sigma_m}). 

Arguing by induction, we assume that $\sigma \in E_N$, $T (E_N) \subset E_N$ and $\sigma$ is an attractive fixed point for $T$ and we want to show that the same is true for $N+1$. For this purpose, suppose that $\gamma \in E_N$ is of class $C^{N+1}$. Analyzing the formula defining $T$, we see that can can write, for $m \in \Z$, 
\begin{multline}\label{equation_T_g_m_N}
(T\gamma)_m^{(N+1)} (y_{m+1} )   =   h_m^{(N+1)}(y_m) + c_m(y_m) \times \left( \frac{\partial y_{m+1}}{\partial y_m}(y_m) \right)^{-N-1} \times \gamma_m^{(N+1)}(y_m) \\
+ R_{N,m} \left(y_m, \gamma_m(y_m), \dots, \gamma^{(N)}_m(y_m) \right)
\end{multline}
where $R_{N,m} : I \times [-C_0,C_0] \times \dots \times [-C_N,C_N] \to \R$ is a polynomial in the last $N+1$ variables with smooth coefficients in $y_m$, uniformly bounded in $m$. As a consequence, there exists a global constant $C^\prime_{N+1}$ such that 
$$\sup_m  \sup_{I \times [-C_0,C_0] \times \dots \times [-C_N,C_N]} |R_{N,m}(y_m, \tau_0, \dots, \tau_N) | \leq C_{N+1}^\prime$$
We can then choose $C_{N+1} \geq C_N$ such that $$\sup_m ||h_m||_{C^{N+1}} + C^\prime_{N+1}+  \nu_1 C_{N+1} \leq C_{N+1}$$ which ensures that $T : E_{N+1} \to E_{N+1}$. We now wish to use the Fiber Contraction Theorem (Theorem \ref{Fiber_Contraction_Theorem}). If $\gamma \in E_N$, we define the map 
$S_\gamma :   E^{aux}_{N+1} \to E^{aux}_{N+1}$ by $$
\left(S_\gamma \theta \right)_{m+1} (y_{m+1})    =   h_m^{(N+1)}(y_m) + c_m(y_m) \times \left(  \frac{\partial y_{m+1}}{\partial y_m}(y_m) \right)^{-N-1} \times \theta_m(y_m) + R_{N,m} \left(y_m, \gamma_m(y_m), \dots, \gamma^{N}_m(y_m) \right) $$
Due to the choice of $C_{N+1}$, we see that $S_\gamma$ is well defined and since we have  $$\left| \frac{\partial y_{m+1}}{\partial y_m}(y_m)\right| \geq 1$$
and $||c_m||_{C^0(I)} \leq \nu_1$, $S_\gamma$ is a contraction with contraction rate $\nu_1$, for every $\gamma \in E_N$. In particular, the map $S_\sigma$ has a unique fixed point $\sigma_{N+1} \in E_{N+1}^{aux}$. 

The Fiber Contraction Theorem (Theorem \ref{Fiber_Contraction_Theorem}) applies to the continuous map $$T_N : (\gamma, \theta) \in E_N \times E_{N+1}^{aux} \mapsto (T\gamma , S_\gamma \theta )\in  E_N \times E_{N+1}^{aux} $$ and $(\sigma, \sigma_{N+1})$  is an attractive fixed point of $T_{N}$ in $E_N \times E_{N+1}^{aux}$. \\
In particular, if $\gamma \in E_{N+1}$, then $\tilde{\gamma} \coloneqq (\gamma, \gamma^{(N+1})) \in E_N \times E_{N+1}^{aux}$ and 
$$\lim_{ p \to +\infty} T_{N}^{ p} \tilde{\gamma} = (\sigma, \sigma_{N+1}) \text{ in } E_N \times E_{N+1}^{aux}$$
 However, by definition of $S_\gamma$, 
$$T_{N}^{ p} \tilde{\gamma} = \left( T^{p} \gamma, \left(T^{p} \gamma\right)^{(N+1)} \right)$$ Hence, for every fixed $m$, $\left(T^{p} \gamma\right)_m$ converges to $\sigma_m$ in $X_N$ and $ \left(T^{p} \gamma\right)_m^{(N+1)} $ converges uniformly on $I$ to $\sigma_{N+1}$. This proves that $\sigma$ is $C^{N+1}$ and $\sigma^{(N+1)} = \sigma_{N+1}$. We conclude that $\sigma \in E_{N+1}$ is then an attractive fixed point of $T : E_{N+1} \to E_{N+1}$, which proves the induction and concludes the proof of Lemma \ref{Lemma_regularity_partial}.

\subsection{Proof of Lemma \ref{Lemma_bound_symbol}}\label{Proof_of_lemma_bound_symbol}

We give the missing proof of Lemma \ref{Lemma_bound_symbol}. The proof is a precise analysis of the iteration formula (\ref{iteration_formula}). We adopt the notations introduced for Lemma \ref{Lemma_bound_symbol}. We argue by induction on $J$ to show the property $\mathcal{P}_{J} $:" the bound (\ref{bound_symbol}) is valid for all $j \leq J$ and for all $1 \leq i \leq n-1 , l \in \N$ with some constants $C_{j,l}$". 

\paragraph{1. Base case.} Let us start with $\mathcal{P}_0$. 
The iteration formula (\ref{iteration_formula}) implies that
$$ a_{i,0}(x_i) = \prod_{l=1}^i f_l(x_l)$$ 
Hence, the bound $||a_{i,0}||_{C^0} \leq \left(B \nu^{1/2} \right)^i $ is obvious and we can set $C_{0,0}=1$. We now argue by induction on $i$ and prove the property $\mathcal{P}_{0,i} $:"the bound (\ref{bound_symbol}) is valid for $j=0$, $i$ and for all $ l \in \N$ for some constants $C_{j,l}$".Theses bounds are trivially true for $i=0$ and are direct consequences of Lemma \ref{iteration_lemma} for $i=1$.  Suppose that the property holds for $i-1$ for some $i \geq 1$ and let's show it for $i$. 
\subparagraph{1.1. Case $l=1$. }
Let us first deal with $l=1$ and compute the derivative of $a_{i,0}$, using the formula : $a_{i,0}(x_i) = f_i(x_i) a_{i-1,0}(x_{i-1})$. 
$$ a_{i,0}^\prime (x_i) = f^\prime(x_i) a_{i-1,0}(x_{i-1}) + f_i(x_i) a_{i-1,0}^\prime(x_{i-1}) \left( \frac{\partial x_{i-1} }{\partial x_i} \right)$$
We use the (weak) bound  $\left| \frac{\partial x_{i-1} }{\partial x_i} \right| \leq 1$ and the property $\mathcal{P}_{0,i-1}$ to show that : 
$$||a_{i,0}||_{C^1} \leq C \left(B \nu^{1/2} \right)^{i-1} + C_{0,1} \left(B \nu^{1/2} \right) \times \left(B \nu^{1/2} \right)^{i-1} i  \leq C_{0,1} \left(B \nu^{1/2} \right)^i (i+1)$$
assuming that $C_{0,1} > C\left(B \nu^{1/2} \right)^{-1}$. 

\subparagraph{1.2. General case for $l >0$.  }
We now come back to the general case $l>0$. By using the formula $a_{i,0}(x_i) = f_i(x_i) a_{i-1,0}(x_{i-1})$, one sees that we can write $a_{i,0}^{(l)}$ on the form : 
$$a_{i,0}^{(l)}(x_i) = f_i(x_i) a_{i-1,0}^{(l)} (x_{i-1}) \left( \frac{\partial x_{i-1} }{\partial x_i} \right)^{l} + O \left( ||a_{i-1,0}||_{C^{l-1}} \right)$$
The constants appearing in the $O$ depend on $C^l$ norms of $f_i$ and $\phi_i$, which, by assumption are controlled by some uniform $C^\prime_l$. Hence, using the assumption $\mathcal{P}_{0,i-1}$,

\begin{align*}
|a_{i,0}^{(l)}(x_i)| & \leq \left( B\nu^{1/2} \right) ||a_{i-1,0}||_{C^l} \left( \frac{\partial x_{i-1} }{\partial x_i} \right)^{l} + C^\prime_l ||a_{i-1,0}||_{C^{l-1}} \\
& \leq C_{0,l} \left( B\nu^{1/2} \right) \left( B\nu^{1/2} \right)^{i-1} i^l  + C^\prime_l C_{0,l-1}\left( B\nu^{1/2} \right)^{i-1} i^{l-1} \\
&\leq  C_{0,l} \left( B\nu^{1/2} \right)^i (i+1)^l
\end{align*}
assuming that $C_{0,l}$ is chosen bigger than $\frac{1}{l} C^\prime_l C_{0,l-1} \left( B\nu^{1/2} \right)^{-1}$. As a consequence, we can build constants satisfying these conditions by defining inductively $C_{0,l} = \max \left( C_{0,l-1},  \frac{1}{l} C^\prime_l C_{0,l-1} \left( B\nu^{1/2} \right)^{-1} \right)$.
This ends the proof of $\mathcal{P}_{0,i}$ and hence of $\mathcal{P}_0$. 

\paragraph{2. Induction step.}
We now assume that $\mathcal{P}_{j-1}$ is true for some $j \geq 1$ and aim at proving $\mathcal{P}_j$.  Again, we do it by induction on $i$ by proving the properties $\mathcal{P}_{j,i}$ : "the bound (\ref{bound_symbol}) is true for $j$,$i$ and all $l \in \N$ ". Theses bounds are trivially true for $i=0$ and are direct consequences of Lemma \ref{iteration_lemma} for $i=1$. Suppose that the property holds for $i-1$ for some $i \geq 2$ and let's show it for $i$.

\subparagraph{2.1. Case $l=0$.}
Let's start with $l=0$. The iteration formula shows that 
$$a_{i,j}(x_i) = f_i(x_i)a_{i-1,j}(x_{i-1} ) + \sum_{p=0}^{j-1} L_{j-p,i}(a_{i-1,p})  (x_{i-1}) $$ 
By Lemma \ref{iteration_lemma}, there exists constants $C^\prime_{p,m} >0$ such that 
$$ ||L_{p,i} a||_{C^m(I_i)} \leq C^\prime_{p,m} ||a||_{C^{2p+m}(I_{i-1}) }$$

Hence, assuming that (\ref{bound_symbol}) holds for $a_{i-1,j}$ with $l=0$. 
\begin{align*}
||a_{i,j}||_\infty & \leq C_{j,0}  \left( B\nu^{1/2} \right)  \left( B\nu^{1/2} \right)^{i-1} i^{3j}  + \sum_{p=0}^{j-1} C^\prime_{j-p,0}  ||a_{i-1,p}||_{C^{2(j-p)}} \\
&  \leq C_{j,0}  \left( B\nu^{1/2} \right)^i i^{3j}  + \sum_{p=0}^{j-1} C^\prime_{j-p,0}  C_{p,2(j-p)}  \left( B\nu^{1/2} \right)^{i-1} i^{2(j-p) + 3p} \\
&\leq   C_{j,0}  \left( B\nu^{1/2} \right)^i i^{3j}+ i^{2j}\left( B\nu^{1/2} \right)^{i-1}\sum_{p=0}^{j-1} C^\prime_{j-p,0}  C_{p,2(j-p)}   i^p \\
&\leq  C_{j,0}  \left( B\nu^{1/2} \right)^i i^{3j}+ i^{2j}\left( B\nu^{1/2} \right)^{i-1} \left[\sup_{0 \leq p \leq j-1} C^\prime_{j-p,0}  C_{p,2(j-p)} \right] \frac{i^j -1}{i-1} \\ 
&\leq C_{j,0}  \left( B\nu^{1/2} \right)^i i^{3j}+ i^{3j-1}\left( B\nu^{1/2} \right)^{i-1} \left[\sup_{0 \leq p \leq j-1} C^\prime_{j-p,0}  C_{p,2(j-p)} \right] \tilde{C}_j \text{ where } \frac{i^j-1}{i-1} \leq \tilde{C}_j i^{j-1}  \\ 
&\leq C_{j,0}  \left( B\nu^{1/2} \right)^i (i+1)^{3j}
\end{align*} 
assuming that $C_{j,0} $ is chosen bigger than $K_j \coloneqq  \frac{1}{3j}\left( B\nu^{1/2} \right)^{-1} \left[\sup_{0 \leq p \leq j-1} C^\prime_{j-p,0}  C_{p,2(j-p)} \right] \tilde{C}_j $. As a consequence, the bounds hold for $l=0$ and $i,j$ if we set $C_{j,0} = \max(1, K_j)$. 

\subparagraph{2.2. Case $l>0$. }
Consider now $l >0$. As already done, one can write

$$a_{i,j}^{(l)}(x_i) = f_i(x_i)a_{i-1,j}^{(l)} (x_{i-1}) \left( \frac{\partial x_{i-1}}{\partial x_i}\right)^l + O\left( ||a_{i-1,j}||_{C^{l-1}}\right) + \sum_{p=0}^{j-1} \left(L_{j-p,i}(a_{i-1,p}) \right)^{(l)} (x_{i-1})$$
As usual, the constants in $O$ depend on $l,j$ but not on $i$ and we note $C^{\prime\prime}_{l,j}$ the constant in this $O$. 
Hence, we can control : 

\begin{align*}
||a_{i,j}^{(l)}||_\infty& \leq C_{j,l}  \left( B\nu^{1/2} \right)  \left( B\nu^{1/2} \right)^{i-1} i^{l+3j} + C^{\prime\prime}_{l,j}  C_{j,l-1}  \left( B\nu^{1/2} \right)^{i-1} i^{l+3j-1}  + \sum_{p=0}^{j-1} ||L_{j-p,i} ( a_{i-1,p} ) ||_{C^l} \\
&\leq C_{j,l}  \left( B\nu^{1/2} \right)^i i^{l+3j} + C^{\prime\prime}_{l,j}  C_{j,l-1}  \left( B\nu^{1/2} \right)^{i-1} i^{l+3j-1}  + \sum_{p=0}^{j-1} C^\prime_{j-p,l} || a_{i-1,p}  ||_{C^{l+2(j-p)}}\\
&\leq  C_{j,l}  \left( B\nu^{1/2} \right)^i i^{l+3j} + C^{\prime\prime}_{l,j}  C_{j,l-1}  \left( B\nu^{1/2} \right)^{i-1} i^{l+3j-1}  + \sum_{p=0}^{j-1} C^\prime_{j-p,l} C_{p,l+2(j-p)}  \left( B\nu^{1/2} \right)^{i-1} i^{l+2(j-p)+3p}   \\
&\leq C_{j,l} \left( B\nu^{1/2} \right)^i \left( i^{l+3j} +i^{l+3j-1} \frac{1}{ C_{j,l}} \underbrace{ \left( B\nu^{1/2} \right)^{-1}  \left( C^{\prime\prime}_{l,j}  C_{j,l-1}   + \sup_{0 \leq p \leq j-1} C^\prime_{j-p,l} C_{p,l+2(j-p)} \tilde{C}_j\right) }_{\tilde{C}_{j,l}} \right) \\
&\leq C_{j,l} \left( B\nu^{1/2} \right)^i  (i+1)^{l+3j}
\end{align*}
if $C_{j,l} \geq  \tilde{C}_{j,l}$. Eventually, we define by induction on $l$ the constants $C_{j,l}$ by setting $C_{j,l} =  \max \left( C_{j,l-1}, \tilde{C}_{j,l}\right)$, achieving the proof of $\mathcal{P}_j$. This concludes the proof of the lemma. 

\subsection{Upper-box dimension for hyperbolic set}\label{upper-box dimension}

This subsection is devoted to the proof of Proposition \ref{upper_box_dim}. We will simply recall some arguments which lead to give an upper bound to the upper box dimension. We borrow this arguments from \cite{Bar} (Section 4.3) and refer the reader to this book for the definitions and properties of topological pressure (definition 2.3.1), Markov partition (definition 4.2.6) and other references on this theory.

We'll show that the pressure condition (\ref{Fractal}) implies Proposition \ref{upper_box_dim}. We prove it for the unstable manifolds. The proof is similar in the case of stable manifolds by changing $F$ into $F^{-1}$. We first begin by fixing a Markov partition for $\mathcal{T}$ with diameter at most $\eta_0$. This is possible in virtue of Theorem 18.7.3 in \cite{KH}. We note $R_1, \dots , R_p \subset \mathcal{T}$ this Markov partition. Here,  $\eta_0$ is smaller than the diameter of the local stable and unstable manifolds and the holonomy maps $H_{\rho,\rho^\prime}^{u/s}$ are well defined for $d(\rho,\rho^\prime) \leq \eta_0$ : 
$$H_{\rho,\rho^\prime}^{u/s} : W_{s/u}(\rho) \to W_{s/u}(\rho^\prime) , \zeta \mapsto \text{ the unique point in }W_u(\zeta) \cap W_s(\rho^\prime) $$ 
Due to our results on the regularity of the stable and unstable distributions, these maps are Lipschitz with global Lipschitz constants. In particular, if an inequality of the kind $$N_{W_u(\rho) \cap \mathcal{T}} (\varepsilon) \leq C \varepsilon^{-\delta}$$ holds for some $\rho$, it holds for $\rho^\prime$ if $d(\rho,\rho^\prime) \leq \eta_0$ with $C$ replaced by $K^\delta C$ where $K$ is a Lipschitz constant for the holonomy maps. We fix $(\rho_1, \dots, \rho_p)$ in $(R_1, \dots, R_p)$ and we set $V  = \bigcup_{i=1}^p W_u(\rho_i) \cap R_i$. It is then enough to show that

$$ \overline{\dim} V   <1$$ 
Indeed, if $\overline{\dim} V <1$, for $\delta \in ( \overline{\dim} V , 1)$, there exists $\varepsilon_0 >0$  such that 
$$ \forall \varepsilon \leq \varepsilon_0, \; N_{ V}(\varepsilon) \leq \varepsilon^{-\delta}$$
and we conclude the proof of Proposition \ref{upper-box dimension} with the above considerations on the holonomy maps. 

$\delta \coloneqq \overline{\dim} V$ satisfies the equation $P(\delta \phi_u)=0$. We will actually show that $P( \delta\phi_u) \geq 0$. Since $s \mapsto P(s \phi_u)$ is strictly decreasing and has a unique root, the assumption $P(\phi_u)<0$ will give $\delta <1$.  
We will note $$ R_{i_0, \dots, i_n} = \bigcap_{k=0}^n F^{-i} (R_{i_k} )  \; ; \; V_{i_0, \dots, i_n}  = R_{i_0, \dots, i_n} \cap V$$
the elements of the refined partition at time $n$. Similarly to the definitions of $J_\mathbf{q}^+$, we will note 
$$J_{i_0, \dots, i_n} = \inf \{ J_u^n(\rho), \rho \in R_{i_0, \dots, i_n}\}  $$
and write
$$c_n(s) = \sum_{i_0, \dots, i_n} J_{i_0, \dots, i_n}^{-s} =  \sum_{i_0, \dots, i_n}  \exp \max_{R_{i_0, \dots, i_n}} \left( s \sum_{k=0}^{n-1} \phi_u \circ F^k  \right) $$
(the last equality follows from the chain rule). Properties of Markov partitions ensure that 
$$P(s\phi_u) = \lim_{n \to \infty} \frac{1}{n} \log c_n(s)$$ 
Fix $s > \delta$. Hence, there exists $\varepsilon_1$ such that $\forall \varepsilon \leq \varepsilon_1$, $N_{V } (\varepsilon) \leq \varepsilon^{-s}$. \\
Fix $n \in \N^*$. By writing $V= \bigcup_{i_0, \dots, i_n} V_{i_0, \dots, i_n} $ we have 
$$ N_{V} (\varepsilon) \leq \sum_{i_0, \dots, i_n} N_{ V_{i_0, \dots, i_n}  } (\varepsilon)$$ 
Note that 
$$F^n (V_{i_0, \dots, i_n}  ) \subset W_u(F^n(\rho_{i_0} )) \cap R_{i_n} $$ 
and $$H^s_{F^n(\rho_{i_0}), \rho_{i_n} } \left(  F^n (V_{i_0, \dots, i_n}   ) \right) \subset  V_{i_n}$$
Hence, if we cover $V_{i_n} $ by $N$ sets of diameter at most $\varepsilon$, $U_1, \dots, U_N$, the sets  $F^{-n} \circ H^s_{ \rho_{i_n}, F^n (\rho_{i_0}) } (U_i) , 1 \leq i \leq N$ cover $V_{i_0, \dots, i_n} $ and have diameters at most $ K \varepsilon J_{i_0, \dots, i_n}^{-1}$.  
Hence, $$N_{V_{i_n}} (\varepsilon) \geq N_{V_{i_0, \dots, i_n} } (K \varepsilon J_{i_0, \dots, i_n}^{-1} ) $$
which gives 
$$N_{V }(\varepsilon) \leq \sum_{i_0, \dots, i_n} N_{V_{i_n}} (\varepsilon K^{-1} J_{i_0, \dots, i_n})$$
As a consequence, if $ \varepsilon < \varepsilon_1 K J_n^{-1}$, where $J_n = \sup_{i_0, \dots , i_n} J_{i_0, \dots, i_n}$, we have 
$$ N_{V}(\varepsilon) \leq \sum_{i_0, \dots, i_n} K^{s} J^{-s}_{i_0, \dots , i_n} \varepsilon^{-s} = K^s \varepsilon^{-s} c_n(s)$$
By iterating this process, we see that for all $m \in \N$, if $\varepsilon < \varepsilon_1 (KJ_n^{-1})^m$, 
$$N_{V } (\varepsilon) \leq \varepsilon^{-s} K^{m s} c_n(s)^m$$
Hence, 
$$ \frac{\log N_{V  } (\varepsilon) } {- \log \varepsilon} \leq s  + m \frac{\log (K^s c_n(s))}{- \log \varepsilon} \leq s + m \frac{\log (K^s c_n(s))}{- \log \left(\varepsilon_1 (KJ_n^{-1})^m\right)}  $$
We then take the $\limsup$ as $\varepsilon \to 0$ first and then pass to the limit as $m \to + \infty$ and find that 
$$ \overline{\dim} V \leq s + \frac{\log K^s c_n(s)}{- \log KJ_n^{-1}}$$
Then, we pass to the limit 
$s  \to \delta$ and find that $\log (K^\delta c_n(\delta) ) \geq 0$. 
Hence, 
$$P(\delta \phi_u) = \lim_{n \to \infty} \frac{1}{n} \log c_n(\delta) \geq \lim_{n \to \infty} \frac{-\delta \log K}{n}  =0$$
This ends the proof of the required inequality and gives that $\overline{\dim} V <1$. 

\subsection{From porosity to upper box dimension}\label{from_porous_to_dim}

We have shown that sets with upper box dimension stricly smaller than one are porous. In this appendix, we show a result in the other way, namely, porous sets down to scale 0 have an upper box dimension strictly smaller than one.  The following lemma gives a quantitative version of this statement. This is not useful for our use (we only needed the first implication) but we found that it could be of independent interest. Our proof is based on the proof of Lemma 5.4 in \cite{DyJ18}. We adopt the same notations as in \ref{subsection_porous_sets}. 

\begin{lem}
Let $M \in \N, \nu >0 , \alpha_1 >0$. 
Let $X \subset [-M,M]$ be a closed set and assume that $X$ is $\nu$-porous on scale $0$ to $\alpha_1$. Then, there exists $C=C(\nu, \alpha_1, M) >0$, $\varepsilon_0 = \varepsilon_0(\nu, \alpha_1, M)$ and $\delta = \delta(\nu) \in [0,1)$ such that 
$$ \forall \varepsilon \leq \varepsilon_0 \; ; \; N_X(\varepsilon) \leq C\varepsilon^{-\delta}$$
In particular, 
$$ \overline{\dim} X \leq \delta$$ 
\end{lem}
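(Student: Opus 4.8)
The statement to prove is a converse to Lemma \ref{from_fractal_to_porous}: a set $X \subset [-M,M]$ that is $\nu$-porous on scales $0$ to $\alpha_1$ has upper box dimension at most some $\delta(\nu) < 1$. The plan is to run a dyadic (or more precisely $N$-adic) self-similar covering argument, in the spirit of the proof of Lemma 5.4 in \cite{DyJ18}. First I would set up the scaling: fix an integer $N = N(\nu) \geq 2$ large enough that $\nu > 2/N$ (so that at each step of the subdivision porosity forces at least one entire $N$-adic subinterval to miss $X$). The reduction to $N$-adic scales is harmless: it suffices to bound $N_X(N^{-k})$ for $k$ large, since $N_X(\varepsilon)$ is monotone in $\varepsilon$ up to a bounded factor, and the passage from scale $\alpha_1$ down to the first $N$-adic scale below $\alpha_1$ only costs a constant depending on $\nu, \alpha_1, M$.

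Next I would prove the key counting inequality: if $I$ is any interval with $|I| = N^{-j}$, $N^{-j} \le \alpha_1$, subdivided into $N$ consecutive closed subintervals of length $N^{-j-1}$, then by $\nu$-porosity there is a subinterval $J \subset I$ of length $\nu N^{-j} \geq 2 N^{-j-1}$ with $J \cap X = \emptyset$; such a $J$ contains at least one of the $N$ $N$-adic subintervals of $I$, so at most $N-1$ of those subintervals meet $X$. Iterating from an initial collection of $O_{\nu,\alpha_1,M}(1)$ intervals of length $\approx \alpha_1$ that covers $[-M,M]$, the number of $N$-adic intervals of length $N^{-k}$ meeting $X$ is at most $C(\nu,\alpha_1,M) (N-1)^{k}$. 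Since each such interval has length $N^{-k}$, this gives
\begin{equation*}
N_X(N^{-k}) \leq C(\nu,\alpha_1,M)\, (N-1)^{k} = C(\nu,\alpha_1,M)\, (N^{-k})^{-\delta}, \qquad \delta := \frac{\log(N-1)}{\log N} \in [0,1).
\end{equation*}
Note $\delta$ depends only on $N$, hence only on $\nu$. Finally, for an arbitrary $\varepsilon \leq \varepsilon_0$ (with $\varepsilon_0$ the first $N$-adic scale below $\alpha_1$), pick $k$ with $N^{-k-1} < \varepsilon \leq N^{-k}$ and use monotonicity: $N_X(\varepsilon) \leq N_X(N^{-k-1}) \leq C (N-1)^{k+1} \leq C' \varepsilon^{-\delta}$ with $C' = C'(\nu,\alpha_1,M)$. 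Taking $\limsup_{\varepsilon \to 0} \frac{\log N_X(\varepsilon)}{-\log\varepsilon}$ yields $\overline{\dim} X \leq \delta$.

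The only mildly delicate point — and the one I would write carefully — is the bookkeeping at the top scale: the set $X$ is only assumed porous \emph{up to} scale $\alpha_1$, so the self-similar halving argument can only begin once the interval lengths have dropped below $\alpha_1$; I would absorb the finitely many coarser scales and the compactness bound $X \subset [-M,M]$ into the constant $C$. A second minor care point is the inequality $\nu N^{-j} \ge 2 N^{-j-1}$ needed to guarantee that the porosity gap $J$ swallows a full $N$-adic child of $I$: this is exactly the requirement $N \ge 2/\nu$, which fixes the choice of $N$ (and hence of $\delta$) in terms of $\nu$ alone. Everything else is routine: monotonicity of $N_X$, geometric summation of the initial cover, and the elementary manipulation of logarithms. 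I do not expect any real obstacle here, as the argument is a standard Cantor-set box-counting estimate run in reverse.
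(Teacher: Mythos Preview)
Your proposal is correct and follows essentially the same argument as the paper: both choose an integer $L=\lceil 2/\nu\rceil$ (your $N$), use $L$-adic subdivision, observe that porosity forces at least one of the $L$ children of each admissible interval to miss $X$, and iterate to obtain $N_X(L^{-k})\le C(L-1)^k$, giving $\delta=\log(L-1)/\log L$. The paper's handling of the top scale (via the choice of $k_0$ with $L^{-k_0}\le\alpha_1$) and the passage to arbitrary $\varepsilon$ are exactly the bookkeeping steps you anticipate.
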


\begin{proof}

We note $L = \lceil \frac{2}{\nu} \rceil$ and $k_0$ the unique integer such that 
$$ L^{-k_0} \leq \alpha_1 < L^{-k_0+1}$$
We will note $I_{m,k} = [mL^{-k}, (m+1)L^{-k}]$ for $k \in \N, m \in \Z$. \\
We now show by induction on $k \geq k_0$ that there exists $Y_k \subset \Z$ such that : 
\begin{equation}
\# Y_k \leq 2ML^{k_0} (L-1)^{k-k_0} \; ; \; \Omega  \subset \bigcup_{ m \in Y_k} I_{m,k} 
\end{equation}
namely, at each level $k \geq k_0$, one new interval $I_{m,k}$ does not intersect $\Omega$. \\
The case $k=k_0$ is trivial since we simply cover $\Omega$ by the intervals $I_{m,k_0}$, for $ML^{k_0} \leq m <ML^{k_0}$. \\
We now assume that the result is proved for $k \geq k_0$ and we prove it for $k+1$. Fix $m \in Y_k$. 
We write $I= \bigcup_{j=0}^{L-1} I_{mL+j, k+1}$. We claim that among the intervals $I_{mL+j,k+1}$, at least one does not intersect $\Omega$. Indeed, since $|I| \leq L^{-k_0} \leq \alpha_1$, the porosity of $\Omega$ implies the existence of an interval $J \subset I$ of size $\nu|I| = \nu L^{-k} \geq 2L^{-k-1}$ such that $J \cap \Omega = \emptyset$. Since $|J| \geq 2L^{-k-1}$, $J$ contains at least one of the intervals $I_{mL+j,k+1}$. We note this index $j_m$. 
We now set $$Y_{k+1} = \bigcup_{m \in Y_k} \{ mL+j , j \in \{0, \dots, L_1 \} \setminus j_m \}$$
By the property of $j_m$, $\Omega \subset \bigcup_{m \in Y_{k+1}} I_{m,k+1}$ and $ \# Y_{k+1} \leq (L-1) \# Y_k \leq (L-1)^{k+1-k_0} 2ML^{k_0}$. 

\begin{figure}[h]
\begin{center}
\begin{tikzpicture}[xscale=1,yscale=1]
\tikzstyle{fleche}=[->,>=latex,thick]
\tikzstyle{noeud}=[rectangle,draw]
\tikzstyle{missing}=[fill=red,rectangle,draw]
\tikzstyle{feuille}=[rectangle,draw]
\tikzstyle{etiquette}=[midway]
\def\DistanceInterNiveaux{1}
\def\DistanceInterFeuilles{2}
\def\NiveauA{(-0)*\DistanceInterNiveaux}
\def\NiveauB{(-1.6666666666666665)*\DistanceInterNiveaux}
\def\NiveauC{(-3)*\DistanceInterNiveaux}
\def\NiveauD{(-4)*\DistanceInterNiveaux}
\def\InterFeuilles{(1)*\DistanceInterFeuilles}
\node[noeud] (R) at ({(3)*\InterFeuilles},{\NiveauA}) {$I_{0,k_0}$};
\node[missing] (Ra) at ({(0)*\InterFeuilles},{\NiveauB}) {$I_{0,k_0+1}$};
\node[noeud] (Rb) at ({(2)*\InterFeuilles},{\NiveauB}) {$I_{1,k_0+1}$};
\node[noeud] (Rba) at ({(1)*\InterFeuilles},{\NiveauC}) {$I_{3,k_0+2}$};
\node[feuille] (Rbaa) at ({(1)*\InterFeuilles},{\NiveauD}) {$\dots$};
\node[missing] (Rbb) at ({(2)*\InterFeuilles},{\NiveauC}) {$I_{4,k_0+2}$};
\node[noeud] (Rbc) at ({(3)*\InterFeuilles},{\NiveauC}) {$I_{5,k_0+2}$};
\node[feuille] (Rbca) at ({(3)*\InterFeuilles},{\NiveauD}) {$\dots$};
\node[noeud] (Rc) at ({(5)*\InterFeuilles},{\NiveauB}) {$I_{2,k_0+1}$};
\node[noeud] (Rca) at ({(4)*\InterFeuilles},{\NiveauC}) {$I_{6,k_0+2}$};
\node[feuille] (Rcaa) at ({(4)*\InterFeuilles},{\NiveauD}) {$\dots$};
\node[noeud] (Rcb) at ({(5)*\InterFeuilles},{\NiveauC}) {$I_{7,k_0+2}$};
\node[feuille] (Rcba) at ({(5)*\InterFeuilles},{\NiveauD}) {$\dots$};
\node[missing] (Rcc) at ({(6)*\InterFeuilles},{\NiveauC}) {$I_{8,k_0+2}$};
\draw[fleche] (R)--(Ra) node[etiquette] {$$};
\draw[fleche] (R)--(Rb) node[etiquette] {$$};
\draw[fleche] (Rb)--(Rba) node[etiquette] {$$};
\draw[fleche] (Rba)--(Rbaa) node[etiquette] {$$};
\draw[fleche] (Rb)--(Rbb) node[etiquette] {$$};
\draw[fleche] (Rb)--(Rbc) node[etiquette] {$$};
\draw[fleche] (Rbc)--(Rbca) node[etiquette] {$$};
\draw[fleche] (R)--(Rc) node[etiquette] {$$};
\draw[fleche] (Rc)--(Rca) node[etiquette] {$$};
\draw[fleche] (Rca)--(Rcaa) node[etiquette] {$$};
\draw[fleche] (Rc)--(Rcb) node[etiquette] {$$};
\draw[fleche] (Rcb)--(Rcba) node[etiquette] {$$};
\draw[fleche] (Rc)--(Rcc) node[etiquette] {$ $ };
\end{tikzpicture}
\end{center}
\caption{It illustrates the tree structure of the family of intervals $I_{k,m}$ with $L=3$. The porosity allows us to withdraw at least one child to any parent. The missing children are drawn in red. }
\end{figure}
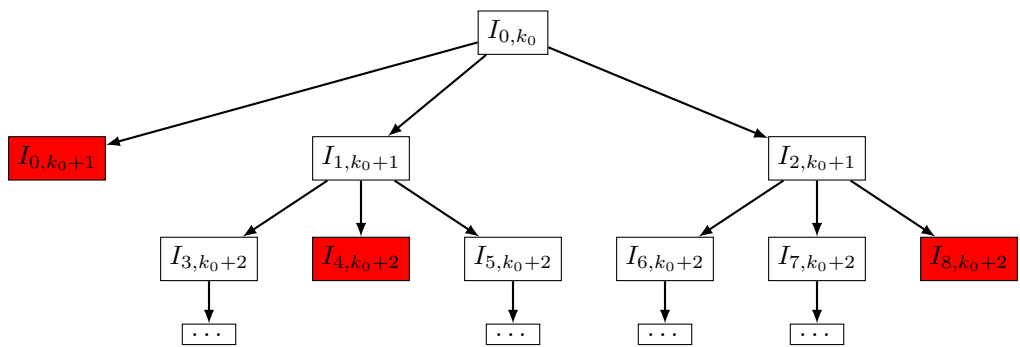

We now consider $\varepsilon \leq \frac{1}{2} L^{-k_0}$ and write $k$ the unique integer such that 
$$L^{-k} \leq 2\varepsilon < L^{-k+1} \quad i.e.\quad  k = \left\lceil \frac{- \log (2\varepsilon)}{\log L} \right\rceil$$
Since we can cover $\Omega$ by $2ML^{k_0} (L-1)^{k-k_0}$ closed intervals of size $L^{-k}$, we can cover $\Omega$ by $4ML^{k_0} (L-1)^{k-k_0}$ open intervals of size $2 \varepsilon$. 
Hence, 
$$N_\Omega(\varepsilon) \leq 4ML^{k_0}(L-1)^{k-k_0} \leq 4M \left( \frac{L}{L-1} \right)^{k_0} (L-1)^{ -\frac{\log (2\varepsilon)}{ \log L} +1 } \leq C \varepsilon^{-\delta}$$
with $\delta = \frac{\log (L-1)}{\log L} \in [0,1)$ and $C = 4M \left( \frac{L}{L-1} \right)^{k_0} (L-1)^{1- \frac{\log 2}{\log L}} $. 
\end{proof}

\newpage
\bibliographystyle{alpha}
\bibliography{biblio_these}

\end{document}